\newcommand{\C}{\mathbb C}
\newcommand{\D}{\mathbb D}
\newcommand{\R}{\mathbb R}
\newcommand{\N}{\mathbb N}
\newcommand{\Z}{\mathbb Z}
\newcommand{\T}{\mathbb T}
\newcommand{\imply}{\Longrightarrow}
\newcommand{\im}{\mathrm{Im\, }}
\newcommand{\Span}{\mathrm{span }}
\newcommand{\Hom}{\mathrm{Hom}}
\newcommand{\Ext}{\mathrm{Ext}}
\newcommand{\supp}{\mathrm{supp }}
\newcommand{\open}[1]{\overset{\circ}{#1}}
\newcommand{\Supp}{\mathrm{supp\,}}
\newcommand{\sF}{\mathrm{sF}}
\newcommand{\sE}{\mathrm{sE}}
\newcommand{\sV}{\mathrm{sV}}
\newcommand{\tensor}{\otimes}
\newcommand{\dlim}{\lim\limits_{\to}}
\newcommand{\eps}{\varepsilon}
\newcommand{\coker}{\mathrm{coker\,}}
\definecolor{redish}{rgb}{.92, .26, .47}
\definecolor{greenI}{rgb}{0, .71, .02}
\definecolor{greenish}{rgb}{.35, .73, .51}
\newtheorem{thm}{Theorem}[section]
\newtheorem{pro}[thm]{Proposition}
\newtheorem{cor}[thm]{Corollary}
\newtheorem{lem}[thm]{Lemma}
\theoremstyle{definition}
\newtheorem{rem}[thm]{Remark}
\newtheorem{defn}[thm]{Definition}
\numberwithin{equation}{section}
\begin{document}

\title[\tiny On the $K$-theory of $C^*$-algebras for substitution tilings (a pedestrian version) ] 
{\large O\MakeLowercase{n the }K\MakeLowercase{-theory of }$C^*$\MakeLowercase{-algebras for substitution tilings}\\
   \MakeLowercase{(a pedestrian version)} 
}

\author[\tiny D. Gon\c{c}alves \qquad M. Ramirez-Solano\qquad ]{ 
Daniel Gon\c{c}alves$^*$\,\,\,\,\,\,\, Maria Ramirez-Solano$^{**}$\\\\\\
{\normalfont \scriptsize\large\emph{I\MakeLowercase{n memory of }U\MakeLowercase{ffe }H\MakeLowercase{aagerup$^{\dag}$ }}}
}

\date{2 September 2018}

\thanks{$^{*}$ Partially supported by CNPq.}
\thanks{$^{**}$ Supported by the Villum Foundation under the project ``Local and global structures of groups
and their algebras'' at University of Southern Denmark, and by CNPq at Universidade Federal de Santa Catarina.}
\thanks{$^\dag$ Dedicated to a special friend whose presence will never be forgotten.}

\newcommand{\Addresses}{{
  \bigskip
  \footnotesize
  Daniel Gon\c{c}alves, \textsc{Departamento de Matem\'atica, Universidade Federal de Santa Catarina, Florian\'opolis, 88.040-900, Brazil.}\par\nopagebreak
  \textit{E-mail address}: \texttt{daemig@gmail.com}
  \medskip
  \medskip

  Maria Ramirez-Solano, \textsc{Department of Mathematics, University of Southern Denmark, Odense, Denmark.}\par\nopagebreak
  \textit{E-mail address}: \texttt{maria.r.solano@gmail.com}

}}

\begin{abstract}
Under suitable conditions, a substitution tiling gives rise to a Smale space, from which three equivalence relations can be constructed, namely  the stable, unstable, and asymptotic equivalence relations. 
We denote with $S$, $U$, and $A$ their corresponding $C^*$-algebras in the sense of Renault. 
In this article we show that the $K$-theories of $S$ and $U$ can be computed from the cohomology and homology of a single cochain complex with connecting maps for tilings of the line and of the plane.
Moreover, we provide formulas to compute the $K$-theory for these three $C^*$-algebras. 
Furthermore, we show that the $K$-theory groups for tilings of dimension 1 are always torsion free.
For tilings of dimension 2, only $K_0(U)$  and $K_1(S)$ can contain torsion.
\end{abstract}

\keywords{Aperiodic tiling; Computing $K$-theory; $C^*$-algebra; Groupoid; Smale space.}

\subjclass[2010]{52C23, 46L80, 37D15}
%
\maketitle
\section{\textbf{Introduction}}\label{s:one}
The study of aperiodic order has gained impetus after the discovery of quasicrystals by Dan Shechtman in $1982$, for which he was awarded the 2011 Nobel Prize in Chemistry. The quasicrystal structure can be explained by an aperiodic tiling \cite{Steinhardt}, \cite{Janot}, \cite{Sadun2006}, and physical properties of quasicrystals are related to mathematical aspects of the corresponding tiling. An important mathematical aspect to be understood is the $K$-theory of $C^*$-algebras associated with an aperiodic tiling, since $K$-theory enters physics through Bellissard's formulation of gap labelling \cite{Bellissard1}, \cite{Bellissard2}, \cite{BBG}. Moreover, $K$-theory has applications to deformations, spaces of measures, and exact regularity \cite{CS}, \cite{Sadun2011}. One can associate the so-called stable and unstable $C^*$-algebras to a tiling. The first author computed the $K$-theory for the stable $C^*$-algebra for a few examples in an ad-hoc manner in \cite{Gon2}, and from that the first author raised the conjecture that there is a relationship between the stable and unstable $C^*$-algebras. In this paper we show that there is indeed a relationship. To this end, we introduce the so-called stable cohomology and stable-transpose homology, from which one can compute the $K$-theories of the stable and unstable $C^*$-algebras, respectively. 
In particular, for tilings of dimension 1, and for tilings of dimension 2, in the absence of torsion, we show that the cohomology and homology can be computed as a direct limit of a matrix and its transpose.
Our new method for computing the K-theories of the stable and unstable C*-algebras is highly computable, and we have implemented this new method in Mathematica. 
The stable-transpose homology arguably provides a simpler method than other methods found in the literature for computing the $K$-theory of the unstable $C^*$-algebra.
We start with some background theory.

A tiling of $\R^d$ is a subdivision of $\R^d$ into so-called tiles.
These tiles can only intersect on their boundaries, and they are homeomorphic to the closed unit disk.
There are several ways to construct these tilings, see for example \cite{Sadun}.
In this paper, we consider only those built via a substitution rule on a finite number of tiles.  
For such tilings, one can construct a tiling space $(\Omega,d)$ and a substitution map $\omega:\Omega\to\Omega$ with inflation factor $\lambda>1$. 
We will assume that the tiling space has FLC (finite local complexity), and that the substitution map $\omega$ is primitive and injective. In such case, the tiling space, which is also called the continuous hull, is a nonempty compact metric space,  and it has no periodic tilings. Moreover, $\omega$ is a homeomorphism, and the triple $(\Omega,d,\omega)$ is a topologically mixing Smale space. For more details see \cite{AP}.
Using the Smale space structure of $(\Omega, d, \omega)$, one can construct three equivalence relations on $\Omega$, namely, $R_s, R_u, R_a$ -- the stable, unstable, and asymptotic equivalence relations. Each of these equivalence relations are defined as an increasing union of subspaces of $\Omega\times\Omega$ with the subspace product topology, and $R_s, R_u, R_a$ are given the inductive limit topology. They are locally compact and Hausdorff. Moreover, the equivalence classes $[T]_{R_s}, [T]_{R_u}, [T]_{R_a}$ of $T\in\Omega$ are each dense in $\Omega$. A characterization of the stable and unstable equivalence relations is given in the following proposition

\begin{pro}[Characterization of the equivalence relations $R_s, R_u$] For tilings $T,T'\in \Omega$, the following holds
  \begin{itemize}
    \item $T \sim_{R_s} T'$ $\Longleftrightarrow$ $\exists\, n\in\N_0$: $\omega^n(T)(0)=\omega^n(T')(0)$
    \item$T \sim_{R_u} T'$ $\Longleftrightarrow$ $\exists\, x\in\R^d$: $T'=T+x$,
  \end{itemize}
  where $\N_0:=\N\cup\{0\}$, and $T(0)$ is the set of tiles in $T$ that contain the origin.
  \end{pro}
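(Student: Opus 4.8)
The plan is to unwind the definition of $R_s$ and $R_u$ as increasing unions of (pullbacks under $\omega^{\pm n}$ of) local stable and unstable sets, and to exploit two structural features of the substitution tiling Smale space: the equivariance relation $\omega(T+x)=\omega(T)+\lambda x$, and the locality of both the substitution and its inverse (the latter coming from injectivity/recognizability of $\omega$). The guiding picture, which I would first isolate as a lemma, is that in this Smale space the local stable set of $T$ consists of tilings agreeing with $T$ on a neighborhood of the origin (the transversal, ``exact-agreement'' direction), while the local unstable set of $T$ consists of the small translates $T+v$, $\|v\|<\eps$ (the $\R^d$-orbit direction). Since $\omega$ expands distances by $\lambda$ along the unstable direction and contracts along the stable direction, forward iteration $\omega^n$ forces any nonzero translation component to grow, whereas backward iteration $\omega^{-n}$ forces any exact-agreement component to shrink out of view; this dichotomy, inherited from \cite{AP}, is what separates the two relations.

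For the unstable relation I would argue as follows. For $(\Leftarrow)$, if $T'=T+x$ then by equivariance $\omega^{-n}(T')=\omega^{-n}(T)+\lambda^{-n}x$, and since $\lambda>1$ the translation $\lambda^{-n}x\to 0$; by continuity of the translation action together with compactness of $\Omega$, a tiling and a vanishing translate of it become arbitrarily close, so $d(\omega^{-n}T,\omega^{-n}T')\to 0$ and $(T,T')\in R_u$. For $(\Rightarrow)$, membership in $R_u$ means that for some $N$ the deflated tiling $\omega^{-N}T'$ lies in the local unstable set of $\omega^{-N}T$, i.e. $\omega^{-N}T'=\omega^{-N}T+v$ for some $v\in\R^d$ by the lemma; applying $\omega^{N}$ and using equivariance gives $T'=T+\lambda^{N}v$, so $T'=T+x$ with $x=\lambda^{N}v$. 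The only real content here is the lemma identifying the local unstable set with a piece of a translation orbit, which uses that $\Omega$ has no nonzero periods (so the orbit map is injective) and that the unstable sets of the Smale space are precisely the $\R^d$-translation orbits.

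For the stable relation, $(\Leftarrow)$ is a propagation argument: if $\omega^{N}(T)(0)=\omega^{N}(T')(0)$, then $\omega^{N}T$ and $\omega^{N}T'$ agree on the union of the tiles containing the origin, which contains a ball $B(0,\delta_0)$; since the substitution is a local rule, agreement on $B(0,r)$ propagates after one inflation to agreement on a ball of radius $\sim\lambda r$, so $\omega^{N+k}T$ and $\omega^{N+k}T'$ agree on $B(0,\lambda^{k}\delta_0)$, whence $d(\omega^{n}T,\omega^{n}T')\to 0$ and $(T,T')\in R_s$. For $(\Rightarrow)$, membership in $R_s$ gives an $N$ with $\omega^{N}T'$ in the local stable set of $\omega^{N}T$; by the lemma this means the two tilings agree on a neighborhood of the origin with no translation, and in particular the tiles containing the origin coincide, i.e. $\omega^{N}(T)(0)=\omega^{N}(T')(0)$.

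The main obstacle I anticipate is the structural lemma identifying the local stable and unstable sets of the tiling Smale space, together with the bookkeeping needed to pass between ``agreement up to a small translation on a large ball'' (what the metric $d$ literally controls) and the clean statements $T'=T+v$ and $\omega^{N}(T)(0)=\omega^{N}(T')(0)$. Establishing this cleanly requires the local product structure and bracket map of the Smale space, the expansion/contraction estimates for $\omega$ along the two foliations, and recognizability (locality of $\omega^{-1}$) so that the converse directions do not leak a spurious translation component; compactness of $\Omega$ and FLC are what allow one to upgrade the pointwise metric limits into the stated exact relations.
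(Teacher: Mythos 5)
Your proposal is correct and follows essentially the same route as the paper: the backward implication for $R_s$ via the patch-propagation estimate $d(\omega^{n+k}T,\omega^{n+k}T')\le \lambda^{-k}r^{-1}$, the forward implication via the inclusion of local stable sets in exact-agreement sets $V^s(T,\varepsilon)\subset\{T'\mid T,T'\text{ agree on } B_{1/\varepsilon-\varepsilon}(0)\}$ from Anderson--Putnam, and the unstable statement via the identification $V^u_n(T)=T+B_{2\varepsilon_0\lambda^n}(0)$ together with the equivariance $\omega(T+x)=\omega(T)+\lambda x$. The only cosmetic difference is that you propose to re-derive these structural facts about the local canonical coordinates from the Smale-space bracket, whereas the paper simply cites them from \cite{AP}.
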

 \noindent(cf.~Section \ref{s:dynamics}).
Recall that an equivalence relation is in particular a groupoid, and that the definition of a groupoid $C^*$-algebra \cite{Renault} requires a topology on the groupoid and a Haar system for the groupoid. 
$R_s$ and $R_u$ have the already-mentioned inductive limit topologies and the Lebesgue measures. 
The inductive limit topology of the asymptotic equivalence relation $R_a$ is an \'etale topology and thus $R_a$ has the Haar system of counting measures. 
Let 
$$S:=C_r^*(R_s),\qquad U:=C_r^*(R_u), \qquad A:=C_r^*(R_a)$$
 denote their $C^*$-algebras, in the sense of Renault. 
These are all UCT, simple, and amenable -- 
UCT holds for the $C^*$-algebra of any amenable groupoid, by J. L. Tu's \cite[Proposition~10.7~and~Lemma~3.5]{Tu}.
Moreover, $A$ is strongly Morita equivalent to $U\otimes S$.
For more details, see \cite{PutnamSpielberg}, \cite{Putnam96SmaleSpaces}.
We should note that "the $C^*$-algebra of a tiling" in the literature usually refers to the $C^*$-algebra $U$.

The $K$-theory of $U$ is related to the \v{C}ech cohomology of $\Omega$ (cf.~\cite[Theorem~6.3]{AP}).
This cohomology can be computed in many different ways \cite{AP}, \cite{Kel-PE}, \cite{KelPut-PE}, \cite{Sadun-PE}, \cite{WaltonHom}, \cite{BargeDiamond}.
An alternative way of computing the $K$-groups of $U$ is as follows: 
The transversal equivalence relation $R_{\mathrm{punc}}$ is an \'etale equivalence relation. 
Moreover, the $C^*$-algebras $U$ and  $C^*_r(R_{\mathrm{punc}})$ are strongly Morita equivalent by Theorem 2.8 in \cite{MRW}, and hence their $K$-theories coincide.
For more details see \cite{Kel1}, \cite{Kel2}, \cite{Kel3}, \cite{KelPut}. 
One can also compute the $K$-theory of $U$ for some tilings via AF-algebras \cite{JulienSavinien}.
In general, there exists a homology theory for Smale spaces \cite{PutnamH} and a duality theory for the Ruelle algebras associated to a Smale space \cite{KPW}, but there is no general theory linking the K-theory of the stable and unstable $C^*$-algebras associated to a Smale space.
We show nonetheless in this article that the $K$-theories of $S$ and $U$ can be computed from the cohomology and homology of a single cochain complex with connecting maps.

The $K$-theory of $S$ is computed via a transversal to $R_s$ as follows: Let $T\in\Omega$ be a fixed tiling.
The equivalence relation 
\begin{equation}\label{e:R_sp} 
R_s':=\bigcup\limits_{n\in\N_0} R_n
\end{equation}
is equipped with the inductive limit topology, where 
$$  R_n:=\frac{1}{\lambda^n} R(\omega^n(T))=\frac1{\lambda^n}\{(x,y)\in \R^d\times \R^d \mid \omega^n(T)(x)-x =\omega^n(T)(y)-y\}$$
is given the subspace topology of $\R^d\times\R^d$.
Then $R'_s$ is topologically groupoid isomorphic to a transversal of $R_s$.
Since $R'_s$ is an \'etale equivalence relation, i.e.~the range and source maps are local homeomorphisms, we can define the $C^*$-algebra
$$S':=C_r^*(R'_s)$$  
in the sense of Renault.
Since $S$ is strongly Morita equivalent to $S'$, their $K$-theory coincide. For more details see Section \ref{s:dynamics}.

In this paper we only focus on tilings of the line and of the plane.
So assume that the fixed tiling $T$ is of dimension $d\in\{1,2\}$.
Although many of our definitions can be generalized immediately to higher dimensions, we will keep them in their simplest form, for the purpose of clarity.

The equivalence relation $R_0$ induces an equivalence relation on the cells.
The $R_0$-equivalence classes of the vertices, edges, and faces are called  the stable vertices, stable edges, and stable faces, respectively, and there is a finite number of them. These numbers are denoted by $sV$, $sE$, $sF$, respectively.
It is convenient to identify the stable cells with its representatives. This proves quite useful when doing explicit computations. 
We thus make the following informal definition 
\begin{defn}[Stable cells]\label{d:stablecellsT0}
If $v$ is a vertex, $e$ an edge, $f$ a face of the fixed tiling $T$ of dimension $d\le 2$, then 
\begin{itemize}
\item stable vertex: $sv:=T(v)$. 
\item stable edge: $se:=T(\open{e})$,\,\, where $\open{e}$ is the edge minus its vertices.
\item stable face: $sf:=f$.
\end{itemize}
Here $T(X)$, for $X\subset \R^d$, is the set of tiles that contain  one or more points of $X$. 
\end{defn}
\noindent
We remark that the stable vertices are collared vertices (cf.~Definition~\ref{d:collaredcells}). 

The cochain map of abelian groups with connecting maps is given as the following commutative diagram
\begin{equation}\label{e:HSdiagram}
\xymatrix{0\ar[r]&\Z^{sV}\ar[r]^{\delta^0}\ar[d]_{W_V}&\Z^{sE}\ar[r]^{\delta^1}\ar[d]_{W_E}&\Z^{sF}\ar[r]\ar[d]_{W_F}&0\\
  0\ar[r]&\Z^{sV}\ar[r]^{\delta^0}&\Z^{sE}\ar[r]^{\delta^1}&\Z^{sF}\ar[r]&0,}
\end{equation}
where the coboundary maps $\delta^0$, $\delta^1$, and the connecting maps $W_V$, $W_E$, $W_F$ are given below, right after the statement of Theorem \ref{t:KTheoryofA_intro}. 
The stable cohomology groups for $T$ are given by
 \begin{eqnarray} \label{e:Hhomology}
 H^2_S(T)&=&\lim_{\to}\xymatrix{\coker\delta^1 \ar[r]^{W_F}&\coker\delta^1\ar[r]^{W_F}&\coker\delta^1\ar[r]^{W_F}&\cdots}\\ \nonumber
 H^1_S(T)&=&\lim_{\to}\xymatrix{\frac{\ker\delta^1}{\im\delta^0}\ar[r]^{W_E}&\frac{\ker\delta^1}{\im\delta^0}\ar[r]^{W_E}&\frac{\ker\delta^1}{\im\delta^0}\ar[r]^{W_E}&\cdots}\\ \nonumber
 H^0_S(T)&=&\lim_{\to}\xymatrix{\ker\delta^0\ar[r]^{W_V}&\ker\delta^0\ar[r]^{W_V}&\ker\delta^0\ar[r]^{W_V}&\cdots}.
 \end{eqnarray}
and the stable-transpose homology groups for $T$ are given by
\begin{eqnarray}\label{e:ST-homology}
H^{ST}_2(T)&=&\lim\limits_{\to}\xymatrix{\ker ({\delta^1}^t) \ar[r]^{W_F^t}&\ker({\delta^1}^t)\ar[r]^{W_F^t}&\ker({\delta^1}^t)\ar[r]^{W_F^t}&\cdots}\\
\nonumber H^{ST}_1(T)&=&\lim\limits_{\to}\xymatrix{\frac{\ker({\delta^0}^t)}{\im({\delta^1}^t)}\ar[r]^{W_E^t}&\frac{\ker({\delta^0}^t)}{\im ({\delta^1}^t)}\ar[r]^{W_E^t}&\frac{\ker({\delta^0}^t)}{\im({\delta^1}^t)}\ar[r]^{W_E^t}&\cdots}\\
\nonumber H^{ST}_0(T)&=&\lim\limits_{\to}\xymatrix{\coker({\delta^0}^t)\ar[r]^{W_V^t}&\coker({\delta^0}^t)\ar[r]^{W_V^t}&\coker({\delta^0}^t)\ar[r]^{\,\,\,W_V^t}&\cdots}.
\end{eqnarray}
Then by Section \ref{s:preliminaries}, Section \ref{s:S} and Section \ref{s:S-U-relationship} 
we have
\begin{thm}[$K$-theory of S]\label{t:stableKtheory}
For tilings of dimension 1 or 2 the following holds
  \begin{itemize}
  \item[line:]  \qquad\qquad\qquad $K_0(S)=H^0_S(T)$ \qquad\qquad\qquad $K_1(S)=\Z$ 
  \item[plane:] \,\, $\xymatrix@C=0.6cm{0\ar[r]&\Z\ar@{^{(}->}[r]&K_0(S)\ar@{->>}[r]&H^0_S(T)\ar[r]&0}$  \qquad $K_1(S)=H^1_S(T)$.
\end{itemize}
  where the sequence is a short exact sequence. Moreover, $H^0_S(T)$ is torsion free for both dimensions.
  In particular, tilings of the line always have torsion free $K$-theory groups. 
  \end{thm}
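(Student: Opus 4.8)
\emph{Strategy.} The plan is to compute $K_*(S)$ through the \'etale model and the cellular data of the fixed tiling, and then to read off the torsion statement from the explicit form of $H^0_S(T)$. First I would use the Morita equivalence $S\sim S'$ recalled above, so that $K_*(S)=K_*(S')$. Since $R_s'=\bigcup_n R_n$ carries the inductive limit topology (cf.~\eqref{e:R_sp}), the algebra $S'=C_r^*(R_s')$ is the $C^*$-inductive limit of the $C_r^*(R_n)$ along the inclusions induced by $R_n\subset R_{n+1}$, i.e.~by $\omega$ and rescaling by $1/\lambda$. As $K$-theory is continuous, $K_*(S')=\varinjlim_n K_*(C_r^*(R_n))$, and the maps of this inductive system are exactly the connecting maps $W_V,W_E,W_F$ of \eqref{e:HSdiagram}. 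Thus it suffices to compute each $K_*(C_r^*(R_n))$ and to identify these induced maps.

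\emph{The cellular filtration.} For fixed $n$ I would stratify the unit space $\R^d$ of $R_n$ by the open cells of $\omega^n(T)$. Since $(x,y)\in R_n$ forces $x$ and $y$ to lie in cells of the same dimension, the union of the top-dimensional open cells is an open invariant set and hence yields an ideal, while the lower-dimensional cells give quotients. For the plane this produces a composition series $0\subset I_2\subset I_1\subset C_r^*(R_n)$ whose subquotients are, up to Morita equivalence, direct sums of $sF$ copies of $C_0(\R^2)$, of $sE$ copies of $C_0(\R)$, and of $sV$ copies of $\C$ (the open faces, edges and vertices grouped by stable type, the infinite multiplicity inside each type contributing only a factor of $\mathcal K$). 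By Bott periodicity their $K$-theory is $K_0=\Z^{sF},0,\Z^{sV}$ and $K_1=0,\Z^{sE},0$. Writing out the two six-term exact sequences, the connecting (index) maps are precisely the cellular coboundaries $\delta^1\colon\Z^{sE}\to\Z^{sF}$ and $\delta^0\colon\Z^{sV}\to\Z^{sE}$, giving
\[ K_1(C_r^*(R_n))=\frac{\ker\delta^1}{\im\delta^0},\qquad 0\to\coker\delta^1\to K_0(C_r^*(R_n))\to\ker\delta^0\to0. \]
For the line the same argument with only edges and vertices yields $K_0=\ker\delta^0$ and $K_1=\coker\delta^0$.

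\emph{Passing to the limit and the distinguished $\Z$.} Taking the direct limit over $n$ and using that $\varinjlim$ is exact, together with the identification of the system maps with $W_V,W_E,W_F$, the limit of $\ker\delta^0$ is $H^0_S(T)$, the limit of $\ker\delta^1/\im\delta^0$ is $H^1_S(T)$, and the limit of $\coker$ in top degree is $H^d_S(T)$ (cf.~\eqref{e:Hhomology}). For the plane this gives $K_1(S)=H^1_S(T)$ and $0\to H^2_S(T)\to K_0(S)\to H^0_S(T)\to0$, and for the line $K_0(S)=H^0_S(T)$, $K_1(S)=H^1_S(T)$. It remains to identify the top group $H^d_S(T)$ with $\Z$: the top-degree class originates from the Bott generator of $K_*(C_0(\R^d))$ carried by the top cells, and modulo the boundary relations it collapses to the single fundamental class of $\R^d$, which the substitution preserves; hence the connecting map acts as an isomorphism on a rank-one group and $H^d_S(T)=\Z$. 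This yields $K_1(S)=\Z$ for the line and the left-hand $\Z$ of the plane sequence. I expect this last identification, together with the verification that the six-term connecting maps are exactly $\delta^0,\delta^1$, to be the main obstacle, since it requires controlling how the cells are glued and how the Bott class transforms under $\omega$.

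\emph{Torsion-freeness.} Finally, the group $\ker\delta^0$ is a subgroup of the free abelian group $\Z^{sV}$, hence free and in particular torsion free; and a direct limit of torsion-free abelian groups is torsion free, because any torsion element of the limit is represented at a finite stage where it must already vanish. Therefore $H^0_S(T)=\varinjlim(\ker\delta^0,W_V)$ is torsion free in both dimensions. For the line this makes $K_0(S)=H^0_S(T)$ and $K_1(S)=\Z$ both torsion free, proving the final assertion. For the plane, $K_0(S)$ is an extension of the torsion-free group $H^0_S(T)$ by $\Z$, and an extension of a torsion-free group by a torsion-free group is again torsion free, so that only $K_1(S)=H^1_S(T)$ can contain torsion.
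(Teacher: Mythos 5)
Your overall architecture is the same as the paper's (Morita equivalence $S\sim S'$, $S'=\varinjlim C_r^*(R_n)$, skeletal filtration of each $R_n$, six-term sequences giving $K_1(A_n)=\ker\delta^1/\im\delta^0$ and $0\to\coker\delta^1\to K_0(A_n)\to\ker\delta^0\to 0$, then pass to the limit). But there is a genuine gap at the step you assert in one line: ``the maps of this inductive system are exactly the connecting maps $W_V,W_E,W_F$.'' The difficulty is not merely ``controlling how the Bott class transforms,'' as you flag at the end; it is that the inclusion $\iota_n\colon A_n\hookrightarrow A_{n+1}$ does \emph{not} respect the skeletal filtration, so the naturality argument your identification implicitly requires cannot even be set up. Concretely, $I_0=C_r^*(R_0|_{X_2^{T_0}-X_1^{T_0}})$ consists of functions vanishing on pairs of points of the $1$-skeleton of $T_0$, but the $1$-skeleton of $T_1$ is strictly larger (it subdivides that of $T_0$), so $\iota_0(I_0)\not\subset I_1$, and likewise the squares involving the quotients $B_n$, $C_n$ fail to commute. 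This is exactly the point of Remark \ref{r:connectingmaps}. The paper's resolution is the central technical content of Section \ref{s:S}: one constructs a cellular homotopy $h_s$ on the prototiles (Definition \ref{d:homotopy}), shows it induces a homotopy of $*$-homomorphisms $\phi_s$ on $C^*(R_0)$ with $\phi_0=\mathrm{id}$ (Lemmas \ref{l:phi_s-welldefined}--\ref{l:phi-s-star-homo}, Proposition \ref{p:phi-star-homo}; the $*$-homomorphism property at $s=1$ is delicate and uses the special features of $h_1$), and then replaces $\iota_0$ by the homotopic map $\iota_0\circ\phi_1$, which \emph{does} respect the filtration (Lemma \ref{l:connecting-maps}) while inducing the same maps on $K$-theory. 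Only then do the maps $W_V=K_0(\gamma)$, $W_E=K_1(\beta')$, $W_F=K_0(\alpha')$ make sense, and indeed their very definitions involve $h_1$ (cells $\sigma'$ with $h_1(\tfrac1\lambda\sigma')=\sigma$), not the substitution $\omega$ alone; without the homotopy you do not even have a candidate formula to verify. Your proposal, as written, would stall at this point.

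Two smaller remarks. First, your identification of the top group $H^d_S(T)$ with $\Z$ needs the connectedness argument of Lemma \ref{l:imDelta1isZ} (all faces become identified in $\coker\delta^1$) together with the fact that $W_F$ induces the identity on this $\Z$ (end of Theorem \ref{t:W-VEF}); your ``fundamental class'' sketch is the right idea but should be pinned down this way. Second, your torsion-freeness argument is correct and is actually more elementary than the paper's: $\ker\delta^0\subset\Z^{sV}$ is free, and a direct limit of torsion-free abelian groups is torsion free, whereas the paper obtains this as a consequence of its UCT/K\"unneth analysis in Corollary \ref{c:torsionmovement}; similarly your observation that an extension of torsion-free by torsion-free is torsion free correctly isolates $K_1(S)$ as the only group that can carry torsion in the plane case.
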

    
\begin{thm}[$K$-theory of $U$]\label{t:unstableKtheory}
For tilings of the plane with convex tiles, or tilings of the line, the following holds.
\begin{itemize}
 \item[line:] \,\, $K_0(U)=H^{ST}_0(T)$ \qquad\quad $K_1(U)=\Z$ 
 \item[plane:] \,\, $K_0(U)=\Z\oplus H^{ST}_0(T)$ \quad   $K_1(U)=H^{ST}_1(T)$.
\end{itemize}
Moreover, $H^{ST}_1(T)$ is torsion free for both dimensions.
For tilings of the line $H^{ST}_0(T)$ is torsion free.
Thus tilings of the line always have torsion free $K$-theory groups.
\end{thm}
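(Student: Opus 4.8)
The plan is to identify $K_*(U)$ with the Čech cohomology of $\Omega$ and then repackage that cohomology, via the stable/unstable duality, as the stable-transpose homology $H^{ST}_*(T)$. First I would combine the two facts already recorded: $U$ is strongly Morita equivalent to the étale algebra $C^*_r(R_{\mathrm{punc}})$, whose $K$-theory is computed by $\check{H}^*(\Omega)$ (Anderson--Putnam, Theorem~6.3). Since $R_u$ is the translation groupoid, Connes' Thom isomorphism produces a degree shift by $d$, giving $K_i(U)\cong K^{i+d}(\Omega)$; and for $d\le 2$ the Atiyah--Hirzebruch spectral sequence collapses at $E_2$ because $\check{H}^m(\Omega)=0$ for $m>d$, so every higher differential lands in a zero group. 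Hence, as groups, $K_i(U)\cong\bigoplus_m\check{H}^m(\Omega)$ with the sum taken over $m\equiv i+d\pmod 2$. As $\Omega$ is connected, $\check{H}^0(\Omega)=\Z$ is a free graded piece, so the corresponding extension splits off a copy of $\Z$: this is the visible summand in $K_0(U)$ for the plane and the whole of $K_1(U)=\Z$ for the line. (The freeness of $\Z$ is exactly why this extension splits, in contrast with the non-split extension appearing for $S$.)

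Next I would realize $\check{H}^*(\Omega)$ through the Anderson--Putnam inverse limit $\Omega=\varprojlim(\Gamma,\gamma)$, where $\Gamma$ is the complex of collared (stable) cells whose cellular cochain complex is precisely \eqref{e:HSdiagram}; continuity of Čech cohomology then gives $\check{H}^m(\Omega)=\varinjlim(H^m(\Gamma),\gamma^*)$. The transpose of \eqref{e:HSdiagram} is exactly the cellular chain complex of $\Gamma$, with the substitution acting by the transpose matrices $W_V^t,W_E^t,W_F^t$, so that $\varinjlim(H_k(\Gamma),W^t)=H^{ST}_k(T)$ by definition \eqref{e:ST-homology}. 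The task is therefore to match $\varinjlim(H^{d-k}(\Gamma),\gamma^*)$ with $H^{ST}_k(T)$. This is the content of the stable/unstable duality of Section~\ref{s:S-U-relationship}, which also underlies the passage between Theorem~\ref{t:stableKtheory} and the present statement: cohomology of \eqref{e:HSdiagram} with the forward maps computes $K_*(S)$, while homology of its transpose with the backward maps computes $K_*(U)$. The convexity hypothesis enters here, ensuring that for the plane $\Gamma$ is a closed $2$-pseudomanifold carrying the Poincaré-type duality needed to intertwine $\gamma^*$ with $W^t$; for the line $\Gamma$ is one-dimensional and the identification is immediate. Tracking the degree shift yields $K_0(U)=\bigoplus_{k\text{ even}}H^{ST}_k(T)$ and $K_1(U)=\bigoplus_{k\text{ odd}}H^{ST}_k(T)$, which are the asserted formulas once one notes $H^{ST}_2(T)=\check{H}^0(\Omega)=\Z$ for the plane and $H^{ST}_1(T)=\check{H}^0(\Omega)=\Z$ for the line. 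Making this duality hold integrally rather than merely rationally, and isolating the precise role of convexity, is the step I expect to be the main obstacle.

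Finally, the torsion assertions follow from a single clean fact once the formulas are in place: $\check{H}^1(\Omega)$ is torsion free. Indeed $\check{H}^1(\Omega)\cong[\Omega,S^1]$, and if $f\colon\Omega\to S^1$ has $f^n\simeq\mathrm{const}$, then $f^n$ lifts through $\R$; dividing $f$ by an $n$-th root of that lift leaves an $n$-th-root-of-unity-valued map, which is constant because $\Omega$ is connected, whence $f\simeq\mathrm{const}$. So $[\Omega,S^1]$ has no torsion. For the plane this gives $K_1(U)=H^{ST}_1(T)=\check{H}^1(\Omega)$ torsion free, while any torsion of $K_0(U)$ is confined to $H^{ST}_0(T)=\check{H}^2(\Omega)$. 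For the line $K_0(U)=H^{ST}_0(T)=\check{H}^1(\Omega)$ is torsion free and $K_1(U)=\Z$, so every $K$-group of a one-dimensional tiling is torsion free, as claimed.
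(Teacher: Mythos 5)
Your overall route coincides with the paper's: first the Anderson--Putnam computation (Theorem~\ref{t:unstableKtheory-AP}) expressing $K_0(U)$, $K_1(U)$ through $\breve H^*(\Omega)$, then an identification $H^{ST}_k(T)\cong\breve H^{d-k}(\Omega)$, then the torsion claims. Your first paragraph is a correct sketch of the cited AP result (Connes' Thom isomorphism plus collapse of the spectral sequence in low dimension), and your last paragraph is a correct and genuinely different proof of the torsion statements: the paper obtains them algebraically from the universal coefficient theorem (Theorem~\ref{t:UCT}, Corollary~\ref{c:torsionmovement}), while you use torsion-freeness of $[\Omega,S^1]\cong\breve H^1(\Omega)$; either works once the middle identification is established.

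The genuine gap is in that middle identification, i.e.\ your second paragraph. You assert that the cellular chain complex of the Anderson--Putnam complex $\Gamma$ is ``exactly'' the transpose of the stable diagram, Eq.~(\ref{e:HSdiagram}). It is not. $\Gamma$ is built from \emph{collared} cells, and its cochain complex with connecting maps is Eq.~(\ref{e:HUdiagram}): ranks $cV,cE,cF$, differentials $\partial_1^t,\partial_2^t$, substitution matrices $\omega_V^t,\omega_E^t,\omega_F^t$. The stable diagram Eq.~(\ref{e:HSdiagram}) has ranks $sV,\sE,\sF$ and connecting maps $W_V,W_E,W_F$ defined through the homotopy $h_1$, which has no counterpart in $\Gamma$. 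Only the vertices agree: a stable edge $T(\open{e})$ and a collared edge $T(e)$ are different patches (the latter contains tiles meeting only the endpoints of $e$), and already for the Fibonacci tiling $\sE=2$ while $cE=4$. Hence continuity of \v{C}ech cohomology along $\Omega=\lim\limits_{\leftarrow}\Gamma$ yields the direct limits of Eq.~(\ref{e:Hcohomology}), and says nothing directly about the transpose of Eq.~(\ref{e:HSdiagram}). The duality you then invoke also cannot live where you place it: even if a Poincar\'e-type duality held on the branched quotient complex $\Gamma$ (which is not automatic for pseudomanifolds integrally), it would only relate $H^k(\Gamma)$ to $H_{2-k}(\Gamma)$ of the \emph{same} collared complex, intertwined with $\omega^t$; it would never manufacture the maps $W^t$. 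The paper's actual bridge is the machinery of Section~\ref{s:S-U-relationship}: realize $C^{ST}_\bullet$ as the pattern-equivariant chains $C_\bullet(T,R_0)$ (Lemma~\ref{l:CST-C0}), factor $W^\bullet=r^\bullet\circ s^\bullet$ through the homotopy section (Lemma~\ref{l:W=rs}), deduce $H_k({W^\bullet}^t)=H_k(q_\bullet)^{-1}H_k(i_\bullet)$ (Lemma~\ref{l:PE-ST}), and then apply Walton's quasi-isomorphism theorems together with the duality between PE-homology and PE-cohomology on $\R^d$ itself --- a manifold, where duality is available --- and the identification of PE-cohomology with $\breve H^*(\Omega)$, giving Theorem~\ref{t:PE-ST}. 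Convexity of the prototiles is what that machinery requires (see the remark following Theorem~\ref{t:PE-ST}); it is not there to make $\Gamma$ a closed pseudomanifold. Without this chain of comparisons (or an equivalent substitute relating the stable complex to the collared one), your plan does not close.
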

It is worth noting that in the absence of torsion, e.g.~for tilings of dimension 1, $H_S^k(T)$ and $H_k^{ST}(T)$ are given alone by the direct limit of a single matrix and its transpose! (cf.~Corollary \ref{c:A-At}).

Since $S$ is $UCT$, and $A$ is  strongly Morita equivalent to $U\otimes S$, we can use the K\"unneth formula to express the $K$-theory of $A$ in terms of the $K$-theory of $U$ and $S$ (cf.~Section \ref{s:A})
\begin{thm}[$K$-theory of $A$]\label{t:KTheoryofA_intro} For tilings of dimension 1 or 2, the following holds
\begin{itemize}
\item [line:] 
  $K_0(A)=\!\!\Big(\!K_0(S)\otimes K_0(U)\Big) \oplus \Z\qquad$\\
  $K_1(A)=\,K_0(S) \oplus K_0(U)$ 
\item [plane:] If $K_i(S)$ and $K_i(U)$, $i=1,2$ are torsion free then\\
  $K_0(A)=\Big(K_0(S)\otimes K_0(U)\Big) \oplus \Big(K_1(S)\otimes K_1(U)\Big)$\\
  $K_1(A)=\Big(K_0(S)\otimes K_1(U)\Big) \oplus \Big(K_1(S)\otimes K_0(U)\Big).$
\end{itemize}
\end{thm}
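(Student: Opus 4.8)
The plan is to reduce the computation of $K_*(A)$ to a purely algebraic manipulation, leaning on the two structural facts recalled in the excerpt: the strong Morita equivalence between $A$ and $U\otimes S$, and the fact that $S$ is UCT. Since strong Morita equivalence preserves $K$-theory, the first step is to replace $K_*(A)$ by $K_*(U\otimes S)$. The second step is to invoke the Künneth short exact sequence of Schochet: because $S$ is separable, nuclear, and satisfies the UCT, it lies in the bootstrap class, so for each $n\in\{0,1\}$ there is a (non-naturally) split short exact sequence
\[
0 \to \bigoplus_{i+j\equiv n} K_i(S)\otimes K_j(U) \to K_n(U\otimes S) \to \bigoplus_{i+j\equiv n+1}\mathrm{Tor}\big(K_i(S),K_j(U)\big)\to 0,
\]
where the indices are read modulo $2$.

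Next I would eliminate the $\mathrm{Tor}$ terms. For tilings of the plane this is exactly where the torsion-freeness hypothesis enters: since $\mathrm{Tor}(G,H)=0$ whenever $G$ or $H$ is torsion free, the assumption that all of $K_i(S),K_i(U)$ are torsion free forces the right-hand group to vanish, so the sequence collapses to the isomorphism $K_n(U\otimes S)\cong\bigoplus_{i+j\equiv n}K_i(S)\otimes K_j(U)$. Writing out the two summands for $n=0$ and $n=1$ reproduces the stated plane formulas directly.

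For tilings of the line I would feed in the explicit values from Theorems \ref{t:stableKtheory} and \ref{t:unstableKtheory}: $K_1(S)=\Z=K_1(U)$, while $K_0(S)=H^0_S(T)$ and $K_0(U)=H^{ST}_0(T)$ are torsion free. All four groups are then torsion free, the $\mathrm{Tor}$ terms again vanish, and the Künneth isomorphism applies. Substituting these values and using $G\otimes\Z\cong G$ gives $K_0(A)\cong\big(K_0(S)\otimes K_0(U)\big)\oplus(\Z\otimes\Z)=\big(K_0(S)\otimes K_0(U)\big)\oplus\Z$ and $K_1(A)\cong\big(K_0(S)\otimes\Z\big)\oplus\big(\Z\otimes K_0(U)\big)=K_0(S)\oplus K_0(U)$, as claimed.

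Once the two structural inputs are granted, the argument is essentially bookkeeping, so there is no deep obstacle in the computation itself. The only points requiring genuine care are, first, the justification that the Künneth formula is actually available — that is, confirming $S$ truly belongs to the bootstrap category and not merely that the UCT holds in some weaker formal sense — and, second, recognizing that the torsion-freeness hypothesis (proved in the line case, assumed in the plane case) is precisely what kills the $\mathrm{Tor}$ correction term. Without it one is left only with the split Künneth sequence, which no longer yields a clean tensor-product formula, and this is why the plane statement must carry the torsion-free assumption explicitly.
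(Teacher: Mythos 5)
Your proposal is correct and follows essentially the same route as the paper: the paper likewise passes from $K_*(A)$ to $K_*(S\otimes U)$ via Putnam's strong Morita equivalence, invokes the K\"unneth theorem of \cite[Theorem~23.1.3]{Blackadar} on the grounds that $S$ is UCT (established in the paper by showing each $C_r^*(R_n)$ is type I, hence $S'$ and $S$ lie in the bootstrap class), kills the $\mathrm{Tor}$ terms by torsion-freeness, and substitutes $K_1(S)=K_1(U)=\Z$ in the line case. If anything, your write-up separates the two roles of the hypotheses (bootstrap class gives the K\"unneth sequence; torsion-freeness collapses it) slightly more cleanly than the paper's own phrasing, and your remark about nuclearity quietly handles the paper's observation that one need not distinguish $\otimes_{\max}$ from $\otimes_{\min}$.
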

\noindent

To define the coboundary maps $\delta^0$, $\delta^1$ we need to assume 
that the stable faces and stable edges have orientation, e.g.~one can put counterclockwise orientation on the faces.
By translation, all cells of $T$ have orientation.
For vertex $v$, edge $e$, face $f$ of $T$ let
\begin{equation}\label{e:deltaev}
\delta_{e,v}:=\left\{
                      \begin{array}{ll}
                        -1, & \text{$v$ is the initial vertex of $e$} \\
                        1, & \text{$v$ is the final vertex of $e$}\\
                        0, & \text{else}
                      \end{array}
                    \right.
\end{equation}
\begin{equation}\label{e:deltafe}
\delta_{f,e}:=\left\{
                      \begin{array}{ll}
                        1, & e\in \partial f \text{ same orient.} \\
                        -1, & e\in \partial f \text{ opps. orient.} \\
                        0, & \text{else}
                      \end{array}
                    \right.
\end{equation}                    
\noindent
Then $\delta^0: \Z^{sV}\to\Z^{sE}$ is given by
\begin{equation}\label{e:delta0-intro}
\delta^0([v]):=\sum_{e\in T(v)} \delta_{e,v}\,\, [e]
\end{equation}
and $\delta^1:\Z^{sE}\to\Z^{sF}$ is given by
\begin{equation}\label{e:delta1-intro}
\qquad\qquad\qquad\quad\delta^1([e]):=\sum_{f\in T(\open{e}) } \delta_{f,e}\,\,\, [f]\,\,=\,\,[f_1]-[f_2],
\end{equation}
where $f_1$ and $f_2$ are the faces in $T$ that contain the edge $e$, and such that $e$ and the corresponding edge in $f_1$ have same orientation.

To define the connecting maps, we need a homotopy $h$ on the prototiles which extends to $\R^d$ by translation of the prototiles (cf.~Definition \ref{d:homotopy}). For edges $e\in T$, $e'\in\omega(T)$, let
$$\delta_{e,e'}:=\left\{
                      \begin{array}{ll}
                        1, & h_1(\frac1{\lambda}e')=e \text{ (matching orientation)} \\
                        -1, & h_1(\frac1{\lambda}e')=e \text{ (opposite orientation)} \\
                        0, & \text{else}
                      \end{array}
                    \right.
$$                         
Then the connecting maps, which are also named substitution-homotopy matrices, are given by
\begin{eqnarray*}
W_F([f])&:=&\sum_{\substack{f'\in\omega(f)\\ h_1(\frac1{\lambda}f')= f\\ \text{$ $}}} [f']\\
W_E([e])&:=&\sum_{\substack{e'\in\omega(T(\open{e}))\\ h_1(\frac1{\lambda}e')=e\\ \text{$ $}}} \delta_{e,e'}\,\, [e']\\
W_V([v])&:=&\sum_{\substack{v'\in\omega(T(v))\\ h_1(\frac1{\lambda}v')=v}} [v'].
\end{eqnarray*}
Compare these formulas with their unstable counterparts given in Section \ref{s:preliminaries}.
(The reason that we use the letter $W$ is that it looks similar to $\omega$).

In Section \ref{s:examples} we computed the $K$-theory of a number of tilings of dimension 1 and dimension 2 using the above formulas. The results are listed in
Tables \ref{table:K0groups-d1}, \ref{table:K1groups-d1}, \ref{table:K0groups-d2}, \ref{table:K1groups-d2}. 
It is interesting to note that in all of our examples of dimension 1, we had the equality $K_i(U)=K_i(S)$, $i=0,1$.
Some of these groups are direct sums of groups of the form $\Z[\frac{1}{\lambda}]$,  $\Z[\frac{1}{\det A}]$, where $\lambda$ denotes the eigenvalues of a matrix $A$ coming from the substitution matrices.
But we also include one example, namely, the tiling that we named Pathologic, whose stable and unstable $K_0$-groups are rank-3 subgroups of $\Z\oplus\Z[1/17]^2$.
Here it is impossible to write the rank-2 subgroup of $\Z[1/17]^2$ nontrivially as a direct sum! Yet those two $K_0$-groups are isomorphic.
For tilings of the plane, it is worth noting that the torsion of the stable and unstable $K$-groups occurred precisely in the groups predicted by Corollary \ref{c:torsionmovement}.
 
We are in the process of extending this article to higher dimensions using spectral sequences. 
It is also our hope that we can remove the homotopy.

The paper is organized in the following way: In the next section we summarize a description of the tiling space as an inverse limit using an alternative definition of the so-called collared tiles. Moreover, we present some remarks on cohomology. In Section \ref{s:dynamics}, we prove that the stable equivalence relation $R_s$ has a transversal which is isomorphic to the inductive equivalence relation $R'_s$ on $\R^d$.
In Section \ref{s:S}, we introduce the $C^*$-algebras $C^*(R_n)$, and calculate their $K$-theory groups in terms of the compacts.
Then we introduce a homotopy of the tiling and use it to construct connecting maps which we show are $*$-homomorphisms. 
We calculate the generators of the $K$-theory groups and formulas for the connecting maps in $K$-theory, and the $K$-theory of $S$.
In Section \ref{s:S-U-relationship}, we show a relationship between the $K$-theory of the stable $C^*$-algebra $S$ (as constructed in this paper) and the $K$-theory of the unstable $C^*$-algebra $U$ via \v{C}ech cohomology, PE-cohomology, PE-homology, 
stable-transpose homology and stable homology, plus the homotopy introduced in Section \ref{s:S}. Moreover, we use a universal coefficient theorem to derive properties of the stable cohomology and the stable-transpose homology.
In Section \ref{s:A} we investigate the asymptotic $C^*$-algebra $A$ and show that the stable $C^*$-algebra $S$ is amenable.
In Section \ref{s:examples} we give explicit calculations of the $K$-theory for a number of tilings of the line and of the plane.
In Appendix \ref{a.s:direct-limits-AG}, we provide methods for calculating the direct limits of automorphisms of finitely generated free abelian groups. In particular, we provide formulas to compute the kernel, cokernel, and homology of integer matrices. 
Such formulas are based on the Smith normal form of integer matrices, and they can easily be implemented for example in Mathematica.
For convenience to the reader we provide in the rest of the appendix some background results on topological spaces and $C^*$-algebras, which we assume are well-known.

This paper is a continuation of the work initiated by the first author in the papers \cite{Gon1}, \cite{Gon2}.
On the one hand, we provide detailed proofs not found in those papers, for example that $R'_s$ is isomorphic to a transversal of $R_s$.
We furthermore give a solid foundation to the concept of a homotopy defined on a tiling, which can be used to define $*$-homomorphisms called connecting maps. These connecting maps are used to give explicit formulas for the $K$-theory of the stable $C^*$-algebra.
We should note that the idea of the homotopy was borrowed from an example in \cite{Gon2}.
Of course, we avoid repeating results from \cite{Gon1}, \cite{Gon2}, but in a few instances we provide alternative short proofs using machinery found in the literature for the full $C^*$-algebra (instead of the reduced $C^*$-algebra as was done in \cite{Gon1}, \cite{Gon2}).  
Moreover, we work mostly with connected components of $R_n$, instead of working directly with $R_n$, in order to make the proofs simpler.

On the other hand, we show the results mentioned in the introduction such as a relationship between the $K$-theories of the stable and unstable $C^*$-algebras, and we provide interesting examples and formulas to compute the $K$-theory groups of the $C^*$-algebras $S$, $U$ and $A$. 

\section{\textbf{Preliminaries}}\label{s:preliminaries}
We start with some remarks on the definition of a tiling $T$ of $\R^d$.
On the one hand, $T$ is a CW-complex with underlying space $\R^d$, such that the closed $d$-cells are homeomorphic to the closed unit disk.
The closed $d$-cells are called tiles, and we allow the possibility that our tiles carry labels.
On the other hand, we will consider $T$ to be the collection of its tiles, which is a cover of $\R^d$ satisfying that two tiles can only intersect on their boundaries.
(cf.~\cite[p.~6]{AP}).
More generally, we consider the closed $k$-cells of $T$ as a cover of the $k$-skeleton of $T$, where $k=0,\ldots,d$.
We will always assume that the cells of $T$ are closed.

The tiling space $\Omega$ is the set of all the tilings whose tiles are translated copies of a finite (fixed) set of prototiles $t_1,\ldots,t_N$, such that
if $T\in\Omega$, and $P\subset T$ is a patch (i.e.~a finite subset) then $P$ is contained in a translated copy of $\omega^k(t_j)$ for some $k\in\N$, $j\in\{1,\ldots,N\}$.
Since the equivalence class $[T]_{R_u}$ is dense in $\Omega$, it holds as well that
$$\Omega=\overline{T+\R^d}^{d}$$
 i.e.~$\Omega$ is the completion under the metric $d$ of the set of all the translations of $T$.
Moreover, one can also write the tiling space as an inverse limit
\begin{equation}\label{e:inverselimit}
\Omega = \lim\limits_{\leftarrow} \xymatrix{\ar@{<<-}[r]\Gamma&\ar@{<<-}[r]\Gamma&\ar@{<<-}[r]\Gamma&\cdots},
\end{equation}
where $\Gamma:=\frac{\Omega\times \R^d}{R_c}$, with $R_c$ being given as follows:
\begin{defn}[Collared equivalence relation $R_c$]
Two pointed tilings $(T,x)$, $(T',x')\in \Omega\times \R^d$ are said to be collared equivalent (or $R_c$-equivalent) if 
 $$T(\sigma) -x = T'(\sigma') - x',$$
 where $\sigma\in T$ (resp.~$\sigma'\in T'$) is the (necessarily unique) closed cell whose interior contains $x$ (resp.~$x'$).
\end{defn}
We remark that by a pointed tiling we just mean a pair consisting of a tiling and a point. 
It is easy to see that $R_c$ is indeed an equivalence relation, and so there is no need to take the transitive closure.  
All the proofs in \cite[Section~4]{AP} carry out almost immediately using $R_c$ instead of the relation $\sim_1$ defined in that section, and thus 
Eq.~(\ref{e:inverselimit}) holds (using $R_c$ instead of $\sim_1$).
We should remark that our definition of $R_c$ is equivalent to the one in \cite[Definition~2.2]{Gahler} and they also show Eq.~(\ref{e:inverselimit}) using their definition.
Since  \v{C}ech cohomology behaves well under inverse limits we get
\begin{equation}\label{e:Hinverselimit}
\breve{H}(\Omega) = \lim\limits_{\rightarrow} \xymatrix{\ar[r]\breve{H}^{k}(\Gamma)&\ar[r]\breve{H}^{k}(\Gamma)&\ar[r]\breve{H}^{k}(\Gamma)&\cdots},
\end{equation}
and since $\Gamma$ is a finite CW-complex, $\breve{H}^{k}(\Gamma)=H_{\mathrm{cell}}^k(\Gamma)$, and cellular cohomology is much easier to compute.

The equivalence relation $R_c$ induces an equivalence relation on the cells.
The $R_c$-equivalence classes of the vertices, edges, and faces are called  the collared vertices, collared edges, and collared faces, respectively, and there is a finite number of them. These numbers are denoted by $cV$, $cE$, $cF$, respectively. 
We make though the following informal definition, just as we did for their stable counterparts.
\begin{defn}[Collared cells]\label{d:collaredcells}
If $v$ is a vertex, $e$ an edge, $f$ a face of a tiling $T$, then 
\begin{itemize}
\item collared vertex: $cv := T(v)$.\\ (i.e.~$cv$ is the set of tiles that contain the vertex $v$).
\item collared edge: $ce := T(e)$.\\ (i.e.~$ce$ is the set of tiles that contain the edge $e$ or its vertices). 
\item collared face: $cf := T(f)$.\\ (i.e.~$cf$ is the set of tiles that share an edge or vertex with the tile $f$).
\end{itemize}
\end{defn}
Using that $\Gamma$ is a finite CW-complex, we can define the following commutative diagram
\begin{equation}\label{e:HUdiagram}
\xymatrix{0\ar[r]&\Z^{cV}\ar[r]^{\partial_1^t}\ar[d]_{\omega_V^t}&\Z^{cE}\ar[r]^{\partial_2^t}\ar[d]_{\omega_E^t}&\Z^{cF}\ar[r]\ar[d]_{\omega_F^t}&0\\
  0\ar[r]&\Z^{cV}\ar[r]^{\partial_1^t}&\Z^{cE}\ar[r]^{\partial_2^t}&\Z^{cF}\ar[r]&0}
\end{equation}
where the exponent $t$ stands for the transpose, the boundary maps $\partial_1$, $\partial_2$ are the standard cellular boundary maps,  and the substitution matrices $\omega_V$, $\omega_E$, $\omega_F$ are given below.
The \v{C}ech cohomology groups are given by
\begin{eqnarray}\label{e:Hcohomology}
\breve H^2(\Omega)&=&\lim_{\to} \xymatrix{\coker\partial_1^t \ar[r]^{\omega_F^t}&\coker\partial_1^t\ar[r]^{\omega_F^t}&\coker\partial_1^t\ar[r]^{\,\,\,\omega_F^t}&\cdots}\\ \nonumber
\breve H^1(\Omega)&=&\lim_{\to}\xymatrix{\frac{\ker\partial_2^t}{\im\partial_1^t}\ar[r]^{\omega_E^t}&\frac{\ker\partial_2^t}{\im\partial_1^t}\ar[r]^{\omega_E^t}&\frac{\ker\partial_2^t}{\im\partial_1^t}\ar[r]^{\omega_E^t}&\cdots}\\ \nonumber
\breve H^0(\Omega)&=& \xymatrix{\Z}.
\end{eqnarray}
By Theorems 6.1, 6.3, 7.1 in \cite{AP} we get 
\begin{thm}[$K$-theory of $U$]\label{t:unstableKtheory-AP}
For tilings of dimension 1 or 2, the following holds.
\begin{itemize}
  \item[line:]  $K_0(U)=\breve H^1(\Omega)$ \qquad \quad $K_1(U)=\Z$ 
  \item[plane:] $K_0(U)=\Z\oplus \breve H^2(\Omega)$ \quad $K_1(U)=\breve H^1(\Omega)$.
\end{itemize}
\end{thm}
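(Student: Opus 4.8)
The plan is to compute $K_*(U)$ by realizing $U$ as a crossed product, applying the Connes Thom isomorphism to pass to the topological $K$-theory of the hull $\Omega$, and finally identifying that topological $K$-theory with \v{C}ech cohomology through the Chern character, exploiting that $\Omega$ is an inverse limit of finite CW-complexes of dimension at most $2$. First I would invoke the characterization of $R_u$: since $T\sim_{R_u}T'$ exactly when $T'=T+x$, the relation $R_u$ is the orbit equivalence relation of the free translation action of $\R^d$ on $\Omega$, freeness being guaranteed by aperiodicity. Hence $U=C^*_r(R_u)$ is strongly Morita equivalent to the transformation-groupoid algebra $C(\Omega)\rtimes\R^d$ (consistent with, and reachable through, the étale transversal $R_{\mathrm{punc}}$ already used in the text); amenability of $\R^d$ makes reduced and full crossed products agree.

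Next I would apply the Connes Thom isomorphism $d$ times to obtain $K_i(C(\Omega)\rtimes\R^d)\cong K_{i+d}(C(\Omega))=K^{i+d}(\Omega)$, where the right-hand side is topological $K$-theory and the superscript is read modulo $2$ by Bott periodicity. Tracking the degree shift gives, for the line ($d=1$), $K_0(U)\cong K^1(\Omega)$ and $K_1(U)\cong K^0(\Omega)$, and for the plane ($d=2$), $K_0(U)\cong K^0(\Omega)$ and $K_1(U)\cong K^1(\Omega)$.

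It then remains to identify $K^*(\Omega)$ with \v{C}ech cohomology. By continuity of $K$-theory under inverse limits, the presentation (\ref{e:inverselimit}) gives $K^*(\Omega)=\lim\limits_{\to}K^*(\Gamma)$ with connecting maps induced by the substitution $\omega$, paralleling (\ref{e:Hinverselimit}). For the finite CW-complex $\Gamma$ of dimension $\le 2$ I would run the Atiyah--Hirzebruch spectral sequence $E_2^{p,q}=H^p(\Gamma;K^q(\mathrm{pt}))\Rightarrow K^{p+q}(\Gamma)$; all differentials vanish for parity and dimension reasons (the first potentially nonzero differential $d_3$ lands outside the range $0\le p\le 2$), so it degenerates at $E_2$. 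Thus $K^1(\Gamma)=H^1(\Gamma)$, while $K^0(\Gamma)$ sits in $0\to H^2(\Gamma)\to K^0(\Gamma)\to H^0(\Gamma)\to 0$, which splits since $H^0(\Gamma)$ is free. Passing to the limit, using $\breve H^0(\Omega)=\Z$ (connectedness) and the identification $\breve H^k(\Gamma)=H^k_{\mathrm{cell}}(\Gamma)$ already noted, yields $K^0(\Omega)=\Z\oplus\breve H^2(\Omega)$ and $K^1(\Omega)=\breve H^1(\Omega)$ (with $\breve H^2(\Omega)=0$ when $d=1$); substituting into the degree bookkeeping above reproduces the four stated formulas and matches (\ref{e:Hcohomology}).

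The main obstacle I anticipate is not any single isomorphism but the \emph{naturality} needed to thread them all through the inverse limit: one must verify that the Morita equivalence, the Connes Thom map, and the Chern character are sufficiently functorial in $\omega$ so that the connecting maps $\omega_E^t,\omega_F^t$ in (\ref{e:Hcohomology}) are exactly the maps induced on cohomology by the substitution. A secondary subtlety is the splitting of the $K^0$ extension for the plane: although freeness of $H^0(\Gamma)$ splits it at each finite stage, one should confirm the splitting is compatible with the bonding maps so that the distinguished $\Z$ summand survives in the limit, producing the $\Z$ in $K_0(U)$. All of this is carried out in \cite{AP} (Theorems~6.1,~6.3,~7.1).
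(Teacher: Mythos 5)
Your proposal is correct and takes essentially the same route as the paper: the paper's entire proof of this theorem is the citation to Theorems 6.1, 6.3 and 7.1 of \cite{AP}, and your outline (realizing $U$ via the free translation action as a crossed product $C(\Omega)\rtimes\R^d$, applying Connes' Thom isomorphism $d$ times, and computing $K^*(\Omega)$ through the inverse limit of the complexes $\Gamma$ with the degenerate Atiyah--Hirzebruch spectral sequence) is precisely the argument carried out there. The naturality of the bonding maps and the compatibility of the splitting of the $K^0$ extension that you flag are exactly the points settled in \cite{AP}, so nothing is missing.
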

To define the boundary maps $\partial_1$, $\partial_2$, we need to assume that the prototiles and the collared edges have orientation.
By translation, all the cells of $T\in\Omega$ have orientation.

\noindent 
The boundary map $\partial_1:\Z^{cE}\to \Z^{cV}$ is given by
$$\partial_1([e]):=\sum_{\substack{v\in e\\ T(v)\subset T(e)\\ \text{$ $}}} \delta_{e,v}\,\,[v],$$
and the boundary map $\partial_2:\Z^{cF}\to \Z^{cE}$ is given by
$$\partial_2([f]):=\sum_{\substack{e\in f\\ T(e)\subset T(f)\\ \text{$ $}}} \delta_{f,e}\,\,[e],$$
where $\delta_{e,v}$ and $\delta_{f,e}$ are defined in Eq.~(\ref{e:deltaev}) and Eq.~(\ref{e:deltafe}).

The collared substitution matrices are given by
$$\omega_F([f]):=\sum_{\substack{f'\in \omega(f)\\ T(f')\subset \omega(T(f))\\ \text{$ $}}} [f'],$$
$$\omega_E([e]):=\sum_{\substack{e'\in \omega(e)\\ T(e')\subset \omega(T(e))\\ \text{$ $}}} [e'],$$
$$\omega_V([v]):=\sum_{\substack{v'\in \omega(v)\\ T(v')\subset \omega(T(v))\\ \text{$ $}}} [v'].$$

\subsection{Cohomology notation}\label{ss:cohomologynotation}
Given a cochain complex 
$$\xymatrix{
&C^\bullet:=\cdots\ar[r]^{\delta^{-1}}&C^0\ar[r]^{\delta^0}&C^1\ar[r]^{\delta^1}&C^2\ar[r]^{\delta^2}&\cdots,\\
}$$
we will denote the cohomology groups by
$$H^n(C^\bullet):=\frac{\ker\delta^n}{\im\delta^{n-1}}\qquad n\in\Z.$$
\section{\textbf{Dynamics}}\label{s:dynamics}
The construction of the Smale space $(\Omega,d,\omega)$ for a substitution tiling, and the dynamics for the unstable equivalence relation $R_u$, were developed in \cite[Section~3]{AP}.
In this section, we consider the stable equivalence relation $R_s$, describe a transversal to it, and set up the ground for computation of the $K$-theory of $S$.

Recall that two tilings $T$, $T'$ are said to be stable equivalent if 
$$\lim_{n\to\infty} d(\omega^n(T),\omega^n(T'))=0.$$
The stable equivalence class for tiling $T$ can be written as (see \cite[p.181]{Putnam96SmaleSpaces})
\begin{equation}\label{e:Vs}
[T]_{R_s}=V^s(T)=\bigcup_{n\in\N_0} \omega^{-n}(V^s(\omega^n(T),\varepsilon_0)),
\end{equation}
where $V^s(\omega^n(T),\varepsilon)$ is the stable local canonical coordinate of $T$ and it is defined in
\cite[p.~9]{AP} as
$$V^s(T,\varepsilon)=\{T'\in\Omega\mid [T,T']=T'\text{\, and\, } d(T,T')<\varepsilon\}\qquad\text{for } 0<\eps\le \eps_0.$$
Moreover, the constant $\varepsilon_0<\frac{1}{\sqrt{2}}$ is fixed, and it depends on the inflation factor $\lambda$ and on some 
separation constant of the diagonal of $R_0$ to the other components of $R_0$,
and it is defined right after Lemma~3.2 in \cite{AP}. Note that $\frac{1}{\eps_0}-\eps_0>0$. 
We remark that the union in (\ref{e:Vs}) is an increasing union (see \cite[p.181]{Putnam96SmaleSpaces})
$$V^s(T,\varepsilon_0)\subset\omega^{-1}(V^s(\omega(T),\varepsilon_0))\subset \omega^{-{2}}(V^s(\omega^{2}(T),\varepsilon_0))\subset\cdots.$$
The equivalence relation $R_s$ is considered as a topological groupoid as follows.
Define
$$G_0^s\subset G_1^s\subset\cdots$$
by
\begin{eqnarray*}
G_0^s&:=&\{(T,T')\in \Omega\times \Omega\mid T'\in V^s(T,\varepsilon_0)\}\\
G_n^s&:=&(\omega\times\omega)^{-n}(G_0^s),
\end{eqnarray*}
and equip $G_n^s\subset\Omega\times\Omega$ with the subspace topology, and give the increasing union
$$R_s=\bigcup_{n\in\N_0} G_n^s$$
the inductive limit topology.
A simple characterization of the stable equivalence relation is

\begin{pro}\label{p:Rs-characterization}
Two tilings $T$, $ T'$are stable equivalent if and only if their tiles containing the origin agree after a finite number of substitutions. That is,
$$T \sim_{R_s} T' \,\,\Longleftrightarrow\,\,\exists\, n\in\N_0\,:\,\omega^n(T)(0)=\omega^n(T')(0).$$
\end{pro}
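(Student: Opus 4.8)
The plan is to strip the definitions down until $T\sim_{R_s}T'$ becomes a statement about the local stable sets $V^s(\cdot,\varepsilon_0)$, and then to convert membership in such a set into agreement of the tiles at the origin, using the locality of the substitution to pass between ``agreement on a small patch'' and ``agreement on a large ball''. First I would reduce to the local sets: since $R_s=\bigcup_n G_n^s$ carries the inductive limit topology and $G_n^s=(\omega\times\omega)^{-n}(G_0^s)$, we have $T\sim_{R_s}T'$ iff $(\omega^n(T),\omega^n(T'))\in G_0^s$ for some $n\in\N_0$; by the definition of $G_0^s$ this is exactly
$$\exists\,n\in\N_0:\quad \omega^n(T')\in V^s(\omega^n(T),\varepsilon_0),$$
which is precisely the description of $[T]_{R_s}$ in Eq.~(\ref{e:Vs}). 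So it suffices to show that the existence of such an $n$ is equivalent to the existence of an $m\in\N_0$ with $\omega^m(T)(0)=\omega^m(T')(0)$.

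For the forward implication I would prove the key local statement: if $S'\in V^s(S,\varepsilon_0)$ then $S$ and $S'$ coincide, with no translation, on a neighborhood of the origin, and in particular $S(0)=S'(0)$. This is where the choice of $\varepsilon_0$ enters. The condition $d(S,S')<\varepsilon_0$ only gives agreement of $S$ and $S'$ on a ball $B_{1/\varepsilon_0}(0)$ up to translations of size at most $\varepsilon_0$, but the bracket condition $[S,S']=S'$ pins down the stable (non-translational) canonical coordinate, ruling out any genuine translation; the inequality $\frac1{\varepsilon_0}-\varepsilon_0>0$ then guarantees that after removing the $\le\varepsilon_0$ slack one still has \emph{exact} agreement on a ball of positive radius about the origin. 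Since the tiles containing the origin intersect any such ball, this yields $S(0)=S'(0)$. Applying this with $S=\omega^n(T)$ and $S'=\omega^n(T')$ gives $\omega^n(T)(0)=\omega^n(T')(0)$, so we may take $m=n$.

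For the converse I would use the locality of the substitution. Suppose $\omega^m(T)(0)=\omega^m(T')(0)$ for some $m$. Then $\omega^m(T)$ and $\omega^m(T')$ agree exactly on the tile(s) containing the origin, hence on a ball $B_r(0)$ of some radius $r>0$. Because $\omega$ substitutes each tile by a rule depending only on that tile, exact agreement on a union of whole tiles propagates under $\omega$ and is inflated by the factor $\lambda$; thus $\omega^{m+k}(T)$ and $\omega^{m+k}(T')$ agree exactly on $B_{\lambda^k r}(0)$. For $k$ large enough we have $\lambda^k r>1/\varepsilon_0$, and exact agreement on $B_{1/\varepsilon_0}(0)$ gives both $d(\omega^{m+k}(T),\omega^{m+k}(T'))<\varepsilon_0$ (take the zero translations) and the trivial bracket $[\omega^{m+k}(T),\omega^{m+k}(T')]=\omega^{m+k}(T')$, i.e.\ $\omega^{m+k}(T')\in V^s(\omega^{m+k}(T),\varepsilon_0)$. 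Hence $T\sim_{R_s}T'$ with $n=m+k$, completing the equivalence.

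The main obstacle is the local statement in the second paragraph: making precise, from the definitions of the metric $d$ and the bracket $[\cdot,\cdot]$ in the Smale-space structure of $(\Omega,d,\omega)$ (cf.~\cite{AP}), that membership in $V^s(S,\varepsilon_0)$ forces translation-free agreement of $S$ and $S'$ near the origin. This is exactly where the separation constant built into $\varepsilon_0$ is used, and it is the one step that does not reduce to the combinatorics of the substitution; the remaining two implications are then routine once the ``exact agreement near the origin'' reformulation of $V^s(\cdot,\varepsilon_0)$ is in hand.
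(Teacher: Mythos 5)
Your proposal is correct, and its forward direction coincides with the paper's: both reduce $T\sim_{R_s}T'$ via Eq.~(\ref{e:Vs}) to $\omega^n(T')\in V^s(\omega^n(T),\varepsilon_0)$ and then use the inclusion $V^s(T,\varepsilon)\subset\{T'\mid T,\,T'\text{ agree on }B_{\frac1\varepsilon-\varepsilon}(0)\}$; the paper simply cites this from \cite[p.~9]{AP}, whereas you sketch its proof from the bracket, which is the same content at the same level of rigor. The genuine difference is in the converse. The paper never returns to the local stable sets: it puts $P:=\omega^n(T)(0)$, $r:=d(0,\partial P)>0$, observes that $\omega^{n+k}(T)$ and $\omega^{n+k}(T')$ agree on $\omega^k(P)$, hence on $B_{\lambda^k r}(0)$, and concludes $d(\omega^{n+k}(T),\omega^{n+k}(T'))\le \frac1{\lambda^k r}\to 0$, which is stable equivalence by the metric definition recalled at the start of Section~\ref{s:dynamics}. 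You instead convert exact agreement on a large ball back into membership in $V^s(\omega^{m+k}(T),\varepsilon_0)$, which forces you to verify the bracket condition $[\omega^{m+k}(T),\omega^{m+k}(T')]=\omega^{m+k}(T')$, i.e.\ the reverse of the inclusion cited for the forward direction. That step is true (uniqueness of the local product structure plus aperiodicity forces the translational part of the bracket to vanish when the tilings agree exactly near the origin), but it is an additional fact about the Smale-space bracket that the paper's argument sidesteps entirely. What your route buys is that the whole proof stays inside the picture $R_s=\bigcup_n G^s_n$ and Eq.~(\ref{e:Vs}), never appealing to the metric-limit definition of $R_s$; what the paper's route buys is economy, since its converse needs nothing beyond the elementary bound $d\le \frac1{\lambda^k r}$.
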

\begin{proof}
$(\Rightarrow).$ By \cite[p.~9]{AP}, we have
$$V^s(T,\varepsilon)\subset\{T'\in\Omega\mid T\text{ and } T' \text{ agree on } B_{\frac{1}{\varepsilon}-\varepsilon}(0)\},$$
for any $0<\varepsilon\le\varepsilon_0$.
(We would like to point out that we replaced the strict inequality $\varepsilon<\varepsilon_0$ given in \cite[p.~9]{AP} with
$\varepsilon\le\varepsilon_0$. This can be done since a Smale space with constant $\eps_0$ is also a Smale space with smaller constant $\tilde\eps_0$.
We work with $0<\eps\le \tilde \eps_0$, and we rename $\tilde\eps_0$ as $\eps_0$.)
 Recall that two tilings $T,T'\in\Omega$ are said to agree on a set $U\subset \R^d$ if $T(U)=T'(U)$, where $T(U)$ denotes the smallest patch in $T$ containing the set $U$. (cf.~\cite[p.~4]{AP}).
Suppose that $T'\in [T]_{R_s}$. 
Then by Eq.~(\ref{e:Vs}) there exists a $n\in\N_0$ such that
$$\omega^n(T')\in V^s(\omega^n(T),\eps_0).$$
Thus $\omega^n(T')$ and $\omega^n(T)$ agree on a ball of radius $\frac1{\eps_0}-\eps_0>0$ centered at the origin. In particular they agree at the origin, i.e.~
$$\omega^{n}(T')(0)=\omega^{n}(T)(0).$$
$(\Leftarrow).$ 
Suppose that $\omega^n(T)(0)=\omega^n(T')(0)$ for some $n\in\N_0$, and define the patch $P:=\omega^n(T)(0)$. Let $r:=d(0,\partial P)$ be the largest radius  such that $B_r(0)\subset P$.
Note that $0$ is an interior point of $P$, and hence $r>0$.
Then for $k\in\N$, $\omega^{n+k}(T)$ and $\omega^{n+k}(T')$ agree on the patch $\omega^k(P)$. 
Thus they agree on the ball of radius $\lambda^k r$. Hence 
$$d(\omega^{n+k}(T),\omega^{n+k}(T'))\le \frac{1}{\lambda^k r}\to0\quad \text{for}\quad k\to\infty.$$
Thus $T$ and $T'$ are stable equivalent.

\end{proof}
By \cite{{PutnamSpielberg}} the unstable equivalence class $[T]_{R_u}$ is dense in $\Omega$ and $[T]_{R_u}=T+\R^d$ is a transversal to $\Omega$ with respect to $R_s$. We elaborate on this in the rest of the paragraph. The unstable equivalence class $[T]_{R_u}$ is considered as a topological space as follows.
Define 
$$V^u_n(T):=\omega^n(V^u(\omega^{-n}(T),\varepsilon_0))=T+B_{2\varepsilon_0\lambda^n}(0)\subset \Omega,$$
where the equality is given in \cite[p.9]{AP}, and the set $V^u(T,\varepsilon_0)$ is known as the unstable local canonical coordinate of $T$.
By \cite[p.10]{AP}, we can write
\begin{equation}\label{e:transversal}
[T]_{R_u}=V^u(T)=\bigcup_{n\in\N_0} V^u_n(T)=T+\R^d.
\end{equation}
Equip $V^u_n(T)\subset \Omega$ with the subspace topology, and give the increasing union in Eq.~(\ref{e:transversal}) the inductive limit topology.
Define the equivalence relation 
\begin{eqnarray*}
H_s(T)&:=&R_s\cap\big((T+\R^d)\times (T+\R^d)\big)\\
&=&\{(T-x,T-y)\in R_s\mid x,y\in\R^d\}\\
&=&\{(T-x,T-y)\in\Omega\times\Omega\mid\exists n\in\N_0:\, \omega^n(T-x)(0)=\omega^n(T-y)(0)\}.
\end{eqnarray*}
By \cite[Theorem~4.2,~Theorem~3.7]{PutnamSpielberg} $H_s(T)$ is groupoid equivalent to $R_s$ in the sense of \cite{MRW}. Moreover, 
by \cite[Theorem~3.7]{PutnamSpielberg} $H_s(T)$ is \'etale.
Note that the set 
$T+\R^d$ is a (generalized) transversal to $\Omega$ with respect to $R_s$ since the set $[T']_{R_s}\cap(T+\R^d)$ is countable for any $T'\in\Omega$.

By \cite[Theorem~2.8]{MRW} the $C^*$-algebras $C^*(H_s(T))$ and  $C^*(R_s)$ are strongly Morita equivalent, and hence their $K$-theories coincide.
It is worth noting that $T+\R^d\subset\Omega$ has been given the inductive limit topology in Eq.~(\ref{e:transversal}) and not the subspace topology of $\Omega$.
Moreover, $H_s(T)$ is equipped with the topology stated in \cite[Lemma~3.3]{PutnamSpielberg}.
Convergence of sequences in $H_s(T)$ can be stated as follows by \cite[p.10]{PutnamSpielberg}: 
$$(T-x_n,T-y_n)\to_{H_s(T)} (T-x,T-y)$$
if and only if
\begin{eqnarray*}
&&(T-x_n,T-y_n)\to_{R_s} (T-x,T-y)\\
&\text{and}&T-x_n\to_{T+\R^d} T-x\\
&\text{and}&T-y_n\to_{T+\R^d} T-y.
\end{eqnarray*}

\begin{pro}\label{p:Rd-TplusRd}
Equip $T+\R^d$ with the inductive limit topology as in Eq.~(\ref{e:transversal}).
Then the map $\R^d\to T+\R^d$ given by
$$x\mapsto T-x$$
is a homeomorphism.
\end{pro}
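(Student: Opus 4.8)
The plan is to exhibit $\phi\colon x\mapsto T-x$ as a continuous open bijection, handling the inductive-limit topology one stage at a time. First I record that $\phi$ is a bijection onto $T+\R^d$: surjectivity is immediate from $T+\R^d=\{T+z:z\in\R^d\}$, and injectivity is exactly aperiodicity, since $T-x=T-y$ forces $T=T+(x-y)$ and hence $x=y$. Writing $U_n:=B_{2\varepsilon_0\lambda^n}(0)\subset\R^d$, aperiodicity together with the identity $V^u_n(T)=T+B_{2\varepsilon_0\lambda^n}(0)$ (from \cite[p.9]{AP}) gives $\phi^{-1}(V^u_n(T))=U_n$ and $\phi(U_n)=V^u_n(T)$; since $\lambda>1$ the balls $U_n$ form an increasing open exhaustion of $\R^d$.

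The crux is the claim that each restriction $\phi|_{U_n}\colon U_n\to V^u_n(T)$ is a homeomorphism, where $V^u_n(T)$ carries the subspace topology of $\Omega$. Continuity is clear, as $\phi$ is the corestriction of the continuous translation orbit map $\R^d\to\Omega$. For the inverse I would argue sequentially, using that $\Omega$ is a compact metric space (so $V^u_n(T)$ is metrizable and sequential continuity suffices): if $T-x_k\to T-x$ in $V^u_n(T)$ with $x_k,x\in U_n$, then the $x_k$ are bounded, so any subsequence has a convergent sub-subsequence $x_{k_j}\to x'$ with $x'\in\overline{U_n}$; continuity of the orbit map gives $T-x'=T-x$, whence $x'=x$ by aperiodicity. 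Since every subsequential limit equals $x$ and the sequence is bounded, $x_k\to x$, so $(\phi|_{U_n})^{-1}$ is continuous.

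With the stagewise homeomorphisms in hand, the inductive-limit topology on $T+\R^d$ makes the rest formal. For continuity: if $O\subset T+\R^d$ is open, then $O\cap V^u_n(T)$ is open in $V^u_n(T)$ for every $n$, so $\phi^{-1}(O)\cap U_n=(\phi|_{U_n})^{-1}(O\cap V^u_n(T))$ is open in $U_n$, hence in $\R^d$; as the $U_n$ cover $\R^d$, $\phi^{-1}(O)$ is open. For openness: if $W\subset\R^d$ is open, then using injectivity and $\phi(U_n)=V^u_n(T)$ we get $\phi(W)\cap V^u_n(T)=\phi(W\cap U_n)$, which is open in $V^u_n(T)$ because $\phi|_{U_n}$ is a homeomorphism; this holds for all $n$, so $\phi(W)$ is open. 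Thus $\phi$ is a continuous open bijection, i.e.\ a homeomorphism.

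The step I expect to be the main obstacle is the continuity of $(\phi|_{U_n})^{-1}$, that is, that on each bounded ball the subspace topology from $\Omega$ agrees with the Euclidean topology. This is genuinely local: the global orbit map $\R^d\to\Omega$ is injective but is not an embedding, since its image accumulates on itself in $\Omega$, which is precisely why the inductive-limit topology is needed. Restricting to the bounded $U_n$ is what prevents the displacements $x_k$ from escaping to infinity, so Bolzano--Weierstrass applies and the accumulation phenomenon cannot occur. Alternatively, one may invoke that $u\mapsto T'+u$ is the unstable canonical coordinate chart of the Smale space and transport it through the homeomorphism $\omega^n$ via $\omega^n(T'+u)=\omega^n(T')+\lambda^n u$, writing $\phi|_{U_n}$ as a composition of homeomorphisms $U_n\to B_{2\varepsilon_0}(0)\to V^u(\omega^{-n}(T),\varepsilon_0)\to V^u_n(T)$.
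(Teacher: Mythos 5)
Your proof is correct and follows essentially the same route as the paper's: both exhaust $\R^d$ by the balls $B_n=B_{2\varepsilon_0\lambda^n}(0)$, prove that each restriction of $x\mapsto T-x$ is a homeomorphism onto $T+B_n$ by a compactness argument (the paper notes that a continuous bijection from the compact closed ball into a Hausdorff space is a homeomorphism, while you run the equivalent Bolzano--Weierstrass subsequence argument), and then pass to the inductive limit stage by stage. The only cosmetic differences are that the paper proves continuity of the orbit map via the explicit estimate $d(T-x,T-y)\le\tfrac12\|x-y\|$ where you cite it as standard, and that the paper concludes the limit step by citing a commuting-diagram lemma where you verify continuity and openness by hand.
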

\begin{proof}
The map $\alpha:\R^d\to T+\R^d$ defined by
$$\alpha(x):=T-x$$
is bijective because $T-x=T-y$ implies $x=y$ as the tiling $T$ is aperiodic.
Moreover, $\alpha$ is continuous since $d(T-x,T-y)\le\frac12||x-y||$, where the inequality follows since $T-x+\frac{x-y}2$ and $T-y-\frac{x-y}2$ agree everywhere.
Let $B_n:=B_{2\eps_0 \lambda^n}(0)\subset \R^d$, $n\in\N_0$ be the open ball of radius $2\eps_0 \lambda^n$.
Since $B_n\subset \R^d$ is bounded and $\alpha$ is continuous, the restriction map  
 $$\alpha_n:=\,\,\,\,\alpha:B_n\to T+B_n$$
 is a homeomorphism, where $B_n$ and $T+B_n$ have the subspace topology of $\R^d$ and $\Omega$, respectively. (It is the restriction of the homeomorphism $\bar\alpha_n:\overline{B_n}\to \overline{T+B_n}$ defined in the same way).
 Then, since the following diagram commutes,
 \begin{equation*}
\xymatrix{ B_0\ar@{^(->}[r]^{ }\ar[d]_{\alpha_0}^{\cong}& B_1\ar@{^(->}[r]^{ }\ar[d]_{\alpha_1}^{\cong}&B_2\ar@{^(->}[r]\ar[d]_{\alpha_2}^{\cong}&\\
  T+B_0\ar@{^(->}[r]^{ }&T+B_1\ar@{^(->}[r]^{ }&T+B_2\ar@{^(->}[r]&}
\end{equation*}
we get that
$$\R^d=\bigcup_{n\in\N_0}B_n\cong \bigcup_{n\in\N_0} T+B_n= T+\R^d,$$
where the first equality is by Lemma \ref{l:Rd-ind}.
\end{proof}
We should remark that the above proposition would be false if we had equipped $T+\R^d$ with the subspace topology of $\Omega$, as shown in Proposition \ref{p:TplusxToxNotContinuous}.

Using Proposition \ref{p:Rd-TplusRd}, we can "see" the transversal $H_s$ as an equivalence relation on $\R^d$. We provide the details below.
\subsection{Equivalence relation $R_s'$ on $\R^d$.}\label{ss:Rs'}
For a tiling $T$ of $\R^d$, $d\in\N$, let $R(T)$ be the associated equivalence relation on $\R^d$ given by
$$R(T):=\{(x,y)\in \R^d\times\R^d\mid T(x)-x=T(y)-y\}.$$
Let $T\in\Omega$ be a fixed tiling, and define the sequence of tilings with shrunk prototiles
$$T_n:=\frac{1}{\lambda^n}\omega^n(T)\lambda^n,\qquad n\in\N_0$$
and equivalence relations
$$R_n:= R(T_n)=\frac1{\lambda^n}R(\omega^n(T)),\qquad n\in \N_0,$$
where $\omega$ is the substitution map with inflation factor $\lambda>1$.
Note that $T_n$, as a function on $\R^d$, is given by
$$T_n(x):=\frac{1}{\lambda^n}(\omega^n(T)(\lambda^n x)),\quad\text{for } x\in \R^d.$$
It is easy to check that 
$$R_1\subset R_2\subset R_3\subset\ldots,$$
so we can define the equivalence relation $R_s'$ on $\R^d$ by
$$R_s':=\bigcup_{n=0}^\infty R_n.$$
We equip $R_n\subset \R^{2d}$ with the subspace topology, and $R_s'$ with the inductive limit topology, which, as we show next, is \'etale.
Since $R_n$ is an equivalence relation on $\R^d$, it is in particular a groupoid with unit space $\R^d$. 
Every connected component $C$ in $R_n$ is homeomorphic onto its image in $\R^d$ via both the range $r$ and source $s$ maps given by $r(x,y):=x$ and $s(x,y):=y$.
In particular, we have that $r$ and $s$ are local homeomorphisms. Hence $R_n$ is \'etale.
Moreover, $r(C)$ and $s(C)$ are both open in $\R^d$. 
It follows that $R_n$ is open in $R_{n+1}$:  if $x\in R_n$, and $C_n$ (resp.~$C_{n+1}$) is the connected component of $x$ in $R_n$ (resp.~$R_{n+1}$), then $r(C_n)$ is open in $r(C_{n+1})$ since $r(C_n)$ is open in $\R^d$ and $C_n\subset C_{n+1}$.
Thus, $C_n$ is open in $C_{n+1}$ because $r:C_{n+1}\to  r(C_{n+1})$ is a homeomorphism.
As a consequence we have that $R_s'$ is \'etale:
If $x\in R'_s$ then $x\in R_n$ for some $n\in\N_0$. 
As above, let $C_n$ be the connected component of $x$ in $R_n$.
Since $R_n$ is open in $R_{k}$ for $k\ge n$ and since $R_s'$ has the inductive limit topology, then 
$C_n$ is open in $R'_s$.
Furthermore, since  $R_n$ is open in $R_{n+1}$, $C_n$ as a subspace of $R_n$ has the same topology as $C_n$ as a subspace of $R_s'$.
Since $r$ and $s$ restricted to $C_n$ (as a subspace of $R_n$ and hence also as a subspace of $R_s'$) are homeomorphisms onto their images in $\R^d$, 
we see that $r$ and $s$ are local homeomorphisms on $R_s'$.

It is convenient to write $R_n$ in different ways:
 \begin{eqnarray*}
  R_n&=&R(T_n)\\
  &=&\{(x,y)\in\R^{d}\times \R^d\mid T_n(x)-x=T_n(y)-y\}\\
&=&\{(x,y)\in\R^{d}\times \R^d\mid \omega^n(T-x)(0)=\omega^n(T-y)(0)\}.
  \end{eqnarray*} 
Moreover, $R_n$ induces an equivalence relation on the cells of the shrunk tiling $T_n$, namely
\begin{defn}[Stable cells]\label{d:stablecells}
Let $n\in\N_0$ be fixed. If $v$ is a vertex, $e$ an edge, and $f$ a face of the tiling $T_n$, then we define the equivalence relations: 
\begin{itemize}
      \item vertices: $v \sim_{R_n} v'$ $\iff$ $ \exists x\in\R^d: T_n(\open{v}\,\!')=T_n(\open{v})+x$
      \item edges: $e \sim_{R_n} e'$ $\iff$ $ \exists x\in\R^d: T_n(\open{e}\,\!')=T_n(\open{e})+x$
      \item faces: $f \sim_{R_n} f'$ $\iff$ $ \exists x\in\R^d: T_n(\open{f}\,\!')=T_n(\open{f})+x$,
\end{itemize}
where $\open{\sigma}$ denotes the interior of a cell $\sigma$. We remark that the interior of a vertex is by convention the vertex itself.
\end{defn}
The number of equivalence classes of $k$-cells, $k=0,1,2$, is always finite and independent of $n$, and is denoted by sV, sE, sF, respectively. In this paper we are only considering tilings of dimension $d\le 2$, and thus we have that $T_n(\open{f})=f$.
The equivalence classes of vertices, edges, and faces are called stable vertices, stable edges, and stable faces, respectively, but informally it is more convenient to define them as follows
\begin{defn}[Stable cells]
Let $n\in\N_0$ be fixed. If $v$ is a vertex, $e$ an edge, $f$ a face of tiling $T_n$, then 
\begin{itemize}
\item stable vertex: $sv:=T_n(\open{v})=T_n(v)$. 
\item stable edge: $se:=T_n(\open{e})$,\,\, where $\open{e}$ is the edge minus its vertices.
\item stable face: $sf:=T_n(\open{f})=f$.
\end{itemize}
\end{defn}

Note that $\frac{1}{\lambda^n}\omega^n(T)$ and $T_n$ are the same as tilings, i.e.~one can draw $T_n$ by first drawing the tiling $\omega^n(T)$ and then shrinking all its cells by the factor $\frac{1}{\lambda^n}$. Also $\frac{1}{\lambda^n}\omega^n(T)(0)= T_n(0)$, but in general $\frac{1}{\lambda^n}\omega^n(T)(x)\ne T_n(x)$ for $x\in\R^d$, $n\in\N$. 
\subsection{$R_s'$ - $H_s$ relationship.}\label{ss:Rs'Hs}
Given a fixed tiling $T\in\Omega$, the following proposition shows that the transversal equivalence relation $H_s:=H_s(T)$ is groupoid isomorphic to the equivalence $R'_s$.
\begin{pro}\label{p:Rp-Hs}
 The map $R'_s\to H_s$ given by
$$(x,y)\mapsto (T-x,T-y)$$
is a topological groupoid isomorphism. 
\end{pro}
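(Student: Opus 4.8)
The plan is to show that $\Phi\colon R'_s\to H_s$, $\Phi(x,y):=(T-x,T-y)$, is first a bijective groupoid homomorphism and then a homeomorphism, handling the algebra and the two topologies separately. For the algebraic part, I would exploit that $R'_s$ and $H_s$ are cut out by the \emph{same} condition: by the last display for $R_n$ in Subsection \ref{ss:Rs'} one has $R'_s=\{(x,y)\in\R^d\times\R^d : \exists\,n\in\N_0,\ \omega^n(T-x)(0)=\omega^n(T-y)(0)\}$, which is exactly the preimage under $\alpha\times\alpha$ (with $\alpha(x)=T-x$ from Proposition \ref{p:Rd-TplusRd}) of the defining condition of $H_s$. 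Since $\alpha$ is a bijection $\R^d\to T+\R^d$, the map $\alpha\times\alpha$ restricts to a bijection $R'_s\to H_s$ equal to $\Phi$, and the fact that $\Phi$ respects units, inverses, and composition is immediate from $\Phi(x,y)=(\alpha(x),\alpha(y))$ together with $s_{H_s}\circ\Phi=\alpha\circ s_{R'_s}$ and $r_{H_s}\circ\Phi=\alpha\circ r_{R'_s}$. So $\Phi$ is a groupoid isomorphism at the algebraic level.

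For continuity of $\Phi$, since $R'_s$ carries the inductive limit topology it suffices to check that each $\Phi|_{R_n}\colon R_n\to H_s$ is continuous, and here I would use the convergence characterization of the topology on $H_s$ recalled before Proposition \ref{p:Rd-TplusRd}: $(T-x_k,T-y_k)\to(T-x,T-y)$ in $H_s$ iff the sequence converges in $R_s$ and $T-x_k\to T-x$, $T-y_k\to T-y$ in $T+\R^d$. Given $(x_k,y_k)\to(x,y)$ in $R_n\subset\R^{2d}$, the last two conditions follow from continuity of $\alpha$ (Proposition \ref{p:Rd-TplusRd}). For the $R_s$-convergence I would first observe, using finite local complexity exactly as in the $(\Leftarrow)$-direction of the proof of Proposition \ref{p:Rs-characterization}, that there is an $m=m(n)$ with $\Phi(R_n)\subseteq G_m^s$: the finitely many possible patches $\omega^n(T-x)(0)$ have a uniform positive inradius, so after finitely many further substitutions $\omega^m(T-x)$ and $\omega^m(T-y)$ agree on $B_{1/\eps_0-\eps_0}(0)$ and are $\eps_0$-close, which places $(T-x,T-y)$ in $G_m^s$. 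Because $(x,y)\mapsto(T-x,T-y)$ is continuous $\R^{2d}\to\Omega\times\Omega$ (via the Lipschitz estimate $d(T-x,T-y)\le\tfrac12\|x-y\|$ from Proposition \ref{p:Rd-TplusRd}) and $G_m^s$ carries the subspace topology of $\Omega\times\Omega$, the restriction $R_n\to G_m^s$ is continuous; composing with the continuous inclusion $G_m^s\hookrightarrow R_s$ yields the $R_s$-convergence, and hence $\Phi|_{R_n}$ is continuous.

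For the inverse, I would prove that $\Phi$ is open using the \'etale structure. Both $R'_s$ (Subsection \ref{ss:Rs'}) and $H_s$ (by \cite[Theorem~3.7]{PutnamSpielberg}) are \'etale, and $\Phi$ covers the homeomorphism $\alpha$ on unit spaces. Given $g=(x,y)\in R'_s$, choose an open bisection $V\ni g$ on which $s_{R'_s}$ is a homeomorphism onto an open subset of $\R^d$, and, using continuity of $\Phi$, an open bisection $W\ni\Phi(g)$ with $\Phi(V)\subseteq W$ and $s_{H_s}|_W$ a homeomorphism onto an open subset of $T+\R^d$ (shrinking $V$ to $V\cap\Phi^{-1}(W)$ if needed). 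From $s_{H_s}\circ\Phi=\alpha\circ s_{R'_s}$ and injectivity of $s_{H_s}|_W$ one gets the local factorization $\Phi|_V=(s_{H_s}|_W)^{-1}\circ\alpha\circ(s_{R'_s}|_V)$, a composite of homeomorphisms onto their images, so $\Phi|_V$ is a homeomorphism of $V$ onto the open set $(s_{H_s}|_W)^{-1}\big(\alpha(s_{R'_s}(V))\big)$. Thus $\Phi$ is an open local homeomorphism, and being in addition a continuous bijection it is a homeomorphism.

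I expect the main obstacle to be the topological bookkeeping rather than the algebra: the warning after Proposition \ref{p:Rd-TplusRd} shows that the topology on $H_s$ is strictly finer than the subspace topology inherited from $R_s$, so it is essential to argue through the three-part convergence characterization and to establish the inclusion $\Phi(R_n)\subseteq G_m^s$ with a uniform $m$ coming from finite local complexity. The \'etale factorization then handles the inverse cleanly once $\Phi$ is known to be continuous, so the real crux is proving continuity compatibly with the finer $H_s$-topology.
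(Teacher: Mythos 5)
Your algebraic step and your treatment of the inverse are sound: identifying $\Phi$ with the restriction of $\alpha\times\alpha$ handles the groupoid structure, and the \'etale bisection factorization $\Phi|_V=(s_{H_s}|_W)^{-1}\circ\alpha\circ(s_{R'_s}|_V)$ is a legitimate (and cleaner) substitute for the paper's direct proof that $\phi^{-1}$ is continuous. But the forward continuity argument — the crux of the proposition — has a genuine gap: the claim that there exists $m=m(n)$ with $\Phi(R_n)\subseteq G_m^s$ is false. FLC gives only finitely many patches $\omega^n(T-x)(0)$ \emph{up to translation}; it does not give a uniform lower bound on $d\bigl(0,\partial(\omega^n(T-x)(0))\bigr)$, because the origin can sit arbitrarily close to the boundary of its patch as $(x,y)$ ranges over $R_n$ (slide $x$ and $y$ simultaneously toward matching boundary points: the pair stays in $R_n$ while this distance tends to $0$). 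Consequently, for any fixed $m$, there are pairs $(x,y)\in R_n$ for which $\omega^m(T-x)$ and $\omega^m(T-y)$ agree only on a ball of radius $\lambda^{m-n}r$ with $r$ tiny — too small to force $d(\omega^m(T-x),\omega^m(T-y))<\eps_0$ — and whose tilings genuinely disagree just outside the shared patch; such pairs map outside $G_m^s$. Without a single fixed $G_m^s$ to factor through, your appeal to the subspace topology of $\Omega\times\Omega$ and the Lipschitz estimate collapses, and this cannot be repaired by taking $m$ large.

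This failure of uniformity is exactly why the paper's proof is as involved as it is: there the level $n_0$ is chosen \emph{per point}, using the set identity $H_s=G'_{\min}=\bigcup_n G'_{n,\min}$ (proved as in Proposition \ref{p:Rs-characterization}), and the real work is the auxiliary connected-component lemma showing that $(\alpha\times\alpha)^{-1}(C_G^{n_0-1})$ lies in the $C_R^{n_0}$-interior of $(\alpha\times\alpha)^{-1}(C_G^{n_0})$. That interiority is what guarantees a sequence converging to $(x,y)$ in $R_{n_0}$ eventually enters $C_G^{n_0}$ and converges there; only then can one push forward along the continuous injection $T+\R^d\hookrightarrow\Omega$ to get convergence in $G_{n_0}$ and hence in $R_s$. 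To fix your proof you would need to replace the uniform containment by some such pointwise, neighborhood-level argument — at which point you are essentially reconstructing the paper's lemma.
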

\begin{proof}
By the proof of Proposition \ref{p:Rd-TplusRd}, the map $\alpha:\R^d\to T+\R^d$ given by $\alpha(x):=T-x$ is bijective.
Since 
$$x\sim_{R'_s} y \iff \alpha(x)\sim_{H_s} \alpha(y),$$
 $\alpha$ induces the (non-topological) groupoid isomorphism $\phi:R'_s\to H_s$ given by $\phi(x,y):=(T-x,T-y)$.
 It remains to show that $\phi$ is a homeomorphism.
 We start by showing that $\psi:=\phi^{-1}$ is continuous.
Suppose that $(T-x_k,T-y_k)\to (T-x,T-y)$ in $H_s$. That is, suppose that
$$(T-x_k,T-y_k)\to (T-x,T-y)\text{ in }R_s$$
and
$$T-x_k\to T-x\text{ in }T+\R^d\text{ (ind)}$$
and
$$T-y_k\to T-y\text{ in }T+\R^d\text{ (ind)},$$
where 'ind' stands for convergence in the inductive limit topology of $T+\R^d$.
By Proposition \ref{p:ind:Xn0-eventually}, there exists a $n_0,k_0\in\N_0$ such that 
 $$(T-x,T-y),\,(T-x_k,T-y_k)\in G^s_{n_0},\quad k\ge k_0.$$
Define the set 
$$G_n':=G_n^s\cap\big( (T+\R^d)\times (T+\R^d)\big),\,\, n\in\N_0.$$
Since also $(T-x_k,T-y_k)\in (T+\R^d)\times (T+\R^d)$, then  
$$(T-x,T-y),\,\,(T-x_k,T-y_k)\in G'_{n_0}\subset  G'_{n_0,max},\quad k\ge k_0,$$
where $\eps_1:=\frac{1}{\eps_0}-\eps_0$ and 
$$G'_{n_0,max}:=\{(T-x,T-y)\in(T+\R^d)^2\mid \text{ $T_{n_0}-x$ and $T_{n_0}-y$ agree on $B_{\frac{1}{\eps_1\lambda^n}}(0)$}\}.$$
Since $\eps_1>0$ (as $\eps_0<\frac1{\sqrt2}$), we get  $(\alpha\times\alpha)^{-1}(G'_{n_0,max})\subset R_{n_0}$ as sets. Thus
$$(x,y),\,\,(x_k,y_k)\in R_{n_0},\quad k\ge k_0.$$
Now, since by Proposition \ref{p:Rd-TplusRd}
$$x_k=\alpha^{-1}(T-x_k)\to \alpha^{-1}(T-x)=x$$
and
$$y_k=\alpha^{-1}(T-y_k)\to \alpha^{-1}(T-y)=y$$
in $\R^d$, and $R_{n_0}\subset \R^{2d}$ has the subspace topology, then, by Lemma \ref{l:seqInSubspace},
$$(x_k, y_k)\to (x,y) \text{ in } R_{n_0}$$
and hence in $R_s'$, because the inclusion $R_{n_0}\hookrightarrow R_s'$ is continuous.
This completes the proof that $\psi$ is continuous.

We now show that $\phi$ is continuous.
Suppose that $(x_k,y_k)\to (x,y)$ in $R'_s$.
We need to show that 
$(T-x_k,T-y_k)\to (T-x,T-y)$ in $H_s$.
That is, we need to show the following three statements
\begin{equation}\label{e:Hs-0}
(T-x_k,T-y_k)\to (T-x,T-y) \text{\quad in\quad } R_s,
\end{equation} 
\begin{equation}\label{e:Hs-1}
T-x_k\to  T-x \text{\quad in\quad } T+\R^d\text{ (ind),}
\end{equation}
\begin{equation}\label{e:Hs-2}
 T-y_k\to T-y \text{\quad in\quad } T+\R^d\text{ (ind)}.
\end{equation}
Note that $R_s'\hookrightarrow \R^d\times\R^d$ is continuous since all the inclusions $R_n\hookrightarrow \R^d\times\R^d$ are continuous.
By this and since $(x_k,y_k)\to (x,y)$ in $R'_s$, we get that
$$x_k\to x\qquad\text{and}\qquad y_k\to y\text{\quad in\quad} \R^d.$$
Hence, Eq.~(\ref{e:Hs-1}) and Eq.~(\ref{e:Hs-2}) hold by Proposition \ref{p:Rd-TplusRd}.
It remains to show Eq.~(\ref{e:Hs-0}).
By Proposition \ref{p:ind:Xn0-eventually}, there exists $m_0,k_0\in\N_0$ such that
$$(x,y),(x_k,y_k)\in R_{m_0}\qquad k\ge k_0,$$
and by Lemma \ref{l:seqInSubspace} $(x_k,y_k)\to (x,y)$ in $R_{m_0}$.
Let 
$$G'_{\min}:=\bigcup_{n\in\N_0} G'_{n,\text{min}},$$
where
$$G'_{n,\min}:=\{(T-x,T-y)\in(T+\R^d)^2\mid \text{ $T_{n}-x$ and $T_{n}-y$ agree on $B_{\frac{1}{\eps_0\lambda^n}}(0)$}\}.$$
For $n\in\N_0$, give $G'_{n,\min}$ the subspace topology of $(T+\R^d)\times(T+\R^d)$, where $T+\R^d$ has the inductive limit topology.
By a proof similar to the one of Proposition \ref{p:Rs-characterization}, we have $H_s=G'_{\min}$ as sets. Hence there is an $n'_0\in\N_0$ such that
$(T-x,T-y)\in G'_{n_0',\text{min}}$. Let 
$$n_0:=\max(n_0'+1,m_0+1),$$
and let $C_G^{n_0}$ be the connected component of $(T-x,T-y)$ in $G'_{n_0,\min}$. 
Let $C_R^{n_0}$ be the connected component of $(x,y)$ in $R_{n_0}$.
Before we proceed we need the following auxiliary result
\begin{lem} Let $\alpha:\R^d\to T+\R^d$ be the homeomorphism from Proposition \ref{p:Rd-TplusRd}.
Let $C_G^{n}$ be the connected component of $(T-x,T-y)$ in $G'_{n,\min}$, and  
let $C_R^{n}$ be the connected component of $(x,y)$ in $R_{n}$.
Then
$(\alpha\times \alpha)^{-1}(C^n_G)$ is a subset of $C^n_R$. 
Moreover, $(\alpha\times\alpha)^{-1}(C_G^n)$ is a subset of the 
$C_R^{n+1}$-interior of $(\alpha\times\alpha)^{-1}(C_G^{n+1})$, $n\in\N_0$.
\end{lem}
\begin{figure}[b]
\centerline{
\begin{tikzpicture}[scale=2]


\draw[fill,yellow!30] (0,0)--(6,0)--(6,2)--(0,2)--(0,0);
\draw[dashed] (0,0)--(6,0)--(6,2)--(0,2)--(0,0);
\draw[fill,blue!20] (.25,.25)--(2.25,.25)--(2.25,1.75)--(.25,1.75)--(.25,.25);
\draw (.25,.25)--(2.25,.25)--(2.25,1.75)--(.25,1.75)--(.25,.25);

\draw[fill,green!30] (.5,.5)--(2,.5)--(2,1.5)--(.5,1.5)--(.5,.5);
\draw (.5,.5)--(2,.5)--(2,1.5)--(.5,1.5)--(.5,.5);

\draw[<->,ultra thick,black!40!green] (0,1)--(.5,1);
\node[black!60!green,ultra thick] at (.17,1.15) {$r_n$};

\node[green,ultra thick] at (1.25,1) {$r'(C^n_G)\subseteq r'(\tilde C^n_G)$};
\node[black,ultra thick] at (1.25,1) {$r'(C^n_G)\subseteq r'(\tilde C^n_G)$};

\draw[<->,blue, ultra thick] (0,.5)--(.25,.5);
\node[blue,ultra thick] at (.2,.65) {$r_{n+1}$};

\node[blue,ultra thick] at (1.5,1.66) {$r'(C^{n+1}_G)$};
\node[black,ultra thick] at (1.5,1.66) {$r'(C^{n+1}_G)$};

\node[yellow,ultra thick] at (3.5,1.75) {$r(C^n_R)$};
\node[black,ultra thick] at (3.5,1.75) {$r(C^n_R)$};
 
\draw[fill,white] (2.5,.15)--(3,.15)--(3,1.9)--(2.5,1.9)--(2.5,.15);
\draw[dashed] (2.5,.15)--(3,.15)--(3,1.9)--(2.5,1.9)--(2.5,.15);
\draw[fill,green!30] (3.5,.5)--(5.5,.5)--(5.5,1.5)--(3.5,1.5)--(3.5,.5);
\draw (3.5,.5)--(5.5,.5)--(5.5,1.5)--(3.5,1.5)--(3.5,.5);

\node[green,ultra thick] at (4.5,1) {$r'(\tilde C^n_G)$};
\node[black,ultra thick] at (4.5,1) {$r'(\tilde C^n_G)$};


\end{tikzpicture}
}
 \caption{Subsets $r(C_R^n)$, $r'(C_G^{n})$, $r'(C_G^{n+1})$, $r'(\tilde C_G^{n})$ of $\R^d$, where $r':=r\circ(\alpha\times\alpha)^{-1}$.\label{f:CGs}}
\end{figure}
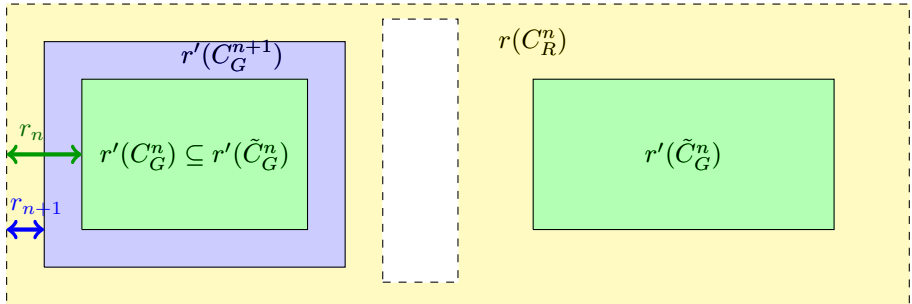
\begin{proof}
The proof of the lemma is illustrated in Figure \ref{f:CGs}.
Since $\eps_0>0$, we have that $(\alpha\times \alpha)^{-1}(G'_{n,\min})\subset R_n$ as sets.
Since $(\alpha\times\alpha)^{-1}:G'_{n,\min}\to R_n$ is continuous by restriction and corestriction, and since $C^n_G$ is connected,
we get that $(\alpha\times\alpha)^{-1}(C^n_G)$ is connected.
Thus $(\alpha\times\alpha)^{-1}(C^n_G)\subset C^n_R$. This proves the first statement of the lemma.
Define
\begin{eqnarray*}
\tilde C^n_G&:=&\{(T-x,T-y)\in (T+\R^d)^2\mid (x,y)\in C^n_R,\,\, B_{r_n}(x)\subset r(C^n_R)\}\\
&=&\{(T-x,T-y)\in (T+\R^d)^2\mid (x,y)\in C^n_R,\,\, d(x,\partial_{\R^d}(r(C^n_R)))\ge r_n\},
\end{eqnarray*}
where $r_n:=\frac{1}{\eps_0\lambda^n}$ and $B_{r_n}(x)$ is the open ball of $\R^d$ of radius $r_n$ and with center $x\in \R^d$.
Roughly speaking, $\tilde C^n_G$ is obtained from $C^n_R$ by removing a neighborhood of the boundary.
We claim that 
\begin{equation}\label{e:claim:Gmin-CR}
(\alpha\times \alpha)^{-1}(\tilde C^n_G)=(\alpha\times\alpha)^{-1}(G'_{n,\min})\cap C^n_R\quad\text{(as sets)}.
\end{equation}
Then, since $C^n_G\subset G'_{n,\min}$ and $(\alpha\times \alpha)^{-1}(C^n_G)\subset C^n_R$, we get by the claim that 
\begin{equation}\label{e:CG-CGtilde}
(\alpha\times \alpha)^{-1}(C_G^n)\subset (\alpha\times \alpha)^{-1}(\tilde C^n_G)\subset C^n_R.
\end{equation}
We now show our claim. We start by showing Eq.~(\ref{e:claim:Gmin-CR})$(\subseteq):$
Let $(x',y')\in(\alpha\times\alpha)^{-1}(\tilde C^n_G)$. Then $(T-x',T-y')\in \tilde C^n_G$.
Then $(x',y')\in C^n_R$ and $B_{r_n}(x')\subset r(C^n_R)$.
Let $x''\in B_{r_n}(x')\subset r(C^n_R)$. 
Let $(x'',y''):=(r|_{C^n_R})^{-1}(x'')\in C^n_R\subset R_n$.
Thus $(T_n-x'')(0)=(T_n-y'')(0)$.
Since $(x'',y'')\in C^n_R$, $y''=x''+y'-x'$. 
The last two equalities 
together with the identity (cf.~\cite[p.9]{AP})
$$(T_n-x'')(0)+x''=T_n(x'')=(T_n-x')(x''-x')+x'$$
give
\begin{eqnarray}\label{e:balltranslation}
(T_n-x')(x''-x')&=&(T_n-x'')(0)+(x''-x')\\
\nonumber&=&(T_n-y'')(0)+(y''-y')\\
\nonumber&=&(T_n-y')(y''-y').
\end{eqnarray}
i.e.~$(T_n-x')$ and $T_n-y'$ agree on $B_{r_n}(0)$.
Thus $(x',y') \in (\alpha\times\alpha)^{-1}(G'_{n,\min})$.
Since $(x',y')\in C^n_R$, the inclusion $\subseteq$ holds. 

We now show Eq.~(\ref{e:claim:Gmin-CR})$(\supseteq)$ by contraposition:
Suppose that $(x',y')\in C^n_R$ but $(x',y')\not\in(\alpha\times\alpha)^{-1}(\tilde C_G)$, we want to show that $(x',y')\not\in (\alpha\times \alpha)^{-1}(G'_{n,\min})$.
By this assumption we have $B_{r_n}(x')\not\subset r(C^n_R)$.
We claim that this implies that 
$$B_{r_n}(x')\cap \partial r(C^n_R)\ne \emptyset.$$
Suppose for contradiction that $B_{r_n}(x')\cap \partial r(C^n_R)= \emptyset$.
Then
\begin{eqnarray*}
B_{r_n}(x')&=&\big(B_{r_n}(x')\cap r(C^n_R)\big)\cup \big(B_{r_n}(x')\cap \partial r(C^n_R)\big) \cup\big(B_{r_n}(x')\cap \big(\R^d\backslash r(C^n_R)\big)^\circ\big)\\
      &=&\big(B_{r_n}(x')\cap r(C^n_R)\big)\cup \big(B_{r_n}(x')\cap \big(\R^d\backslash r(C^n_R)\big)^\circ\big).
\end{eqnarray*}
Moreover, $B_{r_n}(x')\cap r(C^n_R)$ and $B_{r_n}(x')\cap \big(\R^d\backslash r(C^n_R)\big)^\circ$ are both open in $B_{r_n}(x')$ with the subspace topology of $\R^d$.
Since they are complement of each other, they are clopen in $B_{r_n}(x')$.
Moreover, $B_{r_n}(x')\cap r(C^n_R)$ is not empty since it contains $x'$, and $B_{r_n}(x')\cap \big(\R^d\backslash r(C^n_R)\big)^\circ$ is not empty because $B_{r_n}(x')\not\subset r(C^n_R)$.
Hence $B_{r_n}(x')$ is not connected, a contradiction, which proves our claim.

We illustrate this paragraph in Figure \ref{f:CRxpp}. Pick $x''\in B_{r_n}(x')\cap \partial r(C^n_R)$, and let $y''=x''+y'-x'$. 
\begin{figure}[t]
\centerline{\begin{tikzpicture}[scale=2.5]
\draw[->] (0,0) -- (2.2,0);
\draw[dotted,gray] (0,1) -- (2,1);
\draw[dotted,gray] (0,2) -- (2,2);
\draw[->] (0,0) -- (0,2.1);
\draw[dotted,gray] (1,0) -- (1,2);
\draw[dotted,gray] (2,0) -- (2,2);
\draw[ultra thick] (1.02,0.02)--(1.98,.98);
\draw (1,0) circle (.04cm);
\draw (2,1) circle (.04cm);
\draw[thick] (0.02,1.02)--(0.98,1.98);
\draw (0,1) circle (.04cm);
\draw (1,2) circle (.04cm);
\draw[thick] (0,0) -- (2.1,2.1);
\draw[ultra thick] (1.02,0) -- (1.98,0);
\draw (2,0) circle (.04cm);

\draw[dashed](1.5,-.05)--(1.5,.5)--(-.05,.5);
\draw[dashed](2,-.05)--(2,1)--(-.05,1);
\node at (1.5,-.15) {$x'$};
\node at (-.1,0.5) {$y'$};
\node at (2,-.15) {$x''$};
\node at (-.15,1) {$y''$};
\node at (1.5,.7) {$C_R^n$};
\node at (1.75,0.1) {$r(C_R^n)$};
\node at (2.2,2) {$R_n$};
 
\end{tikzpicture}}
 \caption{$(x'',y'')\not\in R_n$.\label{f:CRxpp}}
\end{figure}
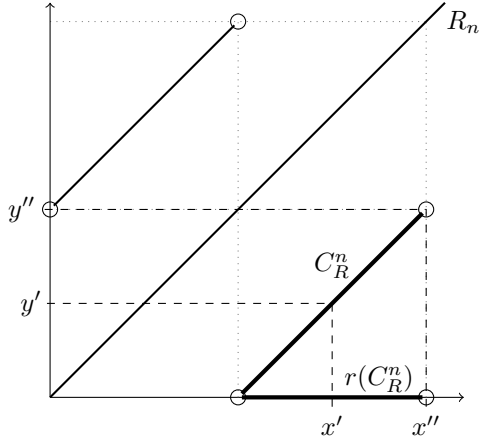
Then since $r(C_R^n)$ is open and since 
$$(\tilde x, \tilde y)\in C_R^n\iff \big(\tilde y-y'=\tilde x-x'\text{ \,and\, } \tilde x\in r(C^n_R)\big)$$
we get that $(x'',y'')\not\in C_R^n$.  
Since also $x''\in\partial r(C_R^n)$ we have $(x'',y'')\not\in R_n$.
Thus $(T_n-x'')(0)\ne (T_n-y'')(0)$.
Hence 
$(T_n-x')$ and $(T_n-y')$ do not agree on $B_{r_n}(0)$ as $y''-y'=x''-x'\in B_{r_n}(0)$ and by Eq.~(\ref{e:balltranslation}).
Hence $(x',y')\not\in (\alpha\times\alpha)^{-1}(G'_{n,\min})$, and thus Eq.~(\ref{e:claim:Gmin-CR})$(\supseteq)$ has been shown.

We will now show the second statement of the lemma.
Let $(x',y')\in (\alpha\times \alpha)^{-1}(C^n_G)\subset (\alpha\times\alpha)^{-1}(\tilde C^n_G)$. 
Then $(x',y')\in C_R^{n}\subset C_R^{n+1}$ and $B_{r_n}(x')\subset r(C^n_R)\subset r(C_R^{n+1})$, where the last inclusion is because $R_n\subset R_{n+1}$.
We need to show that $(x',y')$ is in the interior of $(\alpha\times \alpha)^{-1}(C_G^{n+1})$ as subsets of $C_R^{n+1}$.
Let  $\tilde r:=r_n-r_{n+1}$. Then for any $x''\in B_{\tilde r}(x')$ we get by the triangle inequality that
$$B_{r_{n+1}}(x'')\subset B_{r_n}(x')\subset r(C^{n+1}_R),$$
which shows that $x''\in r'(\tilde C_G^{n+1})$ and hence that $x''\in r'(C^{n+1}_G)$ as $B_{\tilde r}(x')$ is connected, where $r':=r\circ (\alpha\times \alpha)^{-1}$.
Thus
$$U:=(r|_{C^{n+1}_R})^{-1}(B_{\tilde r}(x'))\subset (\alpha\times\alpha)^{-1}(C_G^{n+1})$$
is a neighborhood of $(x',y')$ and is open in $C_R^{n+1}$.

\end{proof}
\proof[Continuation of the proof of Proposition \ref{p:Rp-Hs}]
Since $(x_k,y_k)\to (x,y)$ in $R_{n_0}$, and since $C_R^{n_0}$ is open, we get that 
$(x_k,y_k)\to (x,y)$ in $C_R^{n_0}$ for $k\ge k_1$ for some $k_1\ge k_0$.
By the lemma above, the point $(x,y)$, which is in $(\alpha\times\alpha)^{-1}(C_G^{n_0-1})$, is also in the $C_R^{n_0}$-interior of $(\alpha\times\alpha)^{-1}(C_G^{n_0})$, and thus
$(x_k,y_k)\in (\alpha\times\alpha)^{-1}(C_G^{n_0})$ for $k\ge k_2$ for some $k_2\ge k_1$.
By Lemma \ref{l:seqInSubspace} $(x_k,y_k)\to (x,y)$  in $(\alpha\times\alpha)^{-1}(C_G^{n_0})\subset C_R^{n_0}$ for $k\ge k_2$.
Applying the homeomorphism $\alpha\times\alpha$ we get
$(T-x_k,T-y_k)\to (T-x,T-y)$ in $C_G^{n_0}$ for $k\ge k_2$.
Since the inclusion $f:T+\R^d\to \Omega$ is injective and continuous by \cite[Lemma~4.1,~p.16]{PutnamSpielberg}, where $T+\R^d$ has the inductive limit topology,
then the restriction $f\times f: C_G^{n_0}\to G_{n_0}$ is continuous, and hence 
$(T-x_k,T-y_k)\to (T-x,T-y)$ in $G_{n_0}$ for $k\ge k_2$. Since the inclusion $G_{n_0}\hookrightarrow R_s$ is continuous, we get
$(T-x_k,T-y_k)\to (T-x,T-y)$ in $R_s$.
\end{proof}
\begin{cor}
Let $\alpha:\R^d\to T+\R^d$ be the homeomorphism from Proposition \ref{p:Rd-TplusRd}.
For $n\in \N_0$ let
\begin{eqnarray*}
H_n&:=&(\alpha\times\alpha)(R_n)\\
&=&\{(T-x,T-y)\in (T+\R^d)\times (T+\R^d)\mid (T_n-x)(0)=(T_n-y)(0)\}.
\end{eqnarray*}
Give $H_n$ the subspace topology of $(T+\R^d)\times(T+\R^d)$, where $T+\R^d$ has the inductive limit topology.
Equip the increasing union 
$$H_0\subset H_{1}\subset \cdots$$
with the inductive limit topology.
Then 
$$H_s\,=\bigcup_{n\in\N_0} H_n$$
 as topological spaces.
\end{cor}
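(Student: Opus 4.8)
The plan is to read the corollary off Proposition \ref{p:Rp-Hs} and Proposition \ref{p:Rd-TplusRd} by transporting the inductive-limit structure of $R_s'$ across the isomorphism $\phi$. First I would settle the set-level equality. Since $\phi(x,y)=(T-x,T-y)=(\alpha\times\alpha)(x,y)$ is a bijection from $R_s'$ onto $H_s$ by Proposition \ref{p:Rp-Hs}, and $R_s'=\bigcup_{n}R_n$, the image of a union being the union of images gives $H_s=\phi(R_s')=\bigcup_n\phi(R_n)=\bigcup_n H_n$. The second displayed description of $H_n$ in the statement is merely a rewriting of $R_n=R(T_n)$ under $\alpha$: a pair $(x,y)$ lies in $R_n$ iff $T_n(x)-x=T_n(y)-y$, i.e.\ iff $(T_n-x)(0)=(T_n-y)(0)$, and applying $\alpha\times\alpha$ yields exactly the stated set. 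Thus $H_s=\bigcup_n H_n$ as sets and the two descriptions of $H_n$ agree.

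Next I would match the topologies level by level. By Proposition \ref{p:Rd-TplusRd} the map $\alpha\colon\R^d\to T+\R^d$ is a homeomorphism when $T+\R^d$ carries the inductive-limit topology of Eq.~(\ref{e:transversal}); hence $\alpha\times\alpha\colon\R^d\times\R^d\to(T+\R^d)\times(T+\R^d)$ is a homeomorphism. Since a homeomorphism restricts to a homeomorphism between any subset and its image equipped with the respective subspace topologies, $\alpha\times\alpha$ restricts to a homeomorphism $R_n\to H_n$, where $R_n\subset\R^{2d}$ and $H_n\subset(T+\R^d)\times(T+\R^d)$ carry their subspace topologies. This is precisely the topology placed on $H_n$ in the statement.

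Finally I would pass to the inductive limit. A subset $V\subseteq\bigcup_n H_n$ is open in the inductive-limit topology of $H_0\subset H_1\subset\cdots$ exactly when $V\cap H_n$ is open in $H_n$ for every $n$. Because $\phi|_{R_n}=(\alpha\times\alpha)|_{R_n}\colon R_n\to H_n$ is a homeomorphism onto $H_n$ commuting with the inclusions, and $\phi^{-1}(V)\cap R_n=(\phi|_{R_n})^{-1}(V\cap H_n)$, this is equivalent to $\phi^{-1}(V)\cap R_n$ being open in $R_n$ for every $n$, i.e.\ to $\phi^{-1}(V)$ being open in $R_s'$. Hence $\phi\colon R_s'\to\bigcup_n H_n$ is a homeomorphism onto the inductive limit. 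On the other hand, Proposition \ref{p:Rp-Hs} gives that $\phi\colon R_s'\to H_s$ is a homeomorphism for the given topology on $H_s$. Since $\bigcup_n H_n$ and $H_s$ have the same underlying set and $\phi$ is the same bijection onto each, the two topologies coincide, which is the assertion $H_s=\bigcup_n H_n$ as topological spaces.

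I expect the only point requiring genuine care to be the identification of the two topologies on $H_n$: that the subspace topology of $H_n$ inside $(T+\R^d)\times(T+\R^d)$, with $T+\R^d$ carrying its inductive-limit topology rather than the subspace topology of $\Omega$, agrees with the push-forward under $\alpha\times\alpha$ of the subspace topology of $R_n\subset\R^{2d}$. This is exactly where Proposition \ref{p:Rd-TplusRd} is indispensable: with the subspace topology of $\Omega$ on $T+\R^d$ the map $\alpha$ would fail to be a homeomorphism (cf.\ Proposition \ref{p:TplusxToxNotContinuous}) and the level-wise identification would break down. Everything else is the standard bookkeeping of inductive limits.
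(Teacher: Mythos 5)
Your proof is correct and follows essentially the same route as the paper: restrict $\alpha\times\alpha$ to level-wise homeomorphisms $R_n\to H_n$ compatible with the inclusions, transport the inductive-limit topology across them, and then invoke Proposition \ref{p:Rp-Hs} to conclude that the identity map between $H_s$ and $\bigcup_n H_n$ is a homeomorphism. The only cosmetic difference is that you unwind the inductive-limit comparison via open sets, where the paper cites the commutative ladder diagram directly.
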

\begin{proof}
%
Let $$H_s'':=\bigcup_{n\in\N_0}H_n.$$
 Clearly $H_s=H_s''$ as sets.
 Since  $\phi_n: R_n\to H_n$ given by the restriction
 $$\phi_n:= \alpha\times \alpha:R_n\to H_n$$
 is a homeomorphism, and since the following is a commutative diagram
 \begin{equation*}
\xymatrix{ R_0\ar@{^(->}[r]^{ }\ar[d]_{\phi_0}^{\cong}& R_1\ar@{^(->}[r]^{ }\ar[d]_{\phi_1}^{\cong}& R_2\ar@{^(->}[r]\ar[d]_{\phi_2}^{\cong}&\\
  H_0\ar@{^(->}[r]^{ }&H_1\ar@{^(->}[r]^{ }&H_2\ar@{^(->}[r]&}
\end{equation*}
we get
$$H_s\cong R_s'=\lim\limits_{\to}  R_n \cong \lim\limits_{\to} H_n= H_s''.$$
Since the map from $H_s$ to $H_s''$ is the identity map, which by the above expression is a homeomorphism, the corollary follows.
\end{proof}
%

\section{\textbf{Stable $C^*$-algebra $S$}}\label{s:S}
\noindent
In this section we build a cochain complex with connecting maps from which we define the so-called stable cohomology. We describe the $K$-theory of $S$ in terms of this stable cohomology. 

For fixed tiling $T\in \Omega$, recall the definition of the tiling $T_n$ with shrunk prototiles given by
$$T_n:=\frac{1}{\lambda^n}\omega^n(T)\lambda^n\qquad n\in\N_0.$$
That is $T_n(x):=\frac{1}{\lambda^n}(\omega^n(T)(\lambda^n x))$ for $x\in \R^d$.

Recall as well the associated equivalence relation $R_n$ (for $T_n$) on $\R^d$ given by 
$$R_n:=\{(x,y)\in\R^{2d}\mid T_n(x)-x=T_n(y)-y\}.$$
Then the increasing union 
$$R_s':=\bigcup_{n=0}^\infty R_n$$
is an \'etale equivalence relation, where $R_n\subset \R^{2d}$ is given the subspace topology and $R_s'$ the inductive limit topology.
For more details see subsection \ref{ss:Rs'}. That subsection also comments on the connected components of $R_n$, which play an important role in some of the proofs of this section.

Let $X_k^{T_n}$ be the $k$-skeleton of $T_n$, $n\in\N_0$, $k=0,\dots,d$.
 Then $X_k^{T_n}$ is $R_n$-invariant, i.e.~if $x\in X_k^{T_n}$ and $x\sim_{R_n}y$ then $y\in X_k^{T_n}$, or more succinctly,
 $$s(r^{-1}(X_k^{T_n}))\subset X_k^{T_n},$$
 where $r$ and $s$ denote the range and source map of the groupoid $R_n$.
 The restriction
 \begin{eqnarray*}
 R_n\mid_{_{X_k^{T_n}}}&:=&r^{-1}(X_k^{T_n})\cap s^{-1}(X_k^{T_n}),\qquad n\in\N_0,\, k=0,\ldots,d\\
 &=&\{(x,y)\in R_n\mid x,y\in X_k^{T_n}\}
 \end{eqnarray*}
 is a sub-equivalence relation of $R_n$ and hence a subgroupoid of $R_n$ with unit space $X_k^{T_n}$.
 Moreover, the complement
 $$X_k^{T_n}-X_{k-1}^{T_n}:=X_k^{T_n}\backslash X_{k-1}^{T_n}, \quad n\in\N_0,$$
 is also $R_n$-invariant, and the restriction
 $$R_n\mid_{X_k^{T_n}-X_{k-1}^{T_n}}, \qquad n\in\N_0,\, k=1,\ldots d$$
 is a sub-equivalence relation of $R_n$ and thus a subgroupoid of $R_n$ with unit space $X_k^{T_n}-X_{k-1}^{T_n}$.
Since the equivalence relations $R_n$ are \'etale, i.e.~$r:R_n\to \R^d$ and $s:R_n\to \R^d$ are local homeomorphisms, 
by restriction these maps are still local homeomorphisms, and hence the above sub-equivalence relations are also \'etale.
We can now define the following reduced groupoid $C^*$-algebras in the sense of Renault \cite{Renault}.
\begin{defn}[Groupoid $C^*$-algebras]\label{d:ABCIJ}
 For $n\in\N_0$, let
  \begin{eqnarray*}
    &&A_n:=C_r^*(R_n)\\
    &&B_n:=C_r^*(R_n|_{X^{T_n}_1})\\
    &&C_n:=C_r^*(R_n|_{X^{T_n}_0})\\
    &&I_{n}:=C_r^*(R_n|_{X^{T_n}_2-X^{T_n}_1})\\
    &&J_{n}:=C_r^*(R_n|_{X^{T_n}_1-X^{T_n}_0}).
  \end{eqnarray*}
\end{defn}
\noindent
By the following proposition the algebras $C_n$, $I_n$, $J_n$, $n\in\N_0$, can be written in terms of the compacts.
Using the theory for the compact operators, we can then compute their corresponding $K$-theory groups. 
For convenience to the reader, we provide in the appendix a short proof to the following proposition.
 \begin{pro}{\cite[Prop.~3.7-9]{Gon1}} \label{p:compacts}
 For $n\in\N_0$,
  \begin{enumerate}
  \item $C_n\cong \bigoplus\limits_{k=1}^{\mathrm{sV}} K(\ell^2([v_k]))$
\item $J_n \cong \bigoplus\limits_{k=1}^{\mathrm{sE}} C_0(\open{e}_k, K(\ell^2([e_k])))$
\item $I_{n}\cong \bigoplus\limits_{k=1}^{\mathrm{sF}} C_0(\open{f}_k, K(\ell^2([f_k])))$
\end{enumerate}
where $\mathrm{sV}, \mathrm{sE}, \mathrm{sF}$ are the number of stable vertices, stable edges, stable faces, respectively, and
$v_k, e_k, f_k$ are  representatives of the $R_n$-equivalence classes.
\end{pro}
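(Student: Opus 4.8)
The plan is to prove the three isomorphisms by a common two-step scheme: first decompose each groupoid into a finite direct sum indexed by the stable cells, and then identify each summand. For the first step, note that the unit spaces $X_0^{T_n}$, $X_1^{T_n}-X_0^{T_n}$ and $X_2^{T_n}-X_1^{T_n}$ each partition into the (finitely many) $R_n$-equivalence classes of cells, namely into $\sV$, $\sE$, $\sF$ classes respectively. Each such class is $R_n$-invariant, and it is moreover clopen in the relevant unit space: the open cells are clopen in the skeleton-difference spaces, and vertices are isolated. Restricting an \'etale equivalence relation to a clopen invariant subset produces a complementary pair of ideals at the level of reduced C*-algebras, so I would first argue that $C_n$, $J_n$, $I_n$ split as the finite direct sums of the reduced C*-algebras of the subgroupoids living over the individual stable cells $[v_k]$, $[e_k]$, $[f_k]$. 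This reduces everything to computing $C_r^*$ of a single class.

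The vertex case is then immediate: a class $[v_k]$ is a countable discrete set, the restricted groupoid is the transitive principal (pair) groupoid on $[v_k]$, and its reduced C*-algebra is $K(\ell^2([v_k]))$, giving (1). For edges and faces the heart of the matter is a rigidity statement. The cell-star $T_n(x)$ is constant as $x$ ranges over the interior of a fixed cell, so for interior points $x,y$ of the same cell the condition $T_n(x)-x=T_n(y)-y$ says that a finite patch equals a translate of itself, forcing $x=y$. Combined with the translation $t$ realizing a stable equivalence $e'=e_k+t$ of cells (under which $T_n(x_0+t)=T_n(x_0)+t$, so that $x_0\sim_{R_n} x_0+t$ for every interior $x_0\in\open{e}_k$), this shows that two interior points of cells in the same class are $R_n$-equivalent if and only if they occupy the same position within their cells.

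Consequently I would identify the subgroupoid over the edge class $[e_k]$ with the product $\open{e}_k\times([e_k]\times[e_k])$ of the trivial groupoid on the open edge with the pair groupoid on the discrete set $[e_k]$ of edges in the class: a point is recorded by its position $x_0\in\open{e}_k$ together with the label of the edge it sits in, and the equivalence identifies all labels at a fixed position. The reduced C*-algebra of such a product is $C_0(\open{e}_k)\otimes K(\ell^2([e_k]))\cong C_0(\open{e}_k,K(\ell^2([e_k])))$, which one sees concretely by writing an element of $C_c$ as a compactly supported continuous family of matrices $x_0\mapsto(f(x_0,e,e'))_{e,e'}$ whose convolution is fibrewise matrix multiplication. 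This gives (2), and the identical argument applied to $2$-cells, using $T_n(\open{f}_k)=f_k$, gives (3).

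The main obstacle I anticipate is promoting the set-theoretic identifications to homeomorphisms of topological groupoids, rather than mere bijections. This is exactly where the \'etale structure and the discussion of connected components in Subsection \ref{ss:Rs'} are needed: each ordered pair of cells in a class indexes a connected component of the restricted groupoid that maps homeomorphically, via the range map, onto the open cell $\open{e}_k$ (respectively $\open{f}_k$, respectively a point), so that the product decomposition is genuinely topological. A secondary point to check is that the direct-sum splitting is valid at the C*-level; this follows because the finitely many classes constitute a clopen partition of the relevant unit space, making the associated ideals complementary summands.
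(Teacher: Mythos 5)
Your proposal is correct and is essentially the paper's own argument (Proposition \ref{p:a:compacts}): both decompose the algebra over the finitely many stable cells and then identify each summand using the fact that equivalence within a class is implemented by the translations $\gamma_{e,e'}$ between cells, so that equivalent interior points occupy the same relative position within their cells. The only difference is packaging — you identify each restricted groupoid abstractly as (open cell) $\times$ (pair groupoid) and invoke standard facts about pair groupoids and tensor products, while the paper carries out the same identification concretely through the regular representation, realizing elements as block matrices (for vertices) or as compactly supported fields of matrices over the open cell (for edges and faces).
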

\begin{proof}
Proposition \ref{p:a:compacts}.
\end{proof}
Taking the $K$-theory of these algebras, we get (cf.~\cite{Rordam})
\begin{cor}For $n\in \N_0$
\begin{enumerate}
\item $K_0(C_n)\cong\Z^\mathrm{sV}$ \qquad$K_1(C_n)=0$.
\item $K_0(J_n)=0$\qquad\quad\, $K_1(J_n)\cong \Z^{\mathrm{sE}}$
\item $K_0(I_n)\cong \Z^{\mathrm{sF}}$ \qquad\,\,$K_1(I_n)=0$
\end{enumerate}
\end{cor}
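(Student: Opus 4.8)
The plan is to read the $K$-groups directly off the explicit descriptions of $C_n$, $J_n$, and $I_n$ provided by Proposition \ref{p:compacts}, invoking only two standard facts (cf.~\cite{Rordam}): additivity of $K$-theory over finite direct sums, $K_i\big(\bigoplus_k A_k\big)\cong\bigoplus_k K_i(A_k)$, and the $K$-theory of the compacts, $K_0(K(H))\cong\Z$ and $K_1(K(H))=0$ for any separable Hilbert space $H$. Since every summand in Proposition \ref{p:compacts} is a copy of $K(\ell^2([\sigma_k]))$ tensored with a commutative algebra supported on an open cell, the whole computation reduces to determining the $K$-theory of $C_0(\open{\sigma})\otimes K$ for $\sigma$ a cell of dimension $0$, $1$, or $2$, and then summing.

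Part (1) is immediate: $C_n\cong\bigoplus_{k=1}^{sV}K(\ell^2([v_k]))$ is a finite direct sum of compacts, so additivity and $K_*(K(H))$ give $K_0(C_n)\cong\Z^{sV}$ and $K_1(C_n)=0$. For parts (2) and (3) I would first identify the underlying open cells with Euclidean spaces: an open edge $\open{e}_k$ (an edge minus its two endpoints) is homeomorphic to $\R$, and an open face $\open{f}_k$ (the interior of a $2$-cell, which is homeomorphic to the closed unit disk) is homeomorphic to $\R^2$. Hence
$$J_n\cong\bigoplus_{k=1}^{sE}C_0(\R)\otimes K(\ell^2([e_k])),\qquad I_n\cong\bigoplus_{k=1}^{sF}C_0(\R^2)\otimes K(\ell^2([f_k])).$$
Recognizing $C_0(\R)\otimes A$ as the suspension $SA$ and $C_0(\R^2)\otimes A$ as the double suspension $S^2A$, I would apply the suspension isomorphism $K_i(SA)\cong K_{1-i}(A)$ together with Bott periodicity $K_i(S^2A)\cong K_i(A)$. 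Taking $A=K(\ell^2([\sigma_k]))$ yields $K_0(C_0(\R)\otimes K)=K_1(K)=0$ and $K_1(C_0(\R)\otimes K)=K_0(K)\cong\Z$, so $K_0(J_n)=0$ and $K_1(J_n)\cong\Z^{sE}$; likewise $K_0(C_0(\R^2)\otimes K)=K_0(K)\cong\Z$ and $K_1(C_0(\R^2)\otimes K)=K_1(K)=0$, so $K_0(I_n)\cong\Z^{sF}$ and $K_1(I_n)=0$.

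Once Proposition \ref{p:compacts} is in hand this is essentially bookkeeping, so I do not expect a serious obstacle. The only points needing a little care are the homeomorphisms $\open{e}_k\cong\R$ and $\open{f}_k\cong\R^2$, which rest on the tiling hypothesis that closed $2$-cells are homeomorphic to the closed unit disk (so their interiors are open disks), and keeping track of the degree behaviour in the odd-dimensional (single-suspension) case versus the even-dimensional (Bott) case. It is worth noting that the resulting pattern --- the $K$-theory of a $k$-cell algebra being concentrated in degree $k\bmod 2$ --- is precisely what produces the cochain-complex structure of diagram (\ref{e:HSdiagram}) in the sequel.
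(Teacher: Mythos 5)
Your proposal is correct and is essentially the argument the paper intends: the paper simply states the corollary with a citation to R{\o}rdam after Proposition \ref{p:compacts}, and the standard route it relies on is exactly yours — additivity of $K$-theory over finite direct sums, $K_*(K(H))$, the identifications $\open{e}_k\cong\R$ and $\open{f}_k\cong\R^2$, and the suspension/Bott periodicity isomorphisms. No gaps; your filled-in details (including the care about open cells being homeomorphic to $\R$ or $\R^2$) are exactly what the paper leaves implicit.
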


Since $I_n$, $J_n$ and $C_n$ are given in terms of the compacts
, they are type I and hence amenable. Using that $C_n$ is amenable we get by \cite[Remark~4.10~p.32]{Ren-IdealStructure} the short exact sequence
\begin{equation}\label{e:JBC}
  \xymatrix{0\ar[r]&J_n\ar[r]&B_n\ar[r]&C_n\ar[r]&0.}
\end{equation}
Since the class of type I $C^*$-algebras is closed under extensions we see that $B_n$ is type I, hence amenable. 
Using that $B_n$ is amenable we get by arguing as above the short exact sequence
\begin{equation}\label{e:IAB}
  \xymatrix{0\ar[r]&I_{n}\ar[r]&A_n\ar[r]&B_n\ar[r]&0,}
\end{equation}
and that $A_n$ is type I, hence amenable.
We would like to remark that it is crucial that the quotient algebras be amenable for the above statements to hold, otherwise, one can find counterexamples in \cite[Remark~4.10~p.32]{Ren-IdealStructure} and \cite{Willett}.

We are interested in computing the $K$-theory of the $C^*$-algebra (cf.~Proposition \ref{ap:Sisdirlim}) 
$$S'=\lim_{\to} \xymatrix{A_0\ar[r]^{\iota_0}&A_1\ar[r]^{\iota_1}&A_2\ar[r]^{\iota_2}&},$$
where the inclusion map $\iota_n:A_n\hookrightarrow A_{n+1}$, $n\in\N_0$, is a $*$-homomorphism by Remark \ref{r:iota-cont}.
We will first compute the abelian groups $K_0(A_n)$, $K_1(A_n)$, and then we will construct a homotopy of $*$-homomorphisms in order 
to compute the group homomorphisms $K_0(\iota_n)$, $K_1(\iota_n)$.
\subsection{The $K$-theory groups of $A_n$}
To compute the $K$-theory groups $K_0(A_n)$ and $K_1(A_n)$, we need to put an orientation on the cells of the tiling.
For tilings of the plane, we can put on the faces the counterclockwise orientation, and for 
the edges we can put the orientation of the vectors 
$$(\cos\theta,\sin\theta),\quad \theta\in(-\frac{\pi}{2},\frac{\pi}{2}].$$
Each short exact sequence of $C^*$-algebras induces in $K$-theory a six-term exact sequence of abelian groups. 
From Eq.~(\ref{e:JBC}) we get  the six term exact sequence
\begin{equation}\label{e:K-JBC}
\xymatrix{
0\ar[r]&K_0(B_n)\ar@{^{(}->}[r]&\ar[d]^{\delta^0} K_0(C_n)      &\!\!\!\!\!\!\!\!\!\!\!\!\!\!\!\cong \Z^{\mathrm{sV}}\\
0\ar[u]      &K_1(B_n)\ar[l]&\ar@{>>}[l] K_1(J_n)&\!\!\!\!\!\!\!\!\!\!\!\!\!\!\!\cong \Z^{\mathrm{sE}}.
}
\end{equation}
because $K_0(J_n)=0$ and $K_1(C_n)=0$. For $n=0$, the exponential map $\delta^0:\Z^{\sV}\to\Z^{\sE}$ is computed to be (cf.~\cite[Prop.~2.1]{Gon2})
\begin{equation}\label{e:delta0}
\delta^0([v]):=\sum_{e\in T(v)} \delta_{e,v}\,\, [e],
\end{equation}
where  $\delta_{e,v}$ was defined in Eq.~(\ref{e:deltaev}), and
$T(v)$ is the set of all tiles in $T$ that contain the vertex $v$, and $e\in T(v)$ means all the edges in the patch $T(v)$.
We should remark that we are using a different convention for $\delta^0$ than the one in \cite{{Gon2}} (one is the negative of the other
). Our $\delta^0$ corresponds to the standard differential defining cellular cohomology.
There is no ambiguity in calling $\delta^0:K_0(C_n)\to K_1(J_n)$ and $\delta^0:\Z^{\sV}\to\Z^{\sE}$ with the same name, as the latter is a computation of 
the former via the isomorphisms $K_0(C_n)\cong\Z^{\sV}$, $K_1(J_n)\cong\Z^{sE}$.
For convenience to the reader we provide a proof of the description of the exponential map for tilings of dimension 1 in Proposition \ref{p:delta0-d1}.

From Eq.~(\ref{e:IAB}) we get the six term exact sequence
\begin{equation}\label{e:K-IAB-1}
\xymatrix{
\Z^{sF}\cong&\!\!\!\!\!\!\!\!\!\!\!\!\!\!\!K_0(I_n)\ar[r]&K_0(A_n)\ar@{>>}[r]&\ar[d] K_0(B_n) \\
     &\!\!\!\!\!\!\!\!\!\!\!\!\!\!\!K_1(B_n)\ar[u]^{\tilde\delta^1}       &K_1(A_n)\ar@{_{(}->}[l]&\ar[l] 0.
}
\end{equation}
because $K_1(I_n)=0$, where $\tilde\delta^1$ is the index map.

From Eq.~(\ref{e:K-JBC}), we get 
$$K_0(B_n)\cong \ker\delta^0$$
\begin{equation}\label{e:K1ofBn}
K_1(B_n)\cong \frac{\Z^{sE}}{\im \delta^0}
\end{equation}
Thus, Eq.~(\ref{e:K-IAB-1}) can be rewritten as 
\begin{equation}\label{e:K-IAB-2}
\xymatrix@C=.5pc @R=.55pc{
			  & 			  &&\Z^{\sF}\ar@{}[d]|*[@]{\cong}\\
			  &				  &&K_0(I_n)\ar[rr]&&K_0(A_n)\ar@{>>}[rr]&&\ar[dd]K_0(B_n)        &\!\!\!\cong \ker\delta^0\subset \Z^{\sV} \\
			  &				  &&               &&                    &&                       &\\
\Z^{\sE}\cong &\!\!\!\! K_1(J_n)\ar@{>>}[rr]_{q}\ar[rruu]^{\delta^1}          
                              &&K_1(B_n)\ar@{}[d]|*[@]{\cong}\ar[uu]_{\tilde\delta^1}       
                                               &&K_1(A_n)\ar@{_{(}->}[ll]&&\ar[ll] 0,&&\\
              &   			  &&\frac{\Z^{\sE}}{\im \delta^0}	                                                    
}
\end{equation}
where $q$ is the quotient map, and $\delta^1:=\tilde\delta^1\circ q$. Since $q$ is surjective, $\im \delta^1=\im \tilde\delta^1$, and therefore 
by Lemma \ref{l:es-ses} we get the short exact sequence
\begin{equation}\label{e:K0A0}
\xymatrix{0\ar[r]&\frac{\Z^{\sF}}{\im \delta^1}\ar@{^{(}->}[r]&K_0(A_n)\ar@{>>}[r]&\ker\delta^0\ar[r]&0}.
\end{equation}
Since $\ker\delta^0$ is free abelian, the short exact sequence splits
\begin{equation}\label{e:K0AnDirectSum}
K_0(A_n)\cong \frac{\Z^{\sF}}{\im \delta^1}\oplus \ker\delta^0.
\end{equation}
Note that $\delta^1(\im \delta^0)=0$, i.e.~
\begin{equation}\label{e:d12iszero}
\delta^1\circ\delta^0=0.
\end{equation}
From Eq.~(\ref{e:K-IAB-2}) we get
\begin{equation}\label{e:K1A0}
K_1(A_n)\cong \ker\tilde\delta^1\cong \frac{\ker\delta^1}{\ker q}=\frac{\ker\delta^1}{\im \delta^0}.
\end{equation}
For $n=0$, the (extended) index map $\delta^1:\Z^{\sE}\to\Z^{\sF}$ is computed in \cite[Prop.~2.4]{Gon2}  to be
\begin{equation}\label{e:delta1}
\delta^1([e]):=\sum_{f\in T(\open{e}) } \delta_{f,e}\,\,\, [f],
\end{equation}
where  $\delta_{f,e}$ was defined in Eq.~(\ref{e:deltafe}), and
where $T(\open{e})$ means all the tiles that contain the edge $e$ without its two vertices, and $f\in T(\open{e})$ means all the faces in the patch $T(\open{e})$.

\begin{lem}\label{l:imDelta1isZ}
For tilings of the plane,
$$\im\delta^1=\{(x_1,\ldots,x_{\sF})\in \Z^{\sF}\mid x_1+\ldots+x_{\sF}=0\}.$$ 
\end{lem}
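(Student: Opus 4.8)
The plan is to establish the two inclusions separately. For the inclusion $\im\delta^1\subseteq\{x\in\Z^{\sF}\mid x_1+\cdots+x_{\sF}=0\}$, I would read off from (\ref{e:delta1-intro}) that each generator satisfies $\delta^1([e])=[f_1]-[f_2]$, a difference of two basis vectors of $\Z^{\sF}$ (possibly equal as stable classes, in which case $\delta^1([e])=0$). Summing the coordinates of such an element gives $1-1=0$, so every generator, and hence every element of the subgroup $\im\delta^1$, lies in the sum-zero sublattice. This direction is immediate.

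For the reverse inclusion I would first recall the standard fact that the sum-zero sublattice $L:=\{x\in\Z^{\sF}\mid x_1+\cdots+x_{\sF}=0\}$ is generated as an abelian group by the differences $[f]-[f']$ of basis vectors; indeed any $x=\sum_i x_i[f_i]\in L$ can be rewritten as $x=\sum_i x_i\big([f_i]-[f_1]\big)$ using $\sum_i x_i=0$, and each $[f_i]-[f_1]$ is such a difference. Thus it suffices to show that $[f]-[f']\in\im\delta^1$ for every pair of stable faces $f,f'$.

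To produce such a difference I would use a connectivity argument on the tiling. Pick actual faces $F$ and $F'$ of $T_n$ representing the stable classes $f$ and $f'$. Since the tiling covers the connected space $\R^2$ and its tiles are closed topological disks meeting along edges, the face-adjacency graph --- whose vertices are the faces and whose edges join two faces sharing a $1$-cell --- is connected: a path in $\R^2$ from an interior point of $F$ to an interior point of $F'$ that avoids the (locally finite) $0$-skeleton crosses finitely many edges and thereby exhibits a chain of edge-adjacent faces $F=g_0,g_1,\ldots,g_m=F'$. For each consecutive pair $g_{i-1},g_i$ let $\tilde e_i$ be a shared edge, so that by (\ref{e:delta1-intro}) one has $\delta^1([\tilde e_i])=\pm\big([g_{i-1}]-[g_i]\big)$ in $\Z^{\sF}$. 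Choosing the sign $\eps_i\in\{\pm1\}$ with $\eps_i\,\delta^1([\tilde e_i])=[g_{i-1}]-[g_i]$ and telescoping,
\begin{equation*}
\sum_{i=1}^m \eps_i\,\delta^1([\tilde e_i])=\sum_{i=1}^m\big([g_{i-1}]-[g_i]\big)=[g_0]-[g_m]=[f]-[f'],
\end{equation*}
which lies in the subgroup $\im\delta^1$. Hence $L\subseteq\im\delta^1$, and combined with the first inclusion this gives equality.

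The main obstacle I anticipate is the connectivity step: one must argue cleanly that any two faces are joined by a chain of \emph{edge}-adjacent faces (not merely vertex-adjacent), which is exactly where the planar hypothesis $d=2$ is genuinely used --- it guarantees both that each open edge borders exactly two faces, making $\delta^1([e])$ a difference of two basis vectors, and that crossing the $1$-skeleton transversally realizes genuine edge-adjacencies. The telescoping and the sign bookkeeping are then purely formal.
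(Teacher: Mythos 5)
Your proof is correct and takes essentially the same approach as the paper: both reduce the problem to showing that any two faces are joined by a chain of edge-adjacent faces, telescope the relations $\delta^1([e])=[f_1]-[f_2]$ along that chain to conclude that every difference of stable face classes lies in $\im\delta^1$, and then identify the sum-zero sublattice of $\Z^{\sF}$ with the span of such differences. The only differences are cosmetic --- you spell out the $\subseteq$ inclusion and the planar face-connectivity argument that the paper compresses into ``by connectedness of the tiling,'' while the paper instead verifies the lattice identity by an explicit coordinate computation.
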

\begin{proof}
For each edge $e$,
$$\delta^1([e])=[f_1]-[f_2],$$
where $f_1$, $f_2$ are the two faces adjacent to $e$. Thus, whenever two faces $f_1$, $f_2$ share an edge, $[f_1]-[f_2]\in \im\delta^1$, i.e.~$[f_1]\sim_{\im\delta^1} [f_2]$.
By connectedness of the tiling, all faces are mapped to the same element in $\frac{\Z^{\sF}}{\im\delta^1}$, i.e.~all faces are $\im\delta^1$-equivalent,
i.e.~if $f_1$ and $f_2$ are arbitrary faces then $[f_1]-[f_2]\in \im\delta^1$. For instance if $f_1 f_2 f_3$ is a chain of faces (i.e.~$f_1$ shares an edge with $f_2$, and $f_2$ shares an edge with $f_3$) then 
$$\im\delta^1\ni([f_1]-[f_2])+([f_2]-[f_3])=[f_1]-[f_3].$$
Thus
\begin{eqnarray*}
\im\delta^1&=&\Span_{\Z}\{(1,-1,0,\ldots,0),(0,1,-1,0,\ldots,0),(0,\ldots,0,1,-1)\}\\
&=&\{(x_1,\ldots,x_{\sF})\in \Z^{\sF}\mid x_1+\ldots+x_{\sF}=0\}.
\end{eqnarray*}
The second equality is because
\begin{eqnarray*}
&&y_1(1,-1,0,\ldots,0)+y_2(0,1,-1,0,\ldots,0)+y_{_{\sF-1}}(0,\ldots,0,1,-1)\\
&=&(y_1,y_2-y_1,y_3-y_2,y_4-y_3,\ldots,y_{\sF-1}-y_{_{\sF-2}},-y_{_{\sF-1}})\\
&=&(x_1,x_2,x_3,\ldots,x_{_{\sF}}),
\end{eqnarray*}
and thus
$$x_1+x_2+\cdots+x_{_{\sF}}=y_1+y_2-y_1+y_3-y_2+\cdots +y_{_{\sF-1}}-y_{_{\sF-1}}=0.$$
Finally, notice that from the $x_i$'s one gets the $y_i$'s and vice-versa.
\end{proof}

\begin{lem}\label{l:plane-ZsFoverImDelta1}
For tilings of the plane, it holds
$$\frac{\Z^{\sF}}{\im\delta^1}\cong\Z$$
$$K_0(A_n)\cong \Z\oplus \ker\delta^0$$
\end{lem}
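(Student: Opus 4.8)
The plan is to read off both isomorphisms from results already established earlier in this section, so the argument is short. For the first claim, I would exhibit $\im\delta^1$ as the kernel of an explicit surjection onto $\Z$. Define the augmentation homomorphism $\Sigma:\Z^{\sF}\to\Z$ by $\Sigma(x_1,\ldots,x_{\sF}):=x_1+\cdots+x_{\sF}$. By Lemma \ref{l:imDelta1isZ} we have $\im\delta^1=\ker\Sigma$, and $\Sigma$ is surjective since $\Sigma(1,0,\ldots,0)=1$. The first isomorphism theorem then gives
$$\frac{\Z^{\sF}}{\im\delta^1}=\frac{\Z^{\sF}}{\ker\Sigma}\cong\im\Sigma=\Z,$$
which is the first assertion.

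For the second claim, I would substitute this computation into the direct sum decomposition obtained in Eq.~(\ref{e:K0AnDirectSum}). That equation, which followed from the splitting of the short exact sequence (\ref{e:K0A0}) using that $\ker\delta^0$ is free abelian, states $K_0(A_n)\cong\frac{\Z^{\sF}}{\im\delta^1}\oplus\ker\delta^0$. Replacing the first summand by $\Z$ via the isomorphism just established yields
$$K_0(A_n)\cong\Z\oplus\ker\delta^0,$$
as desired.

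There is essentially no obstacle here: the substantive content was carried out in Lemma \ref{l:imDelta1isZ} (the connectedness-of-the-tiling argument identifying $\im\delta^1$ as the zero-sum sublattice) and in the derivation of the splitting (\ref{e:K0AnDirectSum}). The only points requiring a moment's care are the surjectivity of $\Sigma$, which is immediate, and the observation that substituting an isomorphic group for one summand of a direct sum preserves the isomorphism type of the whole; this is automatic since $\oplus$ respects isomorphisms of summands, so no additional compatibility with the $\ker\delta^0$ factor needs to be checked.
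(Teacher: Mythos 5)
Your proposal is correct and follows essentially the same route as the paper: the paper also defines the augmentation map $\phi(x_1,\ldots,x_{\sF})=x_1+\cdots+x_{\sF}$, identifies $\im\delta^1=\ker\phi$ via Lemma \ref{l:imDelta1isZ}, applies the first isomorphism theorem, and then invokes Eq.~(\ref{e:K0AnDirectSum}) for the second claim. No gaps.
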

\begin{proof}
Let $\phi:\Z^{\sF}\to \Z$ be the linear map $\phi(x_1,\ldots,x_{\sF})=x_1+\ldots+x_{\sF}$. Note that $\phi$ has matrix $(1,\ldots,1)$.
By Lemma \ref{l:imDelta1isZ}, $\ker\phi=\im\delta^1$. Hence since $\phi$ is surjective,
$$\frac{\Z^{\sF}}{\im\delta^1}=\frac{\Z^{\sF}}{\ker\phi}\cong\im\phi=\Z.$$
The second equation in the lemma follows by Eq.~(\ref{e:K0AnDirectSum}).
\end{proof}
\subsection{Homotopy}\label{ss:homotopy}

Recall that $T$ is a fixed tiling of dimension $d$, and that $T_0:=T$, and $T_1:=\frac1{\lambda}\omega(T)\lambda$. Let $t_1,\ldots, t_N\in T$ be the prototiles (then for dimension $d=1$, N=sE is the number of stable edges, and for $d=2$, $N=\sF$ is the number of stable faces).
  
Roughly speaking, one can say that the tiles of $T_0$ can be obtained from the shrunk tiles of $T_1$ simply by joining them, i.e.~for a tile $t\in T$, one joins the shrunk tiles of the patch $\frac{1}{\lambda}(\omega(t))\subset T_1$ to get $t$.
We need a homotopy $h_s$ of $\R^d$ such that the restriction $h_s:t\to t$ homotopes a unique shrunk tile $t'\in \frac{1}{\lambda}(\omega(t))$ to $t$, and the rest of the cells will collapse cellularly, 
i.e.~the vertices of $\frac1{\lambda}\omega(t)$ collapse to vertices of $t$,
the edges of $\frac1{\lambda}\omega(t)$ collapse to edges or vertices of $t$,
and the faces of $\frac1{\lambda}\omega(t)\backslash\{t'\}$ collapse to edges or vertices of $t$.
Since $T$ is a substitution tiling, we define $h:t_i\to t_i$ on the prototiles and extend it to the whole $\R^d$ by translation. 
Of course, one has to ensure that the extension by translation to the whole $\R^d$ is possible, for one can easily construct a homotopy on the prototiles which will not extend by translation, e.g.~if ``pulling'' occurs in opposite directions when gluing together two copies of the patches $\frac1{\lambda}\omega(t_i)$.

Formally, we assume that there is a homotopy satisfying the following assumptions
\begin{defn}[Homotopy $h_s$]\label{d:homotopy}
Suppose that $h:\R^d\times[0,1]\to \R^d$, $d\le 2$, is a homotopy that satisfies the following items with the assumption that the homotopy ignores the orientation of the cells, i.e.~as if the cells had no orientation,
\begin{itemize}
\item[(1)] $h_0=id$
\item[(2)] $h_s$, $0<s<1$ is a homeomorphism where for each prototile $t_i\in T_0$
\begin{itemize} 
\item[($a$)] $h_s:t_i\to t_i$ is a homeomorphism and is cellular. Hence, in particular, $h_s(v)=v$ for every vertex $v\in t_i$, and $h_s(e)=e$ for every edge $e\in t_i$.
\end{itemize}
\item[(3)] $h_1:T_1\to T_0$, where for each prototile $t_i\in T_0$ the following holds
\begin{itemize}
\item[($i$)] $h_1:\frac1{\lambda}\omega(t_i)\to t_i$ is cellular and surjective. Moreover, for every edge $e\in t_i$, and every edge $e'\in \frac1{\lambda}\omega(t_i)$, if $e\in h_1(e')$ then $h_1(e')=e$,  i.e.~$h_1(e')$ cannot contain more than one edge $e$.
\item[($ii$)] there is a unique tile $t'_i\in \frac1{\lambda}\omega(t_i)$ which homotopes to $t_i$ and such that
$h_1:\open{t}\,\!{'_{i}}\to \open{t}_i$ is a homeomorphism.
\item[($iii$)] for each vertex $v\in t_i$, we require that $h_1(v)=v$. \\
For each edge $e\in \partial t_i$, there is a unique shrunk edge $e'\in \frac1{\lambda}\omega(\partial t_i)$ that homotopes to $e$.
 For such shrunk edge $e'$ we require that $e'\subset e$, and that 
$h_1:\open{e}\,\!{'}\to \open{e}$ is a homeomorphism.
\item[($iv$)] for each shrunk tile $t''\in \frac{1}{\lambda}\omega(t_i)$, and each edge $e\in t_i$, if $e\in h_1(t'')$ then there is exactly two edges $e'_1, e'_2\in t''$ with $h_1(e'_1)=e=h_1(e'_2)$ when $t''\ne t'_i$, and there is exactly one edge when $t''=t'_i$.
Moreover, $h_1^{-1}(e)$ is connected.  
\end{itemize}
\item[(4)] Items (2)  and (3) extend to the rest of the tiles of $T_0$ by translation of the scaled substituted prototiles $\frac1{\lambda}\omega(t_i+x_0)=\frac1{\lambda}\omega(t_i)+x_0$.
\end{itemize}
\end{defn}
\noindent
\begin{rem}
We should note that some parts of item $3$ follow from item $2(a)$ by continuity, but for clarity we prefer to spell them out.
Note that for $s\in[0,1]$, the underlying space of $h_s(\frac1{\lambda}\omega(t_i))$ is $t_i$.
Hence for any patch $P\subset T$ and any  $s\in[0,1]$, the underlying space of $h_s(\frac1{\lambda}\omega(P))$ is $P$. 
For $s\in[0,1]$, the continuous map $h_s:T\to T$ is by construction cellular, i.e.~$h_s(X_1^T)\subset X_1^T$ and $h_s(X_0^T)\subset X_0^T$, where $X_1^T$ and $X_0^T$ are the 1- and 0-skeleton of $T$, respectively.
Item (3)(iii) plays an important role in Section \ref{s:S-U-relationship}.
We include item $(3)(iv)$ in the definition of our homotopy to simplify our proofs of Section \ref{s:S-U-relationship} even though it is not strictly needed. 
For tilings of the line, items $(3)(iii)$ and $(3)(iv)$, though still true, can be ignored. 
\end{rem}
\begin{defn}
Define tiling 
$$T_s:=h_{1-s}(T_1),\qquad0\le s\le 1.$$
\end{defn}
Note that the family of tilings $T_s$ continuously deform the tiling $T_{1}=h_{0}(T_1)$ to the tiling $T_{0}=h_{1}(T_1)$. 
We illustrate this in Figure \ref{f:tilingsTs}.

\begin{figure}[h]
\centerline{\begin{tikzpicture}[scale=1]
\draw[fill=yellow] (0,0)--(0.5,0)--(0.5,0.5)--(0,0.5)--(0,0);
\node at (.25,.25) {$t'$};
\draw (-.1,0) -- (2.1,0);
\draw (-.1,1) -- (2.1,1);
\draw (-.1,2) -- (2.1,2);
\draw (0,-.1) -- (0,2.1);
\draw (1,-.1) -- (1,2.1);
\draw (2,-.1) -- (2,2.1);
\draw[red] (-.1,0.5) -- (2.1,0.5);
\draw[red] (-.1,1.5) -- (2.1,1.5);
\draw[red] (0.5,-.1) -- (0.5,2.1);
\draw[red] (1.5,-.1) -- (1.5,2.1);
\node at (1,-.4) {$T_1$};

\draw (2.9,0) -- (5.1,0);
\draw (2.9,1) -- (5.1,1);
\draw (2.9,2) -- (5.1,2);
\draw (3,-.1) -- (3,2.1);
\draw (4,-.1) -- (4,2.1);
\draw (5,-.1) -- (5,2.1);
\draw[red] (2.9,0.75) -- (5.1,0.75);
\draw[red] (2.9,1.75) -- (5.1,1.75);
\draw[red] (3.75,-.1) -- (3.75,2.1);
\draw[red] (4.75,-.1) -- (4.75,2.1);
\node at (4,-.4) {$T_{0.5}$};

\draw (5.9,0) -- (8.1,0);
\draw (5.9,1) -- (8.1,1);
\draw (5.9,2) -- (8.1,2);
\draw (6,-.1) -- (6,2.1);
\draw (7,-.1) -- (7,2.1);
\draw (8,-.1) -- (8,2.1);
\draw[red] (5.9,0.875) -- (8.1,0.875);
\draw[red] (5.9,1.875) -- (8.1,1.875);
\draw[red] (6.875,-.1) -- (6.875,2.1);
\draw[red] (7.875,-.1) -- (7.875,2.1);
\node at (7,-.4) {$T_{0.25}$};

\draw[fill=yellow] (9,0)--(9,1)--(10,1)--(10,0)--(9,0);
\node at (9.5,.5) {$t$};
\draw (8.9,0) -- (11.1,0);
\draw (8.9,1) -- (11.1,1);
\draw (8.9,2) -- (11.1,2);
\draw (9,-.1) -- (9,2.1);
\draw (10,-.1) -- (10,2.1);
\draw (11,-.1) -- (11,2.1);
\draw[red] (8.9,0.99) -- (11.1,0.99);
\draw[red] (8.9,1.99) -- (11.1,1.99);
\draw[red] (9.99,-.1) -- (9.99,2.1);
\draw[red] (10.99,-.1) -- (10.99,2.1);
\node at (10,-.4) {$T_{0}$};
\end{tikzpicture}}
 \caption{Tilings $T_s=:h_{1-s}(T_1)$, $s\in[0,1].$\label{f:tilingsTs}}
\end{figure}
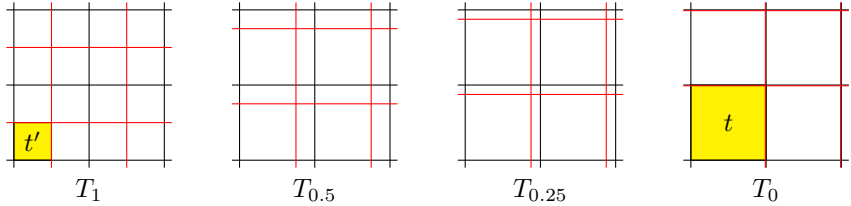

Since the $C^*$-algebra $A_n$ is amenable, it is equal to the full $C^*$-algebra $C^*(R_n)$.
Given a locally compact space $X$, we denote by $C_c(X)$ the space of complex-valued, continuous functions on $X$ with compact support.
In what follows we consider groupoids and their $C^*$-algebras. The results concern the full $C^*$-algebra $C^*(G)$, not the reduced $C^*$-algebra $C^*_r(G)$.
The goal is to construct a homotopy of $*$-homomorphisms $\phi_s$. 
\begin{rem}
The fact that the $\phi_s$ defined below is a $*$-homomorphism relies on the properties of our special homotopy.
The construction of $\phi_s$ is inspired by the fact that any groupoid homomorphism $\pi: G\to H$, which is continuous and proper, induces the map $\pi^*: C_c(H)\to C_c(G)$ given by $\pi^*(f)=f\circ \pi$.
However, this map is usually not a $*$-homomorphism. For instance, consider the case when $G:=\Delta_{R_0}$ is the diagonal of $H:=R_0$ for the Fibonacci tiling with proto-edges $a$, $b$.
Then $G$ is a connected component and a subgroupoid of the \'etale equivalence relation $H$. 
Note that the product in $C_c(G)$ is just pointwise multiplication, and $G$ is trivially \'etale.
Let $i: G\to H$ be the inclusion map, which is obviously continuous and proper.
Then the restriction map $i^*$ is not a $*$-homomorphism since the product is not preserved. For example, choose $a_1=[0,1]$, $a_2=[1,2]$ with label $a$, and let $f,g\in C_c(H)$
be such that $f$ is zero outside $(a_1\times a_2)^\circ\cap H$, and $f(0.5,1.5)\ne 0$, and $g$ is zero outside $(a_2\times a_1)^\circ\cap H$ and $g(1.5,0.5)\ne 0$.
Then $i^*(f)=f\circ i=0$ and thus $i^*(f)*i^*(g)=0$. On the other hand, $i^*(f*g)(0.5,0.5)=f(0.5,1.5)g(1.5,0.5)\ne 0$, and hence $i^*(f*g)\ne i^*(f)*i^*(g)$.
\end{rem}

\begin{defn}[Homotopy $\phi_s$]\label{d:phi_s}
Let $\phi_s:C_c(R_0)\to C_c(R_0)$, $0\le s\le 1$, be defined as
$$\phi_s(g)(x,y):= \left\{ \begin{array}{ccl}
    g\circ (h_s\times h_s)(x,y) &\quad& h_s(x)\sim_{R_0} h_s(y) \\ 
    0 &\quad&  \text{else},\\ 
  \end{array}\right.$$
where $g\in C_c(R_0)$ and  $x\sim_{R_0}y$.
\end{defn}

In \cite[p.92-93]{Gon0}, it was shown that $\phi_s$ is well-defined as stated in the next lemma.
For the convenience of the reader, we provide an alternative proof of this lemma in terms of connected components.
Moreover, many of the ideas in the proof of the next lemma will also be used in later proofs of this section.
\begin{lem}\label{l:phi_s-welldefined}
The map $\phi_s:C_c(R_0)\to C_c(R_0)$, $s\in [0,1]$ is well-defined.
\end{lem}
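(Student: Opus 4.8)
The plan is to show that for every $g\in C_c(R_0)$ and every $s\in[0,1]$ the function $\phi_s(g)$ is continuous on $R_0$ and has compact support. Since $R_0$ is \'etale, each connected component $C$ is open, hence clopen, and $R_0$ is the topological disjoint union of its components; moreover on each component the offset $y-x$ is locally constant, so $C=\{(x,x+c_C)\mid x\in U_C\}$ for a fixed $c_C\in\R^d$ and the open set $U_C=r(C)$, with $r|_C\colon C\to U_C$ a homeomorphism. It therefore suffices to check, componentwise, that $\phi_s(g)\circ (r|_C)^{-1}$ is continuous with compact support and that only finitely many components are met. Writing $c:=c_C$ and transporting to $U_C$, I set $\Psi_s(x):=(h_s(x),h_s(x)+c)$ and $F(x):=\phi_s(g)(x,x+c)$. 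Using that $h_s$ is defined on the prototiles and extended by translation (item~(4) of Definition~\ref{d:homotopy}), the relation $(x,x+c)\in R_0$ forces the tiles through $x$ and $x+c$ to be $c$-translates, whence $h_s(x+c)=h_s(x)+c$, so that $F(x)=g(\Psi_s(x))$ when $\Psi_s(x)\in R_0$ and $F(x)=0$ otherwise.

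The heart of the argument is a geometric lemma about the cellular collapse $h_s$, which I would isolate and prove by a local case analysis. First, since $h_s$ maps each tile of $T_0$ to itself, $h_s(x)$ lies in every tile containing $x$, so $T_0(x)\subseteq T_0(h_s(x))$, with equality when $s<1$ (there $h_s$ is a cell-preserving homeomorphism). Using this together with the cellularity of $h_s$, I would prove two statements: (i) the set $V:=\Psi_s^{-1}(R_0)$ is open in $U_C$, and (ii) $\Psi_s(x)\in R_0$ implies $(x,x+c)\in R_0$. Both are verified by distinguishing whether $h_s(x)$ lies in the interior of a face, in the interior of an edge, or at a vertex: in each case the adjacency data of $T_0$ that decides membership of $(x,x+c)$ in $R_0$ is already recorded in, and translation-matched by, the (a priori larger) local patch of $T_0$ at $h_s(x)$. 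This is the step I expect to be the main obstacle, because at $s=1$ the cells genuinely collapse and $T_0(h_1(x))$ may strictly contain $T_0(x)$, so the bookkeeping of neighbouring tiles under translation by $c$ must be done carefully; items~(2) and (3) of Definition~\ref{d:homotopy} are precisely what make it work.

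Granting (i) and (ii), continuity follows quickly. On the open set $V$ the corestriction $\Psi_s\colon V\to R_0$ is continuous into the subspace topology, so $F=g\circ\Psi_s$ is continuous there; on $U_C\setminus V$ we have $F\equiv 0$. At a boundary point $x_0\in\partial V$ one has $x_0\notin V$, i.e.\ $\Psi_s(x_0)\notin R_0$; for any $x_k\to x_0$ with $x_k\in V$ the points $\Psi_s(x_k)\in R_0$ converge in $\R^{2d}$ to $\Psi_s(x_0)\notin R_0$, and since $\Supp g$ is compact in $R_0$, hence closed in $\R^{2d}$, the $\Psi_s(x_k)$ eventually leave $\Supp g$, giving $F(x_k)\to 0=F(x_0)$. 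Thus $F$, and hence $\phi_s(g)|_C$, is continuous; as the components are clopen, $\phi_s(g)$ is continuous on $R_0$.

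For compact support I would use that $h_s$ moves points by at most $\max_i\operatorname{diam}(t_i)$ and is therefore proper. Combined with (ii), this shows that $\Psi_s|_V\colon V\to R_0$ is proper: if $\Psi_s(x_k)$ lies in a compact $K\subseteq R_0$ and $x_k\to x^*$, then $\Psi_s(x^*)\in K\subseteq R_0$, so (ii) gives $(x^*,x^*+c)\in R_0$ and hence $x^*\in U_C$, indeed $x^*\in V$, while boundedness of the preimage comes from properness of $h_s$. Consequently $\Supp F\subseteq(\Psi_s|_V)^{-1}(\Supp g)$ is compact in $U_C$, so $\phi_s(g)|_C$ has compact support. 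Finally, $\Supp g$ meets only finitely many components and $\Psi_s$ preserves the offset $c$, so $\phi_s(g)$ is supported on finitely many components. Altogether $\phi_s(g)\in C_c(R_0)$, which is the assertion.
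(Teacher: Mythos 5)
Your proof is correct, and it shares the paper's skeleton --- reduce to a single connected component $C$ of $R_0$ via the range map, then prove continuity and compact support of the transported function, handling the collapse at $s=1$ by sequence-and-compactness arguments --- but the decisive geometric input is different. The paper stays entirely inside the component: writing $\phi'_1(f)=f\circ h_1$ on $r(C)$, it needs only that $h_1^{-1}(K)\subseteq T(K)$ (points are moved within their tiles) and that $h_1^{-1}(r(C))\subseteq r(C)$ (cellularity preserves $\partial\, r(C)$), so that $h_1^{-1}(K)$ is bounded, closed and contained in $r(C)$, hence compact there. You instead map into the global groupoid via $\Psi_s(x)=(h_s(x),h_s(x)+c)$ and evaluate the global $g$, and your key claim (ii) --- that $\Psi_s(x)\in R_0$ forces $(x,x+c)\in R_0$ --- replaces boundary preservation. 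That claim is true, and while you only sketch it, the clean proof is precisely the ``surjectivity'' argument the paper gives later when proving multiplicativity in Lemma \ref{l:phi_s-Cc-star-homo}: cellularity gives $T(x)\subseteq T(h_1(x))$, the hypothesis gives $T(h_1(x)+c)=T(h_1(x))+c$, and translation-matching of the homotopy (item (4) of Definition \ref{d:homotopy}) then yields $T(x+c)=T(x)+c$; no case analysis over faces, edges and vertices is needed, and your claim (i) is immediate since $V=U_C\cap h_s^{-1}(W_c)$ with $W_c:=\{u\in\R^d\mid (u,u+c)\in R_0\}$ open. Your route buys two things: the pullback property does double duty (it is the heart of the later proof that $\phi_1$ preserves convolution), and evaluating the global $g$ makes rigorous a point the paper leaves implicit, namely that for $x\in r(C)$ the pair $(h_1(x),h_1(x)+c)$, if it lies in $R_0$ at all, cannot lie on a component other than $C$. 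Two small repairs are needed but routine: from $(x^*,x^*+c)\in R_0$ you must still conclude $x^*\in U_C$ rather than merely $x^*\in W_c$ (distinct components with the same offset have disjoint open $r$-images, and $x^*$ is a limit of points of $U_C$), and the final ``finitely many components'' assertion does not follow from matching offsets alone --- it requires the bounded displacement of $h_s$ together with FLC, exactly as in the paper's treatment of the compact set $\Supp g$.
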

\begin{proof}
Since $g\in C_c(R_0)$ is evaluated only when $h_s(x)\sim_{R_0} h_s(y)$, the map $\phi_s(g):R_0\to\C$ is well-defined. 
For $s\in[0,1]$ we have  
\begin{equation}\label{e:supp-phi-s-homotopy}
\Supp \phi_s(g)=(h_s\times h_s)^{-1}(\Supp g)
\end{equation} because
\begin{eqnarray*}
(x',y')\in \Supp \phi_s(g) &\iff& g(h_s(x'),h_s(y'))\ne 0 \text { and } h_s(x')\sim_{R_0} h_s(y')\\
&\iff& (h_s(x'),h_s(y'))\in \Supp g\\
&\iff&(x',y')\in (h_s\times h_s)^{-1}(\Supp g).
\end{eqnarray*}
By assumption 
$$\tilde K:=\overline{\supp(g)}^{R_0}\subset R_0$$
 is compact in $R_0$.
For showing that $\phi_s(g)$ is continuous with compact support,
it is enough to consider the restrictions $g|_C$, $\phi_s(g)|_C$ to a connected component $C$ of $R_0$.
This is because the support of $g$ is in $\tilde K$, and by compactness, $\tilde K$ intersects only a finite number of connected components.
Moreover, $\tilde K\cap C$ is compact in $C$ because $C$ is clopen. Also, a compact set in $C$ is compact in $R_0$ by continuity of the inclusion map $C\hookrightarrow R_0$.
Furthermore, a finite union of compact sets is compact.\\
Since the range map $r:R_0\to\R^d$ restricted to the component $C$ is a homeomorphism onto its image, it is enough to consider the maps 
$$f:r(C)\to \C,\qquad \phi'_s(f):r(C)\to\C$$
given by
 $$f:=g|_{_C}\circ r^{-1},\qquad \phi'_s(f):=\phi_s(g)|_{_C}\circ r^{-1}.$$
Note that 
\begin{itemize}
\item $r(C)$ can be unbounded (e.g.~$r(C)=\R^d$ when $C$ is the diagonal of $R_0$). Abusing language, $r(C)$ is a (not necessarily finite) connected patch  minus its boundary; the patch is made of tiles of $T$.
\item For $s<1$, $h_s$ maps $r(C)$ to $r(C)$ homeomorphically. 
\item $h_1$ maps $r(C)$ to $\overline{r(C)}^{\R^d}$, thus $\overline{r(C)}^{\R^d}\backslash h_1(r(C))$ can be nonempty. 
Actually for any patch $P$ of $T$, $h_1:\open{P}\to P$, and $h_1(\open{P})\backslash \open{P}$ can be nonempty.
\item $h_1:r(C)\,\backslash\, h_1^{-1}(\partial_{_{\R^d}}\,r(C))\to r(C)$ is surjective.
\end{itemize} 
Let $K$ be the closure of the support of $f$ in $r(C)$. Then, by Eq.~(\ref{e:supp-phi-s-homotopy}),  $K'_s:=h_s^{-1}(K)$ contains the closure of the support of $\phi'_s(f)$  in $r(C)$ i.e.
\begin{equation}\label{e:K's}
 K=\overline{\Supp f\,}^{r(C)} \qquad\text{and}\qquad \overline{h_s^{-1}(\Supp f)}^{r(C)}\subset h_s^{-1}( K)=K'_s.
\end{equation}

If $s<1$, then $\overline{h_s^{-1}(\Supp f)}^{r(C)}=h_s^{-1}( K)=K'_s$ is compact in $r(C)$ because $h_s$ is a homeomorphism of $r(C)$ when restricted to $r(C)$.
Moreover, since 
$$\phi'_s(f)=f\circ h_s,\qquad s<1$$
is a composition of continuous functions, $\phi'_s(f)$ is continuous. This completes the proof for $s<1$.\\
Now suppose that $s=1$. Then 
$$\phi'_1(f)(x)=\left\{  \begin{array}{ll}
    f\circ h_1(x)\quad & h_1(x)\in r(C)\\
    0 & \mathrm{else}\\ 
  \end{array}\right.
$$

We start by showing that $\phi'_1(f)$ is continuous.
Suppose that $(x_n)_{n=1}^\infty \subset r(C)$ is a sequence that converges to $x\in r(C)$.
We need to show that $\phi'_1(f)(x_n)$ converges to $\phi'_1(f)(x)\in\C$.\\
Assume that $h_1(x)\in r(C)$.
Then by continuity of $h_1$ we have $h_1(x_n)\to h_1(x)$.
Since $r(C)$ is open, and $h_1(x)\in r(C)$, $h_1(x_n)\in r(C)$ for $n\ge n_0$ for some $n_0\in\N$.
Thus by continuity of $f$ on $r(C)$, 
$$\phi'_1(f)(x_n)=f(h_1(x_n))\,\,\to\,\, f(h_1(x))=\phi'_1(f)(x), \qquad (n\ge n_0)$$
which proves that $\phi'_1(f)$ is continuous at $h_1(x)$ whenever $h_1(x)\in r(C)$.\\
Assume that $h_1(x)\not\in r(C)$. (i.e.~$h_1(x)$ is in the boundary of $\overline{r(C)}^{\R^d}$.)
Then $h_1(x)\not\in K$ because $K\subset r(C)$. Thus $\phi'_1(f)(x)=0$. 
We claim that $h_1(x_n)\not\in K$, for all $n\ge n_0$ for some $n_0\in\N$.  Hence $\phi'_1(f)(x_n)=0\to 0=\phi'_1(f)(x)$, and thus $\phi_1'(f)$ is continuous.
We proceed to the claim. For contradiction, assume that the claim is false. Then by compactness of $K$, there exists a convergent subsequence $h_1(x_{n_k})\in K$. 
Since $h_1(x_n)\to h_1(x)$, we have that $h_1(x_{n_k})\to h_1(x)$. Since $K$ is closed, $h_1(x)\in K$, a contradiction, which proves the claim.
This completes the proof of continuity of $\phi'_1(f)$.\\\\
We will now show that $K'_1=h_1^{-1}(K)$ is $r(C)$-compact.

Since $K$ is compact in $r(C)$, it is compact in $\R^d$ by continuity of the inclusion map $r(C)\hookrightarrow \R^d$. 
Hence $K$ is closed and bounded in $\R^d$. Let $P:=T( K)$ be the smallest patch of $T$ that contains $K$, i.e.~$P$ is the necessarily finite set of tiles in $T$ that intersect $K$.
By definition of our homotopy, $h_1^{-1}(K)\subseteq P$, as the homotopy moves points from a tile of $T$ to the tile itself. See Figure \ref{f:TKh1}.
More precisely, if $x\not \in P$ then $h_1(x)\not\in \open{P}$ (although it could be in the boundary of $P$), and since $K\subset \open{P}$ then $h_1(x)\not\in K$, i.e.~$x\not\in h_1^{-1}(K)$.
 Hence $T(h_1^{-1}( K))\subseteq T(P)$. 
Hence $K'_1\subset P$ is bounded. 
Since $h_1$ is $\R^d$-continuous, $h_1^{-1}(K)$ is $\R^d$-closed and hence $K'_1=h_1^{-1}(K)$ is $\R^d$-compact.

Since $h_1$ is cellular, (e.g.~$h_1$ maps the $\R^d$-boundary of $r(C)$ to itself), and $K\subset\open{T(K)}\subset r(C)$, we have that 
$$K'_1=h_1^{-1}(K)\subset r(C).$$
This is because 
\begin{eqnarray*}
&&h_1(\partial r(C))\subset \partial r(C)\\
& {\Leftrightarrow}&\forall x\in \overline{r(C)}: x\in \partial r(C) \imply h_1(x)\in \partial(r(C))\\
& {\Leftrightarrow}&\forall x\in \overline{r(C)}: h_1(x)\not\in \partial(r(C))\imply x\not\in\partial(r(C))\\
& {\Leftrightarrow}&\forall x\in \overline{r(C)}: h_1(x)\in r(C)\imply x\in r(C)\\
& {\Leftrightarrow}&\forall x\in \overline{r(C)}: x\in h_1^{-1}(r(C))\imply x\in r(C)\\
& {\Leftrightarrow}& h_1^{-1}(r(C))\subset r(C),
\end{eqnarray*}
where the boundary and closure are done in $\R^d$.
Thus we have shown that $K'_1$ is $\R^d$-compact and that $K'_1$ is contained in $r(C)$.
By Lemma \ref{l:X-compact}, $K'_1$ is $r(C)$-compact.
\begin{figure}[t]
\centerline{\begin{tikzpicture}[scale=2]

%


\draw[fill,gray!20] (0,1.5) -- (1.5,1.5) -- (1.5,0) -- (2,0) -- (2,2) -- (0,2) -- (0,1);
\draw[fill,yellow] (.5,.5) circle (.25cm);
\draw[fill,yellow] (.5,1) circle (.25cm);
\draw[fill,yellow] (1,1) circle (.25cm);
\draw[fill,yellow] (1,0.5) circle (.25cm);
\draw[] (.5,.5) circle (.25cm);
\draw[] (.5,1) circle (.25cm);
\draw[] (1,1) circle (.25cm);
\draw[] (1,0.5) circle (.25cm);
\draw[fill,yellow!70] (0.5,0.25) -- (1,0.25) -- (1,1.25) -- (.5,1.25) -- (.5,0.25);
\draw[fill,yellow!70] (0.25,0.5) -- (1.25,0.5) -- (1.25,1) -- (0.25,1) -- (0.25,0.5);
\draw (.5,0.25) -- (1,0.25);
\draw (1.25,0.5) -- (1.25,1);
\draw (.5,1.25)--(1,1.25);
\draw (.25,.5)--(.25,1);
\draw[dashed] (0,0) -- (2,0);
\draw[dashed] (0,2) -- (2,2);
\draw (0,1) -- (2,1);
\draw[red] (0,0.5) -- (2,0.5);
\draw[red] (0,1.5) -- (2,1.5);
\draw[dashed] (0,0) -- (0,2);
\draw[dashed] (2,0) -- (2,2);
\draw ((1,0) -- (1,2);
\draw[red] (0.5,0) -- (0.5,2);
\draw[red] (1.5,0) -- (1.5,2);

\draw[thick,dashed] (0,0) -- (1.5,0) -- (1.5,1.5) -- (0,1.5);  
\node at (0.7,.75){$h_1^{-1}(K)$};
\node at (1,1.75) {maps to boundary};
\node at (4,-.2){$T(K)$};
\node[red] at (.125,.125){$t'_i$};
\node at (3.5,.5){$t_i$};


\draw[fill,yellow] (4,1) circle(0.5cm);
\draw[] (4,1) circle(0.5cm);
\node at (4.2,1.2){$K$};

\draw[dashed] (3,0) -- (5,0);
\draw[dashed] (3,2) -- (5,2);
\draw (3,1) -- (5,1);
\draw[dashed] (3,0) -- (3,2);
\draw[dashed] (5,0) -- (5,2);
\draw ((4,0) -- (4,2);

\end{tikzpicture}}

 \caption{ $h_1^{-1}(K)\subset T(K)$.\label{f:TKh1}}
\end{figure}
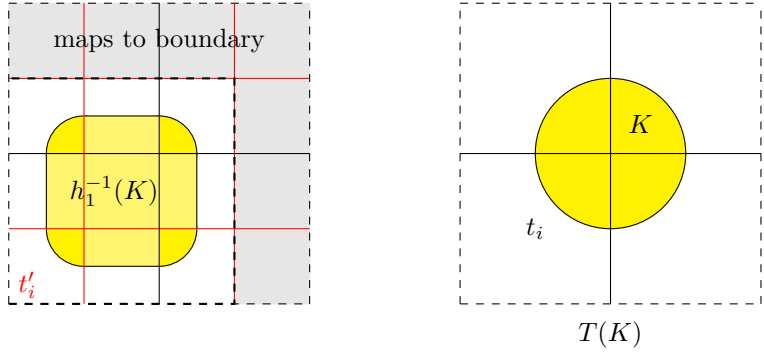
\end{proof}

\begin{rem} Using the notation from the above proof (of Lemma \ref{l:phi_s-welldefined}),
the map $\phi'_1(f)$ shrinks the graph of $f$ on prototile $t_i\in T$ to the shrunk tile $t'_i\in T_1$ and expands continuously (cylindrically) the graph of $f$ on the boundary $\partial t_i$ to the set $t_i\backslash t'$.
We illustrate this in Figure \ref{f:phi1}.
\begin{figure}[h]
\centerline{
\includegraphics[scale=.25]{./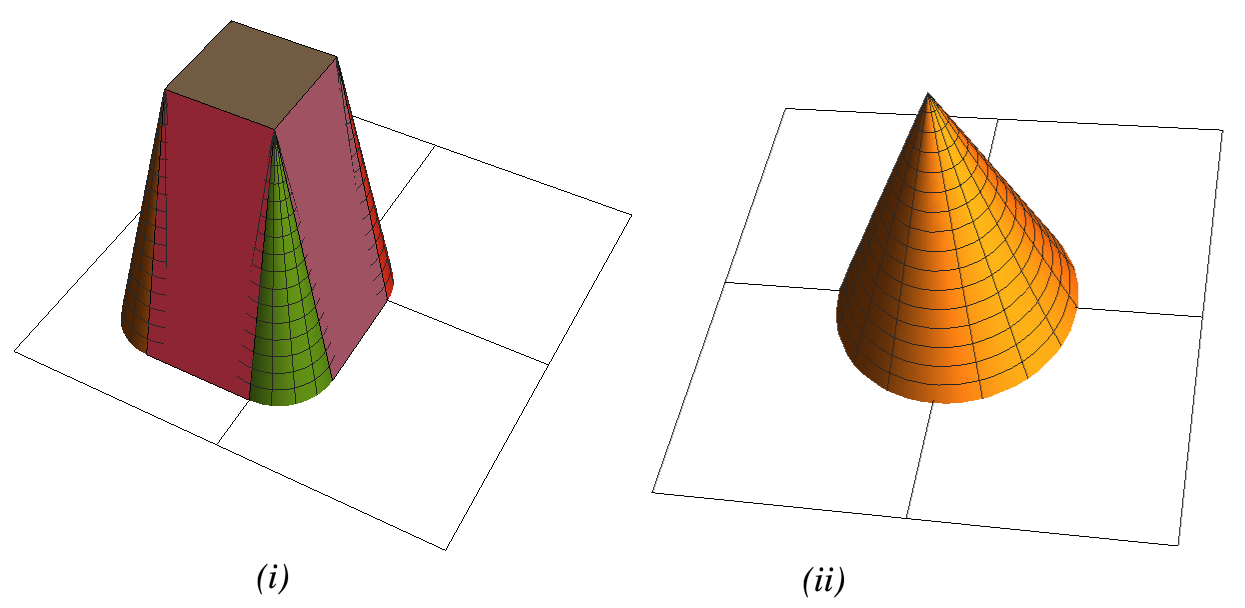}
}
 \caption{$(i)$ $\phi_1'(f)$,  $(ii)$ $f$.\label{f:phi1}}
\end{figure}
\end{rem}

We next introduce the auxiliary map $\psi$ whose extension (see Lemma \ref{l:tilde-psi}) is used in the proof of Proposition \ref{p:phi-star-homo} given later in this section.
The proof of the next lemma is similar to the one of Lemma \ref{l:phi_s-welldefined}.
\begin{lem}\label{l:psi-welldefined}
The map $\psi:C_c(R_0)\to C_c([0,1]\times R_0)$ given by
\begin{equation}\label{e:psi}
\psi(f)(s,(x,y)):= \left\{ \begin{array}{cc}
    f(h_s(x),h_s(y)) & h_s(x)\sim_{R_0} h_s(y) \\ 
    0 & \text{else} \\ 
  \end{array}\right.
\end{equation}
 is well-defined.
\end{lem}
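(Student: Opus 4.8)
The plan is to mimic the proof of Lemma~\ref{l:phi_s-welldefined}, the only genuinely new ingredient being that continuity must now be established \emph{jointly} in the pair $(s,(x,y))\in[0,1]\times R_0$ rather than for each fixed $s$. As before, $\psi(f)$ is unambiguously defined as a function because $f$ is evaluated only where $h_s(x)\sim_{R_0}h_s(y)$, and the same computation that produced Eq.~(\ref{e:supp-phi-s-homotopy}) gives
$$\Supp\psi(f)=\{(s,(x,y))\in[0,1]\times R_0\mid (h_s(x),h_s(y))\in\Supp f\}.$$
Writing $\tilde K:=\overline{\supp(f)}^{R_0}$, which is compact and therefore meets only finitely many connected components of $R_0$, it suffices (by clopenness of components and the fact that a finite union of compacts is compact) to work on a single component $C$ and transport everything to $r(C)\subset\R^d$ via the range homeomorphism $r\colon C\to r(C)$. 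Setting $\tilde f:=f|_C\circ r^{-1}$, the problem reduces to showing that
$$\psi'(\tilde f)(s,x):=\left\{\begin{array}{cc} \tilde f(h_s(x)) & h_s(x)\in r(C) \\ 0 & \text{else} \end{array}\right.$$
is continuous with compact support on $[0,1]\times r(C)$.

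For continuity I would argue exactly as in the $s=1$ part of Lemma~\ref{l:phi_s-welldefined}, but with sequences $(s_n,x_n)\to(s,x)$ varying in both coordinates. Recall that for $s<1$ the cellular homeomorphism $h_s$ carries $r(C)$ onto itself, while $h_1$ is a cellular collapse; in either case $h_s^{-1}(r(C))\subset r(C)$. Joint continuity of the homotopy $H(s,x):=h_s(x)$ gives $h_{s_n}(x_n)\to h_s(x)$. If $h_s(x)\in r(C)$, then, $r(C)$ being open, the points $h_{s_n}(x_n)$ eventually lie in $r(C)$, so $\psi'(\tilde f)(s_n,x_n)=\tilde f(h_{s_n}(x_n))\to\tilde f(h_s(x))$ by continuity of $\tilde f$. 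If $h_s(x)\notin r(C)$, then $h_s(x)\notin K:=\overline{\Supp\tilde f}^{r(C)}$, and a compactness argument—assume for contradiction that $h_{s_{n_k}}(x_{n_k})\in K$ along a subsequence, extract a convergent sub-subsequence, and use that $K$ is closed to deduce $h_s(x)\in K$—shows $\psi'(\tilde f)(s_n,x_n)=0$ for large $n$. Hence $\psi'(\tilde f)$ is continuous.

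For compact support I would use that, by Definition~\ref{d:homotopy}, every $h_s$ maps each tile of $T$ into itself, so for every $s\in[0,1]$ one has $h_s^{-1}(K)\subset T(K)=:P$, the finite patch of tiles meeting $K$; combined with $h_s^{-1}(r(C))\subset r(C)$ this gives $h_s^{-1}(K)\subset r(C)\cap P$. Consequently the preimage $H^{-1}(K)=\{(s,x)\mid h_s(x)\in K\}$ is closed in $[0,1]\times\R^d$ (as $K$ is $\R^d$-closed and $H$ continuous), is bounded (being contained in $[0,1]\times P$), and is contained in $[0,1]\times r(C)$; hence it is $\R^d$-compact and, by Lemma~\ref{l:X-compact}, compact in $[0,1]\times r(C)$. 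Since $\overline{\Supp\psi'(\tilde f)}^{[0,1]\times r(C)}\subset H^{-1}(K)$, the support is compact, and summing over the finitely many components completes the proof. The main obstacle is the joint continuity at $s=1$: one must simultaneously control the parameter $s$ approaching the collapsing value and the base point $x$ approaching $\partial r(C)$, which is precisely where the openness of $r(C)$ and the compactness of $K$ are needed together.
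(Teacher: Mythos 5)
Your proof is correct and follows essentially the same route as the paper's: reduction to a connected component $C$ of $R_0$ via the range map, then a sequential continuity argument at a limit point $(s,x)$ split according to whether $h_s(x)\in r(C)$ or not, using openness of $r(C)$ in the first case and compactness of $K$ (via a convergent-subsequence contradiction) in the second. The only differences are cosmetic: the paper treats joint continuity only at $s=1$ (dismissing $s<1$ as easy, since there $h_s$ is a homeomorphism of $r(C)$) and defers the compact-support verification to the proof of Lemma~\ref{l:psi-cont}, where your argument that $h^{-1}(K)$ is $\R^{d+1}$-closed, bounded (by tile-preservation of the homotopy), contained in $[0,1]\times r(C)$, and hence compact by Lemma~\ref{l:X-compact} appears almost verbatim.
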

\begin{proof}
Recall that the product of two groupoids is a groupoid, and $[0,1]$ is the trivial groupoid where each $x\in [0,1]$ is a unit.
As explained in the proof of Lemma \ref{l:phi_s-welldefined}, it is enough to consider the restriction 
to a connected component $C$ of $R_0$ via the range map $r:R_0\to \R^d$, because a $R_0$-compact set can only intersect a finite number of connected components, and because the range map restricted to a connected component is a homeomorphism onto its image. Thus, it suffices to show that 
$\psi':C_c(r(C))\to C_c([0,1]\times r(C))$ given by
$$
\psi'(f)(s,x):= \left\{ \begin{array}{cc}
    f(h_s(x)) & h_s(x)\in r(C) \\ 
    0 & \text{else} \\ 
  \end{array}\right.
$$
is well-defined.
The case $s<1$ is easy as $h_s$ is a homeomorphism of $r(C)$, so we just show the case $s=1$.
We start by showing continuity of $\psi'(f)$ for $f\in C_c(r(C))$ with compact support $K=\overline{\Supp f}^{r(C)}$.
Suppose that the sequence $(s_n,x_n)$ converges to $(1,x)$ and that $h(1,x)\in r(C)$.
Then by continuity of $h$ we have  $h(s_n,x_n)\to h(1,x)$ for $n\to \infty$.
Since $r(C)$ is open and $h(1,x)\in r(C)$, $h(s_n,x_n)\in r(C)$ for $n\ge n_0$, for some $n_0\in \N$.
Then by continuity of $f$ on $r(C)$, we get
$$\psi'(f)(s_n,x_n)=f(h(s_n,x_n))\to f(h(1,x))=\psi'(f)(1,x),\quad (n\ge n_0).$$
Thus $\psi'(f)$ is continuous at $(1,x)$ whenever $h(1,x)\in r(C)$.
Now assume that the sequence $(s_n,x_n)$ converges to $(1,x)$, and that $h_1(x)\not \in r(C)$. (i.e.~$h_1(x)\in \partial_{\R^d}(r(C))$).
Thus $h(1,x)\not\in K$ because $K\subset r(C)$.
We claim that $h(s_n,x_n)\not\in K$ for $n\ge n_0$ for some $n_0\in\N$. Then
$$\psi'(f)(s_n,x_n)=0\to 0=\psi'(1,x),\qquad (n\ge n_0),$$
and hence $\psi'(f)$ is continuous at $(1,x)$ whenever $h_1(x)\not\in r(C)$.
We now prove the claim.
Suppose that it is false, i.e. suppose that for all $n_0$, there exists an $n\ge n_0$ such that $h(s_n,x_n)\in K$.
Since $K$ is compact, there exists a convergent subsequence $h(s_{n_j},x_{n_j})\to h(1,x)\in K$, which contradicts $ h(1,x)\not\in K$.

\end{proof}

\begin{lem}\label{l:psi-cont}
Equip $C_c(R_0)$ and $C_c([0,1]\times R_0)$ with the inductive limit topology (cf.~\cite[Example~5.10,~p~.118]{Conway}). Then $\psi:C_c(R_0)\to C_c([0,1]\times R_0)$ is continuous.
\end{lem}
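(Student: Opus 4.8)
The plan is to exploit the universal property of the inductive limit topology: a linear map out of $C_c(R_0)$ is continuous for this topology precisely when its restriction to each subspace $C_K(R_0)$ of functions supported in a fixed compact $K\subset R_0$, equipped with the supremum norm, is continuous (cf.~\cite[Example~5.10,~p.118]{Conway}). Since the target $C_c([0,1]\times R_0)$ also carries the inductive limit topology, and each inclusion $C_{K'}([0,1]\times R_0)\hookrightarrow C_c([0,1]\times R_0)$ is continuous, it suffices to produce, for each compact $K\subset R_0$, a single compact set $K'\subset [0,1]\times R_0$ with $\psi(C_K(R_0))\subseteq C_{K'}([0,1]\times R_0)$, together with a supremum-norm bound for $\psi\colon C_K(R_0)\to C_{K'}([0,1]\times R_0)$. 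The bound is immediate: by the definition of $\psi$ in Eq.~(\ref{e:psi}), every value of $\psi(f)$ is either $0$ or a value of $f$, so $\|\psi(f)\|_\infty\le\|f\|_\infty$ and $\psi$ is norm-decreasing. Thus the entire lemma reduces to producing a compact $K'$.

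I would set
$$K':=\{(s,(x,y))\in[0,1]\times R_0 \mid (h_s(x),h_s(y))\in K\}.$$
Arguing exactly as for Eq.~(\ref{e:supp-phi-s-homotopy}) in the proof of Lemma \ref{l:phi_s-welldefined}, one checks that $\Supp\psi(f)\subseteq K'$ whenever $\Supp f\subseteq K$, so it only remains to prove that $K'$ is compact. As in that earlier proof, the argument splits over connected components: since $K$ is $R_0$-compact it meets only finitely many connected components of $R_0$, and it therefore suffices to show that for each such component $C$ the set $K'_C:=K'\cap([0,1]\times C)$ is compact, since $K'$ is then a finite union of compacts.

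I would transport $K'_C$ through the range homeomorphism $r\colon C\to r(C)$, reducing to the compactness in $[0,1]\times r(C)$ of the set of pairs $(s,x)$ with $h_s(x)\in K_C$, where $K_C:=r(K\cap C)$ is compact in $r(C)$. Boundedness follows from the fact that the homotopy moves each point within its own tile (Definition \ref{d:homotopy}), so $\bigcup_{s}h_s^{-1}(K_C)$ lies in the finite patch $T(K_C)$. For $\R^d$-closedness, if $(s_n,x_n)\to(s,x)$ with $h_{s_n}(x_n)\in K_C$, then $h_{s_n}(x_n)\to h_s(x)$ by continuity of $h$, and closedness of $K_C$ gives $h_s(x)\in K_C\subset r(C)$; the boundary-preserving property $h_s^{-1}(r(C))\subset r(C)$, established by the chain of equivalences in the proof of Lemma \ref{l:phi_s-welldefined}, then forces $x\in r(C)$, so the limit point again lies in the set. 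Hence the set is $\R^d$-compact and contained in $[0,1]\times r(C)$, and by Lemma \ref{l:X-compact} it is compact as a subspace of $[0,1]\times r(C)$. Pulling back through $r$ gives compactness of $K'_C$, and hence of $K'$.

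The main obstacle is, just as in Lemma \ref{l:phi_s-welldefined}, the endpoint $s=1$: there $h_1$ fails to be a homeomorphism of $r(C)$ and collapses part of it onto the boundary, so one must verify that supports do not escape to $\partial r(C)$. This rests on the cellularity of $h_1$ (Definition \ref{d:homotopy}) and is precisely the content reused from the proof of Lemma \ref{l:phi_s-welldefined}; the remaining steps are the standard inductive-limit reduction and the trivial norm estimate.
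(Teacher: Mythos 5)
Your proposal is correct and takes essentially the same approach as the paper: the same compact set $K'=h^{-1}(K)$ (handled component-by-component through the range homeomorphism), the same support containment and trivial norm estimate $\|\psi(f)\|_{\sup}\le\|f\|_{\sup}$, and the same compactness argument (boundedness from the tile-preserving property of the homotopy, closedness from continuity of $h$ together with $h_s^{-1}(r(C))\subset r(C)$ from the proof of Lemma \ref{l:phi_s-welldefined}, then Lemma \ref{l:X-compact}). The only difference is presentational: you phrase the reduction via the universal property of the inductive limit topology (continuity of the restrictions $C_K(R_0)\to C_{K'}([0,1]\times R_0)$), whereas the paper verifies convergence of sequences with supports in a common compact, which amounts to the same verification.
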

\begin{proof}
As explained in the proof of Lemma \ref{l:phi_s-welldefined}, and used in the previous lemma, it is enough to consider the restriction 
to a connected component $C$ of $R_0$ via the range map $r:R_0\to \R^d$.
Thus, it suffices to show that 
$\psi':C_c(r(C))\to C_c([0,1]\times r(C))$ given by
$$
\psi'(f)(s,x):= \left\{ \begin{array}{cc}
    f(h_s(x)) & h_s(x)\in r(C) \\ 
    0 & \text{else} \\ 
  \end{array}\right.
$$
is continuous in the inductive limit topology.
Suppose that $f_n\in C_c(r(C))$ converges to $f\in C_c(r(C))$ in the inductive limit topology.
That is, there is a compact subset $K\subset r(C)$ such that for all $n\in \N$
$$\Supp f_n,\,\Supp f\subset K,$$
and such that
$$||f_n-f||_{\sup}\to 0\quad\text{as} \quad n\to \infty.$$
Note that the supremum is taken over $K$. 
We claim that 
$$K':=h^{-1}(K)\subset r(C)$$
 is compact and that 
 $$\Supp\, \psi'(f_n),\, \Supp\, \psi'(f)\subset K'.$$
 Then
\begin{eqnarray*}
||\psi'(f_n)-\psi'(f)||_{\sup}&=&\sup_{(s,x')\in K'} |\psi'(f_n)(s,x')-\psi'(f)(s,x')|\\
&=&\sup_{\substack{(s,x')\in K'\\ h_s(x')\in r(C)}} |f_n(h_s(x'))-f(h_s(x'))|\\
&\le&\sup_{x\in K} |f_n(x)-f(x)|\\
&=&||f_n-f||_{\sup}\to 0 \text{ as } n\to \infty,
\end{eqnarray*}
where the inequality holds because we can only get nonzero values if $h_s(x')\in K$.
To check that the claim holds, notice that, by the proof of Lemma \ref{l:phi_s-welldefined}, 
$$K'_s:=h_s^{-1}(K)\subset r(C),\qquad s\in[0,1].$$
Since 
$$(s,x)\in h^{-1}(K)\iff h_s(x)\in K,$$
we have
$$K'=h^{-1}(K)=\bigcup_{s\in[0,1]} \{s\}\times h_s^{-1}(K)=\bigcup_{s\in[0,1]} \{s\}\times K'_s\subset [0,1]\times r(C).$$
Moreover, $\Supp\, \psi'(f)\subset K'$ because if 
$\psi'(f)(s,x)\ne 0$ then $f(h_s(x))\ne 0$ and thus $h_s(x)\in \Supp f$, i.e.~$x\in h_s^{-1}(\Supp f)$. Hence $(s,x)\in \{s\}\times K'_s\subset K'$.
Similarly, it holds that $\Supp\, \psi'(f_n)\subset K'$.
By definition of our homotopy $K'$ is $\R^{d+1}$-bounded, and by continuity of $h$, it is $\R^{d+1}$-closed. Hence $K'$ is $\R^{d+1}$-compact, and since it is a subset of $ [0,1]\times r(C)$, $K'$ is $([0,1]\times r(C))$-compact by Lemma \ref{l:X-compact}.
\end{proof}
We show next that $\phi_s$ is continuous, the proof of which is a ``pointwise" version of the previous proof (continuity of $\psi$).
\begin{lem}\label{l:phi_s-Cc-cont}
Equip $C_c(R_0)$ with the inductive limit topology (cf.~\cite[Ex.~5.10,p.118]{Conway}). Then $\phi_s:C_c(R_0)\to C_c(R_0)$ is continuous for any $s\in[0,1]$.
\end{lem}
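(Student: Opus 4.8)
The plan is to imitate the proof of Lemma \ref{l:psi-cont} almost verbatim, but with the parameter $s$ held fixed rather than ranging over all of $[0,1]$; this is exactly the ``pointwise in $s$'' version alluded to in the remark preceding the statement. First I would reduce, as in the proofs of Lemma \ref{l:phi_s-welldefined} and Lemma \ref{l:psi-cont}, to a single connected component of $R_0$: since an $R_0$-compact set meets only finitely many components and the range map $r$ restricts to a homeomorphism $C\to r(C)$ on each component $C$, it suffices to prove that the map $\phi'_s:C_c(r(C))\to C_c(r(C))$ given by $\phi'_s(f)(x)=f(h_s(x))$ when $h_s(x)\in r(C)$ and $0$ otherwise is continuous in the inductive limit topology.

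Next I would take a convergent sequence $f_n\to f$ in $C_c(r(C))$ with the inductive limit topology. By definition this provides a compact $K\subset r(C)$ with $\Supp f_n,\Supp f\subset K$ for all $n$ and $||f_n-f||_{\sup}\to 0$. The key input, already established in the proof of Lemma \ref{l:phi_s-welldefined} and recalled in the proof of Lemma \ref{l:psi-cont}, is that $K'_s:=h_s^{-1}(K)$ is again $r(C)$-compact: for $s<1$ because $h_s$ is a homeomorphism of $r(C)$, and for $s=1$ by the patch argument confining $h_1^{-1}(K)$ inside $T(K)$ together with the cellularity of $h_1$. The support identity $\Supp \phi'_s(g)=h_s^{-1}(\Supp g)$ used throughout then yields $\Supp\phi'_s(f_n),\Supp\phi'_s(f)\subset K'_s$, so the images have their supports uniformly contained in the single compact set $K'_s$.

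Finally I would close with the uniform estimate
\[
||\phi'_s(f_n)-\phi'_s(f)||_{\sup}
=\sup_{\substack{x'\in K'_s\\ h_s(x')\in r(C)}}|f_n(h_s(x'))-f(h_s(x'))|
\le\sup_{x\in K}|f_n(x)-f(x)|=||f_n-f||_{\sup}\to 0,
\]
where the inequality holds because $\phi'_s$ produces a nonzero difference only at points $x'$ with $h_s(x')\in K$. This gives $\phi'_s(f_n)\to\phi'_s(f)$ in the inductive limit topology, and reassembling over the finitely many components touched by the supports yields continuity of $\phi_s$ on all of $C_c(R_0)$.

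Since each ingredient has already been isolated in the two preceding lemmas, I expect no genuine obstacle; the only point demanding care is the case $s=1$, where $h_1$ fails to be injective, but the $r(C)$-compactness of $K'_1=h_1^{-1}(K)$ was precisely the delicate step already dispatched in Lemma \ref{l:phi_s-welldefined}, so it can simply be quoted rather than reproved.
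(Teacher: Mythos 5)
Your proposal is correct and follows essentially the same route as the paper's own proof: reduce to a connected component via the range map, use the support identity $\Supp \phi'_s(g)=h_s^{-1}(\Supp g)$ together with the $r(C)$-compactness of $h_s^{-1}(K)$ (quoted from Lemma \ref{l:phi_s-welldefined}), and conclude with the same uniform estimate. The paper likewise treats this as the ``pointwise in $s$'' version of Lemma \ref{l:psi-cont}, so there is nothing to add.
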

\begin{proof}
As before, it is enough to consider the restriction 
to a connected component $C$ of $R_0$ via the range map $r:R_0\to \R^d$.

Suppose that $f_n\in C_c(r(C))$ converges to $f\in C_c(r(C))$ in the inductive limit topology.
That is, there is a compact subset $K\subset r(C)$ such that for all $n\in \N$
$$\Supp f_n,\,\Supp f\subset K,$$
and such that
$$||f_n-f||_{\sup}\to 0\quad\text{as} \quad n\to \infty.$$
Note that the supremum is taken over $K$. 
We claim that 
$$K':=h_s^{-1}(K)\subset r(C)$$
 is compact and that 
 $$\Supp\, \phi'_s(f_n),\, \Supp\, \phi'_s(f)\subset K',$$
 where $\phi_s'$ was given in the proof of Lemma \ref{l:phi_s-welldefined}.
 Then
\begin{eqnarray*}
||\phi'_s(f_n)-\phi'_s(f)||_{\sup}&=&\sup_{x'\in K'} |\phi'_s(f_n)(x')-\phi'_s(f)(x')|\\
&=&\sup_{\substack{x'\in K'\\ h_s(x')\in r(C)}} |f_n(h_s(x'))-f(h_s(x'))|\\
&\le&\sup_{x\in K} |f_n(x)-f(x)|\\
&=&||f_n-f||_{\sup}\to 0 \text{ as } n\to \infty,
\end{eqnarray*}
where the inequality holds because we can only get nonzero values if $h_s(x')\in K$.\\
To check the claim notice that,
since by Eq.~(\ref{e:supp-phi-s-homotopy}) $\Supp \phi'_s(f_n)=h_s^{-1}(\Supp f_n)$ and $\Supp \phi'_s(f)=h_s^{-1}(\Supp f)$, it is clear that
$$\Supp \phi'_s(f_n),\,\Supp \phi'_s(f)\,\subset\, h_s^{-1}(K)\,=\,K'.$$
Now,
$K'=h_s^{-1}(K)\subset r(C)$ is compact,
by the same argument as in the proof of Lemma \ref{l:phi_s-welldefined}.
\end{proof}

The next lemma shows that $\phi_s$ preserves the involution and the product. Note that the product is convolution, not pointwise multiplication. 
As a corollary it will follow that $\psi$ is also a $*$-homomorphism.
\begin{lem}\label{l:phi_s-Cc-star-homo}
Let $s\in[0,1]$. Then 
the map $\phi_s:C_c(R_0)\to C_c(R_0)$ given in Definition \ref{d:phi_s}
is a $*$-homomorphism.
\end{lem}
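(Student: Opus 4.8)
The plan is to check separately that $\phi_s$ preserves the involution and the convolution product, and to reduce the genuinely delicate case $s=1$ to the easy case $s<1$ by a continuity argument in the homotopy parameter. Preservation of the involution is immediate and uniform in $s$: for $(x,y)\in R_0$ one has $\phi_s(g^*)(x,y)=\overline{g(h_s(y),h_s(x))}$ exactly when $h_s(x)\sim_{R_0}h_s(y)$, while $(\phi_s(g))^*(x,y)=\overline{\phi_s(g)(y,x)}=\overline{g(h_s(y),h_s(x))}$ exactly when $h_s(y)\sim_{R_0}h_s(x)$, and since $R_0$ is symmetric these coincide.

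For the product I first treat $s<1$. Here $h_s$ is a homeomorphism of $\R^d$ which is cellular and fixes each cell setwise (item (2)(a)), so every point lies in the interior of the same cell as its image; hence $T(h_s(x))=T(x)$ for all $x$. Moreover, by translation invariance (item (4)), if $(x,y)\in R_0$, then writing $\tau=y-x$ the cells of $x$ and $y$ correspond under the translation $\tau$, whence $h_s(y)=h_s(x)+\tau$, i.e.~$h_s(y)-h_s(x)=y-x$. Combining these facts gives $(x,y)\in R_0\iff (h_s(x),h_s(y))\in R_0$, so the ``else $0$'' branch never occurs for $s<1$ and $h_s\times h_s$ is a topological groupoid automorphism of $R_0$ with $\phi_s=(h_s\times h_s)^*$ the associated pullback. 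Pullback along a groupoid isomorphism of an \'etale equivalence relation is a $*$-homomorphism: it carries a factorization $(x,z)(z,y)=(x,y)$ bijectively to $(h_s x,h_s z)(h_s z,h_s y)=(h_s x,h_s y)$ and intertwines the inversion $(x,y)\mapsto(y,x)$. Therefore $\phi_s(f*g)=\phi_s(f)*\phi_s(g)$ for every $s\in[0,1)$.

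It remains to pass to $s=1$. By Lemma~\ref{l:psi-welldefined} and Lemma~\ref{l:psi-cont}, the assignment $s\mapsto\phi_s(a)$ is a continuous path $[0,1]\to C_c(R_0)$ for each $a\in C_c(R_0)$ (this is precisely the content of $\psi(a)\in C_c([0,1]\times R_0)$), and convolution is continuous on $C_c(R_0)$ in the inductive limit topology. Hence both $s\mapsto\phi_s(f*g)$ and $s\mapsto\phi_s(f)*\phi_s(g)$ are continuous maps into the Hausdorff space $C_c(R_0)$; by the previous paragraph they agree on the dense subset $[0,1)$, and so they agree at $s=1$ as well. Equivalently one argues pointwise: for fixed $(x,y)\in R_0$, the functions $s\mapsto\phi_s(f*g)(x,y)$ and $s\mapsto(\phi_s(f)*\phi_s(g))(x,y)$ are continuous at $s=1$ because $f*g$, $f$, and $g$ are continuous on $R_0$ with compact support there, the compactness being what forces the convolution sum to run over a fixed finite index set for $s$ near $1$ and what makes the ``else $0$'' values vanish once the collapsed pair $(h_s x,h_s y)$ leaves the closed set $\overline{\Supp(f*g)}^{\R^{2d}}\subset R_0$.

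The main obstacle is exactly the passage $s\to 1$. At $s=1$ the map $h_1$ collapses cells, the implication $(x,y)\in R_0\Rightarrow(h_1 x,h_1 y)\in R_0$ fails, and the ``else $0$'' branch becomes active, so the clean automorphism picture of the $s<1$ case breaks down. The content of the argument is that this discontinuity is harmless once one knows, via Lemma~\ref{l:phi_s-welldefined} and Lemma~\ref{l:psi-cont}, that all the relevant supports remain inside a single $\R^{d+1}$-compact set $h^{-1}(K)\subset[0,1]\times r(C)$; the special properties (3)(i)--(iv) of the homotopy are precisely what guarantee this compactness and hence the continuity of $s\mapsto\phi_s(a)$ up to and including $s=1$. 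A direct combinatorial verification at $s=1$, matching the two convolution sums term by term under $z\mapsto h_1(z)$, is also possible, but it requires a delicate multiplicity count invoking (3)(ii) and (3)(iv) that the continuity argument circumvents.
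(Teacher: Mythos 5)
Your proof is correct, and for the only genuinely hard case ($s=1$) it takes a different route from the paper's. The involution and the case $s<1$ are essentially the paper's argument: the paper notes that for $s<1$ the map $h_s$ is a homeomorphism and, by item (2)(a) of Definition \ref{d:homotopy}, $h_s(x)\sim_{R_0}h_s(z)$ if and only if $x\sim_{R_0}z$; your formulation of this as a pullback along a groupoid automorphism of $R_0$ is the same observation. At $s=1$, however, the paper argues combinatorially: it introduces $S_1=\{z'\mid h_1(x)\sim_{R_0}z'\}$ and $S_2=\{z\mid x\sim_{R_0}z,\ h_1(x)\sim_{R_0}h_1(z)\}$ and proves that $z\mapsto h_1(z)$ is a bijection from $S_2$ onto $S_1$ --- surjectivity via the translation structure of the homotopy (item (4)) and injectivity via its uniqueness clauses --- thereby matching the two convolution sums term by term. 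You instead deduce the $s=1$ identity from the $s<1$ identity by continuity in $s$ and density of $[0,1)$, using that $\psi(a)\in C_c([0,1]\times R_0)$; note that for this only Lemma \ref{l:psi-welldefined} is needed, not Lemma \ref{l:psi-cont}, and there is no circularity since the paper derives Corollary \ref{c:psi-Cc-star-homo} from the present lemma, not the other way around. Your route is legitimate and arguably slicker, since it recycles regularity results already proved and bypasses the delicate multiplicity count. Two details should be made explicit to make it airtight: first, use the pointwise variant of the limiting argument, since joint continuity of convolution in the inductive limit topology is asserted but proved nowhere in the paper; second, justify the uniform-in-$s$ finiteness of the convolution sum, e.g.\ the projection $\tilde K_f$ of the compact set $\Supp\psi(f)\subset[0,1]\times R_0$ onto $R_0$ is compact and contains $\Supp\phi_s(f)$ for every $s$, and since $R_0$ is \'etale the fibre $r^{-1}(x)$ is discrete, so $\{z\mid (x,z)\in\tilde K_f\}$ is finite and the sum runs over this fixed finite set for all $s\in[0,1]$, each term being continuous in $s$. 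With these two points spelled out your argument is a complete alternative proof; what the paper's bijection argument buys in exchange is a self-contained, explicit description of how $h_1$ interacts with $R_0$-classes, which is reused in spirit in the section on the stable--unstable relationship (Section \ref{s:S-U-relationship}).
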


\begin{proof}
The $*$-operation holds because
$$\phi_s(g)^*(x,y)=\overline{\phi_s(g)}(y,x)=\overline{g}(h_s(y),h_s(x))=\phi_s(g^*)(x,y)$$
 when $h_s(x)\sim_{R_0} h_s(y)$.
Now
\begin{equation}\label{e:lhs}
\phi_s(g\star g')(x,y)=\sum_{h_s(x)\sim_{R_0} z'} g(h_s(x),z')g'(z',h_s(y)),  
\end{equation}
when $h_s(x)\sim_{R_0} h_s(y)$, else the sum is zero. On the other hand,
\begin{equation*}
\phi_s(g)\star \phi_s(g')(x,y)=\sum_{x\sim_{R_0} z} g(h_s(x),h_s(z))g'(h_s(z),h_s(y)),
\end{equation*}
when $h_s(x)\sim_{R_0} h_s(z)$ and  $h_s(z)\sim_{R_0}h_s(y),$ else the sum is zero. i.e.
\begin{equation}\label{e:rhs}
\phi_s(g)\star \phi_s(g')(x,y)=\sum_{\substack{x\sim_{R_0} z\\ h_s(x)\sim_{R_0} h_s(z)}} g(h_s(x),h_s(z))g'(h_s(z),h_s(y)),
\end{equation}
when $h_s(x)\sim_{R_0} h_s(y)$ else the sum is zero.

$ $
\\
Assume $s<1$. Since $h_s$ is a homeomorphism, there is a unique $z\in \R^d$ such that $z'=h_s(z)$. Moreover, by item $2$($a$) in the definition of our homotopy, $h_s(x)\sim_{R_0} h_s(z)$ if and only if $x\sim_{R_0} z$. Hence $\phi_s(g\star g')=\phi_s(g)\star \phi_s(g')$ for $s<1$. 
(See picture \ref{f:hsx-iff-x}).\\
 
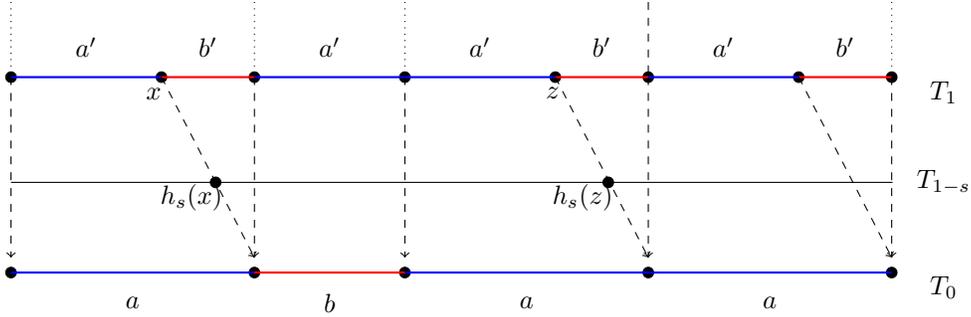
\begin{figure}[h]
\centerline{\begin{tikzpicture}[scale=2]

\draw[black,thin,dotted] (0.,0.6)--(0,0.1);
\draw[black,thin,dotted] (1.61803,0.6)--(1.61803,0.1);
\draw[black,thin,dotted] (2.61803,0.6)--(2.61803,0.1);
\draw[black,thin,dashed] (4.23607,0.6)--(4.23607,0.1);
\draw[black,thin,dotted] (5.8541,0.6)--(5.8541,0.1);

\node at (6.2,0){$T_1$};

\draw plot[mark=*,mark size=1] coordinates{(0.,0.1)};
\draw plot[mark=*,mark size=1] coordinates{(1.,0.1)};
\draw plot[mark=*,mark size=1] coordinates{(1.61803,0.1)};
\draw plot[mark=*,mark size=1] coordinates{(2.61803,0.1)};
\draw plot[mark=*,mark size=1] coordinates{(3.61803,0.1)};
\draw plot[mark=*,mark size=1] coordinates{(4.23607,0.1)};
\draw plot[mark=*,mark size=1] coordinates{(5.23607,0.1)};
\draw plot[mark=*,mark size=1] coordinates{(5.8541,0.1)};
\draw[blue,thick] (0.,0.1)--(1.,0.1);
\draw[red,thick] (1.,0.1)--(1.61803,0.1);
\draw[blue,thick] (1.61803,0.1)--(2.61803,0.1);
\draw[blue,thick] (2.61803,0.1)--(3.61803,0.1);
\draw[red,thick] (3.61803,0.1)--(4.23607,0.1);
\draw[blue,thick] (4.23607,0.1)--(5.23607,0.1);
\draw[red,thick] (5.23607,0.1)--(5.8541,0.1);

\node at (0.5,0.3){$a'$};
\node at (1.30902,0.3){$b'$};
\node at (2.11803,0.3){$a'$};
\node at (3.11803,0.3){$a'$};
\node at (3.92705,0.3){$b'$};
\node at (4.73607,0.3){$a'$};
\node at (5.54508,0.3){$b'$};

\node at (6.2,-.6){$T_{1-s}$};
\draw[->,black,thin,dashed] (0.,0.1)--(0,-1.1);
\draw[->,black,thin,dashed] (1.,0.1)--(1.61803,-1.1);
\draw[->,black,thin,dashed] (1.61803,0.1)--(1.61803,-1.1);
\draw[->,black,thin,dashed] (2.61803,0.1)--(2.61803,-1.1);
\draw[->,black,thin,dashed] (3.61803,0.1)--(4.23607,-1.1);
\draw[->,black,thin,dashed] (4.23607,0.1)--(4.23607,-1.1);
\draw[->,black,thin,dashed] (5.23607,0.1)--(5.8541,-1.1);
\draw[->,black,thin,dashed] (5.8541,0.1)--(5.8541,-1.1);


\node at (6.2,-1.3){$T_{0}$};
\draw plot[mark=*,mark size=1] coordinates{(0.,-1.2)};
\draw plot[mark=*,mark size=1] coordinates{(1.61803,-1.2)};
\draw plot[mark=*,mark size=1] coordinates{(2.61803,-1.2)};
\draw plot[mark=*,mark size=1] coordinates{(4.23607,-1.2)};
\draw plot[mark=*,mark size=1] coordinates{(5.8541,-1.2)};
\draw[blue,thick] (0.,-1.2)--(1.61803,-1.2);
\draw[red,thick] (1.61803,-1.2)--(2.61803,-1.2);
\draw[blue,thick] (2.61803,-1.2)--(4.23607,-1.2);
\draw[blue,thick] (4.23607,-1.2)--(5.8541,-1.2);

\node at (0.809017,-1.4){$a$};
\node at (2.11803,-1.4){$b$};
\node at (3.42705,-1.4){$a$};
\node at (5.04508,-1.4){$a$};

\draw (0,-0.6)--(5.86,-0.6);
\draw plot[mark=*,mark size=1] coordinates{(1.36,-.6)};
\draw plot[mark=*,mark size=1] coordinates{(3.97,-.6)};
\node at (1.2,-.7){$h_s(x)$};
\node at (3.8,-.7){$h_s(z)$};
\node at (0.95,-.01){$x$};
\node at (3.6,-.005){$z$};

\end{tikzpicture}}
 \caption{$h_s(x)\sim_{R_0} h_s(z)$ if and only if $x\sim_{R_0} z$ for $s<1$.\label{f:hsx-iff-x}}
\end{figure}
Now assume that $s=1$. If $h_1(x)\not\sim_{R_0} h_1(y)$ then both sums are zero, so assume  $h_1(x)\sim_{R_0} h_1(y)$.  Define 
$$S_1:=\{z'\mid h_1(x)\sim_{R_0} z'\}$$
$$S_2:=\{z\mid x\sim_{R_0} z,\,\, h_1(x)\sim_{R_0} h_1(z)\}.$$
Let $\Phi:S_2\to S_1$ be given by
$$\Phi(z):=h_1(z).$$
Then clearly, $S_1=[h_1(x)]_{R_0}$, $S_2=[x]_{R_0}\cap h_1^{-1}\big([h_1(x)]_{R_0}\big)$ and
\begin{equation}\label{e:lhs-2}
\phi_1(g\star g')(x,y)=\sum_{z'\in S_1} g(h_1(x),z')g'(z',h_1(y)),  
\end{equation}
\begin{equation}\label{e:rhs-2}
\phi_1(g)\star \phi_1(g')(x,y)=\sum_{z\in S_2} g(h_1(x),h_1(z))g'(h_1(z),h_1(y)).
\end{equation}
We claim that $\Phi$ is bijective. Then $\phi_1(g\star g')=\phi_1(g)\star \phi_1(g')$ and  hence $\phi_1$ is a $*$-homomorphism.
We start by showing surjectivity.
Suppose that $z'\in\R^d$ satisfies that $h_1(x)\sim_{R_0} z'$. We thus have $T(h_1(x))-h_1(x)=T(z')-z'$. By definition of our homotopy we also have $x\in T(h_1(x))$, and note that $h_1(x)\in T(x)\subseteq T(h_1(x))$, where the inclusion holds because the map $h_1$ is cellular.  
Thus $z\in T(z')$ given by 
$$h_1(x)-x=z'-z$$
satisfies $h_1(z)=z'$, since the definition of the homotopy on $T(z')$ is the same as that on $T(h_1(x))$ (up to translation).
We illustrate this in the following figure.\\\\
\centerline{
\centerline{\begin{tikzpicture}[scale=3.5]
\draw[] (0,0) -- (1,0) -- (1,1) -- (0,1) -- (0,0);
\draw[] (0.5,0)--(.5,1);
\draw[] (0,0.5)--(1,0.5);
\node [black] at (.5,.3) {\textbullet};
\node at (.55,.3) {$x$};
\node [black] at (.5,.5) {\textbullet};
\node at (.65,.55) {$h_1(x)$};
\draw [line width=1pt,->] (.5,.3) -- (.5,.48);
\node at (.5,-.1) {$T(h_1(x))$};

\draw (2,0) -- (3,0) -- (3,1) -- (2,1) -- (2,0);
\draw (2,.5) -- (3,.5);
\draw (2.5,0) -- (2.5,1);
\node [black] at (2.5,.3) {\textbullet};
\node at (2.55,.3) {$z$};
\node [black] at (2.5,.5) {\textbullet};
\node at (2.75,.55) {$z'=h_1(z)$};
\draw [line width=0.3mm,->] (2.5,.3) -- (2.5,0.48);
\node at (2.5,-.1) {$T(z')$};

\end{tikzpicture}}
}
By construction of $z$, we have $T(x)-x=T(z)-z$, i.e.~$x\sim_{R_0} z$, and hence $\Phi$ is surjective.

We now show that $\Phi$ is injective.
Suppose that $\Phi(z)=h_1(z)=z'=h_1(\tilde z)=\Phi(\tilde z)$ and that $x\sim z$,\,\, $x\sim \tilde z$,\,\, $h_1(x)\sim h_1(z)$,\,\,
$h_1(x)\sim h_1(\tilde z)$. Thus we have,
$$ T(z)-z=T(x)-x=T(\tilde z)-\tilde z.$$
In particular,  the definition of $h_1$ on $T(z)$ is the same as that on $T(\tilde z)$.
By definition of our homotopy, we must have $z,z'\in T(z)$,\,\, $\tilde z,z'\in  T(\tilde z)$, from where we note that $z'\in T(z)\cap T(\tilde z)$. 
From the two last statements we get $z'-z=z'-\tilde z$, i.e.~$z=\tilde z$.
(Compare with Figure \ref{f:h1-G0-claim}.) Thus $\Phi$ is injective.
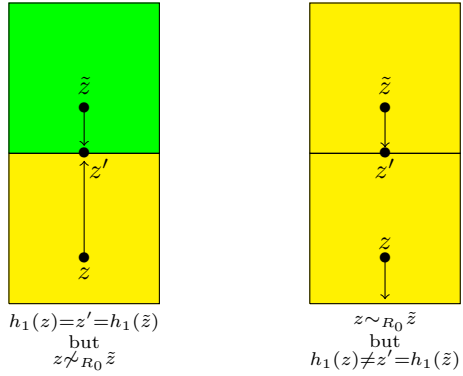
\begin{figure}[h]
\centerline{
\centerline{\begin{tikzpicture}[scale=2]
\draw[fill,yellow] (0,0) -- (1,0) -- (1,1) -- (0,1) -- (0,0);
\draw[fill,green] (0,1) -- (1,1) -- (1,2) -- (0,2) -- (0,1);
\draw (0,0) -- (1,0) -- (1,1) -- (0,1) -- (0,0);
\draw (0,1) -- (1,1) -- (1,2) -- (0,2) -- (0,1);
\node [black] at (.5,.3) {\textbullet};
\node at (.5,.2) {$z$};
\node [black] at (.5,1) {\textbullet};
\node at (.6,.9) {$z'$};
\node [black] at (.5,1.3) {\textbullet};
\node at (.5,1.45) {$\tilde z$};
\draw [->] (.5,.3) -- (.5,.95);
\draw [->] (.5,1.3) -- (.5,1.05);
\node at (.5,-.25) {$\substack{h_1(z)= z'=h_1(\tilde z)\\\text{but}\\z\not\sim_{R_0} \tilde z}$};

\draw[fill,yellow] (2,0) -- (3,0) -- (3,1) -- (2,1) -- (2,0);
\draw[fill,yellow] (2,1) -- (3,1) -- (3,2) -- (2,2) -- (2,1);
\draw (2,0) -- (3,0) -- (3,1) -- (2,1) -- (2,0);
\draw (2,1) -- (3,1) -- (3,2) -- (2,2) -- (2,1);
\node [black] at (2.5,.3) {\textbullet};
\node at (2.5,.4) {$z$};
\node [black] at (2.5,1) {\textbullet};
\node at (2.5,.9) {$z'$};
\node [black] at (2.5,1.3) {\textbullet};
\node at (2.5,1.45) {$\tilde z$};
\draw [->] (2.5,.3) -- (2.5,0.03);
\draw [->] (2.5,1.3) -- (2.5,1.04);
\node at (2.5,-.25) {$\substack{z\sim_{R_0} \tilde z\\\text{but}\\h_1(z)\ne z'=h_1(\tilde z)}$};

\end{tikzpicture}}
}
 \caption{$h_1(z)=z'=h_1(\tilde z) \text{ and } z\sim_{R_0} \tilde z$ implies $z=\tilde z$.\label{f:h1-G0-claim}}
\end{figure}

\end{proof}


\begin{cor}\label{c:psi-Cc-star-homo}
The map $\psi:C_c(R_0)\to C_c([0,1]\times R_0)$ given in Eq.~(\ref{e:psi}) is a $*$-homomorphism.
\end{cor}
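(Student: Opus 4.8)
The plan is to deduce the corollary directly from Lemma \ref{l:phi_s-Cc-star-homo}, exploiting that $\psi$ is nothing but the simultaneous package of all the maps $\phi_s$ and that the groupoid structure of $[0,1]\times R_0$ factors fiberwise over the parameter $s$.

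First I would record the key structural observation: since $[0,1]$ carries the trivial groupoid structure in which every point is a unit, two elements $(s,(x,z))$ and $(s',(z',y))$ of $[0,1]\times R_0$ are composable if and only if $s=s'$ and $(x,z),(z',y)$ are composable in $R_0$. Consequently the convolution on $C_c([0,1]\times R_0)$ is computed fiberwise: for $F,G\in C_c([0,1]\times R_0)$,
$$(F\star G)(s,(x,y))=\sum_{x\sim_{R_0} z} F(s,(x,z))\,G(s,(z,y)),$$
and the involution likewise acts fiberwise, $F^*(s,(x,y))=\overline{F(s,(y,x))}$.

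Next I would observe that, for each fixed $s\in[0,1]$, the slice of $\psi(f)$ at $s$ is exactly $\phi_s(f)$, i.e.~$\psi(f)(s,\cdot)=\phi_s(f)(\cdot)$, directly from comparing Definition \ref{d:phi_s} with Eq.~(\ref{e:psi}). Well-definedness of $\psi$ (that $\psi(f)\in C_c([0,1]\times R_0)$) is already supplied by Lemma \ref{l:psi-welldefined}, so it only remains to check that $\psi$ preserves the involution and the convolution product, and both reduce to the corresponding statements for $\phi_s$. Combining the fiberwise convolution formula with the slice identity gives, for every $s\in[0,1]$ (including $s=1$),
$$\bigl(\psi(f)\star\psi(g)\bigr)(s,\cdot)=\phi_s(f)\star\phi_s(g)=\phi_s(f\star g)=\psi(f\star g)(s,\cdot),$$
where the middle equality is precisely the multiplicativity established in Lemma \ref{l:phi_s-Cc-star-homo}. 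An identical fiberwise argument yields $\psi(f)^*(s,\cdot)=\phi_s(f)^*=\phi_s(f^*)=\psi(f^*)(s,\cdot)$. Since these identities hold for each $s$, they hold as functions on $[0,1]\times R_0$, which is the claim.

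There is no serious obstacle here: the entire analytic content has already been absorbed into Lemma \ref{l:phi_s-Cc-star-homo}, and the only thing to verify is that adjoining the trivial groupoid $[0,1]$ does not mix different parameter values. The mildest point requiring care is making the fiberwise decomposition of the product on $[0,1]\times R_0$ explicit, i.e.~confirming that composability in the product groupoid forces equal $s$-coordinates; once that is in place the result follows immediately from the $s$-pointwise $*$-homomorphism property of $\phi_s$.
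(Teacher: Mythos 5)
Your proposal is correct and follows essentially the same route as the paper's own proof: both rest on the slice identity $\psi(f)(s,\cdot)=\phi_s(f)$, the observation that the trivial groupoid structure on $[0,1]$ forces composable elements of $[0,1]\times R_0$ to share the same $s$-coordinate (so convolution and involution act fiberwise), and then Lemma \ref{l:phi_s-Cc-star-homo} applied at each fixed $s$. No gaps; the argument is complete as stated.
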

\begin{proof}
Note that $\psi(f)(s,(x,y))=\phi_s(f)(x,y)$ for $f\in C_c(R_0)$, $s\in[0,1]$, $(x,y)\in R_0$.
We have $\psi(f^*)=\psi(f)^*$ because 
$$\psi(f^*)(s,(x,y))=\phi_s(f^*)(x,y)=\overline{\phi_s(f)}(y,x),\quad\text{and}$$
$$\psi(f^*)(s,(x,y))=\overline{\psi(f)}(s^{-1},(x,y)^{-1})=\overline{\psi(f)}(s,(y,x))=\overline{\phi_s(f)}(y,x).$$
We also have $\psi(f\star g)= \psi(f)\star \psi(g)$ because 
$$\psi(f\star g)(s,(x,y))=\phi_s(f\star g)(x,y)=\phi_s(f)\star \phi_s(g)(x,y)$$
and
\begin{eqnarray*}
\psi(f)\star \psi(g)((s,s),(x,y))&=&\sum_{(s',z)\sim(s,x)}\psi(f)((s,s'),(x,z))\psi(g)((s',s),(z,y))\\
&=&\sum_{z\sim x}\psi(f)(s,(x,z))\psi(g)(s,(z,y))\\
&=&\sum_{z\sim x}\phi_s(f)(x,z)\phi_s(g)(z,y)\\
&=&\phi_s(f)\star \phi_s(g)(x,y),
\end{eqnarray*}
where we identify the interval $[0,1]$ with the ``trivial" equivalence relation, where each $s\in[0,1]$ is equivalent only to itself, i.e.~$[s]=\{s\}$ for $s\in[0,1]$.
We also see it as the ``trivial" groupoid where each $s\in[0,1]$ is a unit.
\end{proof}
In the next two lemmas we apply Theorem \ref{t:tilde-psi-general} to extend the $*$-homomorphisms $\psi$ and $\phi_s$ to their completions under the full $C^*$-norm.
\begin{lem}\label{l:tilde-psi}
The map $\psi$ defined in Eq.~(\ref{e:psi}) extends to a (continuous) $*$-homomorphism
 $$\hat\psi:C^*(R_0)\to C^*([0,1]\times R_0).$$
\end{lem}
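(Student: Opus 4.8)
The plan is to invoke the general extension principle of Theorem \ref{t:tilde-psi-general}, whose whole purpose is to upgrade a $*$-homomorphism between convolution algebras to a $*$-homomorphism between the associated full groupoid $C^*$-algebras. Both hypotheses that the theorem requires have already been verified for $\psi$: it is a $*$-homomorphism by Corollary \ref{c:psi-Cc-star-homo}, and it is continuous with respect to the inductive limit topologies by Lemma \ref{l:psi-cont}. Applying the theorem with the source groupoid $R_0$ and the target groupoid $[0,1]\times R_0$ then produces the desired continuous extension $\hat\psi\colon C^*(R_0)\to C^*([0,1]\times R_0)$.

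To see why these hypotheses suffice—that is, to reconstruct the content of Theorem \ref{t:tilde-psi-general} in this instance—I would first compose $\psi$ with the canonical $*$-embedding $j\colon C_c([0,1]\times R_0)\hookrightarrow C^*([0,1]\times R_0)$ onto a dense $*$-subalgebra, obtaining a $*$-homomorphism $j\circ\psi\colon C_c(R_0)\to C^*([0,1]\times R_0)$ into a $C^*$-algebra. The only thing that then needs checking is the norm estimate $\|\psi(f)\|_{C^*([0,1]\times R_0)}\le\|f\|_{C^*(R_0)}$ for every $f\in C_c(R_0)$. Once this holds, $j\circ\psi$ is bounded for the full $C^*$-norm on $C_c(R_0)$ and hence extends, by continuity and density, to the claimed map $\hat\psi$, which is automatically a $*$-homomorphism since $j\circ\psi$ preserves products and involution on the dense subalgebra.

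For the norm estimate I would argue representation-theoretically. Recall that $\|f\|_{C^*(R_0)}=\sup_L\|L(f)\|$, the supremum being taken over all representations $L$ of $C_c(R_0)$ that are continuous from the inductive limit topology to the weak operator topology (and hence $I$-norm bounded, e.g.\ by Renault's disintegration theorem). Given any representation $\pi$ of $C^*([0,1]\times R_0)$ on a Hilbert space, the composite $\pi\circ\psi$ is a $*$-homomorphism of $C_c(R_0)$ into bounded operators, and it is continuous in the inductive limit topology because $\psi$ is (Lemma \ref{l:psi-cont}) and $\pi$ is norm continuous. Thus $\pi\circ\psi$ is one of the representations contributing to the universal norm, so $\|\pi(\psi(f))\|=\|(\pi\circ\psi)(f)\|\le\|f\|_{C^*(R_0)}$, and taking the supremum over all $\pi$ yields the required bound.

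The main obstacle is precisely the book-keeping in the previous paragraph: one must confirm that $\pi\circ\psi$ genuinely qualifies as a representation in the sense used to define the full norm, in particular that the inductive-limit continuity of $\psi$ transfers to the composite and that $I$-norm boundedness is automatic for the étale groupoids in play. These are exactly the points that Theorem \ref{t:tilde-psi-general} packages, so in the final write-up the lemma should reduce to a one-line citation of that theorem together with Corollary \ref{c:psi-Cc-star-homo} and Lemma \ref{l:psi-cont}; the substantive work has already been done in establishing that $\psi$ is a $*$-homomorphism that is continuous in the inductive limit topology.
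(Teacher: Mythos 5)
Your proposal is correct and follows exactly the paper's route: the paper's proof of this lemma is precisely the one-line citation you describe, combining Lemma \ref{l:psi-cont} (inductive-limit continuity) and Corollary \ref{c:psi-Cc-star-homo} ($*$-homomorphism property) with Theorem \ref{t:tilde-psi-general}. Your representation-theoretic unpacking of why the theorem applies also matches the paper's own proof of Theorem \ref{t:tilde-psi-general} in the appendix, which composes with a (faithful) representation of the target and uses Renault's automatic boundedness result for inductive-limit-continuous representations to obtain the norm estimate.
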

\begin{proof}
By Lemma \ref{l:psi-cont} and Corollary \ref{c:psi-Cc-star-homo},  the map $\psi: C_c(R_0)\to C_c([0,1]\times R_0)$ is continuous and it is a $*$-homomorphism.
The statement of the lemma follows by Theorem \ref{t:tilde-psi-general}.
\end{proof}
\begin{lem}\label{l:phi-s-star-homo}
The map $\phi_s:C_c(R_0)\to C_c(R_0)$  in Definition \ref{d:phi_s} extends to $*$-homomorphism
 $$\hat\phi_s:C^*(R_0)\to C^*(R_0),\qquad s\in[0,1].$$
\end{lem}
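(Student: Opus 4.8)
The plan is to proceed exactly as in the proof of Lemma~\ref{l:tilde-psi}, since every ingredient needed for $\phi_s$ is now available. Concretely, Lemma~\ref{l:phi_s-Cc-cont} shows that $\phi_s\colon C_c(R_0)\to C_c(R_0)$ is continuous when both spaces carry the inductive limit topology, and Lemma~\ref{l:phi_s-Cc-star-homo} shows that $\phi_s$ is a $*$-homomorphism. These are precisely the hypotheses of Theorem~\ref{t:tilde-psi-general}, applied with both the source and the target groupoid equal to $R_0$, and its conclusion is the desired extension $\hat\phi_s\colon C^*(R_0)\to C^*(R_0)$.

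The reason this works is the universal property of the full groupoid norm. The norm on $C^*(R_0)$ is the supremum over all $*$-representations of $C_c(R_0)$ that are continuous in the inductive limit topology (equivalently, bounded for the $I$-norm). Given such a representation $\pi$, the composite $\pi\circ\phi_s$ is again a $*$-homomorphism into a $C^*$-algebra, and it is continuous in the inductive limit topology because $\phi_s$ is (Lemma~\ref{l:phi_s-Cc-cont}) and $\pi$ is; hence $\pi\circ\phi_s$ is one of the representations defining the full norm. Taking suprema yields $\|\phi_s(f)\|\le\|f\|$ for every $f\in C_c(R_0)$, so $\phi_s$ is contractive for the full $C^*$-norm and therefore extends continuously to the completion $C^*(R_0)$, the extension remaining a $*$-homomorphism by continuity. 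Theorem~\ref{t:tilde-psi-general} packages exactly this argument, so invoking it is the cleanest route.

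A pleasant alternative, which reuses the work already done for $\psi$, is to obtain $\hat\phi_s$ by evaluation. Identifying $[0,1]$ with the trivial groupoid (each point a unit), one has $C^*([0,1]\times R_0)\cong C([0,1],C^*(R_0))$, so for each fixed $s$ the point-evaluation $\mathrm{ev}_s\colon C^*([0,1]\times R_0)\to C^*(R_0)$ is a $*$-homomorphism. Since $\psi(f)(s,(x,y))=\phi_s(f)(x,y)$ on $C_c(R_0)$, we have $\phi_s=\mathrm{ev}_s\circ\psi$ at the level of $C_c$, and hence $\hat\phi_s:=\mathrm{ev}_s\circ\hat\psi$, with $\hat\psi$ from Lemma~\ref{l:tilde-psi}, is a $*$-homomorphism extending $\phi_s$. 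This route has the advantage of furnishing the whole family $(\hat\phi_s)_{s\in[0,1]}$ as a single path inside a $C([0,1])$-algebra, which is convenient for the homotopy statements later in the section.

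Given the amount of preparation, there is no genuine obstacle left: the substantive content was front-loaded into verifying continuity in the inductive limit topology and the algebraic $*$-homomorphism identities. The only point to watch is that contractivity for the full norm really does require the inductive-limit continuity of $\phi_s$, so that $\pi\circ\phi_s$ qualifies as an admissible representation; this is exactly why Lemma~\ref{l:phi_s-Cc-cont} was established, and it is the single hypothesis one must not skip when quoting Theorem~\ref{t:tilde-psi-general}.
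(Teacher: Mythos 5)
Your main argument is exactly the paper's proof: Lemma~\ref{l:phi_s-Cc-cont} supplies inductive-limit continuity, Lemma~\ref{l:phi_s-Cc-star-homo} supplies the $*$-homomorphism property, and Theorem~\ref{t:tilde-psi-general} then gives the extension $\hat\phi_s\colon C^*(R_0)\to C^*(R_0)$. The evaluation route $\hat\phi_s=\mathrm{ev}_s\circ\hat\psi$ you offer as an alternative is also sound, but it is not needed; the paper proceeds precisely as in your first paragraph.
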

\begin{proof}
By Lemma \ref{l:phi_s-Cc-cont} and Lemma \ref{l:phi_s-Cc-star-homo},  the map $\phi: C_c(R_0)\to C_c(R_0)$ is continuous and it is a $*$-homomorphism.
By Theorem \ref{t:tilde-psi-general} the lemma follows.
\end{proof}

We have now arrived to the main result of this subsection, namely that $\phi_s$ gives a homotopy of $*$-homomorphisms of $C^*$-algebras.
We should note that in the statement of the proposition, we use the same notation for $\phi_s$ as for its extension.
\begin{pro}\label{p:phi-star-homo}
The map $\phi:[0,1]\times C_c(R_0)\to C_c(R_0)$ given by
$$\phi(s,g):=\phi_s(g)$$
extends to a homotopy of $*$-homomorphisms
 $$\phi_s:C^*(R_0)\to C^*(R_0),$$
where $\phi_s$ is given in Definition \ref{d:phi_s}, $C_c(R_0)$ has the inductive limit topology, and multiplication in $C_c(R_0)$ is convolution.
 In particular, $id\sim_h \phi_1$ (a path of $*$-homomorphisms).
\end{pro}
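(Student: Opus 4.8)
The plan is to recognize the already-constructed $*$-homomorphism $\hat\psi$ of Lemma \ref{l:tilde-psi} as exactly the datum of the desired homotopy, once its target is identified correctly. Recall that a homotopy of $*$-homomorphisms between $\alpha,\beta\colon C^*(R_0)\to C^*(R_0)$ is, by definition, a single $*$-homomorphism $\Phi\colon C^*(R_0)\to C([0,1],C^*(R_0))$ whose evaluations at the two endpoints recover $\alpha$ and $\beta$. So the first task is to identify $C^*([0,1]\times R_0)$ with $C([0,1],C^*(R_0))$. Here $[0,1]$ is the trivial groupoid (each point is a unit, as already noted in the proof of Corollary \ref{c:psi-Cc-star-homo}), so its full groupoid $C^*$-algebra is $C([0,1])$, and the full $C^*$-algebra of the product groupoid factors as $C^*([0,1]\times R_0)\cong C([0,1])\otimes C^*(R_0)$. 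Since $C([0,1])$ is abelian, hence nuclear, the maximal and minimal tensor norms coincide and $C([0,1])\otimes C^*(R_0)\cong C([0,1],C^*(R_0))$. On the dense subalgebra this identification sends $F\in C_c([0,1]\times R_0)$ to the (inductive-limit continuous, uniformly compactly supported since $[0,1]$ is compact) function $s\mapsto F(s,\cdot)\in C_c(R_0)$.

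With this identification in place, the next step is purely formal. By Lemma \ref{l:tilde-psi}, $\hat\psi\colon C^*(R_0)\to C^*([0,1]\times R_0)\cong C([0,1],C^*(R_0))$ is a $*$-homomorphism, which is precisely a homotopy; it remains only to compute its evaluations. For $s\in[0,1]$ let $\mathrm{ev}_s\colon C([0,1],C^*(R_0))\to C^*(R_0)$ denote evaluation, which is itself a $*$-homomorphism, so $\mathrm{ev}_s\circ\hat\psi$ is a $*$-homomorphism $C^*(R_0)\to C^*(R_0)$. On the dense subalgebra $C_c(R_0)$, the very definition of $\psi$ in Eq.~(\ref{e:psi}) gives $(\mathrm{ev}_s\circ\psi)(f)=\psi(f)(s,\cdot)=\phi_s(f)$. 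Since $\hat\phi_s$ of Lemma \ref{l:phi-s-star-homo} is the unique continuous extension of $\phi_s$, density together with the continuity established in Lemmas \ref{l:psi-cont} and \ref{l:phi_s-Cc-cont} yields $\mathrm{ev}_s\circ\hat\psi=\hat\phi_s$ for every $s$.

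Finally I would read off the endpoints. Since $h_0=\mathrm{id}$ by item $(1)$ of Definition \ref{d:homotopy}, Definition \ref{d:phi_s} gives $\phi_0(g)(x,y)=g(x,y)$ whenever $x\sim_{R_0}y$, so $\phi_0=\mathrm{id}$ on $C_c(R_0)$ and hence $\hat\phi_0=\mathrm{id}$ on $C^*(R_0)$. Thus $\mathrm{ev}_0\circ\hat\psi=\mathrm{id}$ and $\mathrm{ev}_1\circ\hat\psi=\hat\phi_1$, which exhibits $\hat\psi$ as a homotopy of $*$-homomorphisms from $\mathrm{id}$ to $\phi_1$; that is, $\mathrm{id}\sim_h\phi_1$.

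The one genuinely non-formal point, and the main obstacle, is the identification in the first paragraph at the level of the \emph{full} $C^*$-norm rather than merely on the $*$-algebra $C_c$: one must know that the full $C^*$-algebra of the product groupoid $[0,1]\times R_0$ is $C([0,1])\otimes C^*(R_0)$ and invoke nuclearity of $C([0,1])$ to collapse the tensor norms to $C([0,1],C^*(R_0))$. Everything after that is bookkeeping on the dense subalgebra $C_c(R_0)$, extended by continuity via Theorem \ref{t:tilde-psi-general}.
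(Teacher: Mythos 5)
Your proposal is correct and follows essentially the same route as the paper's proof: both rest on Lemma \ref{l:tilde-psi} together with the identification $C^*([0,1]\times R_0)\cong C([0,1],C^*(R_0))$ (which the paper invokes as ``well known'' and you justify via the tensor decomposition and nuclearity of $C([0,1])$), and then identify the slices of $\hat\psi$ with the extensions $\hat\phi_s$ of Lemma \ref{l:phi-s-star-homo}. The only divergence is final bookkeeping: you check the endpoint evaluations $\mathrm{ev}_0\circ\hat\psi=\mathrm{id}$ and $\mathrm{ev}_1\circ\hat\psi=\hat\phi_1$ against the $C([0,1],\cdot)$-valued definition of homotopy, whereas the paper instead verifies joint norm-continuity of $(s,a)\mapsto\hat\phi_s(a)$; these are equivalent formulations of the same conclusion.
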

\begin{proof}
Let $\hat\phi:[0,1]\times C^*(R_0)\to C^*(R_0)$ be the map given by
$$\hat\phi(s,a):=\hat\phi_s(a),$$
where $\hat\phi_s$ was constructed in Lemma \ref{l:phi-s-star-homo}.
By Lemma \ref{l:tilde-psi}, 
$$\hat \psi:C^*(R_0)\to C^*([0,1]\times R_0)$$
 is a (continuous) $*$-homomorphism.
It is well known that
$$C^*([0,1]\times R_0)\cong C([0,1], C^*(R_0))$$
Via the above isomorphism, we have 
$$\hat \psi=(a\mapsto\hat \phi(\cdot, a))$$
(up to the isomorphism), and hence $\hat \phi(\cdot, a)\in C([0,1],C^*(R_0))$ for $a\in C^*(R_0)$.
It follows that the map $\rho:a\mapsto\hat \phi(\cdot, a)$ is a $*$-homomorphism from $C^*(R_0)$ to $C([0,1],C^*(R_0))$. 
We are now ready to show that $\hat \phi$ is continuous. Suppose that $(s_n, a_n)$ converges to $(s,a)$. Then
$$\hat\phi(s_n,a_n)-\hat \phi(s,a)=\hat\phi(s_n,a_n)-\hat\phi(s_n,a)+\hat\phi(s_n,a)-\hat\phi(s,a),$$
and
$$||\hat\phi(s_n,a_n)-\hat\phi(s_n,a)||_{C^*(R_0)}\le ||\rho(a_n-a)||_{\sup}\le ||a_n-a||\to 0,$$
because $\rho$ is bounded, and
$$||\hat\phi(s_n,a)-\hat\phi(s,a)||_{C^*(R_0)}\to 0,$$
because $\hat\phi(\cdot, a)\in C([0,1],C^*(R_0))$ and $s_n\to s$.
\end{proof}

%
\subsection{Connecting maps}\label{ss:connectingmaps}
In this subsection we do the preparations for computing the group homomorphisms $K_0(\iota_n)$, $K_1(\iota_n)$, $n\in\N_0$.
To accomplish this task, we need to use the filtration of $R_n$ by the skeletons of the tiling $T$, i.e.~
we need to use the short exact sequences in Eq.~(\ref{e:JBC}) and Eq.~(\ref{e:IAB}).
However, we cannot simply ``restrict" the inclusion map $\iota_n$ to the ideal and quotient $C^*$-algebras of these short exact sequences.
The reason is that we would get a non-commutative diagram, as it is going to be explained in Remark \ref{r:connectingmaps}.
But, one gets a commutative diagram when one instead ``restricts" the $*$-homomorphism $\iota_0\circ\phi_1$. 
Since $\phi_s$ is a homotopy of $*$-homomorphisms, and $\phi_0$ is the identity, we get by homotopy invariance of $K$-theory (cf.~\cite[Prop.~4.1.4, p.~61]{Rordam}), that $K_i(\iota_0)=K_i(\iota_0\circ\phi_1)$, $i=1,2$.
Recall that homotopy invariance of $K$-theory uses the property that the homotopy is a homotopy of $*$-homomorphisms $\phi_s$, since we need the fact that $\phi_s$, extended to the unitization, preserves projections and unitaries.
We only need to consider $\iota:=\iota_0$, since, in the natural bases in terms of stable cells, each matrix for the $K$-theory of $\iota_n\circ\phi_1$ ``restricted" to the ideal and quotient $C^*$-algebras is equal to that of $\iota_0\circ\phi_1$.
We end this subsection with Lemma \ref{l:K-maindiagram-1}, where we show the construction of a ``computable" cochain complex with connecting maps. It is ``computable" since it is given in terms of the compacts.

Similarly, from Eq.~(\ref{e:IAB}) we get the diagram (which will be shown to be commutative in Lemma \ref{l:connecting-maps})
\begin{equation}\label{e:IAB-connectingmaps}
\xymatrix{
0\ar[r]&I_0\ar@{^{(}->}[r]\ar[d]_{\alpha'}&A_0\ar[d]_{\alpha}\ar@{->>}[r]&B_0\ar[d]^{\beta}\ar[r]&0\\
0\ar[r]&I_1\ar@{^{(}->}[r]&A_1\ar@{->>}[r]&B_1\ar[r]&0,
}
\end{equation}
where the connecting maps $\alpha$, $\beta$, $\alpha'$ are defined in terms of $\phi_1$ as follows:\\
The map $\alpha:=\iota_0\circ\phi_1:A_0\to A_1$ is given by
$$\alpha(g)(x',y'):= \left\{ \begin{array}{ccl}
    g\circ (h_1\times h_1)(x',y') &\quad& h_1(x')\sim_{R_0} h_1(y') \\ 
    0 &\quad&  \text{else},\\ 
  \end{array}\right.$$
for $g\in C_c(R_0)$ and  $x'\sim_{R_1}y'$.\\
The map $\alpha':=\alpha|_{I_0}:I_0\to I_1$ is given by
$$\alpha'(g)(x',y'):= \left\{ \begin{array}{ccl}
    g\circ (h_1\times h_1)(x',y') &\quad& h_1(x')\sim_{R_0} h_1(y') \\ 
    0 &\quad&  \text{else},\\ 
  \end{array}\right.$$
for $g\in C_c(R_0|_{X_2^{T_0}-X_1^{T_0}})$ and  $x'\sim_{R_1}y'$. Note that $g$ vanishes on the edges of $T_0$.\\
The map $\beta:B_0\to B_1$ is given by 
$$\beta(g)(x',y'):= \left\{ \begin{array}{ccl}
    g\circ (h_1\times h_1)(x',y') &\quad& h_1(x')\sim_{R_0} h_1(y') \\ 
    0 &\quad&  \text{else},\\ 
  \end{array}\right.$$
for $g\in C_c(R_0|_{X_1^{T_0}})$ and  $x'\sim_{R_1}y'$ and $x',y'\in X_1^{T_1}$ lie in edges of $T_1$.
Note that $g$ is defined only on the edges of $T_0$.\\

From Eq.~(\ref{e:JBC}) we get the diagram (which will be shown to be commutative in Lemma \ref{l:connecting-maps})
\begin{equation}\label{e:JBC-connectingmaps}
\xymatrix{
0\ar[r]&J_0\ar@{^{(}->}[r]\ar[d]_{\beta'}&B_0\ar[d]_{\beta}\ar@{->>}[r]&C_0\ar[d]^{\gamma}\ar[r]&0\\
0\ar[r]&J_1\ar@{^{(}->}[r]&B_1\ar@{->>}[r]&C_1\ar[r]&0,
}
\end{equation}
where the connecting maps $\beta'$, $\gamma$ are defined as follows:\\
The map $\beta':=\beta|_{J_0}$ is given by
$$\beta'(g)(x',y'):= \left\{ \begin{array}{ccl}
    g\circ (h_1\times h_1)(x',y') &\quad& h_1(x')\sim_{R_0} h_1(y') \\ 
    0 &\quad&  \text{else},\\ 
  \end{array}\right.$$
for $g\in C_c(R_0|_{X_1^{T_0}-X_0^{T_0}})$ and  $x'\sim_{R_1}y'$ and $x',y'$ are in the edges of $T_1$. 
Note that $g$ is defined only on the edges of $T_0$ and vanishes on the vertices of $T_0$.\\
The map $\gamma$ is given by 
$$\gamma(g)(x',y'):= \left\{ \begin{array}{ccl}
    g\circ (h_1\times h_1)(x',y') &\quad& h_1(x')\sim_{R_0} h_1(y') \\ 
    0 &\quad&  \text{else},\\ 
  \end{array}\right.$$
for $g\in C_c(R_0|{X^{T_0}_0})$ and  $x'\sim_{R_1}y'$ and $x',y'\in X_0^{T_1}$ are vertices of $T_1$.
Note that $g$ is defined only on the vertices of $T_0$.
\begin{rem}\label{r:connectingmaps}
Roughly speaking, the connecting maps of the above two diagrams are just the inclusion maps composed with the homotopy $\phi_1$. By the next lemma, these diagrams commute.
Had we defined the above diagrams solely in terms of the inclusion map (without homotopy) then the restriction maps $\alpha'$, $\beta'$ would not map into the ideals $I_1$, $J_1$ respectively, and the connecting maps $\beta$, $\gamma$ would make the diagrams to not commute (to make sense of this see for example Figure \ref{f:betagammaNotCommutative}).  
\end{rem}

\begin{figure}[h]
\centerline{
\includegraphics[scale=.23]{./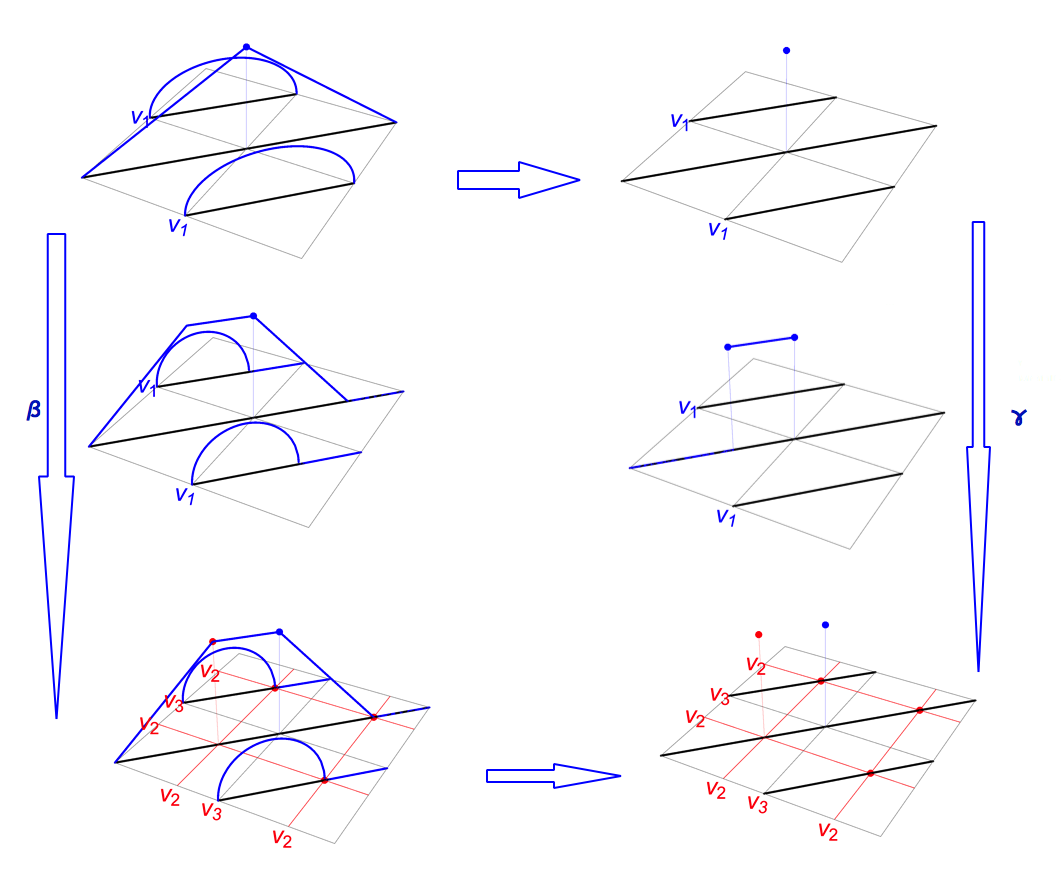}
}
 \caption{
 A commutative diagram for the fibonacci tiling.
 \label{f:betagammaNotCommutative}}
\end{figure}

\begin{lem}[Connecting maps]\label{l:connecting-maps}
The diagrams in Eq.~(\ref{e:IAB-connectingmaps}) and Eq.~(\ref{e:JBC-connectingmaps}) are commutatative diagrams of $*$-homomorphisms.
\end{lem}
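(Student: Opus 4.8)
The plan is to deduce the whole lemma from the single fact, already in hand, that $\alpha=\iota_0\circ\phi_1$ is a $*$-homomorphism $A_0\to A_1$: the map $\phi_1$ is a $*$-homomorphism by Lemma~\ref{l:phi-s-star-homo} and Proposition~\ref{p:phi-star-homo}, and $\iota_0$ is one by Remark~\ref{r:iota-cont}. The five vertical maps $\alpha',\beta,\beta',\gamma$ are all defined by the \emph{same} formula $g\mapsto g\circ(h_1\times h_1)$ (evaluated subject to $h_1(x')\sim_{R_0}h_1(y')$), only with the domain restricted to an ideal or compressed to a quotient of the sequences Eq.~(\ref{e:IAB}) and Eq.~(\ref{e:JBC}). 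So the content reduces to two well-definedness statements, namely that $\alpha$ carries $I_0$ into $I_1$ and that $\beta$ carries $J_0$ into $J_1$, together with the identification of the induced quotient maps with the explicit $\beta$ and $\gamma$. As usual it suffices to check all of this on the dense $*$-subalgebras $C_c(\cdot)$ and then extend by continuity.

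The geometric input is the cellularity of the homotopy: by Definition~\ref{d:homotopy} (items $3(i)$ and $3(iii)$) and the remark following it, $h_1(X_k^{T_1})\subseteq X_k^{T_0}$ for $k=0,1$. First I would prove $\alpha(I_0)\subseteq I_1$. For $g\in C_c(R_0|_{X^{T_0}_2-X^{T_0}_1})$ the function $\alpha(g)(x',y')=g(h_1(x'),h_1(y'))$ is nonzero only when both $h_1(x')$ and $h_1(y')$ lie in the open faces $X_2^{T_0}-X_1^{T_0}$; by the contrapositive of $h_1(X_1^{T_1})\subseteq X_1^{T_0}$ this forces $x',y'\in X_2^{T_1}-X_1^{T_1}$, so $\alpha(g)\in I_1$. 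The identical argument using $h_1(X_0^{T_1})\subseteq X_0^{T_0}$ gives $\beta(J_0)\subseteq J_1$: once one is on the $1$-skeleton, a $g$ supported on open edges of $T_0$ produces $\beta(g)$ supported where $h_1(x')\notin X_0^{T_0}$, hence on $x'\notin X_0^{T_1}$, i.e. on open edges of $T_1$.

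These inclusions make the left-hand squares of Eq.~(\ref{e:IAB-connectingmaps}) and Eq.~(\ref{e:JBC-connectingmaps}) commute automatically, since $\alpha'=\alpha|_{I_0}$ and $\beta'=\beta|_{J_0}$ (same formula) now land in $I_1$ and $J_1$ while the horizontal arrows are the inclusions. For the right-hand squares I would recall that the quotient maps $A_n\to B_n$ and $B_n\to C_n$ of Eq.~(\ref{e:IAB}) and Eq.~(\ref{e:JBC}) are just restriction of functions from $R_n$ to $R_n|_{X_1^{T_n}}$ and to $R_n|_{X_0^{T_n}}$. Comparing the two ways around the square on $g\in C_c(R_0)$ then reduces to a pointwise identity: for $x',y'\in X_1^{T_1}$ both composites equal $g(h_1(x'),h_1(y'))$ under $h_1(x')\sim_{R_0}h_1(y')$, the auxiliary requirement $h_1(x'),h_1(y')\in X_1^{T_0}$ that enters one expression being automatic by cellularity. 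Repeating this computation restricted to $x',y'\in X_0^{T_1}$ identifies the induced map $C_0\to C_1$ with $\gamma$. Finally, the standard fact that a $*$-homomorphism carrying an ideal into an ideal restricts to the ideals and descends to the quotients as $*$-homomorphisms, combined with the commuting squares just verified, shows that $\alpha',\beta,\beta',\gamma$ are $*$-homomorphisms and that both diagrams commute.

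The main obstacle is exactly the two well-definedness claims $\alpha(I_0)\subseteq I_1$ and $\beta(J_0)\subseteq J_1$ and the matching of the induced quotient maps with $\beta$ and $\gamma$: these are the only places where the special cellular structure of $h_1$ is used, and one must check that the support conditions carried through $h_1^{-1}$ land precisely in the open faces, open edges, and vertices cut out by the filtration $X_0^{T_n}\subseteq X_1^{T_n}\subseteq X_2^{T_n}$. Everything else is the formal machinery of compatibility of a $*$-homomorphism with a pair of short exact sequences.
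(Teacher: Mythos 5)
Your proof is correct, but it follows a genuinely different route from the paper's. The paper never performs the descent at the $C^*$-level: it builds $\beta$ (and $\gamma$) on the dense $*$-subalgebras, first establishing the algebraic exact sequence $0\to C_c(R_n|_{X_2^{T_n}-X_1^{T_n}})\to C_c(R_n)\to C_c(R_n|_{X_1^{T_n}})\to 0$ with the surjection $q_n$ given by restriction (surjectivity via Tietze), then showing that $q_1\circ\alpha$ factors through $q_0$ by the same cellularity computation you give; but since the resulting $\beta$ lives only on $C_c(R_0|_{X_1^{T_0}})$, the paper must then prove that $\beta$ is continuous for the inductive limit topology (Lemma~\ref{l:beta-Cc-cont}, a page of support estimates involving the homotopy) and invoke the extension theorem (Theorem~\ref{t:tilde-psi-general}) to land in $B_1$. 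You instead descend directly at the $C^*$-level: once $\alpha(I_0)\subseteq I_1$ is known --- your dense-subalgebra check closes up by continuity of $\alpha$ and closedness of $I_1$; even more simply, $I_1=\ker(A_1\to B_1)$ by exactness of Eq.~(\ref{e:IAB}), and $\alpha(g)$ restricts to zero on $R_1|_{X_1^{T_1}}$ by cellularity, which sidesteps the slightly delicate point that vanishing of a function off the open faces does not by itself place its \emph{support} inside them --- the standard ideal/quotient functoriality of $*$-homomorphisms yields $\beta:B_0\cong A_0/I_0\to A_1/I_1\cong B_1$ with boundedness for free, and commutativity of both squares is built into the construction. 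This buys real economy: Lemma~\ref{l:beta-Cc-cont} and the appeal to Theorem~\ref{t:tilde-psi-general} become unnecessary for $\beta$ and $\gamma$ (though note the paper reuses the $\hat\iota$ portion of that lemma's proof for Remark~\ref{r:iota-cont}, so it cannot be deleted wholesale). What your route still needs, and should state explicitly, is that the quotient maps in Eq.~(\ref{e:IAB}) and Eq.~(\ref{e:JBC}) are induced by restriction of functions on $C_c$ and that this restriction maps onto $C_c$ of the closed subgroupoid (Tietze again) --- this is exactly what lets you identify the abstractly descended map with the paper's explicit formula for $\beta$ on all of $C_c(R_0|_{X_1^{T_0}})$, rather than merely on a dense image.
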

\begin{proof}
The map $\alpha|_{I_0}$ is a $*$-homomorphism because it is the composition $I_0 \hookrightarrow A_0 \xrightarrow{\alpha} A_1$ of $*$-homomorphisms.
The left square of the diagram in Eq.~(\ref{e:IAB-connectingmaps}) commutes because $\alpha'$ is just a corestriction of the $*$-homomorphism $\alpha|_{I_0}$. 
That is, $\alpha'$ maps $I_0$ into $I_1$.
This follows since the map $h_1:T_1\to T_0$ is cellular. Indeed,
if $g\in C_c(R_0|_{X_2^{T_0}-X_1^{T_0}})$ vanishes on the edges of $T_0$, then $\alpha'(g)\in C_c(R_1|_{X_2^{T_1}-X_1^{T_1}})$ vanishes on the edges of $T_1$.
That is, $\alpha'(g)(x',y')=g(h_1(x'),h_1(y'))=0$ for $x',y'\in X_1^{T_1}$ because $h_1(x'),h_1(y')$ are in the edges of $T_0$.

We will now show that $\beta$ is a $*$-homomorphism making the right square in Eq.~(\ref{e:IAB-connectingmaps}) commute.

We start by showing that 
$$\xymatrix{0\ar[r]& I'_n\ar@{^(->}[r]^{i_n}& A'_n\ar@{>>}[r]^{q_n}& B'_n\ar[r]& 0}$$
is a short exact sequence, where $I'_n:=C_c(R_n|_{X_2^{T_n}-X_1^{T_n}})$, $A'_n:=C_c(R_n)$, $B'_n:=C_c(R_n|_{X_1^{T_n}})$.
To show that $q_n$ is surjective one uses Tietze's extension theorem applied to functions with compact support, cf.~\cite[Theorem 3.4.8~p.124]{PutnamCnotes}.
That $q_n$ is a $*$-homomorphism follows from the fact that the 1-skeleton $X_1^{T_n}$ is $R_n$-invariant.
The map $i_n$ extends functions with compact support to be zero outside $R_n|_{X_2^{T_n}-X_1^{T_n}}$ such that the extended function is continuous, which can be done as $R_n|_{X_2^{T_n}-X_1^{T_n}}$ is open in $R_n$.
It follows that $i_n$ is injective. 
That $i_n$ is a $*$-homomorphism follows by the fact that $X_2^{T_n}-X_1^{T_n}$ is $R_n$-invariant.
It is easy to see that $\im i_n = \ker q_n$. 

If $g,g'\in C_c(R_0)$ are equal on the 1-skeleton of $T_0$, 
i.e.~$q_0(g)=q_0(g')$ then $(q_1\circ \alpha)(g)=(q_1\circ\alpha)(g')$ because $h_1$ is cellular.
More precisely, for $x',y' \in X_1^{T_1}$ we get by cellularity of $h_1$ that $h_1(x'),h_1(y')$ are in the 1-skeleton of $T_0$.
Then 
$$(q_1\circ\alpha)(g)(x',y')=g(h_1(x'),h_1(y')) = g'(h_1(x'),h_1(y')) = (q_1\circ\alpha)(g')(x',y')$$
if $h_1(x')\sim_{R_0} h_1(y')$ else they are both zero. 
Hence $q_1\circ \alpha:A'_0\to B'_1$ descends to the quotient giving a unique $*$-homomorphism $\beta:B'_0\to B'_1$, making the diagram commute. The evaluation of $\beta$ on $B'_0$ is the map shown above. 
(In this proof, $\beta$ is defined on $B'_0$, while its extension to the $C^*$-algebra is denoted by $\hat{\hat\beta}$. We do this for notational reasons).
It is a general fact that $\beta$ preserves multiplication:
$$\beta(q_0(g)q_0((g'))=\beta(q_0(gg'))=q_1(\alpha(gg'))=
q_1(\alpha(g))q_1(\alpha(g'))=\beta(q_0(g))\beta(q_0(g')),$$
 for $g,g'\in C_c(R_0)$.
It is clear that $\beta$ is $*$-preserving.

To continue with our proof we need the following result.

\begin{lem}\label{l:beta-Cc-cont}
Equip $C_c(R_0)$ with the inductive limit topology (cf.~\cite[Ex.~5.10,p.118]{Conway}). Then $\beta:C_c(R_0|_{X_1^{T_0}})\to C_c(R_1|_{X_1^{T_1}})$ is continuous.
\end{lem}
\begin{proof}
As explained in the proof of Lemma \ref{l:phi_s-welldefined}, it is enough to consider the restriction 
to a connected component $C$ of $R_0$ via the range map $r:R_0\to \R^d$ because a $R_0$-compact set can only intersect a finite number of connected components, because the range map restricted to a connected component is a homeomorphism onto its image, and because $\beta(g)$ is 0 outside $R_0$ for $g\in C_c(R_0|_{X_1^{T_0}})$.
Thus it suffices to show that the maps $\hat \beta$ and $\hat\iota$ defined below are continuous in the inductive limit topology.
The map $\hat\iota:C_c(R_0|_{X_1^{T_1}})\to C_c(R_1|_{X_1^{T_1}})$ is given by
$$
\hat\iota(g)(x',y'):= \left\{ \begin{array}{cc}
    g(x',y') & x'\sim_{R_0} y' \\ 
    0 & x'\not\sim_{R_0} y' \\ 
  \end{array}\right.,
$$
where $x',y'\in X_1^{T_1}$ and $x'\sim_{R_1} y'$.
Since $R_0$ is open in $R_1$, we get by the gluing lemma for open subsets that one can always extend a function with compact support to be zero outside $R_0$ such that the extended function is continuous. Hence $\hat\iota$ is well-defined.
Moreover, $\hat\iota$ is continuous in the inductive limit topology because the support of a function is unchanged when extending it by zeroes.
 
The map $\hat\beta:C_c(r(C)\cap X_1^{T_0})\to C_c(r(C)\cap X_1^{T_1})$ is given by 
$$
\hat\beta(f)(x'):= \left\{ \begin{array}{cc}
    f(h_1(x')) & h_1(x')\in r(C) \\ 
    0 & h_1(x')\not\in r(C) \\ 
  \end{array}\right.
$$
where $x'\in X_1^{T_1}$ (and thus $h_1(x')\in X_1^{T_0}$).

Suppose $f_n\in C_c(r(C)\cap X_1^{T_0})$ converges to $f\in C_c(r(C)\cap X_1^{T_0})$ in the inductive limit topology.
That is, there is a compact subset $K\subset r(C)\cap X_1^{T_0}$ such that for all $n\in \N$
$$\Supp f_n,\,\Supp f\subset K$$
and such that
$$||f_n-f||_{\sup}\to 0\quad\text{as} \quad n\to \infty.$$
Note that the supremum is taken over $K$. 
We claim that 
$$K':=h_1^{-1}(K)\cap X_1^{T_1} \subset r(C)\cap X_1^{T_1}$$
 is compact and that 
 $$\Supp\, \hat\beta(f_n),\, \Supp\, \hat\beta(f)\subset K'.$$
 Then
\begin{eqnarray*}
||\hat\beta(f_n)-\hat\beta(f)||_{\sup}&=&\sup_{x'\in K'} |\hat\beta(f_n)(x')-\hat\beta(f)(x')|\\
&=&\sup_{\substack{x'\in K'\\ h_1(x')\in r(C)}} |f_n(h_1(x'))-f(h_1(x'))|\\
&\le&\sup_{x\in K} |f_n(x)-f(x)|\\
&=&||f_n-f||_{\sup}\to 0 \text{ as } n\to \infty,
\end{eqnarray*}
where the inequality holds because we can only get nonzero values if $h_1(x')\in K$.

Next we check the claim.
By an argument similar to Eq.~(\ref{e:supp-phi-s-homotopy}),  $\Supp \hat\beta(f_n)=h_1^{-1}(\Supp f_n)\cap X_1^{T_1}$ and $\Supp \hat\beta(f)=h_1^{-1}(\Supp f)\cap X_1^{T_1}$. 
It follows that
$$\Supp \hat\beta(f_n),\,\Supp \hat\beta(f)\,\subset\,K'.$$
Now, $K$ is compact in $r(C)$ since the inclusion $r(C)\cap X_1^{T_0}\hookrightarrow r(C)$ is continuous.
By the same argument as in the proof of Lemma \ref{l:phi_s-welldefined}, $h_1^{-1}(K)$ is compact in $r(C)$.
Since the 1-skeleton $X_1^{T_1}$ is $\R^d$-closed, 
$K'\subset r(C)\cap X_1^{T_1}$ is compact in $\R^d$ hence also compact in $r(C)\cap X_1^{T_1}$ by Lemma \ref{l:X-compact}.
\end{proof}

We continue with the proof of Lemma \ref{l:connecting-maps}.
By the above lemma, Lemma \ref{l:beta-Cc-cont}, $\beta$ is continuous in the inductive limit topology.
By Theorem \ref{t:tilde-psi-general}, $\beta$ extends to a $*$-homomorphism $\hat{\hat\beta}:B_0\to B_1$.
By continuity of $\hat{\hat\beta}$, we get $\hat{\hat\beta}(\lim b_i)=\lim \beta(b_i)$, $b_i\in C_c(R_0|_{X_1^{T_0}})$, from which it follows that the right square commutes.

A similar argument shows that the diagram in Eq.~(\ref{e:JBC-connectingmaps}) commutes, and that the maps are $*$-homomorphisms.
\end{proof} 

The inclusion map $\iota_n$ was shown to be a $*$-homomorphism in \cite[Prop.~IV.9, p.~57]{Gon0} using the reduced norm.
In the next remark, we include an alternative proof using the full norm.
\begin{rem}\label{r:iota-cont}
Recall that $\iota_n:C_c(R_n)\to C_c(R_{n+1})$ is given by 
$$
\iota_n(g)(x',y'):= \left\{ \begin{array}{cc}
    g(x',y') & x'\sim_{R_n} y' \\ 
    0 & x'\not\sim_{R_n} y' \\ 
  \end{array}\right.,
$$
where $x'\sim_{R_{n+1}} y'$.
The map $\iota_n$ is well-defined and continuous in the inductive limit topology by the same proof as for $\hat\iota$ in the proof of Lemma \ref{l:beta-Cc-cont}.
It is easy to check that $\iota_n$ is a $*$-homomorphism.
By Theorem \ref{t:tilde-psi-general}, $\iota_n$ extends to a continuous $*$-homomorphism $\iota_n:C^*(R_n)\to C^*(R_{n+1})$.
\end{rem}

\begin{lem}\label{l:K-maindiagram-1}
Each row in the following diagram is a cochain complex of abelian groups (not a short exact sequence). 
\begin{equation}\label{e:K-maindiagram-1}
\xymatrix{
0\ar[r]&K_0(C_0)\ar[r]^{\delta^0}\ar[d]_{K_0(\gamma)}&K_1(J_0)\ar[d]_{K_1(\beta')}\ar[r]^{\delta^1}&K_0(I_0)\ar[d]^{K_0(\alpha')}\ar[r]&0\\
0\ar[r]&K_0(C_1)\ar[r]^{\delta^0}&K_1(J_1)\ar[r]^{\delta^1}&K_0(I_1)\ar[r]&0.
}
\end{equation}
Moreover the diagram commutes, (i.e.~the diagram is a cochain map).
\end{lem}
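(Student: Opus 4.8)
The statement has two independent components, and the plan is to dispatch them separately. The cochain-complex claim reduces to a single identity: since the map into $K_0(C_n)$ from $0$ and the map out of $K_0(I_n)$ to $0$ make the end composites vanish automatically, the only content is $\delta^1\circ\delta^0=0$. This is precisely Eq.~(\ref{e:d12iszero}), which was already extracted from the six-term sequences (\ref{e:K-JBC}) and (\ref{e:K-IAB-2}); running the identical argument with $n=1$ in place of $n=0$ gives the same identity for the bottom row.

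For the commutativity (that the diagram is a cochain map), my entire approach is to appeal to the naturality of the six-term exact sequence in $K$-theory (cf.~\cite{Rordam}), feeding it the two morphisms of short exact sequences of $C^*$-algebras that Lemma~\ref{l:connecting-maps} guarantees: the triple $(\beta',\beta,\gamma)$ in Eq.~(\ref{e:JBC-connectingmaps}) and the triple $(\alpha',\alpha,\beta)$ in Eq.~(\ref{e:IAB-connectingmaps}). First I would handle the left square, which is immediate. The map $\delta^0\colon K_0(C_n)\to K_1(J_n)$ is the exponential map of the sequence (\ref{e:JBC}), so naturality of the exponential map with respect to the morphism (\ref{e:JBC-connectingmaps}) yields $K_1(\beta')\circ\delta^0=\delta^0\circ K_0(\gamma)$ directly.

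Next I would treat the right square, where $\delta^1$ must first be unpacked as the composite $\delta^1=\tilde\delta^1\circ q$, with $\tilde\delta^1\colon K_1(B_n)\to K_0(I_n)$ the index map of (\ref{e:IAB}) and $q\colon K_1(J_n)\to K_1(B_n)$ induced by the inclusion $J_n\hookrightarrow B_n$. The plan is to chain two naturalities: naturality of the index map applied to (\ref{e:IAB-connectingmaps}) gives $K_0(\alpha')\circ\tilde\delta^1=\tilde\delta^1\circ K_1(\beta)$, and functoriality of $K_1$ applied to the left commuting square $J_n\hookrightarrow B_n$ of (\ref{e:JBC-connectingmaps}) gives $K_1(\beta)\circ q=q\circ K_1(\beta')$. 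Splicing them,
$$K_0(\alpha')\circ\delta^1=K_0(\alpha')\circ\tilde\delta^1\circ q=\tilde\delta^1\circ K_1(\beta)\circ q=\tilde\delta^1\circ q\circ K_1(\beta')=\delta^1\circ K_1(\beta'),$$
which is exactly the right square.

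The hard part will be the bookkeeping in the right square: one must recognize that $\delta^1$ is not itself a connecting homomorphism but the two-step composite $\tilde\delta^1\circ q$ routed through $K_1(B_n)$, and that the shared vertical map $K_1(\beta)$ is precisely what glues the naturality of the index map (from (\ref{e:IAB-connectingmaps})) to the functoriality of $q$ (from the left square of (\ref{e:JBC-connectingmaps})). All the genuinely nontrivial analysis---verifying that (\ref{e:IAB-connectingmaps}) and (\ref{e:JBC-connectingmaps}) are bona fide commuting diagrams of $*$-homomorphisms---has already been carried out in Lemma~\ref{l:connecting-maps}, so the remainder is a routine invocation of standard naturality and functoriality.
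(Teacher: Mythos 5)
Your proposal is correct and follows essentially the same route as the paper: the cochain-complex claim is reduced to $\delta^1\circ\delta^0=0$ (Eq.~(\ref{e:d12iszero})), the left square comes from naturality of the exponential map applied to Eq.~(\ref{e:JBC-connectingmaps}), and the right square comes from splicing naturality of the index map for Eq.~(\ref{e:IAB-connectingmaps}) with functoriality of $K_1$ on the square $J_n\hookrightarrow B_n$, using the factorization $\delta^1=\tilde\delta^1\circ q$ through $K_1(B_n)$. The paper organizes this as two glued commutative diagrams rather than a chain of equations, but the decomposition and the ingredients are identical.
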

\begin{proof}
From the six-term exact sequence in Eq.~(\ref{e:K-JBC}) we get the diagram with exact rows
\begin{equation*}
\xymatrix{
K_0(C_0)\ar[r]^{\delta^0}\ar[d]_{K_0(\gamma)}&K_1(J_0)\ar[d]_{K_1(\beta')}\ar@{>>}[r]^{}&K_1(B_0)\ar[d]^{K_1(\beta)}\ar[r]&0\\
K_0(C_1)\ar[r]^{\delta^0}&K_1(J_1)\ar@{>>}[r]^{ }&K_1(B_1)\ar[r]&0.
}
\end{equation*}
The diagram commutes because of the naturality of the exponential map, because the diagram in Eq.~(\ref{e:JBC-connectingmaps}) commutes, and because $K_j(\phi\circ\psi)=K_j(\phi)\circ K_j(\psi)$, $j=1,2$.
Similarly, from the six-term exact sequence in Eq.~(\ref{e:K-IAB-1}) we get the commutative diagram
\begin{equation*}
\xymatrix{
K_1(B_0)\ar[r]^{\tilde\delta^1}\ar[d]_{K_1(\beta)}&K_0(I_0)\ar[d]^{K_0(\alpha')}\\
K_1(B_1)\ar[r]^{\tilde\delta^1}&K_0(I_1).
}
\end{equation*} 
Putting these two diagrams together and using Eq.~(\ref{e:K-IAB-2}) we get
\begin{equation}\label{e:delta0-delta1-deltat1}
\xymatrix{
K_0(C_0)\ar[r]^{\delta^0}\ar[d]_{K_0(\gamma)}&K_1(J_0)\ar[d]_{K_1(\beta')}\ar@{>>}[r]^{}\ar@/^2pc/[rr]^{\delta^1}&K_1(B_0)\ar[d]^{K_1(\beta)}\ar[r]^{\tilde\delta^1}&K_0(I_0)\ar[d]^{K_0(\alpha')}\\
K_0(C_1)\ar[r]^{\delta^0}&K_1(J_1)\ar@{>>}[r]^{ }\ar@/_2pc/[rr]_{\delta^1}&K_1(B_1)\ar[r]^{\tilde\delta^1}&K_0(I_1).
}
\end{equation}
By Eq.~(\ref{e:d12iszero}), $\delta^1\circ\delta^0=0$. Thus each row in Eq.~(\ref{e:K-maindiagram-1}) is a cochain complex of abelian groups.
\end{proof}
%
%
\subsection{Computation of $K_0(\gamma)$, $K_0(\alpha')$, $K_1(\beta')$}
In this subsection, we compute the connecting maps of the diagram in Lemma \ref{l:K-maindiagram-1}.
That is, we compute the connecting maps $K_0(\gamma)$, $K_0(\alpha')$, $K_1(\beta')$ in terms of the generators (i.e.~the natural basis elements) of the $K$-groups.
This can be done since they are expressed in terms of the compact operators.
The following lemma will be given up to the isomorphism given in Proposition \ref{p:compacts}, which we denote by $\lambda$.
\color{black}
\begin{lem}
The generators of $K_0(C_0)$ are 
$$[e_{uu}]_0,$$
where $u$ is a vertex in $T$. Take a vertex $u\in T$ from each $R_0$-equivalence class (i.e.~each stable class) to get all generators.
\end{lem}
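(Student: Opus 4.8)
The plan is to reduce the statement to the standard $K$-theory computation for the compact operators, via the explicit isomorphism of Proposition \ref{p:compacts}. First I would invoke Proposition \ref{p:compacts}(1), which gives an isomorphism $\lambda\colon C_0 \to \bigoplus_{k=1}^{\sV} K(\ell^2([v_k]))$, where $v_1,\dots,v_{\sV}$ are representatives of the $\sV$ stable vertex classes. Since $K_0$ is additive over finite direct sums and $K_0(K(\ell^2([v_k])))\cong\Z$, generated by the class of any minimal (rank-one) projection, it follows that $K_0(C_0)\cong\Z^{\sV}$ and that a free set of generators is obtained by choosing one minimal projection in each summand. The whole problem is thus to exhibit such a minimal projection concretely inside $C_0$.

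The key step is to identify $e_{uu}$, the indicator function of the diagonal point $(u,u)\in R_0|_{X_0^{T_0}}$, as exactly such a minimal projection. I would verify directly in the convolution algebra $C_c(R_0|_{X_0^{T_0}})$ that $e_{uu}$ is a projection: it is self-adjoint because $e_{uu}^*=\chi_{(u,u)}=e_{uu}$, and it is idempotent because in the convolution product $e_{uu}\star e_{uu}$ the only composable pair reading $(u,u)=(u,z)(z,u)$ forces $z=u$, so $e_{uu}\star e_{uu}=e_{uu}$. Tracing $e_{uu}$ through $\lambda$, it lands in the summand indexed by the class $[u]$ and corresponds to the rank-one orthogonal projection onto the one-dimensional subspace $\C\delta_u\subset\ell^2([u])$, which is minimal. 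Hence $[e_{uu}]_0$ generates the copy of $\Z$ equal to $K_0(K(\ell^2([u])))$.

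It then remains to confirm that the choice of representative within a fixed stable class does not affect the generator in $K$-theory, which justifies the prescription ``take one vertex from each $R_0$-equivalence class.'' For two vertices $u,u'$ in the same $R_0$-class, the point $(u,u')$ lies in $R_0|_{X_0^{T_0}}$, so $e_{uu'}=\chi_{(u,u')}$ is an element of $C_0$; a short convolution computation gives $e_{uu'}^*e_{uu'}=e_{u'u'}$ and $e_{uu'}e_{uu'}^*=e_{uu}$, so $e_{uu'}$ is a partial isometry witnessing the Murray--von Neumann equivalence $e_{uu}\sim e_{u'u'}$ and hence $[e_{uu}]_0=[e_{u'u'}]_0$. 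Consequently, selecting one vertex $u$ from each of the $\sV$ stable vertex classes yields exactly $\sV$ classes $[e_{uu}]_0$, one in each $\Z$ summand, and these freely generate $K_0(C_0)\cong\Z^{\sV}$.

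The only genuine obstacle is the bookkeeping that matches the convolution matrix units $e_{xy}$ with the Hilbert-space matrix units on $\ell^2([v_k])$ under $\lambda$; this is essentially the content of the deferred proof of Proposition \ref{p:compacts} (Proposition \ref{p:a:compacts}). Once that correspondence is granted, every assertion above is a routine verification in the $K$-theory of the compacts requiring no further analytic input.
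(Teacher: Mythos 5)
Your proof is correct and follows essentially the same route as the paper's: both reduce via the isomorphism of Proposition \ref{p:compacts}(1) to the $K$-theory of a finite direct sum of compact operators, where rank-one projections generate each $\Z$ summand. Your added verifications --- the convolution computation showing the indicator of $(u,u)$ is a projection, and the partial isometry $e_{uu'}$ giving independence of the representative --- are concrete versions of what the paper simply cites, namely the deferred proof of Proposition \ref{p:compacts} (Proposition \ref{p:a:compacts} and the appendix computation of projections in $C_r^*(R|_{X_0})$) and Lemma \ref{l:onedimprojareequiv} on the equivalence of one-dimensional projections.
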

\begin{proof}
Let 
$$e_{uv}(\delta_w):= \left\{ \begin{array}{cc}
    \delta_u & v=w \\ 
    0 & else \\ 
  \end{array}\right.$$
be a matrix unit (e.g.~it sends $\delta_v$ to $\delta_u$), where $u\sim_{R_0}v\sim_{R_0}w$ are vertices in $T$, and $\delta_w\in \ell^2([u]_{R_0})$ is the standard basis element of the Hilbert space $\ell^2([u]_{R_0})$, i.e.
for $w'\in [u]_{R_0}$,
$$\delta_w(w'):=
 \left\{ \begin{array}{cc}
    1 & w=w' \\ 
    0 & else. \\ 
  \end{array}\right.
 $$
 
Recall that a 1-dimensional projection generates the $K_0$-group of the compact operators $K$ 
and that $K_0(K)$ is isomorphic to $\Z$ (cf.~\cite[Cor.~6.4.2,~p.103]{Rordam}).
One can choose any 1-dimensional projection because they are all equivalent (cf.~Lemma \ref{l:onedimprojareequiv}).
Since $e_{uu}$ is a 1-dimensional projection, and since $C_0$ can be written in terms of the compacts by Proposition~\ref{p:compacts}(1), the lemma follows by taking a vertex $u\in T$ from each $R_0$-equivalence class.
\end{proof}

\begin{lem}\label{l:Wv} With notation as in the previous lemma, $K_0(\gamma):K_0(C_0)\to K_0(C_1)$ evaluated on the generators of $K_0(C_0)$ is given by
\begin{equation*}
K_0(\gamma)([e_{uu}]_0)=[\gamma(e_{uu})]_0=\sum_{h_1(u')=u}[e_{u'u'}]_0,
\end{equation*}
where the sum is over all vertices $u'$ in $T_1=\frac{1}{\lambda}\omega(T)\lambda$.
\end{lem}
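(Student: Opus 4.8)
The plan is to combine functoriality of $K_0$ with a direct, pointwise evaluation of $\gamma$ on the rank-one projection $e_{uu}$. Since $\gamma\colon C_0\to C_1$ is a $*$-homomorphism (Lemma~\ref{l:connecting-maps}) and $e_{uu}$ is a projection in $C_0$ — it is self-adjoint and $e_{uu}\star e_{uu}=e_{uu}$ — the image $\gamma(e_{uu})$ is again a projection, and functoriality of $K_0$ (cf.~\cite{Rordam}) gives at once
\[
K_0(\gamma)([e_{uu}]_0)=[\gamma(e_{uu})]_0 .
\]
So the whole statement reduces to identifying the projection $\gamma(e_{uu})\in C_1$ explicitly.

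Next I would compute $\gamma(e_{uu})$ from its defining formula. For vertices $x',y'$ of $T_1$ with $x'\sim_{R_1}y'$ one has $\gamma(e_{uu})(x',y')=e_{uu}\big(h_1(x'),h_1(y')\big)$, which equals $1$ exactly when $h_1(x')=u=h_1(y')$ and is $0$ otherwise (the side condition $h_1(x')\sim_{R_0}h_1(y')$ is automatic once both images equal $u$, as $u\sim_{R_0}u$). Writing $C_1\cong\bigoplus K(\ell^2([v']_{R_1}))$ as in Proposition~\ref{p:compacts}, the restriction of $\gamma(e_{uu})$ to the summand indexed by a class $[v']$ is the operator whose matrix is the outer product $\mathbf 1_S\mathbf 1_S^{t}$ of the indicator of
\[
S:=\{\,w\in[v']_{R_1}\mid h_1(w)=u\,\}.
\]
Because the homotopy $h_1$ moves each point only inside its own tile (Definition~\ref{d:homotopy} and cellularity), only finitely many vertices of $T_1$ satisfy $h_1(w)=u$, so $S$ is finite and $\mathbf 1_S\in\ell^2$.

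The key step is to rule out off-diagonal entries, i.e.\ to show that $S$ has at most one element in each $R_1$-class. Here the projection property does the work: $(\mathbf 1_S\mathbf 1_S^{t})^2=|S|\,\mathbf 1_S\mathbf 1_S^{t}$, so an operator of this shape is a projection if and only if $|S|\le 1$; since $\gamma(e_{uu})$ is a projection, necessarily $|S|\le1$ for every class. Hence $\gamma(e_{uu})(x',y')$ can be nonzero only on the diagonal, and it is nonzero precisely at the points $(u',u')$ with $h_1(u')=u$. Therefore
\[
\gamma(e_{uu})=\sum_{h_1(u')=u}e_{u'u'},
\]
a finite sum of mutually orthogonal rank-one projections, distinct points of $R_1|_{X_0^{T_1}}$ giving functions of disjoint support. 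Additivity of $K_0$-classes on orthogonal projections then yields $[\gamma(e_{uu})]_0=\sum_{h_1(u')=u}[e_{u'u'}]_0$, completing the identification.

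I expect the main obstacle to be the bookkeeping in the second paragraph: correctly recognizing $\gamma(e_{uu})$ as the outer product $\mathbf 1_S\mathbf 1_S^{t}$ on each equivalence class, and confirming the finiteness of $S$ from the tile-preserving nature of $h_1$. An alternative to the projection argument for the diagonal vanishing is to invoke translation-equivariance of $h_1$ (item~$(4)$ of Definition~\ref{d:homotopy}): $x'\sim_{R_1}y'$ forces $T_1(y')=T_1(x')+(y'-x')$, and the homotopy, being assembled by translation of the prototile maps, then gives $h_1(x')-x'=h_1(y')-y'$, which combined with $h_1(x')=h_1(y')$ forces $x'=y'$. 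I would feature the projection route, however, since it is cleaner and sidesteps having to track how much local tiling data actually determines the value $h_1(x')$.
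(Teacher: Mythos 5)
Your skeleton --- functoriality of $K_0$, pointwise evaluation of $\gamma$ on the indicator of $(u,u)$, finiteness of the fibre $h_1^{-1}(u)\cap X_0^{T_1}$, and additivity of $K_0$ over orthogonal projections --- is exactly the paper's, and those parts are fine. The gap is in the step you yourself flag as the key one, the off-diagonal vanishing, and it is genuine. Your projection argument applies two facts to one map that in reality belong to two different maps. The map proved to be a $*$-homomorphism in Lemma~\ref{l:connecting-maps} is the $\gamma$ obtained by descending $\alpha=\iota_0\circ\phi_1$ to the vertex quotients; since $\phi_1(g)$ is a function on $R_0$ (Definition~\ref{d:phi_s}) and $\iota_0$ extends by zero, that map satisfies $\gamma(e_{uu})(x',y')=0$ unless $x'\sim_{R_0}y'$, which is strictly stronger than $x'\sim_{R_1}y'$. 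The formula you evaluate (nonzero whenever $x'\sim_{R_1}y'$ and $h_1(x')=h_1(y')=u$) drops this constraint, and the operator it produces can genuinely fail to be a projection. Concretely, take $\omega(a)=abab$, $\omega(b)=a$ (one dimensional, $\lambda=1+\sqrt{3}$; primitive and aperiodic, so within the paper's standing assumptions) with the paper's standard one-dimensional homotopy stretching the leftmost child of $a$ and collapsing $b'a'b'$ onto the right endpoint $u$: the two interior vertices $v_1,v_3$ of $T_1$ of pattern $a'.b'$ inside one supertile $a$ are distinct, $R_1$-equivalent, and both satisfy $h_1(v_i)=u$, so your set $S$ contains two points of a single $R_1$-class and $\mathbf{1}_S\mathbf{1}_S^t$ is not a projection there. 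So the projection property cannot be what rules out the off-diagonal entries: for the map you actually evaluated the property is false, and for the map to which the property applies, the off-diagonal entries vanish for a reason ($R_0$-support) that your evaluation never uses. That vanishing is an input to the computation, not a consequence of it.

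Your fallback argument is false as stated, for the same reason. Item $(4)$ of Definition~\ref{d:homotopy} assembles $h_1$ by translation over tiles of $T_0$, so $x'\mapsto h_1(x')-x'$ is constant on $R_0$-classes, not on $R_1$-classes; knowing $T_1(y')=T_1(x')+(y'-x')$ carries no information about where $x'$ and $y'$ sit inside their $T_0$-supertiles (again $v_1,v_3$ above: identical $T_1$-patterns, different positions inside the supertile $a$, different values of $h_1(\cdot)-\cdot$). The correct argument, which is what the paper's terse proof implicitly invokes, uses the two facts about the descent map in the right order: first, $\gamma(e_{uu})$ is supported on $R_0$-pairs because $\gamma$ descends from $\iota_0\circ\phi_1$; second, for $x'\sim_{R_0}y'$ translation-equivariance (the same computation as in the injectivity of $\Phi$ in the proof of Lemma~\ref{l:phi_s-Cc-star-homo}) gives $h_1(x')-x'=h_1(y')-y'$, so $h_1(x')=h_1(y')=u$ forces $x'=y'$. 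With that replacement, the rest of your write-up (diagonal matrix units, orthogonality, additivity in $K_0$) goes through verbatim. In fairness, the displayed formula for $\gamma$ in Subsection~\ref{ss:connectingmaps} is stated for all $x'\sim_{R_1}y'$ and invites your reading; but that formula agrees with the actual connecting map only because of the very diagonal-support fact at issue, so it cannot be taken at face value inside a proof of this lemma.
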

\begin{proof}
Take vertex $u\in T$. Then by definition of $K_0$ on $*$-homomorphisms we get (cf.~\cite[p.~61]{Rordam}) 
$$K_0(\lambda\gamma\lambda^{-1})([(e_{uu})]_0)=[\lambda\gamma\lambda^{-1}(e_{uu})]_0,$$
where $\lambda$ is the isomorphism in Proposition \ref{p:compacts}(1).
We have $\lambda^{-1}(e_{uu}) = g$, where
$$g(x,y):= \left\{ \begin{array}{cc}
    1 & x=y=u \\ 
    0 & else. \\ 
  \end{array}\right.
$$
Then since 
$$\gamma(g)(x,y)=\left\{  
\begin{array}{ll}
    1 & x=y\text{ and } h_1(x)=u \\ 
    0 & else \\ 
  \end{array}\right.$$
where $x$, $y$ are vertices in $T_1$ such that $x\sim_{R_1} y$, we get
 $$\lambda\gamma\lambda^{-1}(e_{uu})=\sum_{h_1(u')=u}e_{u'u'},$$
 where the sum is over all vertices $u'\in T_1$.

\end{proof}

The following lemma will be given up to the isomorphism given in Proposition \ref{p:compacts}, which we denote by $\lambda$.
\begin{lem}
The generators of $K_0(I_0)$ are 
$$[b\otimes e_{uu}+s(b)\tensor (I-e_{uu})]_0-[s(b)\tensor I]_0,$$
where $u\in T$ is a face, 
$b\in M_2\big(\,\widetilde{C_0(\open{u})}\,\big)$ is the (reparametrized) Bott projection, $s(b)$ is its scalar part, and 
$$I=\sum_{v} e_{vv},$$ where the sum is over all faces $v\in T$.\\
Choose face $u\in T_0$ from each $R_0$-equivalence class (i.e.~from each stable face) to get all generators.
\end{lem}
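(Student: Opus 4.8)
The plan is to reduce to a single summand of $I_0$ and then to identify the relevant class via Bott periodicity together with stability of $K$-theory. By Proposition \ref{p:compacts}(3) we have $I_0\cong\bigoplus_{k=1}^{\sF} C_0(\open{f}_k,K(\ell^2([f_k])))$, and since $K_0$ commutes with finite direct sums, it suffices to produce a generator of $K_0\big(C_0(\open{u})\tensor K(\ell^2([u]))\big)$ for a single face $u$ and then let $u$ range over one representative of each stable face. As we are treating tilings of the plane, each face $u$ is a $2$-cell homeomorphic to the closed unit disk, so its interior $\open{u}$ is homeomorphic to $\R^2$; hence $C_0(\open{u})\cong C_0(\R^2)$.

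By Bott periodicity (cf.~\cite{Rordam}), $K_0(C_0(\R^2))\cong\Z$ is generated by the Bott element $[b]_0-[s(b)]_0$, where $b\in M_2(\widetilde{C_0(\open{u})})$ is the (reparametrized) Bott projection and $s(b)\in M_2(\C)$ is its scalar part. I would then transport this generator along the stability isomorphism. Fixing the rank-one projection $e_{uu}\in K(\ell^2([u]))$ exactly as in the computation of the generators of $K_0(C_0)$, the inclusion $\iota\colon C_0(\open{u})\to C_0(\open{u})\tensor K(\ell^2([u]))$, $a\mapsto a\tensor e_{uu}$, induces an isomorphism $K_0(\iota)$ by stability of $K$-theory (cf.~\cite{Rordam}); hence the image $K_0(\iota)\big([b]_0-[s(b)]_0\big)$ generates $K_0\big(C_0(\open{u})\tensor K(\ell^2([u]))\big)\cong\Z$.

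The key computation is to evaluate $K_0(\iota)$ on this class at the level of projections. Its unitization $\widetilde\iota$ is a unital $*$-homomorphism sending the unit to $1\tensor I$, where $I=\sum_v e_{vv}$ is the identity of the multiplier algebra, and it satisfies $s\circ\widetilde\iota=s$. Writing $b=b_0+s(b)$ with $b_0\in M_2(C_0(\open{u}))$, one computes
\begin{equation*}
\widetilde\iota(b)=b_0\tensor e_{uu}+s(b)\tensor I=b\tensor e_{uu}+s(b)\tensor(I-e_{uu}).
\end{equation*}
Since $\widetilde\iota$ is a unital $*$-homomorphism and $b$ is a projection, $\widetilde\iota(b)$ is automatically a projection, and its scalar part is $s(\widetilde\iota(b))=s(b)\tensor I$. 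By the definition of $K_0$ on a $*$-homomorphism between (possibly non-unital) $C^*$-algebras we therefore obtain
\begin{equation*}
K_0(\iota)\big([b]_0-[s(b)]_0\big)=[b\tensor e_{uu}+s(b)\tensor(I-e_{uu})]_0-[s(b)\tensor I]_0,
\end{equation*}
which is precisely the claimed generator. Letting $u$ run over one face in each $R_0$-equivalence class then yields all $\sF$ generators of $K_0(I_0)\cong\Z^{\sF}$.

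The main obstacle I anticipate is the non-unital bookkeeping: one must verify that $\widetilde\iota(b)$ genuinely lies in $M_2(\widetilde{C_0(\open{u})\tensor K(\ell^2([u]))})$, with the summand $s(b)\tensor I$ sitting in the scalar (multiplier) part rather than in the algebra itself, and that the resulting difference of classes really represents the image of the Bott generator under the stability map rather than some unrelated element. A minor preliminary point to pin down is the identification $\open{u}\cong\R^2$, which rests on the standing assumption that tiles (closed $d$-cells) are homeomorphic to the closed unit disk.
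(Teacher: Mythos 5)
Your overall route --- decompose $I_0$ by Proposition \ref{p:compacts}(3), invoke Bott periodicity for $K_0(C_0(\open{u}))$, and transport the Bott generator along the stability map $a\mapsto a\tensor e_{uu}$ --- is exactly the paper's strategy. But there is one concrete gap, and it sits precisely at the point you flagged as ``non-unital bookkeeping'': you carry out the whole computation inside a single summand $C_0(\open{u})\tensor K(\ell^2([u]))$, and there the unit of the multiplier algebra is $I_u=\sum_{v\sim u}e_{vv}$, the sum over faces $R_0$-equivalent to $u$ only. The generator in the lemma, however, is written in matrices over the unitization of all of $I_0$, where the adjoined unit corresponds to $I=\sum_{v}e_{vv}$ summed over \emph{all} faces of $T$; these are different elements, and your displayed formula for $\widetilde\iota(b)$ silently conflates them (as written, ``$I=\sum_v e_{vv}$, the identity of the multiplier algebra'' is not an operator on $\ell^2([u])$ unless the sum is restricted to $v\sim u$). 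What is missing is the final pushforward: one must apply $K_0$ of the inclusion $j_u\colon C_0(\open{u})\tensor K(\ell^2([u]))\hookrightarrow I_0$ and verify that it carries your class $[b\tensor e_{uu}+s(b)\tensor(I_u-e_{uu})]_0-[s(b)\tensor I_u]_0$ to $[b\tensor e_{uu}+s(b)\tensor(I-e_{uu})]_0-[s(b)\tensor I]_0$. This is not purely formal, because the unitization of a direct sum is not the direct sum of the unitizations, so the scalar parts in the summand and in $\widetilde{I_0}$ must be matched explicitly.

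The gap is fixable by the same kind of computation you already did for $\widetilde\iota$: the unitized inclusion sends $(b-s(b))\tensor e_{uu}$ into the summand and the adjoined unit of the summand to the adjoined unit of $\widetilde{I_0}$, which in the multiplier picture is $1\tensor I$ with $I$ over all faces, and this yields the stated generator. It is worth noting that this identification is exactly where the paper's proof spends its effort: it works with the full direct sum $A'=\bigoplus_{i=1}^{\sF}C_0(\D)\tensor K_n$ from the outset, and its central claim $\tilde\phi_2(b_A)=p$ --- proved via the commutative diagram of short exact sequences split as $\C$-vector spaces --- is precisely the compatibility of scalar parts that your argument takes for granted. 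So your proposal is right in conception and matches the paper's method, but the final line ``which is precisely the claimed generator'' does not yet follow as written; supplying the pushforward computation closes it.
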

\begin{proof}
Choose protoface $f_n\in T$, ($n$ fixed), and let  $K_n:=K(\ell^2([f_n]))$ be the $C^*$-algebra of compact operators on the Hilbert space $H_n:=\ell^2([f_n])$.
Since $[f_n]_{R_0}$ is an infinite set, $K_n$ is non-unital, i.e.~$I_n:=I_{H_n}\not \in K_n$.
Let 
$$\D:=\{z\in \C\mid |z|<1\}$$ be the open unit disk.
Since $\D$ is homeomorphic to $\open{f_n}$, we have $C_0(\open{f_n},K_n)\cong C_0(\D,K_n)$.
Recall that $C_0(\D,K_n)$ is isomorphic to  $C_0(\D)\tensor K_n$ via the map 
$$f\tensor a\mapsto (t\mapsto f(t) a).$$
(cf~\cite[p.173]{WeggeOlsen}).
Let $e_{uv}\in K_n$ be the standard matrix unit
$$e_{uv}(\delta_w)= \left\{\begin{array}{cc}
    \delta_u & w=v \\ 
    0 & else \\ 
  \end{array}\right.
$$
where $u,v\in [f_n]$, i.e.~$u,v\in T$ are faces $R_0$-equivalent to face $f_n$.
Consider the projection $e_{uu}\in K_n$, and let
$\phi_u:{C_0(\D)}\to {C_0(\D)\tensor K_n}$ be the $*$-homomorphism 
$$\phi_u(a):= a \tensor e_{uu}.$$
Then  $K_0(C_0(\D)\tensor K_n)$ is canonically isomorphic to $K_0(C_0(\D))$ via the homomorphism $K_0(\phi_u)$, (but the inverse is not explicit).
Moreover, $K_0(C_0(\D))$ is isomorphic to $\Z$ via the map
$1\mapsto [b]_0-[s(b)]_0$ ,
where $b\in M_2(\widetilde{C_0(\D)})$ is the Bott projection, and $s(b)$ is its scalar part (cf.~\cite[p.156,173]{WeggeOlsen}, \cite[Example~11.3.2~p.200]{Rordam}, \cite[Exercise~9.5,~p.171]{Rordam}).
We remark that the Bott projection and its scalar part correspond to the elements in $M_2(C(\bar \D))$  given by 
$$b=(z\mapsto
  \left[\begin{array}{cc}
    |z|^2 & z\sqrt{1-|z|^2} \\ 
    \bar z\sqrt{1-|z|^2} & 1-|z|^2 \\ 
  \end{array}\right]),\qquad
s(b)=
  \left[\begin{array}{cc}
    1_{C(\bar \D)} & 0 \\ 
    0 & 0 \\ 
  \end{array}\right]=
(z\mapsto
  \left[\begin{array}{cc}
    1 & 0 \\ 
    0 & 0 \\ 
  \end{array}\right]).
$$
Define the non-unital $C^*$-algebras
$$A:=\bigoplus_{i=1}^{\sF} C_0(\D),\qquad A':=\bigoplus_{i=1}^{\sF} C_0(\D)\tensor K_n.$$
 As usual,  $\tilde A$, $\widetilde{A'}$ will denote their unitization, respectively.
Let $p\in M_2(\widetilde{A'})$ be the projection (assuming n=1)
\begin{eqnarray*}
p&:=&\big((b-s(b))\tensor e_{uu},0,\ldots,0\big)+\big(s(b)\tensor I_1,\ldots,s(b)\tensor I_{\sF}\big)\\
&=&b\tensor e_{uu}+s(b)\tensor (I-e_{uu}),
\end{eqnarray*}
where 
$$I:=\sum_{i=1}^{\sF} I_i=\sum_{i=1}^{\sF}\sum_{v\sim f_i} e_{vv}=\sum_{v} e_{vv},$$
and the last sum is over all faces $v$ of $T$. (We use the rule of tensors $a\tensor x+a\tensor y=a\tensor(x+y)$).
Thus the scalar part of $p$ is $s(p)=s(b)\tensor I$.
Note that the unit element of $\widetilde{A'}$ is
$$1_{\widetilde{A'}}=1_{\widetilde{C_0(\D)}}\tensor I,$$
where $1_{\widetilde{C_0(\D)}}$ is the constant function $z\mapsto 1$, $z\in \bar \D$.
Let $b_A\in M_2(\tilde A)$ be the projection
\begin{eqnarray*}
b_A&:=&\big((b-s(b)),0,\ldots\big)+\big(s(b),s(b),\ldots,s(b))\\
&=&(b,s(b),\ldots,s(b)).
\end{eqnarray*}
We claim that
\begin{equation}\label{e:bp}
\tilde\phi_2(b_A)=p,
\end{equation}
where $\tilde\phi_2$ is the usual extension of $\phi_u$ to $M_2(\tilde A)\to M_2(\widetilde{A'})$.
Then by \cite[Prop.~4.2.2,~p.~63]{Rordam} we have the following identifications
$$1\mapsto [b]_0-[s(b)]_0
\mapsto [b_A]_0-[s(b_A)]_0
\mapsto [p]_0-[s(p)]_0,
$$
from which the lemma follows.

The proof of the claim, which is inspired by the splitting lemma, follows from the facts that 
\begin{equation}\label{e:rs}
\phi_2\big(b_A-s(b_A)\big)=p-s(p)\qquad\text{and}\qquad \psi_2(\pi_2(b_A))=\pi'_2(p),
\end{equation}
where $\phi_2$ is the extension of $\phi_u$ to $M_2(A)\to M_2(A')$, and $\psi_2,\pi_2,\pi'_2$ are defined below.
Here we use the fact that we have a commutative diagram of short exact sequences that split as $\C$-vector spaces:
$$
\xymatrix{0\ar[r]&A\ar[r]^{i }\ar[d]_{\phi}&\ar@/_1.0pc/[l]_{r}\tilde A\ar[r]^{\pi}\ar[d]_{\tilde\phi}&\C\ar@/_1.0pc/[l]_{\sigma}\ar[r]\ar[d]_{\psi}&0\\
  0\ar[r]&A'\ar[r]^{i' }&\ar@/_1.0pc/[l]_{r'}\widetilde{A'}\ar[r]^{\pi'}&\C\ar@/_1.0pc/[l]_{\sigma'}\ar[r]\ar[r]&0,}
$$
where $\phi:=\phi_u$, and the retractions are $r(a)=a-s(a)$ and $r'(a')=a'-s(a')$, and the sections $\sigma(\pi(a))=s(a)$ and $\sigma'(\pi'(a'))=s(a')$, and $\psi=id_{\C}$, and we have the identities
\begin{eqnarray}\label{e:rispi}
&&r\circ i=id_A,\quad \pi\circ \sigma=id_\C,\quad i\circ r+\sigma\circ \pi=id_{\tilde A}\\
\nonumber&& r'\circ i'=id_{A'},\quad \pi'\circ \sigma'=id_\C,\quad i'\circ r'+\sigma'\circ \pi'=id_{\widetilde{A'}}.
\end{eqnarray}
By functoriality and additivity of $M_2$ we get the commutative diagram
$$
\xymatrix{0\ar[r]&M_2(A)\ar[r]^{ }\ar[d]_{\phi_2}&\ar@/_1.0pc/[l]_{r_2}M_2(\tilde A)\ar[r]^{\pi_2}\ar[d]_{\tilde\phi_2}&M_2(\C)\ar@/_1.0pc/[l]_{\sigma_2}\ar[r]\ar[d]_{\psi_2}&0\\
  0\ar[r]&M_2(A')\ar[r]^{ }&\ar@/_1.0pc/[l]_{r'_2}M_2(\widetilde{A'})\ar[r]^{\pi'_2}&M_2(\C)\ar@/_1.0pc/[l]_{\sigma_2'}\ar[r]&0,}
$$
where the operations are done coordinatewise. In particular, the equations corresponding to Eq.~(\ref{e:rispi}) hold here as well.
Since $b_A\in M_2(\tilde A)$ and $p\in M_2(\widetilde{A'})$ and assuming $\phi_2\circ r_2(b_A)=r'_2(p)$ and $\psi_2(\pi_2(b_A))=\pi'_2(p)$ then $\tilde\phi_2(b_A)=p$ because
$$\tilde\phi_2(b_A)=\tilde\phi_2 (i_2\circ r_2(b_A)+\sigma_2\circ \pi_2(b_A))$$
$$=i'_2\circ\phi_2\circ r_2(b_A) +\sigma'_2\circ \psi_2\circ\pi_2(b_A)=i'_2\circ r'_2(p)+\sigma'_2\circ\pi'_2(p)=id_{\widetilde{A'}}(p)=p.$$
The converse, although not needed in this proof, follows by commutativity of the diagram.
Thus Eq.~(\ref{e:bp}) is equivalent to Eq.~(\ref{e:rs}). 
It remains to show that Eq.~(\ref{e:rs}) holds.  The first equality follows immediately:
\begin{eqnarray*}
\phi_2(b_A-s(b_A))&=&\phi_2\big(b-s(b),0,\ldots,0\big)\\
&=&\big((b-s(b))\tensor e_{uu},0,\ldots,0\big)\\
&=&p-s(p).
\end{eqnarray*}

 Let $\hat 1:=1_{\widetilde{\C_0(\D)}}$. Then $s(b_A)\in M_2(\tilde A)$ is
\begin{eqnarray*}
s(b_A)&=&(s(b),\ldots,s(b))\\
&=&(
\left[
  \begin{array}{cc}
    \hat 1 & 0 \\ 
    0 & 0 \\ 
  \end{array}
  \right],
\ldots
,
\left[
  \begin{array}{cc}
    \hat 1 & 0 \\ 
    0 & 0 \\ 
  \end{array}
  \right]
)\\
&=&
\left[
  \begin{array}{cc}
    (\hat 1,\ldots,\hat 1) & 0 \\ 
    0 & 0 \\ 
  \end{array}
  \right]\\
&=&
\left[
  \begin{array}{cc}
    1_{\tilde A} & 0 \\ 
    0 & 0 \\ 
  \end{array}
  \right],
\end{eqnarray*}
where we use the identifications 
$$\bigoplus_{i=1}^{\sF} M_2\big(\widetilde{C_0(\D)}\big)\cong M_2\big(\bigoplus_{i=1}^{\sF}\,\widetilde{C_0(\D)}\,\big)\supset M_2(\tilde A).$$ 
Hence 
$$\pi_2(b_A)=\pi_2(s(b_A))=
\left[
  \begin{array}{cc}
    1 & 0 \\ 
    0 & 0 \\ 
  \end{array}
  \right]\in M_2(\C).
$$
Similarly,
\begin{eqnarray*}
s(p)&=&(
\left[
  \begin{array}{cc}
    \hat 1\tensor I_1 & 0 \\ 
    0 & 0 \\ 
  \end{array}
  \right],
\ldots
,
\left[
  \begin{array}{cc}
    \hat 1\tensor I_{\sF} & 0 \\ 
    0 & 0 \\ 
  \end{array}
  \right]
)\\
&=&
\left[
  \begin{array}{cc}
    (\hat 1\tensor I_1,\ldots,\hat 1\tensor I_{\sF}) & 0 \\ 
    0 & 0 \\ 
  \end{array}
  \right]\\
&=&
\left[
  \begin{array}{cc}
    1_{\widetilde{A'}} & 0 \\ 
    0 & 0 \\ 
  \end{array}
  \right],
\end{eqnarray*}
yields
$$\pi_2(p)=\pi_2(s(p))=
\left[
  \begin{array}{cc}
    1 & 0 \\ 
    0 & 0 \\ 
  \end{array}
  \right]\in M_2(\C).
$$
Hence the second equality in  Eq.~(\ref{e:rs}) holds as well.
\end{proof}
\begin{lem}\label{l:Wf} With notation as in the previous lemma,
$K_0(\alpha'):K_0(I_0)\to K_0(I_1)$ evaluated on the generators of $K_0(I_0)$ is 
$$K_0(\alpha')\Big([b\otimes e_{uu}+s(b)\tensor (I-e_{uu})]_0-[s(b)\tensor I]_0\Big)$$
$$=[\alpha'\Big( b\otimes e_{uu}+s(b)\tensor (I-e_{uu})\Big) ]_0-[\alpha'\Big(s(b)\tensor (I-e_{uu})\Big)]_0$$
$$=[b\otimes e_{u'u'}+s(b)\tensor (I-e_{u'u'})]_0-[s(b)\tensor I]_0.$$
In the above expression $u'\in T_1$ is the unique face of $T_1=\frac{1}{\lambda}\omega(T)\lambda$, which homotopes to $u$.
\end{lem}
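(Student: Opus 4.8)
The plan is to mimic the computation carried out for $K_0(\gamma)$ in Lemma \ref{l:Wv}, with the extra bookkeeping forced by the Bott projection. Since $\alpha'$ is a $*$-homomorphism by Lemma \ref{l:connecting-maps}, functoriality of $K_0$ (cf.~\cite[p.~61]{Rordam}) says that $K_0(\alpha')$ sends the class $[p]_0-[q]_0$ of a difference of projections over $\widetilde{I_0}$ to $[\alpha'(p)]_0-[\alpha'(q)]_0$, where $\alpha'$ is read through its unital extension at the level of the defining formula. Applying this to the generator $[b\otimes e_{uu}+s(b)\otimes(I-e_{uu})]_0-[s(b)\otimes I]_0$ yields the first displayed equality; here one uses that $s(b)\otimes I$ is the scalar part, so that the unital extension of $\alpha'$ carries it to the corresponding scalar projection and these scalar contributions cancel in the reduced group $K_0(I_1)$.

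The heart of the matter is the evaluation of $\alpha'$ on the two building blocks. First I would transport the generator to $C_c$ via the isomorphism $\lambda$ of Proposition \ref{p:compacts}(3), under which $b\otimes e_{uu}$ becomes a (reparametrised) Bott projection supported on the diagonal component $\{(x,x)\mid x\in\open{u}\}$ of $R_0|_{X_2^{T_0}-X_1^{T_0}}$, while $s(b)\otimes e_{vv}$ becomes the constant scalar part on the diagonal component over a face $v$. I then apply the defining formula $\alpha'(g)(x',y')=g(h_1(x'),h_1(y'))$. The decisive structural input is item $(3)(ii)$ of Definition \ref{d:homotopy}: among all faces of $T_1$, the interior of exactly one face $u'$ is carried by $h_1$ homeomorphically onto $\open{u}$, every other face collapsing under $h_1$ to an edge or a vertex. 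Consequently $\alpha'(b\otimes e_{uu})$ is supported precisely on the diagonal component $\{(x',x')\mid x'\in\open{u'}\}$, where it equals $b\circ h_1$, i.e.~the Bott projection reparametrised by the homeomorphism $h_1\colon\open{u'}\to\open{u}$; likewise $\alpha'(s(b)\otimes e_{vv})=s(b)\otimes e_{v'v'}$, since a constant scalar is unchanged under precomposition with $h_1$. This is exactly the face analogue of Lemma \ref{l:Wv}, the difference being that for faces only the single special face $u'$ contributes to the face algebra $I_1$, rather than the whole fibre $h_1^{-1}(u)$ as happens for vertices.

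Finally I would assemble the pieces. The induced map $h_1^*$ on $K_0(C_0(\open{u}))\cong\Z$ sends the Bott generator to the Bott generator of $K_0(C_0(\open{u'}))$, because $h_1$ is cellular and, being the endpoint of the path of homeomorphisms $h_s$ with $h_0=\mathrm{id}$, is orientation preserving; hence $[\alpha'(b\otimes e_{uu})]_0=[b\otimes e_{u'u'}]_0$. Using additivity of $K_0$ on orthogonal projections to recombine the scalar summands, and letting the scalar projection classes cancel in the difference, the expression collapses to $[b\otimes e_{u'u'}+s(b)\otimes(I-e_{u'u'})]_0-[s(b)\otimes I]_0$, which is the asserted generator attached to $u'$. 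I expect the main obstacle to be the careful support analysis under the degenerate map $h_1$ — specifically justifying from Definition \ref{d:homotopy} that no second face of $T_1$ contributes to $I_1$, and that the reparametrisation preserves the orientation, and hence the $K_0$-class, of the Bott projection.
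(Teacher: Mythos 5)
Your proposal is correct and follows essentially the same route as the paper's own proof: transport the generator through the isomorphism of Proposition \ref{p:compacts}(3), evaluate $\tilde\alpha'$ pointwise via the defining formula with $h_1$, use Definition \ref{d:homotopy}(3)(ii) to see that only the unique face $u'$ contributes (all other faces collapse to lower-dimensional cells), and finally invoke orientation preservation of $h_1\colon u'\to u$ (deduced from $h_0=\mathrm{id}$ and continuity of the homotopy) to identify $b\circ h_1$ with the Bott projection up to orientation-preserving reparametrisation. The paper writes this identification as replacing $b\circ h_1\circ\gamma_{u_i',u'}$ by $b$, which is exactly your statement that $h_1^*$ carries the Bott generator of $K_0(C_0(\open{u}))$ to that of $K_0(C_0(\open{u'}))$.
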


\begin{proof}
Let $g:R_0\to M_2(\C)$ be given by $g = {\tilde\lambda}^{-1}\big(b\otimes e_{uu}+s(b)\tensor (I-e_{uu})\big)$ where $\tilde\lambda$ is the $*$-isomorphism induced by $\lambda$. Note that here $b$ denotes the Bott projection reparametrized to face $u$ and note that $g$ is (up to isomorphism) a projection in $M_2$ of the unitization of $I_0=C^*(R_0|_{X_2-X_1})$. Evaluating $g$ we get
$$g(x,y)=\left\{
  \begin{array}{cc}
    s(b)(x) & x=y\not\in u \\ 
    b(x) & x=y\in u \\ 
    0 & x\ne y \\ 
  \end{array}\right..
$$
Note that $g$ is zero outside the diagonal of $R_0$, and it is constant on the diagonal intersected with $X_1\times X_1$.
Next, we evaluate $\tilde\alpha'(g)$ and get
$$\tilde\alpha'(g)(x,y)=\left\{
  \begin{array}{cc}
    b(h_1(x)) & x=y\in u' \\
    c & x=y\not \in u' \\
    0 & x\ne y \\ 
  \end{array}\right.,
$$
where $c:=
  \left[\begin{array}{cc}
    1 & 0 \\ 
    0 & 0 \\ 
  \end{array}\right]\in M_2(\C)$ is the constant value of the constant function $s(b)$.
Then, denoting the constant function with value $c$ by $\zeta(x):=c$ (i.e.~$\zeta$ is a reparametrization of $s(b)$), we have that
$$\tilde\lambda(\tilde\alpha'(g))=b\circ h_1\circ \gamma_{u_i',u'}\otimes e_{u'u'}+\zeta\tensor (I-e_{u'u'})$$
is a projection in the unitization of $\bigoplus_{i=1}^{\sF} C_0(\open{u_i}\,\!\!')\tensor K(\ell^2([u'_i]_{R_1}))$, ($u'_i$ are the representatives of the $R_1$-equivalence classes, i.e., the stable faces of $T_1$)  and its scalar part is
$$s(\tilde\lambda(\tilde\alpha'(g)))=\zeta\tensor I.$$
The map $\gamma_{u_i',u'}:u_i'\to u'$ is the translation map given by $\gamma_{u_i',u'}(x):=x+x_0$, with $x_0$ being the translation of the two faces $u'=u_i'+x_0$.
Note that the constant function $\zeta$ has different faces as domain depending on the context.
That is
$$
\zeta\tensor I=\sum_{i=1}^{\sF} \zeta_{u'_i}\tensor (\sum_{v'\sim u'_i}e_{v'v'}),\qquad
\zeta\tensor (I-e_{u'u'})=\sum_{i=1}^{\sF} \zeta_{u'_i}\tensor (\sum_{\substack{v'\sim u'_i\\ v'\ne u'}}e_{v'v'}),
$$
where the inner sum is over all faces $v'\in T_1$ equivalent to $u_i'$, and $\zeta_{u'_i}$ is the constant function $c$ with domain $u'_i$. 
By definition of $K_0$ on $*$-homomorphisms and Proposition 4.2.2 p.63 in \cite{Rordam}, we get  
$$[\tilde\lambda(\tilde\alpha'(g))]_0-[s(\tilde\lambda(\tilde\alpha'(g)))]_0=[b\circ h_1\circ \gamma_{u_i',u'}\otimes e_{u'u'}+\zeta\tensor (I-e_{u'u'})]_0-[\zeta\tensor I)]_0.$$
Moreover, the scalar part map $s$ and $\tilde\lambda$ commute.
Since all faces of $T_1$ and $T_0$ are oriented counter clockwise, the map $h_0|_u = id_u$ is orientation preserving and since $h$ is a homotopy, also $h_1: u' \to u$ is orientation preserving. Hence (up to orientation preserving reparametrization) we can replace $b \circ h_1\circ\gamma_{u_i',u'}$ with $b$ and $\zeta$ with $s(b)$ and we get
$$[\tilde\lambda(\tilde\alpha'(g))]_0-[s(\tilde\lambda(\tilde\alpha'(g)))]_0=[b\otimes e_{u'u'}+s(b)\tensor (I-e_{u'u'})]_0-[s(b)\tensor I]_0.$$

\end{proof}

The following lemma will be given up to the isomorphism given in Proposition \ref{p:compacts}, which we denote by $\lambda$.
Since the proofs of the next two lemmas are similar to the previous two, we omit some technical details.
\begin{lem}The generators of $K_1(J_0)$ are 
\begin{eqnarray*}
[ b\otimes e_{uu} +\sum_{v\ne u} 1\otimes e_{vv}]_1\
\end{eqnarray*}
where $u$ is an edge in $T$, the sum is over all vertices of $v$ in $T$, and $b$ is the map $s\mapsto e^{i2\pi s}, s\in[0,1]$ (up to reparametrization from domain $u$ to domain $[0,1]$). Choose edge $u\in T$ from each $R_0$-equivalence class to get all generators.
\end{lem}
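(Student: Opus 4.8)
The plan is to follow the template of the two preceding lemmas, with the standard generator of the $K_1$ of a suspension playing the role that the Bott projection played for $K_0(I_0)$. By Proposition \ref{p:compacts}(2) we have $J_0\cong\bigoplus_{k=1}^{\sE}C_0(\open{e}_k,K(\ell^2([e_k])))$, and since $K$-theory is additive over finite direct sums it suffices to analyse one summand $C_0(\open{e},K(\ell^2([e])))$ and then let $u$ run over one representative edge of each of the $\sE$ stable classes. Each open edge $\open{e}\subset\R^d$ is homeomorphic to $\R$, so the summand is isomorphic to $C_0(\R)\tensor K$; thus $K_1(C_0(\R)\tensor K)\cong K_1(C_0(\R))\cong K_0(\C)=\Z$ by the suspension isomorphism, which recovers $K_1(J_0)\cong\Z^{\sE}$ from the corollary following Proposition \ref{p:compacts}.

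First I would recall that $K_1(C_0(\R))\cong\Z$ is generated by the class of the unitary $b\colon s\mapsto e^{i2\pi s}$: after the reparametrization $\open{e}\cong(0,1)$ one has $b-1\in C_0(\open{e})$, so $b$ is a unitary in $\widetilde{C_0(\open{e})}\cong\{f\in C([0,1]):f(0)=f(1)\}\cong C(S^1)$, and under this identification $b$ corresponds to the generator $[z]_1$ of $K_1(C(S^1))=\Z$ (cf.~\cite[Example~11.3.2~p.~200]{Rordam}). Next I would transport this generator along the corner inclusion $\phi_u\colon C_0(\open{e})\to C_0(\open{e})\tensor K$, $a\mapsto a\tensor e_{uu}$, which induces the stability isomorphism on $K$-theory, exactly as in the proofs for $K_0(C_0)$ and $K_0(I_0)$; since all one-dimensional projections are equivalent (Lemma \ref{l:onedimprojareequiv}), the choice of $e_{uu}$ is immaterial. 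The unitization $\widetilde{\phi_u}$ sends $b=(b-1)+1$ to
\begin{equation*}
w:=1_{\widetilde{J_0}}+(b-1)\tensor e_{uu}=b\tensor e_{uu}+\sum_{v\ne u}1\tensor e_{vv},
\end{equation*}
where the sum is over all edges $v$ of $T$, so that $1_{\widetilde{J_0}}=\sum_v 1\tensor e_{vv}=1\tensor I$ with $I$ the identity of the multiplier algebra.

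A short computation using $b^*b=bb^*=1$ in $\widetilde{C_0(\open{e})}$ and $e_{uu}^2=e_{uu}$ shows that $w^*w=ww^*=1_{\widetilde{J_0}}$, so $w$ is a unitary, and $w-1_{\widetilde{J_0}}=(b-1)\tensor e_{uu}\in C_0(\open{e})\tensor K\subset J_0$, so $w$ genuinely determines a class $[w]_1\in K_1(J_0)$. By the definition of $K_1$ on $*$-homomorphisms (cf.~\cite[p.~61]{Rordam}) one has $[w]_1=K_1(\phi_u)([b]_1)$, and since $K_1(\phi_u)$ is the stability isomorphism onto the $k$-th summand, $[w]_1$ is a generator of that $\Z$. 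Letting $u$ range over one edge per stable class then produces the asserted generating set of $K_1(J_0)\cong\Z^{\sE}$. I expect the only genuinely delicate point to be the bookkeeping at the level of unitizations, namely checking that $\widetilde{\phi_u}$ carries the unitary $b$ to the displayed $w$, that $w$---a priori an element of the multiplier algebra---actually lies in $\widetilde{J_0}$, and that $[w]_1$ indeed equals $K_1(\phi_u)([b]_1)$; everything else runs parallel to the two previous lemmas and may be abbreviated.
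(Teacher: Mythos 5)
Your proposal is correct and takes essentially the same approach as the paper: identify the generator of $K_1$ of the circle algebra $\widetilde{C_0(\open{u})}\cong C(\T)$, push it through the stability isomorphism induced by the corner embedding $a\mapsto a\tensor e_{uu}$, and land in $K_1(J_0)$ via the summand decomposition of Proposition \ref{p:compacts}(2). The only difference is presentational: where the paper defers the unitization bookkeeping to ``the same argument as for the faces,'' you carry it out directly via $\widetilde{\phi_u}(b)=1+(b-1)\tensor e_{uu}=b\tensor e_{uu}+1\tensor(I-e_{uu})$, which is in fact simpler for unitaries (scalar part $1$) than the splitting-lemma computation needed for projections in the faces lemma.
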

\begin{proof}
Let $\T:=\{z\in \C\mid |z|=1\}$ be the unit circle, and $u\in T$ be an edge.
Recall that $\Z$ is isomorphic to $K_1(C(\T))$ mapping $1\mapsto[ b']_1$, where $b'$ is the identity map $z\mapsto z$, $z\in \T$.
Moreover, $K_1(C(\T))$ is isomorphic to $K_1(C_0(\open{u}))$ mapping $[b']_1\mapsto [b]_1$. Note that $b$ is in the unitization of $C_0(\open{u})$.
Recall that $C_0((0,1),K)$ is isomorphic to  $C_0((0,1))\tensor K$ via the map 
$$f\tensor a\mapsto (t\mapsto f(t) a).$$
Recall that $K_1(C_0(\open{u}))$ is isomorphic to $K_1(C_0(\open{u})\tensor K)$, where $K$ is the $C^*$-algebra of compact operators on the Hilbert space $\ell^2([u]_{R_0})$.
Moreover, $C_0(\open{u})\tensor K$ embeds in $J_0$ via the isomorphism given in Proposition \ref{p:compacts}(2),
where for simplicity, we assume that $u$ is one of the representatives $e_i$ in the proposition.
Let $e_{\tilde uv}\in K(\ell^2([u]))$  be the standard matrix unit
$$e_{\tilde uv}(\delta_w)= \left\{\begin{array}{cc}
    \delta_{\tilde u} & w=v \\ 
    0 & else \\ 
  \end{array}\right.,
$$
where $\tilde u,v,w\in T$ are edges $R_0$-equivalent to edge $u$.
Thus we have the following identifications 
$$1\mapsto [b']_1\mapsto [b]_1\mapsto[b\tensor e_{uu}+1\tensor(I-e_{uu})]_1,$$
where the third map follows by the same argument as for the faces, the $1$ in the expression $1\tensor(I-e_{uu})$ denotes the constant function $1$, and 
$$
I:=\sum_{i=1}^{\sE}I_{B(H_i)}=\sum_{v} e_{vv},\qquad
1\tensor I:=\sum_{i=1}^{\sE}1_{e_i}\tensor \left(\sum_{v\sim e_i} e_{vv}\right),
$$ 
where the sum is over all edges $v\in T$, and $1_{e_i}\in \widetilde{C_0(\open{e_i})}$ is the unit element -- the constant function $1$.
Note that 
$b\tensor e_{uu}+1\tensor(I-e_{uu})$ is a unitary in the unitization of $\bigoplus_{i=1}^{\sE}C_0(\open{e_i})\tensor K(\ell^2([e_i]))$. 
\end{proof}

\begin{lem}\label{l:We} With notation as in the previous lemma, $K_1(\beta'):K_1(J_0)\to K_1(J_1)$ evaluated on the generators of $K_1(J_0)$ is
\begin{eqnarray*}
K_1(\beta')\Big([ b\otimes e_{uu} +\sum_{v\ne u} 1\otimes e_{vv}]_1\Big)=
\end{eqnarray*}
$$\sum_{h_1(u')=u}[b\otimes e_{u'u'}+\sum_{\substack{v'\ne u' }} 1\otimes e_{v'v'}]_1\,-\,\sum_{h_1(u')=\breve{u}}[ b\otimes e_{u'u'} +\sum_{\substack{v'\ne u'}} 1\otimes e_{v'v'}]_1.$$
where the sums are over all edges $u',v'$ in $T_1$.
Here $\breve u$ is the edge $u$ with reversed orientation, and for this lemma equality $h_1(u') = u$ is understood to include that $h_1: u' \to u$ is orientation preserving.

We remark that by Whitehead's lemma
$$[b\tensor e_{u'u'}+\sum_{w' \ne u'} 1\otimes e_{w'w'}]_1=[b\tensor e_{v'v'}+\sum_{w' \ne v'} 1\otimes e_{w'w'}]_1\qquad \text{for } u'\sim_{R_1} v'.$$

\end{lem}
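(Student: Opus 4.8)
The plan is to follow the template of the proof of Lemma~\ref{l:Wf}, the only genuinely new ingredient being that the homotopy $h_1$ may reverse the orientation of an edge. Write the generator as $[v_u]_1$ for the unitary
$$v_u := b\tensor e_{uu} + \sum_{w\ne u} 1\tensor e_{ww}$$
in the unitization of $J_0$, where $b$ is (after reparametrizing the edge $u$ as $[0,1]$) the winding-number-one map $s\mapsto e^{i2\pi s}$ and $w$ runs over all edges of $T$. First I would pull $v_u$ back to a function $g:=\tilde\lambda^{-1}(v_u)$ on $R_0|_{X_1^{T_0}-X_0^{T_0}}$ via the $*$-isomorphism $\tilde\lambda$ induced by Proposition~\ref{p:compacts}(2); this $g$ is supported on the diagonal, equal to the phase $b(x)$ for $x$ in the edge $u$ and equal to $1$ on the remaining edges.

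Next I would evaluate the unitization extension $\tilde\beta'(g)$. Since $\beta'(g)(x',y')=g(h_1(x'),h_1(y'))$ when $h_1(x')\sim_{R_0}h_1(y')$ and is $0$ otherwise, and since $h_1$ is cellular, $\tilde\beta'(g)$ is again supported on the diagonal of $R_1|_{X_1^{T_1}-X_0^{T_1}}$, where it equals $b(h_1(x'))$ exactly when $x'$ lies in an edge $u'$ of $T_1$ with $h_1(u')=u$ (edges of $T_1$ that $h_1$ collapses to vertices contribute the constant $1$ and hence the trivial class, so they drop out). Applying $\tilde\lambda$ and using that $K_1$ is additive over the mutually orthogonal blocks $e_{u'u'}$, I obtain
$$K_1(\beta')([v_u]_1) \;=\; \sum_{\substack{h_1(u')=u\\\text{o.p.}}} [\,b\tensor e_{u'u'} + \ldots\,]_1 \;+\; \sum_{\substack{h_1(u')=u\\\text{o.r.}}} [\,\overline{b}\tensor e_{u'u'} + \ldots\,]_1,$$
where the sums run over edges $u'$ of $T_1$ with $h_1(u')=u$, the labels ``o.p.''/``o.r.'' record whether $h_1\colon u'\to u$ preserves or reverses orientation, and ``$\ldots$'' abbreviates $1\tensor(I-e_{u'u'})$; here $\overline{b}=b^{-1}$ appears because, by item~(3) of Definition~\ref{d:homotopy}, $h_1\colon\open{u}'\to\open{u}$ is a homeomorphism, so $(b\circ h_1)|_{u'}$ is homotopic through unitaries to $b$ in the orientation-preserving case and to $\overline b$ in the orientation-reversing case.

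The decisive step is the orientation bookkeeping. Since $\overline{b}\tensor e_{u'u'}+1\tensor(I-e_{u'u'})$ is the inverse unitary of $b\tensor e_{u'u'}+1\tensor(I-e_{u'u'})$, the group law in $K_1$ gives $[\,\overline b\tensor e_{u'u'}+\ldots\,]_1=-[\,b\tensor e_{u'u'}+\ldots\,]_1$, so every orientation-reversing term contributes with a minus sign. Finally, an edge $u'$ with $h_1(u')=u$ orientation-reversing is precisely an edge with $h_1(u')=\breve u$ orientation-preserving; relabelling the second sum by $\breve u$ turns the formula into the asserted difference of two sums, which is exactly $W_E([u])$ with the signs $\delta_{u,u'}$. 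I expect this orientation accounting --- correctly identifying the reversed-orientation edges with $\breve u$ and justifying $[b^{-1}\tensor\cdots]_1=-[b\tensor\cdots]_1$ cleanly in the unitization of the non-unital algebra $J_0$ --- to be the main obstacle, while the closing Whitehead-lemma remark guarantees that each summand is independent of the representative $u'$ chosen within its $R_1$-class.
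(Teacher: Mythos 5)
Your proposal is correct and follows essentially the same route as the paper's proof: pull the generator back through $\tilde\lambda^{-1}$, evaluate $\tilde\beta'$ pointwise on the diagonal, push forward through $\tilde\lambda$ to obtain $b$-blocks on orientation-preserving preimages and $b^*$-blocks on orientation-reversing ones (with collapsed edges contributing only the unit), and then use the $K_1$ group law (R{\o}rdam Prop.~8.1.4) to split the product into a sum and convert the $b^*$-terms into minus signs, reinterpreting the orientation-reversing preimages of $u$ as orientation-preserving preimages of $\breve u$. The paper performs exactly this computation, likewise closing with the Whitehead-lemma remark on independence of the representative within an $R_1$-class.
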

\begin{proof}
Take edge $u\in T$. Then by definition of $K_1$ on $*$-homomorphisms we get 
$$K_1(\lambda\beta'\lambda^{-1})\Big([ b\otimes e_{uu} +1\otimes(I-e_{uu})]_1\Big)=[\tilde\lambda\tilde\beta'\tilde\lambda^{-1}( b\otimes e_{uu} +1\otimes(I-e_{uu})) ]_1,$$
where $1$ is the constant function 1.

Let $g$ be the unitary in the unitization of $J_0$ given by 
$$g:=\tilde\lambda^{-1}\big(b\tensor e_{uu}+1\tensor(I-e_{uu})\big),$$
 where $\tilde\lambda$ is the isomorphism in Proposition \ref{p:compacts} extended to the unitization. We evaluate $g$ and get
\begin{equation}\label{e:edges-g}
g(x,y):=\left\{  
\begin{array}{cc}
    b(x) & x=y\in u \\ 
    1 & x=y\not\in u \\ 
    0 & else. \\ 
  \end{array}\right.
\end{equation}

Since
$$\tilde \beta'(g)(x,y)=\left\{
  \begin{array}{cc}
    b(h_1(x)) & x=y\in u', h_1(u')=u \\ 
    b(h_1(x)) & x=y\in u', h_1(u')=\breve u \\ 
    1 &  x=y\in u',\breve u\ne h_1(u')\ne u\\ 
    0 & x\ne y \\ 
  \end{array}\right.
$$
we get
$$\tilde \lambda\tilde \beta'(g)=
 \sum_{h_1(u')=u} b\otimes e_{u'u'}+\sum_{h_1(u')=\breve{u}} b^*\otimes e_{u'u'} +\sum_{\substack{h_1(v')\ne u \\ h_1(v')\ne \breve u}} 1\otimes e_{v'v'},
$$
where the sums is over all edges $u',v'\in T_1=\frac{1}{\lambda}\omega(T)\lambda$ (and $b^*$ is the function $e^{-2\pi i s}$, $s\in[0,1]$, up to reparametrization).
Thus
\begin{eqnarray*}
[\tilde\lambda\tilde\beta'\tilde\lambda^{-1}(b\otimes e_{uu} +\sum_{v\ne u} 1\otimes e_{vv}) ]_1=
\end{eqnarray*}
$$[ \sum_{h_1(u')=u} b\otimes e_{u'u'}+\sum_{h_1(u')=\breve{u}} b^*\otimes e_{u'u'} +\sum_{\substack{h_1(v')\ne u \\ h_1(v')\ne \breve u}} 1\otimes e_{v'v'}]_1=$$
$$[ \sum_{h_1(u')=u} b\otimes e_{u'u'}+\sum_{\substack{h_1(v')\ne u }} 1\otimes e_{v'v'}]_1\,-\,[\sum_{h_1(u')=\breve{u}} b\otimes e_{u'u'} +\sum_{\substack{h_1(v')\ne \breve u}} 1\otimes e_{v'v'}]_1=$$
$$\sum_{h_1(u')=u}[ b\otimes e_{u'u'}+\sum_{\substack{v'\ne u' }} 1\otimes e_{v'v'}]_1\,-\,\sum_{h_1(u')=\breve{u}}[ b\otimes e_{u'u'} +\sum_{\substack{v'\ne u'}} 1\otimes e_{v'v'}]_1,$$
where the equalities hold by \cite[Prop.~8.1.4(iv),~p.135]{Rordam}.

Note that by Whitehead's lemma
$$[b\tensor e_{u'u'}+c_2]_1=[b\tensor e_{v'v'}+c_3]_1,$$ 
whenever $u'\sim_{R_1} v'$.
This equality is equivalent to $[b\tensor e_{u'u'}+ b^*\tensor e_{v'v'}+c_1]_1=0$  by \cite[Prop.~8.1.4($iv$), p.135]{Rordam}, which can easily be verified:
\begin{eqnarray*}
[b\tensor e_{u'u'}+b^*\tensor e_{v'v'}+c_1]_1&=&[bb^*\otimes e_{u'u'}+c_2]_1=[(1\otimes e_{u'u'}+c_2)]_1=[1]_1=0,
\end{eqnarray*}
where the first equality is by Whitehead's lemma, and
$$
c_1:=\sum_{\substack{w'\ne u' \\ w'\ne v'}} 1\otimes e_{w'w'}\qquad
c_2:=\sum_{w' \ne u'} 1\otimes e_{w'w'}\qquad
c_3:=\sum_{w' \ne v'} 1\otimes e_{w'w'}.
$$

\end{proof}

\subsection{Stable cohomology and $K$-theory}
In this subsection we introduce the stable cohomology groups, from which we compute the $K$-theory of the $C^*$-algebra $S'$.
We start with the following theorem, which is one of our main results. It shows that 
\begin{equation}\label{e:CSbullet}
C_S^\bullet:=\,\,\,\,\xymatrix{0\ar[r]&\Z^{sV}\ar[r]^{\delta^0}&\Z^{sE}\ar[r]^{\delta^1}&\Z^{sF}\ar[r]&0}
\end{equation}
is a cochain complex, and the collection of connecting maps
\begin{equation}\label{e:WvWeWf} 
W_V:=K_0(\gamma),\qquad W_E:=K_1(\beta'),\qquad W_F:=K_0(\alpha')
\end{equation}
form a cochain map, which we denote by $W^\bullet: C_S^\bullet\to C_S^\bullet$, and whose  explicit computations are given in Lemma \ref{l:Wv}, Lemma \ref{l:We}, Lemma \ref{l:Wf}.
Moreover, it relates the group homomorphisms $K_0(\iota)$, $K_1(\iota)$ with the connecting maps.
\begin{thm} \label{t:W-VEF}
With the above notation, the rows of the following commutative diagram are cochain complexes of abelian groups
\begin{equation}
\xymatrix{0\ar[r]&\Z^{sV}\ar[r]^{\delta^0}\ar[d]_{W_V}&\Z^{sE}\ar[r]^{\delta^1}\ar[d]_{W_E}&\Z^{sF}\ar[r]\ar[d]_{W_F}&0\\
  0\ar[r]&\Z^{sV}\ar[r]^{\delta^0}&\Z^{sE}\ar[r]^{\delta^1}&\Z^{sF}\ar[r]&0.}
\end{equation}

Moreover, the group homomorphism $K_1(\iota):K_1(A_0)\to K_1(A_1)$ is given by 
\begin{equation}
K_1(\iota)=H^1(W_E), 
\end{equation}
where $H^1(W_E):{\frac{\ker\delta^1}{\im\delta^0}}\to{\frac{\ker\delta^1}{\im\delta^0}}$ is given by $H^1(W_E)([z]):=[W_E(z)]$, $z\in \ker\delta^1$.

The group homomorphism $K_0(\iota):K_0(A_0)\to K_0(A_1)$ fits into the commutative diagram 
\begin{equation}
\xymatrix{
0\ar[r]&H^2(C_S^\bullet)\ar@{^{(}->}[r]\ar[d]_{H^2(W_F)}&K_0(A_0)\ar[d]_{K_0(\iota)}\ar@{->>}[r]&H^0(C_S^\bullet)\ar[d]^{H^0(W_V)}\ar[r]&0\\
0\ar[r]&H^2(C_S^\bullet)\ar@{^{(}->}[r]&K_0(A_{1})\ar@{->>}[r]&H^0(C_S^\bullet)\ar[r]&0,
}
\end{equation}
where $H^0(C_S^\bullet)=\ker\delta^0$,\, $H^0(W_V)=W_V|_{\ker\delta^0}$.
For tilings of dimension 1, $H^2(C_S^\bullet)=0$. For tilings of dimension 2, $H^2(C_S^\bullet)=\Z$ and $H^2(W_F)=id$.

\end{thm}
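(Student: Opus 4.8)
The plan is to read off each assertion from the two nested six-term exact sequences already assembled in Lemma~\ref{l:K-maindiagram-1} and Lemma~\ref{l:connecting-maps}, using naturality of the exponential and index maps together with homotopy invariance of $K$-theory. First I would dispose of the commutativity and cochain statement: under the canonical identifications $K_0(C_n)\cong\Z^{sV}$, $K_1(J_n)\cong\Z^{sE}$, $K_0(I_n)\cong\Z^{sF}$, the diagram of Lemma~\ref{l:K-maindiagram-1} becomes exactly the displayed diagram with vertical maps $W_V=K_0(\gamma)$, $W_E=K_1(\beta')$, $W_F=K_0(\alpha')$. That the rows are cochain complexes is the relation $\delta^1\circ\delta^0=0$ recorded in Eq.~(\ref{e:d12iszero}), and commutativity is already part of Lemma~\ref{l:K-maindiagram-1}.

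The decisive reduction is to replace $\iota$ by the connecting maps. Since $\phi_s$ is a homotopy of $*$-homomorphisms with $\phi_0=\mathrm{id}$ (Proposition~\ref{p:phi-star-homo}) and $\alpha=\iota_0\circ\phi_1$, homotopy invariance of $K$-theory gives $K_i(\iota)=K_i(\iota_0\circ\phi_1)=K_i(\alpha)$ for $i=0,1$. This lets me compute $K_i(\iota)$ entirely inside the commuting diagrams Eq.~(\ref{e:IAB-connectingmaps}) and Eq.~(\ref{e:JBC-connectingmaps}), whose induced maps on $K$-theory are $K_i(\alpha)$, $K_i(\beta)$, $K_i(\gamma)$, $K_i(\alpha')$, $K_i(\beta')$. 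For the $K_1$ statement I would then use the identifications $K_1(A_n)\cong\ker\delta^1/\im\delta^0$ (Eq.~(\ref{e:K1A0})) sitting inside $K_1(B_n)\cong\Z^{sE}/\im\delta^0$ (Eq.~(\ref{e:K1ofBn})). Naturality of the six-term sequence for Eq.~(\ref{e:JBC-connectingmaps}) shows that $K_1(\beta)$ is the map induced by $W_E$ on $\Z^{sE}/\im\delta^0$; naturality for Eq.~(\ref{e:IAB-connectingmaps}) shows $K_1(\alpha)$ is the restriction of $K_1(\beta)$ to the subgroup $K_1(A_n)$. Because $W_E$ is a cochain map it preserves $\ker\delta^1$, so its induced map on $\ker\delta^1/\im\delta^0$ is by definition $H^1(W_E)$; hence $K_1(\iota)=K_1(\alpha)=H^1(W_E)$.

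For the $K_0$ statement I would apply naturality of the six-term exact sequence to the morphism of short exact sequences Eq.~(\ref{e:IAB-connectingmaps}), producing a morphism of the short exact sequence Eq.~(\ref{e:K0A0}). Its left vertical arrow is $K_0(\alpha')$ descended to $\coker\delta^1=H^2(C_S^\bullet)$, which is precisely $H^2(W_F)$; its right vertical arrow is $K_0(\beta)$ acting on $K_0(B_n)\cong\ker\delta^0$, and naturality for Eq.~(\ref{e:JBC-connectingmaps}) identifies this with the restriction of $W_V=K_0(\gamma)$ to $\ker\delta^0$, i.e.~$H^0(W_V)$. This yields the asserted commutative ladder with $K_0(\iota)=K_0(\alpha)$ in the middle.

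Finally I would settle the top-degree computation. For dimension~$1$ there are no faces, so $sF=0$ and $H^2(C_S^\bullet)=\coker\delta^1=0$; for dimension~$2$, Lemma~\ref{l:plane-ZsFoverImDelta1} gives $H^2(C_S^\bullet)=\Z^{sF}/\im\delta^1\cong\Z$ via the sum-of-coordinates map, and by item~$(3)(ii)$ of Definition~\ref{d:homotopy} each $W_F([f])$ equals a single generator $[f']$, namely the unique tile of $\frac1\lambda\omega(f)$ homotoping to $f$, so $W_F$ carries sum-of-coordinates $1$ to $1$ and thus $H^2(W_F)=\mathrm{id}$. The main obstacle is the careful bookkeeping of the two interlocking naturality squares, so that the restriction/quotient identifications of $K_1(\alpha)$ and $K_0(\alpha)$ are genuinely compatible with the stable-cohomology groups; the homotopy-invariance identity $K_i(\iota)=K_i(\alpha)$ is exactly what makes this bookkeeping legitimate, since $\iota$ itself does not restrict to the ideals and quotients (cf.~Remark~\ref{r:connectingmaps}).
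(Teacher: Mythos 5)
Your proposal is correct and follows essentially the same route as the paper's proof: restating Lemma~\ref{l:K-maindiagram-1} with the computed connecting maps, invoking homotopy invariance to replace $K_i(\iota)$ by $K_i(\alpha)$, and then using naturality of the index and exponential maps for the diagrams in Eq.~(\ref{e:IAB-connectingmaps}) and Eq.~(\ref{e:JBC-connectingmaps}) to identify $K_1(\alpha)$ with $H^1(W_E)$ and to produce the $K_0$ ladder with $H^2(W_F)$ and $H^0(W_V)$, concluding with Lemma~\ref{l:plane-ZsFoverImDelta1} and Definition~\ref{d:homotopy}(3)(ii) for the top-degree claims. No gaps; the bookkeeping you describe is exactly what the paper carries out.
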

\begin{proof}
The first diagram is simply restating the one in Lemma \ref{l:K-maindiagram-1} but with explicit formulas for the connecting maps, which were computed in the lemmas \ref{l:Wv}, \ref{l:We}, \ref{l:Wf}.
We now show the second statement of the theorem, namely that $K_1(\iota)=H^1(W_E)$. 
From the six-term exact sequence in Eq.~(\ref{e:K-IAB-2}) we get the following diagram with exact rows
\begin{equation*}
\xymatrix{
0\ar[r]&K_1(A_0)\ar[d]_{K_1(\alpha)}\ar@{^{(}->}[r]&K_1(B_0)\ar[d]^{K_1(\beta)}\ar[r]^{\tilde\delta^1}&K_0(I_0)\ar[d]^{K_0(\alpha')}\\
0\ar[r]&K_1(A_{1})\ar@{^{(}->}[r]&K_1(B_1)\ar[r]^{\tilde\delta^1}&K_0(I_1)
}
\end{equation*}
as $K_1(I_n)=0$.
The diagram commutes because of naturality of $\tilde\delta^1$, because the diagram in Eq.~(\ref{e:IAB-connectingmaps}) commutes, and because $K_j(\phi\circ\psi)=K_j(\phi)\circ K_j(\psi)$, $j=1,2$. (cf.~\cite[Prop.~9.1.5, p.~157]{Rordam}).
Hence by this diagram 
\begin{eqnarray*}
K_1(\iota)&=&K_1(\alpha)=K_1(\beta)|_{\ker\tilde\delta^1},
\end{eqnarray*}
where the first equality follows from the invariance under homotopy of $K$-theory, and the second equality is up to the isomorphism $K_1(A_n)\cong \ker\tilde\delta^1$.
By the diagram in Eq.~(\ref{e:delta0-delta1-deltat1})  we get
\begin{eqnarray*}
K_1(\beta)|_{\ker\tilde\delta^1}&=&[K_1(\beta')|_{\ker\delta^1}]_{\im\delta^0}\\
&=&[W_{E}|_{\ker\delta^1}]_{\im\delta^0}\\
&=&H^1(W_E),
\end{eqnarray*}
where the first equality is up to the isomorphism $\ker\tilde\delta^1\cong\frac{\ker\delta^1}{\im\delta^0}$, and $[W_{E}|_{\ker\delta^1}]_{\im\delta^0}$ denotes the map $[x]_{_{\im\delta^0}}\mapsto [W_{_E}(x)]_{_{\im\delta^0}}$ for $x\in \ker\delta^1$.
  
We now show the last statement of the theorem.
By Eq.~(\ref{e:K-IAB-2}) and Eq.~(\ref{e:K0A0}) and naturality of the index map (and hence naturality of $\delta^1$) we get the following commutative diagram with exact rows
\begin{equation*}
\xymatrix{
0\ar[r]&\frac{K_0(I_0)}{\im\delta^1}\ar@{^{(}->}[r]\ar[d]^{[K_0(\alpha')]_{_{\im\delta^1}}}&K_0(A_0)\ar[d]^{K_0(\iota)}\ar@{->>}[r]& K_0(B_0)\ar[d]^{K_0(\beta)}\ar[r]&0\\
0\ar[r]&\frac{K_0(I_1)}{\im\delta^1}\ar@{^{(}->}[r]&K_0(A_{1})\ar@{->>}[r]&K_0(B_1)\ar[r]&0,
}
\end{equation*}
and note that 
$$[K_0(\alpha')]_{_{\im\delta^1}}=[W_F]_{_{\im\delta^1}}=H^2(W_F).$$
By the six-term exact sequence in Eq.~(\ref{e:K-JBC}) and naturality of the exponential map we get the following commutative diagram with exact rows
\begin{equation*}
\xymatrix{
0\ar[r]&K_0(B_0)\ar[d]_{K_0(\beta)}\ar@{^{(}->}[r]&K_0(C_0)\ar[d]^{K_0(\gamma)}\ar[r]^{\delta^0}&K_1(J_0)\ar[d]^{K_1(\beta')}\\
0\ar[r]&K_0(B_{1})\ar@{^{(}->}[r]&K_0(C_1)\ar[r]^{\delta^0}&K_1(I_1)
}
\end{equation*}
Thus 
$$K_0(\beta)=K_0(\gamma)|_{\ker\delta^0}=W_V|_{\ker\delta^0}=H^0(W_V),$$
where the first equality is up to the isomorphism $K_0(B_n)\cong \ker\delta^0$.

\noindent For tilings of dimension 2, it holds, by Lemma \ref{l:plane-ZsFoverImDelta1}, that
\begin{eqnarray*}
&&\frac{K_0(I_n)}{\im\tilde\delta^1}\cong\frac{\Z^{\sF}}{\im\tilde\delta^1}=\frac{\Z^{\sF}}{\im\delta^1}\cong \Z\\
&&[W_F]_{_{\im\delta^1}}= 1.
\end{eqnarray*}
Note that in the last isomorphism, any face $f$ yields a generator of $\Z$.
Now $W_F([f])=[f']$ in the notation of the definition of homotopy (cf.~Definition \ref{d:homotopy}), corresponds to the identity map $\Z\to\Z$, which we denote by $1$. 
\end{proof}
%

\begin{defn}[Stable(S)]\label{d:stablecohomology}
For $k\in\{0,1,2\}$, we define the stable cohomology groups for the fixed tiling $T$ as 
\begin{eqnarray*}
 H_S^{k}&:=&\lim_{\to}(H^{k}(C_S^\bullet),\,\,H^{k}(W^\bullet)),
\end{eqnarray*}
where the chain complex $C_S^\bullet$ and chain map $W^\bullet$ were defined right above Theorem \ref{t:W-VEF}.
Here the notation is
$$\xymatrix{
&\lim\limits_{\to}(X,A):=&\!\!\!\!\!\!\!\!\!\!\!\!\!\!\!\!\lim\limits_{\to}X\ar[r]^{A}&X\ar[r]^{A}&X\ar[r]^{A}&.\\
}$$
\end{defn}

The $K$-theory groups $K_0(S')$, $K_1(S')$ for the $C^*$-algebra $S'$ can be expressed in terms of the stable cohomology groups $H^0_S$, $H^1_S$, $H^2_S$ as is shown in the following theorem.
\begin{thm}\label{t:Ktheory-tung}
For tilings of dimension 1 or 2, the group $K_1(S')$ is given by
$$K_1(S')=H^1_S,$$
and the group $K_0(S')$ fits into the short exact sequence
$$\xymatrix{0\ar[r]&H_S^2\ar@{^{(}->}[r]&K_0(S')\ar@{->>}[r]&H_S^0\ar[r]&0.}$$
Moreover, in dimension 1 it holds, $H^2_S=0$, $H^1_S=\Z$, and in dimension 2 it holds
$H^2_S=\Z.$
\end{thm}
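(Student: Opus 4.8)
The plan is to obtain $K_i(S')$ by applying the $K$-functor to the inductive limit $S'=\lim_{\to}(A_n,\iota_n)$ of Proposition \ref{ap:Sisdirlim}. First I would invoke continuity of $K$-theory under sequential inductive limits of $C^*$-algebras (cf.~\cite{Rordam}), which yields
$$K_i(S')=\lim\limits_{\to}\big(K_i(A_n),\,K_i(\iota_n)\big),\qquad i=0,1.$$
The key simplification is that this directed system is constant in $n$: every shrunk tiling $T_n$ has the same stable cells, so $K_i(A_n)\cong K_i(A_0)$, and by the remarks preceding Theorem \ref{t:W-VEF} the matrices representing $K_i(\iota_n)$ in the stable-cell bases coincide with those of $K_i(\iota_0)=:K_i(\iota)$. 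Hence only the single-step data computed in Theorem \ref{t:W-VEF} enter.

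For $K_1$ the computation is then immediate: by Eq.~(\ref{e:K1A0}) we have $K_1(A_n)\cong H^1(C_S^\bullet)$ and by Theorem \ref{t:W-VEF} the connecting map is $K_1(\iota)=H^1(W_E)$, so $K_1(S')=\lim_{\to}(H^1(C_S^\bullet),H^1(W_E))=H^1_S$ straight from Definition \ref{d:stablecohomology}.

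For $K_0$ the plan is to pass the commutative ladder of short exact sequences of Theorem \ref{t:W-VEF} to the colimit. Applying exactness of the filtered-colimit functor on abelian groups to the directed system whose rows are $0\to H^2(C_S^\bullet)\to K_0(A_n)\to H^0(C_S^\bullet)\to 0$ and whose vertical maps are $H^2(W_F)$, $K_0(\iota)$, $H^0(W_V)$, one gets the short exact sequence
$$\xymatrix{0\ar[r]&H_S^2\ar@{^{(}->}[r]&K_0(S')\ar@{->>}[r]&H_S^0\ar[r]&0}$$
of the statement, where the outer limits are $H_S^2$ and $H_S^0$ by Definition \ref{d:stablecohomology}.

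It then remains to pin down the top stable cohomology in each dimension. In dimension $2$ I would simply quote Lemma \ref{l:plane-ZsFoverImDelta1}, giving $H^2(C_S^\bullet)=\frac{\Z^{\sF}}{\im\delta^1}\cong\Z$, together with the identity $H^2(W_F)=id$ from Theorem \ref{t:W-VEF}, so that $H_S^2=\lim_{\to}(\Z,id)=\Z$. In dimension $1$ there are no faces, hence $\Z^{\sF}=0$ and $H_S^2=0$; the complex truncates to $0\to\Z^{\sV}\xrightarrow{\delta^0}\Z^{\sE}\to 0$, so $H^1(C_S^\bullet)=\coker\delta^0$. Here $\delta^0([v])=[e_\ell]-[e_r]$ for the two edges meeting at $v$, so the summation map $\Z^{\sE}\to\Z$ kills $\im\delta^0$ and induces $\coker\delta^0\cong\Z$, exactly as in Lemmas \ref{l:imDelta1isZ} and \ref{l:plane-ZsFoverImDelta1}. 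Since the homotopy sends a unique orientation-preserving sub-edge onto each edge, I expect $W_E([e])$ to reduce to a single term, yielding $H^1(W_E)=id$ and $H_S^1=\Z$. The genuine work is already done in Theorem \ref{t:W-VEF}; the only delicate points that remain are the exactness of the colimit step (preservation of short exactness under filtered colimits) and the dimension-$1$ top-cell bookkeeping, which mirrors the dimension-$2$ face computation already in the paper.
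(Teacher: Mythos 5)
Your proposal is correct and follows essentially the same route as the paper's proof: continuity of $K$-theory applied to $S'=\lim\limits_{\to}(A_n,\iota_n)$ from Proposition \ref{ap:Sisdirlim}, the identifications and commutative ladder of Theorem \ref{t:W-VEF}, and exactness of direct limits in the category of abelian groups to pass the short exact sequences to the colimit. Your dimension-$1$ bookkeeping ($\coker\delta^0\cong\Z$ via the summation map and $H^1(W_E)=id$ from the uniqueness of the orientation-preserving sub-edge in the homotopy) fills in a detail the paper leaves implicit, but it is the same argument as Lemma \ref{l:plane-ZsFoverImDelta1}, not a genuinely different approach.
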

\begin{proof}
By Proposition \ref{ap:Sisdirlim} we have that
$$S'=\lim_{\to} \xymatrix{A_0\ar[r]^{\iota_0}&A_1\ar[r]^{\iota_1}&\ldots&}.$$
By \cite[Theorem~6.3.2]{Rordam} 
\begin{eqnarray*}
K_j(S')&=&\lim_{\to} \xymatrix{K_j(A_0)\ar[r]^{K_j(\iota_0)}&K_j(A_1)\ar[r]^{K_j(\iota_1)}&\ldots&}\quad j=0,1.
\end{eqnarray*}
Then, by Theorem \ref{t:W-VEF}, 
we get the statements of the theorem, where for the second statement, we use the fact that the direct limit of a directed system of short exact sequences of abelian groups is a short exact sequence because the direct limit is an exact functor in the category of abelian groups.
\end{proof}
Unlike in the unstable case (cf.~Theorem \ref{t:unstableKtheory}), we do not know whether the short exact sequence in Theorem \ref{t:Ktheory-tung} splits, since $H^0_S$ is not necessarily projective, i.e.~ isomorphic to $\Z^\ell$ for some $\ell\in\N_0$.


\subsection{Properties.}
\begin{pro}
The short exact sequence in Eq.~(\ref{e:JBC}) does not split.
\end{pro}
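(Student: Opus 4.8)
The plan is to obstruct the existence of a splitting $\ast$-homomorphism by means of $K$-theory. The only tool I need is the six-term exact sequence in $K$-theory attached to Eq.~(\ref{e:JBC}), namely Eq.~(\ref{e:K-JBC}). Recall that ``splitting'' of an extension of $C^*$-algebras $0\to J_n\to B_n\xrightarrow{\pi}C_n\to 0$ means the existence of a $\ast$-homomorphism section $s\colon C_n\to B_n$ with $\pi\circ s=\mathrm{id}$. First I would observe that such an $s$ would make $K_0(\pi)\colon K_0(B_n)\to K_0(C_n)$ surjective, with right inverse $K_0(s)$. By exactness of Eq.~(\ref{e:K-JBC}), the kernel of the exponential map $\delta^0\colon K_0(C_n)\to K_1(J_n)$ equals the image of $K_0(\pi)$; surjectivity of $K_0(\pi)$ would therefore force $\delta^0=0$. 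Since this exponential map is exactly the map $\delta^0\colon K_0(C_n)\cong\Z^{\sV}\to K_1(J_n)\cong\Z^{\sE}$ computed in Eq.~(\ref{e:delta0}), the whole statement reduces to proving $\delta^0\neq0$.

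Next I would verify $\delta^0\neq0$ from the explicit formula $\delta^0([v])=\sum_{e\in T(v)}\delta_{e,v}[e]$ together with the nonperiodicity of $T$. For tilings of the line this is immediate: a vertex $v$ has exactly one incoming tile $e^-$ (with $v$ its final vertex) and one outgoing tile $e^+$ (with $v$ its initial vertex), so $\delta^0([v])=[e^-]-[e^+]$, where $[e^\pm]$ denote the stable edge classes, i.e.~the prototile types. If $\delta^0$ were zero, every vertex would have its two neighbouring tiles of the same prototile type; by connectedness of $T$ this forces a single prototile type to occur throughout. This contradicts the fact that $\omega$ is a primitive substitution on $N=\sE\ge 2$ prototiles, because a one-prototile substitution tiling of $\R$ is periodic while $\Omega$ contains no periodic tilings. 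Hence $\delta^0\neq0$ in dimension $1$, and no splitting exists.

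For tilings of the plane I would argue analogously: since $T$ is a nonperiodic FLC tiling with $\sE\ge 2$ stable edge classes, one exhibits a vertex $v$ whose incident edges, weighted by the signs $\delta_{e,v}$, do not cancel in $\Z^{\sE}$, so that $\delta^0([v])\neq0$. The step I expect to be the main obstacle is precisely this last claim in dimension $2$, i.e.~ruling out the possibility that the signed incident edge-classes cancel at \emph{every} vertex simultaneously. Rank and Euler-characteristic bookkeeping do not exclude $\delta^0=0$ by themselves, so the nonvanishing has to be extracted from the local geometry: finite local complexity and the absence of a global translation symmetry prevent such a uniform cancellation. Once $\delta^0\neq0$ is secured in both dimensions, combining it with the vanishing of the exponential map forced by a hypothetical section yields the contradiction, and the proposition follows.
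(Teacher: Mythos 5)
Your reduction of the splitting question to the nonvanishing of $\delta^0$ is correct and is essentially the paper's strategy: the paper also assumes a section $\sigma\colon C_n\to B_n$ and concludes that the exponential map vanishes, though by a slightly different mechanism. There, one uses that $\tilde\sigma(p)$ is a projection lifting $p$, so the continuous functional calculus gives $\exp(2\pi i\,\tilde\sigma(p))=I$, and then \cite[Prop.~12.2.2(i)]{Rordam} forces $\delta^0([p]_0-[s(p)]_0)=0$ for all projections $p$; you instead argue via functoriality ($K_0(\pi)\circ K_0(s)=\mathrm{id}$, so $K_0(\pi)$ is surjective) and exactness of Eq.~(\ref{e:K-JBC}). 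Both routes are valid and give the same conclusion $\delta^0=0$. Your dimension-one argument that $\delta^0\neq0$ --- at each vertex $\delta^0([v])=[e^-]-[e^+]$, so vanishing forces adjacent prototile types to agree, hence a single prototile, hence periodicity, contradicting aperiodicity of $\Omega$ --- is exactly the contradiction the paper extracts from Eq.~(\ref{e:delta0}).

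The genuine gap is the two-dimensional case, and you flag it yourself without closing it. Asserting that ``finite local complexity and the absence of a global translation symmetry prevent such a uniform cancellation'' is not an argument; it is precisely the content that must be proved, namely that one cannot have, at \emph{every} stable vertex, cancellation of the signed incident stable-edge classes in $\Z^{\sE}$. The paper does not leave this open: it claims (tersely, but uniformly in both dimensions) that $\delta^0=0$ together with the explicit formula in Eq.~(\ref{e:delta0}) forces the tiling to have only one prototile, which contradicts aperiodicity of $T$. Note that cancellation at a vertex pairs each edge ending at $v$ with an edge of the same stable class starting at $v$; since both edges contain $v$ and are translates of one another, they are collinear and the adjacent-tile pattern repeats along the line through them, which is the kind of rigidity one must exploit to collapse the stable data and contradict aperiodicity. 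Until you supply an argument of this sort for the plane, your proof establishes the proposition only for tilings of the line.
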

\begin{proof}
Suppose for contradiction that the short exact sequence splits, that is, there exists a section $\sigma:C_n\to B_n$.
Since $\sigma$ is in particular a $*$-homomorphism, $\tilde\sigma(p)$ is a projection for any projection $p\in M_\ell(\tilde C_n)$, $\ell\in\N$.
Then by the continuous functional calculus, $\mathrm{exp}(2\pi i \tilde\sigma(p))=I$. By \cite[Prop.~12.2.2($i$)]{Rordam}, $\delta^0([p]_0-[s(p)]_0)=[I]_1=0$, i.e.~the exponential map $\delta^0$ is the zero map.
This, together with Eq.~(\ref{e:delta0}), implies that there is only 1 prototile, a contradiction since the tiling $T$ is aperiodic.
\end{proof}

\begin{pro}
For tilings of dimension 2, the short exact sequence in Eq.~(\ref{e:IAB}) does not split.
\end{pro}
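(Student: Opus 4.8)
The plan is to mirror the argument used for Eq.~(\ref{e:JBC}): a $*$-homomorphism splitting of the extension forces the relevant $K$-theoretic connecting map to vanish, and that vanishing is incompatible with the explicit description of the map. Here the extension is $0\to I_n\to A_n\xrightarrow{q} B_n\to 0$, so the connecting map in question is the \emph{index} map $\tilde\delta^1\colon K_1(B_n)\to K_0(I_n)$ of the six-term sequence Eq.~(\ref{e:K-IAB-1}), rather than the exponential map $\delta^0$ used in the previous proposition.

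First I would assume, for contradiction, that there is a $*$-homomorphism section $\sigma\colon B_n\to A_n$ with $q\circ\sigma=\mathrm{id}_{B_n}$. Applying the functor $K_1$ and using $K_1(q)\circ K_1(\sigma)=\mathrm{id}$ shows that $K_1(q)\colon K_1(A_n)\to K_1(B_n)$ is surjective. By exactness of Eq.~(\ref{e:K-IAB-1}) at $K_1(B_n)$ we have $\ker\tilde\delta^1=\im K_1(q)=K_1(B_n)$, hence $\tilde\delta^1=0$. (Equivalently, and in the concrete spirit of the earlier proof: extending $\sigma$ to unitizations, every unitary over $\widetilde{B_n}$ lifts through $\tilde\sigma$ to a unitary over $\widetilde{A_n}$, and a unitary admitting a unitary lift has vanishing index.)

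Next I would transport this to the combinatorial map. By the definition $\delta^1:=\tilde\delta^1\circ q$ in Eq.~(\ref{e:K-IAB-2}), where $q\colon K_1(J_n)\twoheadrightarrow K_1(B_n)$ is the surjective quotient map, $\tilde\delta^1=0$ forces $\delta^1=0$, and hence $\im\delta^1=0$. By Lemma \ref{l:imDelta1isZ}, for a tiling of the plane $\im\delta^1=\{(x_1,\dots,x_{\sF})\in\Z^{\sF}\mid x_1+\cdots+x_{\sF}=0\}$, and this subgroup is trivial precisely when $\sF=1$. Thus the assumed splitting forces the tiling to have a single prototile up to translation.

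The main obstacle is closing the contradiction in this last step. Exactly as in the proof for Eq.~(\ref{e:JBC}), I would argue that $\sF=1$ is impossible for a genuine aperiodic tiling of the plane, since a substitution tiling whose tiles are all translates of one prototile is periodic, contradicting the standing assumption that the hull contains no periodic tilings. This is also where dimension $2$ is essential: in dimension $1$ there are no faces, so $I_n=0$ and the extension splits trivially, which is why the statement is restricted to the plane. A purely rank-theoretic variant of the same argument avoids even discussing the index map directly: a split extension would give $K_0(A_n)\cong K_0(I_n)\oplus K_0(B_n)\cong\Z^{\sF}\oplus\ker\delta^0$, whereas Lemma \ref{l:plane-ZsFoverImDelta1} gives $K_0(A_n)\cong\Z\oplus\ker\delta^0$; comparing ranks again yields $\sF=1$ and the same contradiction with aperiodicity.
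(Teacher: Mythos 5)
Your proposal is correct and takes essentially the same route as the paper's proof: assume a $*$-homomorphism section, deduce that the index map $\tilde\delta^1$ (and hence $\delta^1$) vanishes, and then use the explicit form of $\delta^1$ (your Lemma \ref{l:imDelta1isZ}, the paper's Eq.~(\ref{e:delta1})) to force a single prototile, contradicting aperiodicity. The only cosmetic difference is that you obtain $\tilde\delta^1=0$ from functoriality of $K_1$ and exactness of the six-term sequence, whereas the paper lifts unitaries through $\tilde\sigma$ and invokes \cite[Prop.~9.2.2]{Rordam} — as your own parenthetical notes, these are the same argument.
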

\begin{proof}
Suppose for contradiction that the short exact sequence splits, that is, there exists a section $\sigma:B_n\to A_n$.
Since $\sigma$ is in particular a $*$-homomorphism, $v:=\tilde\sigma(u)$ is a unitary for any unitary $u\in M_\ell(\tilde B_n)$, $\ell\in\N$.
Thus $p:=1-v^*v=0$ and $q:=1-vv^*=0$. By \cite[Prop.~9.2.2]{Rordam}, $\tilde\delta^1([u]_1)=[p]_0-[q]_0=0$, i.e.~the exponential map $\tilde\delta^1$ is the zero map, and hence also $\delta^1$ is the zero map.
This together with Eq.~(\ref{e:delta1}) implies that there is only 1 prototile, a contradiction since the tiling $T$ is aperiodic.
\end{proof}

\begin{pro}
  The $C^*$-algebra $S'$ is not unital. (But this does not rule out that $S$ might be unital).
\end{pro}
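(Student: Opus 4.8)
The plan is to exploit the fact, established in Subsection~\ref{ss:Rs'}, that $R'_s$ is an \'etale equivalence relation whose unit space is the diagonal, canonically identified with $\R^d$. Since $S'=C^*_r(R'_s)$ and $\R^d$ is noncompact for $d\ge 1$, I would show that $S'$ cannot be unital by exhibiting the commutative subalgebra $C_0(\R^d)\subset S'$ together with an approximate identity for $S'$ living inside it: a unit for $S'$ would then be forced to lie in $C_0(\R^d)$, making $\R^d$ compact.

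First I would make the inclusion $C_0(\R^d)\hookrightarrow S'$ explicit. Because $R'_s$ is \'etale, the diagonal $\{(x,x)\mid x\in\R^d\}$ is open in $R'_s$, so extending a function by zero off the diagonal identifies $C_c(\R^d)$ with a $*$-subalgebra of $C_c(R'_s)$, and hence $C_0(\R^d)$ with a $C^*$-subalgebra of $S'$. Using the convolution convention $(g\star g')(x,y)=\sum_{z} g(x,z)g'(z,y)$ with $r(x,y)=x$ and $s(x,y)=y$ (cf.~Subsection~\ref{ss:Rs'}), a direct computation, using that $u$ is supported on units, gives for $u\in C_c(\R^d)$ and $f\in C_c(R'_s)$ the identities
\begin{equation*}
(u\star f)(x,y)=u(x)\,f(x,y),\qquad (f\star u)(x,y)=f(x,y)\,u(y).
\end{equation*}

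Next I would produce the approximate identity. Given $f\in C_c(R'_s)$ with $\Supp f=K$, the sets $r(K)$ and $s(K)$ are compact in $\R^d$, so one may choose $u\in C_c(\R^d)$ with $0\le u\le 1$ and $u\equiv 1$ on $r(K)\cup s(K)$; the displayed formulas then yield $u\star f=f=f\star u$. Directing such bump functions by the compact subsets of $\R^d$ produces a net $(u_\lambda)\subset C_0(\R^d)$ with $u_\lambda\star f\to f$ and $f\star u_\lambda\to f$ for every $f\in C_c(R'_s)$, hence for every element of $S'$ by density; that is, $(u_\lambda)$ is an approximate identity for $S'$ contained in $C_0(\R^d)$. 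Then I would argue by contradiction: if $S'$ had a unit $1$, the approximate-identity property applied to $1$ gives $u_\lambda=u_\lambda\cdot 1\to 1$, so $1\in\overline{C_0(\R^d)}=C_0(\R^d)$; since $1$ is a unit for all of $S'$ it is a unit for $C_0(\R^d)$, forcing $\R^d$ to be compact, a contradiction.

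The only delicate point will be the bookkeeping in the first two steps --- checking that zero-extension off the open diagonal indeed lands in $C_c(R'_s)$ and that the convolution formulas hold as stated --- but these are routine consequences of \'etaleness. The genuine content, namely that noncompactness of the unit space $\R^d$ obstructs a unit, is then immediate, and it makes transparent why the argument says nothing about whether the Morita-equivalent algebra $S$ is unital.
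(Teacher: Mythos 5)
Your proof is correct, and it takes a genuinely different route from the paper's. The paper stays inside the machinery of Section \ref{s:S}: by Proposition \ref{p:compacts}, $C_n$ is a direct sum of compacts on infinite-dimensional Hilbert spaces, hence non-unital; since a quotient of a unital $C^*$-algebra is unital, the extensions (\ref{e:JBC}) and (\ref{e:IAB}) force first $B_n$ and then $A_n$ to be non-unital; finally, a direct limit of non-unital $C^*$-algebras along injective connecting maps is non-unital (a unit in the limit would lie within distance $<1$ of an element of some $A_n$, forcing units at finite stages). You instead bypass the skeletal filtration and the direct-limit argument altogether and work directly on the groupoid: \'etaleness makes the unit space $\R^d$ open in $R'_s$, so $C_0(\R^d)$ sits inside $S'$ as a $C^*$-subalgebra containing an approximate identity --- your bump-function identity $u\star f=f=f\star u$ is exact, not merely approximate, which keeps the argument clean --- and a unit of $S'$ would then be the norm limit of the $u_\lambda$, hence an element of $C_0(\R^d)$ acting as a unit there, contradicting noncompactness of $\R^d$. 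What your route buys: it is self-contained modulo standard \'etale-groupoid facts, it uses neither Proposition \ref{p:compacts} nor the exactness of (\ref{e:JBC}) and (\ref{e:IAB}) (which in the paper rests on amenability), it does not need injectivity of the connecting maps $\iota_n$, and it actually proves the stronger general statement that the reduced $C^*$-algebra of any \'etale equivalence relation with noncompact unit space is non-unital. What the paper's route buys: given the results already established, it is a three-line argument, and it avoids re-deriving the embedding $C_0(\R^d)\hookrightarrow S'$, whose injectivity (via the restriction onto the unit space) is the one genuinely non-formal ingredient that your write-up correctly flags but leaves to standard theory.
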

\begin{proof}
It is well-known that the $C^*$-algebra of compact operators $K$ is never unital on an infinite dimensional Hilbert space $H$, since the identity $I\in B(H)$ is not a compact operator.
The quotient of a unital $C^*$-algebra is unital because the class [I] is the identity in the quotient. Since, by Eq.~(\ref{e:JBC}), the compacts $K$ is a quotient of $B_n=C_r^*(R_n|_{X_1})$, $B_n$ is not unital. Similarly, by Eq.~(\ref{e:IAB}), $A_n=C_r^*(R_n)$ is non-unital.
Hence $S'$ must be non-unital, since all the algebras forming the limit are non unital.
Cf.~\cite[Exc.6.7(iii)]{Rordam} which uses that the inclusion map $A_n\to A_{n+1}$ is injective.
This is because an element of a unital $C^*$-algebra with distance less than 1 to the identity is invertible, so if the limit algebra is unital then the individual algebras must be unital from a certain step.
  Tensoring with the compact operators will turn a unital $C^*$-algebra into an non-unital  $C^*$-algebra Morita equivalent to the first. So $S'$ non unital does not rule out the possibility that $S$ might be unital. 
\end{proof}
\begin{pro}Let $n\in\N$. Then,
the ideal $J_n$ of $B_n$ (d=1) in Definition~\ref{d:ABCIJ} does not contain any projections other than the zero projection.
Hence $B_n$ is not AF. (The same holds for d=2). This does not prove though that $S'$ is not AF. However, we can decide this from $K_1(S')$.
\end{pro}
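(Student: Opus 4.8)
The plan is to use the explicit form of $J_n$ from Proposition~\ref{p:compacts}(2),
$$J_n\cong\bigoplus_{k=1}^{\sE}C_0(\open{e}_k,K(\ell^2([e_k]))),$$
and to show that each summand is projectionless. For $d=1$ each interior $\open{e}_k$ is a bounded open interval, hence a connected, locally compact, non-compact Hausdorff space, while each $K(\ell^2([e_k]))$ is a copy of the compact operators $K$ on an infinite-dimensional Hilbert space. Since a projection in a finite direct sum is a tuple of projections in the summands, it suffices to prove that $C_0(\open{e}_k,K)$ has no nonzero projection.

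First I would take an arbitrary projection $p\in C_0(\open{e}_k,K)$ and view it as a norm-continuous map $t\mapsto p(t)$ with each $p(t)$ a projection in $K$, hence of finite rank, and with $\|p(t)\|\to0$ as $t$ tends to either endpoint of the interval. The decisive observation is that the rank is locally constant: whenever two projections $a,b\in K$ satisfy $\|a-b\|<1$ they are unitarily equivalent in the unitization $\tK$ and therefore have the same rank, so $t\mapsto\operatorname{rank}\,p(t)$ is continuous and thus constant on the connected set $\open{e}_k$. If this constant value were $\ge1$, then $\|p(t)\|=1$ for all $t$, contradicting $p\in C_0$; hence the rank is $0$ and $p=0$. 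Consequently $J_n$ contains no nonzero projection.

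Next I would deduce that $B_n$ is not AF. By Eq.~(\ref{e:JBC}), $J_n$ is a closed two-sided ideal of $B_n$, and it is nonzero. If $B_n$ were AF, then its closed ideal $J_n$ would be AF as well, since closed ideals of AF-algebras are AF. But a nonzero AF-algebra is the norm closure of an increasing union of finite-dimensional $C^*$-subalgebras, one of which is nonzero and hence unital, so it contains a nonzero projection; thus every nonzero AF-algebra possesses a nonzero projection. This contradicts the previous paragraph, so $B_n$ is not AF. The case $d=2$ is identical: the edges $\open{e}_k$ are again open intervals, so the same computation shows $J_n$ is projectionless; moreover $I_n\cong\bigoplus_k C_0(\open{f}_k,K)$ with each $\open{f}_k$ an open disk, again connected and non-compact, so the same argument renders $I_n$ projectionless and $A_n$ not AF.

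The step I expect to be the main obstacle is the rank argument in the second paragraph, namely showing that a norm-continuous $C_0$-family of finite-rank projections over a connected non-compact interval must vanish identically. Although it rests on the standard fact that close projections are unitarily equivalent, it is precisely here that the topology of $\open{e}_k$ enters: connectedness forces the rank to be constant, while non-compactness, through the $C_0$ boundary behaviour, forces that constant rank to be $0$, and these two ingredients must be combined correctly.
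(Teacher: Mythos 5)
Your proof is correct and follows essentially the same route as the paper: decompose $J_n$ via Proposition~\ref{p:compacts}(2), show each summand $C_0(\open{e}_k,K(\ell^2([e_k])))$ has no nonzero projection by combining connectedness of $\open{e}_k$ with the $C_0$-vanishing at the endpoints, and then conclude $B_n$ is not AF because closed ideals of AF-algebras are AF while nonzero AF-algebras always contain nonzero projections. The only variation is your intermediate rank argument (local constancy of $t\mapsto\mathrm{rank}\,p(t)$ via unitary equivalence of close projections); the paper reaches the same dichotomy more directly by noting that $\|p(t)\|\in\{0,1\}$ for a projection, so $t\mapsto\|p(t)\|$ is a continuous $\{0,1\}$-valued function on a connected set vanishing at the boundary, hence identically zero.
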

\begin{proof}
If $p\in C_0((0,1), K(\ell^2([\mathrm{edge}])))$ is a projection, then $p:(0,1)\to K(\ell^2([\mathrm{edge}]))$ is continuous vanishing at the endpoints. Since the map  $p(t)\mapsto||p(t)||$ is continuous and since $p(t)$ is a projection, its norm is $0$ or $1$. Since $(0,1)$ is connected, and since the image of connected sets under continuous maps is connected, the map  $p(t)\mapsto||p(t)||$ is either $0$ or $1$. Since it vanishes at the endpoints, it has to be zero. In particular $J_n$ is not AF.
Any ideal in an AF algebra is AF. Hence $B_n$ is not AF.
(Recall that an AF-algebra is the norm closed linear span of its projections. This is because it is an inductive limit of finite dimensional $C^*$-algebras (which are direct sums of matrix algebras), and every finite dimensional $C^*$-algebra is the linear span of its projections.)
\end{proof}



\section{\textbf{Stable-Unstable relationship}}\label{s:S-U-relationship}
In this section we relate the K-theory of $U$ to the $K$-theory of $S$.
On the one hand, the $K$-theory of $S$, as shown in Section \ref{s:S}, is given in terms of the stable cohomology via a skeletal decomposition.
On the other hand, the $K$-theory of $U$ is well-known to be given in terms of the \v{C}ech cohomology also via a skeletal decomposition.
We will relate the so-called stable-transpose homology to \v{C}ech cohomology via PE-cohomology and PE-homology.
In particular, the stable-transpose homology is a simpler method for computing the $K$-theory of $U$.


Recall that $\Omega$ denotes the continuous hull, $\omega:\Omega\to\Omega$ denotes the substitution map, which is a homeomorphism, and $\lambda>1$ denotes the inflation factor.
Define the tiling space 
$$\Omega_n:=\lambda^{-n}\Omega\lambda^{n},\qquad n\in\Z,$$
with inflation factor $\lambda$ and substitution map $\omega_n:\Omega_n\to\Omega_n$ given by
$$\omega_n:=\Ad(\lambda^{-n})\omega \Ad(\lambda^n)\qquad\qquad n\in\Z,$$
where
$\Ad(\lambda):\Omega\to\lambda\Omega\lambda^{-1}$ is defined by
$$\Ad(\lambda)(T):=\lambda T \lambda^{-1}.$$
Fix a substitutional tiling $T \in\Omega$. We can then construct the sequence of tilings 
$$T_n:=\lambda^{-n}\omega^n(T)\lambda^{n}\in\Omega_n,\qquad\qquad n\in\Z,$$
whose tiles shrink as $n$ increases.
Furthermore we can construct the following commutative diagram
\begin{equation*}
\cdots
\!\!\!\!\!\!\!\!\!\!\!\!\!\!\!
\xymatrix@C=1.2cm{
\ar@{|->}@{|->}@{}[r]^(.4){}="a"^(.8){}="b" \ar "a";"b"
&T_{-2}\ar@{|->}[r]^{\color{blue}\tilde\omega_{-2}}\ar@{|->}@/^-1.0pc/@[black][drr]_{\color{blue}\theta_{2}}
&T_{-1}\ar@{|->}[r]^{\color{blue}\quad \tilde\omega_{-1}}\ar@{|->}[dr]_{\color{blue}\theta_{1}}
&T_0\ar@{|->}[r]^{\color{blue}\!\!\!\!\!\tilde\omega_{0}}\ar@{|->}[d]^{\color{blue}id}
&T_1\ar@{|->}[r]^{\color{blue}\tilde\omega_{1}}\ar@{|->}[dl]^{\color{blue}\theta_{-1}}
&T_2\ar@{|->}@/^1.0pc/@[black][dll]^{\color{blue}\theta_{-2}}
\ar@{|->}[r]&\\
&&&T_0&&&
}
\cdots
\end{equation*}
where $T_0:=T$,\,\,\,$\theta_0:=id$, and
\begin{eqnarray}
\label{e:tildeomegan}&&\tilde\omega_n:=\Ad(\frac{1}{\lambda})\omega_{n},\qquad\qquad\qquad n\in\Z.\\
\nonumber&&\theta_n:=\omega^{-n} \Ad(\lambda^n),\qquad\qquad\qquad n\in\Z.
\end{eqnarray}
If $p$ is a prototile of $T_0$ then $\lambda^{-n} p$ is a prototile of tiling $T_n$, $n\in\Z$. 
Moreover, the prototile $\lambda^{n} p$ coincides with $\omega^{n}(p)$ as sets.
\begin{exam} Consider the Fibonacci tiling with protoedges $a,b$ of lengths $|a|=1$, $|b|=1/\phi$, ($\lambda=\phi$=golden ratio) and substitution rule $\omega(a)=ab$, $\omega(b)=a$.
The tilings $T_1$, $T_0$, $T_{-1}$, $T_{-2}$ are shown in the following picture.

\begin{tikzpicture}
\draw[dotted](0,-2)--(0,1);
\draw[dotted](.618,0)--(.618,1);
\draw[dotted](1,-1)--(1,1);
\draw[dotted](1.618,-2)--(1.618,1);
\draw[dotted](2.236,0)--(2.236,1);
\draw[dotted](2.618,-2)--(2.618,1);

\draw[dotted](4,-2)--(4,1);
\draw[dotted](4.618,0)--(4.618,1);
\draw[dotted](5,-1)--(5,1);
\draw[dotted](5.618,-2)--(5.618,1);
\draw[dotted](6.236,0)--(6.236,1);
\draw[dotted](6.618,-2)--(6.618,1);
\draw[dotted](6.618,-2)--(6.618,1);
\draw[black,thick] (0,1) -- (.618,1);
\node at (.3,.7) {$\hat a$};
\draw[red,thin] (.618,1) -- (1,1);
\node at (.8,.7) {$\hat b$};
\draw[black,thin] (1,1) -- (1.618,1);
\node at (1.3,.7) {$\hat a$};
\draw[black,thick] (1.618,1) -- (2.236,1);
\node at (1.8,.7) {$\hat a$};
\draw[red,thin] (2.236,1) -- (2.618,1);
\node at (2.4,.7) {$\hat b$};
%
\draw[black,thick] (4,1) -- (4.618,1);
\node at (4.3,.7) {$\frac{a}{\lambda}$};
\draw[red,thin] (4.618,1) -- (5,1);
\node at (4.8,.7) {$\frac{b}{\lambda}$};
\draw[black,thin] (5,1) -- (5.618,1);
\node at (5.3,.7) {$\frac{a}{\lambda}$};
\draw[black,thick] (5.618,1) -- (6.236,1);
\node at (5.8,.7) {$\frac{a}{\lambda}$};
\draw[red,thin] (6.236,1) -- (6.618,1);
\node at (6.4,.7) {$\frac{b}{\lambda}$};
\draw[black,thick] (0,0) -- (1,0);
\node at (.5,-.3) {$a$};
\draw[red,thin] (1,0) -- (1.618,0);
\node at (1.3,-.3) {$b$};
\draw[black,thick] (1.618,0) -- (2.618,0);
\node at (2,-.3) {$a$};
%
\draw[black,thick] (4,0) -- (5,0);
\node at (4.5,-.3) {$a$};
\draw[red,thin] (5,0) -- (5.618,0);
\node at (5.3,-.3) {$b$};
\draw[black,thick] (5.618,0) -- (6.618,0);
\node at (6,-.3) {$a$};
%
\node[align=left] at (8.5,1) {$T_{1}:=\frac1{\lambda}\omega(T)\lambda$};
\node[align=left] at (8.05,0) {$T_0:=T$};
\node[align=left] at (8.8,-1) {$T_{-1}:=\lambda\omega^{-1}(T)\frac{1}{\lambda}$};
\node[align=left] at (9,-2) {$T_{-2}:=\lambda^2\omega^{-2}(T)\frac{1}{\lambda^{2}}$};
\draw[black,thick] (0,-1) -- (1.618,-1);
\node at (.5,-1.3) {$a'$};
\draw[red,thin] (1.618,-1) -- (2.618,-1);
\node at (2,-1.3) {$b'$};
%
\draw[black,thick] (4,-1) -- (5.618,-1);
\node at (4.5,-1.3) {$\lambda a$};
\draw[red,thin] (5.618,-1) -- (6.618,-1);
\node at (6,-1.3) {$\lambda b$};
\draw[black,thick] (0,-2) -- (2.618,-2);
\node at (1.5,-2.3) {$a''$};
%
\draw[black,thick] (4,-2) -- (6.618,-2);
\node at (5.5,-2.3) {$\lambda^2 a$};
\end{tikzpicture}\\
%
Note that the edge $a'=\lambda a\in T_{-1}$ coincides with the patch $\omega(a)=ab\subset T_0$ as sets, i.e.
$a'$ is ``tiled" with $\omega(a)$. 
\end{exam}
We then define the equivalence relation $R_n$ (for $T_n$) on $\R^d$ as
\begin{eqnarray*}
R_n:=R(T_n)=\frac{1}{\lambda^n}R(\omega^n(T)),\qquad n\in\Z,
\end{eqnarray*}
where
$$R(T_n):=\{(x,y)\in \R^{2d}\mid T_n(x)-x=T_n(y)-y\},$$
and recall that $T_n(x)$ is the patch made of all the tiles in tiling $T_n$ that contain $x$.
We equip these equivalence relations with the subspace topology of $\R^{2d}$. Then
$$\cdots\subset R_{-2}\subset R_{-1}\subset R_{0}\subset R_{1}\subset R_{2}\subset\cdots$$
and $R_n$ is open in $R_{n+1}$, $n\in\Z$ (cf. Subsection \ref{ss:Rs'}).
We have the directed system of chain complexes indexed by $\Z$
$$
\cdots\xymatrix{\ar[r]&C_S^\bullet(R_{-1})\ar[r]^{W_{-1}^\bullet}&C_S^\bullet(R_0)\ar[r]^{W_0^\bullet}&C_S^\bullet(R_1)\ar[r]^{W_1^\bullet}&C_S^\bullet(R_2)\ar[r]^{\qquad W_2^\bullet}&}\cdots,
$$
where $C_S^\bullet(R_{0})$ and $W_0^\bullet$ are defined in the diagram of Lemma \ref{l:K-maindiagram-1} in terms of the compacts and of the stable cells of $T_0$ and simplified in Eq.~(\ref{e:CSbullet}).
The remaining $C_S^\bullet(R_{n})$ and $W_n^\bullet$ are defined similarly in terms of the compacts and of the stable cells of $T_n$.
They are independent of $n$ because $T_n$ has the same stable cells as $T_0$ up to shrinking.
Applying the contravariant $\Hom(_{\text{---}},\Z)$ functor we get
$$\cdots
\xymatrix{\ar[r]&C^{ST}_\bullet(R_{1})\ar[r]^{{W_{0}^\bullet}^t}&C^{ST}_\bullet(R_0)\ar[r]^{{W_{-1}^\bullet}^t}&C^{ST}_\bullet(R_{-1})\ar[r]^{{W_{-2}^\bullet}^t}&C^{ST}_\bullet(R_{-2})\ar[r]^{\qquad{W_{-3}^\bullet}^t}&}\cdots,
$$
where $C^{ST}_\bullet(R_n):=C_S^\bullet(R_n)^t$.  Note that we used the transpose instead of $\Hom(_{\text{---}},\Z)$ since the groups we take the transpose of are finitely generated free abelian groups with a fixed basis (the stable cells of $T_n$).
Recall that the category of abelian groups is cocomplete, i.e.~all small colimits (in particular all direct limits) exist. Hence, we can define the abelian groups
$$
H^k_S:=\lim_{\to}(\cdots\xymatrix@C=1.2cm{\ar[r]^{H^k(W_{-1}^\bullet)\qquad\quad}&H^k(C_S^\bullet(R_0))\ar[r]^{H^k(W_0^\bullet)}&H^k(C_S^\bullet(R_1))\ar[r]^{\qquad\quad H^k(W_1^\bullet)}&}\cdots),
$$
$$
H_k^{ST}:=\lim_{\to}(\cdots\xymatrix@C=1.2cm{\ar[r]^{H_k({W_{0}^\bullet}^t)\qquad\quad}&H_k(C^{ST}_\bullet(R_0))\ar[r]^{H_k({W_{-1}^\bullet}^t)}&H_k(C^{ST}_\bullet(R_{-1}))\ar[r]^{\qquad\quad H^k({W_{-2}^\bullet}^t)}&}\cdots).
$$
Since all chain complexes are independent of $n$, it makes sense to define them more succinctly as follows
\begin{defn}[Stable-Transpose(ST), Stable(S)]\label{d:ST-S-PE}
For $k\in\{0,1,2\}$, define the homology and cohomology groups
\begin{eqnarray*}
 H^{ST}_{k}&:=&\lim_{\to}(H_{k}({C^{ST}_\bullet}),\,\, H_k({W^\bullet}^{t}))\\
 H_S^{k}&:=&\lim_{\to}(H^{k}(C_S^\bullet),\,\,H^{k}(W^\bullet)),
\end{eqnarray*}
where we repeat the definition of $H_S^{k}$ from Definition \ref{d:stablecohomology}. 
When we want to emphasize the tiling $T$ we will write $H^{ST}_{k}(T)$ and $H_S^{k}(T)$ instead.
Here the notation is
$$\xymatrix{
&\lim\limits_{\to}(X,A):=&\!\!\!\!\!\!\!\!\!\!\!\!\!\!\!\!\lim\limits_{\to}X\ar[r]^{A}&X\ar[r]^{A}&X\ar[r]^{A}&.\\
}$$
\end{defn}
In Section \ref{s:S} we related the stable cohomology $H_S^k$ to the $K$-theory of the stable $C^*$-algebra $S$.
Next we present a series of definitions, which are necessary for us to relate the stable-transpose homology $H^{ST}_k$ to the $K$-theory of the unstable $C^*$-algebra $U$ via the \v{C}ech cohomology of $\Omega$ (cf. Section \ref{s:preliminaries}).

Define the relation
\begin{defn}[$R_n$-equivalent sets]
Let $n\in\Z$. Two bounded subsets $\sigma_1,\sigma_2\subset \R^d$ are said to be $R_n$-equivalent, $\sigma_1\sim_{R_n}\sigma_2$, 
if there exists  $x\in \R^d$ such that $\sigma_1=\sigma_2+x$ and  $T_n(\sigma_1^\circ)=T_n(\sigma_2^\circ)+x.$
(Recall that for tiling $T'$, $T'(\sigma)$ denotes the patch made of all the tiles containing at least a point of $\sigma$).
\end{defn}
\noindent Note that $x$ in the definition is unique: 
If $\sigma_1=\sigma_2+x$ then we can write 
$$x_i=\sup p_i(\sigma_1)-\sup p_i(\sigma_2),$$ 
where $p_i:\R^d\to\R$ is the projection onto the $i$-th coordinate.
Since the sets $\sigma_1,\sigma_2$ are bounded, their supremums on each coordinate are unique and hence $x=(x_1,\ldots,x_n)$ is uniquely determined by $\sigma_1$ and $\sigma_2$.  

It is straightforward to check that the above $R_n$-equivalence is an equivalence relation on the set of bounded subsets of $\R^d$.
For instance, transitivity is just addition of vectors.
Moreover, this definition reduces to Definition \ref{d:stablecells}(stable cells) when applying it to the $k$-cells of $T_n$.
We use the standard convention that a vertex has no boundary.

\begin{defn}[Combinatorial ball $T'^m(\sigma)$]
Let $m\in\N_0$, let $T'$ be a tiling, and let $\sigma\subset\R^d$ be a bounded subset.
Define the combinatorial ball $T'^m(\sigma)$ of combinatorial radius $m$ and combinatorial center $\sigma$ to be the patch $T'(\cdots(T'(\sigma)))$ done $m$ times.
\end{defn}
The combinatorial ball $T'^m(\sigma)$ induces the following relation:
\begin{defn}[$T'^m$-equivalent sets]
Let $m\in\N_0$, and $T'$ a tiling. Two bounded subsets $\sigma_1,\sigma_2\subset \R^d$ are said to be $T'^m$-equivalent, which we denote by $\sigma_1\sim_{T'^m}\sigma_2$, 
if there exists  $x\in \R^d$ such that $\sigma_1=\sigma_2+x$ and  $T'^m(\open{\sigma}_1)=T'^m(\open{\sigma}_2)+x$.
\end{defn}
It is straighforward to check that the above relation $\sim_{T'^m}$ is actually an equivalence relation on the set of bounded subsets of $\R^d$ and that the vector $x$ is unique. The proof is the same as for the $R_n$-equivalence relation.

\subsection{Borel-Moore chains}\label{ss:BM}
We start by defining some subchain complexes of the Borel-Moore chain complex on a tiling $T'$.
These definitions are equivalent to definitions in \cite{WaltonHom}.
\begin{defn}[BM $k$-chain for tiling $T'$]
Let $k\in\{0,1,2\}$ be fixed, and let 
$$\xi:=\sum_{\sigma\text{ closed $k$-cell of $T'$}} K_\sigma\, \sigma,$$
where $K_\sigma\in\Z$ for all $k$-cells $\sigma\in T'$.
We say that the $k$-chain $\xi$ is Borel-Moore (BM for short).
\end{defn}
We remark that all integers  $K_\sigma$ in the above sum can be non-zero.
This is in contrast with the definition of a standard cellular k-chain where only a finite number of the $K_\sigma$'s are allowed to be nonzero.
\begin{defn}[PE $k$-chain for tiling $T'$]
Let $k\in\{0,1,2\}$ be fixed, and let 
$$\xi:=\sum_{\sigma\text{ closed $k$-cell of $T'$}} K_\sigma\, \sigma,$$
where $K_\sigma\in\Z$ for all $k$-cells $\sigma\in T'$.
We say that the $k$-chain $\xi$ is pattern-equivariant  (PE for short) if the following condition is satisfied:
\begin{equation}\label{e:PE}
\exists m\in \N_0:\forall \sigma,\sigma' \text{ $k$-cells of T'}:\sigma\sim_{T'^m} \sigma'\imply K_\sigma=K_{\sigma'}.
\end{equation}
\end{defn}

If $\xi$ is a PE $k$-chain, then by definition there exists an $m_0$ such that the  condition in Eq.~(\ref{e:PE}) is satisfied.
We would like to remark that the condition is also satisfied for any integer $m>m_0$.

We define the chain complex
\begin{equation}\label{e:BM-PE}
\xymatrix{
0\ar[r]
&C_2^{\text{BM,PE}}(T')\ar[r]^{\partial_2}
&C_1^{\text{BM,PE}}(T')\ar[r]^{\partial_1}
&C_0^{\text{BM,PE}}(T')\ar[r]
&0
}
\end{equation}
where 
$C_k^{\text{BM,PE}}(T')$, $k\in\{0,1,2\}$, is an abelian group whose elements are exactly all the PE $k$-chains, and where the differentials $\partial_k$ are the standard cellular boundary maps.


\begin{exam}
For the Fibonacci tiling,
an example of a sequence $(K_e)_{e\in T}$ of edges that yield a PE 1-chain with $m\ge0$ is
\begin{center}
\begin{tikzpicture}

\node at (10.7,2){$T$};
\node at (10.7,2.5){$K_e$};
\node at (0.5,2.4){$2$};
\node at (1.30902,2.4){$3$};
\node at (2.11803,2.4){$2$};
\node at (3.11803,2.4){$2$};
\node at (3.92705,2.4){$3$};
\node at (4.73607,2.4){$2$};
\node at (5.54508,2.4){$3$};
\node at (6.3541,2.4){$2$};
\node at (7.3541,2.4){$2$};
\node at (8.16312,2.4){$3$};
\node at (8.97214,2.4){$2$};

\draw plot[mark=*,mark size=1] coordinates{(0.,2.1)};
\draw plot[mark=*,mark size=1] coordinates{(1.,2.1)};
\draw plot[mark=*,mark size=1] coordinates{(1.61803,2.1)};
\draw plot[mark=*,mark size=1] coordinates{(2.61803,2.1)};
\draw plot[mark=*,mark size=1] coordinates{(3.61803,2.1)};
\draw plot[mark=*,mark size=2] coordinates{(4.23607,2.1)};
\draw plot[mark=*,mark size=1] coordinates{(5.23607,2.1)};
\draw plot[mark=*,mark size=1] coordinates{(5.8541,2.1)};
\draw plot[mark=*,mark size=1] coordinates{(6.8541,2.1)};
\draw plot[mark=*,mark size=1] coordinates{(7.8541,2.1)};
\draw plot[mark=*,mark size=1] coordinates{(8.47214,2.1)};
\draw[blue,thick] (0.,2.1)--(1.,2.1);
\draw[red,thick] (1.,2.1)--(1.61803,2.1);
\draw[blue,thick] (1.61803,2.1)--(2.61803,2.1);
\draw[blue,thick] (2.61803,2.1)--(3.61803,2.1);
\draw[red,thick] (3.61803,2.1)--(4.23607,2.1);
\draw[blue,thick] (4.23607,2.1)--(5.23607,2.1);
\draw[red,thick] (5.23607,2.1)--(5.8541,2.1);
\draw[blue,thick] (5.8541,2.1)--(6.8541,2.1);
\draw[blue,thick] (6.8541,2.1)--(7.8541,2.1);
\draw[red,thick] (7.8541,2.1)--(8.47214,2.1);
\draw[blue,thick] (8.47214,2.1)--(9.47214,2.1);

\node at (0.5,1.8){$a$};
\node at (1.30902,1.8){$b$};
\node at (2.11803,1.8){$a$};
\node at (3.11803,1.8){$a$};
\node at (3.92705,1.8){$b$};
\node at (4.73607,1.8){$a$};
\node at (5.54508,1.8){$b$};
\node at (6.3541,1.8){$a$};
\node at (7.3541,1.8){$a$};
\node at (8.16312,1.8){$b$};
\node at (8.97214,1.8){$a$};
\end{tikzpicture}
\end{center}

and with $m\ge1$  is (m=0 does not work here)
\begin{center}
\begin{tikzpicture}

\node at (10.7,2){$T$};
\node at (10.7,2.5){$K_e$};
\node at (0.5,2.4){$?$};
\node at (1.30902,2.4){$4$};
\node at (2.11803,2.4){$5$};
\node at (3.11803,2.4){$3$};
\node at (3.92705,2.4){$4$};
\node at (4.73607,2.4){$2$};
\node at (5.54508,2.4){$4$};
\node at (6.3541,2.4){$5$};
\node at (7.3541,2.4){$3$};
\node at (8.16312,2.4){$4$};
\node at (8.97214,2.4){$?$};


\draw plot[mark=*,mark size=1] coordinates{(0.,2.1)};
\draw plot[mark=*,mark size=1] coordinates{(1.,2.1)};
\draw plot[mark=*,mark size=1] coordinates{(1.61803,2.1)};
\draw plot[mark=*,mark size=1] coordinates{(2.61803,2.1)};
\draw plot[mark=*,mark size=1] coordinates{(3.61803,2.1)};
\draw plot[mark=*,mark size=2] coordinates{(4.23607,2.1)};
\draw plot[mark=*,mark size=1] coordinates{(5.23607,2.1)};
\draw plot[mark=*,mark size=1] coordinates{(5.8541,2.1)};
\draw plot[mark=*,mark size=1] coordinates{(6.8541,2.1)};
\draw plot[mark=*,mark size=1] coordinates{(7.8541,2.1)};
\draw plot[mark=*,mark size=1] coordinates{(8.47214,2.1)};
\draw[blue,thick] (0.,2.1)--(1.,2.1);
\draw[red,thick] (1.,2.1)--(1.61803,2.1);
\draw[blue,thick] (1.61803,2.1)--(2.61803,2.1);
\draw[blue,thick] (2.61803,2.1)--(3.61803,2.1);
\draw[red,thick] (3.61803,2.1)--(4.23607,2.1);
\draw[blue,thick] (4.23607,2.1)--(5.23607,2.1);
\draw[red,thick] (5.23607,2.1)--(5.8541,2.1);
\draw[blue,thick] (5.8541,2.1)--(6.8541,2.1);
\draw[blue,thick] (6.8541,2.1)--(7.8541,2.1);
\draw[red,thick] (7.8541,2.1)--(8.47214,2.1);
\draw[blue,thick] (8.47214,2.1)--(9.47214,2.1);

\node at (0.5,1.8){$a$};
\node at (1.30902,1.8){$b$};
\node at (2.11803,1.8){$a$};
\node at (3.11803,1.8){$a$};
\node at (3.92705,1.8){$b$};
\node at (4.73607,1.8){$a$};
\node at (5.54508,1.8){$b$};
\node at (6.3541,1.8){$a$};
\node at (7.3541,1.8){$a$};
\node at (8.16312,1.8){$b$};
\node at (8.97214,1.8){$a$};
\end{tikzpicture}
\end{center}


where $K_{a\underline{a}b}=3, K_{a\underline{b}a}=4,K_{b\underline{a}a}=5,K_{b\underline{a}b}=2$, and the underlined letter denotes the center of the combinatorial ball.

An example of a sequence $(K_v)_{v\in T}$ of vertices that yield a PE 0-chain with $m\ge0$ is
\begin{center}
\begin{tikzpicture}

\node at (10.7,2){$T$};
\node at (10.7,2.5){$K_v$};
\node at (0.,2.5){$?$};
\node at (1.,2.5){$5$};
\node at (1.61803,2.5){$5$};
\node at (2.61803,2.5){$5$};
\node at (3.61803,2.5){$5$};
\node at (4.23607,2.5){$5$};
\node at (5.23607,2.5){$5$};
\node at (5.8541,2.5){$5$};
\node at (6.8541,2.5){$5$};
\node at (7.8541,2.5){$5$};
\node at (8.47214,2.5){$5$};
\node at (9.47214,2.5){$?$};

\draw plot[mark=*,mark size=1] coordinates{(0.,2.1)};
\draw plot[mark=*,mark size=1] coordinates{(1.,2.1)};
\draw plot[mark=*,mark size=1] coordinates{(1.61803,2.1)};
\draw plot[mark=*,mark size=1] coordinates{(2.61803,2.1)};
\draw plot[mark=*,mark size=1] coordinates{(3.61803,2.1)};
\draw plot[mark=*,mark size=2] coordinates{(4.23607,2.1)};
\draw plot[mark=*,mark size=1] coordinates{(5.23607,2.1)};
\draw plot[mark=*,mark size=1] coordinates{(5.8541,2.1)};
\draw plot[mark=*,mark size=1] coordinates{(6.8541,2.1)};
\draw plot[mark=*,mark size=1] coordinates{(7.8541,2.1)};
\draw plot[mark=*,mark size=1] coordinates{(8.47214,2.1)};
\draw plot[mark=*,mark size=1] coordinates{(9.47214,2.1)};
\draw[blue,thick] (0.,2.1)--(1.,2.1);
\draw[red,thick] (1.,2.1)--(1.61803,2.1);
\draw[blue,thick] (1.61803,2.1)--(2.61803,2.1);
\draw[blue,thick] (2.61803,2.1)--(3.61803,2.1);
\draw[red,thick] (3.61803,2.1)--(4.23607,2.1);
\draw[blue,thick] (4.23607,2.1)--(5.23607,2.1);
\draw[red,thick] (5.23607,2.1)--(5.8541,2.1);
\draw[blue,thick] (5.8541,2.1)--(6.8541,2.1);
\draw[blue,thick] (6.8541,2.1)--(7.8541,2.1);
\draw[red,thick] (7.8541,2.1)--(8.47214,2.1);
\draw[blue,thick] (8.47214,2.1)--(9.47214,2.1);

\node at (0.5,1.8){$a$};
\node at (1.30902,1.8){$b$};
\node at (2.11803,1.8){$a$};
\node at (3.11803,1.8){$a$};
\node at (3.92705,1.8){$b$};
\node at (4.73607,1.8){$a$};
\node at (5.54508,1.8){$b$};
\node at (6.3541,1.8){$a$};
\node at (7.3541,1.8){$a$};
\node at (8.16312,1.8){$b$};
\node at (8.97214,1.8){$a$};
\end{tikzpicture}
\end{center}


and with $m\ge 1$ is (m=0 does not work here)
\begin{center}
\begin{tikzpicture}

\node at (10.7,2){$T$};
\node at (10.7,2.5){$K_v$};
\node at (0.,2.5){$?$};
\node at (1.,2.5){$1$};
\node at (1.61803,2.5){$2$};
\node at (2.61803,2.5){$0$};
\node at (3.61803,2.5){$1$};
\node at (4.23607,2.5){$2$};
\node at (5.23607,2.5){$1$};
\node at (5.8541,2.5){$2$};
\node at (6.8541,2.5){$0$};
\node at (7.8541,2.5){$1$};
\node at (8.47214,2.5){$2$};
\node at (9.47214,2.5){$?$};

\draw plot[mark=*,mark size=1] coordinates{(0.,2.1)};
\draw plot[mark=*,mark size=1] coordinates{(1.,2.1)};
\draw plot[mark=*,mark size=1] coordinates{(1.61803,2.1)};
\draw plot[mark=*,mark size=1] coordinates{(2.61803,2.1)};
\draw plot[mark=*,mark size=1] coordinates{(3.61803,2.1)};
\draw plot[mark=*,mark size=2] coordinates{(4.23607,2.1)};
\draw plot[mark=*,mark size=1] coordinates{(5.23607,2.1)};
\draw plot[mark=*,mark size=1] coordinates{(5.8541,2.1)};
\draw plot[mark=*,mark size=1] coordinates{(6.8541,2.1)};
\draw plot[mark=*,mark size=1] coordinates{(7.8541,2.1)};
\draw plot[mark=*,mark size=1] coordinates{(8.47214,2.1)};
\draw plot[mark=*,mark size=1] coordinates{(9.47214,2.1)};
\draw[blue,thick] (0.,2.1)--(1.,2.1);
\draw[red,thick] (1.,2.1)--(1.61803,2.1);
\draw[blue,thick] (1.61803,2.1)--(2.61803,2.1);
\draw[blue,thick] (2.61803,2.1)--(3.61803,2.1);
\draw[red,thick] (3.61803,2.1)--(4.23607,2.1);
\draw[blue,thick] (4.23607,2.1)--(5.23607,2.1);
\draw[red,thick] (5.23607,2.1)--(5.8541,2.1);
\draw[blue,thick] (5.8541,2.1)--(6.8541,2.1);
\draw[blue,thick] (6.8541,2.1)--(7.8541,2.1);
\draw[red,thick] (7.8541,2.1)--(8.47214,2.1);
\draw[blue,thick] (8.47214,2.1)--(9.47214,2.1);

\node at (0.5,1.8){$a$};
\node at (1.30902,1.8){$b$};
\node at (2.11803,1.8){$a$};
\node at (3.11803,1.8){$a$};
\node at (3.92705,1.8){$b$};
\node at (4.73607,1.8){$a$};
\node at (5.54508,1.8){$b$};
\node at (6.3541,1.8){$a$};
\node at (7.3541,1.8){$a$};
\node at (8.16312,1.8){$b$};
\node at (8.97214,1.8){$a$};
\end{tikzpicture}
\end{center}
where $K_{a.a}=0$, $K_{a.b}=1$, $K_{b.a}=2$.

\end{exam}


\begin{defn}[$(T_{-\ell}, R_{-n})$ $k$-chain]
Let $n\in\Z$ and let $\ell \leq n$. Let $k\in\{0,1,2\}$ be fixed, and let 
$$\xi:=\sum_{\sigma\text{ closed $k$-cell of $T_{-\ell}$}} K_\sigma \sigma,$$
where $K_\sigma\in\Z$ for all $k$-cells $\sigma\in T$.
We say that the $k$-chain $\xi$ is a $(T_{-\ell},R_{-n})$ $k$-chain  if the following condition is satisfied
\begin{equation}
\forall \sigma,\sigma' \text{ $k$-cells of $T_{-\ell}$: } \sigma\sim_{R_{-n}}\sigma'\imply K_\sigma=K_{\sigma'}.
\end{equation}
\end{defn}

A $(T_{-\ell}, R_{-n})$ $k$-chain complex is a PE-chain complex for tiling $T_{-\ell}$. 
This amounts to showing that there exists an $m \geq 0$ such that the map 
$$f_{m,n,k}([\sigma]_{T_{-\ell}^m}):=[\sigma]_{R_{-n}},\qquad\text{$\sigma\in T_{-\ell}$ is a $k$-cell,}$$
is  well-defined.
For instance consider the Fibonacci tiling and $\ell = 0, n=1$:
We get a well-defined map on the edges ($k=1$) if we choose a combinatorial radius $m=1$, and on vertices ($k=0$) if we choose a combinatorial radius $m=2$. Namely, using the notation of Example \ref{ex:Fib-SU}, $f_{m,n,k}$ on the 1-balls of edges is given by
$$
b\underline{a}a\mapsto b'_{e_0}\quad
b\underline{a}b\mapsto a'_{e_0}\quad
a\underline{a}b\mapsto a'_{e_0}\quad
a\underline{b}a\mapsto a'_{e_1},
$$
where the underlined letter denotes the center of the combinatorial ball. On the 2-balls of vertices, $f_{m,n,k}$ is
$$
aa.ba\mapsto a'_{v_1}\quad
ab.aa\mapsto a'.b'\quad
ab.ab\mapsto a'.a'\quad
ba.ab\mapsto b'.a'\qquad
ba.ba\mapsto a'_{v_1}.
$$



\begin{defn}[$C_\bullet(T_{-\ell},R_{-n})$]
Define the abelian group $C_k(T_{-\ell},R_{-n})$ to be the subgroup of $C_k^{BM,PE}(T_{-\ell})$ whose elements are   
the $(T_{-\ell},R_{-n})$ $k$-chains. Here $k\in\{0,1,2\}$, $n\in\Z$, $\ell \leq n$. The chain complex $C_\bullet(T_{-\ell},R_{-n})$ is then a subchain complex of the chain complex  in Eq.~(\ref{e:BM-PE}) (with $T'=T_{-\ell}$) by restricting the differentials.
\end{defn}

For the fixed tiling $T$, we now construct the approximate sequence of chain maps (not a short exact sequence)
\begin{equation}
\begin{gathered}
\xymatrix{
0\ar[r]
&C_2(T,R_0)\ar[r]^{\partial_2}\ar[d]_{q_2}^{i_2}
&C_1(T,R_0)\ar[r]^{\partial_1}\ar[d]_{q_1}^{i_1}
&C_0(T,R_0)\ar[r]\ar[d]_{q_0}^{i_0}
&0\\
%
0\ar[r]
&C_2(T,R_{-1})\ar[r]^{\partial_2}\ar[d]_{q_2^{(1)}}^{i_2^{(1)}}
&C_1(T,R_{-1})\ar[r]^{\partial_1}\ar[d]_{q_1^{(1)}}^{i_1^{(1)}}
&C_0(T,R_{-1})\ar[r]\ar[d]_{q_0^{(1)}}^{i_0^{(1)}}
&0\\
%
0\ar[r]
&C_2(T,R_{-2})\ar[r]^{\partial_2}\ar[d]_{q_2^{(2)}}^{i_2^{(2)}}
&C_1(T,R_{-2})\ar[r]^{\partial_1}\ar[d]_{q_1^{(2)}}^{i_1^{(2)}}
&C_0(T,R_{-2})\ar[r]\ar[d]_{q_0^{(2)}}^{i_0^{(2)}}
&0\\
%
&\ar@{-->}@[blue][d]
&\ar@{-->}@[blue][d]
&\ar@{-->}@[blue][d]
&\\
%
0\ar[r]
&C_2^{\infty}(T)\ar[r]^{\partial_2}
&C_1^{\infty}(T)\ar[r]^{\partial_1}
&C_0^{\infty}(T)\ar[r]
&0.\\
}
\end{gathered}
\end{equation}
The direct limit chain complex 
$$C_{\bullet}^{\infty}(T):=\lim_{\to}
\xymatrix{
 C_{\bullet}(T,R_0)\ar[r]^{i^{(0)}_\bullet}
& C_{\bullet}(T,R_{-1})\ar[r]^{i^{(1)}_\bullet}
& C_{\bullet}(T,R_{-2})\ar[r]^{\quad i^{(2)}_\bullet}
&
 },
 $$
 where $C^{\infty}_{k}(T):=\cup_{n\in\N_0} C_k(T,R_{-n})$, is quasi-isomorphic to the chain complex in Eq.~(\ref{e:BM-PE}) by \cite[Lemma~4.13]{WaltonHom}. Recall that a quasi-isomorphism is by definition a morphism of chain complexes that becomes an isomorphism after taking homology.
We will only describe in detail the chain maps between $C_{\bullet}(T,R_0)$ and $C_{\bullet}(T,R_{-1})$ 
because the maps for higher $n$ behave similarly. 



A basis element for $C_k(T_{-\ell},R_{-n})$, $n\in\Z$, $\ell\le n$, is a $(T_{-\ell},R_{-n})$ $k$-chain
$$I([\sigma]_{R_{-n}}):= \sum_{\sigma'\in[\sigma]_{R_{-n}}}  \sigma',$$
where  $\sigma\in T_{-\ell}$ is a closed $k$-cell. Choose $k$-cell $\sigma\in T_{\ell}$ from each $R_{-n}$ equivalence class to get a whole basis.
Unless ambiguity arises, we will denote $I([\sigma]_{R_{-n}})$ simply as $[\sigma]_{R_{-n}}$.
Since $R_{-(n+1)}\subset R_{-n}$ then for any $k$-cell $\sigma\in T_{-\ell}$, the set $[\sigma]_{R_{-n}}$ is partitioned with the equivalence relation $R_{-(n+1)}$. Thus a basis element of $C_k(T_{-\ell},R_{-n})$ equals a (finite) sum of basis elements of $C_k(T_{-\ell},R_{-(n+1)})$. Hence $C_k(T_{-\ell},R_{-n})$ is a subgroup of $C_k(T_{-\ell},R_{-(n+1)})$.
Define $C^\bullet(T_{-\ell},R_{-n})$ to be the transpose of $C_\bullet(T_{-\ell},R_{-n})$ with respect to the above basis, that is the abelian group $C^k(T_{-\ell},R_{-n})$ can and will be given the basis of $C_k(T_{-\ell},R_{-n})$, but the differentials of $C^\bullet(T_{-\ell},R_{-n})$ are
\begin{equation}\label{e:deltak-SU}
\delta^k:=\partial_{k+1}^t.
\end{equation}
%
\subsection{Stable Transpose homology (ST)}\label{ss:ST-BM}
In this subsection we relate the stable transpose homology with \v{C}ech cohomology.
Moreover, we show that the stable cohomology and stable transpose homology are always torsion free for tilings of the line.
For tilings of the plane, we show that only $H_S^1$ and $H^{ST}_0$ can contain torsion.
In the absence of torsion, we show that $H_S^k=\lim\limits_{\to}(\Z^{n_k},A_k)$ and $H^{ST}_k=\lim\limits_{\to}(\Z^{n_k},(A_k)^t)$ for some matrix $A_k\in M_{n_k}(\Z)$. 
We start with some technical definitions and lemmas.

\begin{defn}[replacing equivalence relation]
We define the cochain map $r^\bullet:{C^\bullet}(T,R_{-1})\to {C^\bullet}(T,R_0)$ by
$$r^k([\sigma]_{R_{-1}}):=[\sigma]_{R_{0}} \qquad \sigma\in T\text{ is a $k$-cell},$$
for $k\in\{0,1,2\}$.
\end{defn}

\begin{defn}[inclusion]
Let $k\in\{0,1,2\}$.
Let $\sigma\in T$ be a $k$-cell, and suppose that $[\sigma]_{R_{0}}=[\sigma_1]_{R_{-1}}\sqcup\cdots\sqcup [\sigma_j]_{R_{-1}}$ for some $j\in \N$.
The inclusion chain map $i_\bullet:C_\bullet(T,R_0)\to C_\bullet(T,R_{-1})$ in the basis given above and evaluated at $\sigma$ is 
$$i_k(I([\sigma]_{R_{0}})):=I([\sigma_1]_{R_{-1}})+\cdots+I([\sigma_{j}]_{R_{-1}}).$$
\end{defn}
Since $r^k([\sigma_i]_{R_{-1}})=[\sigma_i]_{R_{0}}=[\sigma]_{R_0}$ for $i=1,\ldots, j$, the inclusion $i_k$ is the transpose of $r^k$
\begin{equation}
i_k=(r^k)^t.
\end{equation}


\begin{defn}[relabeled parent map]
Let $k,j\in\{0,1,2\}$.
Since $T_{-1}$ and $\lambda\omega^{-1}(T)$ are the same as tilings, for every $j$-cell $\sigma'\in T_{-1}$ there is a cell $\sigma\in \omega^{-1}(T)$, such that $\sigma'=\lambda \sigma$. In particular $\omega(\sigma)\subset T$. Moreover, $\sigma'$ and $\omega(\sigma)$ are equal as sets. 
We say that $\sigma'$ is the parent of a $k$-cell $\tau$ ($k\le j$) if $\open{\tau}$ is in $\omega(\open{\sigma})$.
Note that the parent of a cell is unique and the parent always has at least the dimension of the cell. We define the cochain map $g'^\bullet:{C^\bullet}(T,R_{-1})\to {C^\bullet}(T_{-1},R_{-1})$ by
$$g'^k([\tau]_{R_{-1}}):=  \left\{\begin{array}{cc}
    \delta_{\sigma,\tau}\,\cdot\,[\lambda\sigma]_{R_{-1}} & \text{if $\lambda\sigma$ is a $k$-cell of $T_{-1}$}  \\ 
    0 & else, \\ 
  \end{array}\right.
$$
where $\tau\in T$ is a $k$-cell such that $\open{\tau}\in \omega(\open{\sigma})$. Informally, and ignoring signs, the map $g'^k$ maps a cell $\tau$ to its parent $\lambda\sigma$ if $\lambda\sigma$ has the same dimension as $\tau$, else it maps $\tau$ to 0 (i.e.~when $\lambda\sigma$ has a higher dimension).


We define the cochain map $g^\bullet:{C^\bullet}(T,R_{-1})\to {C^{\bullet}}(T,R_0)$ by
$$g^k:=\Lambda'^k\circ g'^k,\quad k\in\{0,1,2\},$$
where $(\Lambda'^k)^{-1}:C^k(T_{0},R_0)\to C^{k}(T_{-1},R_{-1})$ is the isomorphism that identifies the $k$-stable cells $\sigma\in T$  with the expanded $k$-stable cells $\lambda\sigma+x\in T_{-1}$
\begin{equation}\label{e:LambdaPrime}
(\Lambda'^k)^{-1}([\sigma]_{R_0}):=[\lambda\sigma+ x]_{R_1},
\end{equation}
where $x$ is a translational vector such that $\lambda\sigma+x\in T_{-1}$.
\end{defn}
\begin{defn}[relabeled children map]
The chain map $q_\bullet:C_\bullet(T,R_0)\to C_\bullet(T,R_{-1})$ is defined as 
$$q_k:=(g^k)^t\qquad k\in\{0,1,2\},$$
i.e.~$q_k$ is the transpose of the relabeled parent map $g^k$. 
\end{defn}
This definition of $q_\bullet$ coincides with that of $q_\bullet$ in \cite[Lemma 4.14]{WaltonHom}, and by the same lemma, $q_\bullet$ is a quasi-isomorphism, i.e.~it is an isomorphism when one takes homology. 


\begin{defn}[a section]
Let $h'_1:T\to T_{-1}$ be the homotopy defined in Subsection \ref{ss:homotopy} but for tiling $T_{-1}$ instead of tiling $T_0$. That is,
on the (expanded) prototiles of $T_{-1}$, the definition of $h'_1$ is the same as the definition of $h_1$ on the prototiles of $T_0$ up to expanding by the factor $\lambda$.
We define the cochain map $s'^\bullet:C^\bullet(T_{-1},R_{-1})\to C^\bullet(T,R_{-1})$ by
$$s'^k([\sigma']_{R_{-1}}):=  
    \sum_{\substack{\tau\in \tilde\omega_{_{-1}}(T_{-1}(\open{\sigma}\,\!'))\\\,\, h_1'(\tau)=\sigma'}}\delta_{\sigma',\tau}\,\cdot\,[\tau]_{R_{-1}},
$$
where $\sigma'$ is $k$-cell of $T_{-1}$, and $\tau$ is $k$-cell of $T$, and $\tilde\omega_{_{-1}}:T_{-1}\to T$ is the map defined in Eq.~(\ref{e:tildeomegan}).
Note that $\tilde\omega_{_{-1}}(T_{-1}(\open{\sigma}\,\!'))\subset T_0$ is, as a set, the stable cell $T_{-1}(\open{\sigma}\,\!')$; this set is tiled with prototiles of $T_0$ via $\tilde\omega_{_{-1}}$.


We define the cochain map $s^\bullet:C^\bullet(T,R_0)\to C^\bullet(T,R_{-1})$ as
$$s^k:=s'^k\circ(\Lambda'^k)^{-1},\qquad k\in\{0,1,2\},$$
where ${\Lambda'}^k$ was defined in Eq.~(\ref{e:LambdaPrime}).
\end{defn}
By the following two lemmas, the maps $s^k$ commute with the differentials $\delta^k$.
\begin{lem}
For dimension $d\le 2$, the following diagram commutes
\begin{equation*}
\xymatrix{
C^1(T_{-1},R_{-1})\ar[r]^{s'^1}\ar[d]_{\delta^1}&C^1(T_0,R_{-1})\ar[d]^{\delta^1}\\
C^2(T_{-1},R_{-1})\ar[r]^{s'^2}&C^2(T_0,R_{-1}).}
\end{equation*}
\end{lem}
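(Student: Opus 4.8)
The plan is to recognize that $s'^\bullet$ is nothing other than the cochain map induced on cellular cochains by the cellular map $h_1'\colon T\to T_{-1}$, for which commutativity with the coboundary is automatic. First I would record that $h_1'$ is cellular: by the remark following Definition \ref{d:homotopy} the homotopy sends the $k$-skeleton into the $k$-skeleton, so $h_1'$ carries each $k$-cell $\tau$ of $T$ either homeomorphically onto a $k$-cell of $T_{-1}$ or collapses it onto a cell of strictly lower dimension. This is exactly the data of a cellular map, and its cellular pushforward $(h_1')_\#$ on chains assigns to a $k$-cell $\tau$ the signed cell $\delta_{\sigma',\tau}\,\sigma'$ when $h_1'(\tau)=\sigma'$ (local degree $\pm 1$, the sign being the orientation-matching sign) and $0$ when $\tau$ collapses. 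The point of axioms (3)(ii) and (3)(iii) of Definition \ref{d:homotopy} is precisely that whenever $h_1'(\tau)=\sigma'$ the restriction is genuinely a homeomorphism, so the local degree is indeed $\pm 1$ and equals $\delta_{\sigma',\tau}$.

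Next I would identify $s'^k$ with the transpose of this pushforward on the chosen cell bases. Comparing the defining formula of $s'^k$ with $((h_1')_\#)^t$: for a $k$-cell $\sigma'$ of $T_{-1}$ the coefficient of $[\tau]$ in $s'^k([\sigma']_{R_{-1}})$ is $\delta_{\sigma',\tau}$ exactly when $h_1'(\tau)=\sigma'$, which is the $(\sigma',\tau)$-entry of $(h_1')_\#$; the condition $\tau\in\tilde\omega_{-1}(T_{-1}(\open{\sigma}'))$ merely restricts attention to the cells that can map onto $\sigma'$ in the first place. Hence $s'^k=((h_1')_\#)^t$.

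Once this identification is in place the lemma becomes the purely formal statement that the transpose of a chain map is a cochain map. Since $(h_1')_\#$ commutes with the cellular boundary operators, $\partial_2\circ(h_1')_\#=(h_1')_\#\circ\partial_2$, and transposing gives $((h_1')_\#)^t\circ\partial_2^t=\partial_2^t\circ((h_1')_\#)^t$, that is $s'^2\circ\delta^1=\delta^1\circ s'^1$ with $\delta^1=\partial_2^t$. This is exactly the asserted commutativity. If instead one prefers a hands-on verification, one evaluates both composites on a basis edge $[\sigma']_{R_{-1}}$: $\delta^1\circ s'^1$ produces the $T$-faces incident to the $T$-edges lying over $\sigma'$, while $s'^2\circ\delta^1$ produces the distinguished $T$-faces lying over the $T_{-1}$-faces incident to $\sigma'$, and one matches the two term by term with signs.

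The main obstacle will be the bookkeeping behind the identification $s'^k=((h_1')_\#)^t$ in the \emph{stable} (that is, $R_{-1}$-equivariant) setting rather than in the full cellular complex. One must check that $h_1'$, being built from the substitution and extended by translation, is $R_{-1}$-equivariant, so that it sends $R_{-1}$-equivalent cells to $R_{-1}$-equivalent cells and therefore descends to the groups $C^\bullet(T_{-1},R_{-1})$ and $C^\bullet(T,R_{-1})$ with their bases of $R_{-1}$-classes; and that the orientation signs $\delta_{\sigma',\tau}$ are consistently the local degrees throughout. Here item (3)(iv) (two edges of a non-special tile, but only one edge of the special tile, map onto each boundary edge) is what controls how the collapsing cells contribute, and item (3)(iii) pins down the unique boundary preimage edge; tracking these signs through $\partial_2^t$ is the only genuinely delicate point, and it is for this reason that the remark after Definition \ref{d:homotopy} singles out item (3)(iii) as important for Section \ref{s:S-U-relationship}.
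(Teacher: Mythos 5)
Your proposal is correct, but it takes a genuinely different route from the paper's own proof. The paper argues by direct evaluation on generators: for a stable edge $[e']_{R_{-1}}$ shared by two supertiles $t_1',t_2'$, it uses items $(3)(iii)$, $(3)(iv)$ of the homotopy definition together with connectedness of $h_1'^{-1}(e')$ to produce a chain of tiles $t_1=\tau_0|\tau_1|\cdots|\tau_n=t_2$ inside $\tilde\omega_{-1}(T_{-1}(\open{e}\,\!'))$ whose consecutive shared edges $\epsilon_i$ are exactly the edges homotoping to $e'$; then $\delta^1\bigl(s'^1([e'])\bigr)=([t_1]-[\tau_1])+([\tau_1]-[\tau_2])+\cdots+([\tau_{n-1}]-[t_2])$ telescopes to $[t_1]-[t_2]=s'^2\bigl(\delta^1([e'])\bigr)$. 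You instead identify $s'^k$ as the transpose, in the class bases, of the cellular pushforward $(h_1')_{\#}$, so that commutativity with $\delta^k=\partial_{k+1}^t$ is just the transpose of the chain-map identity for $(h_1')_{\#}$; this is more conceptual, handles this lemma and the companion lemma for $\delta^0,s'^0$ uniformly, and exposes the paper's telescoping sum as the combinatorial shadow of the chain-map property. What your route costs is exactly the two checks you flag: (a) that the matrix entries of $(h_1')_{\#}$ are the signs $\delta_{\sigma',\tau}$ --- here be aware that $(3)(iii)$ literally asserts the homeomorphism property only for the unique shrunk edge contained in $e$, so for the interior edges of $(3)(iv)$ you need either the relative-degree argument (the degree of a map of pairs is determined by where the boundary goes) or the same implicit orientation-comparability the paper itself assumes when defining $\delta_{e,e'}$; and (b) the descent to the $R_{-1}$-equivariant complexes, where the pushforward of a single class $I([\tau])$ can hit several distinct classes, so the transpose identification requires regrouping $\sum_{\tau:\,h_1'(\tau)=\sigma'}\delta_{\sigma',\tau}\,[\tau]_{R_{-1}}$ by classes (your observation that the restriction $\tau\in\tilde\omega_{-1}(T_{-1}(\open{\sigma}\,\!'))$ is automatic is right, since $h_1'$ keeps every point inside its supertile). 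Both checks succeed --- for instance, in the paper's Fibonacci data one can verify directly that $s'^0$ and $s'^1$ are the transposes of the pushforward matrices --- so your plan is a valid, and arguably cleaner, alternative proof.
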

\begin{proof}
We will assume that $d=2$, otherwise the lemma trivially holds.
 To help the reader with the notation, we will use Fig.~\ref{f:chaintau} as intuition. Moreover, in this proof $[\cdot]$ denotes $[\cdot]_{R_{-1}}$.
Let $T_{-1}(\open{e}\,\!')=\{t'_1,t'_2\}$ be a stable edge, for some edge $e'\in T_{-1}$ of two tiles $t'_1,t'_2\in T_{-1}$, and assume $e'$ has same orientation as one of the edges of $t'_1$, and $e'$ has opposite orientation as one of the edges of $t'_2$.
Then 
$$\delta^{1}([e'])=[t'_1]-[t'_2].$$
We remark that if $t'\in T_{-1}$ is a prototile (i.e.~a stable face), then all the shrunk tiles in $\tilde\omega_{-1}(t')\subset T_0$  are representatives of the $(T_0,R_{-1})$-equivalence classes.
By definition of $h'_1$, there are unique tiles $t_1,t_2\in T_0$ such that $h'_1(t_1)=t_1'$ and $h'_1(t_2)=t_2'$. Hence
$$s'^2(\delta^{1}([e']))=[t_1]-[t_2].$$
On the other hand, by definition of $h'_1$, there is a chain of tiles $\tau_0|\tau_1|\tau_2|\cdots|\tau_n$ from $\tau_0:=t_1$ to $\tau_n:=t_2$ such that the edges 
$\epsilon_i:=\tau_i|\tau_{i+1}\in T_0$ homotope to $e'$, i.e. $h'_1(\epsilon_i)=e'$.
Indeed, 
by definition of $h'_1$ (cf. Definition \ref{d:homotopy}($3(iii)$)) there is a unique edge $e\in\tilde\omega_{-1}(t'_1)\subset T_0$ such that $e\in e'$.
Again, by definition of $h'_1$ (cf. Definition \ref{d:homotopy}($3(iv)$)), a tile $\tau\in\tilde\omega_{-1}(t'_1)\subset T_0$ that contains the edge $e$ contains exactly two edges that homotope to $e'$ whenever $\tau\ne t_1$, else $\tau=t_1$.
There is a finite number of cells to go through, so the chain ends eventually in an edge of $t_1$.
By connectedness of $h_1^{-1}(e')$, all the edges in $\tilde\omega_{-1}(t'_1)$  which homotope to $e'$ are contained in the chain.
A similar argument holds for tile $t_2$. 
We can assume that the edges homotoping to $e'$ preserve the orientation, since we can replace $[\epsilon]$ with $[\check{\epsilon}]=-[\epsilon]$ without changing the resulting value of $\delta^1(s^1([e']))$, where $\check{\epsilon}$ is the edge $\epsilon$ but with opposite orientation. 
Then
$$s'^1([e'])=[\epsilon_1]+\cdots+[\epsilon_{n-1}].$$
Thus
\begin{eqnarray*}
\delta^1(s^1([e']))&=&([t_1]-[\tau_1])+([\tau_1]-[\tau_2])+\cdots+([\tau_{n-1}]-[t_2]])\\
&=&[t_1]-[t_2].
\end{eqnarray*}
\end{proof}
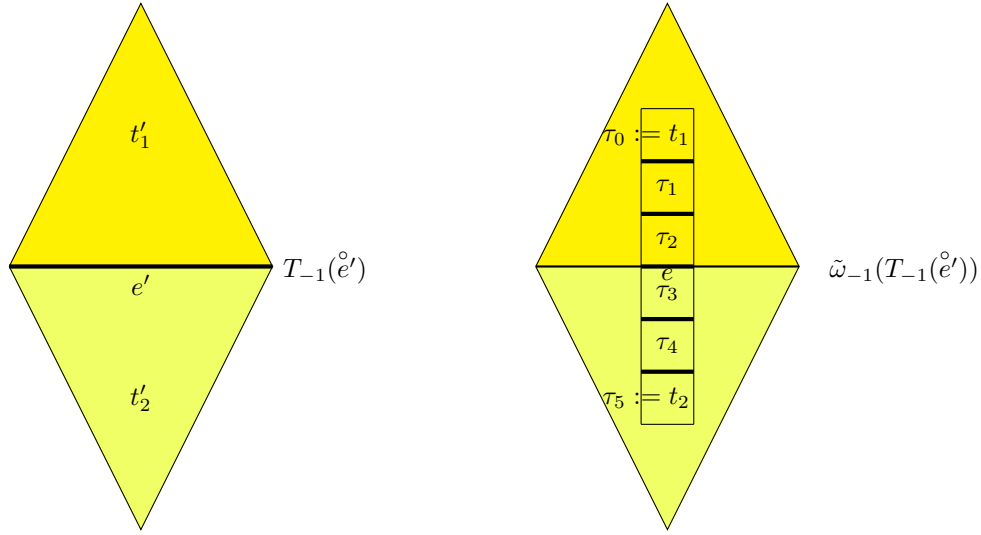
\begin{figure}
\begin{center}
\begin{tikzpicture}[scale=3.5]
\draw[yellow,fill](0,0)--(1,0)--(.5,1)--(0,0);
\draw[green!10!yellow!60,fill](0,0)--(1,0)--(.5,-1)--(0,0);
\draw[] (0,0)--(1,0)--(.5,1)--(0,0);
\draw[green!10!yellow!60,fill](0,0)--(1,0)--(.5,-1)--(0,0);
\draw[] (0,0)--(1,0)--(.5,-1)--(0,0);
\draw[ultra thick] (0,0)--(1,0);
\node at (.5,.5) {$t'_1$};
\node at (.5,-.07) {$e'$};
\node at (.5,-.5) {$t'_2$};
\node at (1.2,0) {$T_{-1}(\open{e}\,\!')$};
%
\draw[yellow,fill](2,0)--(3,0)--(2.5,1)--(2,0);
\draw[green!10!yellow!60,fill](2,0)--(3,0)--(2.5,-1)--(2,0);
\draw[] (2,0)--(3,0)--(2.5,1)--(2,0);
\draw[green!10!yellow!60,fill](2,0)--(3,0)--(2.5,-1)--(2,0);
\draw[] (2,0)--(3,0)--(2.5,-1)--(2,0);
\draw[thick] (2,0)--(3,0);
\draw (2.4,.6)--(2.4,-.6);
\draw (2.6,.6)--(2.6,-.6);
\draw (2.4,.6)--(2.6,.6);
\draw[ultra thick] (2.4,.4)--(2.6,.4);
\draw[ultra thick] (2.4,.2)--(2.6,.2);
\draw[ultra thick] (2.4,0)--(2.6,0);
\draw[ultra thick] (2.4,-.2)--(2.6,-.2);
\draw[ultra thick] (2.4,-.4)--(2.6,-.4);
\draw (2.4,-.6)--(2.6,-.6);
\node at (2.42,.5) {$\tau_0:=t_1$};
\node at (2.5,.3) {$\tau_1$};
\node at (2.5,.1) {$\tau_2$};
\node at (2.5,-.1) {$\tau_3$};
\node at (2.5,-.3) {$\tau_4$};
\node at (2.42,-.5) {$\tau_5:=t_2$};
\node at (3.4,0) {$\tilde\omega_{-1}(T_{-1}(\open{e}\,\!'))$};
\node at (2.5,-.03) {$e$};
\end{tikzpicture}
\end{center}
\caption{The stable edge $T_{-1}(\open{e}\,\!')$ tiled with the patch $\tilde\omega_{-1}(T_{-1}(\open{e}\,\!'))\subset T_0$.}\label{f:chaintau}
\end{figure}


\begin{lem}
For dimension $d\le 2$, the following diagram commutes
\begin{equation*}
\xymatrix{
C^0(T_{-1},R_{-1})\ar[r]^{s'^0}\ar[d]_{\delta^0}&C^0(T_0,R_{-1})\ar[d]^{\delta^0}\\
C^1(T_{-1},R_{-1})\ar[r]^{s'^1}&C^1(T_0,R_{-1}).}
\end{equation*}
\end{lem}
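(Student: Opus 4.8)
The plan is to mirror the proof of the preceding lemma, now one dimension lower, and to verify the identity $\delta^0\circ s'^0 = s'^1\circ \delta^0$ on the basis elements $[v']_{R_{-1}}$, where $v'$ is a vertex of $T_{-1}$. Throughout I would abbreviate $[\cdot]=[\cdot]_{R_{-1}}$ and expand both composites as explicit signed sums of edges $\epsilon$ of $T_0$. On the one hand, $s'^0([v'])=\sum_{h'_1(\tau)=v'}[\tau]$, the sum ranging over vertices $\tau$ of $T_0$ in the patch $\tilde\omega_{-1}(T_{-1}(\open{v}\,\!'))$ collapsing to $v'$ (vertices carry no orientation, so each sign is $+1$); hence $\delta^0(s'^0([v']))=\sum_{h'_1(\tau)=v'}\sum_{\epsilon\ni\tau}\delta_{\epsilon,\tau}\,[\epsilon]$. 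On the other hand, $\delta^0([v'])=\sum_{e'\ni v'}\delta_{e',v'}[e']$, so $s'^1(\delta^0([v']))=\sum_{e'\ni v'}\delta_{e',v'}\sum_{h'_1(\epsilon)=e'}\delta_{e',\epsilon}\,[\epsilon]$, where $e'$ runs over the edges of $T_{-1}$ incident to $v'$.

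The core of the argument is a classification of the edges $\epsilon$ of $T_0$ appearing in $\delta^0(s'^0([v']))$ according to how many of their two endpoints collapse to $v'$. If both endpoints collapse to $v'$, then $h'_1(\epsilon)$ is the degenerate image $v'$, so $\epsilon$ contributes to $\delta^0 s'^0$ with coefficient $\delta_{\epsilon,\tau_{\mathrm{init}}}+\delta_{\epsilon,\tau_{\mathrm{fin}}}=-1+1=0$ and drops out; such edges never occur in $s'^1\delta^0$ either, since $s'^1$ only records edges homotoping to a genuine edge. If exactly one endpoint $\tau$ collapses to $v'$, then by cellularity $h'_1(\epsilon)$ is a nondegenerate edge $e'$ of $T_{-1}$ joining $v'$ to a distinct vertex, hence $e'$ is incident to $v'$; conversely, every $\epsilon$ with $h'_1(\epsilon)=e'$ and $e'\ni v'$ has, using that $h'_1$ restricts to a homeomorphism $\open{\epsilon}\to\open{e}\,\!'$, its two endpoints collapsing to the two distinct endpoints of $e'$, exactly one of which is $v'$. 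Thus the two composites are supported on the same set of edges $\epsilon$, namely those homotoping to an edge $e'$ of $T_{-1}$ incident to $v'$.

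It then remains to match coefficients, i.e. to prove the sign identity $\delta_{\epsilon,\tau}=\delta_{e',v'}\,\delta_{e',\epsilon}$ for each surviving edge, where $\tau$ is the endpoint of $\epsilon$ collapsing to $v'$ and $e'=h'_1(\epsilon)$. This I would do by a short case analysis on whether $h'_1$ preserves or reverses the orientation of $\epsilon$ (the two values of $\delta_{e',\epsilon}$) and on whether $v'$ is the initial or final vertex of $e'$ (the two values of $\delta_{e',v'}$): since $h'_1$ sends the initial, resp. final, vertex of $\epsilon$ to the initial, resp. final, vertex of $e'$, and swaps them exactly when the orientation is reversed, the endpoint $\tau$ lying over $v'$ is initial or final for $\epsilon$ precisely as prescribed by the product $\delta_{e',v'}\delta_{e',\epsilon}$, and all four sign combinations check out. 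The main obstacle is not the bookkeeping but securing the geometric inputs from Definition \ref{d:homotopy}: that an edge of $T_0$ homotoping to an edge $e'$ of $T_{-1}$ does so as a homeomorphism on interiors with endpoints over the endpoints of $e'$, so that no folding occurs, and that the collection of such edges is exactly the set of boundary edges surviving the cancellation. These facts are the edge-level analogues of the chain description used in the previous lemma, and I would derive them from items $3(i)$, $3(iii)$ and the connectedness of $(h'_1)^{-1}(e')$ in item $3(iv)$, transported to the tiling $T_{-1}$.
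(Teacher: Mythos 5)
Your proposal is correct and takes essentially the same route as the paper's proof: both verify the identity on the generators $[v']_{R_{-1}}$, observe that edges of $T_0$ whose two endpoints both collapse to $v'$ cancel in $\delta^0\circ s'^0$ (and are absent from $s'^1\circ \delta^0$), and then match the surviving terms with the edges of $T_0$ homotoping to the edges of $T_{-1}$ incident to $v'$. The only real difference is cosmetic: the paper suppresses the sign bookkeeping by normalizing orientations (taking $v'$ to be a final vertex of each incident edge and choosing orientation-preserving representatives), whereas you carry out the explicit four-case sign check $\delta_{\epsilon,\tau}=\delta_{e',v'}\,\delta_{e',\epsilon}$.
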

\begin{proof}
In this proof, $[\cdot]$ denotes $[\cdot]_{R_{-1}}$.
Let $T_{-1}(v')$ be a stable vertex, for some vertex $v'\in T_{-1}$.
Suppose that $e'_1,\ldots, e'_n$ are all the edges in $T_{-1}(v')$ for which one of its vertices is $v'$.
We will assume that $e'_j$, $j=1,\ldots, n$,  is oriented so that $v'$ is a final vertex since we can replace $[e'_j]$ with $[\breve e'_j]=-[e'_j]$ without changing the resulting value of $s'^{1}(\delta^0([v']))$, where $\breve e'_j$ is the edge $e'_j$ with reversed orientation. Then 
$$s'^1(\delta^0([v']))=s'^1([e'_1])+\cdots s'^1([e'_n]).$$
Suppose that $T_{-1}(\open{e}_1\,\!')=\{t'_1,t'_2\}$ such that $e_1'$ matches the orientation of one of the edges of $t'_1$, and $e_1'$ with reverse orientation matches one of the edges of $t'_2$.
Suppose that $e_{j1},\ldots e_{jm_j}$ are all the edges whose interior is in $\tilde\omega_{-1}(\open{t}\,\!'_j)\subset T_0$ and that homotope to $e'_1$, where $j=1,2$. 
We assume here as well that the edges homotoping to $e'_1$ preserve the orientation, since the resulting value would not change by similar reasons as above.
Then
$$s'^1([e'_1])=[e_{1}]+[e_{11}]+\cdots+[e_{1m_1}]+[e_{21}]+\cdots+[e_{2m_2}],$$
where $e_1$ is the unique edge in $\tilde\omega_{-1}(e'_1)$ that homotopes to $e'_1$.

On the other hand, consider all the vertices in $\tilde\omega_{-1}(T_{-1}(v'))\subset T_0$ which homotope to $v'$: If two of these vertices are connected by an edge $e$ of $T_0$, then, in the resulting value of $\delta^0(s'^1([v']))$, we get the term $+[e]$ (coming from the final vertex) and the term $-[e]$ (coming from the initial vertex), and hence, after cancellation, $[e]$ does not occur.
After removing such edges, all the edges in $\delta^0(s'^1([v']))$ must homotope to one of the $e'_i$'s. We conclude that the terms in $\delta^0(s'^1([v']))$ coming from edges homotoping to $e'_1$ are
$$[e_1]+[e_{11}]+\cdots+[e_{1m_1}]+[e_{21}]+\cdots+[e_{2m_2}].$$
It follows that $s'^1(\delta^0([v']))=\delta^0(s'^1([v']))$.
\end{proof}


Next we will prove four auxiliary results, that will be necessary in our main theorems.
\begin{lem}\label{l:gs=id}
For $k\in\{0,1,2\}$, the map $s^k$ is a section of the relabeled parent map $g^k$, that is
$$g^k\circ s^k = id.$$
\end{lem}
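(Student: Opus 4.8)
The plan is to reduce the stated identity to the analogous identity for the primed maps and then to verify that one on a basis by a direct geometric analysis of the homotopy $h_1'$. First I would use the defining factorizations $g^k=\Lambda'^k\circ g'^k$ and $s^k=s'^k\circ(\Lambda'^k)^{-1}$ to write
$$g^k\circ s^k=\Lambda'^k\circ\big(g'^k\circ s'^k\big)\circ(\Lambda'^k)^{-1}.$$
Since $\Lambda'^k$ is an isomorphism, it suffices to prove that $g'^k\circ s'^k=\mathrm{id}$ on $C^k(T_{-1},R_{-1})$; conjugating by $\Lambda'^k$ then gives $g^k\circ s^k=\mathrm{id}$ on $C^k(T_0,R_0)$, which is exactly the claim.

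Next I would evaluate $g'^k\circ s'^k$ on a basis element $[\sigma']_{R_{-1}}$, where $\sigma'$ is a $k$-cell of $T_{-1}$. By definition, $s'^k([\sigma']_{R_{-1}})$ is the signed sum of those $k$-cells $\tau$ of $T$ lying in the subdivided stable cell $\tilde\omega_{-1}(T_{-1}(\open{\sigma}\,\!'))$ that satisfy $h_1'(\tau)=\sigma'$, each weighted by the orientation-matching sign $\delta_{\sigma',\tau}$. Applying $g'^k$ to a term $[\tau]_{R_{-1}}$ returns $\delta_{\sigma,\tau}\,[\lambda\sigma]_{R_{-1}}$ precisely when the parent $\lambda\sigma\in T_{-1}$ of $\tau$ is again a $k$-cell, and returns $0$ otherwise. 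Thus the whole composition collapses to a signed sum taken only over those $\tau$ in the support of $s'^k([\sigma'])$ whose parent has dimension exactly $k$.

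The geometric heart of the argument, and the main obstacle, is to show that there is exactly one such surviving $\tau$, namely the distinguished sub-cell $\tau^*$ sitting on $\sigma'$ itself — the unique sub-face homeomorphic to $\sigma'$ when $k=2$, the unique sub-edge $\tau^*\subset\sigma'$ with $h_1'(\tau^*)=\sigma'$ when $k=1$, and $\tau^*=\sigma'$ viewed as a vertex of $T$ when $k=0$ — and that its parent is $\sigma'$. Here I would invoke the structure of the homotopy in Definition~\ref{d:homotopy}: items $3(ii)$ and $3(iii)$ supply the existence, uniqueness, and inclusion $\tau^*\subset\sigma'$, while cellularity together with $3(iv)$ forces every other $k$-cell $\tau$ with $h_1'(\tau)=\sigma'$ to lie in the interior of a strictly higher-dimensional parent (an adjacent face collapsing onto $\sigma'$), so that $g'^k$ annihilates it.

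Finally I would carry out the sign bookkeeping. For the surviving term, both $\delta_{\sigma',\tau^*}$ (coming from $s'^k$) and $\delta_{\sigma,\tau^*}$ (coming from $g'^k$, where $\lambda\sigma=\sigma'$) are the orientation-matching sign of the \emph{same} pair $(\sigma',\tau^*)$, so their product is $+1$, and for $k=0$ no sign enters at all. Hence $g'^k(s'^k([\sigma'_{R_{-1}}]))=[\sigma']_{R_{-1}}$ on every basis element, which gives $g'^k\circ s'^k=\mathrm{id}$ and, by the reduction above, $g^k\circ s^k=\mathrm{id}$. The one-dimensional case $d=1$ is just the restriction to $k\in\{0,1\}$ and is strictly simpler, since no faces occur.
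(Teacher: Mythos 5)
Your proposal is correct and follows essentially the same route as the paper's proof: reduce via the isomorphism $\Lambda'^k$ to the identity $g'^k\circ s'^k=\mathrm{id}$, evaluate on a basis element $[\sigma']_{R_{-1}}$, and observe that every $k$-cell $\tau$ homotoping to $\sigma'$ but not lying inside $\sigma'$ is killed by $g'^k$ (its parent has dimension $>k$), while the unique $k$-cell $\tau_0\subset\sigma'$ that homotopes to $\sigma'$ survives and maps to $[\sigma']_{R_{-1}}$. Your explicit sign bookkeeping (the product $\delta_{\sigma',\tau^*}\cdot\delta_{\sigma,\tau^*}=+1$ since both signs compare the same pair of cells) is in fact slightly more careful than the paper's, which absorbs this point into the statement $g'^k([\tau_0]_{R_{-1}})=[\sigma']_{R_{-1}}$.
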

\begin{proof}
By the standard isomorphism $\Lambda'$ that identifies stable cells $T$ with stable cells of $T_{-1}$, it suffices to show that 
$g'^k\circ s'^k=id$.
We get
\begin{eqnarray*}
g'^k(s'^k([\sigma']_{R_{-1}}))=\sum_{\tau\in \tilde\omega_{_{-1}}(T_{-1}(\open{\sigma}\,\!')),\,\, h_1'(\tau)=\sigma'}\delta_{\sigma',\tau}\,\cdot\,g'^k([\tau]_{R_{-1}})=[\sigma']_{R_{-1}},
\end{eqnarray*}
where the last equality is because of the following:
First, $\sigma'$ is a $k$-cell in $T_{-1}$.
Second, all the $k$-cells $\tau\in\tilde\omega_{_{-1}}(T_{-1}(\open{\sigma}\,\!'))$ with $h_1'(\tau)=\sigma'$ that don't lie inside $\sigma'$ (as sets) vanish under $g'^k$ because the parent of $\tau$  is a $j$-cell of $T_{-1}$ of dimension $j>k$. 
Third, exactly one $k$-cell $\tau_0\subset \sigma'$ homotopes to $\sigma'$ (cf. Subsection \ref{ss:homotopy} adapted to $h'_1$).
Hence $\tau_0$ is the only $k$-cell for which $g'^k$ is nonzero. Since $g'^k([\tau_0]_{R_{-1}})=[\sigma']_{R_{-1}}$ the equality follows.  
\end{proof}


\begin{lem}\label{l:CST-C0}
We have
\begin{equation}\label{e:doubledual}
C_S^\bullet = C^\bullet(T,R_0)\qquad\text{and}\qquad {C^{ST}_\bullet} = C_\bullet(T,R_0).
\end{equation}
\end{lem}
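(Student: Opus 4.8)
The plan is to show the two complexes are literally equal by matching their graded groups and their differentials, and then to deduce the second identity by transposing the first.

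First I would pin down the underlying graded groups. By definition a basis of $C_k(T,R_0)$ is given by the elements $I([\sigma]_{R_0})$, where $[\sigma]_{R_0}$ runs over the $R_0$-equivalence classes of closed $k$-cells of $T$; by Definition \ref{d:stablecells} these classes are precisely the stable $k$-cells, so $C_0(T,R_0)\cong\Z^{sV}$, $C_1(T,R_0)\cong\Z^{sE}$, $C_2(T,R_0)\cong\Z^{sF}$, with the same natural bases (stable vertices, edges, faces) as the groups of $C_S^\bullet$ in Eq.~(\ref{e:CSbullet}). Since $C^k(T,R_0)=C_k(T,R_0)$ as abelian groups — the transpose only reinterprets the differentials, Eq.~(\ref{e:deltak-SU}) — the graded groups underlying $C_S^\bullet$ and $C^\bullet(T,R_0)$ coincide degreewise.

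Next I would match the differentials. By Eq.~(\ref{e:deltak-SU}) the coboundary maps of $C^\bullet(T,R_0)$ are $\delta^0=\partial_1^t$ and $\delta^1=\partial_2^t$, transposes of the cellular boundary maps restricted to the $(T,R_0)$-chains. Evaluating $\partial_1$ on a basis element gives $\partial_1 I([e]_{R_0})=\sum_{e'\sim_{R_0}e}\sum_{v'\in\partial e'}\delta_{e',v'}\,v'$, so the matrix entry pairing a stable vertex $[v]$ with a stable edge $[e]$ is $\sum_{e'\sim e,\ v\in\partial e'}\delta_{e',v}$; this is constant on the class of $v$ because the cell structure is translation invariant, which is exactly why $C_\bullet(T,R_0)$ is a subchain complex. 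Reading off the transpose and grouping the defining sum of $\delta^0$ in Eq.~(\ref{e:delta0}) by stable classes, one checks that both maps assign to $[v]$ the coefficient $\sum_{e\ \text{incident to}\ v,\ e\sim e_0}\delta_{e,v}$ on each class $[e_0]$; the sum over $e\in T(v)$ in Eq.~(\ref{e:delta0}) is harmless since $\delta_{e,v}=0$ unless $v$ is an endpoint of $e$. An identical bookkeeping with $\delta_{f,e}$ identifies $\partial_2^t$ with $\delta^1$ of Eq.~(\ref{e:delta1}), the sum over $f\in T(\open{e})$ again costing nothing because $\delta_{f,e}=0$ off $\partial f$. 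This proves $C_S^\bullet=C^\bullet(T,R_0)$.

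Finally, the stable-transpose statement follows formally. By construction $C^{ST}_\bullet=(C_S^\bullet)^t$ and $C^\bullet(T,R_0)=(C_\bullet(T,R_0))^t$; as all groups in sight are finitely generated free abelian with the fixed basis of stable cells, transposition is an involution (the double transpose restores both the arrows and the degree indexing). Hence transposing the already-proved identity $C_S^\bullet=(C_\bullet(T,R_0))^t$ gives $C^{ST}_\bullet=(C_S^\bullet)^t=C_\bullet(T,R_0)$. I expect the only real obstacle to be the middle step: verifying that the $K$-theoretic coboundary maps $\delta^0,\delta^1$, which are written as sums over concrete cells in the patches $T(v)$ and $T(\open{e})$, reassemble exactly into the transposes of the stable cellular boundary maps expressed in the $I([\sigma]_{R_0})$-basis. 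This is a purely combinatorial incidence-number check, but it requires care that the coefficients $\delta_{e,v}$ and $\delta_{f,e}$ are defined identically in both frameworks and that summing over a patch rather than over a boundary contributes no spurious terms.
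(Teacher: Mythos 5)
Your proposal is correct and follows the same route as the paper's proof: identify the natural bases of $C_S^k$ and $C^k(T,R_0)$ with the stable $k$-cells (the $R_0$-classes), observe that the differentials of Eq.~(\ref{e:delta0-intro})--(\ref{e:delta1-intro}) agree with $\partial_{k+1}^t$ from Eq.~(\ref{e:deltak-SU}), and then obtain $C^{ST}_\bullet=C_\bullet(T,R_0)$ by applying the transpose and using that it is an involution on these based free abelian groups. The only difference is that you spell out the incidence-number verification (grouping $\partial_1$, $\partial_2$ by stable classes and noting that summing over the patches $T(v)$, $T(\open{e})$ adds no spurious terms since $\delta_{e,v}$, $\delta_{f,e}$ vanish off the boundary), which the paper simply asserts as immediate.
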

\begin{proof}
Recalling that $C^\bullet(T,R_0) := C_\bullet(T,R_0)^t$ where the right hand side is given the standard basis, it is easy to see that
$$C_S^\bullet = C^\bullet(T,R_0).$$
Namely, the basis elements of $C_S^k$ are the stable classes $[\sigma]_{R_0}$, where $\sigma\in T_0$ is a $k$-cell.
As explained at the end of Subsection \ref{ss:BM}, the basis elements of  $C^k(T,R_0)$ are also the equivalence classes $[\sigma]_{R_0}$ where $\sigma\in T_0$ is a $k$-cell.
Moreover, the differentials for $C_S^\bullet$ in Eq.~(\ref{e:delta0-intro}) and Eq.~(\ref{e:delta1-intro}) agree with those for $C^\bullet(T,R_0)$ given in Eq.~(\ref{e:deltak-SU}).
%
Applying the transpose to both sides, we get
\begin{equation*}
C^{ST}_\bullet={C_S^\bullet}^t = C^\bullet(T,R_0)^t = (C_\bullet(T,R_0)^t)^t = C_\bullet(T,R_0).
\end{equation*}
\end{proof}



\begin{lem}\label{l:W=rs}
For $k\in\{0,1,2\}$, the connecting maps $W^k:C^k_S\to C^k_S$ can be obtained from the replacing-equivalence-relation chain map $r^{k}$ and the section chain map $s^{k}$ by the following equality:
$$W^k=r^k\circ s^k,\qquad k\in\{0,1,2\}.$$
\end{lem}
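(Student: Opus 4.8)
The plan is to verify the identity $W^k = r^k\circ s^k$ directly on the standard basis, exploiting Lemma~\ref{l:CST-C0}, which gives $C_S^\bullet = C^\bullet(T,R_0)$, so that both $W^k$ and $r^k\circ s^k$ are endomorphisms of the same free abelian group $C^k(T,R_0)$ whose basis is the set of stable $k$-cells $[\sigma]_{R_0}$. It therefore suffices to evaluate both sides on a basis element $[\sigma]_{R_0}$ and compare.

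First I would unwind $r^k\circ s^k$. Applying $(\Lambda'^k)^{-1}$ sends $[\sigma]_{R_0}$ to the expanded stable cell $[\sigma']_{R_{-1}}$ with $\sigma':=\lambda\sigma+x\in T_{-1}$; applying $s'^k$ expands this over the $k$-cells $\tau$ of $T_0$ lying in $\tilde\omega_{-1}(T_{-1}(\open{\sigma}\,\!'))$ with $h'_1(\tau)=\sigma'$, weighted by the orientation signs $\delta_{\sigma',\tau}$; and applying $r^k$ simply replaces each $R_{-1}$-class by the corresponding $R_0$-class. The net formula is
\begin{equation*}
r^k\circ s^k([\sigma]_{R_0})=\sum_{\substack{\tau\in\tilde\omega_{-1}(T_{-1}(\open{\sigma}\,\!'))\\ h'_1(\tau)=\sigma'}}\delta_{\sigma',\tau}\,[\tau]_{R_0}.
\end{equation*}

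Next I would place this beside the explicit descriptions of $W^0=W_V$, $W^1=W_E$, $W^2=W_F$ from Lemmas~\ref{l:Wv}, \ref{l:We}, \ref{l:Wf}, each of which expands a stable cell $[\sigma]$ over those $k$-cells of $T_1$ that homotope to $\sigma$ under $h_1$, with prescribed orientation signs. The decisive point is the self-similarity built into the definition of $h'_1$: on the expanded prototiles of $T_{-1}$ the homotopy $h'_1$ is, up to scaling by $\lambda$, identical to $h_1$ on the prototiles of $T_0$. Consequently the pair $(h_1:T_1\to T_0)$ and $(h'_1:T_0\to T_{-1})$ carry the same combinatorial and orientation data one level apart in the tower. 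Since scaling by $\lambda$ sends $T_1$-cells to $T_0$-cells of the same stable type, under the tower identification of stable cells the indexing set $\{\tau\in\tilde\omega_{-1}(T_{-1}(\open{\sigma}\,\!')):h'_1(\tau)=\sigma'\}$ is carried bijectively onto the set of $T_1$-cells indexing $W^k([\sigma])$, with orientation signs $\delta_{\sigma',\tau}$ matching those of $W^k$. For $k=2$ both sums collapse to a single counterclockwise-oriented term ($\delta=+1$), matching $W_F([f])=[f']$; for $k=0$ all signs are trivially $+1$, matching $W_V$.

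The main obstacle I anticipate is the sign/orientation bookkeeping for $k=1$: I must check that the orientation of an edge $\tau\in T_0$ relative to its homotopic image $\sigma'\in T_{-1}$ agrees with the orientation sign appearing in $W_E$ (Lemma~\ref{l:We}), including the cancellation that lets the two oppositely-oriented sums of Lemma~\ref{l:We} be combined into a single signed sum. This reduces to tracking how $h'_1$, as the $\lambda$-scaled copy of $h_1$, acts on edge orientations, which is precisely the data already exploited in the two commuting-differential lemmas above. Once this term-by-term matching of signed indexing sets is established for each $k\in\{0,1,2\}$, the identity $W^k=r^k\circ s^k$ follows.
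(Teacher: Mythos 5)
Your proposal is correct and follows essentially the same route as the paper: unwind $r^k\circ s^k=r^k\circ s'^k\circ(\Lambda'^k)^{-1}$ on a stable-cell basis element to get the signed sum over $\tau\in\tilde\omega_{-1}(T_{-1}(\open{\sigma}\,\!'))$ with $h'_1(\tau)=\sigma'$, and then match this matrix with that of the connecting map one level up the tower, the match being guaranteed by the self-similarity of the homotopy ($h'_1$ is the $\lambda$-scaled copy of $h_1$). The only difference is presentational — the paper phrases the comparison as an equality of matrices between $W_0^k:C^k(T_0,R_0)\to C^k(T_1,R_1)$ and $r^k\circ s'^k:C^k(T_{-1},R_{-1})\to C^k(T,R_0)$ under the $\Lambda$-identifications, while you compare directly against the formulas of Lemmas \ref{l:Wv}, \ref{l:We}, \ref{l:Wf}, and you are somewhat more explicit about the $k=1$ orientation bookkeeping, which the paper leaves implicit.
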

\begin{proof}
By the standard isomorphism $\Lambda'^k$ in Eq.~(\ref{e:LambdaPrime}) that identifies stable $k$-cells of $T_{-1}$ with stable $k$-cells of $T$, 
and by $\Lambda^k$ defined similarly, that identifies the stable $k$-cells of $T$ with stable $k$-cells of $T_{1}$, it suffices to show that 
$W_0^k:C^k(T_0,R_0)\to C^k(T_1,R_{1})$ given by
$$W_0^k([\sigma]_{R_0}):=\sum_{\hat{\tau}\in\tilde\omega_0(T(\sigma^\circ)),\,\,\,\, h_1(\hat{\tau})=\sigma} \delta_{\sigma,\hat{\tau}}\,\, [\hat{\tau}]_{R_1}
$$
has the same matrix as 
$r^k\circ s'^k:C^k(T_{-1},R_{-1})\to C^k(T,R_0)$ in their standard basis.
This holds because the composition is
\begin{eqnarray*}
r^k(s'^k([\sigma']_{R_{-1}}))&=&  
    \sum_{\tau\in \tilde\omega_{_{-1}}(T_{-1}(\open{\sigma}\,\!')),\,\, h_1'(\tau)=\sigma'}\delta_{\sigma',\tau}\,\cdot\,r^k([\tau]_{R_{-1}})\\
&=&  
    \sum_{\tau\in\tilde\omega_{_{-1}}(T_{-1}(\open{\sigma}\,\!')),\,\, h_1'(\tau)=\sigma'}\delta_{\sigma',\tau}\,\cdot\,[\tau]_{R_{0}}.
\end{eqnarray*}
\end{proof}


\begin{lem}\label{l:PE-ST}
For $k\in\{0,1,2\}$ 
$$H_k({W^\bullet}^t) = H_k(q_\bullet)^{-1}H_k(i_\bullet).$$ 
\end{lem}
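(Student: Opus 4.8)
The plan is to reduce the identity to formal manipulation of maps already constructed, using three inputs: the factorization $W^k=r^k\circ s^k$ from Lemma~\ref{l:W=rs}, the section identity $g^k\circ s^k=\mathrm{id}$ from Lemma~\ref{l:gs=id}, and the transpose conventions $i_k=(r^k)^t$, $q_k=(g^k)^t$, together with the fact recalled above that $q_\bullet$ is a quasi-isomorphism. Everything happens in the category of chain complexes of finitely generated free abelian groups with fixed bases, so transposition is literal matrix transposition: it reverses the order of composition and interchanges cochain maps with chain maps.

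First I would transpose the factorization of Lemma~\ref{l:W=rs}. As maps $C^k(T,R_0)\to C^k(T,R_0)$ we have $W^k=r^k\circ s^k$, so
\[(W^k)^t=(r^k\circ s^k)^t=(s^k)^t\circ(r^k)^t=(s^k)^t\circ i_k.\]
Since $s^\bullet$ is a cochain map (it commutes with the $\delta^k$ by the two lemmas preceding Lemma~\ref{l:gs=id}), its transpose $(s^\bullet)^t$ is a chain map $C_\bullet(T,R_{-1})\to C_\bullet(T,R_0)$, while $i_\bullet$ is a chain map $C_\bullet(T,R_0)\to C_\bullet(T,R_{-1})$. Reading the displayed equality in each degree therefore gives the equality of chain endomorphisms ${W^\bullet}^t=(s^\bullet)^t\circ i_\bullet$ of $C^{ST}_\bullet=C_\bullet(T,R_0)$ (Lemma~\ref{l:CST-C0}). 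Applying $H_k$ and using functoriality yields $H_k({W^\bullet}^t)=H_k((s^\bullet)^t)\circ H_k(i_\bullet)$.

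Next I would identify $H_k((s^\bullet)^t)$ with $H_k(q_\bullet)^{-1}$. Transposing the section identity $g^k\circ s^k=\mathrm{id}$ from Lemma~\ref{l:gs=id} gives $(s^k)^t\circ(g^k)^t=\mathrm{id}$, that is $(s^\bullet)^t\circ q_\bullet=\mathrm{id}$ as chain maps. Passing to homology, $H_k((s^\bullet)^t)\circ H_k(q_\bullet)=\mathrm{id}$; and because $q_\bullet$ is a quasi-isomorphism, $H_k(q_\bullet)$ is invertible, so right-composition with $H_k(q_\bullet)^{-1}$ gives $H_k((s^\bullet)^t)=H_k(q_\bullet)^{-1}$. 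Substituting into the previous paragraph produces exactly $H_k({W^\bullet}^t)=H_k(q_\bullet)^{-1}H_k(i_\bullet)$, as claimed.

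The argument is essentially bookkeeping, so the only real care needed is directional: one must track that transposition reverses compositions and turns the cochain maps $s^\bullet$, $g^\bullet$ into the chain maps $(s^\bullet)^t$, $q_\bullet$, and that $H_k(q_\bullet)$ may be inverted solely because of the quasi-isomorphism property quoted from \cite[Lemma~4.14]{WaltonHom}. I anticipate no genuine obstacle beyond checking that the degree indices of the transposed maps match the chain complex $C^{ST}_\bullet$, which is immediate from the conventions $C^{ST}_k=(C_S^k)^t$ and $C^{ST}_\bullet=C_\bullet(T,R_0)$.
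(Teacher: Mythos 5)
Your proof is correct and follows the same route as the paper's: transpose the factorization $W^\bullet=r^\bullet\circ s^\bullet$ of Lemma~\ref{l:W=rs} and the section identity $g^\bullet\circ s^\bullet=\mathrm{id}$ of Lemma~\ref{l:gs=id}, use $i_\bullet=(r^\bullet)^t$, $q_\bullet=(g^\bullet)^t$, and invert $H_k(q_\bullet)$ via the quasi-isomorphism property. The only difference is that you spell out the directional bookkeeping and the left-inverse-implies-inverse step, which the paper leaves implicit.
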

\begin{proof}
Applying the functor $H_k((\_\!\_)^t)$ to the cochain map $W^\bullet:C_S^\bullet\to C_S^\bullet$ we get the following map
\begin{equation*}
H_k({W^\bullet}^t):H_k({C^\bullet_S}^t)\to H_k({C^\bullet_S}^t).
\end{equation*}
Since  $W^\bullet=r^\bullet\circ s^\bullet$ and $g^\bullet\circ s^\bullet=id^\bullet$
and $r^\bullet=i_\bullet^t$ and $g^\bullet=q_\bullet^t$, and $q^\bullet$ is a quasi-isomorphism we have that
$$H_k({W^\bullet}^t)=H_k({s^\bullet}^t)H_k({r^\bullet}^t)=H_k(q_\bullet)^{-1}H_k(i_\bullet).$$
\end{proof}

Recall that the tiling space $\Omega$ is assumed to be FLC, and that the substitution map $\omega$ is primitive and injective. 
\begin{thm}\label{t:PE-ST}
Let $T\in\Omega$ be a tiling of dimension less or equal to two, with convex prototiles.
For $k\in\{0,1,2\}$, the stable-transpose-homology groups are related to the \v{C}ech cohomology groups by
$$H^{ST}_k(T)=\breve{H}^{d-k}(\Omega).$$
\end{thm}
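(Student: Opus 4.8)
The plan is to identify the stable-transpose homology $H^{ST}_k(T)$ with the Borel--Moore pattern-equivariant homology $H_k^{BM,PE}(T)$ of the tiling, and then to pass to \v{C}ech cohomology via the Poincar\'e-type duality between PE-homology and PE-cohomology. The reductions already assembled in this subsection do most of the bookkeeping. First I would invoke Lemma \ref{l:CST-C0} to rewrite $C^{ST}_\bullet=C_\bullet(T,R_0)$, so that the directed system defining $H^{ST}_k(T)$ is
$$H_k(C_\bullet(T,R_0)) \xrightarrow{\,H_k({W^\bullet}^t)\,} H_k(C_\bullet(T,R_0)) \xrightarrow{\,H_k({W^\bullet}^t)\,} \cdots,$$
and then apply Lemma \ref{l:PE-ST} to factor the connecting map as $H_k({W^\bullet}^t)=H_k(q_\bullet)^{-1}H_k(i_\bullet)$, where $i_\bullet=i^{(0)}_\bullet\colon C_\bullet(T,R_0)\to C_\bullet(T,R_{-1})$ is the inclusion and $q_\bullet=q^{(0)}_\bullet$ is the relabeled-children quasi-isomorphism of \cite[Lemma~4.14]{WaltonHom}.

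The heart of the argument is a telescope identification replacing this self-map system on a fixed complex by the honest tower of inclusions $C_\bullet(T,R_0)\xrightarrow{i^{(0)}}C_\bullet(T,R_{-1})\xrightarrow{i^{(1)}}\cdots$ whose limit is $C_\bullet^\infty(T)$. I would define the isomorphisms $\phi_n:=H_k\big(q^{(n-1)}_\bullet\circ\cdots\circ q^{(0)}_\bullet\big)\colon H_k(C_\bullet(T,R_0))\xrightarrow{\cong}H_k(C_\bullet(T,R_{-n}))$, each being a composite of quasi-isomorphisms, and verify the compatibility $\phi_{n+1}\circ H_k({W^\bullet}^t)=H_k(i^{(n)}_\bullet)\circ\phi_n$. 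This reduces to the commuting squares
$$q^{(j+1)}_\bullet\circ i^{(j)}_\bullet = i^{(j+1)}_\bullet\circ q^{(j)}_\bullet \qquad (\text{on homology}),$$
which, fed into the chain $H_k(q^{(n)}\cdots q^{(1)})H_k(i^{(0)})=H_k(i^{(n)})H_k(q^{(n-1)}\cdots q^{(0)})$ inductively, yields the required intertwining. The family $\{\phi_n\}$ is then an isomorphism of directed systems, so that
$$H^{ST}_k(T)\;\cong\;\lim_{\to}\big(H_k(C_\bullet(T,R_{-n})),\,H_k(i^{(n)}_\bullet)\big)\;=\;H_k\big(C_\bullet^\infty(T)\big),$$
the last equality because homology commutes with direct limits.

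To finish, I would use \cite[Lemma~4.13]{WaltonHom}, which gives that $C_\bullet^\infty(T)$ is quasi-isomorphic to the Borel--Moore PE complex of Eq.~(\ref{e:BM-PE}), so $H_k(C_\bullet^\infty(T))=H_k^{BM,PE}(T)$. Then, for tilings of dimension $d\le 2$ with convex prototiles, \cite{WaltonHom} supplies the Poincar\'e duality $H_k^{BM,PE}(T)\cong H^{d-k}_{PE}(T)$ between PE-homology and PE-cohomology, and the standard identification of PE-cohomology with \v{C}ech cohomology (cf.~\cite{Sadun-PE}, and Section \ref{s:preliminaries}) gives $H^{d-k}_{PE}(T)=\breve H^{d-k}(\Omega)$. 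Chaining these isomorphisms produces $H^{ST}_k(T)=\breve H^{d-k}(\Omega)$, with the degree shift $d-k$ coming precisely from the duality; this also matches the $K$-theory formulas of Theorem \ref{t:unstableKtheory} against Theorem \ref{t:unstableKtheory-AP}.

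The hard part will be twofold. The delicate but elementary-looking point is the commutation of the inclusion maps $i^{(j)}_\bullet$ with the relabeling quasi-isomorphisms $q^{(j)}_\bullet$ on homology; this is what allows the self-map direct limit on the finite complex $C_\bullet(T,R_0)$ to be folded onto the genuine PE tower, and it rests on the compatibility implicit in Walton's construction together with the explicit descriptions of $r^\bullet$, $s^\bullet$, $g^\bullet$ given above (recall $W^k=r^k\circ s^k$ and $g^k\circ s^k=\mathrm{id}$). The deeper input, which I would cite rather than reprove, is the duality $H_k^{BM,PE}(T)\cong\breve H^{d-k}(\Omega)$: this is exactly where convexity of the prototiles is needed, to guarantee the transversality underlying the PE Poincar\'e duality, which is why that hypothesis appears in the statement.
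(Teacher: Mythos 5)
Your proposal follows essentially the same route as the paper's proof: both identify $H^{ST}_k(T)$ with the limit $\lim_{\to}\big(H_k(C_\bullet(T,R_0)),\,H_k(q_\bullet)^{-1}H_k(i_\bullet)\big)$ via Lemma \ref{l:CST-C0} and Lemma \ref{l:PE-ST}, fold this self-map system onto the tower of inclusions computing $H_k(C^\infty_\bullet(T))$ (the paper's commutative diagram with vertical maps $id$, $H_k(q_\bullet)$, $H_k(q^{(1)}_\bullet\circ q_\bullet),\ldots$ is exactly your telescope of $\phi_n$'s), and then chain Walton's quasi-isomorphism $C_\bullet^\infty(T)\hookrightarrow C_\bullet^{BM,PE}(T)$ with PE Poincar\'e duality and the PE-to-\v{C}ech identification. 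The intertwining relations $H_k(q^{(j+1)})H_k(i^{(j)})=H_k(i^{(j+1)})H_k(q^{(j)})$ that you flag as the delicate point are precisely what the paper's assertion that its diagram commutes rests on, so your write-up simply makes that step explicit.
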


\begin{proof}
Let
$$H^{\infty}_{k}:=\lim_{\to}(H_{k}(C_\bullet(T,R_0)),\,\,  H_k(q_\bullet)^{-1}H_k(i_\bullet)).$$
Then, by Lemma \ref{l:CST-C0} and Lemma \ref{l:PE-ST}, we have that
$$H^{ST}_k= H_k^{\infty}\qquad k\in\{0,1,2\}.$$
The theorem follows by the previous equality and by the isomorphisms
$$  H_k^{\infty}\cong H_k^{PE}\cong H_{PE}^{d-k}\cong \breve{H}^{d-k}(\Omega),$$
where the first isomorphism is by \cite[Thm.~4.5]{WaltonHom}, the second is by \cite[Thm.~2.2]{WaltonHom},
and the third isomorphism is in \cite{Kel-PE}, \cite{KelPut-PE},  \cite{Sadun-PE}.
We should remark that the first isomorphism factors through $H_k(C^\infty_\bullet(T))$. Namely,
the groups $H_k^{\infty}$ and $H_k(C^\infty_\bullet(T))$ are obtained by taking direct limit of the rows in the following commutative diagram whose vertical maps are isomorphisms
\begin{equation}
\begin{gathered}
\xymatrix@C=1.2cm{
H_k(C_\bullet(T,R_0))\ar[r]^{H_k({W^\bullet}^t)}\ar[d]_{id}
&H_k(C_\bullet(T,R_0))\ar[r]^{H_k({W^\bullet}^t)}\ar[d]_{H_k(q_\bullet)}
&H_k(C_\bullet(T,R_0))\ar[r]^{\qquad\quad H_k({W^\bullet}^t)}\ar[d]_{H_k(q^{(1)}_\bullet\circ q_\bullet)}
&\\
%
H_k(C_\bullet(T,R_0))\ar[r]^{H_k(i_\bullet)}
&H_k(C_\bullet(T,R_{-1}))\ar[r]^{H_k(i^{(1)}_\bullet)}
&H_k(C_\bullet(T,R_{-2}))\ar[r]^{\qquad\quad H_k(i^{(2)}_\bullet)}
&,
}
\end{gathered}
\end{equation}
and one also uses that $C_\bullet^\infty(T)\hookrightarrow C_\bullet^{BM,PE}(T)$ is a quasi-isomorphism.
\end{proof}
%
\begin{rem}
For tilings with non-convex prototiles, one can either refine the substitution using convex tiles, or adapt a recipe similar to the Kites-Darts Penrose tiling shown in \cite[Example~4.3]{WaltonHom}.
\end{rem}
%


We should remark that we denote cohomology groups in the following theorem using the notational convention given in Subsection \ref{ss:cohomologynotation}.
\begin{thm}\label{t:UCT}
With notation from the beginning of this section, the following diagram commutes, and the rows in the diagram are short exact sequences which split (non-canonically)
\begin{equation*}
\xymatrix{
0\ar[r]
&\Ext_\Z^{1}(H^{k+1}(C_S^\bullet),\Z)\ar[r]\ar[d]^{\Ext_{\Z}^1(H^{k+1}(W^\bullet),\Z)}
&H_k(C_\bullet^{ST})\ar[r]\ar[d]^{H_k({W^\bullet}^t)}
&\Hom(H^k(C^\bullet_S),\Z)\ar[r]\ar[d]^{\Hom(H^k(W^\bullet),\Z)}
&0\\
%
0\ar[r]
&\Ext_{\Z}^{1}(H^{k+1}(C_S^\bullet),\Z)\ar[r]
&H_k(C_\bullet^{ST})\ar[r]
&\Hom(H^k(C^\bullet_S),\Z)\ar[r]
&0.
}
\end{equation*}
\end{thm}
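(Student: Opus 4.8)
The plan is to identify this with the algebraic Universal Coefficient Theorem for the $\Z$-dual of a bounded cochain complex of finitely generated free abelian groups, applied to $C_S^\bullet$, together with its naturality and the splitting afforded by working over the PID $\Z$. Since each $C_S^k$ ($k=0,1,2$) is free of finite rank and $C^{ST}_\bullet=\Hom(C_S^\bullet,\Z)$ has boundary maps $(\delta^{k-1})^t=\Hom(\delta^{k-1},\Z)$, the group $H_k(C^{ST}_\bullet)$ is exactly the homology of the dual complex, and dualization shifts the $\Ext$ contribution up one degree, producing the stated $H^{k+1}$ term.

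First I would build the short exact sequence. Setting $Z^k=\ker\delta^k$ and $B^k=\im\delta^{k-1}$, both free as subgroups of the free group $C_S^k$, the sequence $0\to Z^k\to C_S^k\xrightarrow{\delta^k}B^{k+1}\to 0$ splits because $B^{k+1}$ is free, and the sequence $0\to B^{k+1}\to Z^{k+1}\to H^{k+1}(C_S^\bullet)\to 0$ computes the cohomology. Applying $\Hom(-,\Z)$ to the first (split) sequence identifies $C^{ST}_k$ as an extension of $\Hom(Z^k,\Z)$ by $\Hom(B^{k+1},\Z)$, while applying $\Hom(-,\Z)$ to the second and using $\Ext^1_\Z(Z^{k+1},\Z)=0$ exhibits $\Ext^1_\Z(H^{k+1}(C_S^\bullet),\Z)$ as $\coker\big(\Hom(Z^{k+1},\Z)\to\Hom(B^{k+1},\Z)\big)$. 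A snake-lemma diagram chase then assembles these into the exact sequence
$$0\to\Ext^1_\Z(H^{k+1}(C_S^\bullet),\Z)\to H_k(C^{ST}_\bullet)\to\Hom(H^k(C_S^\bullet),\Z)\to 0.$$

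Next, for commutativity I would invoke that $W^\bullet$ is a cochain map (Theorem \ref{t:W-VEF}), so it preserves $Z^k$ and $B^k$ and induces $H^k(W^\bullet)$ on cohomology; hence all of the short exact sequences above are natural in $W^\bullet$. The functors $\Hom(-,\Z)$ and $\Ext^1_\Z(-,\Z)$ carry these commuting squares to commuting squares, and the connecting map of the snake lemma is natural, so the three induced vertical maps are precisely $\Ext^1_\Z(H^{k+1}(W^\bullet),\Z)$, $H_k({W^\bullet}^t)$, and $\Hom(H^k(W^\bullet),\Z)$, which is what commutativity of the diagram asserts.

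Finally, the sequence splits because $\Hom(H^k(C_S^\bullet),\Z)$ is the dual of a finitely generated abelian group, hence free and therefore projective, so the surjection onto it admits a section; this section cannot be chosen naturally in $W^\bullet$, which is why the splitting is only non-canonical and does not appear in the diagram. The main obstacle I anticipate is the naturality bookkeeping: one must confirm that propagating $W^\bullet$ through the two splittings and the snake connecting homomorphism reproduces exactly $\Hom(H^k(W^\bullet),\Z)$ and $\Ext^1_\Z(H^{k+1}(W^\bullet),\Z)$, with the degree indices $k$ and $k+1$ correctly matched, rather than a shifted or twisted version.
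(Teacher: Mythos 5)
Your proposal is correct and takes essentially the same route as the paper: the paper's proof simply cites the universal coefficient theorem for cohomology, applied naturally to the cochain self-map $W^\bullet:C_S^\bullet\to C_S^\bullet$, and then identifies $\Hom(C_S^\bullet,\Z)$ with $C_\bullet^{ST}$, exactly as you do. The only difference is that you additionally unpack the standard proof of the cited UCT (the free splittings $0\to Z^k\to C_S^k\to B^{k+1}\to 0$, the identification of $\Ext^1_\Z(H^{k+1}(C_S^\bullet),\Z)$ as a cokernel, snake-lemma naturality, and the splitting via freeness of $\Hom(H^k(C_S^\bullet),\Z)$), which the paper treats as a black box.
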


\begin{proof}
Applying the universal coefficient theorem (UCT) for cohomology to the chain map $W^\bullet:C_S^\bullet\to C_s^\bullet$ (using the notation of Subsection \ref{ss:cohomologynotation}) we get the following commutative diagram, where the rows are short exact sequences which split (non-canonically)
\begin{equation*}
\xymatrix{
0\ar[r]
&\Ext_\Z^{1}(H^{k+1}(C_S^\bullet),\Z)\ar[r]\ar[d]^{\Ext_{\Z}^1(H^{k+1}(W^\bullet),\Z)}
&H_k(\Hom(C^\bullet_S,\Z))\ar[r]\ar[d]^{H_k(\Hom(W^\bullet,\Z))}
&\Hom(H^k(C^\bullet_S),\Z)\ar[r]\ar[d]^{\Hom(H^k(W^\bullet),\Z)}
&0\\
%
0\ar[r]
&\Ext_{\Z}^{1}(H^{k+1}(C_S^\bullet),\Z)\ar[r]
&H_k(\Hom(C_S^\bullet,\Z))\ar[r]
&\Hom(H^k(C^\bullet_S),\Z)\ar[r]
&0.
}
\end{equation*}
The result follows by identifying $\Hom(C^\bullet_S,\Z)$ with ${C^{ST}_\bullet}$. 
\end{proof}

\begin{defn}[S-tor, ST-torFree] 
Define the following homology groups
\begin{eqnarray*}
 H^{\text{S-tor}}_{k}&:=&\lim_{\to}\big(
									 \Ext_\Z^{1}(H^{k}(C_S^\bullet),\Z) 
 									,\,\, 
 									\Ext_{\Z}^1(H^{k}(W^\bullet),\Z)
 \big)\\
 H^{\text{ST-torFree}}_{k}&:=&\lim_{\to}\big( 
  							   \Hom(H^k(C^\bullet_S),\Z)
  								,\,\, 
								\Hom(H^k(W^\bullet),\Z)
 \big).
 \end{eqnarray*}
\end{defn}
%
\begin{cor}\label{c:torsionmovement} We have
\begin{eqnarray*}
H_k(C_\bullet^{ST})&\cong&\Ext_\Z^{1}(H^{k+1}(C_S^\bullet),\Z)\oplus\Hom(H^k(C^\bullet_S),\Z),
\end{eqnarray*}
where the isomorphism is non-canonical.
Note that the stable-torsion part moves from level $k+1$ to level $k$ of the unstable-torsion part. 

Moreover, $H^{\text{S-tor}}_{k}$ is a torsion group, $H^{\text{ST-torFree}}_{k}$ is a torsion free group, and
\begin{equation*}
\xymatrix{
0\ar[r]
& H^{\text{S-tor}}_{k+1}\ar[r]
&H^{ST}_{k}\ar[r]
& H^{\text{ST-torFree}}_{k}\ar[r]
&0
}
\end{equation*}
is a short exact sequence (might not split).
In particular, 
\begin{itemize}
\item [(1)] if $T$ is a one-dimensional tiling, then there is no stable nor unstable torsion part.
\item [(2)] if $T$ is a two-dimensional tiling, then in the stable case, the groups $H^0_S$ and $H^2_S=\Z$ are torsion free, 
but $H^1_S$ can contain torsion; in the unstable case, the groups
$H_2^{ST}=\Z$, $H_1^{ST}$ are torsion free, but $H_0^{ST}$ can contain torsion.
\end{itemize}
\end{cor}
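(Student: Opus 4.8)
The goal is to prove Corollary~\ref{c:torsionmovement}, which has three components: the non-canonical splitting of $H_k(C_\bullet^{ST})$, the short exact sequence relating $H^{ST}_k$ to the torsion and torsion-free limit groups, and the concrete consequences (1)--(2) for dimensions $1$ and $2$. The entire corollary should fall out of Theorem~\ref{t:UCT} together with a handful of algebraic observations, so the work is mostly bookkeeping rather than deep argument. Let me sketch it.

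First I would derive the splitting. The short exact sequence in Theorem~\ref{t:UCT} splits (non-canonically) at each level $k$, which immediately yields
$$H_k(C_\bullet^{ST})\cong \Ext_\Z^{1}(H^{k+1}(C_S^\bullet),\Z)\oplus\Hom(H^k(C^\bullet_S),\Z),$$
and the remark about torsion moving from level $k+1$ to level $k$ is just the observation that the $\Ext$ term carries the torsion of $H^{k+1}(C_S^\bullet)$ (since $\Ext_\Z^1(G,\Z)\cong G_{\mathrm{tor}}$ for finitely generated $G$) while the $\Hom$ term is always free. Next, taking the direct limit of the commuting ladder in Theorem~\ref{t:UCT} gives the desired short exact sequence. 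Here I would invoke exactness of the direct limit functor on abelian groups (already used in the proof of Theorem~\ref{t:Ktheory-tung}): applying $\lim_{\to}$ levelwise to the bottom-row exact sequences, with connecting maps $\Ext_\Z^1(H^{k+1}(W^\bullet),\Z)$, $H_k({W^\bullet}^t)$, and $\Hom(H^k(W^\bullet),\Z)$ respectively, produces
$$\xymatrix{0\ar[r]& H^{\text{S-tor}}_{k+1}\ar[r]&H^{ST}_{k}\ar[r]& H^{\text{ST-torFree}}_{k}\ar[r]&0,}$$
using $H^{ST}_k=\lim_{\to}(H_k(C_\bullet^{ST}),H_k({W^\bullet}^t))$ from Definition~\ref{d:ST-S-PE} and the definitions of $H^{\text{S-tor}}_k$, $H^{\text{ST-torFree}}_k$. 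The index shift to $k+1$ in the $\Ext$ term is exactly the shift already present in the non-limit sequence.

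For the claim that $H^{\text{S-tor}}_k$ is torsion and $H^{\text{ST-torFree}}_k$ is torsion free, I would argue as follows. Each $\Ext_\Z^1(H^k(C_S^\bullet),\Z)$ is finite (it is the torsion subgroup of a finitely generated group), but a direct limit of finite groups can fail to be torsion only if elements of unbounded order accumulate — however every element of the limit comes from some finite stage and hence has finite order, so $H^{\text{S-tor}}_k$ is a torsion group. Dually, each $\Hom(H^k(C^\bullet_S),\Z)$ is free abelian, and a direct limit of torsion-free groups is torsion free (if $nx=0$ in the limit, it is witnessed at a finite stage where the group is torsion free, forcing $x=0$); thus $H^{\text{ST-torFree}}_k$ is torsion free.

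Finally, items (1) and (2) are read off from the structure of $C_S^\bullet$. The key input is that $H^2(C_S^\bullet)=\Z$ in dimension $2$ and $H^2(C_S^\bullet)=0$ in dimension $1$ (Theorem~\ref{t:W-VEF}), that $H^0(C_S^\bullet)=\ker\delta^0$ is torsion free (it is a subgroup of $\Z^{sV}$), and that $\Ext_\Z^1$ of a free group vanishes. In dimension $1$ the complex has only $H^0,H^1$, both torsion free (subquotients computations give free groups), so all $\Ext$ terms vanish and there is no stable or unstable torsion; the exact sequence then degenerates and $H^{ST}_k\cong H^{\text{ST-torFree}}_k$ is torsion free — recovering the torsion-freeness already asserted in Theorems~\ref{t:stableKtheory} and~\ref{t:unstableKtheory}. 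In dimension $2$, torsion can live only in $H^1(C_S^\bullet)$, since $H^0$ and $H^2=\Z$ are free. Hence $H^{\text{S-tor}}_k\ne 0$ is possible only for $k=1$, so by the splitting the unstable torsion appears only in $H_0^{ST}$ (the $\Ext_\Z^1(H^1,\Z)$ summand landing at level $k=0$), while $H_2^{ST}=\Z$ and $H_1^{ST}$ remain torsion free. The main obstacle, modest as it is, will be justifying the levelwise direct-limit arguments rigorously — making sure that splittings at finite stages need not be compatible (hence the limit splitting is asserted only for the $H_k(C_\bullet^{ST})$ isomorphism and the exactness, not for a canonical global splitting), and confirming that the connecting maps in the two towers genuinely restrict and descend to the $\Ext$ and $\Hom$ subquotients, which is guaranteed by naturality of the UCT splitting in Theorem~\ref{t:UCT}.
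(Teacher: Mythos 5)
Your proposal is correct and follows essentially the same route as the paper: the splitting comes from the (non-canonically) split rows of the diagram in Theorem~\ref{t:UCT}; the short exact sequence comes from exactness of direct limits of abelian groups applied levelwise to that ladder; the torsion/torsion-free claims for $H^{\text{S-tor}}_{k}$ and $H^{\text{ST-torFree}}_{k}$ are standard direct-limit facts (the paper phrases them as ``a quotient of a direct sum of torsion groups is torsion'' and ``a subgroup of $\Q^n$ is torsion free,'' you phrase them by element-chasing at finite stages; both are fine); and items (1)--(2) are read off from which of the groups $H^k(C_S^\bullet)$ can carry torsion, exactly as in the paper.

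One justification, however, would fail as written. In dimension $1$ you claim $H^1(C_S^\bullet)$ is torsion free because ``subquotient computations give free groups.'' That is not a valid principle: in dimension $1$ one has $H^1(C_S^\bullet)=\coker\delta^0$, and cokernels of integer matrices are routinely not torsion free (e.g.\ $\coker(2\colon\Z\to\Z)=\Z/2$). The torsion-freeness here is a fact about the tiling, not about subquotients: by connectedness of the tiling all stable edges become identified in $\coker\delta^0$, giving $H^1(C_S^\bullet)\cong\Z$; this is the dimension-$1$ analogue of Lemma~\ref{l:plane-ZsFoverImDelta1}, and it is precisely what the paper invokes at this point. Without it, item (1) does not follow, since both the stable side ($H^1_S$ torsion free) and the unstable side (vanishing of $\Ext_\Z^{1}(H^{1}(C_S^\bullet),\Z)$, which would otherwise feed torsion into $H_0^{ST}$) rest on this one computation. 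The fix is a single line, but it must be the connectedness argument rather than the parenthetical you gave.
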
  
\begin{proof}
The direct sum follows from the short exact sequence of the top row of the diagram in the theorem, which splits (non-canonically). 

Recall that the direct limit of a directed system of short exact sequences of abelian groups is a short exact sequence. This is because the direct limit is an exact functor in the category of abelian groups. From this fact and the theorem, the short exact sequence of the corollary follows.

Recall that if $G$ is an abelian torsion group then any quotient of an infinite direct sum of $G$'s is a torsion group.
Since $\Ext_\Z^{1}(H^{k+1}(C_S^\bullet),\Z)$ is a torsion group and the direct limit $H^{\text{S-tor}}_{k+1}$ is in particular a quotient of an infinite direct sum of $\Ext_\Z^{1}(H^{k+1}(C_S^\bullet),\Z)$, we get that $H^{\text{S-tor}}_{k+1}$ is a torsion group (it might be the zero group).


The group $H^{\text{ST-torFree}}_{k}$ is torsion free since it is isomorphic to a subgroup of $\Q^n$ for some $n\in\N$ (here we used that $\Hom(H^k(C^\bullet_S),\Z)$ is a finitely generated free abelian group). (cf.~Appendix \ref{a.s:direct-limits-AG}). 

If the tiling $T$ is of dimension $d=1$ then $H^1(C_S^\bullet)=\Z$ by a similar proof to Lemma \ref{l:plane-ZsFoverImDelta1}, and thus by the direct sum in this corollary, there is no torsion to move to $H_0(C^{ST}_\bullet)$. Furthermore, since $H_{-1}(C^{ST}_\bullet)=0$, there can be no torsion to move from $H^0(C_S^\bullet)$. Finally, $H_1(C^{ST}_\bullet)=\breve{H}^0(\Omega)=\Z$. Since before taking the direct limit there is no torsion in any of the groups, after taking the direct limit there is still no torsion in any of the groups, that is, there is no torsion in $H_\bullet^{ST}$ nor in $H^\bullet_{S}$.
The conclusion for tilings of dimension 2 is deduced similarly from the following table
\begin{equation*}
\xymatrix@R=0.3cm{
\text{\underline{Unstable}}&&\text{\underline{Stable}}\\
H_2(C_\bullet^{ST})=\Z & & H^2(C^\bullet_{S})=\Z  \ar[lld]^{\text{\!\!\!\!\!\!\!\!\!\!torsion free}} \\ 
   H_1(C_\bullet^{ST}) &  & H^1(C^\bullet_{S})\ar[lld]\\ 
    H_0(C_\bullet^{ST}) &  & H^0(C^\bullet_{S}).
 }
\end{equation*}



\end{proof}
\begin{cor}\label{c:A-At}
If $T\in\Omega$ is a tiling of dimension 1, or if $T$ is of dimension 2 with $H^{1}(C_S^\bullet)$ torsion free then
\begin{eqnarray*}
H^{ST}_k&\cong&
\lim_{\to}
\xymatrix{
\,\,\Z^n\ar[r]^{A_k^t}
&\Z^n\ar[r]^{A_k^t}
&\Z^n\ar[r]^{A_k^t}
&
}\qquad k\in\{0,1,2\}\\
H_{S}^k&\cong&
\lim_{\to}
\xymatrix{
\,\,\Z^n\ar[r]^{A_k}
&\Z^n\ar[r]^{A_k}
&\Z^n\ar[r]^{A_k}
&
}\qquad k\in\{0,1,2\}.
\end{eqnarray*}
where $A_k\in M_n(\Z)$ is the matrix of $H^k(W^\bullet)$ with respect to a fixed basis of $H^k(C_S^\bullet)$, and
 $A_k^t$ is the transpose of $A_k$.
\end{cor}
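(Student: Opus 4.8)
The plan is to reduce both assertions to a single observation: under the stated hypotheses every cohomology group $H^k(C_S^\bullet)$ is a \emph{finitely generated free} abelian group, so that the direct systems defining $H_S^k$ and $H_k^{ST}$ are genuinely systems of copies of $\Z^n$ with a single structure map. First I would verify the freeness. In dimension $1$ we have $H^2(C_S^\bullet)=0$ (there are no faces, so $C_S^2=0$) and $H^1(C_S^\bullet)=\Z$ by the argument of Lemma~\ref{l:plane-ZsFoverImDelta1} adapted to the line (cf.~the proof of Corollary~\ref{c:torsionmovement}), while $H^0(C_S^\bullet)=\ker\delta^0$ is a subgroup of $\Z^{sV}$ and hence free. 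In dimension $2$, $H^2(C_S^\bullet)=\Z$ and $H^0(C_S^\bullet)=\ker\delta^0$ are again free, and $H^1(C_S^\bullet)$ is free by hypothesis. Thus in all admissible cases each $H^k(C_S^\bullet)$ is free of finite rank; fixing a basis identifies it with $\Z^n$ (where $n=n_k$ is its rank) and turns the endomorphism $H^k(W^\bullet)$ into a matrix $A_k\in M_n(\Z)$.

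Granting this, the formula for $H_S^k$ is immediate from Definition~\ref{d:stablecohomology}: one has $H_S^k=\lim_{\to}(H^k(C_S^\bullet),H^k(W^\bullet))\cong\lim_{\to}(\Z^n,A_k)$, the isomorphism being induced by the chosen basis at every stage of the system.

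For $H_k^{ST}$ I would use the splitting from Corollary~\ref{c:torsionmovement}, namely $H_k(C_\bullet^{ST})\cong\Ext_\Z^1(H^{k+1}(C_S^\bullet),\Z)\oplus\Hom(H^k(C_S^\bullet),\Z)$. Since $H^{k+1}(C_S^\bullet)$ is free, $\Ext_\Z^1(H^{k+1}(C_S^\bullet),\Z)=0$, so $H_k(C_\bullet^{ST})\cong\Hom(H^k(C_S^\bullet),\Z)\cong\Hom(\Z^n,\Z)\cong\Z^n$. Under this identification the structure map $H_k({W^\bullet}^t)$ of the system defining $H_k^{ST}$ agrees with $\Hom(H^k(W^\bullet),\Z)$, by the commuting universal-coefficient diagram of Theorem~\ref{t:UCT}, whose $\Ext$ column vanishes here. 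A short linear-algebra check identifies this dual map with $A_k^t$ in the dual basis: if $H^k(W^\bullet)e_j=\sum_iA_{ij}e_i$, then $(e_j^*\circ H^k(W^\bullet))(e_l)=A_{jl}$, i.e.~$e_j^*\circ H^k(W^\bullet)=\sum_lA_{jl}e_l^*$, whose matrix is $A_k^t$. Passing to the direct limit then yields $H_k^{ST}\cong\lim_{\to}(\Z^n,A_k^t)$.

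The only real content is the freeness verification in the first paragraph; once torsion is excluded the $\Ext$ term in the universal coefficient theorem disappears and the passage from $H_S^k$ to $H_k^{ST}$ is pure dualization, which transposes the matrix. The subtlety to flag explicitly is that $H^k(C_S^\bullet)$ and $\Hom(H^k(C_S^\bullet),\Z)$ have the \emph{same} rank $n$, so that $A_k$ and $A_k^t$ act on the same $\Z^n$ and the two limits are honest transposes of one another; this is automatic for free groups of finite rank but deserves a sentence.
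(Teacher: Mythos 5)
Your proposal is correct and follows essentially the same route as the paper's proof: both establish that each $H^k(C_S^\bullet)$ is finitely generated free under the stated hypotheses, then use the universal-coefficient machinery of Theorem \ref{t:UCT}/Corollary \ref{c:torsionmovement} to kill the $\Ext$ term, identify $H_k(C^{ST}_\bullet)$ with $\Hom(H^k(C_S^\bullet),\Z)$ compatibly with the connecting maps, and conclude by the standard fact that the dual map in the dual basis is the transpose. The only cosmetic difference is that you perform the identification term-by-term before passing to the limit, whereas the paper applies the already-limited short exact sequence of Corollary \ref{c:torsionmovement}; the content is identical.
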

\begin{proof}
By assumption and by Corollary \ref{c:torsionmovement}, $H^k(C^\bullet_S)$ is torsion free. Since taking direct limits cannot introduce torsion, $H^k_{S}$ is also torsion free.
By the short exact sequence in Corollary \ref{c:torsionmovement} we get that
$$H_k^{ST}=\lim_{\to}\big( \Hom(H^k(C_S^\bullet),\Z),\Hom(H^k(W^\bullet),\Z)\big),\quad k\in\{0,1,2\}.$$
Since $H^k(C^\bullet_S)$ is a finitely generated torsion free abelian group, it is isomorphic to $\Z^n$ for some $n\in\N_0$.
Choosing such an isomorphism, that is, giving a basis to $H^k(C_S^\bullet)$, then in the dual basis of $H^k(C_S^\bullet)$,
the matrix for the dual map 
$$\Hom(H^k(W^\bullet),\Z):\Hom(H^k(C_S^\bullet),\Z)\to \Hom(H^k(C_S^\bullet),\Z)$$
is well-known (and easily shown) to be the transpose of the matrix. That is $\Hom(H^k(W^\bullet),\Z)=(H^k(W^\bullet))^t$.
The corollary follows immediately.

Note that,
\begin{eqnarray}\label{e:ST-homology-torsionfree}
 H^{ST}_2&=&\lim\limits_{\to}\xymatrix{(\coker\delta^1)^t \ar[r]^{W_F^t}&(\coker\delta^1)^t\ar[r]^{\quad W_F^t}&\cdots}\\
\nonumber H^{ST}_1&=&\lim\limits_{\to} \xymatrix{(\frac{\ker\delta^1}{\im\delta^0})^t\ar[r]^{W_E^t}&(\frac{\ker\delta^1}{\im\delta^0})^t\ar[r]^{W_E^t}&(\frac{\ker\delta^1}{\im\delta^0})^t\ar[r]^{\quad W_E^t}&\cdots}\\
\nonumber H^{ST}_0&=&\lim\limits_{\to} \xymatrix{(\ker\delta^0)^t\ar[r]^{W_V^t}&(\ker\delta^0)^t\ar[r]^{W_V^t}&(\ker\delta^0)^t\ar[r]^{\quad W_V^t}&\cdots}
\end{eqnarray}
\end{proof}



 
\subsection{Examples}
We denote with $(T_0,R_0)$ the tiling $T_0$ with cells partitioned by the equivalence relation $R_0$.
The equivalence classes $[\sigma]_{R_{0}}$, where $\sigma$ is a $k$-cell of $T$, are the stable vertices if $k=0$, the stable edges if $k=1$, and the stable faces if $k=2$.
For tilings of the line, we denote the stable edges with the labels of the prototiles, e.g. $a$, $b$.
The stable vertices we denote them by pairs of edges with a dot in the middle to denote the vertex, e.g. $a.b$.

Similarly, we denote with $(T_{-1},R_{-1})$ the tiling $T_{-1}$ with cells partitioned by the equivalence relation $R_{-1}$.
The $R_{-1}$-equivalence classes of cells of $T_{-1}$ are the stable cells of $T_{-1}$.
For tilings of the line we denote the stable edges and stable vertices of $T_{-1}$ with primes on the labels of the prototiles, e.g. $a'$, $b'$, $a'.b'$.


Similarly, we denote with $(T_{0},R_{-1})$ the tiling $T_{0}$ with cells partitioned by the equivalence relation $R_{-1}$.
However, the similarity with the above ends here.
Each cell in $T_0$ has a parent cell in $T_{-1}$. 
The $R_{-1}$ equivalence classes of cells in $T_0$ are denoted by the label of the stable parent cell (which lives in $T_{-1}$) together with a unique sub-label to distinguish the siblings from each other.


\begin{exam}[Fibonacci tiling]\label{ex:Fib-SU}
Consider the Fibonnacci tiling with protoedges $a,b$ of lengths $|a|=1$, $|b|=1/\phi$, ($\lambda=\phi$=golden ratio) and substitution rule $\omega(a)=ab$, $\omega(b)=a$.
The pairs $(T_0,R_0)$, $(T_{-1},R_{-1})$ $(T_{0},R_{-1})$ are drawn below:\\
\begin{tikzpicture}[scale=1.1]
\node at (10.7,2){$(T_0,R_0)$};
\draw plot[mark=*,mark size=1] coordinates{(0.,2.1)};
\draw plot[mark=*,mark size=1] coordinates{(1.,2.1)};
\draw plot[mark=*,mark size=1] coordinates{(1.61803,2.1)};
\draw plot[mark=*,mark size=1] coordinates{(2.61803,2.1)};
\draw plot[mark=*,mark size=1] coordinates{(3.61803,2.1)};
\draw plot[mark=*,mark size=2] coordinates{(4.23607,2.1)};
\draw plot[mark=*,mark size=1] coordinates{(5.23607,2.1)};
\draw plot[mark=*,mark size=1] coordinates{(5.8541,2.1)};
\draw plot[mark=*,mark size=1] coordinates{(6.8541,2.1)};
\draw plot[mark=*,mark size=1] coordinates{(7.8541,2.1)};
\draw plot[mark=*,mark size=1] coordinates{(8.47214,2.1)};
\draw[blue,thick] (0.,2.1)--(1.,2.1);
\draw[red,thick] (1.,2.1)--(1.61803,2.1);
\draw[blue,thick] (1.61803,2.1)--(2.61803,2.1);
\draw[blue,thick] (2.61803,2.1)--(3.61803,2.1);
\draw[red,thick] (3.61803,2.1)--(4.23607,2.1);
\draw[blue,thick] (4.23607,2.1)--(5.23607,2.1);
\draw[red,thick] (5.23607,2.1)--(5.8541,2.1);
\draw[blue,thick] (5.8541,2.1)--(6.8541,2.1);
\draw[blue,thick] (6.8541,2.1)--(7.8541,2.1);
\draw[red,thick] (7.8541,2.1)--(8.47214,2.1);
\draw[blue,thick] (8.47214,2.1)--(9.47214,2.1);


\node at (0.5,1.8){$a$};
\node at (1.30902,1.8){$b$};
\node at (2.11803,1.8){$a$};
\node at (3.11803,1.8){$a$};
\node at (3.92705,1.8){$b$};
\node at (4.73607,1.8){$a$};
\node at (5.54508,1.8){$b$};
\node at (6.3541,1.8){$a$};
\node at (7.3541,1.8){$a$};
\node at (8.16312,1.8){$b$};
\node at (8.97214,1.8){$a$};

\draw[black,thin,dotted] (0.,2.1)--(0,0.1);
\draw[black,thin,dotted] (1.61803,2.1)--(1.61803,0.1);
\draw[black,thin,dotted] (2.61803,2.1)--(2.61803,0.1);
\draw[black,thin,dashed] (4.23607,2.1)--(4.23607,0.1);
\draw[black,thin,dotted] (5.8541,2.1)--(5.8541,0.1);
\draw[black,thin,dotted] (6.8541,2.1)--(6.8541,0.1);
\draw[black,thin,dotted] (8.47214,2.1)--(8.47214,0.1);
\draw[black,thin,dotted] (9.47214,2.1)--(9.4721,0.1);

\node at (10.7,0){$(T_0,R_{-1})$};

\draw plot[mark=*,mark size=1] coordinates{(0.,0.1)};
\draw plot[mark=*,mark size=1] coordinates{(1.,0.1)};
\draw plot[mark=*,mark size=1] coordinates{(1.61803,0.1)};
\draw plot[mark=*,mark size=1] coordinates{(2.61803,0.1)};
\draw plot[mark=*,mark size=1] coordinates{(3.61803,0.1)};
\draw plot[mark=*,mark size=2] coordinates{(4.23607,0.1)};
\draw plot[mark=*,mark size=1] coordinates{(5.23607,0.1)};
\draw plot[mark=*,mark size=1] coordinates{(5.8541,0.1)};
\draw plot[mark=*,mark size=1] coordinates{(6.8541,0.1)};
\draw plot[mark=*,mark size=1] coordinates{(7.8541,0.1)};
\draw plot[mark=*,mark size=1] coordinates{(8.47214,0.1)};
\draw[blue,thick] (0.,0.1)--(1.,0.1);
\draw[red,thick] (1.,0.1)--(1.61803,0.1);
\draw[blue,thick] (1.61803,0.1)--(2.61803,0.1);
\draw[blue,thick] (2.61803,0.1)--(3.61803,0.1);
\draw[red,thick] (3.61803,0.1)--(4.23607,0.1);
\draw[blue,thick] (4.23607,0.1)--(5.23607,0.1);
\draw[red,thick] (5.23607,0.1)--(5.8541,0.1);
\draw[blue,thick] (5.8541,0.1)--(6.8541,0.1);
\draw[blue,thick] (6.8541,0.1)--(7.8541,0.1);
\draw[red,thick] (7.8541,0.1)--(8.47214,0.1);
\draw[blue,thick] (8.47214,0.1)--(9.47214,0.1);

\node at (0.5,-0.1){${a'}\!\!_{e_0}$};
\node at (1.30902,-0.1){${a'}\!\!_{e_1}$};
\node at (2.11803,-0.1){${b'}\!\!_{e_0}$};
\node at (3.11803,-0.1){${a'}\!\!_{e_0}$};
\node at (3.92705,-0.1){${a'}\!\!_{e_1}$};
\node at (4.73607,-0.1){${a'}\!\!_{e_0}$};
\node at (5.54508,-0.1){${a'}\!\!_{e_1}$};
\node at (6.3541,-0.1){${b'}\!\!_{e_0}$};
\node at (7.3541,-0.1){${a'}\!\!_{e_0}$};
\node at (8.16312,-0.1){${a'}\!\!_{e_1}$};
\node at (8.97214,-0.1){${b'}\!\!_{e_0}$};



\node at (10.7,-.6){$h_t'$};
\draw[->,black,thin,dashed] (0.,0.1)--(0,-1.1);
\draw[->,black,thin,dashed] (1.,0.1)--(1.61803,-1.1);
\draw[->,black,thin,dashed] (1.61803,0.1)--(1.61803,-1.1);
\draw[->,black,thin,dashed] (2.61803,0.1)--(2.61803,-1.1);
\draw[->,black,thin,dashed] (3.61803,0.1)--(4.23607,-1.1);
\draw[->,black,thin,dashed] (4.23607,0.1)--(4.23607,-1.1);
\draw[->,black,thin,dashed] (5.23607,0.1)--(5.8541,-1.1);
\draw[->,black,thin,dashed] (5.8541,0.1)--(5.8541,-1.1);
\draw[->,black,thin,dashed] (6.8541,0.1)--(6.8541,-1.1);
\draw[->,black,thin,dashed] (7.8541,0.1)--(8.47214,-1.1);
\draw[->,black,thin,dashed] (8.47214,0.1)--(8.47214,-1.1);
\draw[->,black,thin,dashed] (9.47214,0.1)--(9.4721,-1.1);



\node at (10.7,-1.3){$(T_{-1},R_{-1})$};
\draw plot[mark=*,mark size=1] coordinates{(0.,-1.2)};
\draw plot[mark=*,mark size=1] coordinates{(1.61803,-1.2)};
\draw plot[mark=*,mark size=1] coordinates{(2.61803,-1.2)};
\draw plot[mark=*,mark size=2] coordinates{(4.23607,-1.2)};
\draw plot[mark=*,mark size=1] coordinates{(5.8541,-1.2)};
\draw plot[mark=*,mark size=1] coordinates{(6.8541,-1.2)};
\draw plot[mark=*,mark size=1] coordinates{(8.47214,-1.2)};
\draw plot[mark=*,mark size=1] coordinates{(9.47214,-1.2)};
\draw[blue,thick] (0.,-1.2)--(1.61803,-1.2);
\draw[red,thick] (1.61803,-1.2)--(2.61803,-1.2);
\draw[blue,thick] (2.61803,-1.2)--(4.23607,-1.2);
\draw[blue,thick] (4.23607,-1.2)--(5.8541,-1.2);
\draw[red,thick] (5.8541,-1.2)--(6.8541,-1.2);
\draw[blue,thick] (6.8541,-1.2)--(8.47214,-1.2);
\draw[red,thick] (8.47214,-1.2)--(9.47214,-1.2);

\node at (0.809017,-1.5){$a'$};
\node at (2.11803,-1.5){$b'$};
\node at (3.42705,-1.5){$a'$};
\node at (5.04508,-1.5){$a'$};
\node at (6.3541,-1.5){$b'$};
\node at (7.66312,-1.5){$a'$};
\node at (8.97214,-1.5){$b'$};

\node[ultra thick,black!80!green] at (.9,.4) {$a'\!_{v_1}$};
\node[ultra thick,black!80!green] at (1.61803,.4) {$a'\!.b'$};
\node[ultra thick,black!80!green] at (2.61803,.4) {$b'\!.a'$};
\node[ultra thick,black!80!green] at (3.5,.4) {$a'\!_{v_1}$};
\node[ultra thick,black!80!green] at (4.23607,.4) {$a'\!.a'$};
\node[ultra thick,black!80!green] at (5.1,.4) {$a'\!_{v_1}$};
\node[ultra thick,black!80!green] at (5.8541,.4) {$a'\!.b'$};
\node[ultra thick,black!80!green] at (6.8541,.4) {$b'\!.a'$};
\node[ultra thick,black!80!green] at (7.7,.4) {$a'\!_{v_1}$};
\node[ultra thick,black!80!green] at (8.47214,.4) {$a'\!.b'$};


\end{tikzpicture}\\
The matrices for the replacing-the-equivalence-relation maps $r^k$ are
$$
r^0=\begin{blockarray}{ccccc}
 &a'\!.a' & a'\!.b' & b'\!.a' & {a'}\!_{v_1} \\
\begin{block}{c(cccc)}
  a.a & 0 & 0 & 1 & 0  \\
  a.b & 0 & 0 & 0 & 1  \\
  b.a & 1 & 1 & 0 & 0  \\
\end{block}
\end{blockarray}
$$
For instance $r^0(a'.a')=\omega(a).\omega(a)=ab.ab=b.a$.
$$
r^1=\begin{blockarray}{cccc}
 &a'\!\!_{e_0}  &a'\!\!_{e_1} & b'\!\!_{e_0} \\
\begin{block}{c(ccc)}
  a & 1 & 0 & 1  \\
  b & 0 & 1 & 0  \\
\end{block}
\end{blockarray}
$$
For instance $r^1({{a}'}_{e_1})=r^1(\omega(a)_{e_1})=b$ because the second edge of $\omega(a)=ab$ is $b$.



The matrices for the parent maps $g'^k$ are 
$$
g'^0=\begin{blockarray}{ccccc}
 &a'\!.a' & a'\!.b' & b'\!.a' & {a'}\!_{v_1} \\
\begin{block}{c(cccc)}
  a'\!.a' & 1 & 0 & 0 & 0  \\
  a'\!.b' & 0 & 1 & 0 & 0  \\
  b'\!.a' & 0 & 0 & 1 & 0  \\
\end{block}
\end{blockarray}
$$
%
$$
g'^1=\begin{blockarray}{cccc}
 &{a'}\!\!_{e_0} &{a'}\!\!_{e_1} &{b'}\!\!_{e_0}  \\
\begin{block}{c(ccc)}
  a' & 1 & 1 & 0  \\
  b' & 0 & 0 & 1  \\
\end{block}
\end{blockarray}.
$$




The matrices for the relabeled parent maps $g^k$ are 
$$
g^0=\begin{blockarray}{ccccc}
 &a'\!.a' & a'\!.b' & b'\!.a' & {a'}\!\!_{v_1} \\
\begin{block}{c(cccc)}
  a.a & 1 & 0 & 0 & 0  \\
  a.b & 0 & 1 & 0 & 0  \\
  b.a & 0 & 0 & 1 & 0  \\
\end{block}
\end{blockarray}
$$
%
$$
g^1=\begin{blockarray}{cccc}
 &{a'}\!\!_{e_0} &{a'}\!\!_{e_1} &{b'}\!\!_{e_0}  \\
\begin{block}{c(ccc)}
  a & 1 & 1 & 0  \\
  b & 0 & 0 & 1  \\
\end{block}
\end{blockarray}.
$$
%
The matrices for the section maps $s'^k$ are
$$
s'^0=\begin{blockarray}{cccc}
 &a'\!.a' & a'\!.b' & b'\!.a'  \\
\begin{block}{c(ccc)}
  a'\!.a' 		& 1 & 0 & 0   \\
  a'\!.b' 		& 0 & 1 & 0   \\
  b'\!.a' 		& 0 & 0 & 1   \\
{a'}\!\!_{v_1} & 1 & 1 & 0  \\
\end{block}
\end{blockarray}
$$

$$
s'^1=\begin{blockarray}{ccc}
 &a' & b'   \\
\begin{block}{c(cc)}
  {a'}\!\!_{e_0} & 1 & 0  \\
  {a'}\!\!_{e_1} & 0 & 0   \\
  {b'}\!\!_{e_0} & 0 & 1   \\
\end{block}
\end{blockarray}.
$$



The matrices for the section maps $s^k$ are
$$
s^0=\begin{blockarray}{cccc}
 &a\!.a & a\!.b & b\!.a  \\
\begin{block}{c(ccc)}
  a'\!.a' 		& 1 & 0 & 0   \\
  a'\!.b' 		& 0 & 1 & 0   \\
  b'\!.a' 		& 0 & 0 & 1   \\
{a'}\!\!_{v_1} & 1 & 1 & 0  \\
\end{block}
\end{blockarray}
$$

$$
s^1=\begin{blockarray}{ccc}
 &a & b   \\
\begin{block}{c(cc)}
  {a'}\!\!_{e_0} & 1 & 0  \\
  {a'}\!\!_{e_1} & 0 & 0   \\
  {b'}\!\!_{e_0} & 0 & 1   \\
\end{block}
\end{blockarray}.
$$

It is easy to check that $g^k\circ s^k=id$, as expected by Lemma \ref{l:gs=id}.
It is also easy to check that  $r^0 \circ s^0=W_V$ in Eq.~(\ref{e:Wv-fib}) and $r^1 \circ s^1=W_E$ in Eq.~(\ref{e:We-fib}), as expected by Lemma \ref{l:W=rs}.


\end{exam}



\section{\textbf{Asymptotic $C^*$-algebra $A$}}\label{s:A}
In this section we compute the $K$-theory of the asymptotic $C^*$-algebra $A$. To do so we will use the K\"unneh formula, and so we will first prove that both $S'$ and $S$ are UCT.
 
Recall that, by Section \ref{s:dynamics} and Section \ref{s:S}, the $C^*$-algebra $S$ is Morita equivalent to the transversal 
$S'=\lim\limits_{n\to\infty} (C_r^*(R_n),\iota_n)$.
\begin{thm}\label{t:An-UCT-amenable}
  For tilings of dimension $d=1,2$, the $C^*$-algebras $C_r^*(R_n)$ are type I. Hence $S'$ and $S$ are both  UCT and amenable.
\end{thm}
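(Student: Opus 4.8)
The plan is to prove the two assertions in order: first that each $A_n=C_r^*(R_n)$ is type I, and then to propagate amenability and the UCT from the $A_n$ to $S'$ through the inductive limit, and finally from $S'$ to $S$ through strong Morita equivalence.

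For the type I claim I would simply reuse the skeletal filtration already set up in Section \ref{s:S}. By Proposition \ref{p:compacts}, the algebras $C_n$, $J_n$ and $I_n$ are finite direct sums of algebras built from the compacts, hence type I. Feeding $C_n$ type I (so amenable) into the short exact sequence (\ref{e:JBC}) and using that the class of type I $C^*$-algebras is closed under extensions, one concludes that $B_n$ is type I; feeding this together with $I_n$ type I into (\ref{e:IAB}) yields that $A_n=C_r^*(R_n)$ is type I. This is precisely the argument carried out right after (\ref{e:IAB}), so the first sentence of the theorem is essentially a restatement of that discussion, now recorded for reference.

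For $S'$, recall from Proposition \ref{ap:Sisdirlim} that $S'=\lim_{\to}(A_n,\iota_n)$, with injective connecting $*$-homomorphisms by Remark \ref{r:iota-cont}. Since each $A_n$ is type I, it is nuclear (hence amenable) and lies in the bootstrap (UCT) class, as every type I $C^*$-algebra does. I would then invoke the two standard permanence properties that nuclearity and membership in the UCT class are both preserved under countable inductive limits of $C^*$-algebras. This gives that $S'$ is amenable and UCT. The last step transfers these to $S$: by Section \ref{s:dynamics}, $S=C_r^*(R_s)$ is strongly Morita equivalent to $S'$ (via $R_s'\cong H_s(T)$ from Proposition \ref{p:Rp-Hs} together with \cite[Theorem~2.8]{MRW}). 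Since these algebras are separable, Brown--Green--Rieffel gives a stable isomorphism $S\otimes K\cong S'\otimes K$, where $K$ denotes the compacts, and both amenability (nuclearity is stable) and membership in the UCT class are invariant under stable isomorphism, so $S$ inherits amenability and the UCT from $S'$. As a consistency check, this matches the groupoid route noted in the introduction, where $S$ is UCT and amenable because $R_s$ is an amenable groupoid by \cite{Tu}.

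The main obstacle is not any single deep step but the careful bookkeeping of permanence properties: one must confirm that type I is closed under extensions (for the $A_n$), that both nuclearity and the UCT survive the countable inductive limit defining $S'$, and that both are Morita/stable-isomorphism invariants for the final passage to $S$. The only point requiring mild care is separability of $S'$ and $S$, so that strong Morita equivalence upgrades to a stable isomorphism; this holds because $\Omega$ is compact metrizable and all the groupoids involved are second countable.
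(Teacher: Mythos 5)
Your proposal is correct and follows essentially the same route as the paper: type I for each $C_r^*(R_n)$ via Proposition \ref{p:compacts} and closure of type I under the extensions (\ref{e:JBC}) and (\ref{e:IAB}), then passage of amenability and the UCT to $S'$ through the inductive limit of Proposition \ref{ap:Sisdirlim}, and finally to $S$ by Morita invariance. The only difference is cosmetic: where the paper cites Blackadar directly for Morita invariance of the UCT and amenability, you justify it via Brown--Green--Rieffel stable isomorphism using separability, which is a valid elaboration of the same step.
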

\begin{proof}
By Proposition \ref{p:compacts}(1),  $C_r^*(R_n|_{X_0})$ is isomorphic to a finite direct sum of the compacts.
Hence it is type $I$. By Proposition \ref{p:compacts}(2) $C_r^*(R_n|_{X_1-X_0})$ is isomorphic to a finite direct sum of $C_0( (0,1), K)$. Hence it is also type I.
Since the class of type I $C^*$-algebras is closed under extensions, $C_r^*(R_n|_{X_1})$ is type I by Eq.~(\ref{e:JBC}).
This proves that $C^*_r(R_n)$ is type I for tilings of dimension 1.

By Proposition \ref{p:compacts}(3) the ideal $C_r^*(R_n|_{X_2-X_1})$ is type $I$. Hence, by Eq.~(\ref{e:IAB}) $C_r^*(R_n)$ is type I.
Hence $C^*_r(R_n)$ is also type I for tilings of dimension 2.

Since type I is UCT (cf.~\cite[p.229]{Blackadar}), $C^*_r(R_n)$ is UCT. Since the class of UCT $C^*$-algebras is closed under direct limits (cf.~\cite[p.229]{Blackadar}) the stable transversal $C^*$-algebra $S'$ is UCT as well. Since type I $C^*$-algebras are amenable, and amenability is preserved under direct limits, $S'$ is amenable.
By the third line in \cite[p.229]{Blackadar}, UCT is preserved by Morita equivalence. Hence $S$ is UCT.
Amenability is also preserved by Morita equivalence. Hence $S$ is also amenable.
(Note: In \cite{Blackadar} the UCT class of $C^*$-algebras is called class N, cf.~\cite[Definition~22.3.4]{Blackadar}).
\end{proof}

\begin{cor}\label{c:CstarA-line}
  For tilings of dimension 1,
  \begin{eqnarray*}
  K_0(A)&=&\big(K_0(S)\otimes_{\Z} K_0(U)\big) \oplus \Z,\\
  K_1(A)&=&K_0(S) \oplus K_0(U),
  \end{eqnarray*}
  where $A$, $S$ and $U$ are the asymptotic, stable, and unstable $C^*$-algebras, respectively.
\end{cor}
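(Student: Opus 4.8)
The plan is to deduce the corollary from the K\"unneth theorem, exploiting the preparatory results already established. First I would recall from the introduction that $A$ is strongly Morita equivalent to $U\otimes S$; since strong Morita equivalence preserves $K$-theory, this reduces the problem to computing $K_*(U\otimes S)$. By Theorem \ref{t:An-UCT-amenable} the algebra $S$ lies in the UCT (bootstrap) class, which is precisely the hypothesis needed to apply the K\"unneth theorem for the $K$-theory of $C^*$-algebra tensor products. That theorem furnishes, for each $n\in\{0,1\}$, a natural short exact sequence
\begin{equation*}
0\to \bigoplus_{i+j=n} K_i(S)\otimes_\Z K_j(U)\to K_n(S\otimes U)\to \bigoplus_{i+j=n+1}\mathrm{Tor}_1^\Z\big(K_i(S),K_j(U)\big)\to 0,
\end{equation*}
where the indices are taken modulo $2$.

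Next I would invoke torsion-freeness. By Theorem \ref{t:stableKtheory} and Theorem \ref{t:unstableKtheory}, every $K$-group of $S$ and of $U$ is torsion free for tilings of dimension $1$. Since $\mathrm{Tor}_1^\Z(G,H)$ vanishes as soon as one of $G,H$ is torsion free (torsion-free abelian groups being flat), all the $\mathrm{Tor}$ terms above are zero. Hence the K\"unneth sequence degenerates into the isomorphisms
\begin{equation*}
K_0(S\otimes U)\cong \big(K_0(S)\otimes_\Z K_0(U)\big)\oplus\big(K_1(S)\otimes_\Z K_1(U)\big),
\end{equation*}
\begin{equation*}
K_1(S\otimes U)\cong \big(K_0(S)\otimes_\Z K_1(U)\big)\oplus\big(K_1(S)\otimes_\Z K_0(U)\big).
\end{equation*}

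Finally I would substitute the computed values $K_1(S)=\Z$ (Theorem \ref{t:stableKtheory}) and $K_1(U)=\Z$ (Theorem \ref{t:unstableKtheory}) and simplify using the canonical isomorphisms $G\otimes_\Z\Z\cong G$ and $\Z\otimes_\Z\Z\cong\Z$. This turns the first displayed isomorphism into $K_0(A)=\big(K_0(S)\otimes_\Z K_0(U)\big)\oplus\Z$ and the second into $K_1(A)=K_0(S)\oplus K_0(U)$, which are exactly the claimed formulas. I expect the only genuine subtlety to be the bookkeeping of the $\Z/2$-grading and the one-degree shift of the $\mathrm{Tor}$ term in the K\"unneth sequence; the substantive analytic input, namely that $S$ is UCT, has already been supplied by Theorem \ref{t:An-UCT-amenable}, so the remaining argument is essentially homological algebra and poses no real obstacle.
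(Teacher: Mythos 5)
Your proposal is correct and takes essentially the same route as the paper's proof: strong Morita equivalence of $A$ with $S\otimes U$, the K\"unneth sequence (applicable because $S$ is UCT by Theorem \ref{t:An-UCT-amenable}), vanishing of the $\mathrm{Tor}$ terms by torsion-freeness of the $K$-groups in dimension $1$, and substitution of $K_1(S)=K_1(U)=\Z$. The only detail the paper makes explicit that you gloss over is that amenability of $S$ and $U$ ensures the maximal and minimal tensor products coincide, so the Putnam Morita equivalence (stated for $S\otimes_{max}U$) and the K\"unneth theorem refer to the same algebra.
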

\begin{proof}
By \cite[Theorem~3.1]{Putnam96SmaleSpaces} the $C^*$-algebras $A$ and $S \otimes_{max} U$ are strongly Morita equivalent.
The K\"unneth formula \cite[Theorem~23.1.3,~p.234]{Blackadar} holds here because:

1) $K_i(S)$, $K_i(U)$ $i=0,1$ are torsion free (Corollary \ref{c:torsionmovement}, Theorem \ref{t:Ktheory-tung}).

2) $S$ is UCT by the theorem.\\
The explicit formulas for the K\"unneth formula are
$$K_0(S \otimes U) = \big(K_0(S) \otimes_{\Z} K_0(U)\big) \oplus \big(K_1(S) \otimes_{\Z} K_1(U)\big),$$
$$K_1(S \otimes U) = \big(K_0(S) \otimes_{\Z} K_1(U)\big) \oplus \big(K_1(S) \otimes_{\Z} K_0(U)\big).$$
We do not need to worry about the kind of tensor product $S\otimes U$ since $S$ and $U$ are amenable.
Since $K_1(S)=\Z$ and $K_1(U)=\Z$ the corollary follows.
\end{proof}

For tilings of the plane we have, by Theorem \ref{t:Ktheory-tung},
that $K_1(S')$ has torsion only if $H_S^1(T)$ has torsion. 
Note that $K_0(S')$ cannot have torsion, because if an element has torsion in $K_0(S')$ then by injectivity of the inclusion in the short exact sequence in Theorem \ref{t:Ktheory-tung}, the element does not come from the ideal, as the ideal has no torsion, and thus the element must map to the quotient, with torsion. This is a contradiction since $H_S^0$ has no torsion by Corollary \ref{c:torsionmovement}.

By the K\"unneth formula, Corollary \ref{c:torsionmovement}, and by a similar proof as the one above, we have

\begin{cor}\label{c:CstarA-plane}
  For tilings of dimension 2, if $H_S^1(T)$ and $H_0^{ST}(T)$ are torsion free, then
  \begin{eqnarray*}
  K_0(A)&=&(K_0(S)\otimes_{\Z} K_0(U)) \oplus (K_1(S)\otimes_{\Z} K_1(U)),\\
  K_1(A)&=&(K_0(S)\otimes_{\Z} K_1(U)) \oplus (K_1(S)\otimes_{\Z} K_0(U)),
  \end{eqnarray*}
  where $A$, $S$ and $U$ are the asymptotic, stable, and unstable $C^*$-algebras, respectively.
\end{cor}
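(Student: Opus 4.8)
The plan is to reduce the computation of $K_*(A)$ to a tensor product via Morita equivalence and then apply the K\"unneth formula, exactly as in the proof of Corollary~\ref{c:CstarA-line}; the only additional work compared to the line case is to check that the torsion-vanishing hypotheses of the K\"unneth formula hold in the plane. First I would invoke \cite[Theorem~3.1]{Putnam96SmaleSpaces}, which asserts that $A$ and $S\otimes_{\max} U$ are strongly Morita equivalent, so that $K_i(A)\cong K_i(S\otimes U)$ for $i=0,1$. Since $S$ and $U$ are amenable (Theorem~\ref{t:An-UCT-amenable} and the discussion in Section~\ref{s:one}), the maximal and minimal tensor products agree, so there is no ambiguity in writing $S\otimes U$.

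Next I would set up the K\"unneth short exact sequence \cite[Theorem~23.1.3,~p.234]{Blackadar}, which applies because $S$ lies in the UCT class by Theorem~\ref{t:An-UCT-amenable}. This gives, for $n\in\{0,1\}$, a $\Z/2$-graded short exact sequence
$$0\to\bigoplus_{i+j=n}K_i(S)\otimes_\Z K_j(U)\to K_n(S\otimes U)\to\bigoplus_{i+j=n+1}\mathrm{Tor}\big(K_i(S),K_j(U)\big)\to 0.$$
The two formulas in the statement are precisely the tensor-product terms of this sequence (so that $K_0(S\otimes U)$ receives $K_0\otimes K_0$ and $K_1\otimes K_1$, while $K_1(S\otimes U)$ receives the cross terms $K_0\otimes K_1$ and $K_1\otimes K_0$). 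Hence it remains only to show that every $\mathrm{Tor}$ term vanishes; since $\mathrm{Tor}(G,H)=0$ whenever $G$ or $H$ is torsion free, it suffices to verify that all four groups $K_0(S),K_1(S),K_0(U),K_1(U)$ are torsion free under the stated hypotheses.

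The heart of the argument is therefore this torsion-freeness check, transported to $S$ and $U$ from the cohomological data via Morita invariance of $K$-theory. For the unstable algebra, $K_1(U)=H^{ST}_1(T)$ is torsion free by Theorem~\ref{t:unstableKtheory} (equivalently Corollary~\ref{c:torsionmovement}), and $K_0(U)=\Z\oplus H^{ST}_0(T)$ is torsion free precisely under the assumption that $H^{ST}_0(T)$ is torsion free. For the stable algebra, $K_1(S)=H^1_S(T)$ is torsion free by the assumption on $H^1_S(T)$, while $K_0(S)$ is torsion free in all cases: by Theorem~\ref{t:stableKtheory} it sits in a short exact sequence $0\to\Z\to K_0(S)\to H^0_S(T)\to 0$ with $H^0_S(T)$ torsion free, and any torsion element of $K_0(S)$ would have to map to $0$ in $H^0_S(T)$, hence come from the torsion-free subgroup $\Z$, forcing it to be $0$. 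With all four $K$-groups torsion free, every $\mathrm{Tor}$ term is zero and the K\"unneth sequences collapse to the claimed isomorphisms.

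I expect no step to involve a genuine calculation; the main obstacle is purely organizational, namely bookkeeping the $\Z/2$-grading of the K\"unneth sequence correctly and ensuring that the torsion hypotheses on $H^1_S(T)$ and $H^{ST}_0(T)$ are exactly the ones needed to kill all $\mathrm{Tor}$ contributions (the groups $H^2_S(T)=\Z$, $H^0_S(T)$, $H^{ST}_2(T)=\Z$ and $H^{ST}_1(T)$ being automatically torsion free by Corollary~\ref{c:torsionmovement}). Once these are in place, the conclusion follows verbatim from the line-case argument of Corollary~\ref{c:CstarA-line}.
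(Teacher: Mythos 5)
Your proposal is correct and follows essentially the same route as the paper: strong Morita equivalence of $A$ with $S\otimes_{\max}U$ via \cite[Theorem~3.1]{Putnam96SmaleSpaces}, the K\"unneth sequence of \cite[Theorem~23.1.3]{Blackadar} justified by the UCT property of $S$ (Theorem~\ref{t:An-UCT-amenable}), and the verification that all four $K$-groups are torsion free under the stated hypotheses, including the same argument that $K_0(S)$ is always torsion free because any torsion element must die in $H_S^0(T)$ and hence lie in the torsion-free subgroup $\Z$. The paper compresses this into a reference to the line-case proof plus Corollary~\ref{c:torsionmovement}, but the content is identical to what you wrote.
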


\section{\textbf{Examples}}\label{s:examples}
In this section, we calculate the stable and unstable $K$-theories for a number of tilings of the line and of the plane. 
We then use Corollary \ref{c:CstarA-line} and Corollary \ref{c:CstarA-plane} to obtain the asymptotic $K$-theories.
The simplification of the tensor products are done using Proposition \ref{p:tensorproduct}, Corollary \ref{c:tensorproduct} and Corollary \ref{c:tensorproduct2}.

Many of the following computations of direct limits could be done by hand. We choose however to use the general formulas stated in Appendix \ref{a.s:direct-limits-AG}, which rely heavily on the Smith normal form of integer matrices, in order to illustrate the use of them. 
Moreover, we have programmed these formulas in Mathematica. For instance, we have written functions in Mathematica that, in the absence of torsion, yield the isomorphisms $\ker A\cong\Z^r$, $\coker A\cong\Z^r$, $\frac{\ker A}{\im B}\cong\Z^r$ for integer matrices $A$, $B$. The Mathematica files for the examples found in this section can be downloaded at 
\begin{center}
\url{https://github.com/mariars/Tilings-Ktheory}
\end{center}
We should remark that in this section we use the notation for direct limit
$$\xymatrix{
&\lim\limits_{\to}(A,X):=&\!\!\!\!\!\!\!\!\!\!\!\!\!\!\!\!\lim\limits_{\to}X\ar[r]^{A}&X\ar[r]^{A}&X\ar[r]^{A}&,\\
}$$
where we write the matrix first, in order to emphasize it, in contrast to our previous notation where we wrote the group first.



\subsection{One dimensional tilings}
Let $T$ be a tiling of the line and let $e_1,\ldots,e_N\in T$ be the prototiles(=proto-edges).
For these tilings we can always put the homotopy that homotopes the leftmost edge in $\frac{1}{\lambda}\omega(e_i)$ to $e_i$.
Morever, we put the orientation of the vector $(1,0)\in\R^2$ to all the edges of the tiling.
\begin{exam}[Fibonacci tiling](cf.~\cite[Ex.~1,~p.30]{AP}).
Let $T$ be the Fibonacci tiling with proto-edges $a,b\in T$ and substitution rule $\omega(a):=ab$,\, $\omega(b):=a$.
The length of the interval $a$ is 1, and the length of $b$ is $1/\phi$, where $\phi$ is the golden ratio. The inflation factor $\lambda=\phi\approx 1.618$.
We illustrate this and the homotopy in the following figure:\\
\begin{tikzpicture}

\node at (10.7,0){$T_1=\frac1{\lambda}\omega(T)\lambda$};

\draw plot[mark=*,mark size=1] coordinates{(0.,0.1)};
\draw plot[mark=*,mark size=1] coordinates{(1.,0.1)};
\draw plot[mark=*,mark size=1] coordinates{(1.61803,0.1)};
\draw plot[mark=*,mark size=1] coordinates{(2.61803,0.1)};
\draw plot[mark=*,mark size=1] coordinates{(3.61803,0.1)};
\draw plot[mark=*,mark size=2] coordinates{(4.23607,0.1)};
\draw plot[mark=*,mark size=1] coordinates{(5.23607,0.1)};
\draw plot[mark=*,mark size=1] coordinates{(5.8541,0.1)};
\draw plot[mark=*,mark size=1] coordinates{(6.8541,0.1)};
\draw plot[mark=*,mark size=1] coordinates{(7.8541,0.1)};
\draw plot[mark=*,mark size=1] coordinates{(8.47214,0.1)};
\draw[blue,thick] (0.,0.1)--(1.,0.1);
\draw[red,thick] (1.,0.1)--(1.61803,0.1);
\draw[blue,thick] (1.61803,0.1)--(2.61803,0.1);
\draw[blue,thick] (2.61803,0.1)--(3.61803,0.1);
\draw[red,thick] (3.61803,0.1)--(4.23607,0.1);
\draw[blue,thick] (4.23607,0.1)--(5.23607,0.1);
\draw[red,thick] (5.23607,0.1)--(5.8541,0.1);
\draw[blue,thick] (5.8541,0.1)--(6.8541,0.1);
\draw[blue,thick] (6.8541,0.1)--(7.8541,0.1);
\draw[red,thick] (7.8541,0.1)--(8.47214,0.1);
\draw[blue,thick] (8.47214,0.1)--(9.47214,0.1);

\node at (0.5,0.4){$\frac{a}{\lambda}$};
\node at (1.30902,0.4){$\frac{b}{\lambda}$};
\node at (2.11803,0.4){$\frac{a}{\lambda}$};
\node at (3.11803,0.4){$\frac{a}{\lambda}$};
\node at (3.92705,0.4){$\frac{b}{\lambda}$};
\node at (4.73607,0.4){$\frac{a}{\lambda}$};
\node at (5.54508,0.4){$\frac{b}{\lambda}$};
\node at (6.3541,0.4){$\frac{a}{\lambda}$};
\node at (7.3541,0.4){$\frac{a}{\lambda}$};
\node at (8.16312,0.4){$\frac{b}{\lambda}$};
\node at (8.97214,0.4){$\frac{a}{\lambda}$};



\node at (10.7,-.6){$h_s$};
\draw[->,black,thin,dashed] (0.,0.1)--(0,-1.1);
\draw[->,black,thin,dashed] (1.,0.1)--(1.61803,-1.1);
\draw[->,black,thin,dashed] (1.61803,0.1)--(1.61803,-1.1);
\draw[->,black,thin,dashed] (2.61803,0.1)--(2.61803,-1.1);
\draw[->,black,thin,dashed] (3.61803,0.1)--(4.23607,-1.1);
\draw[->,black,thin,dashed] (4.23607,0.1)--(4.23607,-1.1);
\draw[->,black,thin,dashed] (5.23607,0.1)--(5.8541,-1.1);
\draw[->,black,thin,dashed] (5.8541,0.1)--(5.8541,-1.1);
\draw[->,black,thin,dashed] (6.8541,0.1)--(6.8541,-1.1);
\draw[->,black,thin,dashed] (7.8541,0.1)--(8.47214,-1.1);
\draw[->,black,thin,dashed] (8.47214,0.1)--(8.47214,-1.1);
\draw[->,black,thin,dashed] (9.47214,0.1)--(9.4721,-1.1);



\node at (10.7,-1.3){$T_0:=T$};
\draw plot[mark=*,mark size=1] coordinates{(0.,-1.2)};
\draw plot[mark=*,mark size=1] coordinates{(1.61803,-1.2)};
\draw plot[mark=*,mark size=1] coordinates{(2.61803,-1.2)};
\draw plot[mark=*,mark size=2] coordinates{(4.23607,-1.2)};
\draw plot[mark=*,mark size=1] coordinates{(5.8541,-1.2)};
\draw plot[mark=*,mark size=1] coordinates{(6.8541,-1.2)};
\draw plot[mark=*,mark size=1] coordinates{(8.47214,-1.2)};
\draw plot[mark=*,mark size=1] coordinates{(9.47214,-1.2)};
\draw[blue,thick] (0.,-1.2)--(1.61803,-1.2);
\draw[red,thick] (1.61803,-1.2)--(2.61803,-1.2);
\draw[blue,thick] (2.61803,-1.2)--(4.23607,-1.2);
\draw[blue,thick] (4.23607,-1.2)--(5.8541,-1.2);
\draw[red,thick] (5.8541,-1.2)--(6.8541,-1.2);
\draw[blue,thick] (6.8541,-1.2)--(8.47214,-1.2);
\draw[red,thick] (8.47214,-1.2)--(9.47214,-1.2);

\node at (0.809017,-1.5){$a$};
\node at (2.11803,-1.5){$b$};
\node at (3.42705,-1.5){$a$};
\node at (5.04508,-1.5){$a$};
\node at (6.3541,-1.5){$b$};
\node at (7.66312,-1.5){$a$};
\node at (8.97214,-1.5){$b$};

\end{tikzpicture}\\

\begin{itemize}
\item stable edges (2): $a$, $b$.
\item stable vertices (3): $a.a$, $a.b$, $b.a$
\end{itemize}
Note that $b.b$ never occurs.
Recall that the stable edges are always the proto-edges for 1-dimensional tilings.
Since $\omega^3(a)$ and $\omega^4(a)$ both contain precisely the stable vertices $a.a$, $a.b$, $b.a$ and $w(b)=a$, these are all the stable vertices in the tiling.
The prototiles of the shrunk tiling $T_1:=\frac{1}{\lambda}\omega(T)\lambda$ are $a':=\frac{a}{\lambda}$, $b':=\frac{b}{\lambda}$.

The substitution-homotopy map  $W_E:\Z^{sE}\to \Z^{sE}$ is given by
$W_E(a)=a'$, $W_E(b)=a'$. Thus its matrix is
\begin{equation}\label{e:We-fib}
W_E=\begin{blockarray}{ccc}
 &a  &b  \\
\begin{block}{c(cc)}
  a' & 1 & 1 \\
  b' & 0 & 0 \\
\end{block}
\end{blockarray}.
\end{equation}
The substitution-homotopy map  $W_V:\Z^{sV}\to \Z^{sV}$ is given by
$$W_V(a.a)=a'.b'+b'.a',\qquad  W_V(a.b)=a'.b'+b'.a',\qquad W_V(b.a)=a'.a'$$
 Thus its matrix is
\begin{equation}\label{e:Wv-fib}
W_V=\begin{blockarray}{cccc}
 &a.a  &a.b &b.a  \\
\begin{block}{c(ccc)}
  a'.a' & 0 & 0 & 1\\
  a'.b' & 1 & 1 & 0\\
  b'.a' & 1 & 1 & 0\\
\end{block}
\end{blockarray}.
\end{equation}
The exponential map $\delta^0:\Z^{sV}\to\Z^{sE}$ is given by
$$\delta^0(a.a)=a-a=0,\quad \delta^0(a.b)=a-b\qquad \delta^0(b.a)=b-a.$$
Thus its matrix is
$$
\delta^0=\begin{blockarray}{cccc}
 &a.a  &a.b &b.a  \\
\begin{block}{c(ccc)}
  a & 0 & 1 & -1\\
  b & 0 & -1 & 1\\
\end{block}
\end{blockarray}.
$$
By Proposition \ref{p:kerA-isom-Znminusr},
$$\lim_{\to} (W_V,\, \ker\delta^0)=\lim_{\to}(p\, W_V \, q,\,\Z^{n-r}),$$ 
where $p:\ker\delta^0\to \Z^{n-r}$ and $q:\Z^{n-r}\to\ker\delta^0$ are $\Z$-isomorphisms defined in the proposition.
Since 
$$W_V':=p\, W_V \,q= \left( \begin{array}{cc}
    0 & 1 \\ 
    1 & 1 \\ 
  \end{array}
\right)$$
is symmetric (i.e.~$W_V'$ is equal to its  transpose)  the zero stable cohomology group and zero stable-transpose homology group are the same:
$$H_S^0(T)=\lim_{\to}(W_V,\,\ker\delta^0)=\lim_{\to}(W_V',\,\Z^{n-r})=$$
$$\lim_{\to}((W_V')^t,\,\Z^{n-r})=\lim_{\to}((W_V)^t,\,(\ker\delta^0)^t)=H_0^{ST}(T).$$
Thus the zero $K$-groups for the stable and unstable $C^*$-algebras are the same:
$$K_0(S)=H_S^0(T)=H^{ST}_0(T)=K_0(U).$$
Since $W_V'$ has determinant 1, it is $\Z$-invertible and thus 
$$K_0(U)=K_0(S)=H^0_S(T)=\lim_{\to}(W_V',\,\Z^2)=\Z^2.$$
\end{exam}

\begin{exam}[Morse Tiling](cf.~\cite[p.33]{AP}).
Let $T$ be the Morse tiling with proto-edges $a,b\in T$ and substitution rule $\omega(a)=ab$, $\omega(b)=ba$. 
The length of the edges $a$, $b$ are 1, and  inflation factor is $\lambda=2$.
We follow the same procedure as for the Fibonacci tiling so we omit most of the details.
\begin{itemize}
\item stable edges (2): $a$, $b$
\item stable vertices (4): $a.a$, $a.b$, $b.a$, $b.b$ 
\end{itemize}
$$
W_V=\left(
\begin{array}{cccc}
 0 & 0 & 1 & 0 \\
 1 & 1 & 0 & 1 \\
 1 & 0 & 1 & 1 \\
 0 & 1 & 0 & 0 \\
\end{array}
\right),\qquad W_E=\left(
\begin{array}{cc}
 1 & 0 \\
 0 & 1 \\
\end{array}
\right),\qquad
\delta^0=\left(
\begin{array}{cccc}
 0 & 1 & -1 & 0 \\
 0 & -1 & 1 & 0 \\
\end{array}
\right).$$
By Proposition \ref{p:kerA-isom-Znminusr},
$$\lim_{\to} (W_V,\, \ker\delta^0)=\lim_{\to}(p\, W_V \, q,\,\Z^{n-r}),$$ 
where $p:\ker\delta^0\to \Z^{n-r}$ and $q:\Z^{n-r}\to\ker\delta^0$ are the $\Z$-isomorphisms defined in the proposition.
Since 
$$
W_V':=p\, W_V \, q=\left(
\begin{array}{ccc}
 0 & 1 & 0 \\
 1 & 1 & 1 \\
 0 & 1 & 0 \\
\end{array}
\right)
$$
is equal to its transpose, $K_0(S)=H_S^0(T)=H^{ST}_0(T)=K_0(U).$

Using Proposition \ref{p:imA-isom-Zr} we remove the zero eigenvalues of $W_V'$, i.e.
$$\lim_{\to} (W_V',\, \ker\delta^0)=\lim_{\to}(p'\, W_V' \, q',\,\Z^{n-r}),$$
where $p':W_V'\Z^n\to \Z^r$ and $q':\Z^r \to W_V'\Z^n$ are the maps defined in the proposition, and computing we get
$$
W_V'':=p'\,W_V'\,q'=\left(
\begin{array}{cc}
 1 & 1 \\
 2 & 0 \\
\end{array}
\right).
$$

Using Proposition \ref{p:extract-one-eigenvalues} we extract the eigenvalue -1 of $W_V''$, i.e.
$$\xymatrix@C=.7cm{0\ar[r]&\lim\limits_{\to} (W_V'',\, \ker q(W_V''))\ar@{^(->}[r]&\lim\limits_{\to} (W_V'',\, \Z^n)\ar@{>>}[r]&\lim\limits_{\to} (\lambda I_n,\, q(W_V'')\Z^n)\ar[r]&0},$$
where $p(x)=(x+1)(x-2)=(x+1)q(x)$ is the minimal polynomial of $W_V''$.  
By Proposition \ref{p:kerA-isom-Znminusr}, $\lim\limits_{\to}(W_V'',\ker q(W_V'')) = \lim\limits_{\to}( 2, \Z) = \Z[1/2]$, 
and by Proposition \ref{p:imA-isom-Zr} $\lim\limits_{\to}(W_V'', \im q(W_V'')) = \lim\limits_{\to}(-I, \im q(W_V'')) = \lim\limits_{\to}( -1, \Z) = \Z$. Since the short exact sequence
$$\xymatrix{0\ar[r]&\Z[\frac12]\ar@{^(->}[r]&\lim\limits_{\to} (W_V'',\, \Z^n)\ar@{>>}[r]&\Z\ar[r]&0}$$
splits, we get $$K_0(U)=H^{ST}_0(T)=H_S^0(T)=K_0(S)=\Z[\frac12]\oplus\Z.$$
\end{exam}

\begin{exam}[Pathologic]
Let $T$ be the Pathologic tiling with proto-edges $a,b\in T$ and substitution rule $\omega(a)=babbaaa$, $\omega(b)=abbbbb$.
The length of edge $a$ is $\frac{\sqrt{13}-1}2\approx 1.30$ and of $b$ is $1$, and the inflation factor is $\lambda=\frac{9+\sqrt{13}}2\approx 6.30$.
(To compute the lengths see \cite[Section~8,~p.26]{AP}).
We follow same procedure as for the Fibonacci tiling and Morse tiling so we omit most of the details.
\begin{itemize}
\item stable edges (2): $a$, $b$
\item stable vertices (4): $a.a$, $a.b$, $b.a$, $b.b$ 
\end{itemize}
$$W_V=\left(
\begin{array}{cccc}
 2 & 3 & 0 & 0 \\
 2 & 1 & 1 & 1 \\
 2 & 2 & 0 & 1 \\
 1 & 1 & 5 & 4 \\
\end{array}
\right),
\qquad
W_E=\left(
\begin{array}{cc}
 0 & 1 \\
 1 & 0 \\
\end{array}
\right),
\qquad
\delta^0=\left(
\begin{array}{cccc}
 0 & 1 & -1 & 0 \\
 0 & -1 & 1 & 0 \\
\end{array}
\right).
$$

By Proposition \ref{p:kerA-isom-Znminusr},
$$\lim_{\to} (W_V,\, \ker\delta^0)=\lim_{\to}(p\, W_V \, q,\,\Z^{n-r}),$$ 
where $p:\ker\delta^0\to \Z^{n-r}$ and $q:\Z^{n-r}\to\ker\delta^0$ are the $\Z$-isomorphisms defined in the proposition, and we get
$$
W_V':=p\, W_V \, q=\left(
\begin{array}{ccc}
 2 & 3 & 0 \\
 2 & 2 & 1 \\
 1 & 6 & 4 \\
\end{array}
\right).
$$
We now extract the eigenvalue -1 of $W_V'$ as follows.
By the proof of Proposition \ref{p:extract-one-eigenvalues}, (and Propositions \ref{p:kerA-isom-Znminusr}, \ref{p:imA-isom-Zr}) we get the following commutative diagram with exact rows
$$\xymatrix{
0\ar[r]&\Z^2\ar[d]^{W_V''}\ar@{^(->}[r]&\Z^3\ar[d]^{W_V'}\ar@{>>}[r]^{\,\,\,\,\,\,\,\,}&\Z\ar[d]^{-1}\ar[r]&0\\
0\ar[r]& \Z^2\ar@{^(->}[r]&\Z^3\ar@{>>}[r]^{\,\,\,\,\,\,\,\,}&\Z\ar[r]&0,
}$$
where
$$W_V'':=
\left(
\begin{array}{cc}
 7 & -1 \\
 3 & 2 \\
\end{array}
\right).
$$
We now get by the proof of Proposition \ref{p:extract-one-eigenvalues} that, 
$$\lim_{\to} (W_V',\, \Z^3)=\Z\oplus\lim_{\to} (W_V'',\, \Z^2).$$ 

Since
$$
W_V''':=\left(
\begin{array}{cc}
 1 & -1 \\
 1 & 0 \\
\end{array}
\right).\left(
\begin{array}{cc}
 7 & -1 \\
 3 & 2 \\
\end{array}
\right).\left(
\begin{array}{cc}
 1 & -1 \\
 1 & 0 \\
\end{array}
\right)^{-1}=\left(
\begin{array}{cc}
 3 & 1 \\
 1 & 6 \\
\end{array}
\right)
$$
we get
$$\lim_{\to} (W_V'',\, \Z^3)=\Z\oplus\lim_{\to} (W_V''',\, \Z^2).$$ 
Similarly, using Proposition \ref{p:extract-one-eigenvalues}, we extract the eigenvalue -1 of the transpose of $W_V'$ and get
$$\lim_{\to} ((W_V')^t,\, \Z^3)=\Z\oplus\lim_{\to} (
\left(
\begin{array}{cc}
 5 & 3 \\
 1 & 4 \\
\end{array}
\right)
,\, \Z^2).$$
Since 
$$
\left(
\begin{array}{cc}
 4 & -1 \\
 -3 & 1 \\
\end{array}
\right)^{-1}.\left(
\begin{array}{cc}
 5 & 3 \\
 1 & 4 \\
\end{array}
\right).\left(
\begin{array}{cc}
 4 & -1 \\
 -3 & 1 \\
\end{array}
\right)=\left(
\begin{array}{cc}
 3 & 1 \\
 1 & 6 \\
\end{array}
\right),$$
we get 
$$K_0(S)=H_S^0(T)=H^{ST}_0(T)=K_0(U)=\Z\oplus \lim_{\to}(\left(
\begin{array}{cc}
 3 & 1 \\
 1 & 6 \\
\end{array}
\right),\Z^2)
=\Z\oplus \lim_{\to} (W_V''',\Z^2)
.$$
We now show that $\lim\limits_{\to}(W_V''',\Z^2)$ cannot be written as a direct sum even though it has rank two!
First note that the group $\lim\limits_{\to}(W_V''',\Z^2)$ is of rank 2, as the matrix has determinant $17$.
Since the eigenvalues of $W_V'''$ are $\frac{9\pm \sqrt13}{2}$, hence two distinct irrational numbers,
the minimal polynomial for $W_V'''$ is the same as the characteristic polynomial for $W_V'''$, and is irreducible over $\Q$.
Let $\lambda:=\frac{9+\sqrt13}{2}$.
By  \cite[Prop.~4]{Dugas} the direct limit $\lim\limits_{\to}(W_V''',\Z^2)$ is quasi-isomorphic (as abelian groups) to 
$$L_{\lambda}:=R[\frac1{\lambda}]=\{\frac{q}{\lambda^n}\mid q\in R,\,n\in \Z\},$$
where $R$ is the ring of algebraic integers in the quadratic extension $\Q[\lambda]=\Q[\sqrt{13}]=\{q_1+q_2\sqrt{13}\mid q_1,q_2\in \Q\}$. That is,
$$R=\{\frac{m+n\sqrt{13}}{2}\mid m,n\in\Z\},$$ 
since $13$ is congruent to $1$ mod $4$. Note that $\lambda\in R$.
Since $13$ is a prime number, it is a square free integer.
Moreover, it is not a unit in $R$ since $\lambda^{-1}=\frac{9-\sqrt{13}}{34}\not\in R$.
Furthermore there is no prime number $p$ such that $\lambda \in p R$ because the equation
$$\frac{mp+n p\sqrt{13}}{2}=\frac{9+\sqrt{13}}{2}=\lambda$$
implies that $p$ must divide $9$ and $1$.
Similarly, there is no prime number $p$ such that $\lambda^2\in p R$ because the equation
$$\frac{mp+n p\sqrt{13}}{2}=\frac{47+9\sqrt{13}}{2}=\lambda^2$$ 
implies that $p$ must divide $47$ and $9$.
Hence, by  \cite[Prop.~9]{Dugas} $\lambda$ is strong. 
By \cite[Thm.~6]{Dugas} and the remark below the theorem, $L_{\lambda}$ is an E-ring whose additive group is strongly indecomposable (cf.~\cite[Thm.~14.3, p.~163]{Arnold}).
Hence by \cite[Thm.~6, p.~49]{Reid} $\lim\limits_{\to}(W_V''',\Z^2)$ is strongly indecomposable, since $\lim\limits_{\to}(W_V''',\Z^2)$ is quasi-isomorphic to the additive group of $L_{\lambda}$.
Recall that a ring R is said to be an E-ring if $R^+$, the additive group of $R$, is group isomorphic to $\mathrm{End}(R)$ via the map $r\mapsto (x\mapsto rx)$.
A group $G$ is said to be strongly indecomposable if for all integers $n\ne 0$ and all abelian subgroups $A,B$:
 $$(nG\subset A\oplus B\subset G) \imply (A=0 \text{ or } B=0).$$
A group $G$ is said to be indecomposable if for all abelian subgroups $A,B$:
$$(G=A\oplus B) \imply (A=0 \text{ or } B=0).$$
Thus $\lim\limits_{\to}(W_V''',\Z^2)$ is an indecomposable group, i.e.~it cannot be written as a direct sum of two nonzero abelian subgroups!
\end{exam}
In the following example we discuss briefly the computation of the stable cohomology and stable-transpose homology for several more tilings of the line.
\begin{exam}[more 1-dimensional tilings]\label{e:more1dimexamples}$ $\\
\textbf{Tiling OneFifth}:(cf.~\cite[Example~2.6]{Gahler}).
\begin{itemize}
\item Inflation factor: $\lambda=\frac{5+\sqrt{5}}2=3.618$
\item Proto-edges(=stable edges) (2): $a$,$b$,   $|a|=\frac{\sqrt{5}-1}2=0.618$, $|b|=1$
\item substitution: $\omega(a)=aba$, $\omega(b)=bbab$
\item Stable vertices (3): $a.b$, $b.a$, $b.b$
\item $K_0(U)=H^{ST}_0(T)=H_S^0(T)=K_0(S)=\lim\limits_{\to}(
\left(
\begin{array}{cc}
 3 & 1 \\
 1 & 2 \\
\end{array}
\right)
,\Z^2)=\Z[\frac15]^2$.
\end{itemize}
The substitution-homotopy matrix $W_V$ is calculated in a similar way as for the Fibonacci tiling. 
We then use Proposition \ref{p:kerA-isom-Znminusr} to compute the matrix $W_V':=\left(
\begin{array}{cc}
 3 & 1 \\
 1 & 2 \\
\end{array}
\right)
$. Since it is equal to its transpose, the stable 0-cohomology group equals the stable-transpose 0-homology group. By Proposition \ref{p:lim-union}, 
$$\lim\limits_{\to}(W_V',\Z^2)\subset \Z[\frac1{5}]^2.$$
We then check with a computer that the powers $5^k W_V'^{-k}$ are integer matrices for small values of $k\in\N$.
Diagonalizing the matrix $W_V'$ in $\C$, we compute explicitly $W_V'^{-k}$, and using an induction argument we show that the above inclusion is actually an equality.
We learn from this example that this direct limit is of the form $\Z[\frac1{\det A}]^2$.\\
%
\\
\noindent
\textbf{Tiling OneSixth}:(cf.~\cite[Example~1.21]{Gahler}).
\begin{itemize}
\item Inflation factor: $\lambda=3+\sqrt{3}\approx4.73$
\item Proto-edges(=stable edges) (2): $a$,$b$,   $|a|=\sqrt{3}\approx 1.73$, $|b|=1$
\item substitution: $\omega(a)=bbaaab$, $\omega(b)=bbab$
\item Stable vertices (4): $a.a$, $a.b$, $b.a$, $b.b$
\item $K_0(U)=H^{ST}_0(T)=H_S^0(T)=K_0(S)=\lim\limits_{\to}(
\left(
\begin{array}{cc}
 6 & -2 \\
 3 & 0 \\
\end{array}
\right),\Z^2)=\Z[\frac16]^2$
\item $H_S^0(T)=K_0(S)=\lim\limits_{\to}(
\left(
\begin{array}{cc}
 6 & -2 \\
 3 & 0 \\
\end{array}
\right),\Z^2)=\Z[\frac16]^2$
\item $K_0(U)=H^{ST}_0(T)=\lim\limits_{\to}(
\left(
\begin{array}{cc}
 6 & 3 \\
 -2 & 0 \\
\end{array}
\right),\Z^2)=\Z[\frac16]^2.$
\end{itemize}
The substitution-homotopy matrix $W_V$ is calculated in a similar way as for the Fibonacci tiling. 
We then use Proposition \ref{p:kerA-isom-Znminusr}  and  Proposition \ref{p:imA-isom-Zr} to compute the matrix $W_V':=\left(
\begin{array}{cc}
 6 & -2 \\
 3 & 0 \\
\end{array}
\right).$
The direct limit is computed the same way as we did for the tiling OneFifth and is also of the form $\Z[\frac1{\det A}]^2$.
Moreover, this example shows that the collared equivalence relation $R_c$  given in Definition \ref{d:collaredcells} is different from 
the equivalence relation $\sim_1$ defined in \cite[Section~4]{AP}, even though both yield the same \v{C}ech cohomology  (cf.~Section \ref{s:preliminaries}).
%
\\\\
\noindent
\textbf{Tiling nonReducible-4-Letter}:
\begin{itemize}
\item Inflation factor: $\lambda\approx2.508$
\item Proto-edges(=stable edges) (4): $a$, $b$, $c$, $d$   $|a|\approx 1.966$, $|b|\approx0.542$, $|c|\approx0.359$, $|d|=1$
\item substitution: $\omega(a)=aad$, $\omega(b)=cd$, $\omega(c)=cb$, $\omega(d)=ab$
\item Stable vertices (9): $a.a$,\, $a.b$,\, $a.d$,\, $b.a$,\, $b.c$,\, $c.b$,\, $c.d$,\, $d.a$,\, $d.c$
\item $K_0(U)=H^{ST}_0(T)=H_S^0(T)=K_0(S)=\Z^5$
\end{itemize}
We compute the substitution-homotopy matrix $W_V$ in a similar way as we did for the Fibonacci tiling. 
We then use Proposition \ref{p:kerA-isom-Znminusr}  and  Proposition \ref{p:imA-isom-Zr} to remove the eigenvalue 0. The result matrix is a $5\times 5$ matrix with determinant 1.
%
\\\\
\noindent
\textbf{Tiling PeriodDoubling}:
\begin{itemize}
\item Inflation factor: $\lambda=2$
\item Proto-edges(=stable edges) (2): $a$, $b$,    $|a|=1$, $|b|=1$
\item substitution: $\omega(a)=bb$, $\omega(b)=ba$
\item Stable vertices (3): $a.b$,\, $b.a$,\, $b.b$
\item $K_0(U)=H^{ST}_0(T)=H_S^0(T)=K_0(S)=\Z\oplus\Z[\frac12]$
\end{itemize}
We compute the substitution-homotopy matrix $W_V$ in a similar way as we did for the Fibonacci tiling. 
We then use Proposition \ref{p:kerA-isom-Znminusr}  to get the matrix $\lim\limits_{\to}(
\left(
\begin{array}{cc}
 1 & 1 \\
 2 & 0 \\
\end{array}
\right)
\,,\Z^2)$, which also occurs in the Morse tiling example, hence the same direct limit is obtained.
%
\\\\
\noindent
\textbf{Tiling Rauzy}:
\begin{itemize}
\item Inflation factor: $\lambda\approx 1.839$
\item Proto-edges(=stable edges) (3): $a$, $b$, $c$    $|a|\approx 1.839$, $|b|\approx 1.543$, $|c|=1$
\item substitution: $\omega(a)=ab$, $\omega(b)=ac$, $\omega(c)=a$
\item Stable vertices (5): $a.a$,\, $a.b$,\, $a.c$,\, $b.a$,\, $c.a$
\item $K_0(U)=H^{ST}_0(T)=H_S^0(T)=K_0(S)=\Z^3$
\end{itemize}
We compute the substitution-homotopy matrix $W_V$ in a similar way as we did for the Fibonacci tiling. 
We then use Proposition \ref{p:kerA-isom-Znminusr}  to get a $3\times3$ matrix with determinant 1.
%
\\\\
\noindent
\textbf{Tiling Rudin-Shapiro}:
\begin{itemize}
\item Inflation factor: $\lambda= 2$
\item Proto-edges(=stable edges) (4): $a$, $b$, $c$, $d$    $|a|=1$, $|b|=1$, $|c|=1$, $|d|=1$
\item substitution: $\omega(a)=ab$, $\omega(b)=ac$, $\omega(c)=db$, $\omega(d)=dc$
\item Stable vertices (8): $a.b$,\, $a.c$,\, $b.a$,\, $b.d$,\, $c.a$,\, $c.d$,\, $d.b$,\, $d.c$
\item $K_0(U)=H^{ST}_0(T)=H_S^0(T)=K_0(S)=\Z\oplus\Z[\frac12]^3$
\end{itemize}
We compute the substitution-homotopy matrix $W_V$ in a similar way as we did for the Fibonacci tiling. 
We then use Proposition \ref{p:kerA-isom-Znminusr} and Proposition \ref{p:imA-isom-Zr} to remove the eigenvalue 0.
We then extract the eigenvalue -1 with Proposition \ref{p:extract-one-eigenvalues} and get 
$$\lim\limits_{\to} (W_V,\ker\delta^0)=\Z\oplus \lim\limits_{\to} (W_V',\Z^3),$$
where
$$W_V':=\left(
\begin{array}{ccc}
 -2 & 1 & -1 \\
 -1 & 2 & 0 \\
 1 & 0 & 2 \\
\end{array}
\right),
\qquad
W_V'\,^2=\left(
\begin{array}{ccc}
 2 & 0 & 0 \\
 0 & 3 & 1 \\
 0 & 1 & 3 \\
\end{array}
\right).$$
By Proposition \ref{p:Asqr} and Proposition \ref{p:lim:direct-sum},
$$\lim\limits_{\to}(W_V',\Z^3)=\Z[\frac12]\oplus\lim\limits_{\to}(\left(
\begin{array}{cc}
 3 & 1 \\
 1 & 3 \\
\end{array}
\right)
,\,\Z^2).$$
Since 
$$\left(
\begin{array}{cc}
 1 & 1 \\
 0 & 1 \\
\end{array}
\right).\left(
\begin{array}{cc}
 3 & 1 \\
 1 & 3 \\
\end{array}
\right).\left(
\begin{array}{cc}
 1 & 1 \\
 0 & 1 \\
\end{array}
\right)^{-1}=\left(
\begin{array}{cc}
 4 & 0 \\
 1 & 2 \\
\end{array}
\right)$$
we get, by Proposition \ref{p:lim:Ginv}, that
$$
\lim\limits_{\to}(\left(
\begin{array}{cc}
 3 & 1 \\
 1 & 3 \\
\end{array}
\right)
,\,\Z^2)=
\lim\limits_{\to}(
\left(
\begin{array}{cc}
 4 & 0 \\
 1 & 2 \\
\end{array}
\right),\Z^2)=\lim\limits_{\to}(
A,\Z^2)
$$
where $A=\left(
\begin{array}{cc}
 4 & 0 \\
 1 & 2 \\
\end{array}
\right)$.
It is clear that $A^{-n} \Z^2$ is a subset of $\Z[\frac12]^2$ because every entry is of that form. The other inclusion is by the following argument:
  The second coordinate $A^{-n}.( 0, k)$ is obviously all the dyadic numbers for all integers $k$ and all $n$.
  The first coordinate $A^{-n}.(k,0)$ gives all the dyadics  in the first entry and some numbers in the second entry, which we know are dyadics, so we just subtract that and in this way we get all dyadic numbers in the first entry.
Hence by Proposition \ref{p:lim-union}, $\lim\limits_{\to}(A,\Z^2)=\Z[\frac12]^2$, and thus $H_S^0(T)=\lim\limits_{\to} (W_V,\ker\delta^0)=\Z\oplus\Z[\frac12]^3$.
\end{exam}
%
The $K$-theory groups of the above examples are summarized in Table \ref{table:K0groups-d1} and Table \ref{table:K1groups-d1}.
For computing the $K$-theory of the asymptotic  $C^*$-algebra $A$ we used the formula given in Proposition \ref{p:tensorproduct}.
We are the first ones to compute the stable and unstable $K$-theories of the ``Pathologic'' tiling. The unstable $K$-theories of the rest of the above examples are already well-known, and they agree with our computations.



\begin{table}[t]
\begin{center}\caption{$K_0$-groups for tilings of the line. 
  }\label{table:K0groups-d1}
  \begin{tabular}{| r | c | c| c| c| }
    \hline
      Tiling       &  $K_0(U)=H_0^{ST}(T) $               & $K_0(S)=H_S^0(T)$               & $K_0(A)$ \\ \hline
     Fibonnaci             & $\Z^2$                  & $\Z^2$                 &   $\Z^5$   \\
     Morse                 & $\Z\oplus\Z[\frac12]$   & $\Z\oplus\Z[\frac12]$  & $\Z^2\oplus\Z[\frac12]^3$\\
     Nonreducible 4 letter & $\Z^5$                  & $\Z^5$                 & $\Z^{26}$ \\
     Period doubling       & $\Z\oplus \Z[\frac12]$   & $\Z\oplus \Z[\frac12]$&  $\Z^2\oplus \Z[\frac12]^3$\\
     Rauzi (tribonacci)                 & $\Z^3$                  & $\Z^3$                 & $\Z^{10}$\\
     Rudin Shapiro         & $\Z\oplus \Z[\frac12]^3$ & $\Z\oplus \Z[\frac12]^3$& $\Z^2\oplus \Z[\frac12]^{15}$\\
     OneFifth              & $\Z[\frac15]^2$             & $\Z[\frac15]^2$    &    $\Z\oplus\Z[\frac15]^4$    \\  
     OneSixth              & $\Z[\frac16]^2$             & $\Z[\frac16]^2$    &    $\Z\oplus\Z[\frac16]^4$    \\  
     Pathologic              & $\Z\oplus\lim\limits_{\to} 
     {
       \left(\begin{array}{cc}
    3 & 1 \\ 
    1 & 6 \\ 
  \end{array}\right)
     }
      \subsetneqq\Z\oplus \Z[\frac1{17}]^2$             & $\Z\oplus \lim\limits_{\to}
     {
      \left(\begin{array}{cc}
    3 & 1 \\ 
    1 & 6 \\ 
  \end{array}\right)
     }
     $    &    $  $    \\  
    \hline
  \end{tabular}
    \footnotesize{Note: some authors prefer to write for example $\Z[1/4]$ instead of its reduced form $\Z[1/2]$.}
\end{center}
\end{table}


\begin{table}[t]
\caption{$K_1$-groups for tilings of the line.}\label{table:K1groups-d1}
\begin{center}
  \begin{tabular}{| r | c | c| c| c| }
    \hline
     Tiling        &  $K_1(U)=H_1^{ST}(T)$               & $K_1(S)=H_S^1(T)$              & $K_1(A)$ \\ \hline
     Fibonnaci             & $\Z$                    & $\Z$                  &  $\Z^4$   \\
     Morse                 & $\Z$                    & $\Z$                  & $\Z^2\oplus\Z[\frac12]^2$\\
     nonreducible 4 letter & $\Z$                    & $\Z$                  & $\Z^{10}$ \\
     Period doubling       & $\Z$                    & $\Z$                  &  $\Z^2\oplus \Z[\frac12]^2$\\
     Rauzi (tribonacci)                & $\Z$                    & $\Z$                  & $\Z^{6}$\\
     Rudin Shapiro         & $\Z$                    & $\Z$                  & $\Z^2\oplus \Z[\frac12]^{6}$\\
     OneFifth              & $\Z$             & $\Z$            &    $\Z[\frac15]^6$    \\
     OneSixth              & $\Z$             & $\Z$            &    $\Z[\frac16]^4$    \\
     Pathologic             & $\Z$             & $\Z$            &    $  $    \\
  \hline
  \end{tabular}
\end{center}
\end{table}





\subsection{Stable cells - Collared cells relationship}
Recall that for tilings of dimension 1, we can always use the homotopy that maps the leftmost edge of $\omega(e)$ to $e$ (the orientation of the edges is from left to right). In such case,  we can  write the substitution-homotopy matrices $W_V, W_E$ in terms of the collared-substitution matrices and the forgetful and inclusion maps defined as follows
\begin{center}
\begin{tabular}{ll}
 $F_E(ee'e''):=e'.e''$& $i_E(e'.e''):= ee'e''$\\
 $F_V(e.e'):=e'$ & $i_V(e'):=e.e',$
\end{tabular}
\end{center}
where $ee'e''$ is a collared edge, $e'$ is a stable edge, and $e.e'$ is a stable or collared vertex. Then it is easy to see that

\begin{thm}[Stable cells - Collared cells relationship] For any tiling $T\in \Omega$ of dimension 1 with the homotopy defined above, the following relations hold
\begin{itemize}
\item $W_E = F_V\circ \omega_V\circ i_V$
\item $W_V = F_E\circ \omega_E \circ i_E$
\item $\delta_0=F_V\circ\partial_1\circ i_E$
\item $F_V\circ\partial_1 = \delta_0 \circ F_E$
 \item $F_V \circ\omega_V = W_E \circ F_V$
 \item $F_E\circ\omega_E = W_V \circ F_E$.
\end{itemize}
\end{thm}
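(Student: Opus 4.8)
The plan is to verify each of the six identities directly on basis elements, since all maps involved are $\Z$-linear maps between finitely generated free abelian groups with explicit bases given by stable and collared cells. The key structural fact I would exploit is that for a one-dimensional tiling with the leftmost-edge homotopy, the substitution $\omega(e)=e_1e_2\cdots e_k$ is a word in the proto-edges, and every cell of $\omega(e)$ (all the sub-edges and the internal vertices) has $e$ as its unique ``parent''; the homotopy $h_1$ collapses $\omega(e)$ onto $e$ by sending the leftmost shrunk edge $\tfrac1\lambda e_1$ onto $e$ and collapsing everything else to the left endpoint. First I would fix notation: a stable edge is just a proto-edge $e'$, a stable vertex is a pair $e.e'$ (the two proto-edges meeting at a vertex), a collared edge is a triple $ee'e''$, and a collared vertex is again a pair $e.e'$ but tracking a larger patch. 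With the forgetful maps $F_E, F_V$ and inclusions $i_E, i_V$ as defined, the identities become combinatorial bookkeeping about which letters appear where in the substitution words.

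The order I would carry the verifications out is from the more concrete to the more formal. I would begin with the two expressions $W_E = F_V\circ\omega_V\circ i_V$ and $W_V = F_E\circ\omega_E\circ i_E$, since these unwind the substitution-homotopy matrices (Lemma \ref{l:Wv}, Lemma \ref{l:We}) into the collared substitution matrices $\omega_V,\omega_E$ of Section \ref{s:preliminaries}. For $W_E$: given a stable edge $e'$, the map $W_E$ sends $e'$ to $\sum_{h_1(\tilde e)=e'}\tilde e$, i.e.\ to the proto-edges $\tilde e$ of $T$ whose leftmost-position shrunk copy in $\omega$ matches $e'$; I would check this equals first including $e'$ as a vertex $e.e'$, then applying $\omega_V$ (which substitutes and selects the collared vertex-classes appearing in $\omega(e.e')$), then forgetting down to the second proto-edge via $F_V$. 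The matching of signs $\delta_{e,e'}$ is automatic here because in dimension one all edges carry the orientation $(1,0)$. For $W_V = F_E\circ\omega_E\circ i_E$ the argument is parallel but one dimension up: a stable vertex $e.e'$ is included as the collared edge $ee'$ (reading off the edge data around the vertex), pushed through $\omega_E$, and the resulting collared edges are forgotten to their terminal vertex-pairs.

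Next I would treat the two identities relating $\delta_0$ to $\partial_1$, namely $\delta_0 = F_V\circ\partial_1\circ i_E$ and $F_V\circ\partial_1 = \delta_0\circ F_E$. These follow from the fact, recorded already in Eq.~(\ref{e:delta0-intro}) and the boundary-map formula for $\partial_1$, that both $\delta_0$ and $\partial_1$ are built from the same incidence coefficients $\delta_{e,v}$ of Eq.~(\ref{e:deltaev}); the forgetful and inclusion maps merely translate between the collared and stable labelings of the same geometric vertex, so the incidences are preserved. Finally I would dispatch the two intertwining relations $F_V\circ\omega_V = W_E\circ F_V$ and $F_E\circ\omega_E = W_V\circ F_E$, which are naturality/commutativity statements; I expect these to follow formally by composing the already-established identities and using that $F_E\circ i_E = \mathrm{id}$ and $F_V\circ i_V=\mathrm{id}$ on the relevant classes (the forgetful maps are one-sided inverses of the inclusions). \textbf{The main obstacle} I anticipate is not any single computation but ensuring the bookkeeping of the collaring is consistent: a collared vertex and a stable vertex are both denoted $e.e'$, yet they live in different groups ($\Z^{cV}$ versus $\Z^{sV}$) and the forgetful map must be shown to be well-defined on equivalence classes, i.e.\ that $R_c$-equivalent collared cells have $R_0$-equivalent (stable) images. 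I would handle this by checking that the leftmost-edge homotopy is compatible with the collaring, so that the forgetful maps descend to the equivalence classes, after which all six identities reduce to the letter-matching described above.
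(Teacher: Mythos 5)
The paper itself offers no argument for this theorem beyond ``it is easy to see'', so the only question is whether your verification plan is sound, and at one crucial point it is not. You propose to verify $W_E=F_V\circ\omega_V\circ i_V$ and $W_V=F_E\circ\omega_E\circ i_E$ on basis elements first, and then to obtain the intertwining relations $F_V\circ\omega_V=W_E\circ F_V$ and $F_E\circ\omega_E=W_V\circ F_E$ ``formally'' from $F_V\circ i_V=\mathrm{id}$ and $F_E\circ i_E=\mathrm{id}$. That derivation fails: from $W_E=F_V\omega_V i_V$ you get $W_E\circ F_V=F_V\omega_V i_V F_V$, and identifying this with $F_V\omega_V$ requires $F_V\omega_V\circ(i_VF_V-\mathrm{id})=0$. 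Since $i_V\circ F_V\neq\mathrm{id}$ (it replaces the left edge of a collared vertex by the arbitrarily chosen one), this is precisely the assertion that $F_V\circ\omega_V$ depends only on the right edge, i.e.\ relation (5) itself; it is not a formal consequence of $F_Vi_V=\mathrm{id}$ (for instance $F(u_1)=F(u_2)=x$, $i(x)=u_1$, $\omega(u_1)=u_1$, $\omega(u_2)=2u_2$ satisfies $Fi=\mathrm{id}$, yet $(F\omega i)\circ F\neq F\omega$). The correct logical order is the opposite of yours: prove (5) and (6) first by the letter computation --- with the leftmost-edge homotopy, $\omega_V(e.e')$ is the single collared vertex $\mathrm{last}(\omega(e)).\mathrm{first}(\omega(e'))$, so $F_V\omega_V(e.e')=\mathrm{first}(\omega(e'))=W_E(e')$ independently of $e$, and similarly $F_E\omega_E(ee'e'')=W_V(e'.e'')$ independently of $e$ --- and then (1) and (2) follow by composing with $i_V$, $i_E$ on the right. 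Relations (5) and (6) are also exactly what make (1) and (2) independent of the choice hidden in $i_V$, $i_E$ (in Fibonacci, $i_V(a)$ could be $a.a$ or $b.a$); your proposal never addresses this choice, while the well-definedness issue you do worry about is vacuous in dimension one, where stable and collared vertices are literally the same classes.

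Two further points would sink a verification carried out as you describe it. First, you describe $\omega_V$ as ``substituting and selecting the collared vertex-classes appearing in $\omega(e.e')$'', which reads as a sum over all vertices of the substituted patch; with that reading (1) is false --- for Fibonacci it yields $F_V\omega_V i_V(a)=a+2b$ instead of $W_E(a)=a$. The map $\omega_V$ is the cellular map induced by the substitution on the Anderson--Putnam complex and sends a collared vertex to the \emph{single} collared vertex $[\lambda v]$, read with its collar in the substituted patch. Second, (3) and (4) are not a matter of ``the incidences being preserved'': the forgetful maps shift every cell one step to the right, and with the paper's own conventions ($\delta^0(e_1.e_2)=e_1-e_2$ by Proposition \ref{p:delta0-d1}, and $\partial_1=$ final vertex minus initial vertex) one computes $F_V\partial_1 i_E(e'.e'')=e''-e'=-\delta^0(e'.e'')$, so (3) and (4) hold only up to a sign (equivalently, with the opposite convention for $\delta^0$, which the paper itself flags as the convention of \cite{Gon2}). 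A verification that merely asserts the coefficients match, as yours does, would not detect this --- and catching such discrepancies is the entire point of checking identities on basis elements.
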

\noindent
The last relation in the above theorem gives rise to the following commutative diagram
\begin{equation}\label{e:cokercoker}
\xymatrix{\coker\partial_1^t\ar[r]^{\omega_E^t}&\coker\partial_1^t\ar[r]^{\omega_E^t}&\cdots\\
\coker\delta_0^t\ar[r]^{W_V^t}\ar[u]^{F_E^t}&\coker\delta_0^t\ar[r]^{W_V^t}\ar[u]^{F_E^t}&\cdots\,.}
\end{equation}
\noindent
All the above one dimensional tilings satisfied
\begin{center} 
\begin{tabular}{ccc}
$H_0^{ST}(T)$& $\quad K_0(U)$ & $\quad K_0(S)$\\
$\lim(\coker\delta_0^t,W_V^t)$ =& $\lim (\coker\partial_1^t, \omega_E^t)$ = &$\lim(\ker \delta_0, W_V).$
\end{tabular}
\end{center}
The first equality holds in general for any 1-dimensional tiling by Theorem \ref{t:unstableKtheory}. See also Corollary \ref{c:A-At}.
The second equality holds at least for the above examples, but we don't know if it is valid in general.
In all these 1-dimensional examples, except the tiling called OneSixth, the first equality was verified using diagram (\ref{e:cokercoker}) after removing some of the 0-eigenvalues; the second equality was verified empirically with the following 4 steps:
\begin{enumerate}
\item  Let $A_u: \Z^r \to \Z^r$ be the matrix for $\omega_E^t:\coker\partial_1^t \to \coker\partial_1^t$ with some of the zero eigenvalues removed.
\item Let $A_s: \Z^r \to \Z^r$ be the matrix for $W_V:\ker\delta_0 \to \ker\delta_0$ with some of the zero eigenvalues removed.
\item  Let the following be $\C$-diagonalization of the matrices $A_u$, $A_s$:
$$P_u D P_u^{-1}=A_u,\quad P_s D P_s^{-1}= A_s.$$
 \item If $P_uP_s^{-1}$ is a $\Z$-invertible integer matrix then
 $A_u$ and $A_s$ are $\Z$-similar.
\end{enumerate}
The matrix $P_uP_s^{-1}$ in Step 4 always yielded a $\Z$-invertible matrix for the above examples except for the tiling called OneSixth.




\subsection{Two dimensional tilings}[Block/Rectangular tilings.]
$ $
Let $T$ be a tiling of the plane, and let $f_1,\ldots,f_N\in T$ be the prototiles(=protofaces). 
If all the prototiles are rectangles or blocks (cf.~\cite[Section~8]{AP}) then one can always generalize the procedure that was used for the one-dimensional tilings.
Namely, for these ``block'' tilings one can always put the homotopy that homotopes the left-most bottom-most rectangle in $\frac{1}{\lambda}\omega(f_i)$  to $f_i$.
Moreover, we put the counterclockwise orientation on all the rectangles of the tiling, on its horizontal edges we put the orientation from left to right, and on its vertical edges we put the orientation from bottom to top.

\begin{figure}[h]
\centerline{
\includegraphics[scale=.3]{./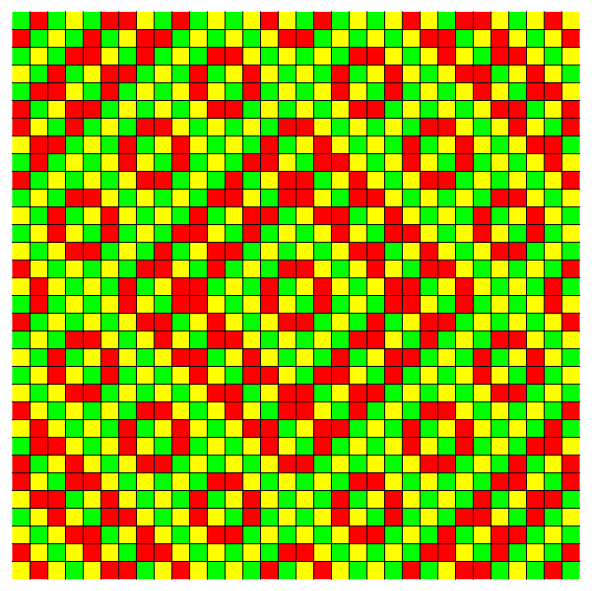}
}
 \caption{Tri-square tiling.\label{f:trisquare}}
\end{figure}
\begin{exam}[Tri-square tiling](cf.~\cite[Section~5.4,p.99]{Gustavo}).
Let $T$ be the tri-square tiling with protofaces $a,b,c\in T$ and substitution rule given as follows:
\begin{center}
\begin{tikzpicture}
\draw[fill=red] (.5,4) rectangle (1.5,5);
\draw[fill=green] (4.5,4) rectangle (5.5,5);
\draw[fill=yellow] (8.5,4) rectangle (9.5,5);
\node at (1,4.5){$a$};
\node at (5,4.5){$b$};
\node at (9,4.5){$c$};
%
\draw[->] (1,3.5) -- (1,2.5); 
\draw[->] (5,3.5) -- (5,2.5); 
\draw[->] (9,3.5) -- (9,2.5); 
\node at (1.3,3){$\omega$};
\node at (5.3,3){$\omega$};
\node at (9.3,3){$\omega$};

%
\draw[blue,fill=yellow] (0,0) rectangle (1,1);
\draw[blue,fill=yellow] (1,1) rectangle (2,2);
\draw[blue,fill=green] (1,0) rectangle (2,1);
\draw[blue,fill=green] (0,1) rectangle (1,2);
\draw (0,0) rectangle (2,2);
\node at (0.5,0.5){$c$};
\node at (1.5,1.5){$c$};
\node at (0.5,1.5){$b$};
\node at (1.5,0.5){$b$};
%
\draw[blue,fill=red] (4,0) rectangle (5,1);
\draw[blue,fill=red] (5,1) rectangle (6,2);
\draw[blue,fill=green] (4,1) rectangle (5,2);
\draw[blue,fill=green] (5,0) rectangle (6,1);
\draw (4,0) rectangle (6,2);
\node at (4.5,0.5){$a$};
\node at (5.5,1.5){$a$};
\node at (4.5,1.5){$b$};
\node at (5.5,0.5){$b$};
%
\draw[blue,fill=yellow] (8,0) rectangle (9,1);
\draw[blue,fill=yellow] (9,1) rectangle (10,2);
\draw[blue,fill=red] (8,1) rectangle (9,2);
\draw[blue,fill=red] (9,0) rectangle (10,1);
\draw (8,0) rectangle (10,2);
\node at (8.5,0.5){$c$};
\node at (9.5,1.5){$c$};
\node at (8.5,1.5){$a$};
\node at (9.5,0.5){$a$};
\end{tikzpicture} 
\end{center}
The length of the horizontal edge and of the vertical edge is 1.
The inflation factor is $\lambda=2$.
We illustrate the homotopy $h_s$, $0\le s\le 1$, on the vertices in the following figure:
\begin{center}
\begin{tikzpicture}
\draw[blue] (0,0) rectangle (1,1);
\draw[blue] (1,1) rectangle (2,2);
\draw (0,0) rectangle (2,2);
\draw[red,->] (0.1,1.1) -- (0.1,1.9);
\draw[red,->] (1.1,0.1) -- (1.9,0.1);
\draw[red,->] (2.1,1.1) -- (2.1,1.9);
\draw[red,->] (1.1,2.1) -- (1.9,2.1);
\draw[red,->] (1.1,1.1) -- (1.9,1.9);
%
\node[red] at (0.3,1.5){$h_s$};
\node[red] at (1.7,1.5){$h_s$};
\node[red] at (1.5,0.3){$h_s$};
\node[red] at (2.3,1.5){$h_s$};
\node[red] at (1.5,2.3){$h_s$};
\foreach \Point in { (1,1), (2,1), (1,2),(1,0),(0,1)}{
    \node at \Point[red] {\textbullet};
}    
\foreach \Point in {(0,0), (2,0), (2,2), (0,2)}{
    \node at \Point[black] {\textbullet};
}    
\end{tikzpicture}  
\end{center}
\begin{itemize}
\item stable faces (3):\,\, $a$, $b$, $c$
\item vertical stable edges (7):\,\,
$\vsE{a}{a},\,
\vsE{a}{b},\,
\vsE{a}{c},\,
\vsE{b}{a},\,
\vsE{b}{c},\,
\vsE{c}{a},\,
\vsE{c}{b}
$
\item horizontal stable edges (7):\,\,
$\hsE{a}{a},\,
\hsE{a}{b},\,
\hsE{a}{c},\,
\hsE{b}{a},\,
\hsE{b}{c},\,
\hsE{c}{a},\,
\hsE{c}{b}
$

\item stable vertices (21):\,\,
$\sVf{a}{a}{a}{a},\,
\sVf{a}{a}{a}{b},\,
\sVf{a}{a}{b}{c},\,
\sVf{a}{a}{c}{a},\,
\sVf{a}{a}{c}{b},\,
\sVf{a}{b}{a}{a},\,
\sVf{a}{b}{a}{b},\,
\sVf{a}{b}{c}{a},\,
\sVf{a}{c}{b}{a},\,
\sVf{a}{c}{b}{c},\,
\sVf{b}{a}{a}{c},\,
\sVf{b}{a}{b}{c},\,$\\
$
\sVf{b}{c}{a}{a},\,
\sVf{b}{c}{a}{c},\,
\sVf{b}{c}{b}{a},\,
\sVf{b}{c}{b}{c},\,
\sVf{c}{a}{a}{a},\,
\sVf{c}{a}{a}{b},\,
\sVf{c}{a}{c}{a},\,
\sVf{c}{b}{a}{a},\,
\sVf{c}{b}{c}{b}
$
\end{itemize}
We list the stable edges in the above order starting with the vertical stable edges. We denote them as $se_1,\ldots,se_{14}$.
The stable vertices are also listed in the above order and we denote them as $sv_1,\ldots, sv_{21}$.
The exponential map $\delta^0:\Z^{\sV}\to\Z^{\sE}$ is given by the matrix
\begin{center}
\includegraphics[scale=.6]{./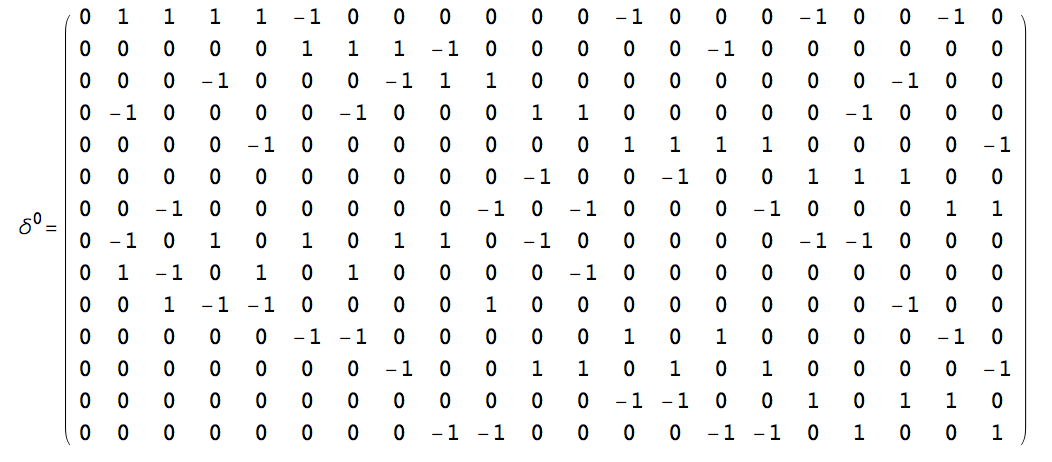}
\end{center}
For instance $\delta^0(sv_{3})=se_1-se_7-se_9+se_{10}$.
The index map $\delta^1:\Z^{\sE}\to\Z^{\sF}$ is given by the matrix
$$
\delta^1=\left(
\begin{array}{cccccccccccccc}
 0 & 1 & 1 & -1 & 0 & -1 & 0 & 0 & -1 & -1 & 1 & 0 & 1 & 0 \\
 0 & -1 & 0 & 1 & 1 & 0 & -1 & 0 & 1 & 0 & -1 & -1 & 0 & 1 \\
 0 & 0 & -1 & 0 & -1 & 1 & 1 & 0 & 0 & 1 & 0 & 1 & -1 & -1 \\
\end{array}
\right)
$$
For controlling computational errors, it is always good to check that $\delta^1\circ\delta^0=0$.
The substitution-homotopy map $W_V:\Z^{\sV}\to\Z^{\sV}$ is given by the matrix
\begin{center}
\includegraphics[scale=.6]{./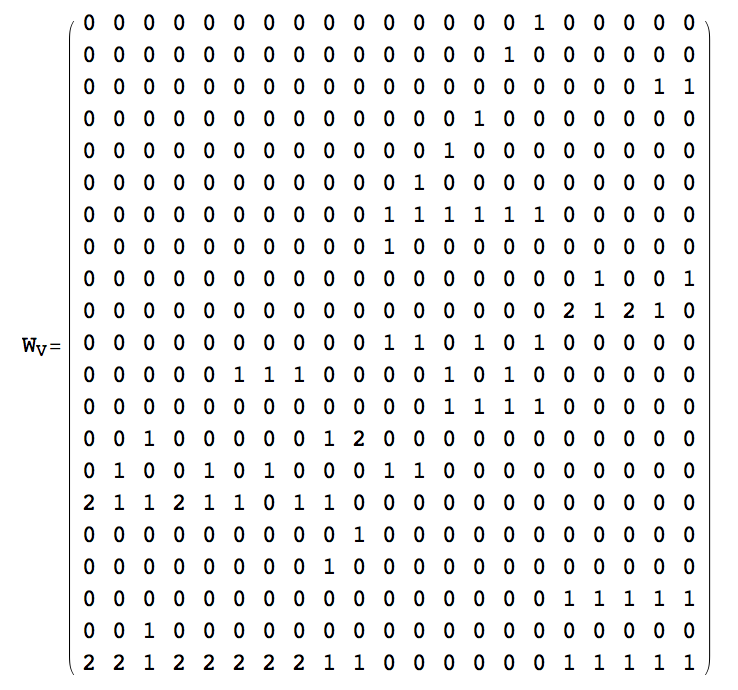}
\end{center}
For instance,
$W_V(sv_2)=sv_{15}+sv_{16}+2\,sv_{21}$, and it is
illustrated in Figure \ref{f:trisquare-Wv-sv2}.
\begin{figure}[t]
\centerline{
\includegraphics[scale=.4]{./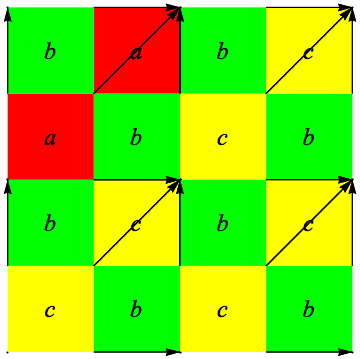}
}
 \caption{$W_V(sv_2)=sv_{15}+sv_{16}+2\,sv_{21}$\label{f:trisquare-Wv-sv2}}
\end{figure}
The substitution-homotopy map $W_E:\Z^{\sE}\to\Z^{\sE}$ is given by the matrix
\begin{center}
\includegraphics[scale=.6]{./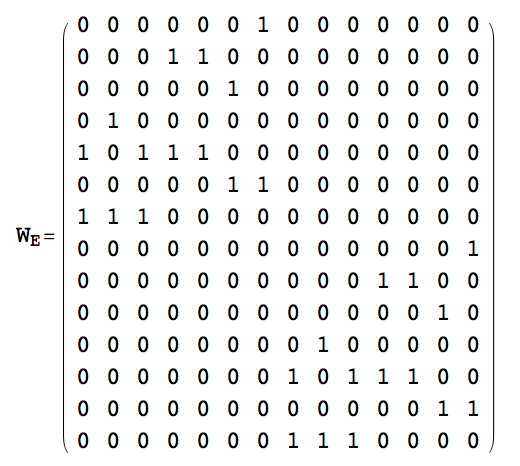}
\end{center}
For instance, $W_E(se_2)=se_4+se_7$, and it is illustrated in Figure \ref{f:trisquare-We-se2}.\\
\begin{figure}[t]
\centerline{
\input{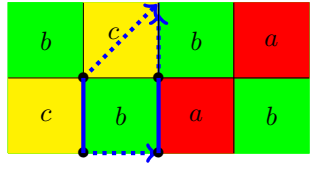}
}
 \caption{$W_E(se_2)=se_4+se_7$\label{f:trisquare-We-se2}}
\end{figure}
\noindent
The substitution-homotopy map $W_F:\Z^{\sF}\to\Z^{\sF}$ is given by the matrix
$$W_F=\left(
\begin{array}{ccc}
 0 & 1 & 0 \\
 0 & 0 & 0 \\
 1 & 0 & 1 \\
\end{array}
\right)
$$
For controlling computational errors it is always good to check that the diagram (\ref{e:HSdiagram}) commutes, i.e.~that $W_E\circ \delta^0=\delta^0\circ W_V$ and $W_F\circ \delta^1=\delta^1\circ W_E$. The computation of the direct limits is done similarly as the ones done for the 1-dimensional tilings, so here we skip most of the steps. 
By Proposition \ref{p:kerA-isom-Znminusr} we replace  $\ker \delta^0$ with $\Z^{11}$.
By Proposition \ref{p:imA-isom-Zr} we remove the zero-eigenvalues of the matrix.
By Proposition \ref{p:extract-one-eigenvalues} we extract the $\pm1$-eigenvalues and get
$$H_S^0(T)=\lim_{\to}(W_V,\,\ker\delta^0)=\Z^4\oplus\lim_{\to}(\left(
\begin{array}{ccc}
 18 & -38 & 16 \\
 8 & -18 & 8 \\
 2 & -6 & 4 \\
\end{array}
\right),\,\Z^{3}).$$
Note that  
$$\left(
\begin{array}{ccc}
 -1 & 2 & -1 \\
 0 & 1 & 0 \\
 0 & 0 & -1 \\
\end{array}
\right)
.
\left(
\begin{array}{ccc}
 18 & -38 & 16 \\
 8 & -18 & 8 \\
 2 & -6 & 4 \\
\end{array}
\right)
.
\left(
\begin{array}{ccc}
 -1 & 2 & -1 \\
 0 & 1 & 0 \\
 0 & 0 & -1 \\
\end{array}
\right)^{-1}=
\left(
\begin{array}{ccc}
 4 & 0 & 0 \\
 -8 & -2 & 0 \\
 2 & 2 & 2 \\
\end{array}
\right)
$$
is a lower triangular matrix with eigenvalues $4,-2,2$ and, by Proposition \ref{p:lim:Ginv}, we get that
$$\lim_{\to}(\left(
\begin{array}{ccc}
 18 & -38 & 16 \\
 8 & -18 & 8 \\
 2 & -6 & 4 \\
\end{array}
\right),\,\Z^{3})=
\lim_{\to}(\left(
\begin{array}{ccc}
 4 & 0 & 0 \\
 -8 & -2 & 0 \\
 2 & 2 & 2 \\
\end{array}
\right),\,\Z^{3}).
$$
By the same argument as the one used for the lower triangular matrix in Example \ref{e:more1dimexamples}(Rudin-Shapiro) we conclude that
$$H_S^0(T)=\lim_{\to}(W_V,\,\ker\delta^0)=\Z^4\oplus\lim_{\to}(\left(
\begin{array}{ccc}
 4 & 0 & 0 \\
 -8 & -2 & 0 \\
 2 & 2 & 2 \\
\end{array}
\right),\,\Z^{3})=\Z^4\oplus \Z[\frac12]^3.$$
By Proposition \ref{p:kerOverIm-A-isom-Znminus}, $\lim\limits_{\to}(W_E,\,\frac{\ker\delta^1}{\im\delta^0})=\lim\limits_{\to}(W_E',\coker B)$ for some matrices $W_E',B$.
Then by Proposition \ref{p:cokerA-isom-Znminuss} we get rid of the coker and get
$$K_1(S)=H_S^1(T)=\lim_{\to}(W_E,\frac{\ker\delta^1}{\im\delta^0})=(
\left(
\begin{array}{cc}
 2 & 0 \\
 0 & 2 \\
\end{array}
\right)
,\Z^2)=\Z[\frac12]^2.$$
\end{exam}
We are the first ones to compute the stable and unstable $K$-theories of the Tri-square tiling.
The computation of $K(U)$ for this example and for the rest of the examples was done in Mathematica.
Since the computations are similar to the ones for $K(S)$, we do not explain them in this paper.


The Table tiling and its unstable $K$-theory was communicated by Franz G\"ahler.
However, we have refined its substitution rule in order to be able to apply our formulas.
\begin{figure}[b]
\centerline{
\includegraphics[scale=.5]{./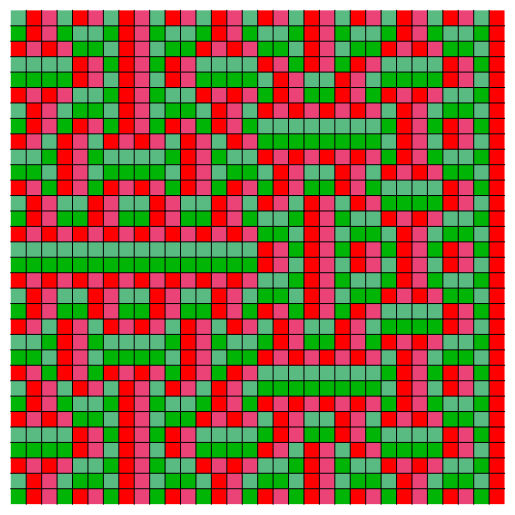}
}
 \caption{Table tiling.\label{f:table}}
\end{figure}

\begin{exam}[Table Tiling]
Let $T$ be the Table tiling with proto-faces $a,b,c,d\in T$ and substitution rule given below:
\begin{center}
\begin{tikzpicture}
\draw[fill=red] (1,4) rectangle (2,5);
\draw[fill=redish] (2.1,4) rectangle (3.1,5);
\node at (1.5,4.5){$a$};
\node at (2.6,4.5){$b$};
%
\draw[->] (1.5,3.5) -- (1.5,2.5); 
\draw[->] (2.6,3.5) -- (2.6,2.5); 
\draw[->] (9,3.5) -- (9,2.5); 
\node at (1.3,3){$\omega$};
\node at (2.8,3){$\omega$};
\draw[blue,fill=greenI] (0,0) rectangle (1,1);
\draw[blue,fill=red] (1,1) rectangle (2,2);
\draw[blue,fill=red] (1,0) rectangle (2,1);
\draw[blue,fill=greenish] (0,1) rectangle (1,2);
\draw (0,0) rectangle (2,2);
\node at (0.5,0.5){$c$};
\node at (1.5,1.5){$a$};
\node at (0.5,1.5){$d$};
\node at (1.5,0.5){$a$};
%
\draw[blue,fill=redish] (2.1,0) rectangle (3.1,1);
\draw[blue,fill=greenish] (3.1,1) rectangle (4.1,2);
\draw[blue,fill=redish] (2.1,1) rectangle (3.1,2);
\draw[blue,fill=greenI] (3.1,0) rectangle (4.1,1);
\draw (2.1,0) rectangle (4.1,2);
\node at (2.5,0.5){$b$};
\node at (3.5,1.5){$d$};
\node at (2.5,1.5){$b$};
\node at (3.5,0.5){$c$};
\draw[fill=greenI] (6,1) rectangle (7,2);
\draw[fill=greenish] (6,2.1) rectangle (7,3.1);
\node at (6.5,1.5){$c$};
\node at (6.5,2.5){$d$};
%
\draw[->] (7.5,1.5) -- (8.5,1.5); 
\draw[->] (7.6,2.5) -- (8.6,2.5); 
\node at (8,1.3){$\omega$};
\node at (8,2.7){$\omega$};
%
\draw[blue,fill=red] (9,0) rectangle (10,1);
\draw[blue,fill=greenI] (10,1) rectangle (11,2);
\draw[blue,fill=greenI] (9,1) rectangle (10,2);
\draw[blue,fill=redish] (10,0) rectangle (11,1);
\draw (9,0) rectangle (11,2);
\node at (9.5,0.5){$a$};
\node at (10.5,1.5){$c$};
\node at (9.5,1.5){$c$};
\node at (10.5,0.5){$b$};
%
\draw[blue,fill=greenish] (9,2.1) rectangle (10,3.1);
\draw[blue,fill=redish] (10,3.1) rectangle (11,4.1);
\draw[blue,fill=red] (9,3.1) rectangle (10,4.1);
\draw[blue,fill=greenish] (10,2.1) rectangle (11,3.1);
\draw (9,2.1) rectangle (11,4.1);
\node at (9.5,2.5){$d$};
\node at (10.5,3.5){$b$};
\node at (9.5,3.5){$a$};
\node at (10.5,2.5){$d$};
\end{tikzpicture} 
\end{center}
%
The length of each of the edges of the proto-faces is $1$, and the inflation factor is $\lambda=2$. The homotopy is the same as the one of the previous example.
\begin{itemize}
\item stable faces (4): $a,\,b,\,c,\,d$
\item vertical stable edges (10):
$\vsE{a}{b},\,
\vsE{b}{a},\,
\vsE{b}{c},\,
\vsE{b}{d},\,
\vsE{c}{a},\,
\vsE{c}{c},\,
\vsE{c}{d},\,
\vsE{d}{a},\,
\vsE{d}{c},\,
\vsE{d}{d}$
\item horizontal stable edges (10):
$\hsE{a}{a},\,
\hsE{a}{b},\,
\hsE{a}{c},\,
\hsE{b}{a},\,
\hsE{b}{b},\,
\hsE{b}{c},\,
\hsE{c}{d},\,
\hsE{d}{a},\,
\hsE{d}{b},\,
\hsE{d}{c}$
\item stable vertices (24):
$\sVf{a}{b}{a}{b},\,
\sVf{a}{b}{a}{c},\,
\sVf{a}{b}{b}{a},\,
\sVf{a}{b}{c}{b},\,
\sVf{a}{b}{c}{c},\,
\sVf{b}{a}{b}{a},\,
\sVf{b}{a}{c}{c},\,
\sVf{b}{c}{d}{b},\,
\sVf{b}{c}{d}{c},\,
\sVf{b}{d}{a}{c},\,
\sVf{b}{d}{b}{a},\,
\sVf{b}{d}{c}{b},\,$\\
$
\sVf{c}{a}{a}{d},\,
\sVf{c}{a}{c}{d},\,
\sVf{c}{c}{d}{d},\,
\sVf{c}{d}{a}{d},\,
\sVf{c}{d}{c}{d},\,
\sVf{d}{a}{a}{c},\,
\sVf{d}{a}{b}{a},\,
\sVf{d}{a}{c}{b},\,
\sVf{d}{c}{d}{b},\,
\sVf{d}{c}{d}{c},\,
\sVf{d}{d}{a}{b},\,
\sVf{d}{d}{b}{a}$
\end{itemize}
We list the stable edges in the above order starting with the vertical stable edges. We denote them as $se_1,\ldots,se_{20}$.
The stable vertices are also listed in the above order and we denote them as $sv_1,\ldots sv_{24}$.
The exponential map $\delta^0:\Z^{\sV}\to\Z^{\sE}$ is given by the matrix 
\begin{center}
\includegraphics[scale=.6]{./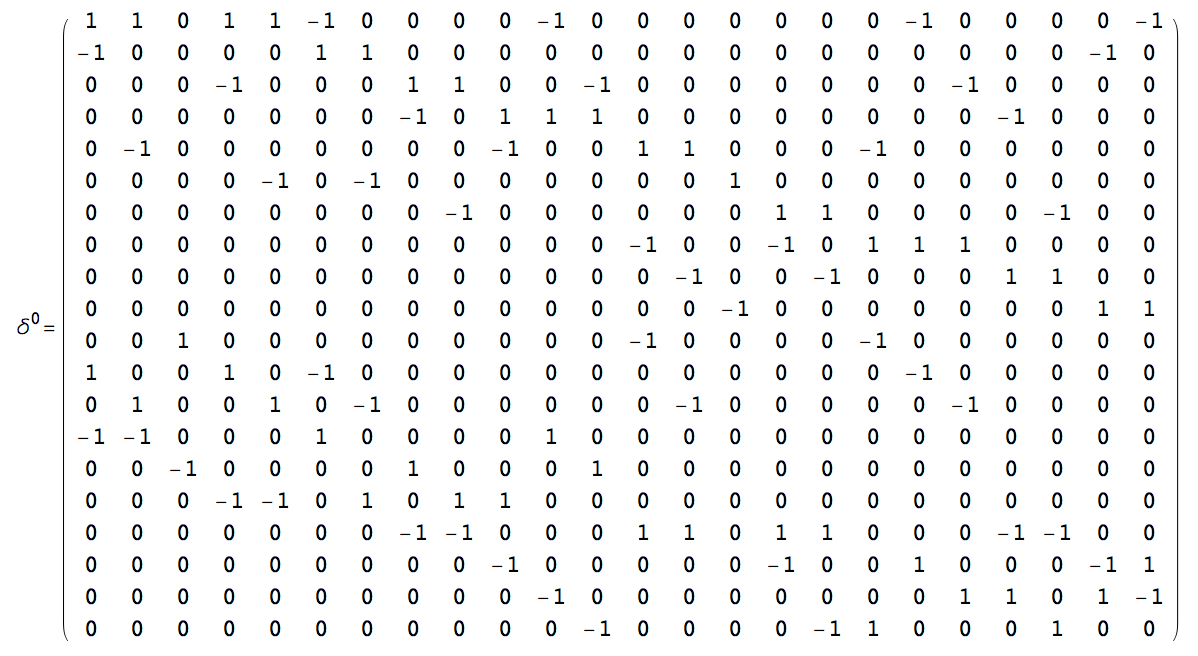}
\end{center}
The index map $\delta^1:\Z^{\sE}\to\Z^{sF}$ is given by the matrix
\begin{center}
\includegraphics[scale=.6]{./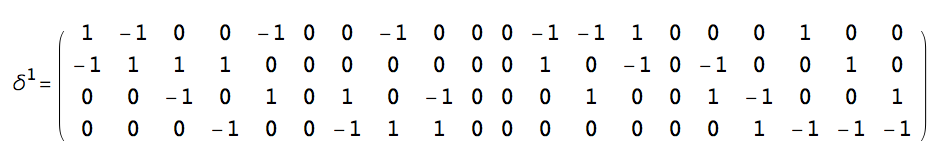}
\end{center}
The substitution-homotopy map $W_V:\Z^{\sV}\to\Z^{sV}$ is given by the matrix
\begin{center}
\includegraphics[scale=.6]{./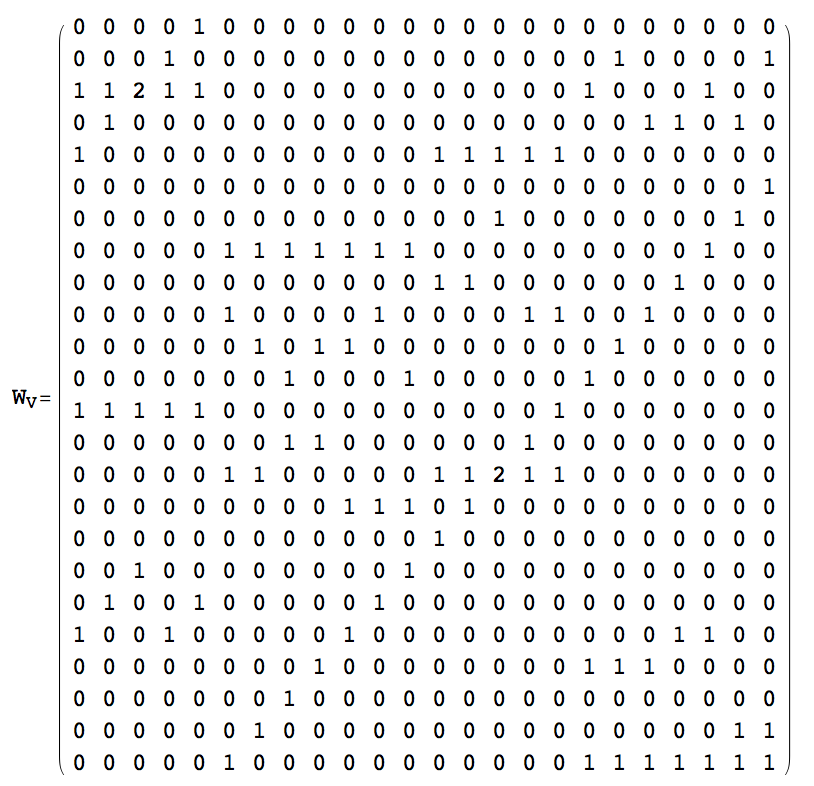}
\end{center}
The substitution-homotopy map $W_E:\Z^{\sE}\to\Z^{\sE}$ is given by the matrix
\begin{center}
\includegraphics[scale=.6]{./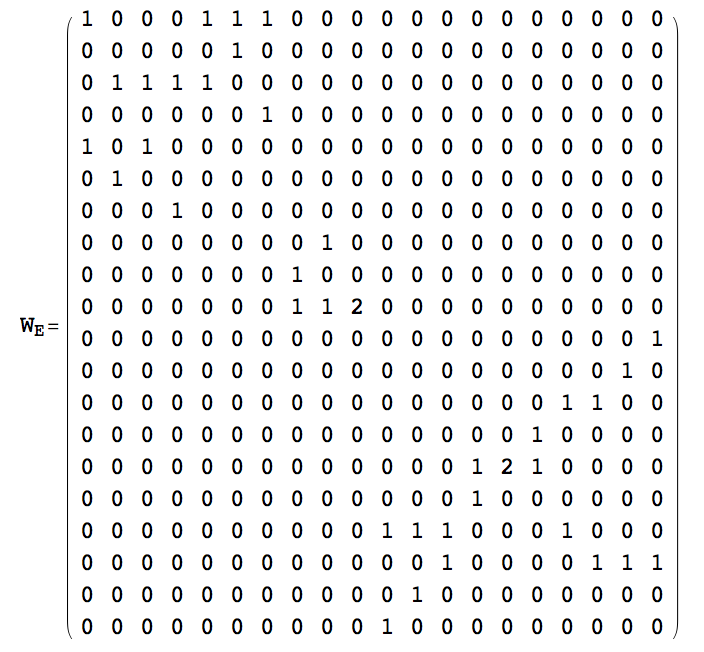}
\end{center}
The substitution-homotopy map $W_F:\Z^{\sF}\to\Z^{\sF}$ is given by the matrix
$$W_F=\left(
\begin{array}{cccc}
 0 & 0 & 1 & 0 \\
 0 & 1 & 0 & 0 \\
 1 & 0 & 0 & 0 \\
 0 & 0 & 0 & 1 \\
\end{array}
\right).
$$
The computation of the direct limits is done similar to the one from the previous example.
By Proposition \ref{p:kerA-isom-Znminusr} we replace  $\ker \delta^0$ with $\Z^{9}$.
By Proposition \ref{p:imA-isom-Zr} we remove the zero-eigenvalue of the matrix.
By Proposition \ref{p:extract-one-eigenvalues} we extract the $\pm1$-eigenvalues and get
$$H_S^0(T)=\lim_{\to}(W_V,\,\ker\delta^0)=\Z^3\oplus\lim_{\to}(
\left(
\begin{array}{ccccc}
 0 & 2 & 8 & 18 & -8 \\
 2 & 0 & -6 & -10 & 8 \\
 0 & 0 & -10 & -16 & 16 \\
 0 & 0 & 8 & 14 & -10 \\
 0 & 0 & 2 & 4 & 0 \\
\end{array}
\right),\,\Z^{5}).$$
Note that
\begin{center}
\includegraphics[scale=.6]{./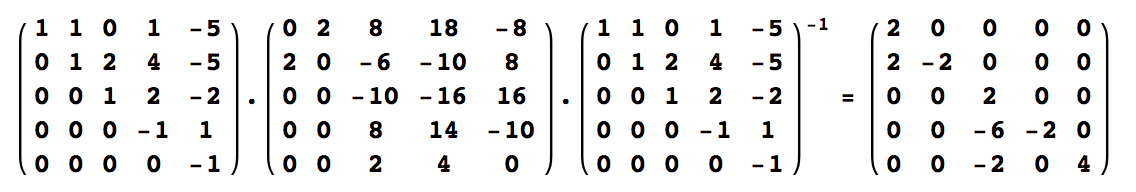}
\end{center}
is a lower triangular matrix with eigenvalues 4, -2, -2, 2, 2, and by Proposition \ref{p:lim:Ginv} we get that
$$\lim_{\to}(   
\left(
\begin{array}{ccccc}
 0 & 2 & 8 & 18 & -8 \\
 2 & 0 & -6 & -10 & 8 \\
 0 & 0 & -10 & -16 & 16 \\
 0 & 0 & 8 & 14 & -10 \\
 0 & 0 & 2 & 4 & 0 \\
\end{array}
\right)
,\,\Z^{5})=
\lim_{\to}(
\left(
\begin{array}{ccccc}
 2 & 0 & 0 & 0 & 0 \\
 2 & -2 & 0 & 0 & 0 \\
 0 & 0 & 2 & 0 & 0 \\
 0 & 0 & -6 & -2 & 0 \\
 0 & 0 & -2 & 0 & 4 \\
\end{array}
\right)
,\,\Z^{5}).
$$
By the same argument as the one used for the lower triangular matrix in Example \ref{e:more1dimexamples}(Rudin-Shapiro) we conclude that
$$H_S^0(T)=\lim_{\to}(W_V,\,\ker\delta^0)=\Z^3\oplus\lim_{\to}(
\left(
\begin{array}{ccccc}
 2 & 0 & 0 & 0 & 0 \\
 2 & -2 & 0 & 0 & 0 \\
 0 & 0 & 2 & 0 & 0 \\
 0 & 0 & -6 & -2 & 0 \\
 0 & 0 & -2 & 0 & 4 \\
\end{array}
\right)
,\,\Z^{5})=\Z^3\oplus \Z[\frac12]^5.$$
By Proposition \ref{p:kerOverIm-A-isom-Znminus}, $\lim\limits_{\to}(W_E,\,\frac{\ker\delta^1}{\im\delta^0})=\lim\limits_{\to}(W_E',\coker B)$ for some matrices $W_E',B$.
Then by Proposition \ref{p:cokerA-isom-Znminuss} we get rid of the coker and get
$$K_1(S)=H_S^1(T)=\lim_{\to}(W_E,\frac{\ker\delta^1}{\im\delta^0})=\lim_{\to}(
\left(
\begin{array}{ccc}
 1 & -1 & 0 \\
 0 & 2 & 0 \\
 0 & 0 & 2 \\
\end{array}
\right),\Z_2\oplus\Z^2)=\Z_2\oplus\Z[\frac12]^2.$$
\end{exam}

\subsection{Two dimensional tilings}[more general tilings.]
%
\begin{figure}[b]
\centerline{
\includegraphics[scale=.5]{./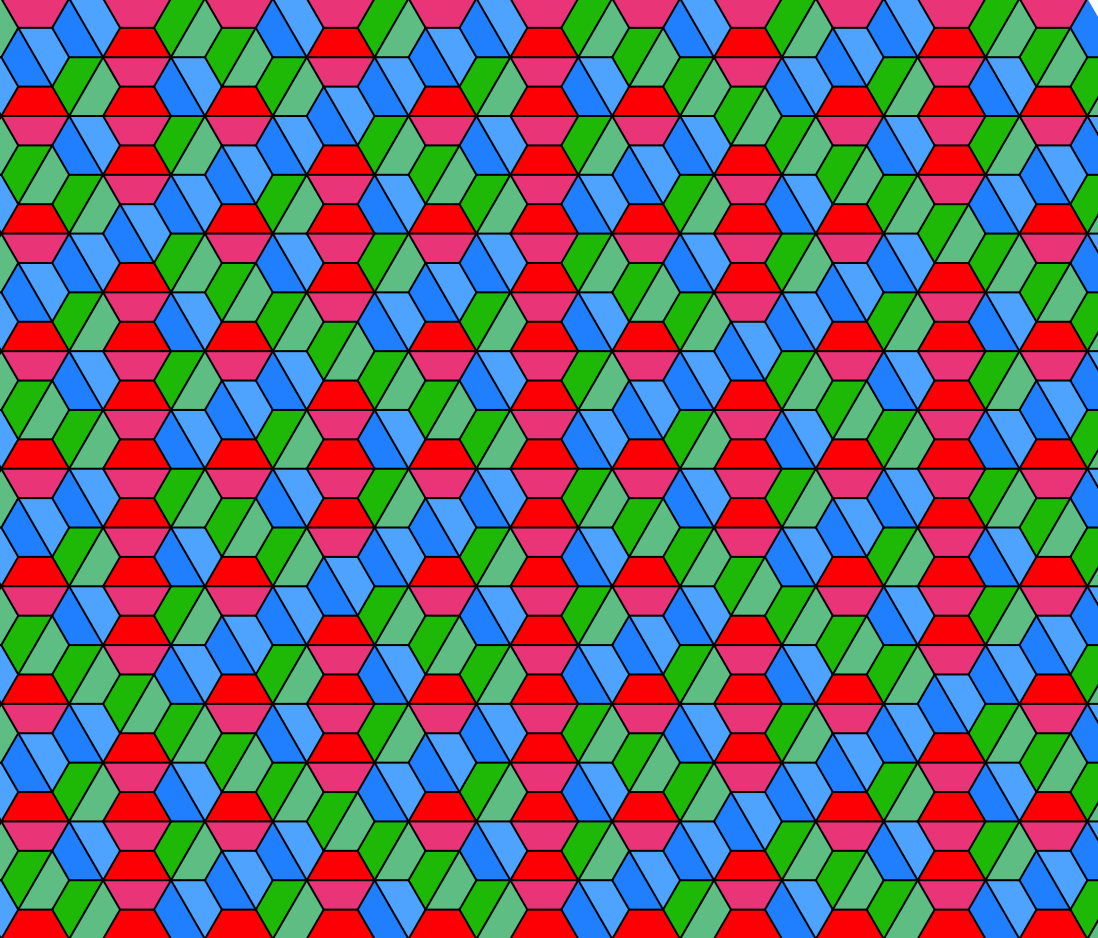}
}
 \caption{Half-hex tiling.\label{f:halfhex}}
\end{figure}
\begin{exam}[Half-hex tiling]
Let $T$ be the Half-hex tiling with proto-faces $a,b,c,d,e,f\in T$ and substitution rule
\begin{center}
\includegraphics[scale=.4]{./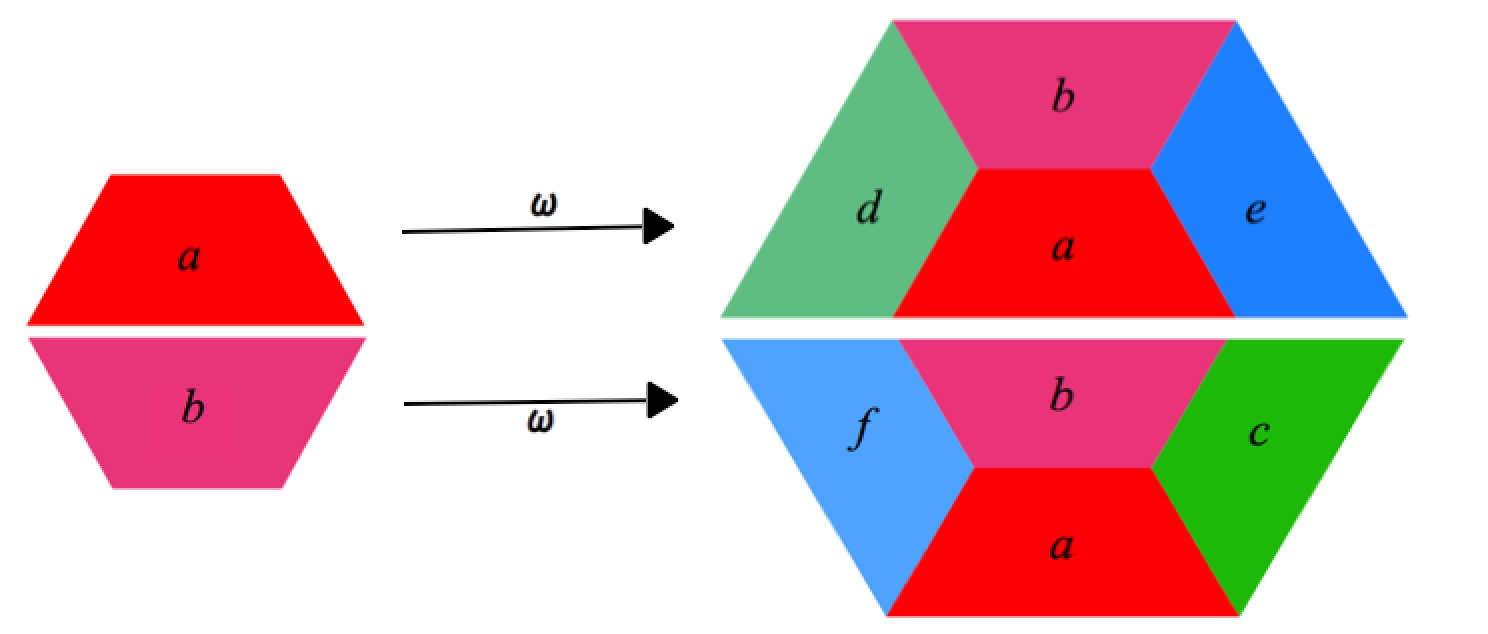}
\end{center}
\begin{center}
\includegraphics[scale=.35]{./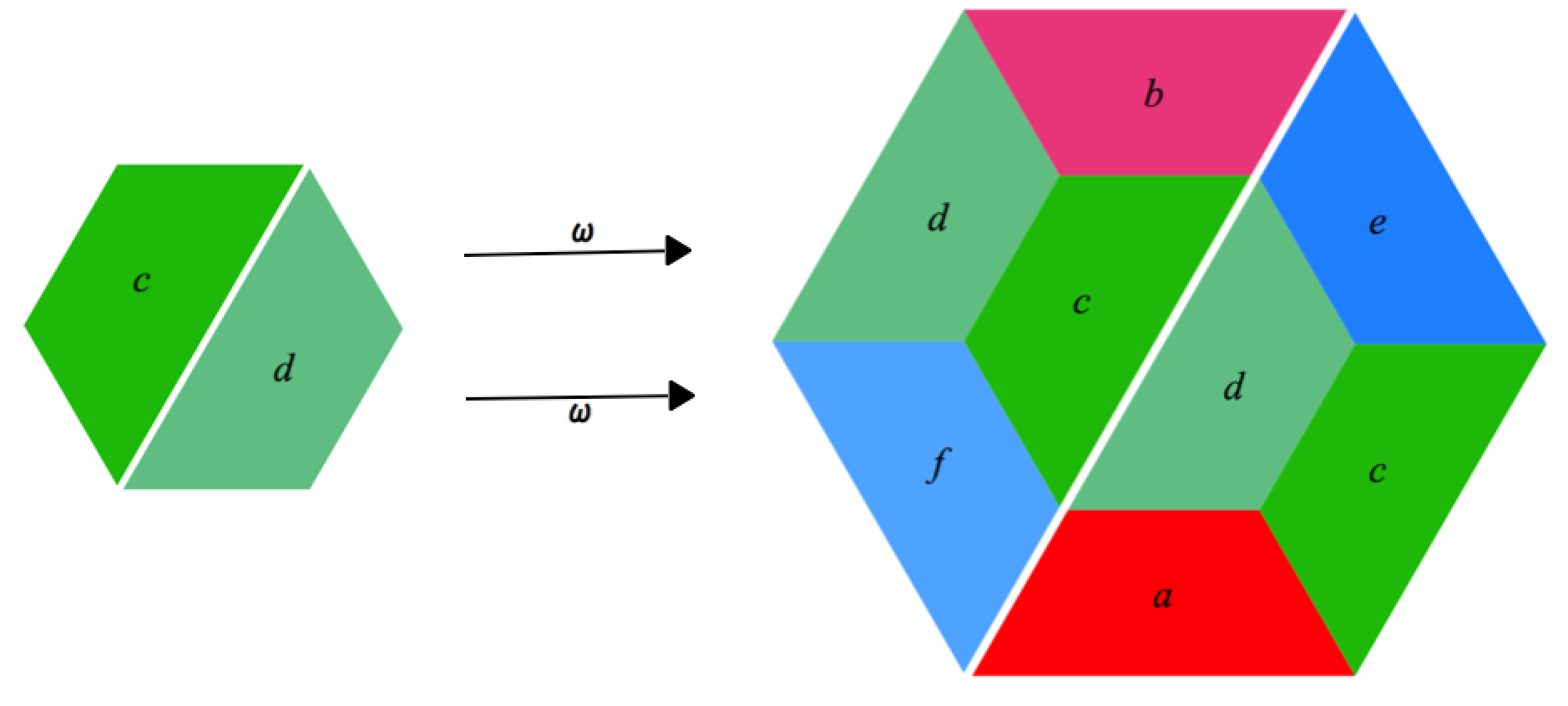}
\end{center}
\begin{center}
\includegraphics[scale=.35]{./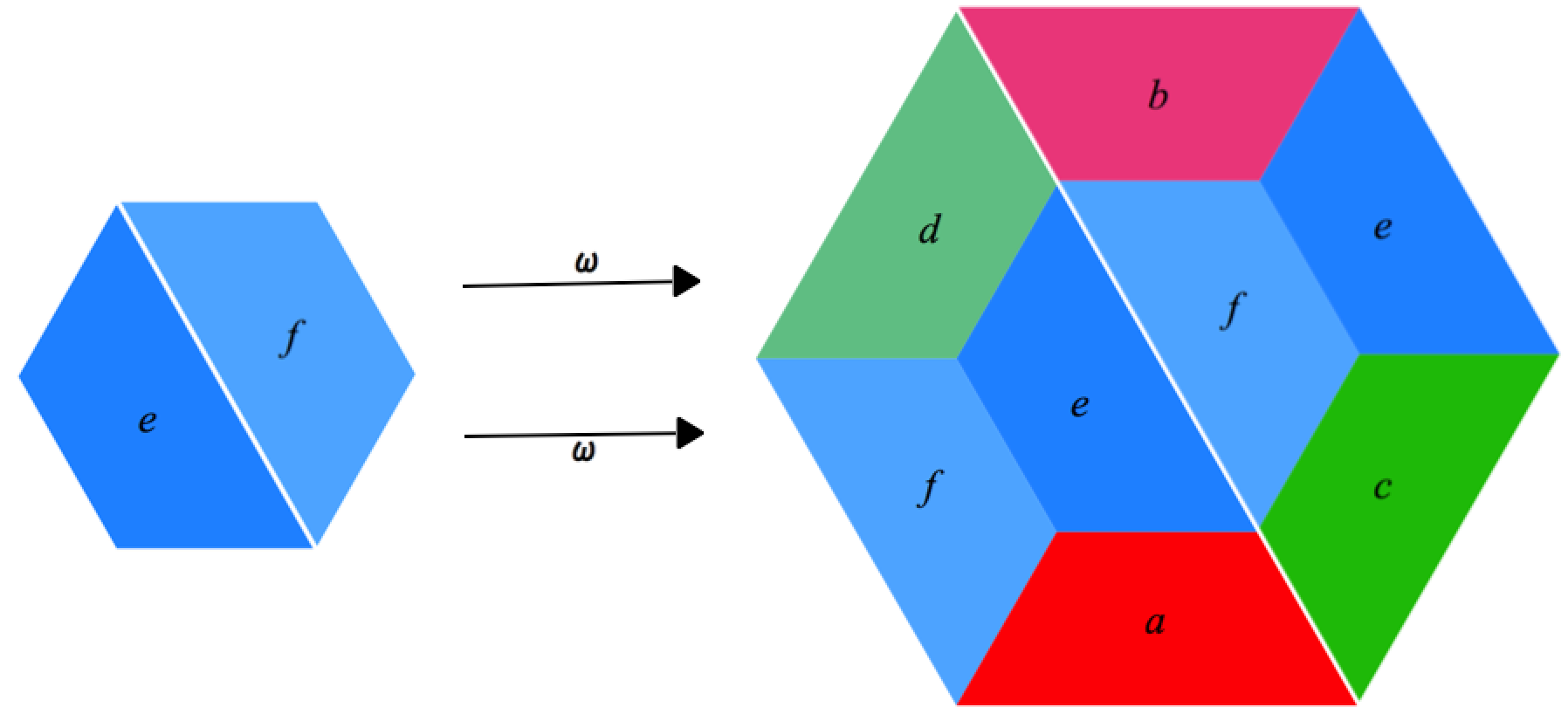}
\end{center}
The inflation factor is $\lambda=2$. There are 24 stable edges and 20 stable vertices.\\\\
{\centering
\includegraphics[scale=.35]{./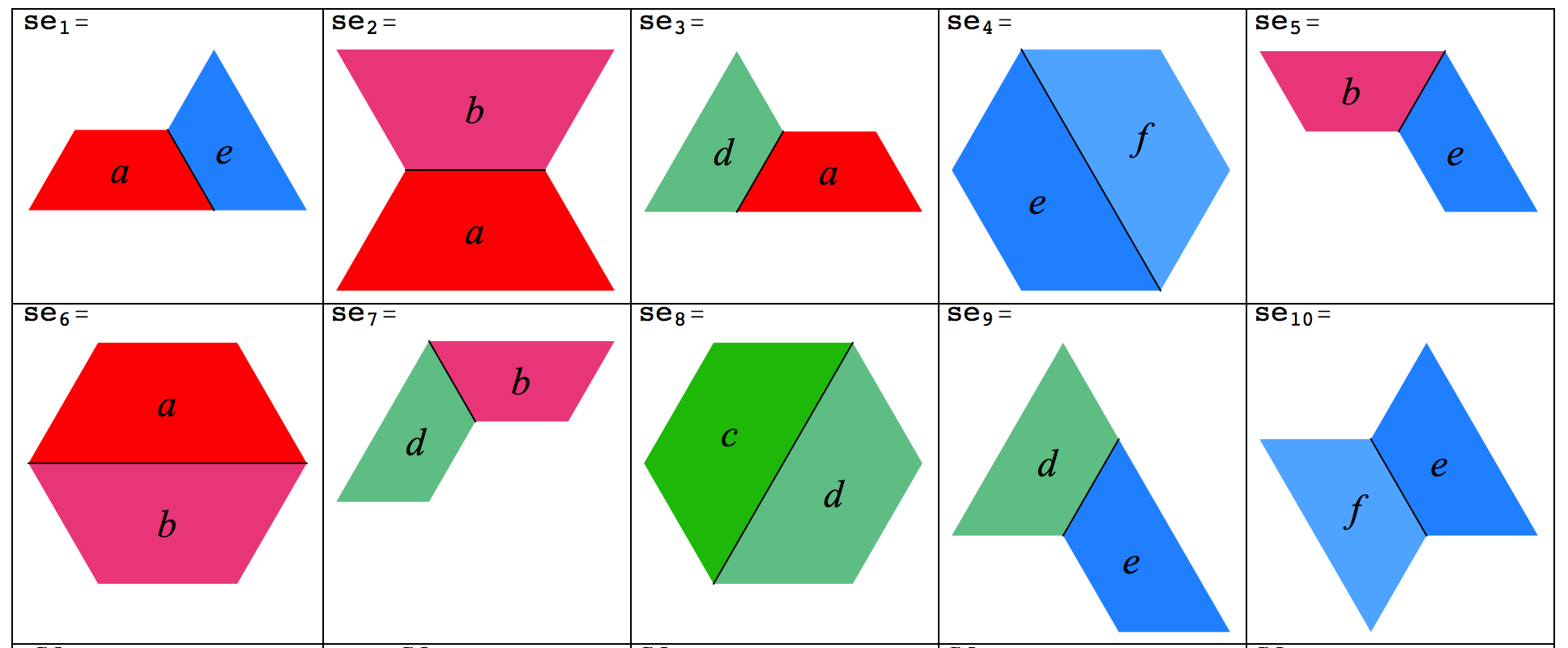}\\
\includegraphics[scale=.35]{./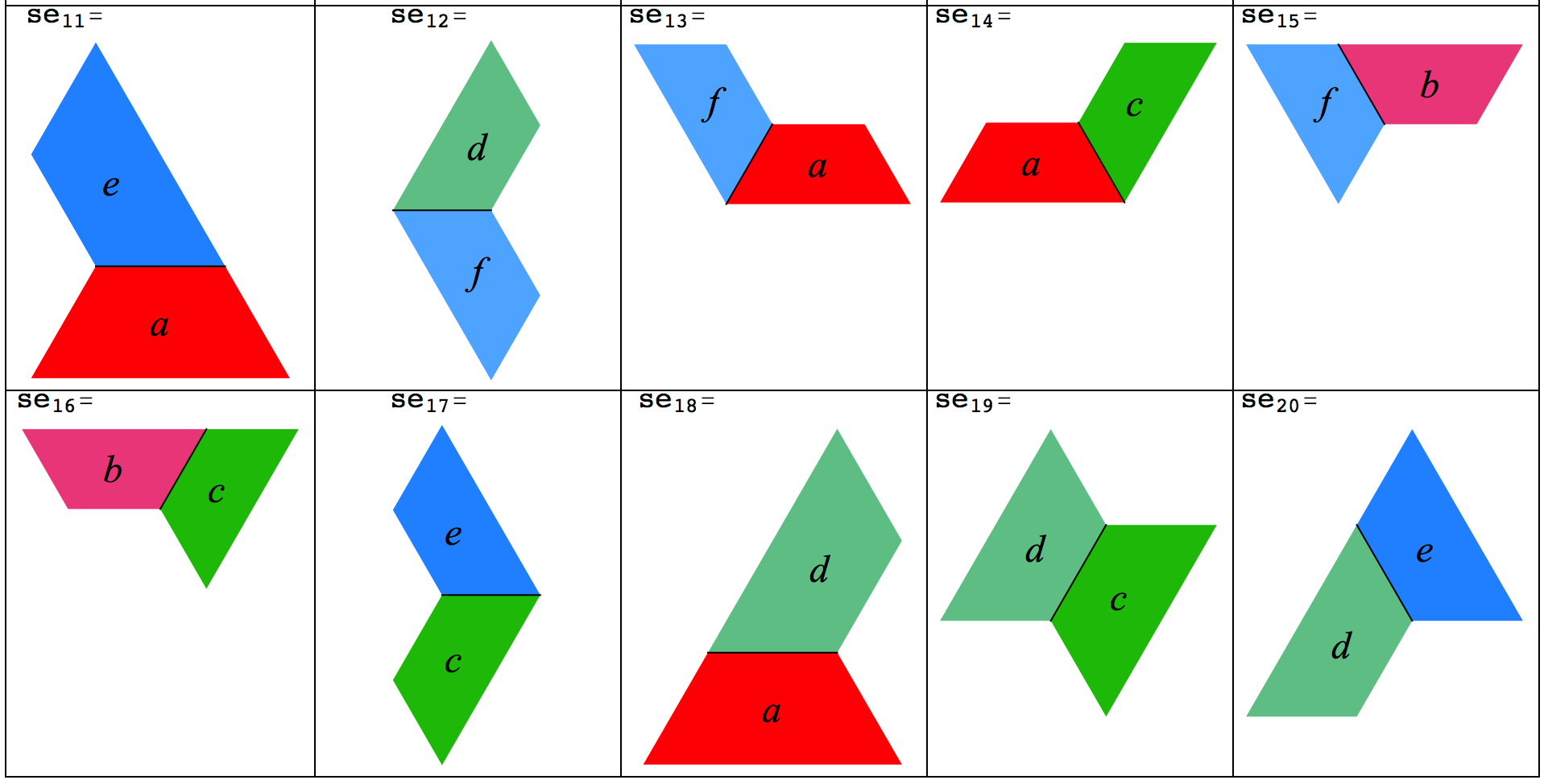}\\
\includegraphics[scale=.35]{./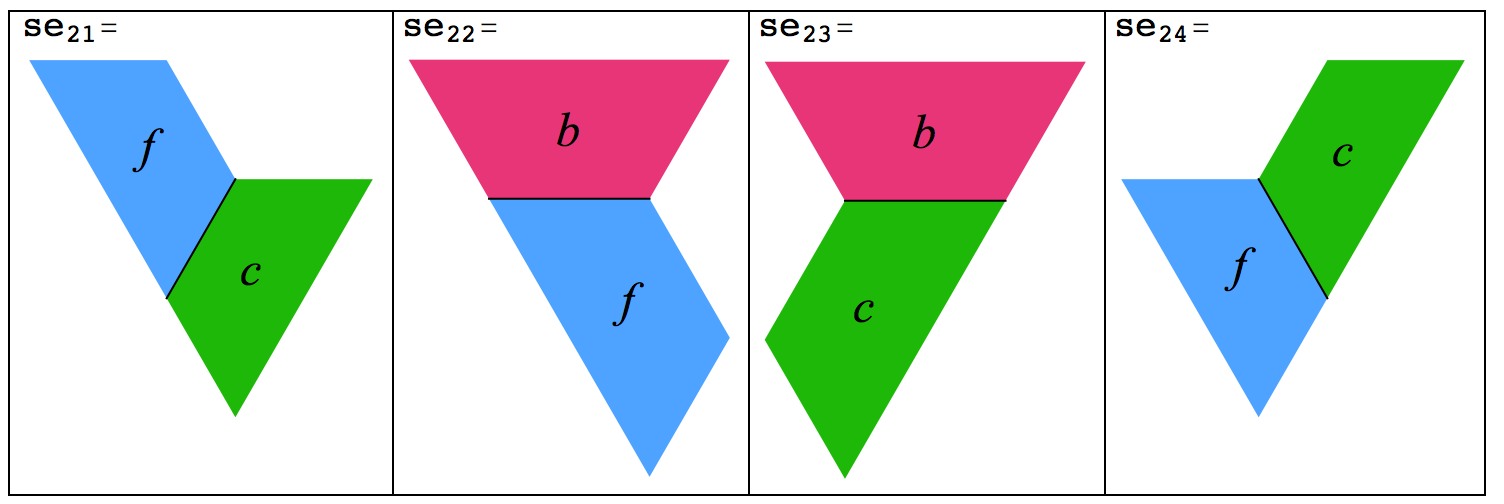}\\
\includegraphics[scale=.35]{./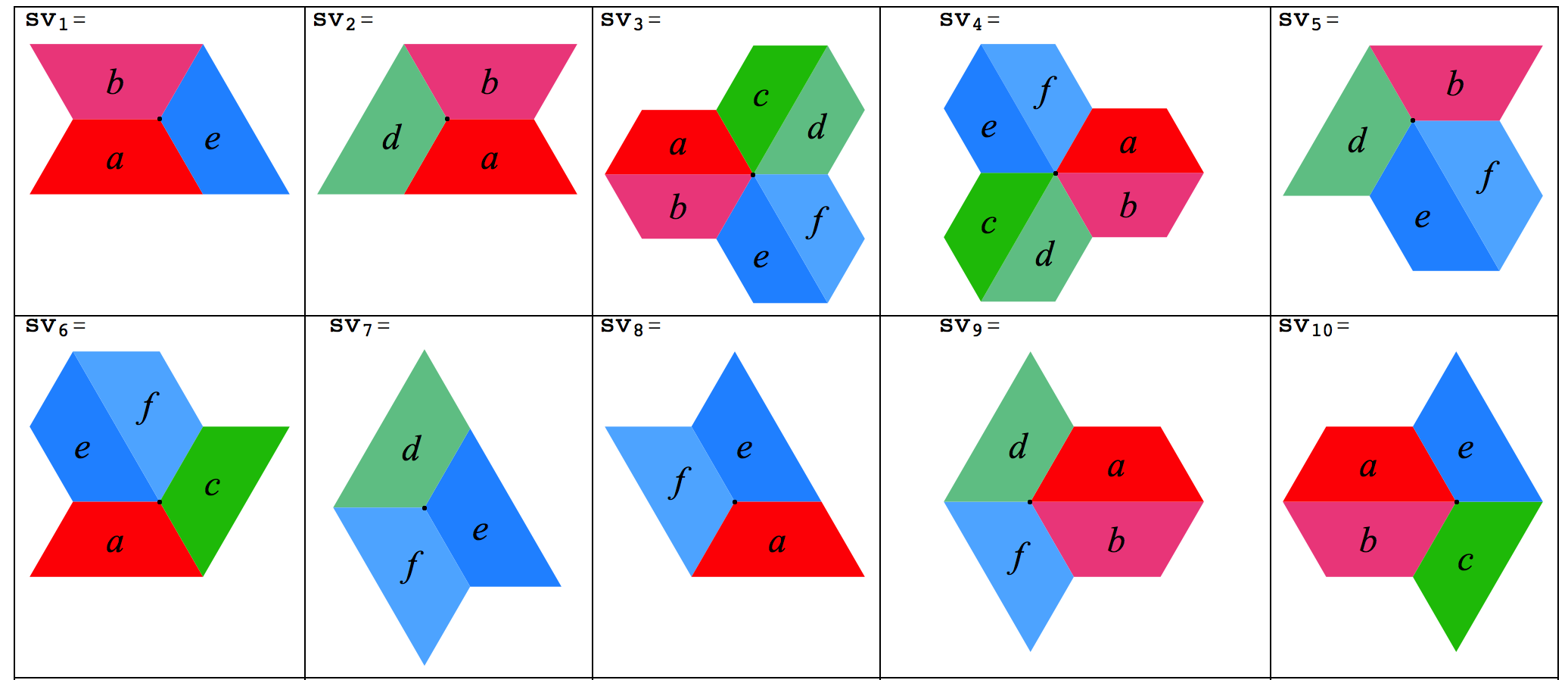}\\
\includegraphics[scale=.36]{./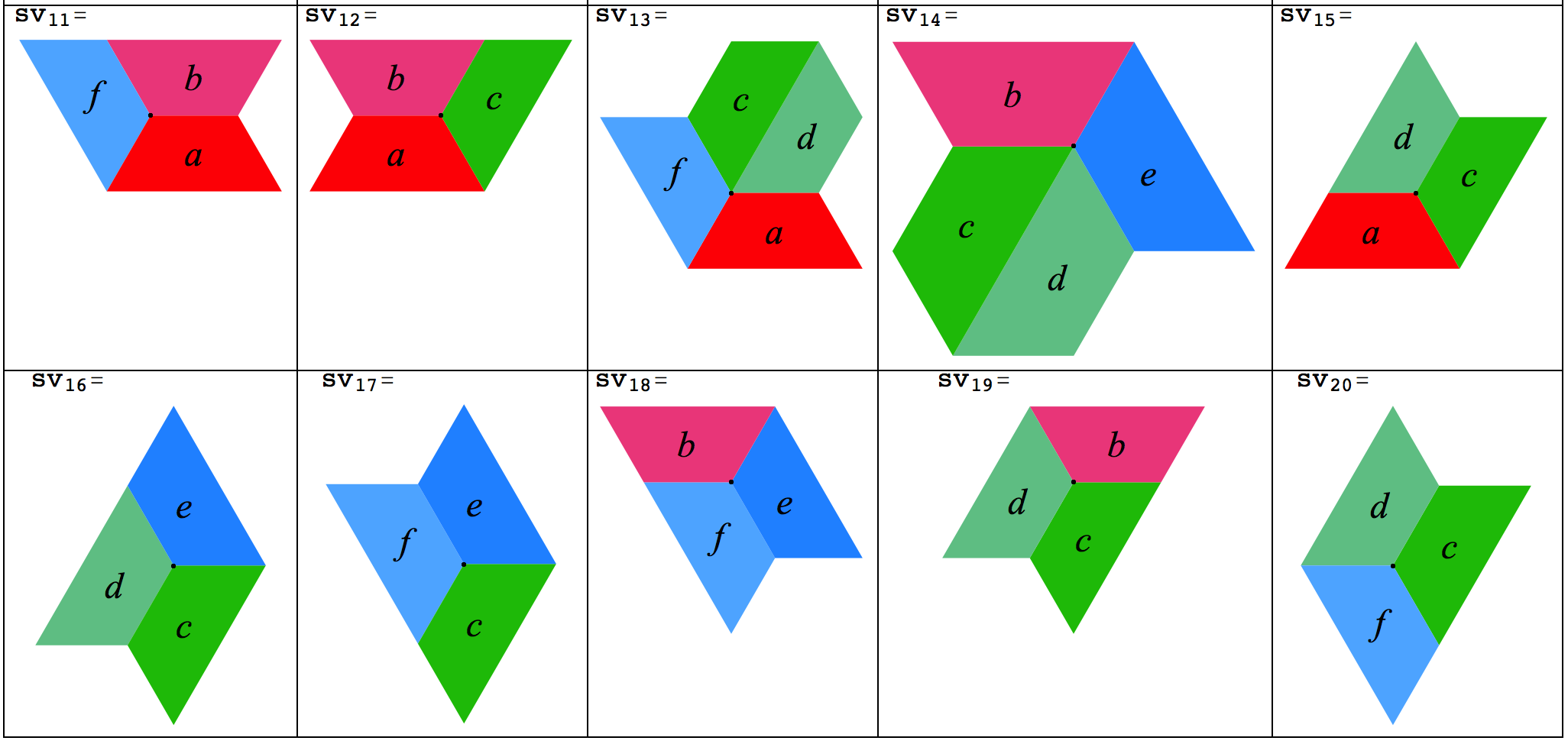}\\
}
$ $\\\\
The exponential map $\delta^0:\Z^{\sV}\to\Z^{\sE}$ is given by the matrix
\begin{center}
\includegraphics[scale=.6]{./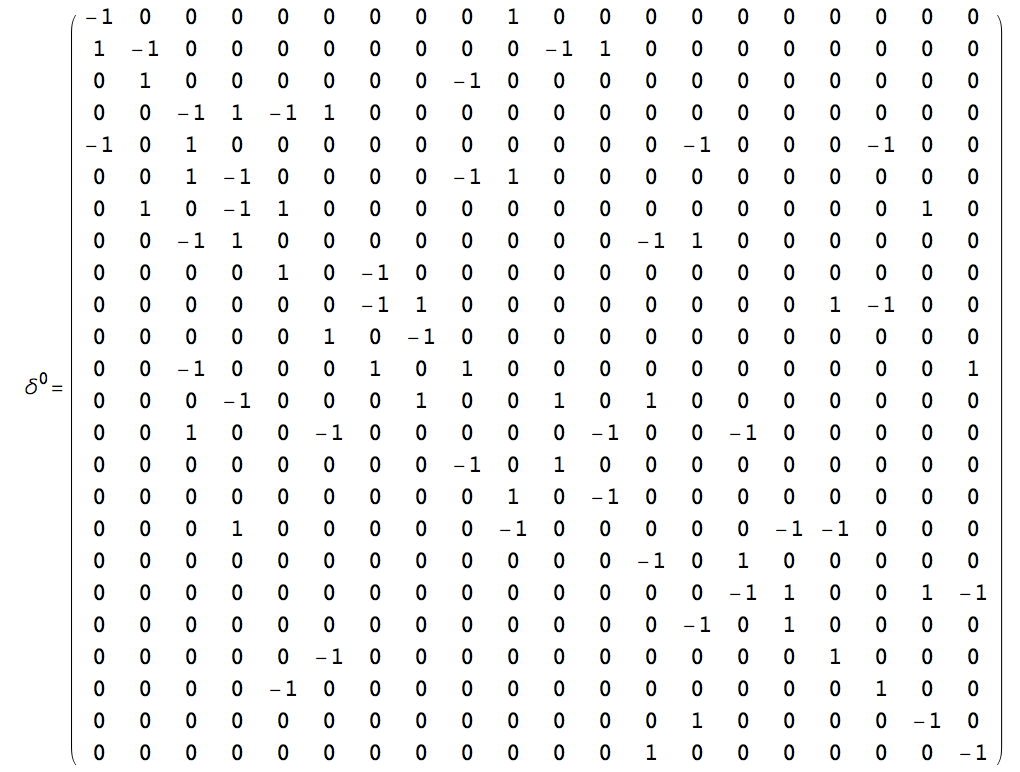}
\end{center}
The index map $\delta^1:\Z^{\sE}\to\Z^{sF}$ is given by the matrix
\begin{center}
\includegraphics[scale=.6]{./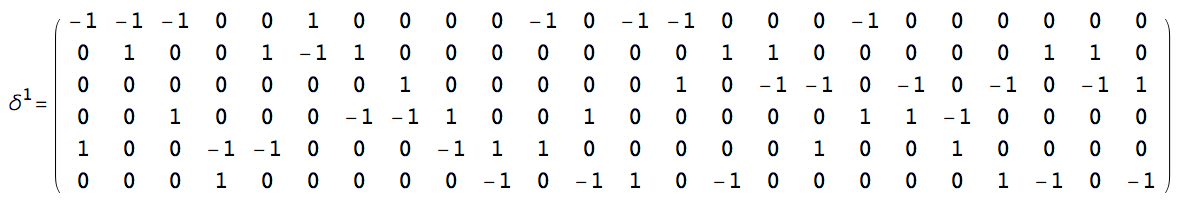}
\end{center}
We illustrate the homotopy $h_s$, $0\le s\le 1$, on the vertices in the following figure:
\begin{center}
\includegraphics[scale=.3]{./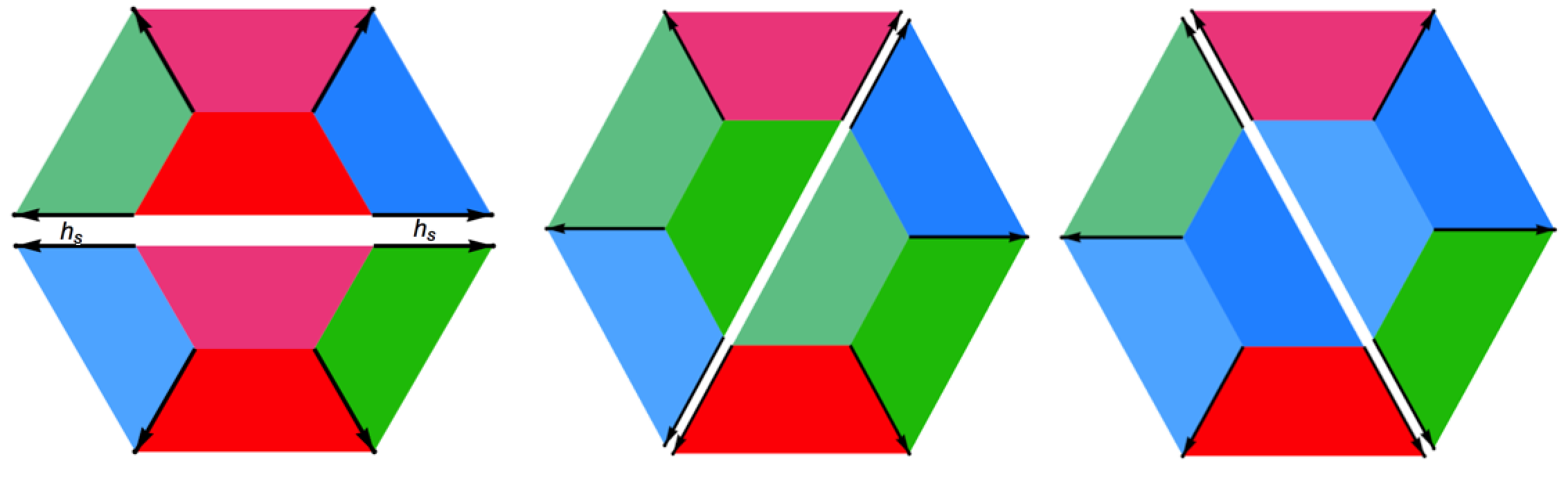}
\end{center}
The substitution-homotopy map $W_V:\Z^{\sV}\to\Z^{sV}$ is given by the matrix
\begin{center}
\includegraphics[scale=.61]{./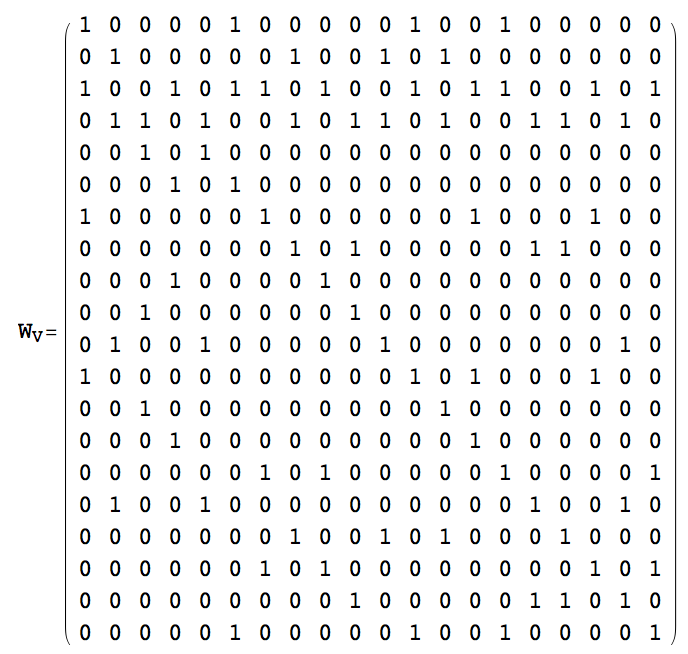}
\end{center}
The substitution-homotopy map $W_E:\Z^{\sE}\to\Z^{\sE}$ is given by the matrix
\begin{center}
\includegraphics[scale=.61]{./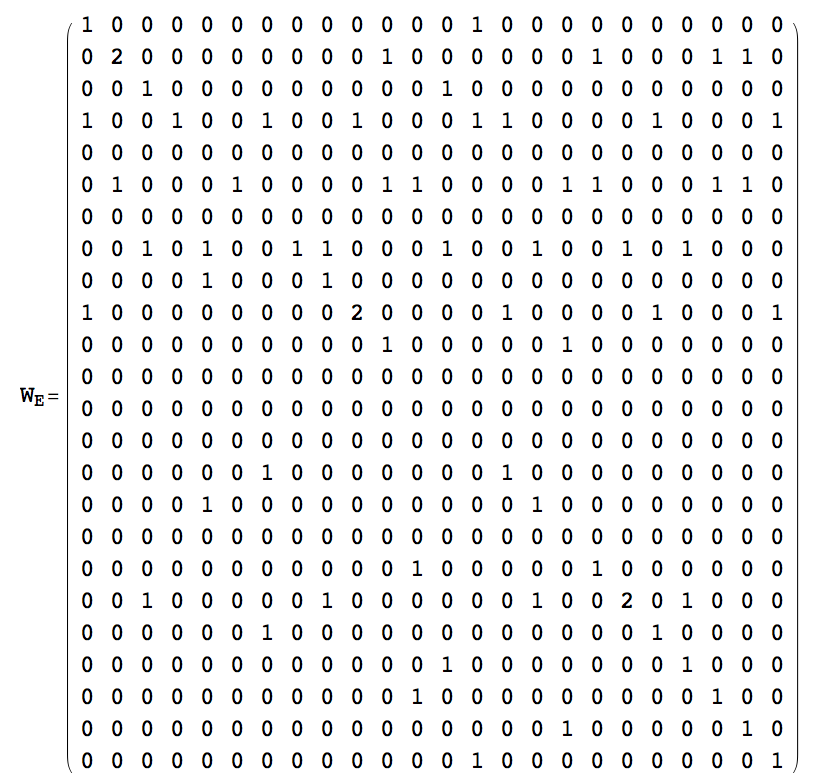}
\end{center}
The substitution-homotopy map $W_F:\Z^{\sF}\to\Z^{\sF}$ is given by the matrix
\begin{center}
\includegraphics[scale=.65]{./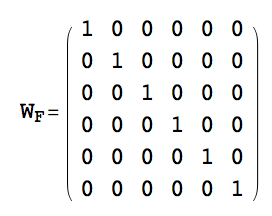}
\end{center}
By Proposition \ref{p:kerA-isom-Znminusr}, 
$$H_S^0(T)=\dlim(W_V,\ker \delta^0)=\dlim(
\left(
\begin{array}{ccc}
 2 & 1 & 1 \\
 1 & 2 & 1 \\
 1 & 1 & 2 \\
\end{array}
\right)
,\Z^{3}).$$
By Proposition \ref{p:extract-one-eigenvalues} we extract the $1$-eigenvalues and get
$$H_S^0(T)=\dlim(
\left(
\begin{array}{ccc}
 2 & 1 & 1 \\
 1 & 2 & 1 \\
 1 & 1 & 2 \\
\end{array}
\right)
,\Z^{3})=\Z^2\oplus\dlim(4,\Z)=\Z^2\oplus\Z[\frac12].$$
By Proposition \ref{p:kerOverIm-A-isom-Znminus}, $\lim\limits_{\to}(W_E,\,\frac{\ker\delta^1}{\im\delta^0})=\lim\limits_{\to}(W_E',\coker B)$ for some matrices $W_E',B$.
Then by Proposition \ref{p:cokerA-isom-Znminuss} we get rid of the coker and get
$$K_1(S)=H_S^1(T)=\dlim(W_E,\frac{\ker\delta^1}{\im\delta^0})=\dlim(
\left(
\begin{array}{ccc}
 0 & -2 & -2 \\
 0 & 2 & 0 \\
 0 & 0 & 2 \\
\end{array}
\right),\Z_2\oplus\Z^2)
=\Z[\frac12]^2.$$
\end{exam}

\begin{figure}[b]
\centerline{
\includegraphics[scale=.4]{./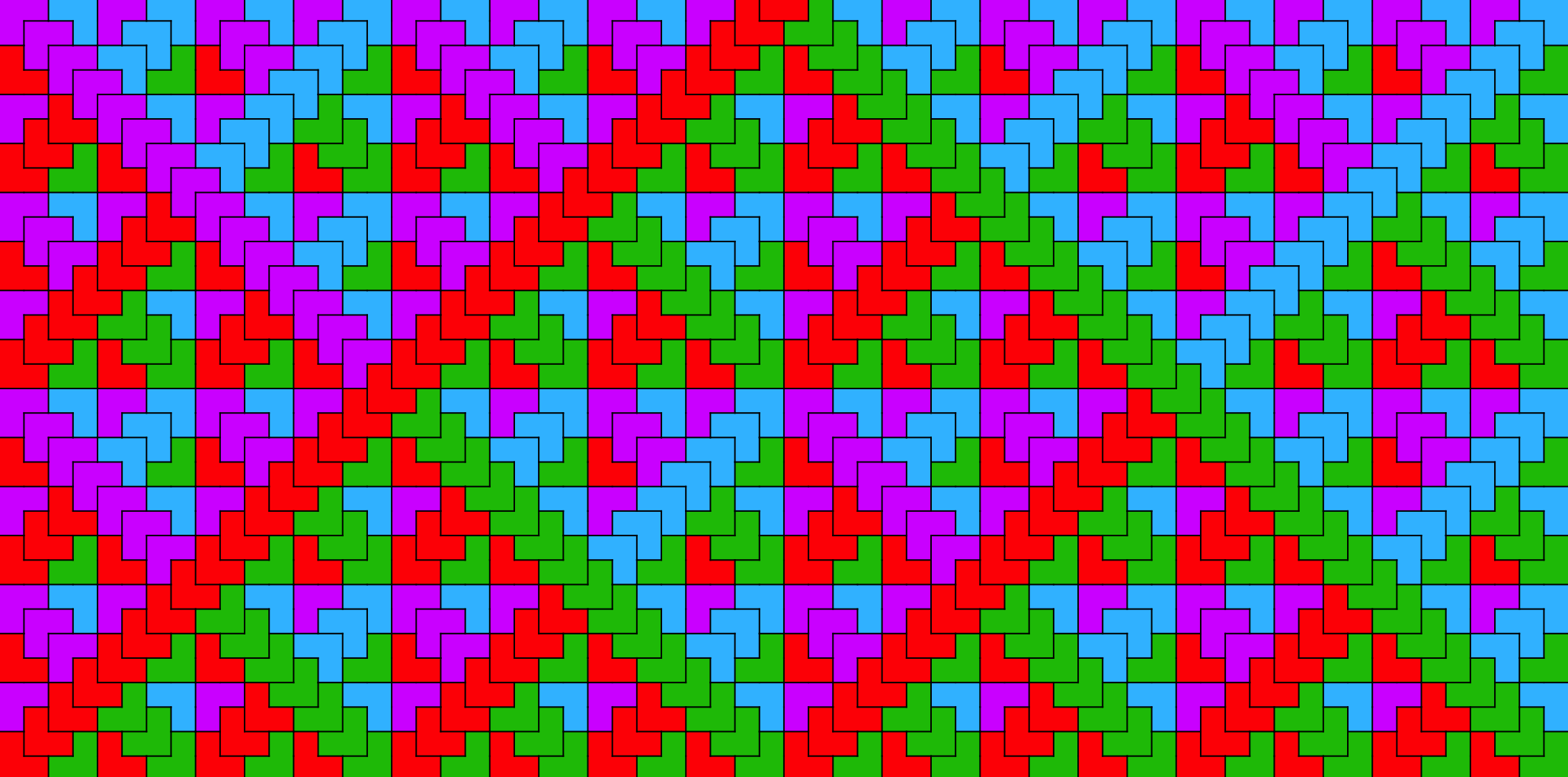}
}
 \caption{Chair tiling.\label{f:chair}}
\end{figure}

\begin{exam}[Chair tiling]
Let $T$ be the Chair tiling with proto-faces $a,b,c,d\in T$ and substitution rule given by:
\begin{center}
\includegraphics[scale=.3]{./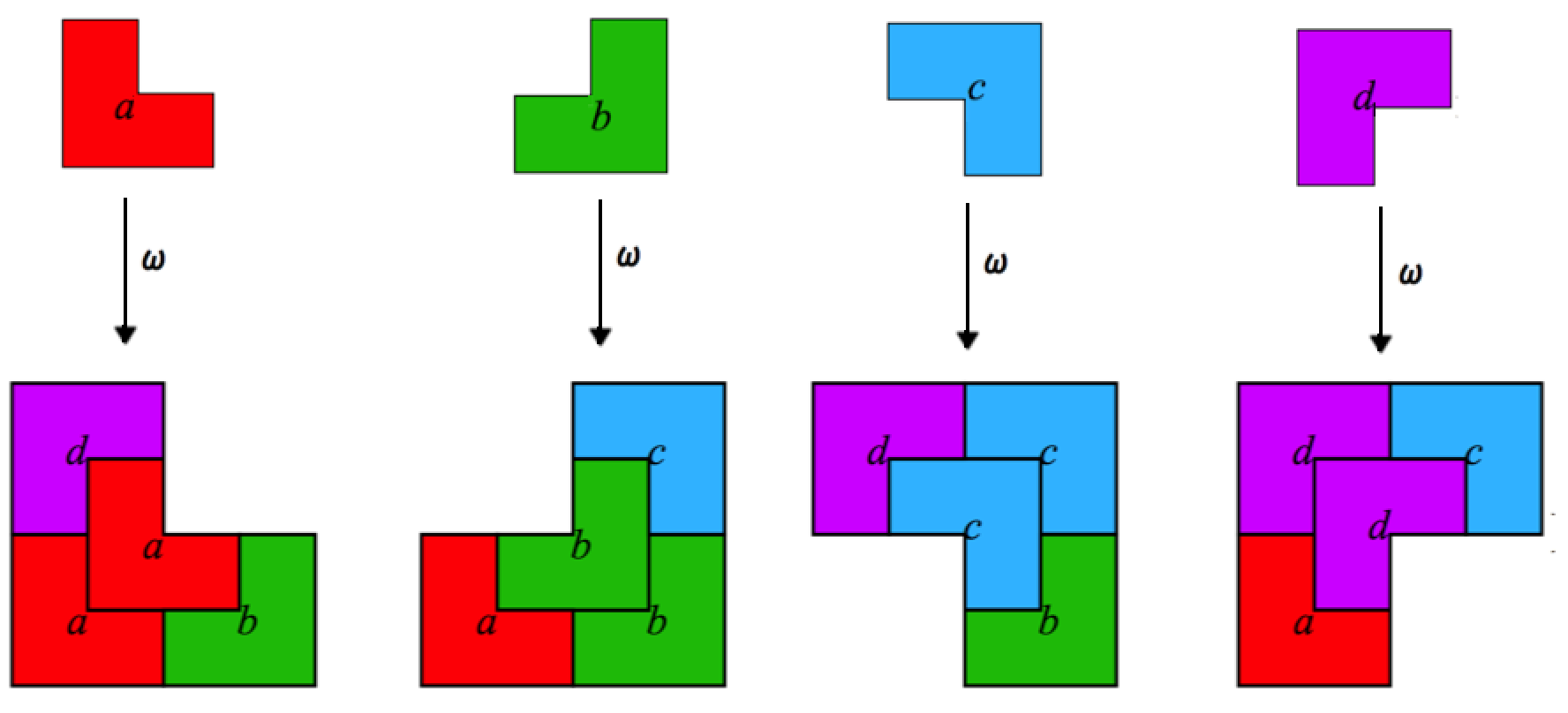}
\end{center}
The inflation factor is $\lambda=2$. There are $44$ stable edges and 47 stable vertices\\\\
{\centering
\includegraphics[scale=.3]{./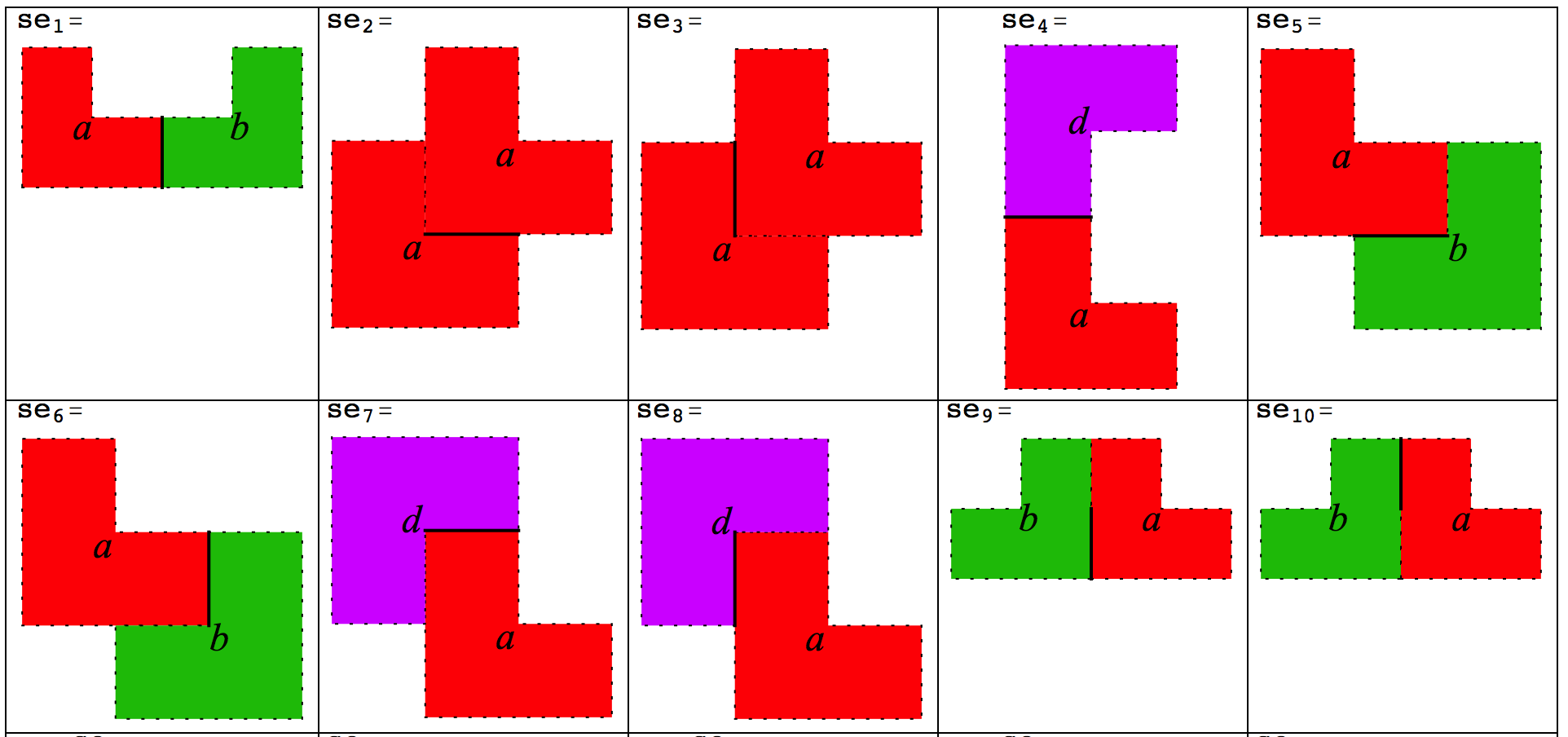}\\
\includegraphics[scale=.3]{./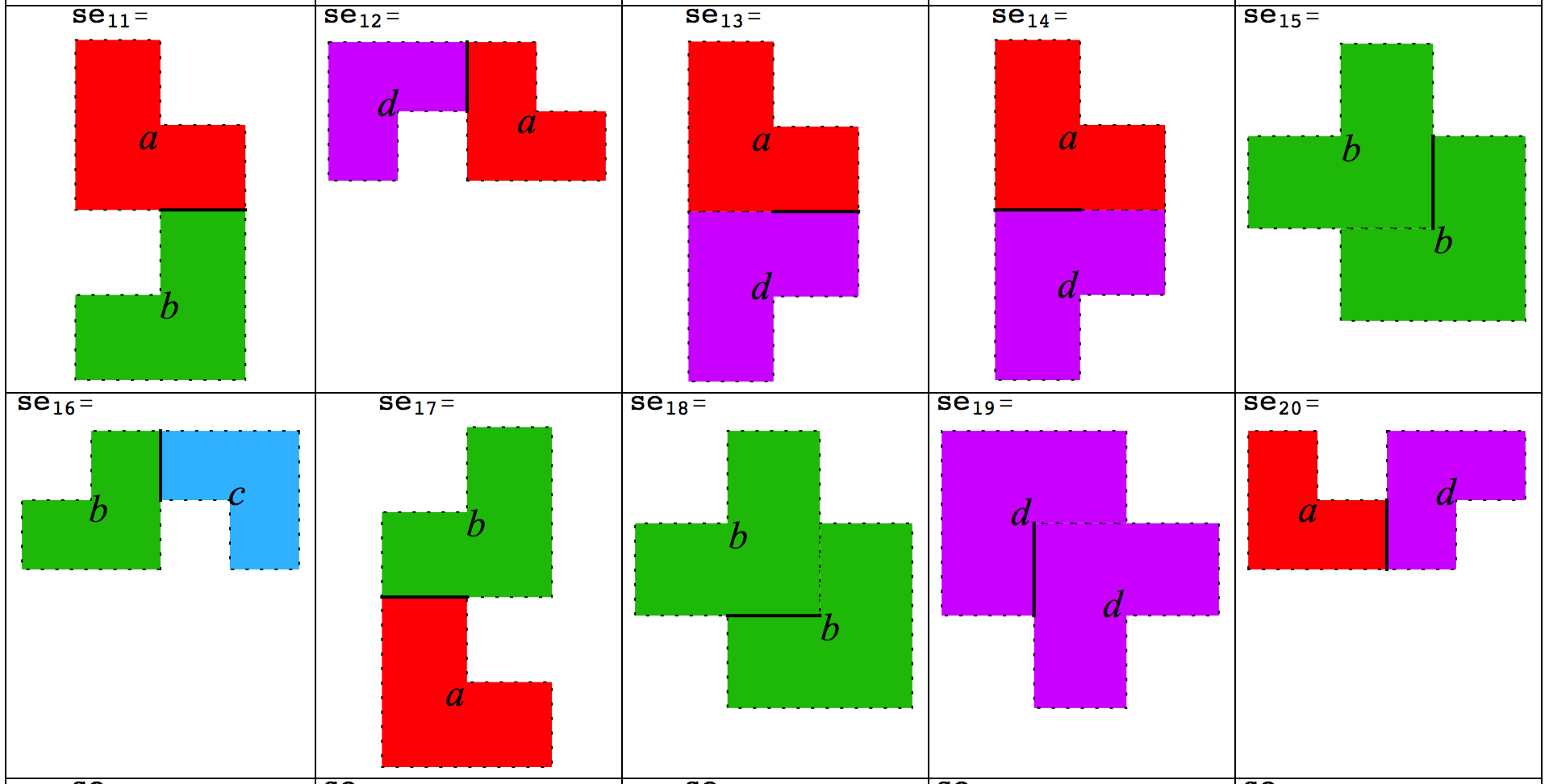}\\
\includegraphics[scale=.3]{./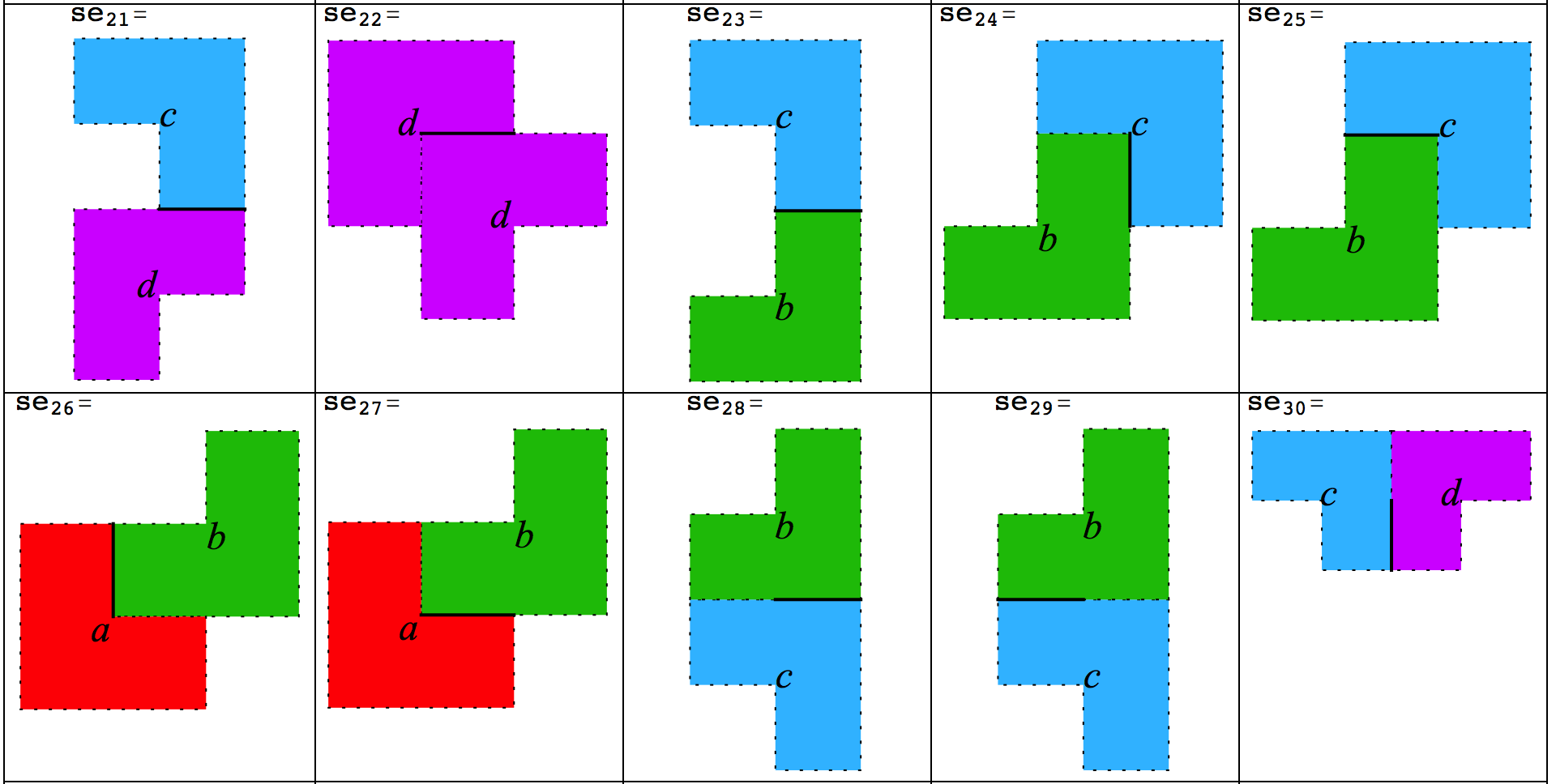}\\
\includegraphics[scale=.34]{./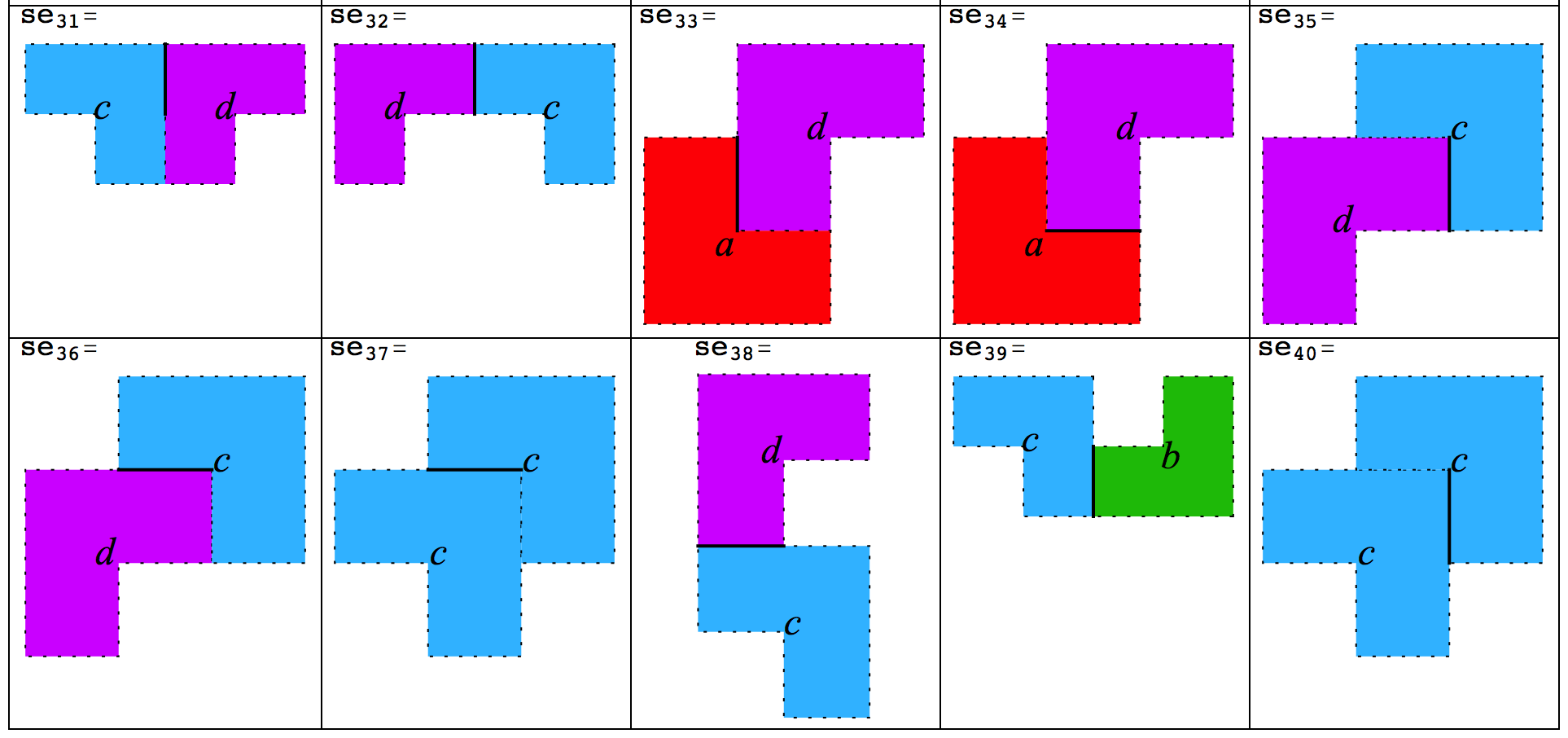}\\
\includegraphics[scale=.34]{./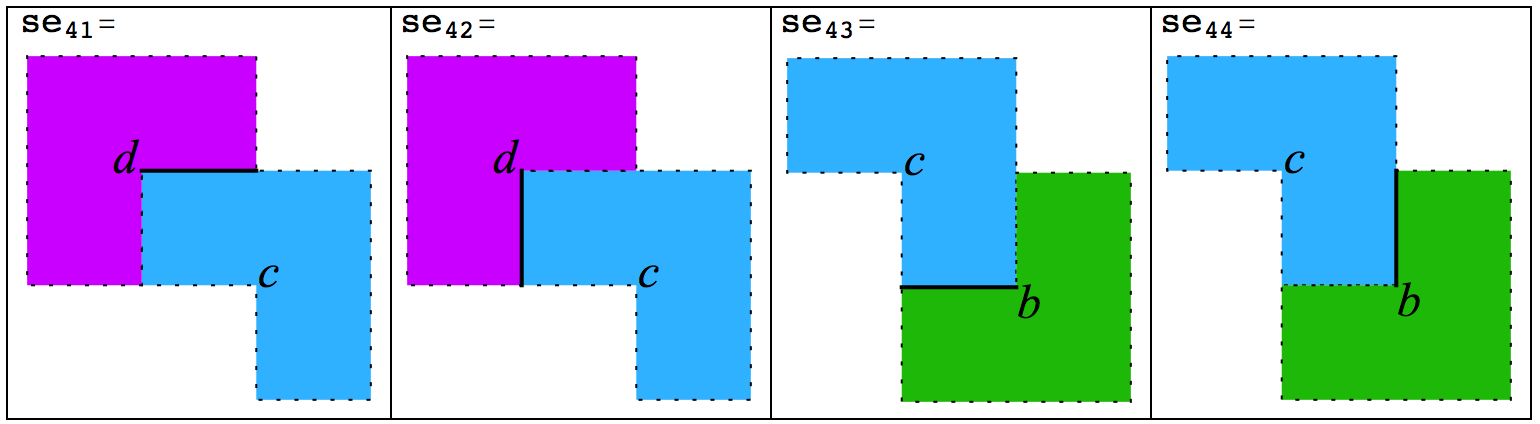}\\
\includegraphics[scale=.34]{./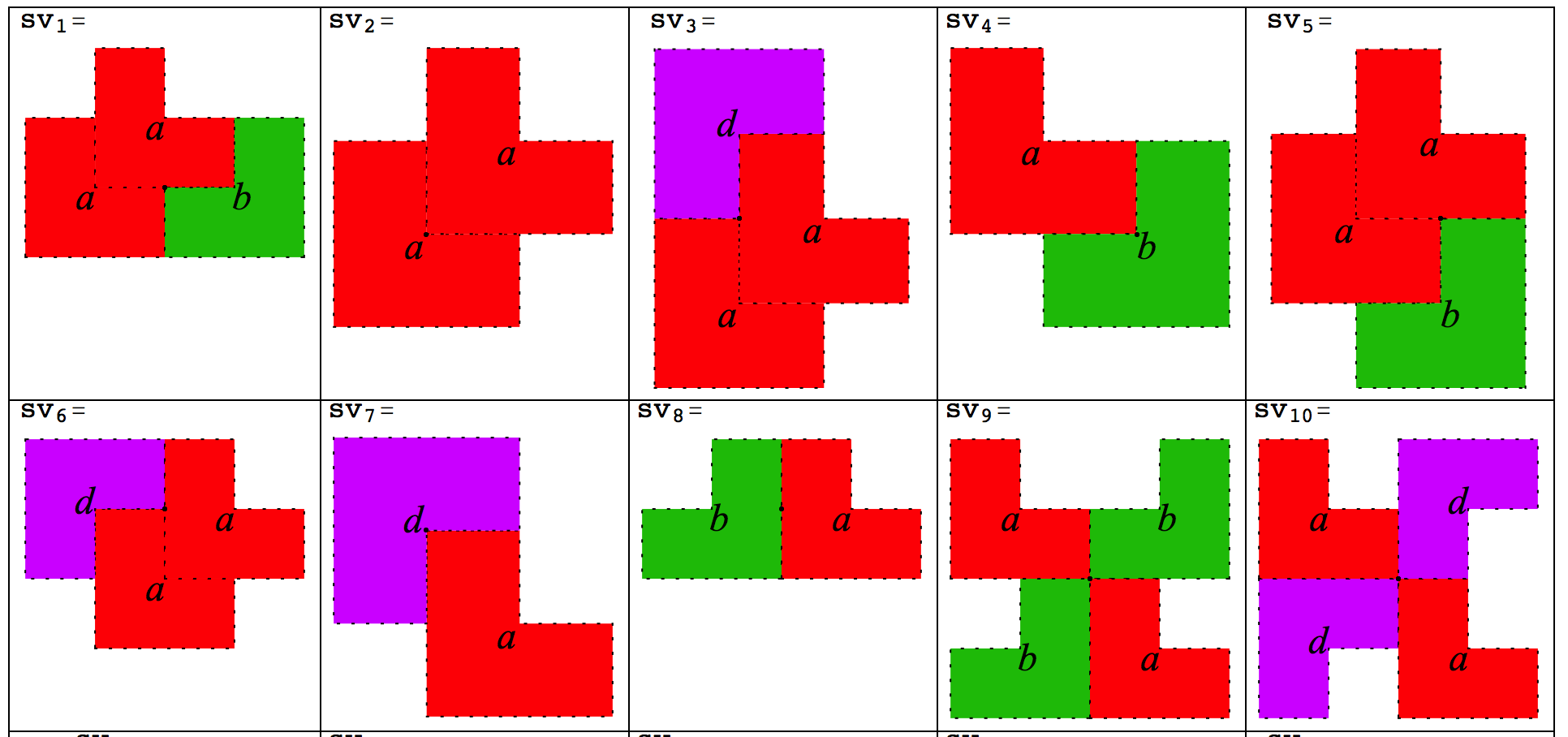}\\
\includegraphics[scale=.34]{./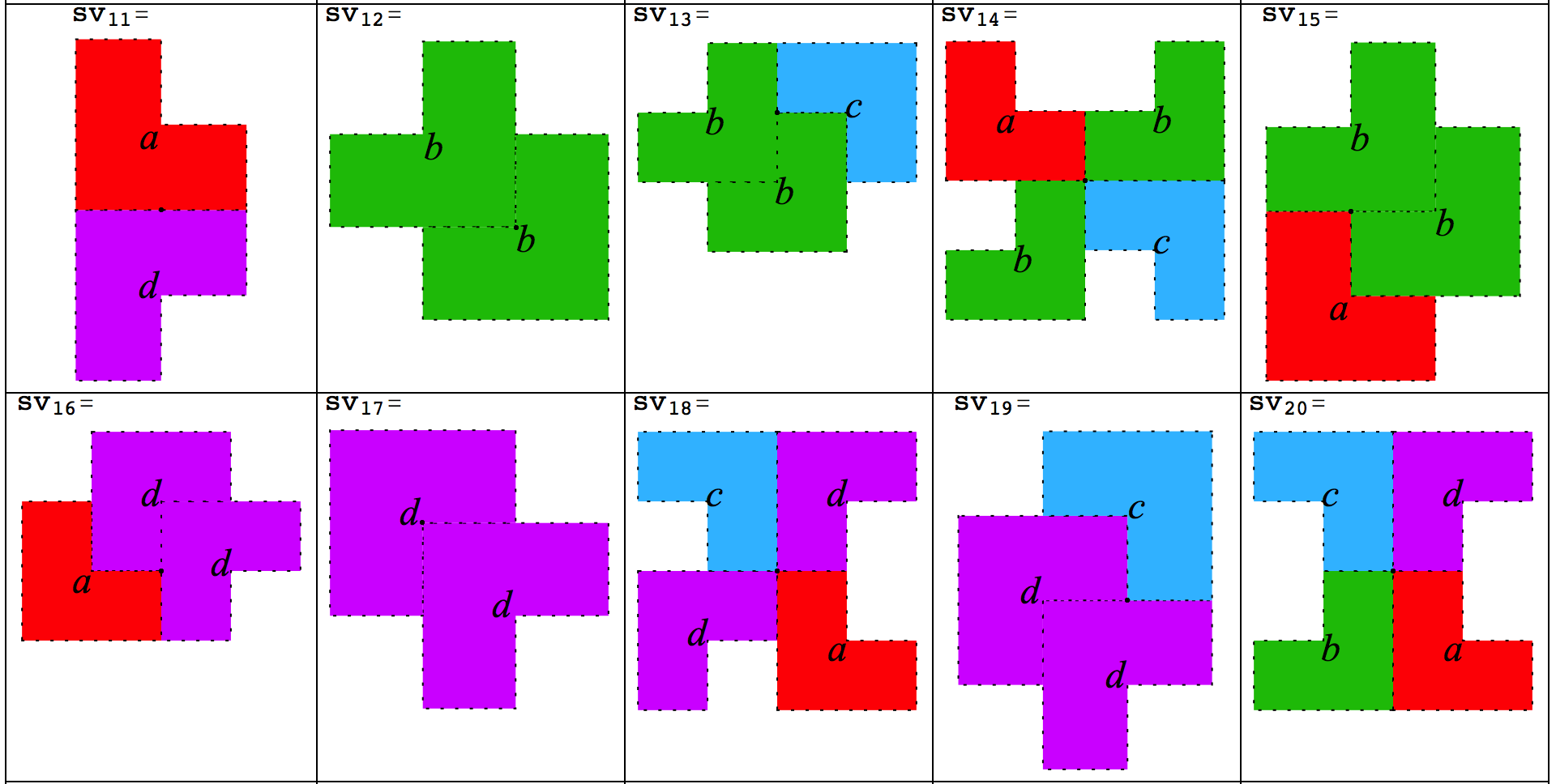}\\
\includegraphics[scale=.32]{./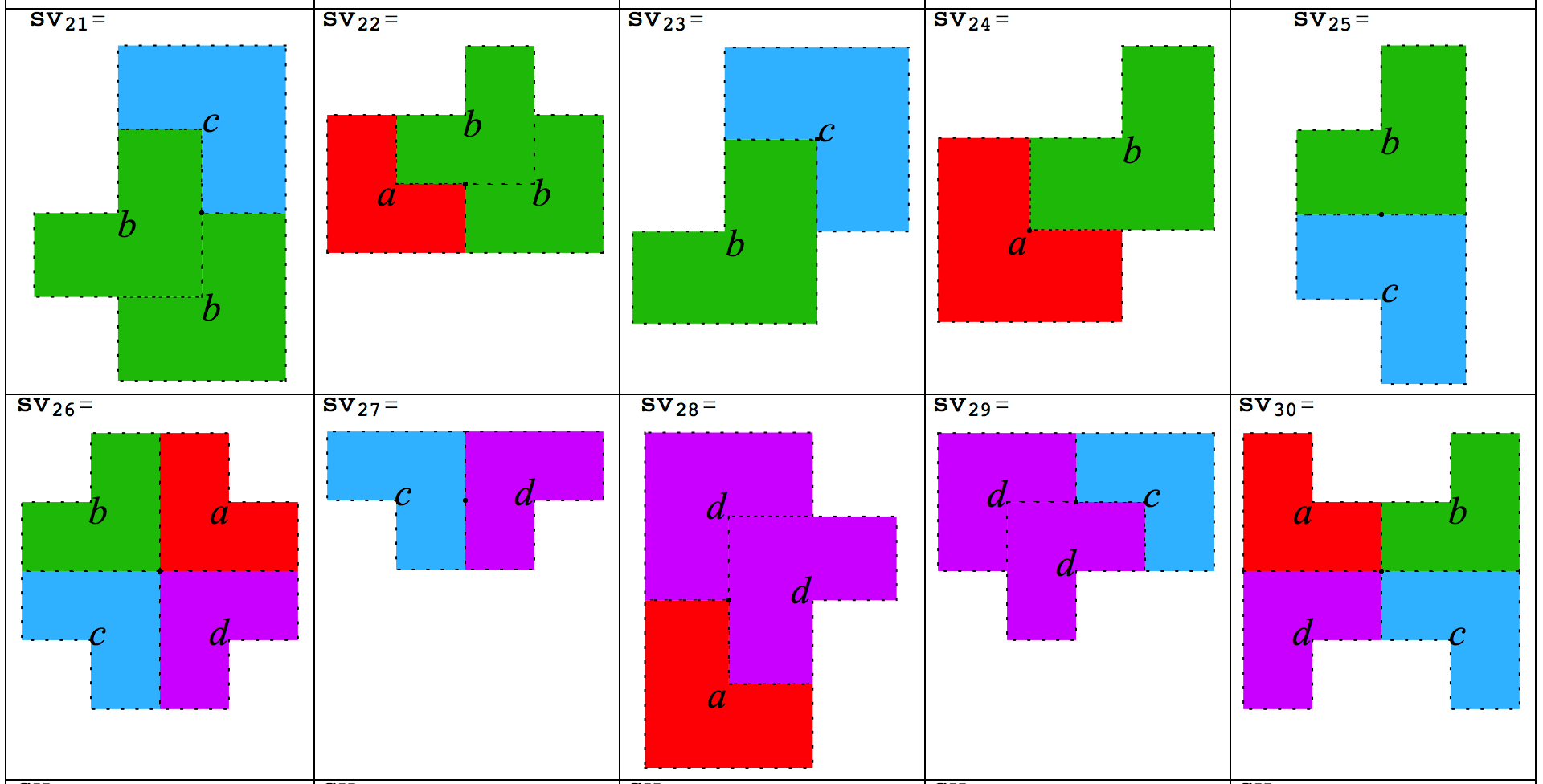}\\
\includegraphics[scale=.32]{./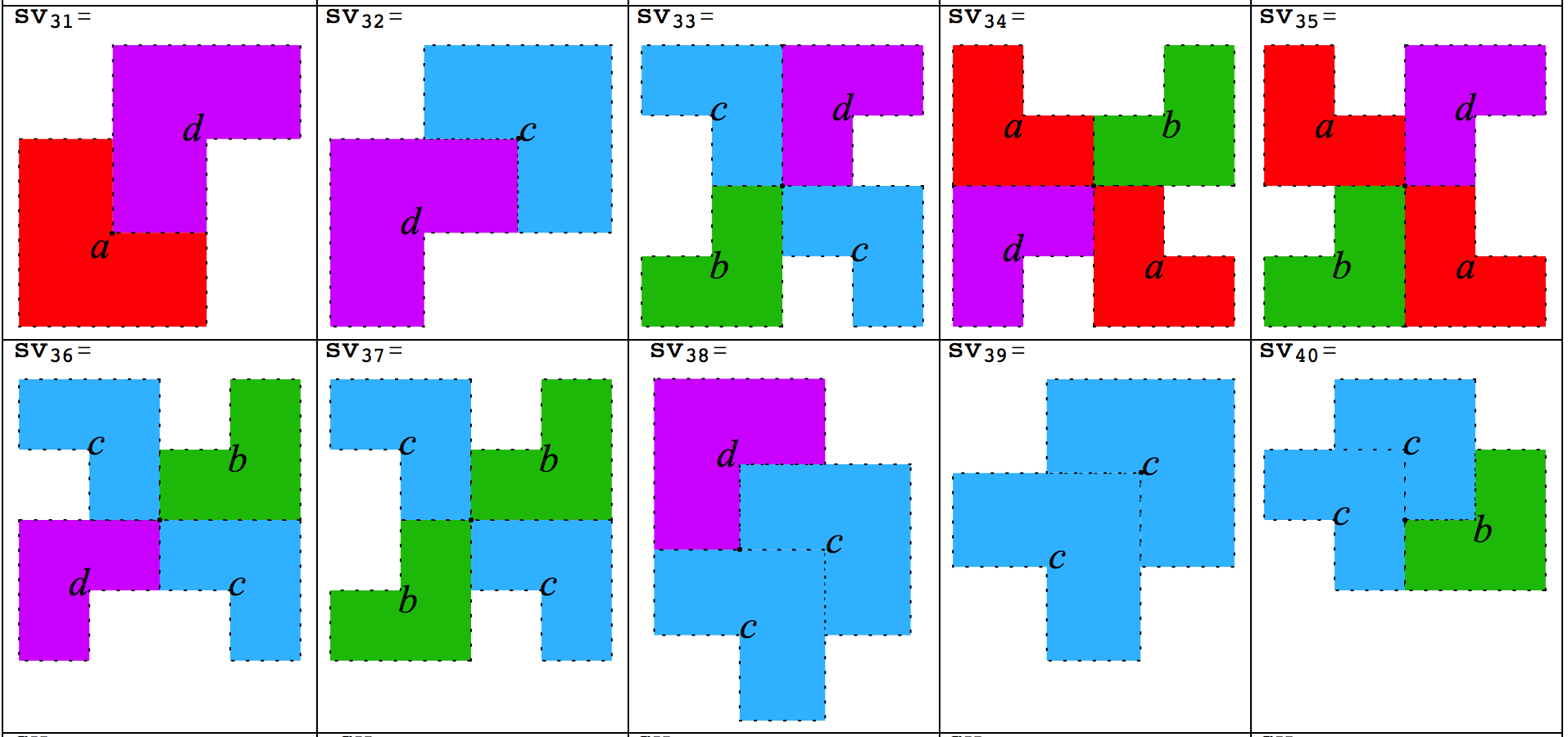}\\
\includegraphics[scale=.32]{./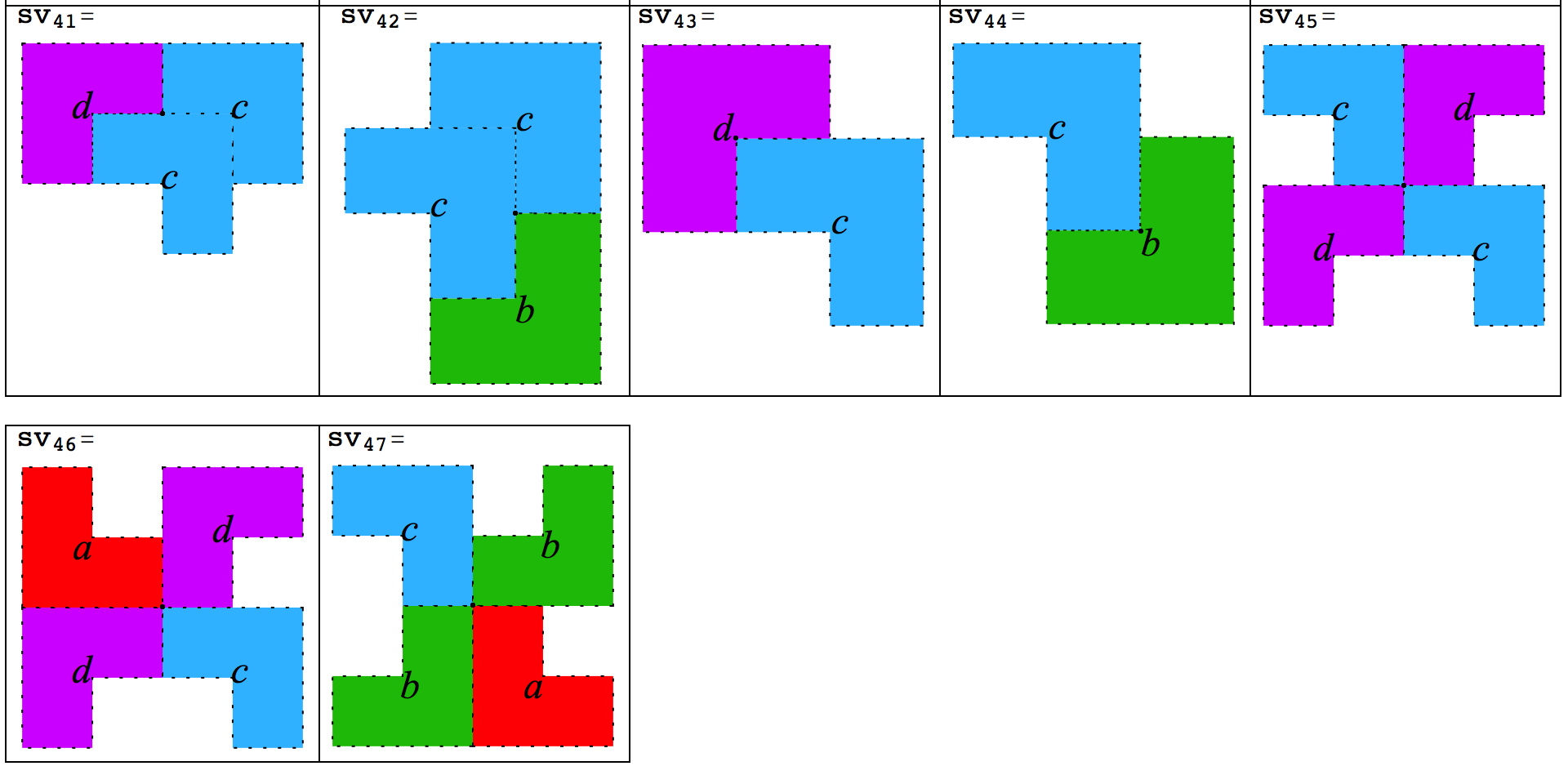}\\
}
$ $\\
We remark that we will write big matrices in the format: 
$$\{\{row_1\},\{row_2\},\ldots\}.$$
The exponential map $\delta^0:\Z^{\sV}\to\Z^{\sE}$ is given by the matrix:\\
{\fontsize{4}{0}\selectfont
\input{"./pics/chair-delta0.mtx.tex"}
       
}   
The index map $\delta^1:\Z^{\sE}\to\Z^{sF}$ is given by the matrix:\\
{\fontsize{4}{0}\selectfont
\input{"./pics/chair-delta1.mtx.tex"}
       
}   

We illustrate the homotopy $h_s$, $0\le s\le 1$, on the vertices in the following figure:
\begin{center}
\includegraphics[scale=.3]{./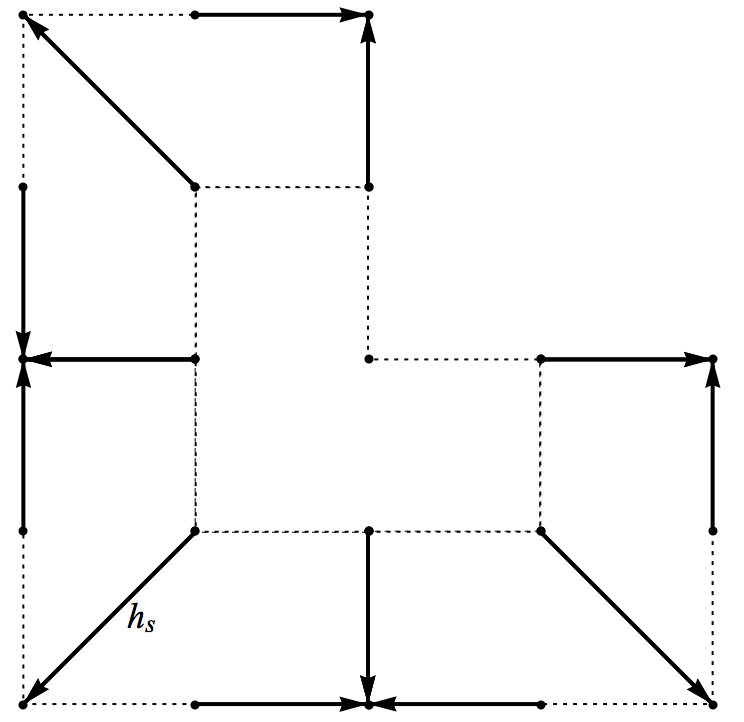}
\end{center}
The substitution-homotopy map $W_V:\Z^{\sV}\to\Z^{sV}$ is given by the matrix:\\
{\fontsize{4}{0}\selectfont
\input{"./pics/chair-Wv.mtx.tex"}
       
}
The substitution-homotopy map $W_E:\Z^{\sE}\to\Z^{\sE}$ is given by the matrix:\\
{\fontsize{4}{0}\selectfont
\input{"./pics/chair-We.mtx.tex"}
       
}
The substitution-homotopy map $W_F:\Z^{\sF}\to\Z^{\sF}$ is given by the matrix
\begin{center}
\includegraphics[scale=.6]{./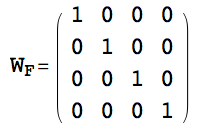}
\end{center}
By Proposition \ref{p:kerA-isom-Znminusr}, 
$$H_S^0(T)=\dlim(W_V,\ker \delta^0)=\dlim(
\left(
\begin{array}{cccccccc}
 0 & 0 & -1 & 1 & 0 & 1 & 1 & -1 \\
 0 & 4 & 0 & 0 & 0 & 0 & 0 & 0 \\
 0 & 1 & 1 & -1 & 1 & -1 & -1 & 1 \\
 0 & 2 & 0 & 0 & 0 & 0 & 0 & 0 \\
 0 & 1 & 0 & 0 & 2 & 0 & 0 & 0 \\
 0 & 0 & -1 & 1 & 1 & 1 & 1 & -1 \\
 0 & 0 & 0 & 0 & 1 & 0 & 0 & 0 \\
 0 & 0 & 0 & 0 & 1 & 0 & 0 & 0 \\
\end{array}
\right)
,\Z^{8}).$$
By Proposition \ref{p:imA-isom-Zr} we remove the zero-eigenvalues and get
$$H_S^0(T)=\dlim(W_V,\ker \delta^0)=\dlim(
\left(
\begin{array}{ccc}
 2 & -1 & 1 \\
 0 & 5 & -3 \\
 0 & 1 & 1 \\
\end{array}
\right)
,\Z^{3}).$$
Since
$$ 
\left(
\begin{array}{ccc}
 1 & 0 & -1 \\
 0 & -1 & 1 \\
 0 & 0 & 1 \\
\end{array}
\right)^{-1}.\left(
\begin{array}{ccc}
 2 & -1 & 1 \\
 0 & 5 & -3 \\
 0 & 1 & 1 \\
\end{array}
\right).\left(
\begin{array}{ccc}
 1 & 0 & -1 \\
 0 & -1 & 1 \\
 0 & 0 & 1 \\
\end{array}
\right)=\left(
\begin{array}{ccc}
 2 & 0 & 0 \\
 0 & 4 & 0 \\
 0 & -1 & 2 \\
\end{array}
\right),$$
$$H_S^0(T)=\dlim(W_V,\ker \delta^0)=\dlim(
\left(
\begin{array}{ccc}
 2 & 0 & 0 \\
 0 & 4 & 0 \\
 0 & -1 & 2 \\
\end{array}
\right)
,\Z^{3})=\Z[\frac12]^3.$$
By Proposition \ref{p:kerOverIm-A-isom-Znminus}, $\lim\limits_{\to}(W_E,\,\frac{\ker\delta^1}{\im\delta^0})=\lim\limits_{\to}(W_E',\coker B)$ for some matrices $W_E',B$.
Then by Proposition \ref{p:cokerA-isom-Znminuss} we get rid of the coker and get
$$K_1(S)=H_S^1(T)=\dlim(W_E,\frac{\ker\delta^1}{\im\delta^0})=\dlim(
\left(
\begin{array}{cccc}
 2 & 0 & 0 & 0 \\
 0 & 2 & 0 & 0 \\
 0 & 0 & 2 & 0 \\
 0 & 0 & 0 & 2 \\
\end{array}
\right)
,\Z_2^2\oplus\Z^2)=\Z[\frac12]^2$$ 
because $2\mod 2$ is zero.
\end{exam}
\begin{figure}[t]
\centerline{
\includegraphics[scale=.4]{./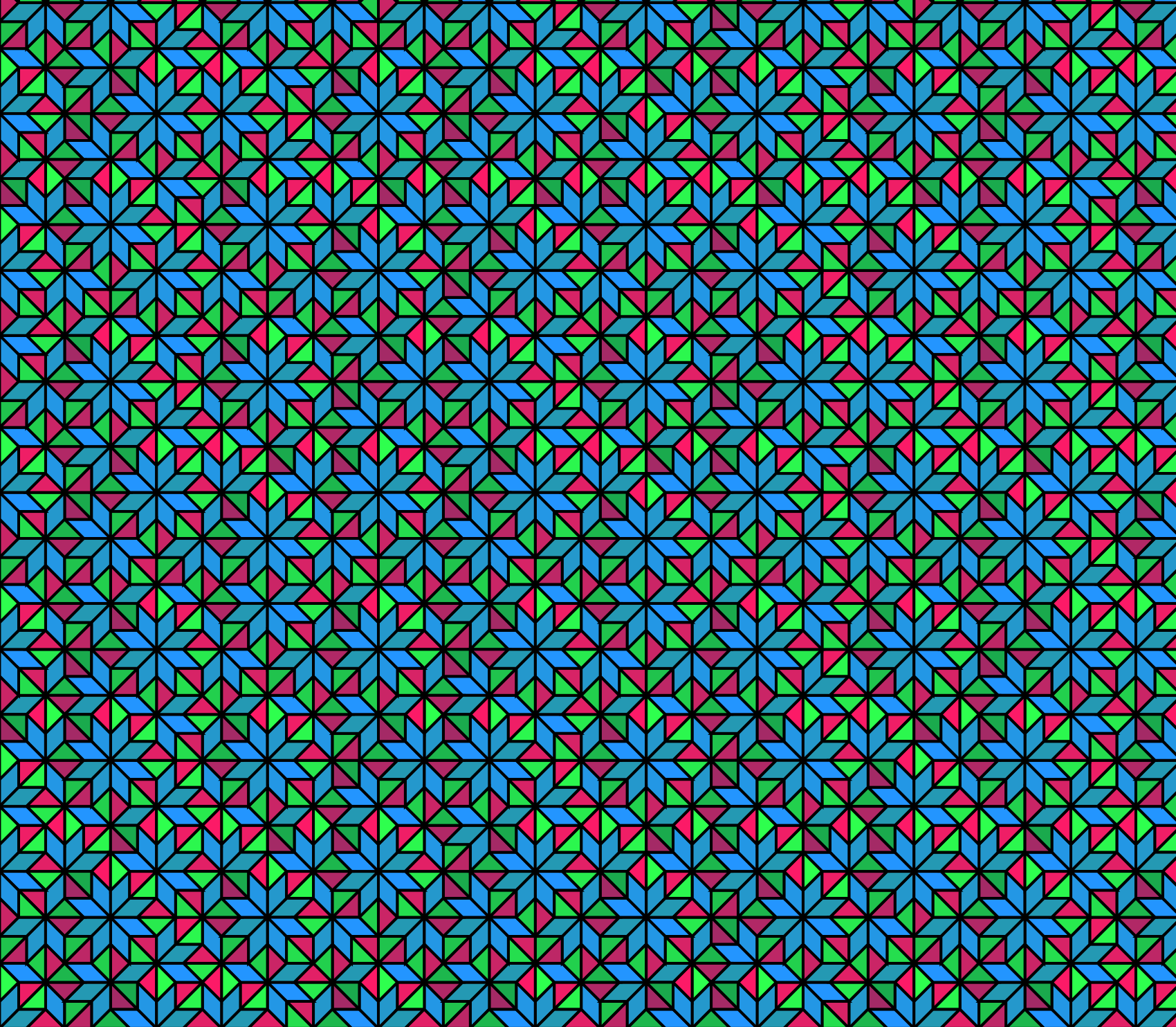}
}
 \caption{Octagonal tiling.\label{f:octagonal}}
\end{figure}

\begin{exam}[Octagonal tiling](cf.~\cite{KelPut}).
Let $T$ be the Octagonal tiling with proto-faces $a,b,c,d,e,f,g,h,i,j,k,l,m,n,o,p,q,r,s,t,\in T$ and substitution rule given by:
\begin{center}
\includegraphics[scale=.4]{./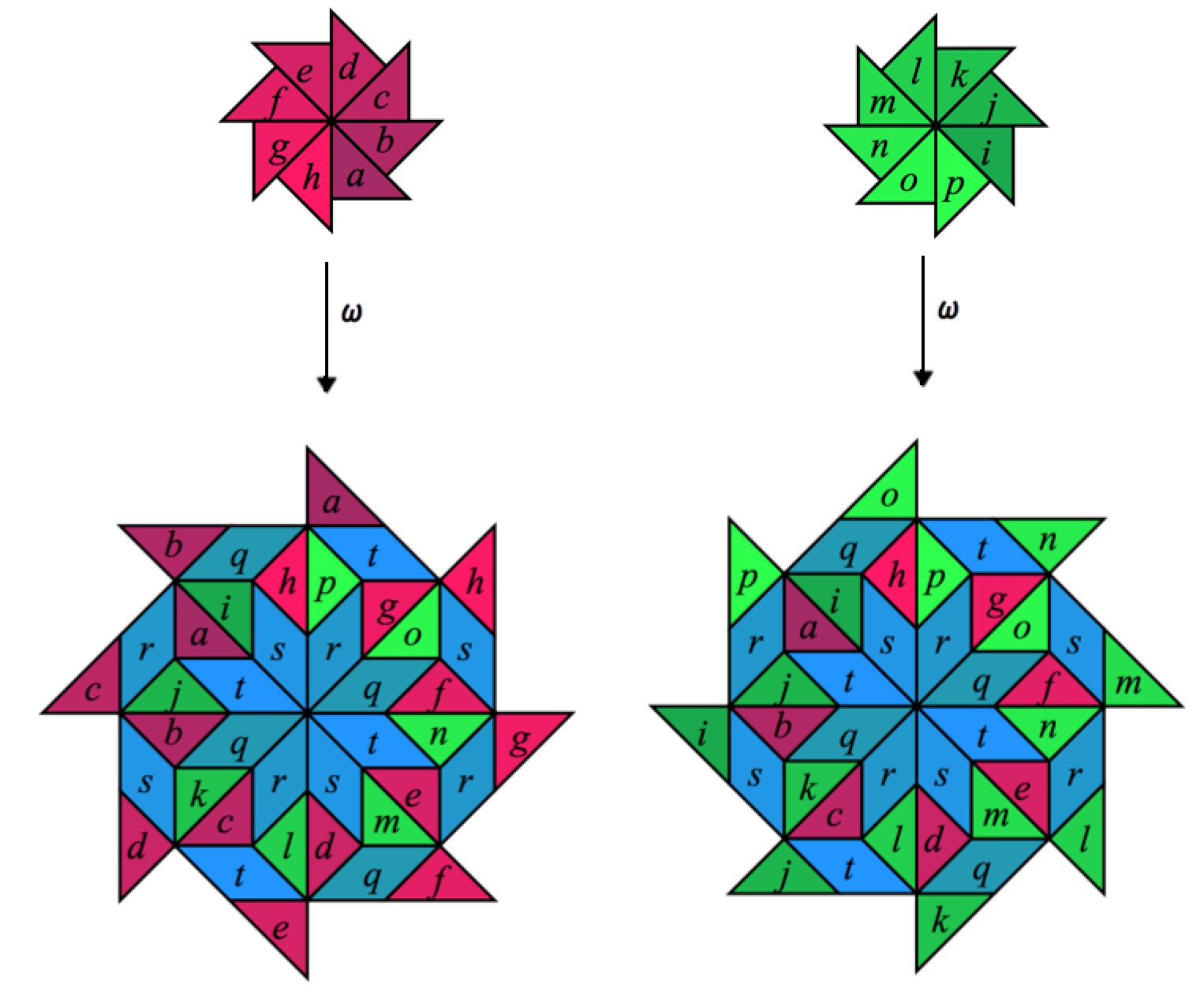}
\end{center}
\begin{center}
\includegraphics[scale=.4]{./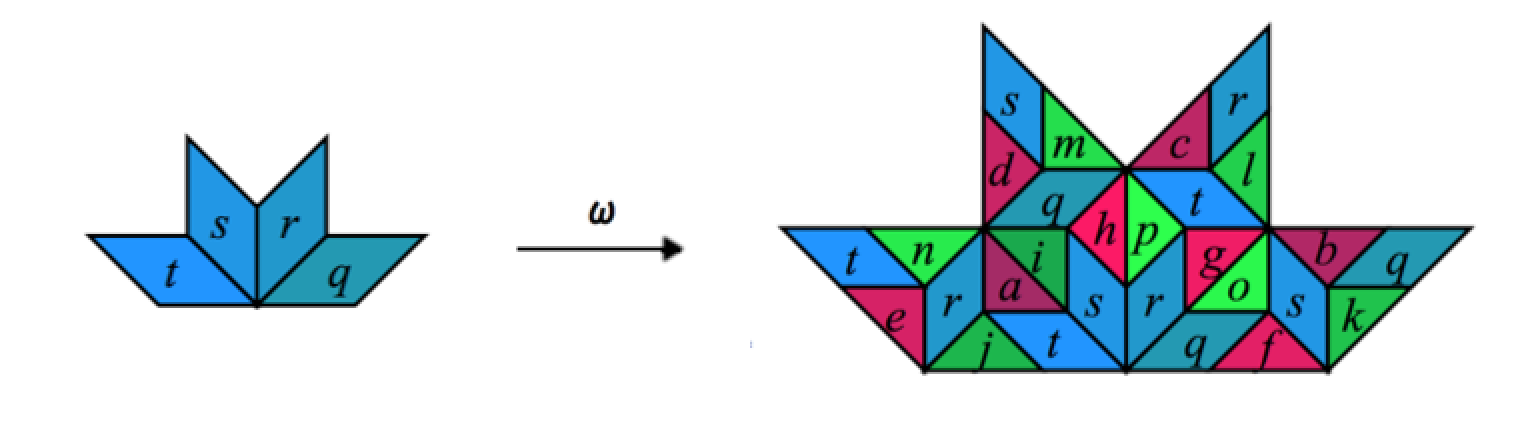}
\end{center}
The tiles $a,i$ form a unit square, and the angles in the tile $q$ are $45^\circ,135^\circ$.
The inflation factor is $\lambda=1+\sqrt{2}$.
There are 56 stable edges and 49 stable vertices.\\\\
{\centering
\includegraphics[scale=.32]{./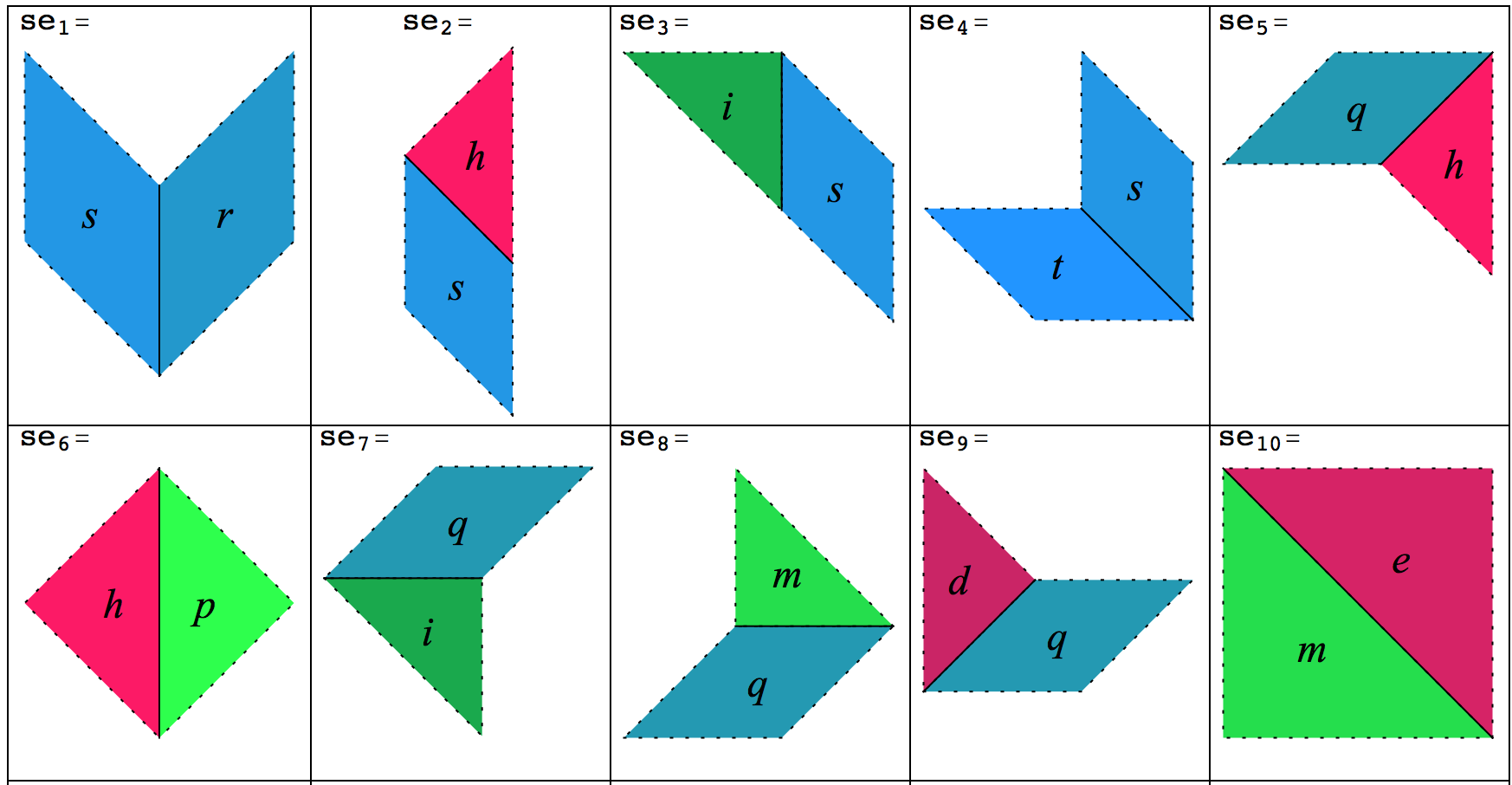}\\
\includegraphics[scale=.32]{./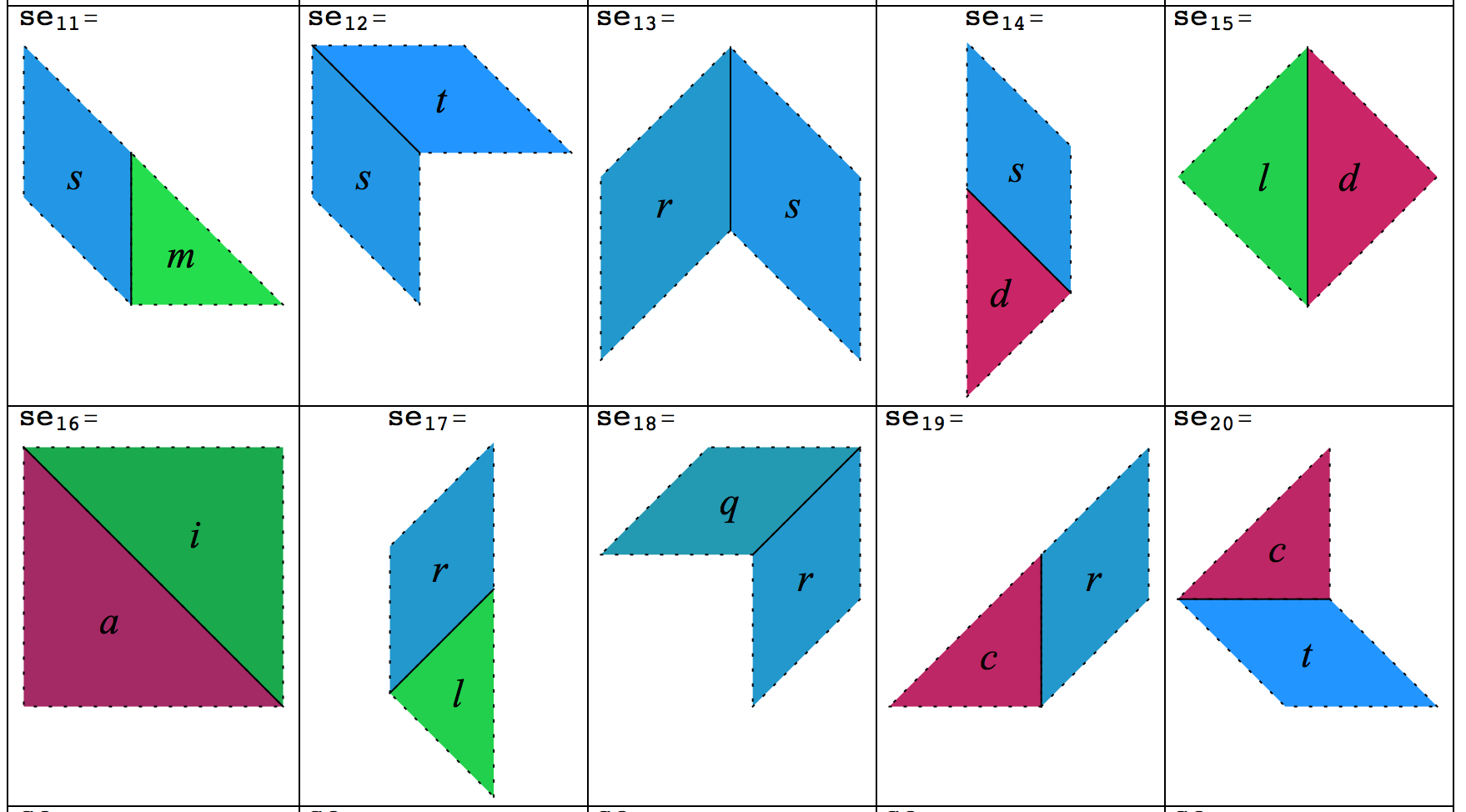}\\
\includegraphics[scale=.32]{./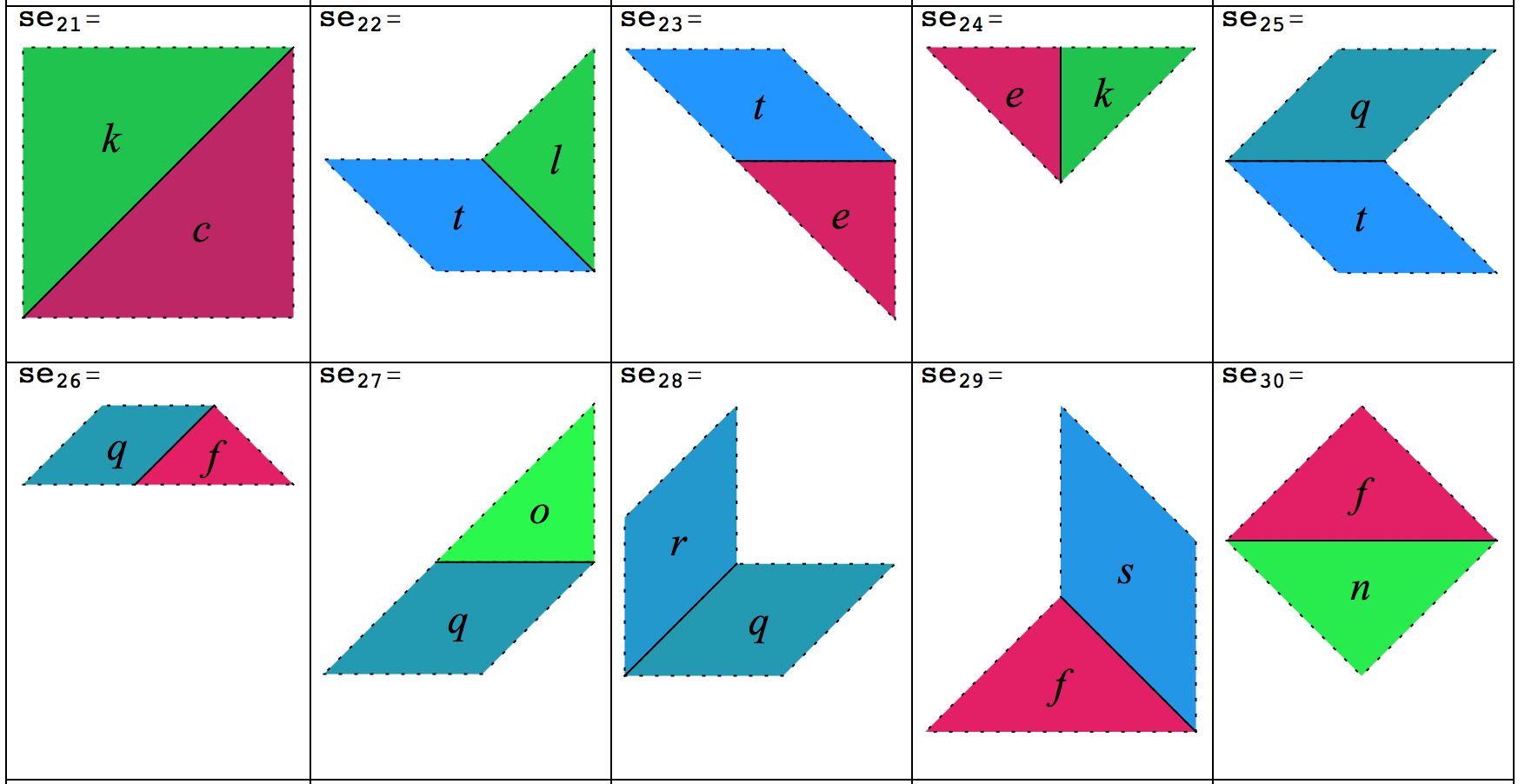}\\
\includegraphics[scale=.3]{./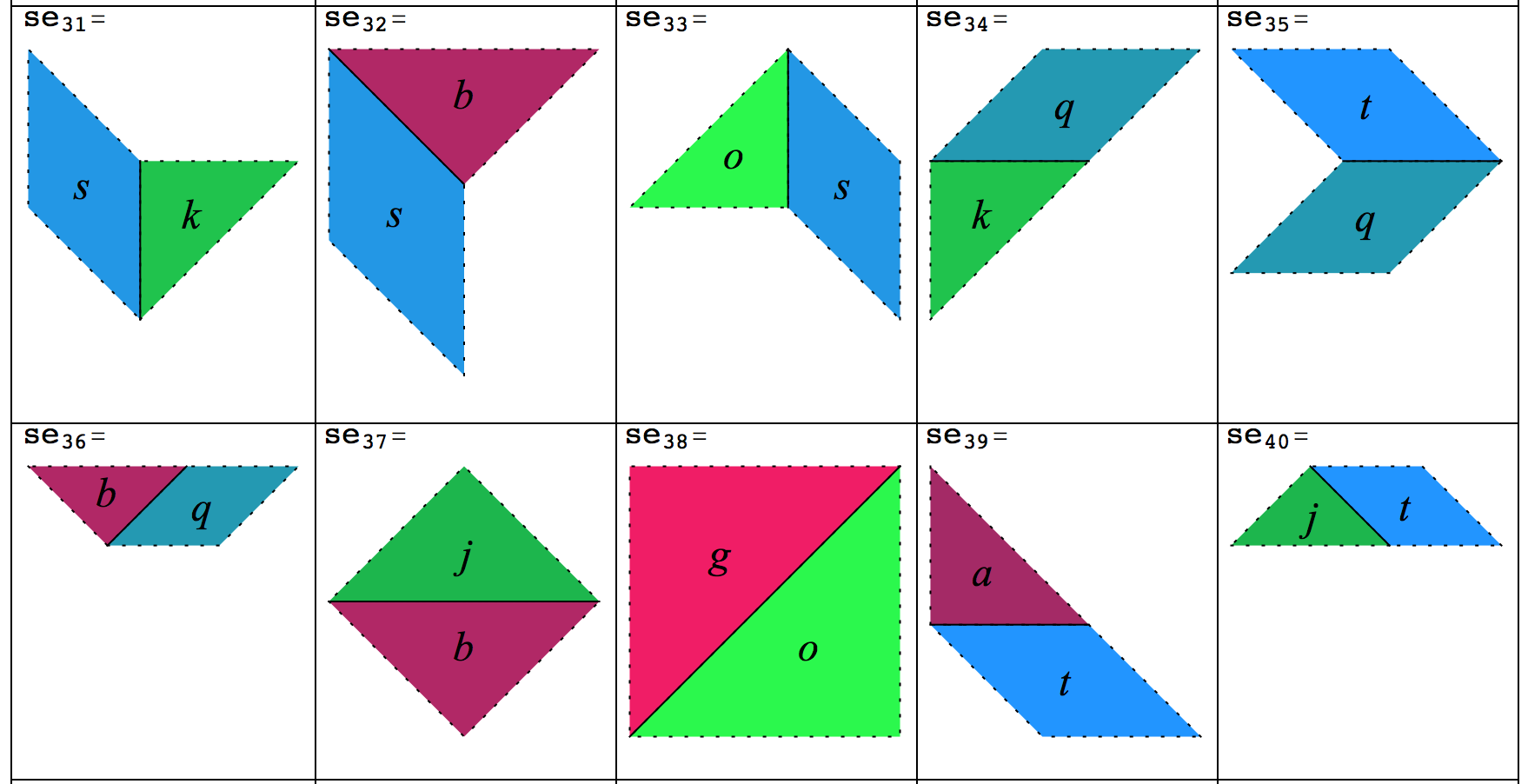}\\
\includegraphics[scale=.3]{./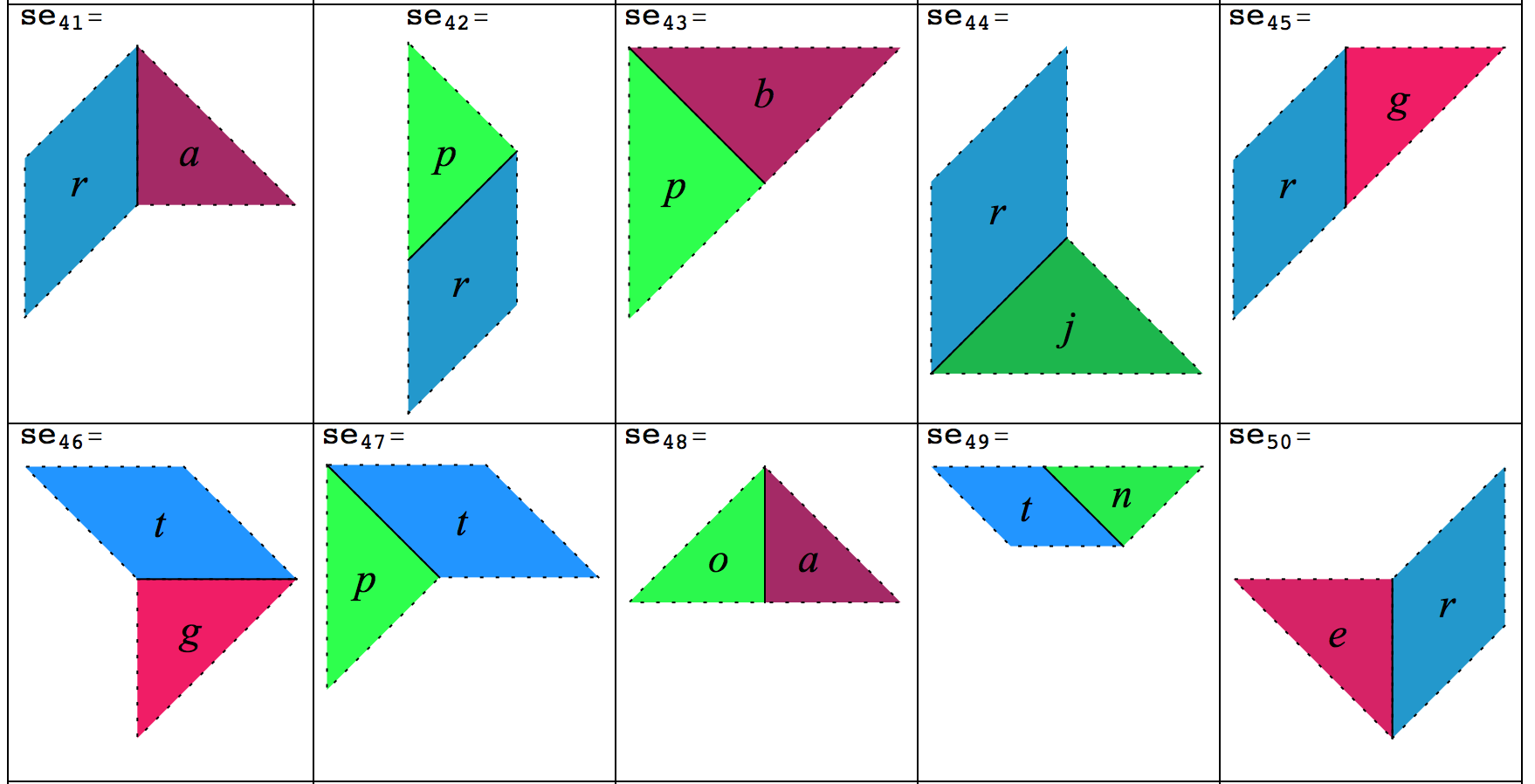}\\
\includegraphics[scale=.3]{./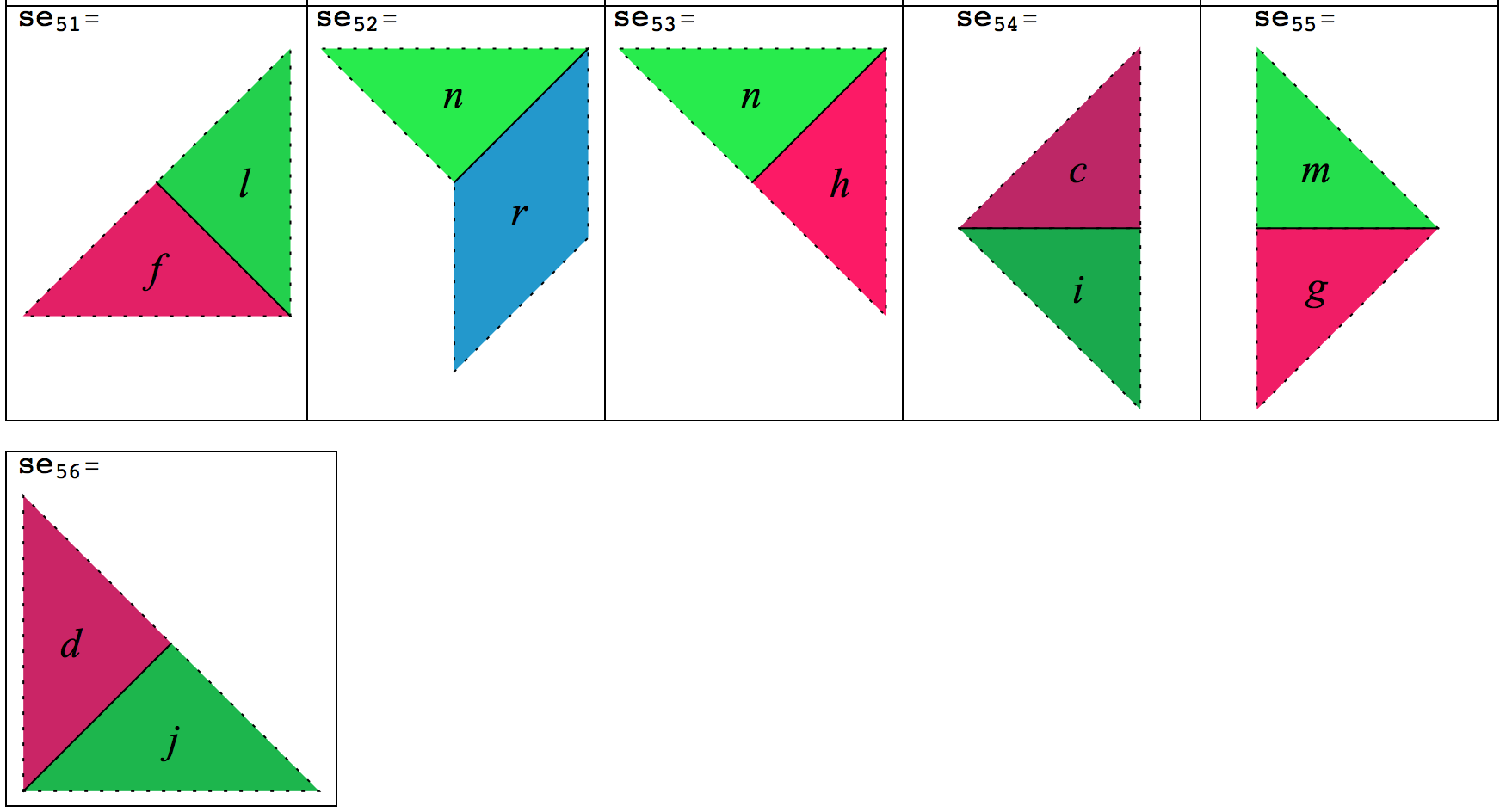}\\
\includegraphics[scale=.3]{./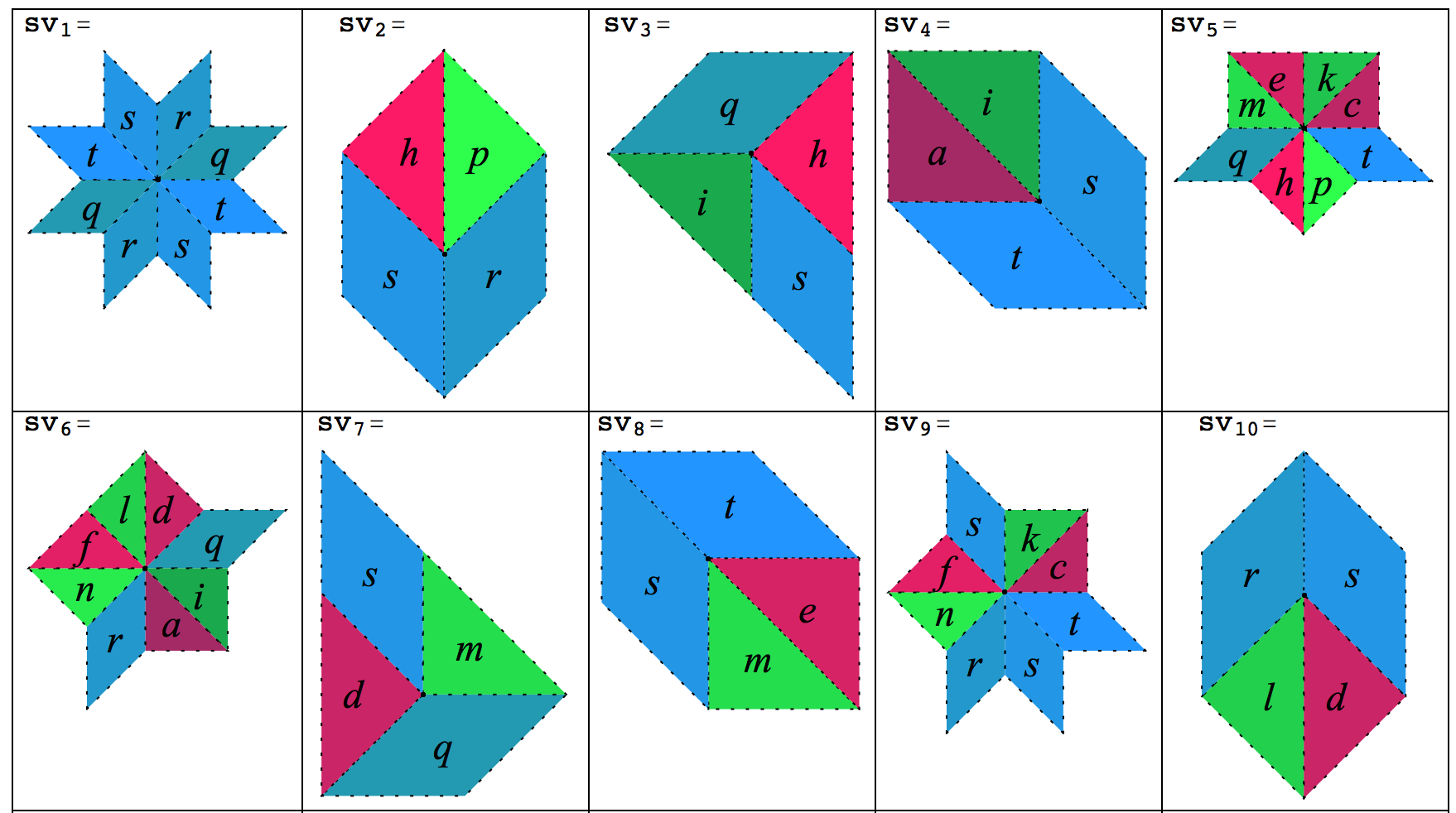}\\
\includegraphics[scale=.305]{./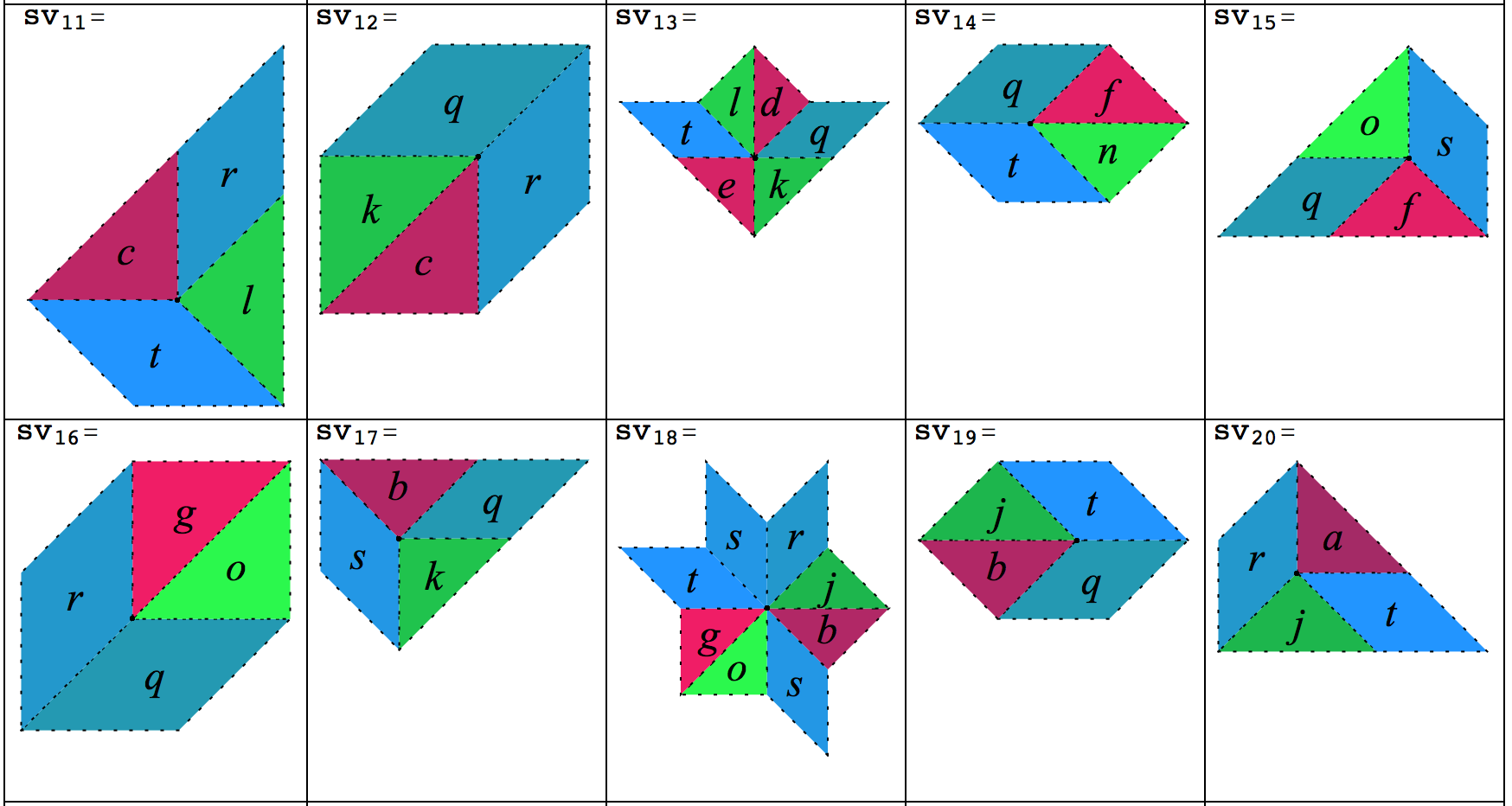}\\
\includegraphics[scale=.305]{./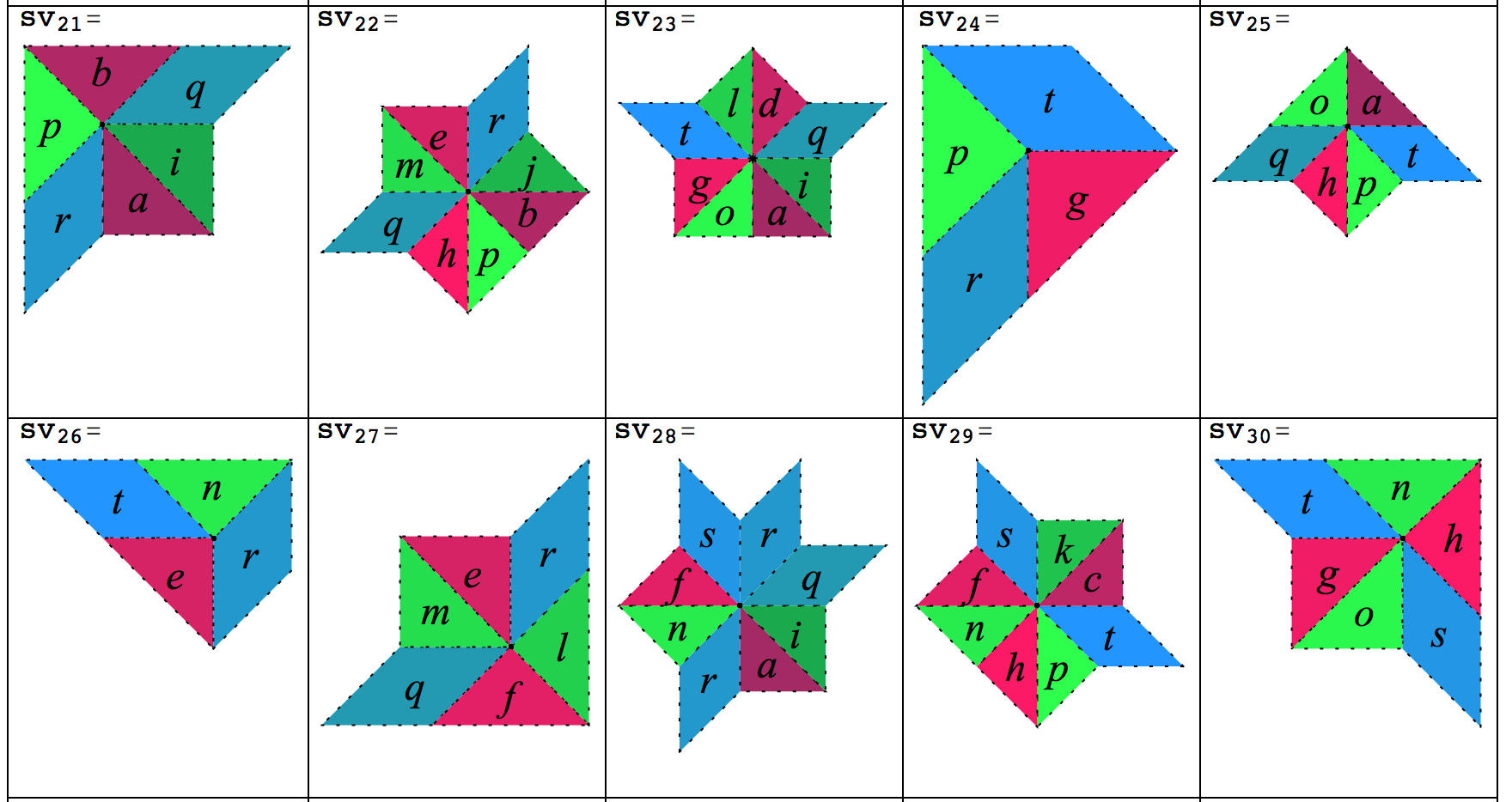}\\
\includegraphics[scale=.305]{./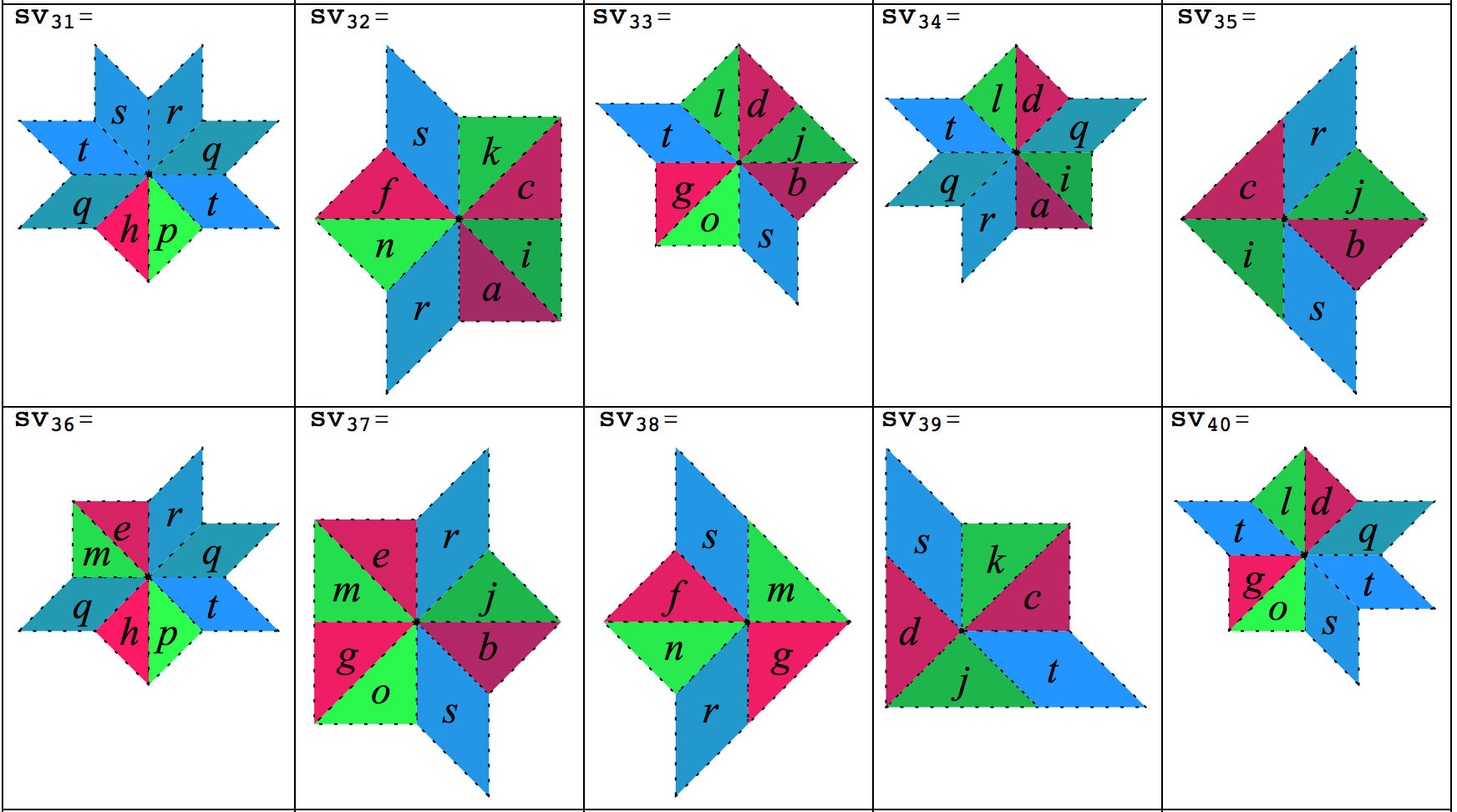}\\
\includegraphics[scale=.305]{./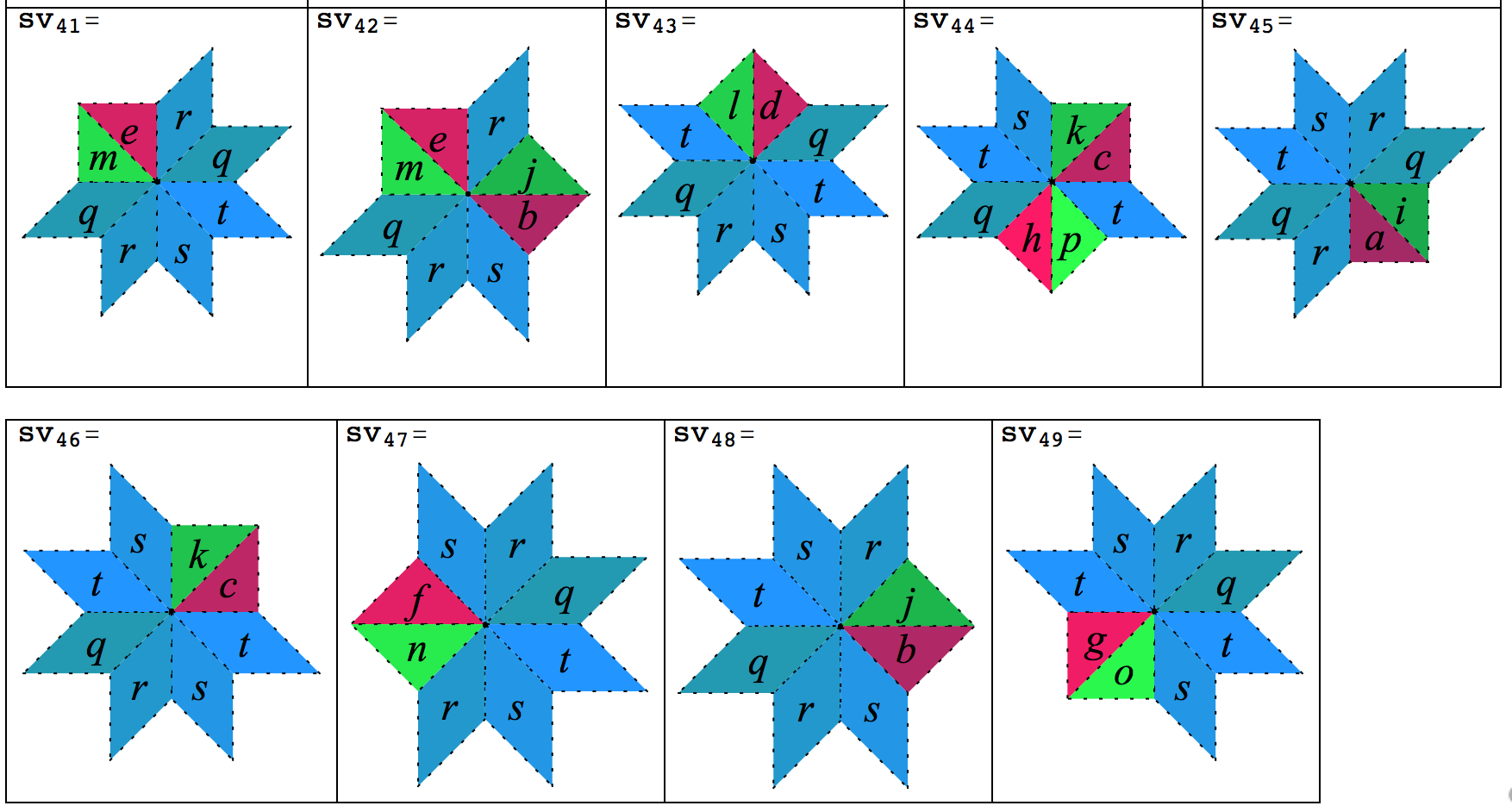}\\
}
$ $\\\\
The exponential map $\delta^0:\Z^{\sV}\to\Z^{\sE}$ is given by the matrix:\\
{\fontsize{4}{0}\selectfont
\input{"./pics/octagonal-delta0.mtx.tex"}
       
} 
The index map $\delta^1:\Z^{\sE}\to\Z^{sF}$ is given by the matrix:\\
{\fontsize{4}{0}\selectfont
\input{"./pics/octagonal-delta1.mtx.tex"}
       
}
We illustrate the homotopy $h_s$, $0\le s\le 1$, on the vertices in the following figure:
\begin{center}
\includegraphics[scale=.36]{./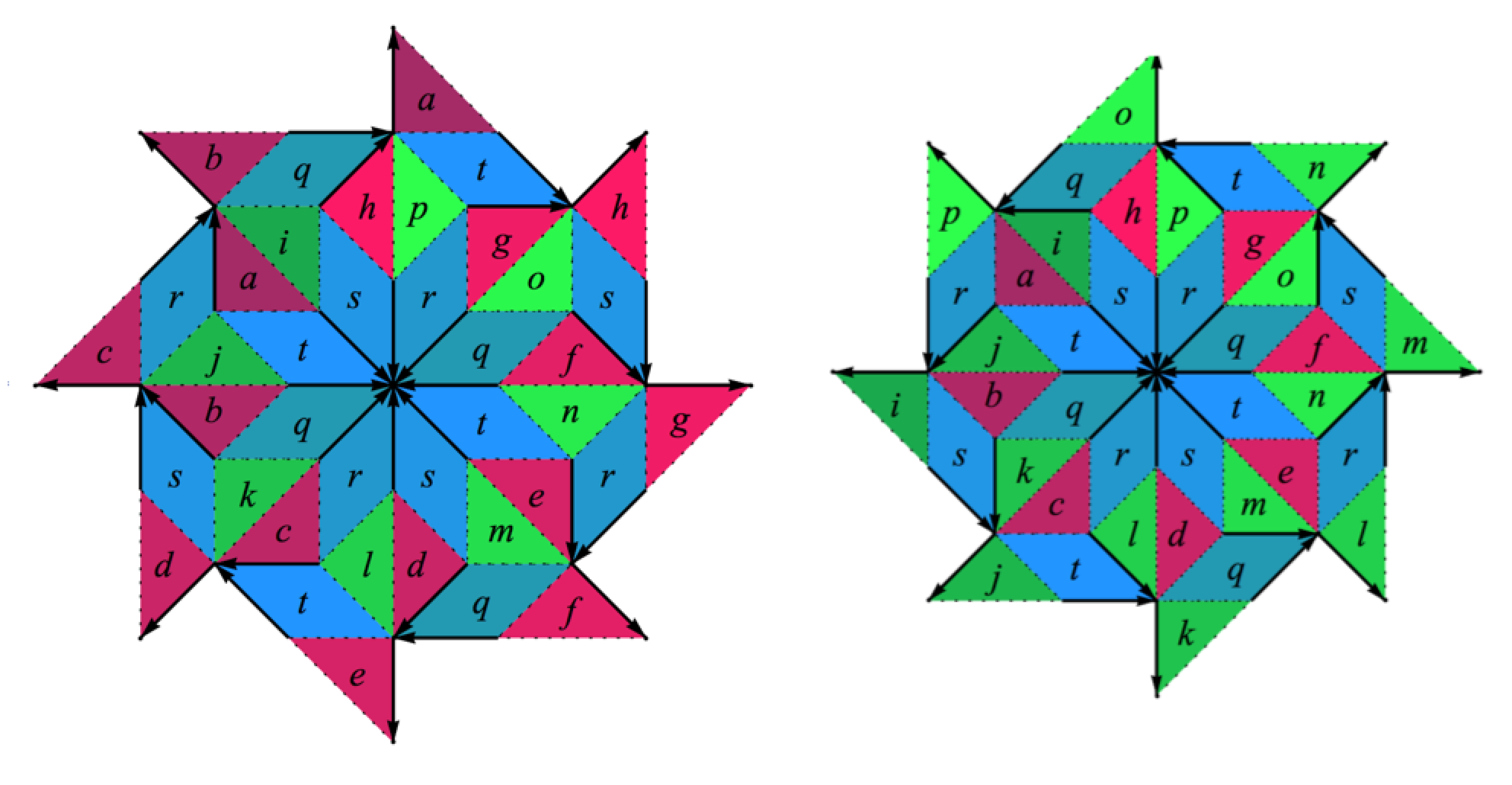}
\end{center}
\begin{center}
\includegraphics[scale=.25]{./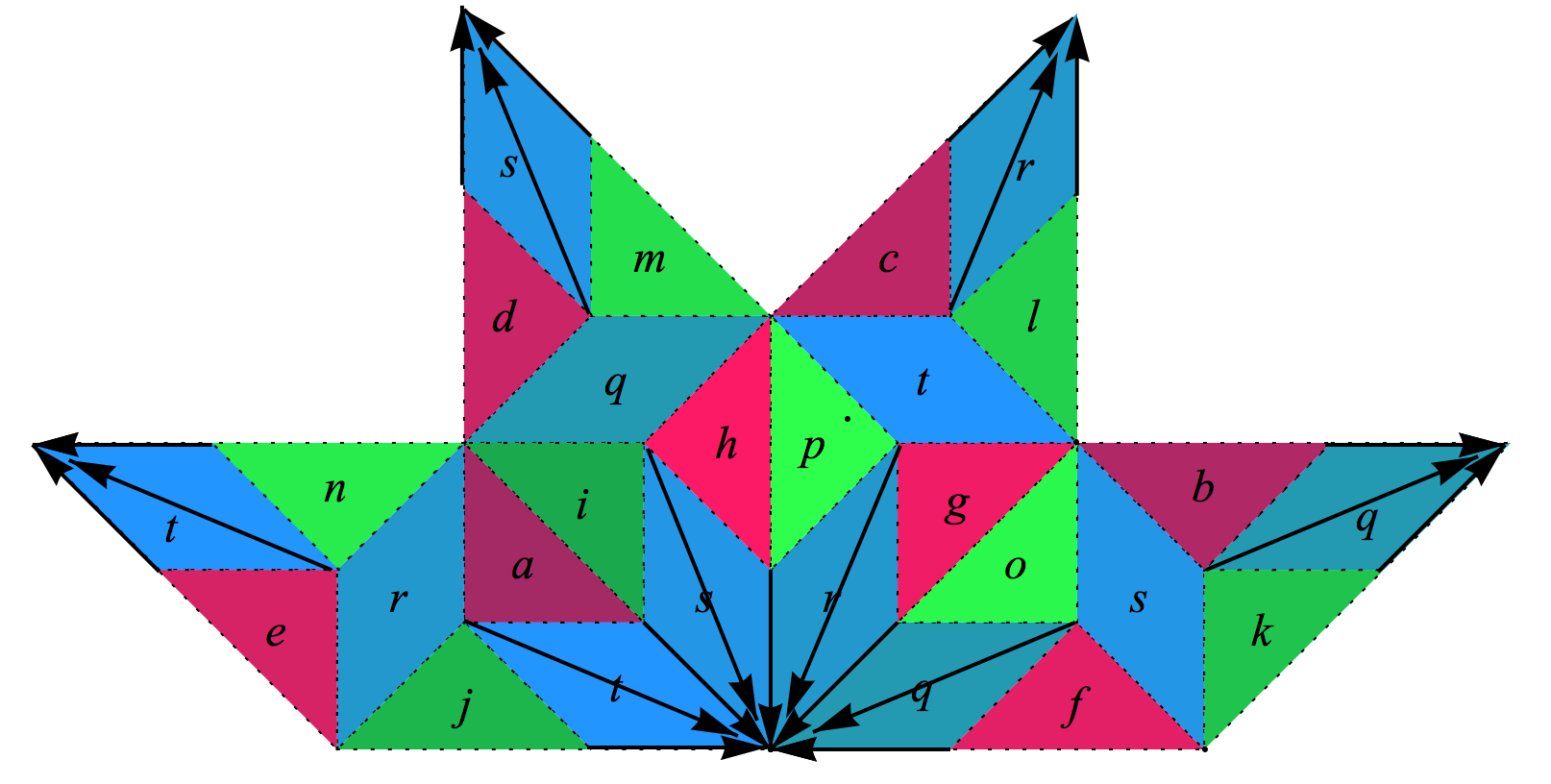}
\end{center}
The substitution-homotopy map $W_V:\Z^{\sV}\to\Z^{sV}$ is given by the matrix:
{\fontsize{4}{0}\selectfont
\input{"./pics/octagonal-Wv.mtx.tex"}
       
}
The substitution-homotopy map $W_E:\Z^{\sE}\to\Z^{\sE}$ is given by the matrix:\\
{\fontsize{4}{0}\selectfont
\input{"./pics/octagonal-We.mtx.tex"}
       
}
The substitution-homotopy map $W_F:\Z^{\sF}\to\Z^{\sF}$ is given by the matrix
\begin{center}
\includegraphics[scale=.6]{./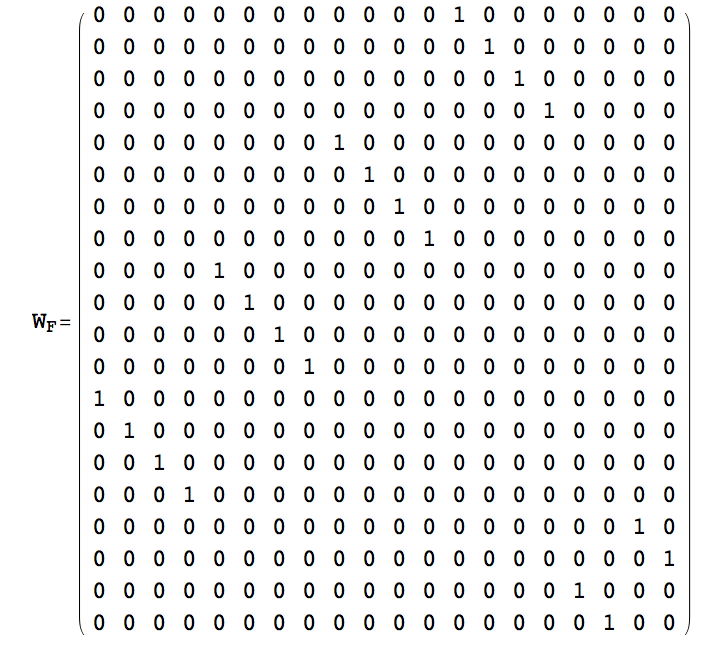}
\end{center}
By Proposition \ref{p:kerA-isom-Znminusr} we replace  $\ker \delta^0$ with $\Z^{17}$.
By Proposition \ref{p:imA-isom-Zr} we remove the zero-eigenvalues of the corresponding matrix and get
\begin{eqnarray*}
H_S^0(T)&=&\dlim(W_V,\,\ker\delta^0)\\
&=&\Z^3\oplus\dlim(
\begin{tiny}
\left(
\begin{array}{ccccccccc}
 8 & -19 & -63 & 12 & -14 & -7 & 12 & -1 & 7 \\
 10 & -24 & -74 & 15 & -15 & -6 & 8 & -1 & 4 \\
 -2 & 2 & 5 & -1 & 1 & 0 & 0 & 0 & 0 \\
 3 & -15 & -48 & 10 & -10 & -5 & 7 & -1 & 3 \\
 0 & 6 & 21 & -4 & 5 & 3 & -5 & 1 & -2 \\
 -2 & 1 & 7 & -2 & -1 & -1 & 5 & -1 & 1 \\
 0 & 1 & 7 & -1 & 2 & 2 & -4 & 1 & -2 \\
 1 & 1 & 7 & -1 & 2 & 2 & -4 & 1 & -2 \\
 0 & -1 & -4 & 0 & -3 & -3 & 8 & -2 & 3 \\
\end{array}
\right)
\end{tiny},
\Z^9)\\
&=&\Z^9
\end{eqnarray*}
because the determinant of the $9\times 9$ matrix is -1. 
By Proposition \ref{p:kerOverIm-A-isom-Znminus}, $\lim\limits_{\to}(W_E,\,\frac{\ker\delta^1}{\im\delta^0})=\lim\limits_{\to}(W_E',\coker B)$ for some matrices $W_E',B$.
Then by Proposition \ref{p:cokerA-isom-Znminuss} we get rid of the coker and get
$$K_1(S)=H_S^1(T)=\lim_{\to}(W_E,\frac{\ker\delta^1}{\im\delta^0})=\lim_{\to}(
\left(
\begin{array}{ccccc}
 0 & 1 & 0 & -1 & 0 \\
 3 & 1 & 0 & 0 & 1 \\
 2 & 2 & -1 & -2 & 0 \\
 0 & -1 & 1 & 2 & 1 \\
 1 & -1 & 1 & 3 & 1 \\
\end{array}
\right),\Z^5)=\Z^5$$
because the determinant of the $5\times5$ matrix is -1.
\end{exam}


The $K$-theory groups of the above examples are summarized in Table~\ref{table:K0groups-d2} and Table \ref{table:K1groups-d2}. 
We are the first ones to compute the stable and unstable $K$-theories of the tri-square tiling. 
The unstable $K$-theories of the rest of the above examples are already well-known, and they agree with our computations.



\begin{table}[t]
\begin{center}\caption{$K_0$-groups for tilings of the plane.  }\label{table:K0groups-d2}
  \begin{tabular}{| r | c | c| c| c| }
    \hline
      Tiling       &  $K_0(U)=\Z\oplus H_0^{ST}(T) $               & $K_0(S)/\Z=H_S^0(T)$               & $K_0(A)$ \\ \hline
     Octagonal             & $\Z^{10}$               & $\Z^{9}$              & $\Z^{200}$\\
     Chair                  & $\Z\oplus\Z[\frac12]^3$         &  $\Z[\frac12]^3$       & \\
     Tri-square     &  $\Z^5\oplus\Z[\frac12]^3$        &  $\Z^4\oplus\Z[\frac12]^3$       & \\
     Half-hex              & $\Z^3\oplus\Z[\frac12]$         & $\Z^2\oplus\Z[\frac12]$      &  \\
     Table & $\Z^4\oplus\Z[\frac12]^5\oplus \Z_2$ & $\Z^3\oplus\Z[\frac12]^5 $   & --\\
    \hline
  \end{tabular}

\end{center}
\end{table}
\begin{table}[t]
\caption{$K_1$-groups for tilings of the plane.}\label{table:K1groups-d2}
\begin{center}
  \begin{tabular}{| r | c | c| c| c| }
    \hline
     Tiling        &  $K_1(U)=H_1^{ST}(T)$               & $K_1(S)=H^1_{S}(T)$              & $K_1(A)$ \\ \hline
     Octagonal             & $\Z^{5}$                & $\Z^{5}$              & $\Z^{200}$\\
     Chair                  & $\Z[\frac12]^2$         &  $\Z[\frac12]^2$      & \\
     Tri-square      &  $\Z[\frac12]^2$        &  $\Z[\frac12]^2$      & \\
     Half-hex              & $\Z[\frac12]^2$         &  $\Z[\frac12]^2$      &  \\
    Table                 & $\Z[\frac12]^2$         &  $\Z[\frac12]^2\oplus\Z_2$ & --\\
    \hline
  \end{tabular}
\end{center}
\end{table}





\appendix

\subsection*{\underline{Acknowledgments}}
The authors would like to thank Jean Renault, Erik Christensen, Rohit Dilip Holkar, Uffe Haagerup, and Toke N{\o}rg{\aa}rd-Solano for the fruitful discussions and valuable suggestions. 
The first author was partially supported by CNPq. 
The second author was
supported by the Villum Foundation under the project ``Local and global structures
of groups and their algebras'' at University of Southern Denmark, and by CNPq grant at Universidade Federal de Santa Catarina.

\section{\textbf{Direct limits of abelian groups}}\label{a.s:direct-limits-AG}
In this section we give some tools to compute direct limits of integer matrices.
\begin{defn}[Direct limit notation]
Suppose that $A:X\to X$ is an endomorphism of some abelian group $X$ (i.e.~a $\Z$-module). Then we introduce the notation for the direct limit
$$\xymatrix{
&\lim\limits_{\to}(A,X):=&\!\!\!\!\!\!\!\!\!\!\!\!\!\!\!\!\lim\limits_{\to}X\ar[r]^{A}&X\ar[r]^{A}&X\ar[r]^{A}&.
}$$
Note that we write the homomorphism $A$ first, which we do in order to emphasize it. 
\end{defn}

\begin{pro}\label{p:lim-union}
Let $A\in M_n(\Z)$ be an $n\times n$ integer matrix such that $\det A\ne 0$. Then
$$\xymatrix{
&\bigcup_{k=0}^{\infty} A^{-k}\Z^n\,=\,&\!\!\!\!\!\!\!\!\!\!\!\!\!\!\!\!\lim\limits_{\to}\,\Z^n\ar[r]^{A}&\Z^n\ar[r]^{A}&\Z^n\ar[r]^{A}&.
}$$
Note that 
$$\bigcup_{k=0}^{\infty} A^{-k}\Z^n\subset \Z[\frac{1}{\det A}]^n.$$
\end{pro}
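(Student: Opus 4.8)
The plan is to realize the direct limit concretely as a subgroup of $\Q^n$, exploiting that $\det A\ne 0$ makes $A$ invertible over $\Q$. Since $A$ is then injective already on $\Z^n$, every bonding map in the system $\Z^n\xrightarrow{A}\Z^n\xrightarrow{A}\cdots$ is injective, and the task reduces to identifying the correct ambient group in which the limit appears as an ascending union.

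First I would define, for each stage $k\in\N_0$, the homomorphism $\phi_k:\Z^n\to\Q^n$ by $\phi_k(x):=A^{-k}x$, where $A^{-1}$ is taken in $M_n(\Q)$. These are compatible with the directed system, since $\phi_{k+1}(Ax)=A^{-(k+1)}Ax=A^{-k}x=\phi_k(x)$; hence, by the universal property of the direct limit, they induce a single homomorphism $\phi:\lim\limits_{\to}(A,\Z^n)\to\Q^n$ with $\phi\circ\iota_k=\phi_k$, where $\iota_k$ denotes the canonical map of the $k$-th copy into the limit. The image of $\phi$ is $\bigcup_{k}\phi_k(\Z^n)=\bigcup_{k}A^{-k}\Z^n$, and this is genuinely an increasing union because $A^{-k}x=A^{-(k+1)}(Ax)\in A^{-(k+1)}\Z^n$.

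Next I would verify that $\phi$ is an isomorphism onto $\bigcup_{k}A^{-k}\Z^n$. Surjectivity onto this union is immediate from the description of the image. For injectivity, represent a class by $\iota_k(x)$ and suppose $\phi(\iota_k(x))=\phi(\iota_m(y))$, that is $A^{-k}x=A^{-m}y$; assuming $k\le m$ and multiplying by $A^m$ gives $A^{m-k}x=y$, which says precisely that $\iota_k(x)=\iota_m(A^{m-k}x)=\iota_m(y)$ already holds in the directed system, so the two classes coincide. In particular the only class sent to $0$ is the zero class. This establishes the claimed equality of groups.

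Finally, for the concluding inclusion I would invoke Cramer's rule via the adjugate: $A^{-1}=\frac{1}{\det A}\,\mathrm{adj}(A)$ with $\mathrm{adj}(A)\in M_n(\Z)$, whence $A^{-k}=\frac{1}{(\det A)^k}\,\mathrm{adj}(A)^k$ has all entries in $\Z[\frac{1}{\det A}]$. Therefore $A^{-k}\Z^n\subset\Z[\frac{1}{\det A}]^n$ for every $k$, and the inclusion passes to the union. There is no serious obstacle in this argument; the only points demanding care are the compatibility computation that makes $\phi$ well-defined and the injectivity step, where one must track the equivalence relation defining the direct limit and use $\det A\ne 0$ to cancel powers of $A$ over $\Q$.
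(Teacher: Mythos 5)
Your proof is correct and follows essentially the same route as the paper: both identify the abstract limit with the nested union $\bigcup_{k} A^{-k}\Z^n$ inside $\Q^n$ via the compatible isomorphisms $x\mapsto A^{-k}x$ --- the paper packages this as a commutative ladder between the directed system and the chain of inclusions $\Z^n\subset A^{-1}\Z^n\subset A^{-2}\Z^n\subset\cdots$, while you run the same data through the universal property and verify injectivity by hand. As a small bonus, your adjugate argument actually proves the concluding inclusion into $\Z[\frac{1}{\det A}]^n$, which the paper only states as a remark.
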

\begin{proof}
Since $A\Z^n\subset \Z^n$ we get that $\Z^n=A^{-1}(A\Z^n)\subset A^{-1}\Z^n$. Thus the following diagram is well defined
$$\xymatrix{
&\Z^n\ar[r]^{A}\ar[d]^{I}_{\cong}&\Z^n\ar[r]^{A}\ar[d]^{A^{-1}}_{\cong}&\Z^n\ar[r]^{A}\ar[d]^{A^{-2}}_{\cong}&\Z^n\ar[r]^{A}\ar[d]^{A^{-3}}_{\cong}&\\
&\Z^n\ar@{^(->}[r]^{}&A^{-1}\Z^n\ar@{^(->}[r]^{}&A^{-2}\Z^n\ar@{^(->}[r]^{}&A^{-3}\Z^n\ar@{^(->}[r]^{A}&.
}$$
Moreover, since the diagram commutes, and the vertical maps are isomorphisms, and the maps on the bottom row are the inclusion maps the statement of the proposition holds.
\end{proof}

\begin{cor}
Let $A\in M_n(\Z)$ be an integer matrix such that $\det A=\pm 1$. Then
$$\lim\limits_{\to}(A,\Z^n)=\Z^n.$$
\end{cor}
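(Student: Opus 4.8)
The plan is to derive this corollary directly from Proposition \ref{p:lim-union} by specializing to the case $\det A = \pm 1$. First I would invoke the proposition, which tells us that
$$\lim\limits_{\to}(A,\Z^n) = \bigcup_{k=0}^\infty A^{-k}\Z^n \subset \Z[\tfrac{1}{\det A}]^n.$$
The key observation is that when $\det A = \pm 1$, we have $\Z[\tfrac{1}{\det A}] = \Z[\pm 1] = \Z$, so the ambient group on the right-hand side collapses to $\Z^n$, giving the inclusion $\lim\limits_{\to}(A,\Z^n) \subset \Z^n$.

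For the reverse inclusion, I would argue that each term $A^{-k}\Z^n$ in the union already equals $\Z^n$. The crucial fact is that $A \in M_n(\Z)$ with $\det A = \pm 1$ is invertible over $\Z$ (i.e.~$A \in GL_n(\Z)$), so by Cramer's rule $A^{-1} = (\det A)^{-1}\,\mathrm{adj}(A) = \pm\,\mathrm{adj}(A)$ is again an integer matrix. Consequently $A^{-1}\Z^n = \Z^n$, and inductively $A^{-k}\Z^n = \Z^n$ for every $k \in \N_0$. Hence the union $\bigcup_{k=0}^\infty A^{-k}\Z^n$ is just $\Z^n$ itself, which establishes $\lim\limits_{\to}(A,\Z^n) = \Z^n$.

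I do not anticipate any real obstacle here, since the statement is an immediate corollary of the preceding proposition combined with the elementary ring-theoretic fact that an integer matrix of determinant $\pm 1$ has an integer inverse. The only point requiring minimal care is noting that $\det A = \pm 1$ is nonzero, so Proposition \ref{p:lim-union} genuinely applies, and then observing that each isomorphism $A^{-k}:\Z^n \to A^{-k}\Z^n$ in the commutative diagram of that proposition has image exactly $\Z^n$. The whole proof is therefore a one- or two-line consequence, and I would present it simply by citing the proposition, specializing $\Z[\tfrac{1}{\det A}] = \Z$, and remarking that each $A^{-k}\Z^n = \Z^n$.
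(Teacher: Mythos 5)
Your proof is correct and follows essentially the same route as the paper: both invoke Proposition \ref{p:lim-union} and observe that $\det A=\pm1$ makes $A^{-1}$ an integer matrix, so $A^{-k}\Z^n=\Z^n$ for all $k$ and the union collapses to $\Z^n$. The extra remark about $\Z[\tfrac{1}{\det A}]=\Z$ is harmless but redundant once each term of the union is identified with $\Z^n$.
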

\begin{proof}
If $\det A=\pm 1$ then $A^{-1}\Z^n=\Z^n$. The statement of the corollary then follows by the proposition.
\end{proof}

\begin{pro}\label{p:lim:Ginv}
Let $A,G\in M_n(\Z)$ be integer matrices, and suppose that $\det G=\pm 1$. Then
$$\lim\limits_{\to}(A,\Z^n)=\lim\limits_{\to}(GAG^{-1} , \Z^n).$$
\end{pro}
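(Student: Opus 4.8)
The plan is to realize the equality of direct limits as the effect of an isomorphism of directed systems whose vertical legs are all given by $G$ itself, and then to invoke functoriality of the direct limit. First I would record the role of the determinant hypothesis: since $\det G=\pm1$, the adjugate formula $G^{-1}=(\det G)^{-1}\operatorname{adj}(G)$ shows that $G^{-1}\in M_n(\Z)$, so $G\colon\Z^n\to\Z^n$ is an isomorphism of abelian groups with integer inverse. This is what allows $G$ to serve as a legitimate morphism in the category of finitely generated abelian groups at every stage of the system.

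Next I would assemble the commutative ladder between the two directed systems, with the constant vertical map $G$ at each level:
\begin{equation*}
\xymatrix{
\Z^n \ar[r]^{A} \ar[d]_{G}^{\cong} & \Z^n \ar[r]^{A} \ar[d]_{G}^{\cong} & \Z^n \ar[r]^{A} \ar[d]_{G}^{\cong} & \cdots\\
\Z^n \ar[r]^{GAG^{-1}} & \Z^n \ar[r]^{GAG^{-1}} & \Z^n \ar[r]^{GAG^{-1}} & \cdots
}
\end{equation*}
The crux is to verify that each square commutes, which is the single algebraic identity
$$(GAG^{-1})\circ G \;=\; GAG^{-1}G \;=\; GA \;=\; G\circ A,$$
where the middle equality uses $G^{-1}G=I$. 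Thus the two paths around every square agree, and the family $(G)_{k\ge 0}$ is genuinely a morphism of directed systems from $(\,\Z^n, A\,)$ to $(\,\Z^n, GAG^{-1}\,)$.

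Finally I would pass to the colimit. Since a morphism of directed systems in which every vertical map is an isomorphism induces an isomorphism on direct limits (by the universal property of the colimit, with the inverse morphism supplied by the commuting ladder built from $G^{-1}$), we conclude
$$\lim\limits_{\to}(A,\Z^n)\;\cong\;\lim\limits_{\to}(GAG^{-1},\Z^n).$$
I anticipate that there is no serious obstacle here: the only point requiring care is the invertibility of $G$ over $\Z$, which is exactly where the hypothesis $\det G=\pm1$ is consumed, and the verification that the squares commute, which reduces to the one-line cancellation above. If one prefers to avoid quoting abstract colimit functoriality, the same conclusion follows from Proposition \ref{p:lim-union} by identifying both limits with nested unions $\bigcup_k A^{-k}\Z^n$ and $\bigcup_k (GAG^{-1})^{-k}\Z^n = \bigcup_k G A^{-k} G^{-1}\Z^n = G\big(\bigcup_k A^{-k}\Z^n\big)$, the last equality using $G^{-1}\Z^n=\Z^n$, so that $G$ carries one union isomorphically onto the other.
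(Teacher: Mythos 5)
Your proof is correct and is essentially the paper's own argument: the same commuting ladder with the constant vertical isomorphism $G$, justified by the identity $(GAG^{-1})G = GA$, followed by functoriality of the direct limit. Note only that your optional alternative via Proposition \ref{p:lim-union} would additionally require $\det A \neq 0$, a hypothesis not present in the statement, whereas the ladder argument needs no such restriction.
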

\begin{proof}
Since the diagram
$$\xymatrix{
&\Z^n\ar[r]^{A}\ar[d]^{G}_{\cong}&\Z^n\ar[r]^{A}\ar[d]^{G}_{\cong}&\Z^n\ar[r]^{A}\ar[d]^{G}_{\cong}&\Z^n\ar[r]^{A}\ar[d]^{G}_{\cong}&\\
&\Z^n\ar[r]_{G A G^{-1}}&\Z^n\ar[r]_{G A G^{-1}}&\Z^n\ar[r]_{G A G^{-1}}&\Z^n\ar[r]_{G A G^{-1}}&
}$$
commutes, and its vertical maps are isomorphisms, the statement of the proposition holds.
\end{proof}

\begin{pro}\label{p:limA:Zn-isom-limA:AZn}
Let $A\in M_n(\Z)$ be an integer matrix. Then
$$\lim\limits_{\to}(A,\Z^n)=\lim\limits_{\to}(A , A\Z^n).$$
\end{pro}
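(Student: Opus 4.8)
The plan is to realize the second directed system as a subsystem of the first through the inclusion $\iota\colon A\Z^n\hookrightarrow \Z^n$, and then to show that the induced homomorphism on direct limits is a bijection. First I would check that $\dlim(A,A\Z^n)$ is even well-defined: since $A(A\Z^n)=A^2\Z^n\subseteq A\Z^n$, the matrix $A$ restricts to an endomorphism of the subgroup $A\Z^n$, and the inclusion satisfies $A\circ\iota=\iota\circ A|_{A\Z^n}$. Hence $\iota$ is a morphism of directed systems and induces $\iota_*\colon \dlim(A,A\Z^n)\to \dlim(A,\Z^n)$, and the whole task reduces to proving $\iota_*$ is an isomorphism.

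For surjectivity I would use the defining identity of a direct limit, namely that the class of $x\in\Z^n$ at level $i$ equals the class of $Ax$ at level $i+1$. Since $Ax\in A\Z^n$, the arbitrary element $[x,i]=[Ax,i+1]$ is hit by $\iota_*$, because $\iota_*\big([Ax,i+1]_{A\Z^n}\big)=[\iota(Ax),i+1]_{\Z^n}=[x,i]_{\Z^n}$. For injectivity I would invoke the standard description of vanishing in a direct limit of abelian groups: if $y\in A\Z^n$ satisfies $\iota_*([y,i])=[y,i]_{\Z^n}=0$, then some iterate $A^{k-i}y=0$ holds already in $\Z^n$; since $\iota$ is injective and intertwines the bonding maps (all equal to $A$), the same relation $A^{k-i}y=0$ holds in $A\Z^n$, so $[y,i]_{A\Z^n}=0$.

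The main obstacle, and the reason this cannot simply copy the template of Proposition \ref{p:lim-union} or Proposition \ref{p:lim:Ginv}, is that here there is no hypothesis $\det A\neq 0$, so $A$ need not be injective and cannot serve as a vertical isomorphism connecting the two systems. The remedy is to never invert $A$ and instead argue directly with $\iota$ and the equivalence relation defining the colimit, as above. An equivalent, more diagrammatic presentation in the spirit of the surrounding propositions would interleave the two systems into a single directed system
$$\xymatrix{\Z^n\ar[dr]^{A}\ar[rr]^{A}&&\Z^n\ar[dr]^{A}\ar[rr]^{A}&&\Z^n\ar[rr]^{A}&&\cdots\\ &A\Z^n\ar[ur]^{\iota}\ar[rr]_{A}&&A\Z^n\ar[ur]^{\iota}\ar[rr]_{A}&&A\Z^n\ar[ur]^{\iota}&\cdots}$$
whose triangles commute since $\iota\circ A=A$ and $A\circ\iota=A|_{A\Z^n}$; then both $\dlim(A,\Z^n)$ and $\dlim(A,A\Z^n)$ appear as the colimit over a cofinal subsystem of this interleaved diagram, and therefore coincide. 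I expect the only point requiring care to be the bookkeeping of levels in the surjectivity step, where the shift $[x,i]=[Ax,i+1]$ is exactly what moves the representative into $A\Z^n$.
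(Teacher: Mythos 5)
Your proposal is correct and is essentially the paper's own argument: the interleaved diagram you sketch at the end, with the triangles commuting via $\iota\circ A=A$ and $A\circ\iota=A|_{A\Z^n}$ so that one can ``go up and down'' between the two cofinal rows, is precisely the proof given in the paper. Your element-wise verification of surjectivity and injectivity of $\iota_*$ is just an unpacking of that same zigzag, so no genuinely different route is involved.
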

\begin{proof}
Since the diagram
$$\xymatrix{
&\Z^n\ar[r]^{A}\ar[d]_{A}&\Z^n\ar[r]^{A}\ar[d]_{A}&\Z^n\ar[r]^{A}\ar[d]_{A}&\Z^n\ar[r]^{A}\ar[d]_{A}&\\
&A\Z^n\ar[r]_{A}\ar@{^(->}[ru]^{\iota}&A\Z^n\ar[r]_{A}\ar@{^(->}[ru]^{\iota}&A\Z^n\ar[r]_{A}\ar@{^(->}[ru]^{\iota}&A\Z^n\ar[r]_{A}\ar@{^(->}[ru]^{\iota}&
}$$
commutes and we can go up and down in the diagram, the statement of the proposition holds.
\end{proof}

\begin{pro}\label{p:Asqr}
Let $A\in M_n(\Z)$ be an integer matrix. Then
$$\lim\limits_{\to}(A,\Z^n)=\lim\limits_{\to}(A^2 , \Z^n).$$
\end{pro}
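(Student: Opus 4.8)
The statement to prove is Proposition \ref{p:Asqr}: for an integer matrix $A \in M_n(\Z)$,
$$\lim_{\to}(A, \Z^n) = \lim_{\to}(A^2, \Z^n).$$

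Looking at the pattern of the preceding propositions in this appendix, the standard technique is to exhibit a commutative diagram between the two directed systems whose vertical (or diagonal) maps are isomorphisms, so that the two colimits are forced to agree. The key observation is that the directed system for $A^2$ is precisely the subsystem of the system for $A$ obtained by keeping only the even-indexed terms and composing the maps in pairs. In other words, the system $\xymatrix{\Z^n \ar[r]^{A^2} & \Z^n \ar[r]^{A^2} & \cdots}$ is cofinal in $\xymatrix{\Z^n \ar[r]^{A} & \Z^n \ar[r]^{A} & \cdots}$.

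The plan is as follows. First I would write down the two telescopes and interleave them in a single diagram, drawing the identity maps $\mathrm{id}:\Z^n \to \Z^n$ from each term of the $A^2$-system down to the corresponding even-indexed term of the $A$-system, together with the maps $A:\Z^n\to\Z^n$ going diagonally from each even term of the $A$-system up to the next term of the $A^2$-system. The triangles of this diagram commute because $A \circ A = A^2$ and $A^2 \circ \mathrm{id} = A \circ A$. Since one can pass up and down through the diagram and all the relevant squares and triangles commute, the universal property of the direct limit yields the desired isomorphism, exactly as in the proof of Proposition \ref{p:limA:Zn-isom-limA:AZn}.

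Concretely, the diagram I would draw is
$$\xymatrix{
&\Z^n\ar[r]^{A^2}\ar[d]_{I}&\Z^n\ar[r]^{A^2}\ar[d]_{I}&\Z^n\ar[r]^{A^2}\ar[d]_{I}&\\
&\Z^n\ar[r]_{A}\ar@{^(->}[ru]^{A}&\Z^n\ar[r]_{A}\ar@{^(->}[ru]^{A}&\Z^n\ar[r]_{A}\ar@{^(->}[ru]^{A}&
}$$
where the top row is the $A^2$-system and the bottom row is the $A$-system (displayed so that only its even-indexed terms are shown). Commutativity of the downward triangles ($A^2 = A \circ A$) and of the upward triangles ($A = A \circ I$, reading the composite bottom-to-top-to-next) shows the two cofinal systems have the same colimit.

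I do not expect any serious obstacle here; this is the simplest proposition of the batch. The only point requiring a word of care is the cofinality argument itself: one must note that passing to a cofinal subsequence of a directed system does not change its colimit, and that the diagram above realizes the $A^2$-system as such a cofinal subsystem of the $A$-system via the identity maps. Once the diagram is checked to commute, the conclusion follows immediately, and no hypothesis on $\det A$ is needed (unlike Proposition \ref{p:lim-union}), since we never invert $A$.
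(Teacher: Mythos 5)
Your guiding idea --- that the $A^2$-system is the even-indexed, cofinal subsystem of the $A$-system, so the two colimits agree --- is correct, and it is essentially the idea behind the paper's own proof. But the diagram you draw to realize it does not commute, and since you explicitly rest the conclusion on checking that it does, this is a genuine gap as written. With your labels, the triangle from the $k$-th top term down to the $k$-th bottom term and diagonally back up to the $(k+1)$-st top term gives $A\circ I=A$, which must equal the top map $A^2$; so commutativity forces $A=A^2$, false in general. The root of the problem is an inconsistency in the bottom row: if it really showed ``only the even-indexed terms'' of the $A$-system, its connecting maps would be $A\circ A=A^2$, not $A$; and if you relabel them $A^2$, then the diagonals labelled $A$ fail instead (both rows become literally the same system, with nothing left to interleave). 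With your labels one row advances twice as fast as the other, so identity verticals and $A$-diagonals cannot both be compatible.

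The fix is exactly what the paper does: keep the full $A$-system as one row and let the other row \emph{stutter}, i.e.\ take it to be $\Z^n\xrightarrow{A^2}\Z^n\xrightarrow{I}\Z^n\xrightarrow{A^2}\Z^n\xrightarrow{I}\cdots$, with vertical maps alternating $I,A,I,A,\ldots$ and diagonals alternating $A,I,A,I,\ldots$:
\begin{equation*}
\xymatrix{
&\Z^n\ar[r]^{A}\ar[d]_{I}&\Z^n\ar[r]^{A}\ar[d]_{A}&\Z^n\ar[r]^{A}\ar[d]_{I}&\Z^n\ar[r]^{A}\ar[d]_{A}&\\
&\Z^n\ar[r]_{A^2}\ar[ru]^{A}&\Z^n\ar[r]_{I}\ar[ru]^{I}&\Z^n\ar[r]_{A^2}\ar[ru]^{A}&\Z^n\ar[r]_{I}\ar[ru]^{I}&
}
\end{equation*}
Now every triangle commutes ($A\circ I=A$, $A\circ A=A^2$, $I\circ I=I$), going up and down through the diagram identifies the two colimits, and the bottom row is a stuttered copy of the $A^2$-system, hence has limit $\lim\limits_{\to}(A^2,\Z^n)$ by the very cofinality fact you quote. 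Alternatively, you can drop the diagram entirely: your prose observation that the even-indexed terms with the induced maps $A^2$ form a cofinal subsystem of the $A$-system, together with the standard fact that cofinal subsystems have the same direct limit, is already a complete proof. Either repair is minor, but the diagram as you drew it would not survive the commutativity check you rely on.
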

\begin{proof}
Since the diagram
$$\xymatrix{
&\Z^n\ar[r]^{A}\ar[d]_{I}&\Z^n\ar[r]^{A}\ar[d]_{A}&\Z^n\ar[r]^{A}\ar[d]_{I}&\Z^n\ar[r]^{A}\ar[d]_{A}&\\
&\Z^n\ar[r]_{A^2}\ar[ru]^{A}&\Z^n\ar[r]_{I}\ar[ru]^{I}&\Z^n\ar[r]_{A^2}\ar[ru]^{A}&\Z^n\ar[r]_{I}\ar[ru]^{I}&
}$$
commutes, and we can go up and down in the diagram, the statement of the proposition holds.
\end{proof}

\begin{pro}\label{p:pAq}
Let $X,Y$ be $\Z$-modules.
Let $A:X\to X$ be an endomorphism, and let $X$ and $Y$ be $\Z$-isomorphic with isomorphisms $p:X\to Y$, $q:Y\to X$. Then
$$\lim\limits_{\to}(A,X)=\lim\limits_{\to}(pAq,Y).$$ 
\end{pro}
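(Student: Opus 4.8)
The plan is to run the same ladder-diagram argument that powers Propositions \ref{p:lim:Ginv}, \ref{p:limA:Zn-isom-limA:AZn} and \ref{p:Asqr}: build an isomorphism of the two directed systems and then invoke the fact that the direct limit is a functor on abelian groups, so isomorphic directed systems have (canonically) equal colimits. Concretely, I would place the system $\lim\limits_{\to}(A,X)$ on the top row and $\lim\limits_{\to}(pAq,Y)$ on the bottom row, use $p:X\to Y$ as the vertical map at every stage, and verify that each square commutes.

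The one fact that makes this work, and the only point deserving attention, is that $p$ and $q$ are mutually inverse, i.e. $qp=\mathrm{id}_X$ and $pq=\mathrm{id}_Y$. This is the intended reading of ``the'' isomorphisms $p:X\to Y$, $q:Y\to X$, and it is exactly how $p,q$ are used elsewhere in this appendix (for instance in the reduction $W_V'=pW_Vq$, which is the identity-of-systems conjugation $W_V'=pW_Vp^{-1}$). Granting this, I would draw the ladder
\[
\xymatrix{
X\ar[r]^{A}\ar[d]_{p}^{\cong}&X\ar[r]^{A}\ar[d]_{p}^{\cong}&X\ar[r]^{A}\ar[d]_{p}^{\cong}&\\
Y\ar[r]_{pAq}&Y\ar[r]_{pAq}&Y\ar[r]_{pAq}&
}
\]
and check commutativity of a typical square: the right-then-down composite is $p\circ A$, while the down-then-right composite is $(pAq)\circ p=pA(qp)=pA$, where the final equality is precisely $qp=\mathrm{id}_X$.

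Since every vertical arrow $p$ is an isomorphism and every square commutes, the two rows are isomorphic directed systems; taking direct limits yields $\lim\limits_{\to}(A,X)=\lim\limits_{\to}(pAq,Y)$, the induced limit isomorphism being the colimit of the maps $p$. There is no genuine obstacle here beyond the bookkeeping identity $qp=\mathrm{id}_X$ (rather than $pq=\mathrm{id}_Y$): choosing the vertical map in the wrong direction, or cancelling with the wrong one-sided inverse, would fail to close the square, so the proof amounts to recording the correct cancellation and citing exactness of $\lim\limits_{\to}$ on abelian groups.
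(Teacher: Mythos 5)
Your proof is correct and is essentially the paper's own argument: the paper's one-line proof exhibits exactly this ladder diagram, drawing $p$ downward and $q$ upward at each stage and asserting that it commutes so one can pass between the two systems, which is the same conjugation-of-directed-systems idea you spell out (your explicit isolation of the cancellation $qp=\mathrm{id}_X$, i.e.\ that $p$ and $q$ are mutually inverse, is precisely the fact the paper uses implicitly). One tiny polish: what you need at the end is functoriality of $\lim\limits_{\to}$ (isomorphic directed systems have isomorphic colimits), not its exactness.
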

\begin{proof}
This is because the following diagram commutes and we can go up and down the diagram.
$$\xymatrix{
&X\ar[r]^{A}\ar@<-.5ex>[d]_{p}&X\ar[r]^{A}\ar@<-.5ex>[d]_{p}&X\ar[r]^{A}\ar@<-.5ex>[d]_{p}&X\ar[r]^{A}\ar@<-.5ex>[d]_{p}&\\
&Y\ar[r]_{p A q}\ar@<-.5ex>[u]_{q}&Y\ar[r]_{p A q}\ar@<-.5ex>[u]_{q}&Y\ar[r]_{p A q}\ar@<-.5ex>[u]_{q}&Y\ar[r]_{p A q}\ar@<-.5ex>[u]_{q}&
}$$
\end{proof}

\begin{pro}[direct sum]\label{p:lim:direct-sum}
Let $A\in M_m(\Z)$, $B\in M_n(\Z)$ be integer matrices. Then
 $$\lim\limits_{\to}(  
 \left(\begin{array}{cc}
    A & 0 \\ 
    0 & B \\ 
  \end{array}\right)
 ,\Z^{m+n})=\lim\limits_{\to}(A,\Z^m)\oplus \lim\limits_{\to}(B,\Z^n).$$ 
\end{pro}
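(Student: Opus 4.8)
The plan is to reduce the identity to two observations: that the block-diagonal system is literally the ``direct sum'' of the two systems $(A,\Z^m)$ and $(B,\Z^n)$, and that the direct limit functor commutes with finite direct sums in the category of abelian groups. Write $D:=\left(\begin{smallmatrix} A & 0\\ 0 & B\end{smallmatrix}\right)$. Identifying $\Z^{m+n}$ with $\Z^m\oplus\Z^n$ by splitting coordinates, the map $D$ is exactly the coordinatewise map $A\oplus B$, that is $D(x,y)=(Ax,By)$. Thus, either directly or by applying Proposition \ref{p:pAq} with the coordinate-splitting isomorphism (whose matrix is the identity), I obtain
\[
\lim_{\to}(D,\Z^{m+n})=\lim_{\to}\!\left(A\oplus B,\ \Z^m\oplus\Z^n\right),
\]
so the whole content is to evaluate the right-hand side.

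To match the elementary style of this appendix, I would build the isomorphism explicitly rather than cite generalities. Denote by $[x]$ the class in $\lim_{\to}(A,\Z^m)$ of an element $x\in\Z^m$ placed at some stage $k$, and similarly $[y]$ in $\lim_{\to}(B,\Z^n)$, recalling that two representatives are identified once they agree after finitely many applications of the structure map. I would define
\[
\Phi\colon \lim_{\to}\!\left(A\oplus B,\ \Z^m\oplus\Z^n\right)\longrightarrow \lim_{\to}(A,\Z^m)\oplus \lim_{\to}(B,\Z^n),\qquad \Phi\big([(x,y)]\big):=\big([x],[y]\big),
\]
and then verify the four routine points: $\Phi$ is well defined (since $(x,y)$ and $(Ax,By)$ representing the same class forces $x\sim Ax$ and $y\sim By$, hence the same pair on the right); $\Phi$ is a homomorphism (addition and the structure maps both act coordinatewise); $\Phi$ is surjective (a pair $([x]_k,[y]_\ell)$ is hit after pushing the earlier representative forward to a common stage via $A$ or $B$); and $\Phi$ is injective (if $[x]=0$ and $[y]=0$ then $x$ and $y$ are each annihilated after finitely many steps, hence so is $(x,y)$). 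Combining the two displayed equalities gives the proposition.

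Alternatively, and perhaps more in keeping with the ``commutes, and we can go up and down in the diagram'' format of the preceding propositions, I could avoid $\Phi$ altogether by assembling the commutative ladder whose top row is the system $(A\oplus B,\ \Z^m\oplus\Z^n)$ and whose bottom row is the external direct sum of the two ladders for $(A,\Z^m)$ and $(B,\Z^n)$, with vertical maps the canonical isomorphisms, and then passing to limits. Either way the argument is genuinely routine; the only place needing care is the bijectivity of $\Phi$, which is precisely the assertion that direct limits commute with finite direct sums, and that is where I would concentrate the verification.
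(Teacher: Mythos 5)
Your proof is correct, and it establishes the same underlying fact as the paper — that direct limits commute with finite direct sums — but the bookkeeping differs. The paper works with the explicit presentation of the colimit as a quotient of an infinite direct sum, $\lim_{\to}(A,\Z^m)=\bigoplus_{i\in\N}\Z_i^m\big/\langle xe_i-(Ax)e_{i+1}\rangle$: after canonically identifying the numerator for the block system with the direct sum of the two numerators, it reduces everything to a single equality of relation subgroups, proved by two inclusions of generators (and then observes that the relations ``do not intertwine,'' so the quotient splits). You instead work with equivalence classes of representatives and construct the isomorphism $\Phi\big([(x,y)]\big)=([x],[y])$ directly, verifying well-definedness, additivity, surjectivity (pushing representatives to a common stage), and injectivity (if $A^px=0$ and $B^qy=0$ then the block matrix to the power $\max(p,q)$ kills $(x,y)$). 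The trade-off: the paper's route avoids any separate injectivity/surjectivity check, since equality of denominators inside one identified group immediately gives equality of quotients, while yours avoids invoking the presentation machinery altogether and stays at the level of elements. You were also right to flag that your alternative ``ladder'' argument is not self-contained — passing from a levelwise direct-sum ladder to a direct sum of limits is exactly the statement being proved — so the content must live in the verification of $\Phi$ (or, in the paper's version, in the subgroup equality), and your main argument supplies precisely that.
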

\begin{proof}
Recall that 
$$\lim\limits_{\to}(A,\Z_i^m)=\frac{\bigoplus_{i=1}^\infty \Z_i^m}{\Span\{x_i e_i- (Ax_i)e_{i+1}\mid i\in\N\}}=\{[z e_i]\mid i\in\N,z\in\Z^m\},$$
where $e_i=(0,\ldots,0,1,0,\ldots)$ has 1 at the $i$-th position, and $\Z_i:=\Z$.
For instance if 
$$x=x_1 e_{i_1}+x_2 e_{i_2} + x_3 e_{i_3} \in \bigoplus_{i\in\N}\Z_i^m$$ with $i_1<i_2<i_3$ then
$$x\sim (A^{i_3-i_i}x_1+A^{i_3-i_2}x_2+x_3)e_{i_3}.$$
We claim that
$$\frac{\bigoplus_{i\in\N} (\Z_{i,a}^m\oplus\Z_{i,b}^n)}{\langle (x,y)e_i-(Ax,By)e_{i+1}\rangle}=
\frac{(\bigoplus_{i\in\N} \Z_{i,a}^m)\oplus(\bigoplus_{i\in\N}\Z_{i,b}^n)}{\langle x e^{(a)}_{i}-(Ax)e^{(a)}_{i+1},y e^{(b)}_{j}-(By)e^{(b)}_{j+1}\rangle}
$$
$$
=\left(\frac{\bigoplus_{i\in\N} \Z_{i,a}^m}{xe^{(a)}_i-(Ax)e^{(a)}_{i+1}}\right)\oplus
\left(\frac{\bigoplus_{i\in\N} \Z_{j,b}^n}{ye^{(b)}_i-(By)e^{(b)}_{j+1}}\right).
$$
The second equality is clear since the relations do not intertwine. The proof of the first equality is as follows.
Note that the left numerator of the equality can be identified with the right numerator.
Hence it suffices to show that the denominators are equal.
Proof that the left denominator is contained in the right denominator:
Let $(x,y)e_i-(Ax,By)e_{i+1}$ be given. 
This element is in the right denominator because it is the sum of two  generators of the right denominator with $i=j$, namely
$$(x e_i^{(a)}-(Ax)e_{i+1}^{(a)})+(y e_i^{(b)}-(By)e_{i+1}^{(b)})= (x,y)e_i-(Ax,By)e_{i+1}.$$
Proof that the left denominator contains the right denominator:
Let $x e^{(a)}_{i}-(Ax)e^{(a)}_{i+1}$ be given. This element is in the left denominator since 
$$(x,0)e_i-(Ax,0)e_{i+1}=x e^{(a)}_{i}-(Ax)e^{(a)}_{i+1}.$$
By the same argument as before, we see that the element $y e^{(b)}_{j}-(By)e^{(b)}$ is in the left denominator.
\end{proof}
The next proposition tells us how to remove the zero-eigenvalues of a matrix.

\begin{pro}[$\im A \cong \Z^r$]\label{p:imA-isom-Zr}
Let $A\in M_{m\times n}(\Z)$ be an $m \times n$ integer matrix and let 
$D = P A Q$ be its Smith normal form. In particular, all the nonzero entries in the diagonal of 
$$D=\mathrm{diag}(d_1,d_2,...,d_r,0,...0)$$
 are given first. Moreover, $d_i$ divides $d_{i+1}$, $i=1,\ldots,r-1$.

Define  $\pi:\Z^n \to \Z^r$ to be the projection onto the first $r$ coordinates i.e.~$\pi$ is the first $r$ rows of the identity matrix $I_n$.

Let  $q:\Z^r \to A\Z^n$ be defined by
$$q:=P^{-1} D \pi^t,$$
where $\pi^t$ is the transpose of $\pi$, and let $p:A\Z^n\to \Z^r$ be defined by
$$p:=\pi D^\dagger P$$
where the pseudo-inverse $D^\dagger$ is the diagonal matrix 
 $$D^\dagger=\mathrm{diag}(\frac{1}{d_1}, \frac{1}{d_2},...,\frac{1}{d_r},0,...,0).$$
Then $A\Z^n$ and $\Z^r$ are $\Z$-isomorphic with isomorphisms $p,q$. In particular, (for computer testing)
$$pq=I_r, \qquad qpA=A,$$
where $I_r$ is the $r \times r$ identity matrix. 
\end{pro}

\begin{proof}
Observe that $p$ is a rational matrix, not necessarily an integer matrix.
We have
$$q\Z^r=P^{-1} D \pi^t\Z^r=P^{-1} (D \pi^t\pi)\Z^n=P^{-1} D\Z^n=A(Q\Z^n)=A\Z^n.$$
Thus $q$ is surjective. 
The map $pq$ is the identity on $\Z^r$ because 
$$pq=\pi D^\dagger P P^{-1} D\pi^t =\pi (D^\dagger D \pi^t)=\pi\pi^t=I_r.$$
We now show that the map $qp$ is the identity on $A\Z^n$.
Let $x\in A\Z^n$. Since $q$ is surjective $x=q(y)$ for some $y\in \Z^r$.
Then
$$qp(x)=qpq(y)=q(y)=x.$$
Thus $A\Z^n$ and $\Z^r$ are $\Z$-isomorphic with isomorphisms $p$, $q$.
\end{proof}

\begin{pro}[$\ker A\cong \Z^{n-r}$]\label{p:kerA-isom-Znminusr}
Let $A\in M_{m\times n}(\Z)$ be an $m \times n$ integer matrix and let 
$D = P A Q$ be its Smith normal form. In particular, all the nonzero entries in the diagonal of 
$$D=\mathrm{diag}(d_1,d_2,...,d_r,0,...0)$$
 are given first. Moreover, $d_i$ divides $d_{i+1}$, $i=1,\ldots,r-1$.
If $r=n$ then $\ker A=\{0\}$, so assume $r<n$.
Define  $\pi:\Z^n \to \Z^{n-r}$ to be the projection onto the last $n-r$ coordinates i.e.~$\pi$ is the last $n-r$ rows of the identity square matrix $I_n$.
Let $q:\Z^{n-r}\to \ker A$ be defined by
$$q:=Q\circ\pi^{t},$$
where $\pi^t$ is the transpose of $\pi$, and let $p:\ker A\to \Z^{n-r}$ be defined by
$$p:=\pi\circ Q^{-1}.$$
Then $\ker A$ and $\Z^{n-r}$ are $\Z$-isomorphic, with isomorphisms $p,q$. In particular, (for computer testing)
$$pq=I_{n-r},\qquad Aqp =Aq=0.$$
\end{pro}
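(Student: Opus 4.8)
The plan is to transport the problem through the Smith normal form and exploit that the transforming matrices are unimodular. First I would record that $P$ and $Q$ are invertible over $\Z$, so in particular $Q^{-1}$ is an integer matrix and $p=\pi Q^{-1}$ is a genuine integer map defined on all of $\Z^n$ (in contrast to the situation in Proposition \ref{p:imA-isom-Zr}, where the analogous $p$ was only rational). Next I would identify the kernel of the diagonal matrix $D=\mathrm{diag}(d_1,\dots,d_r,0,\dots,0)$: since $d_i\neq 0$ for $i\le r$, an integer vector $x$ satisfies $Dx=0$ iff its first $r$ coordinates vanish, so $\ker D=\{0\}^r\times\Z^{n-r}=\mathrm{im}\,\pi^t$, where $\pi^t$ is the inclusion of $\Z^{n-r}$ as the last $n-r$ coordinates of $\Z^n$.

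The second step is to pass from $\ker D$ to $\ker A$. Because $D=PAQ$ and $P$ is invertible, $Ax=0$ iff $PAx=0$ iff $D(Q^{-1}x)=0$, i.e.~iff $Q^{-1}x\in\ker D$. Hence $\ker A=Q(\ker D)=Q\pi^t\Z^{n-r}=q\Z^{n-r}$, which shows $q$ is surjective onto $\ker A$; it is injective since both $Q$ and $\pi^t$ are. This already yields that $q$ is a $\Z$-isomorphism $\Z^{n-r}\to\ker A$.

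The remaining identities are then bookkeeping. For $pq=I_{n-r}$ I would compute $pq=\pi Q^{-1}Q\pi^t=\pi\pi^t=I_{n-r}$, using that $\pi\pi^t$ is the $(n-r)\times(n-r)$ identity because $\pi$ picks out the last $n-r$ rows of $I_n$. To see $qp=\mathrm{id}_{\ker A}$ (and that $p$ indeed lands in $\Z^{n-r}$ when restricted to $\ker A$), I would take $x\in\ker A$, write $x=q(y)$ by surjectivity, and observe $p(x)=pq(y)=y\in\Z^{n-r}$ and $qp(x)=qpq(y)=q(y)=x$. Finally $Aq=0$ follows from $AQ=P^{-1}D$ together with $D\pi^t=0$: indeed $Aq=AQ\pi^t=P^{-1}D\pi^t=0$, since $D$ annihilates the last $n-r$ coordinates, and consequently $Aqp=0$ as well.

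The argument is essentially routine once the Smith normal form is available; the only point requiring a little care is keeping the two projections straight, here $\pi$ onto the \emph{last} $n-r$ coordinates, as opposed to the \emph{first} $r$ used in Proposition \ref{p:imA-isom-Zr}, and checking that $p=\pi Q^{-1}$ restricts to an honest two-sided inverse of $q$ on $\ker A$ rather than merely a one-sided relation on all of $\Z^n$. I expect no genuine obstacle beyond this.
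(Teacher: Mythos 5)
Your proposal is correct and follows essentially the same route as the paper: both identify $\ker D$ with the last $n-r$ coordinates, transport it through $Q$ to get $\ker A = Q\pi^t\Z^{n-r} = q\Z^{n-r}$, and then verify $pq = I_{n-r}$ directly and $qp = \mathrm{id}_{\ker A}$ via surjectivity of $q$. The only (harmless) additions are your explicit remarks on injectivity of $q$ and the verification $Aq = AQ\pi^t = P^{-1}D\pi^t = 0$, which the paper leaves implicit.
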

\begin{proof}
\begin{eqnarray*}
\ker A&=&\{x\in \Z^n\mid Ax=0\}\\
&=&\{x\in \Z^n\mid P^{-1}DQ^{-1}x=0\}\\
&=&\{x\in\Z^n\mid Dy=0, y=Q^{-1}x\}\\
&=&\{Qy\mid \lambda_1y_1 e_1+\cdots +\lambda_r y_r e_r=0\}\\
&=&\{Qy\mid y_1=0,\ldots, y_r=0\}\\
&=&\Span(Q e_{r+1},\ldots ,Q e_n)\\
&=&Q\pi^t\Z^{n-r}\\
&=&q\Z^{n-r}.
\end{eqnarray*}
Thus $q$ is surjective.
The map $pq$ is the identity on $\Z^{n-r}$ because
$$pq=\pi Q^{-1}Q \pi^t= \pi\pi^{t}=I_{n-r}.$$
We now show that the map $qp$ is the identity on $\ker A$.
Let $x\in \ker A$. Since $q$ is surjective, $x=q(y)$ for some $y\in \Z^{n-r}$.
Then 
$$qp(x)=qpq(y)=q(y)=x.$$ 
Thus $\ker A$ and $\Z^{n-r}$ are $\Z$-isomorphic, with isomorphisms $p$, $q$. 
\end{proof}

\begin{pro}[$\coker A\cong \frac{\Z}{d_{s+1}}\oplus\cdots\oplus\frac{\Z}{d_{m}}$\,\,(for $m\le n$)]\label{p:cokerA-isom-Znminuss}
Let $A\in M_{m\times n}(\Z)$ be an $m \times n$ integer matrix such that $m\le n$, and let 
$D = P A Q$ be its Smith normal form. In particular, all the nonzero entries in the diagonal of 
$$D=\mathrm{diag}(\pm 1,\ldots,\pm 1,d_{s+1},d_{s+2},...,d_r,0,...0)$$
 are given first. Moreover, $d_i$ divides $d_{i+1}$, $i=1,\ldots,r-1$, and there are $s\ge0$ ones.
If $s=m$ then $\coker A=\{0\}$. So assume $s< m$.
Define  $\pi:\Z^m \to \Z^{m-s}$ to be the projection onto the last $m-s$ coordinates i.e.~$\pi$ is the last $m-s$ rows of the identity square matrix $I_m$.
Let $q:\Z^{m-s}\to \Z^m$ be defined by
$$q:=P^{-1}\circ{\pi}^{t},$$
where $\pi^t$ is the transpose of $\pi$, 
and let $p:\Z^m\to \Z^{m-s}$ be defined by
$$p:=\pi\circ P.$$
Then $\coker A$ and $\frac{\Z}{d_{s+1}}\oplus\cdots\oplus\frac{\Z}{d_{m}}$ are $\Z$-isomorphic, with isomorphisms 
$\bar q:\frac{\Z}{d_{s+1}}\oplus\cdots\oplus\frac{\Z}{d_{m}}\to \coker A$ and 
$\bar p:\coker A\to \frac{\Z}{d_{s+1}}\oplus\cdots\oplus\frac{\Z}{d_{m}}$ induced by $q$, $p$, respectively.
In particular, (for computer testing)
$$pq=I_{m-s},\qquad qpA =0.$$
\end{pro}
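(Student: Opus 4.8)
The plan is to reduce the whole statement to the diagonal case by absorbing the unimodular change-of-basis matrices, exactly as in the proofs of Proposition \ref{p:imA-isom-Zr} and Proposition \ref{p:kerA-isom-Znminusr}. Since $\det P=\pm1$ and $\det Q=\pm1$, both $P$ and $Q$ are invertible over $\Z$, so $Q^{-1}\Z^n=\Z^n$ and hence
\[
A\Z^n=P^{-1}DQ^{-1}\Z^n=P^{-1}D\Z^n .
\]
Because $m\le n$, the columns of the $m\times n$ diagonal matrix $D$ are $d_ie_i$ for $1\le i\le m$ together with zero columns for $i>m$; therefore $D\Z^n=\bigoplus_{i=1}^m d_i\Z\subseteq\Z^m$. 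Thus multiplication by the unimodular matrix $P$ carries the lattice $A\Z^n$ onto the standard ``diagonal'' lattice $D\Z^n$, which is the conceptual heart of the argument.

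First I would record that $P$, being an automorphism of $\Z^m$ taking $A\Z^n$ onto $D\Z^n$, descends to an isomorphism
\[
\bar P:\coker A=\Z^m/A\Z^n\;\xrightarrow{\ \cong\ }\;\Z^m/D\Z^n=\bigoplus_{i=1}^m \Z/d_i\Z .
\]
Next I would observe that the first $s$ summands are $\Z/(\pm1)\Z=0$ and so contribute nothing, leaving $\bigoplus_{i=s+1}^m \Z/d_i\Z$; here the convention $d_i=0$ for $r<i\le m$ turns the last $m-r$ summands into free copies of $\Z$, accounting for the free rank of the cokernel. The projection $\pi$ onto the last $m-s$ coordinates then identifies $\Z^m/D\Z^n$ with $\bigoplus_{i=s+1}^m\Z/d_i\Z$, because the first $s$ coordinates are already killed in the quotient. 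Composing these identifications, the candidate maps $\bar p$ and $\bar q$ are precisely the quotient maps induced by $p=\pi P$ and $q=P^{-1}\pi^t$.

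The main work, and the place I expect the real obstacle, is verifying that $p$ and $q$ genuinely descend to the quotients and are mutually inverse there. For $\bar p$ one must check that $p$ sends $A\Z^n$ into the relation lattice $\bigoplus_{i=s+1}^m d_i\Z$ of the target torsion group; this follows from $A\Z^n=P^{-1}D\Z^n$ together with the fact that $\pi$ reads off exactly the non-unit diagonal block $(d_{s+1},\dots,d_m)$. For $\bar q$ one checks dually that $q$ carries that relation lattice back into $A\Z^n$. Once well-definedness is in hand, one inverse identity reduces to the clean integer computation
\[
pq=\pi PP^{-1}\pi^t=\pi\pi^t=I_{m-s},
\]
which gives $\bar p\,\bar q=\mathrm{id}$ directly. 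The other, $\bar q\,\bar p=\mathrm{id}$ on $\coker A$, amounts to $(qp-I)\Z^m\subseteq A\Z^n$ (the precise content of the stated check $qpA\equiv0$ in $\coker A$): since $qp-I=P^{-1}(\pi^t\pi-I)P$ has image supported, after applying $P$, on the first $s$ coordinates, and since the corresponding diagonal entries $d_1,\dots,d_s$ are units so that $\bigoplus_{i=1}^s d_i\Z$ is all of $\Z$ in those coordinates, one gets $(qp-I)\Z^m\subseteq P^{-1}D\Z^n=A\Z^n$. The delicate bookkeeping is thus entirely in the placement of the $s$ unit entries relative to the $d_i$ and the zeros; once that is tracked carefully the proof is a transcription of the diagonal computation through the unimodular conjugation, requiring no new idea beyond the Smith normal form.
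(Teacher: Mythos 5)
Your proposal is correct and follows essentially the same route as the paper's proof: both use the unimodularity of $P$ and $Q$ to reduce to the diagonal case via the lattice identity $A\Z^n=P^{-1}D\Z^n$ (the paper phrases this as $\ker(gP)=\im A$), identify $\coker A\cong\Z^m/\im D$ through $\bar P$, and use $\pi$, $\pi^t$ to strip off the $s$ unit entries. The only difference is packaging — the paper organizes the argument as a commutative diagram of short exact sequences with kernel computations and the first isomorphism theorem, while you verify directly that $p$, $q$ descend and that $pq=I_{m-s}$ and $(qp-I)\Z^m\subseteq A\Z^n$ — which is a presentational, not mathematical, distinction.
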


\begin{proof}
Consider the following diagram whose rows are short exact sequences of abelian groups.
$$
\xymatrix{
0\ar[r] & \im A\ar@{^(->}[r] 							 & \Z^m\ar@{->>}[r]\ar[d]^{P}_{\cong} & \frac{\Z^m}{\im A}\ar[r]\ar[d]_{\bar P}^{\cong} & 0\\
0\ar[r] & \im D\ar@{^(->}[r] 							 & \Z^m\ar@{->>}[r]^{g}\ar[d]^{\pi} & \frac{\Z^m}{\im D}\ar[r]\ar[d]^{\bar \pi} & 0\\
0\ar[r] & \Z d_{s+1}\oplus\cdots\oplus\Z d_m\ar@{^(->}[r] & \Z^{m-s}\ar@{->>}[r]^{ h\qquad\quad} & \frac{\Z}{\Z d_{s+1}}\oplus\cdots\oplus\frac{\Z}{\Z d_m}\ar[r] & 0
}$$
Since $h$ is a quotient map and $\pi$ is a  projection, they are both surjective and hence $h\pi$ is surjective.
Since
$$\ker(h\pi)=\pi^{-1}(\ker h)=\pi^{-1}( \Z d_{s+1}\oplus\cdots\oplus\Z d_m)=\Z^s\oplus \Z d_{s+1}\oplus\cdots\oplus\Z d_m=\im D$$
we get by the first isomorphism theorem of groups that
$$\frac{\Z^m}{\im D}=\frac{\Z^m}{\ker h\pi}\cong\frac{\Z}{\Z d_{s+1}}\oplus\cdots\oplus\frac{\Z}{\Z d_m},$$
where the isomorphism is induced by $\pi$ and it is denoted by $\bar \pi$.
Now since $\pi\pi^t = I_{m-s}$ and since
$$
\ker(g\pi^t) = (\pi^t)^{-1}(\ker g) = (\pi^t)^{-1}(\im D) = (\pi^t)^{-1}(\ker h\pi) = \ker(h\pi\pi^t) = \ker h
$$
we get that $\pi^t$ induces an isomorphism $\overline{\pi^t}: \frac{\Z}{\Z d_{s+1}}\oplus\cdots\oplus\frac{\Z}{\Z d_m} \xrightarrow{\sim} \frac{\Z^m}{\im D}$ and that $\overline{\pi^t} = \bar\pi^{-1}$.

Similarly, since $gP$ is surjective and
$$\ker gP=P^{-1}(\ker g)=P^{-1}(\im D)= P^{-1}D \Z^n = AQ\Z^n=A\Z^n=\im A,$$
we get that $\bar P$ exists and that it is an isomorphism with inverse $\overline{P^{-1}}$.
Thus $q:=P^{-1}\pi^t$ and $p:=\pi P$ induce the isomorphisms $\bar q$, $\bar p$ :
$$\bar q=\overline{P^{-1}}\circ\overline{\pi^t}=\overline{P^{-1}\pi^t},$$
$$\bar p=\bar \pi\circ\bar P=\overline{\pi P}.$$
\end{proof}

\begin{pro}[$\frac{\ker\delta^1}{\im\delta^0}\cong \coker\pi Q^{-1}\delta^0$]\label{p:kerOverIm-A-isom-Znminus}
Let 
\begin{equation*}
\xymatrix{0\ar[r]&\Z^{V}\ar[r]^{\delta^0}&\Z^{E}\ar[r]^{{\delta^1}}&\Z^{F}\ar[r]&0}
\end{equation*}
be a cochain complex (i.e.~$\delta^1\circ\delta^0=0$) for some $V,E,F\in\N$.
Let 
$D = P \delta^1 Q$ be the Smith normal form of $\delta^1$. 
In particular, all the nonzero entries in the diagonal of 
$$D=\mathrm{diag}(d_1,...,d_r,0,...0)$$
 are given first. Moreover, $d_i$ divides $d_{i+1}$, $i=1,\ldots,r-1$.
Define  $\pi:\Z^E \to \Z^{E-r}$ to be the projection onto the last $E-r$ coordinates, i.e.~$\pi$ is the last $E-r$ rows of the identity square matrix $I_E$.
Let $q:\Z^{E-r}\to \Z^E$ be defined by
$$q:=Q\circ\pi^{t},$$
where $\pi^t$ is the transpose of $\pi$, and let $p:\Z^E\to \Z^{E-r}$ be defined by
$$p:=\pi\circ Q^{-1}.$$
Then $\frac{\ker\delta^1}{\im\delta^0}$ and $\coker\pi Q^{-1}\delta^0$ are isomorphic, with isomorphisms 
$ q':\coker\pi Q^{-1}\delta^0\to \frac{\ker\delta^1}{\im\delta^0}$,\,\,
$ p':\frac{\ker\delta^1}{\im\delta^0}\to \coker\pi Q^{-1}\delta^0$
induced  by $q$, $p$, respectively. In particular, (for computer testing)
$$pq=I_{E-r},\qquad \delta^1qp =0.$$
\end{pro}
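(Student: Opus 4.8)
The plan is to reduce everything to Proposition \ref{p:kerA-isom-Znminusr} applied to the integer matrix $\delta^1\colon\Z^E\to\Z^F$, whose Smith normal form is $D=P\delta^1 Q$ of rank $r$. That proposition already produces the mutually inverse $\Z$-isomorphisms $p=\pi Q^{-1}\colon\ker\delta^1\to\Z^{E-r}$ and $q=Q\pi^t\colon\Z^{E-r}\to\ker\delta^1$, where $\pi$ projects onto the last $E-r$ coordinates; in particular $q\,\Z^{E-r}=\ker\delta^1$, $pq=I_{E-r}$, and $qp=\mathrm{id}$ on $\ker\delta^1$. First I would record the one structural fact special to the present situation: because we are given a cochain complex, $\delta^1\circ\delta^0=0$, so $\im\delta^0\subseteq\ker\delta^1$ and the quotient $\ker\delta^1/\im\delta^0$ makes sense. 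I would also note that $\pi Q^{-1}\delta^0$ is a genuine integer matrix, since $Q\in GL_E(\Z)$ forces $Q^{-1}$ to be integral, while $\pi$ and $\delta^0$ are integral.

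The heart of the argument is then a descent of the isomorphism $p$ to the quotients. I would first identify the image subgroup: applying the matrix $p=\pi Q^{-1}$ to $\im\delta^0=\delta^0\Z^V$ gives $p(\im\delta^0)=\pi Q^{-1}\delta^0\,\Z^V=\im(\pi Q^{-1}\delta^0)$, which is exactly the subgroup of $\Z^{E-r}$ defining $\coker(\pi Q^{-1}\delta^0)$. Conversely, $q$ carries this subgroup back into $\im\delta^0$: for $v\in\Z^V$ one has $q(\pi Q^{-1}\delta^0 v)=qp(\delta^0 v)=\delta^0 v$, where the second equality uses $qp=\mathrm{id}$ on $\ker\delta^1$ together with $\delta^0 v\in\ker\delta^1$. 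Thus $p$ and $q$ restrict to mutually inverse bijections between $\im\delta^0$ and $\im(\pi Q^{-1}\delta^0)$. From here the induced maps $p'\colon[x]\mapsto[px]$ and $q'\colon[y]\mapsto[qy]$ are well defined on the quotients, and $p'q'=\mathrm{id}$, $q'p'=\mathrm{id}$ follow immediately from $pq=I_{E-r}$ and from $qp=\mathrm{id}$ on $\ker\delta^1$ (the latter applied to representatives $x\in\ker\delta^1$). This yields the claimed isomorphism $\ker\delta^1/\im\delta^0\cong\coker(\pi Q^{-1}\delta^0)$, and the two computer-testable identities $pq=I_{E-r}$ and $\delta^1 qp=0$ are inherited directly from Proposition \ref{p:kerA-isom-Znminusr} (the second because $q\,\Z^{E-r}\subseteq\ker\delta^1$, so $\delta^1 q=0$).

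The step I expect to require the most care is not any single computation but the bookkeeping in the descent: one must check that $p$ really does send $\im\delta^0$ onto the defining subgroup of the target cokernel and that $q$ sends it back into $\im\delta^0$ exactly, so that both induced maps on quotients are well defined. Everything there hinges on the single identity $qp|_{\ker\delta^1}=\mathrm{id}$ from Proposition \ref{p:kerA-isom-Znminusr} and on the inclusion $\im\delta^0\subseteq\ker\delta^1$; once those are in place, the first isomorphism theorem gives the result with no further effort. A convenient alternative presentation I might adopt is to phrase the whole thing through the short exact sequence $0\to\im\delta^0\to\ker\delta^1\to\ker\delta^1/\im\delta^0\to0$ together with the isomorphism $p$, in the diagram-chase style of Proposition \ref{p:cokerA-isom-Znminuss}, which would make the well-definedness of $p'$ and $q'$ transparent.
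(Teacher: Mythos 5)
Your proof is correct, and it ends up constructing exactly the same isomorphisms as the paper: in both arguments $p'$ and $q'$ are the maps induced on quotients by $p=\pi Q^{-1}$ and $q=Q\pi^t$. The organization, however, is genuinely different. The paper never invokes Proposition \ref{p:kerA-isom-Znminusr}; instead it assembles a three-row commutative diagram of cochain complexes, first conjugating the given complex by $(\mathrm{id},Q^{-1},P)$ so that the differential $\delta^1$ becomes its Smith normal form $D$, and then observing that $\pi$ and $\pi^t$ are chain maps between the middle and bottom rows because $\im Q^{-1}\delta^0\subseteq\ker D=0^r\oplus\Z^{E-r}$, equivalently $\pi^t\pi Q^{-1}\delta^0=Q^{-1}\delta^0$; the isomorphism then follows from functoriality of $H^1$ together with $\pi\pi^t=I_{E-r}$. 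You instead take Proposition \ref{p:kerA-isom-Znminusr} as a black box, giving mutually inverse isomorphisms between $\ker\delta^1$ and $\Z^{E-r}$, and descend them to the quotients by verifying the subgroup correspondence $p(\im\delta^0)=\im(\pi Q^{-1}\delta^0)$ and $q\bigl(\im\pi Q^{-1}\delta^0\bigr)\subseteq\im\delta^0$, the latter resting on $qp|_{\ker\delta^1}=\mathrm{id}$ together with the cochain condition $\im\delta^0\subseteq\ker\delta^1$ --- which is the same essential fact the paper exploits when it shows $\im Q^{-1}\delta^0\subseteq\ker D$. Your route is more elementary (subgroup bookkeeping plus induced maps on quotients), makes the well-definedness of $p'$ and $q'$ completely explicit, and reuses an already-proved lemma; the paper's route makes visible the intermediate complex with differential $D$, which explains conceptually why the last $E-r$ coordinates carry all of the degree-one cohomology, and it matches the diagrammatic style of the neighboring Proposition \ref{p:cokerA-isom-Znminuss}. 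Your handling of the two computer-testable identities ($pq=I_{E-r}$ inherited, and $\delta^1qp=0$ from $q\Z^{E-r}\subseteq\ker\delta^1$) is also correct.
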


\begin{proof}
From the commutative diagram of cochain complexes (not a short exact sequence)
\begin{equation*}
\xymatrix{
0\ar[r]&\Z^{V}\ar[d]^{id}_{\cong}\ar[r]^{\delta^0}&\Z^{E}\ar[d]^{Q^{-1}}_{\cong}\ar[r]^{\delta^1}&\ar[d]^{P}_{\cong}\Z^{F}\ar[r]&0\\
0\ar[r]&\Z^{V}\ar[r]^{Q^{-1}\delta^0}\ar[d]^{id}_{\cong}&\Z^{E}\ar[r]^{D}\ar@<2pt>[d]^{\pi}&\Z^{F}\ar[r]\ar[d]^{0}&0\\
0\ar[r]&\Z^{V}\ar[r]^{\pi Q^{-1}\delta^0}&\Z^{E-r}\ar@<2pt>[u]^{\pi^t}\ar[r]&0\ar[r]&0
}
\end{equation*}
we get the isomorphisms 
$$\xymatrix{H^1(Q^{-1}):\frac{\ker\delta^1}{\im\delta^0}\ar[r]^{\quad\sim}& \frac{\ker D}{\im Q^{-1}\delta^0}},$$
$$\xymatrix{H^1(\pi):\frac{\ker D}{\im Q^{-1}\delta^0}\ar[r]^{\sim\qquad} & \frac{\Z^{E-r}}{\pi Q^{-1}\delta^0 \Z^V}\,=\,\coker\pi Q^{-1}\delta^0},$$
where the inverse of the last isomorphism is described next.
Note that $\im Q^{-1}\delta^0\subset \ker D=0^r\oplus\Z^{E-r}$ because
$DQ^{-1}\delta^0\Z^V=P\delta^1\delta^0\Z^V=0$.
Hence
$$\pi^t\pi Q^{-1}\delta^0=Q^{-1}\delta^0,$$
because $\pi^t\pi=0^r\oplus I_{E-r}$. So $\pi^t$ induces the map $H^1(\pi^t)$ and, since $\pi\pi^t=I_{E-r}$, we get that $H^{1}(\pi)^{-1}=H^1(\pi^t)$.
Thus $q:=Q\pi^t$ and $p:=\pi Q^{-1}$ induce the isomorphisms $ q'$, $ p'$ :
$$ q'=H^1(Q)\circ H^1(\pi^t)=H^1(q)$$
$$p'=H^1(\pi)\circ H^1(Q^{-1})=H^1(p).$$
\end{proof}

\begin{pro}
Let $A\in M_n(\Z)$ be an $n\times n$ integer matrix. Suppose that 0 is an eigenvalue of $A$ and let 
$p(x)=x^r q(x)$ be the characteristic polynomial of $A$. 
Then 
$$\lim_{\to} (A^r, \Z^n)=\lim_{\to} (A^r,\, \ker q(A)).$$
\end{pro}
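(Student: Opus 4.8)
The plan is to exhibit the inclusion $\ker q(A)\hookrightarrow\Z^n$ together with a corestriction of $A^r$ as a pair of maps interleaving the two directed systems, in exactly the spirit of Proposition \ref{p:Asqr} and Proposition \ref{p:pAq}.

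First I would record the one algebraic fact that drives everything: by the Cayley--Hamilton theorem, $p(A)=A^r q(A)=0$. Since polynomials in $A$ commute, $q(A)A^r=A^r q(A)=0$, so for every $v\in\Z^n$ we have $q(A)(A^r v)=0$; that is, $A^r\Z^n\subseteq\ker q(A)$. In particular $A^r$ corestricts to a homomorphism $\phi:\Z^n\to\ker q(A)$, $\phi(v):=A^r v$. The same computation shows that $\ker q(A)$ is $A^r$-invariant: if $q(A)w=0$ then $q(A)(A^r w)=A^r q(A)w=0$, so $A^r w\in\ker q(A)$; hence the bottom system $\lim\limits_{\to}(A^r,\ker q(A))$ is a legitimate directed system.

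Next I would let $\iota:\ker q(A)\hookrightarrow\Z^n$ be the inclusion and note the two identities $\iota\circ\phi=A^r$ (as an endomorphism of $\Z^n$) and $\phi\circ\iota=A^r|_{\ker q(A)}$ (as an endomorphism of $\ker q(A)$), both immediate from $\phi(v)=A^r v$. These say precisely that in the ladder
\[
\xymatrix{
\Z^n \ar[r]^{A^r} \ar[d]_{\phi} & \Z^n \ar[r]^{A^r} \ar[d]_{\phi} & \Z^n \ar[d]_{\phi} \ar[r]^{A^r} & \\
\ker q(A) \ar[r]_{A^r} \ar[ur]^{\iota} & \ker q(A) \ar[r]_{A^r} \ar[ur]^{\iota} & \ker q(A) \ar[r]_{A^r} \ar[ur]^{\iota} &
}
\]
every triangle commutes: the down-then-up composite $\iota\circ\phi$ is the top connecting map $A^r$, and the up-then-down composite $\phi\circ\iota$ is the bottom connecting map $A^r$.

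Finally, since the interleaved diagram commutes and one can go up and down in it, the two directed systems are cofinal in one another and their direct limits coincide, giving $\lim\limits_{\to}(A^r,\Z^n)=\lim\limits_{\to}(A^r,\ker q(A))$. I do not expect any serious obstacle: the entire content is the Cayley--Hamilton identity $A^r q(A)=0$, which forces $A^r\Z^n\subseteq\ker q(A)$; everything afterward is the routine interleaving argument already used repeatedly in this appendix. The only point that must be stated with care is that $A^r$ genuinely preserves $\ker q(A)$, so that the bottom row and the maps $\phi$, $\iota$ are all well defined.
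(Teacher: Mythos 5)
Your proposal is correct and follows essentially the same route as the paper's proof: the identity $A^r q(A)=p(A)=0$ (Cayley--Hamilton) gives $A^r\Z^n\subseteq\ker q(A)$ and the $A^r$-invariance of $\ker q(A)$, and then the same interleaved ladder of corestricted $A^r$'s and inclusions, commuting triangle by triangle, identifies the two direct limits. The only cosmetic difference is that the paper verifies invariance of $\ker q(A)$ under $A$ itself rather than under $A^r$ directly, which is immaterial.
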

\begin{proof}
Since $q(A)A^r=A^rq(A)=p(A)=0$, we get $A^r\Z^n\subset \ker q(A)$.
Moreover, $A \ker q(A)\subset \ker q(A)$ because if $x\in \ker q(A)$ then $q(A)x=0$ and thus $q(A)Ax=A(q(A)x)=0$. Hence the following diagram is well-defined
$$\xymatrix{
\Z^n\ar[r]^{A^r}\ar[d]_{A^r}&\Z^n\ar[r]^{A^r}\ar[d]_{A^r}&\Z^n\ar[r]^{A^r}\ar[d]_{A^r}&\\
\ker q(A)\ar[r]^{A^r}\ar@{^(->}[ru]^{\iota}&\ker q(A)\ar[r]^{A^r}\ar@{^(->}[ru]^{\iota}&\ker q(A)\ar[r]^{A^r}\ar@{^(->}[ru]^{\iota}&
}$$
and since it commutes $\lim\limits_{\to} (A^r, \Z^n)=\lim\limits_{\to} (A^r,\, \ker q(A)).$
\end{proof}

\begin{pro}[extract the 1-eigenvalues]\label{p:extract-one-eigenvalues}
Let $A\in M_n(\Z)$ be an $n\times n$ integer matrix and let $p(x)$ be its minimal polynomial. Suppose that $\lambda\in\Z$ is an eigenvalue of $A$ and let $q(x)$ be defined from the equation
$p(x)=(x-\lambda)q(x)$. Let $q(x)$ have integer coefficients.
Then 
$$\xymatrix{0\ar[r]&\lim\limits_{\to} (A,\, \ker q(A))\ar@{^(->}[r]&\lim\limits_{\to} (A,\, \Z^n)\ar@{>>}[r]&\lim\limits_{\to} (\lambda I_n,\, q(A)\Z^n)\ar[r]&0}$$
is a short exact sequence, where $I_n\in M_n(\Z)$ is the identity matrix.
\end{pro}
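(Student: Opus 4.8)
The plan is to realize the displayed sequence as the direct limit of a short exact sequence of directed systems, and then invoke exactness of the direct-limit functor on abelian groups. First I would record the underlying short exact sequence of $\Z$-modules
\begin{equation*}
\xymatrix{0\ar[r]&\ker q(A)\ar@{^{(}->}[r]&\Z^n\ar[r]^{q(A)}&q(A)\Z^n\ar[r]&0,}
\end{equation*}
which is exact by the first isomorphism theorem: since $q(x)$ has integer coefficients, $q(A)$ is an integer matrix, and it maps $\Z^n$ onto $q(A)\Z^n$ with kernel $\ker q(A)$. All three groups are genuine $\Z$-modules, so this is a well-formed input for the direct-limit construction.

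Next I would promote this to a short exact sequence of directed systems whose connecting maps are $A$ on the first two terms and $\lambda I_n$ on the third. Two points need checking. First, $A$ restricts to an endomorphism of $\ker q(A)$: if $q(A)x=0$ then $q(A)(Ax)=A\,q(A)x=0$, since $q(A)$ and $A$ commute; and $\lambda I_n$ clearly preserves the subgroup $q(A)\Z^n$. Second, the two squares commute. The left square (with the inclusion along the top) commutes trivially, because $A$ acts by the same formula on $\ker q(A)$ and on $\Z^n$. The right square commutes precisely because, for every $x\in\Z^n$,
\begin{equation*}
q(A)\,(Ax)=\big(q(A)A\big)x=\lambda\,q(A)x,
\end{equation*}
i.e.\ $q(A)(A-\lambda I_n)=0$. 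This in turn holds because $q(A)$ and $A-\lambda I_n$ are polynomials in $A$, hence commute, so $q(A)(A-\lambda I_n)=(A-\lambda I_n)q(A)=p(A)=0$, using that $p$ is the minimal polynomial of $A$ and therefore annihilates it.

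Finally I would take direct limits. Since the direct limit is an exact functor on the category of abelian groups (as recalled in the proof of Corollary \ref{c:torsionmovement}), applying it to the short exact sequence of directed systems yields exactly
\begin{equation*}
\xymatrix{0\ar[r]&\lim\limits_{\to}(A,\ker q(A))\ar@{^{(}->}[r]&\lim\limits_{\to}(A,\Z^n)\ar@{->>}[r]&\lim\limits_{\to}(\lambda I_n,q(A)\Z^n)\ar[r]&0,}
\end{equation*}
which is the claim.

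The only genuinely substantive step is the second one: identifying the correct connecting map $\lambda I_n$ on the quotient system and verifying commutativity of the right square via $p(A)=0$. I expect this to be the main obstacle, though it is conceptual rather than computational — one must observe that the factor $(x-\lambda)$ of the minimal polynomial is exactly what converts multiplication by $A$ into multiplication by $\lambda$ after applying $q(A)$. Everything else reduces to the formal exactness of the direct-limit functor.
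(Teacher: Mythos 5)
Your proposal is correct and follows essentially the same route as the paper's own proof: the same short exact sequence $0\to\ker q(A)\to\Z^n\xrightarrow{q(A)}q(A)\Z^n\to0$ with vertical maps $A$, $A$, $\lambda I_n$, the same key identity $q(A)A=\lambda q(A)$ deduced from $p(A)=(A-\lambda I)q(A)=0$, and the same appeal to exactness of the direct-limit functor on abelian groups. No gaps.
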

\begin{proof}
Recall that an eigenvalue of $A$ is a root of the minimal polynomial $p(x)$ of $A$, and thus by the assumption it makes sense to write $p(x)=(x-\lambda)q(x)$.
By definition of minimal polynomial $0=p(A)=(A-\lambda I)q(A)$, i.e.~$Aq(A)=\lambda q(A)$.
Moreover, $A \ker q(A)\subset \ker q(A)$ because if $x\in \ker q(A)$ then $q(A)x=0$ and thus $q(A)Ax=A(q(A)x)=0$. Hence the following diagram is well-defined
$$\xymatrix{
0\ar[r]& \ker q(A)\ar[d]^{A}\ar[d]^{A}\ar@{^(->}[r]&\Z^n\ar[d]^{A}\ar@{>>}[r]^{q(A)\,\,\,\,\,\,\,\,}&q(A)\Z^n\ar[d]^{\lambda I}\ar[r]&0\\
0\ar[r]& \ker q(A)\ar@{^(->}[r]&\Z^n\ar@{>>}[r]^{q(A)\,\,\,\,\,\,\,\,}&q(A)\Z^n\ar[r]&0.
}$$
Moreover, the rows are clearly short exact sequences and the diagram is commutative.
Recall that the direct limit of a directed system of short exact sequences of abelian groups is a short exact sequence because the direct limit is an exact functor in the category of abelian groups. Hence the statement of the proposition holds.
\end{proof}

\begin{pro}\label{p:tensorproduct}
Let $m,n\in\N$. Then
$$\Z[\frac1m]\otimes_{\Z} \Z[\frac1n]=\Z[\frac1{mn}].$$
\end{pro}
\begin{proof}
We use the formula ``localization of a module'' (see \cite[Lemma~2.4,~p.66]{Eisenbud}): 
$$S^{-1}M=M\otimes_{\Z} S^{-1}\Z$$
with $S=\{n^i\mid i\in \N_0\}$ and $M=\Z[\frac1m]$.
Notice that $1\in S$ and $n^i n^j\in S$.
Recall that
$$S^{-1}M =\{[(\tilde m, s)]_{\sim} \mid \tilde m\in M, s\in S\}$$
where
$$(\tilde m, s)\sim (\tilde n , t)\iff \exists u\in S: u(s\tilde n - t\tilde m)=0$$
$$\frac{\tilde m}{s}:=[(\tilde m, s)]_{\sim}$$
and $S^{-1}M$ is a module with operations
$$\frac{\tilde m}{s}+\frac{\tilde n}t:=\frac{t\tilde m + s\tilde n}{st},\qquad a\cdot \frac ms:=\frac{am}s.$$
Thus,
\begin{eqnarray*}
S^{-1}M&=&\{\frac{\tilde m}s\mid \tilde m\in \Z[\frac1m],s\in S\}\\
&=&\{\frac{\frac{a}{m^i}}{n^j}\mid a\in \Z, i,j\in\N_0\}\\
&=&\{\frac{a}{m^i n^j}\mid a\in \Z, i,j\in\N_0\}\\
&=&\{\frac{a}{(m n)^i}\mid a\in \Z, i\in\N_0\}\\
&=&\Z[\frac1{mn}]
\end{eqnarray*}
\end{proof}

\begin{cor}\label{c:tensorproduct}
Let $m,n,a,b,\in\N$. Then
$$\Z[\frac1m]^a\otimes_{\Z}\Z[\frac1n]^b=\Z[\frac1{mn}]^{ab}.$$
\end{cor}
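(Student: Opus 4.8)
The plan is to deduce this immediately from Proposition \ref{p:tensorproduct} together with the elementary fact that the tensor product over $\Z$ distributes over finite direct sums. First I would unpack the notation by writing $\Z[\frac1m]^a=\bigoplus_{i=1}^a\Z[\frac1m]$ and $\Z[\frac1n]^b=\bigoplus_{j=1}^b\Z[\frac1n]$, viewing the superscripts as finite direct sums of $\Z$-modules (for finitely many copies the direct sum and direct product coincide, so the interpretation of the exponent is unambiguous).

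Next I would invoke the natural isomorphism $\big(\bigoplus_i M_i\big)\otimes_{\Z}N\cong\bigoplus_i\big(M_i\otimes_{\Z}N\big)$, which holds for arbitrary $\Z$-modules and is one of the defining properties of the tensor product. Applying it to the left factor, and then to the right factor inside each resulting summand, yields
$$\Z[\frac1m]^a\otimes_{\Z}\Z[\frac1n]^b\cong\bigoplus_{i=1}^a\bigoplus_{j=1}^b\big(\Z[\frac1m]\otimes_{\Z}\Z[\frac1n]\big).$$
By Proposition \ref{p:tensorproduct} each of the $ab$ summands on the right is isomorphic to $\Z[\frac1{mn}]$, so the right-hand side is $\Z[\frac1{mn}]^{ab}$, which is exactly the claim.

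There is no genuine obstacle here: the entire substance of the corollary already resides in Proposition \ref{p:tensorproduct}, and the remaining work is the routine bookkeeping of distributing $\otimes_{\Z}$ across the $a$ summands of the first factor and the $b$ summands of the second. The only point deserving (minimal) care is to confirm that the isomorphism of Proposition \ref{p:tensorproduct} may be applied summand-by-summand, which is guaranteed by the naturality of the distributivity isomorphism; I would state this explicitly but not belabor it.
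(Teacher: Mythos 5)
Your proof is correct and is precisely the argument the paper intends: the paper states this corollary without proof, treating it as immediate from Proposition \ref{p:tensorproduct} together with distributivity of $\otimes_{\Z}$ over finite direct sums, which is exactly what you supply. Nothing is missing, and the one point you flag (applying the proposition summand-by-summand via naturality) is handled correctly.
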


\begin{cor}\label{c:tensorproduct2}
Let $k,\ell,m,n,a,b,c,d\in\N$. Then 
$$
\left(\Z[\frac 1k]^a\oplus \Z[\frac1{\ell}]^b\right)\otimes
\left(\Z[\frac 1m]^c\oplus \Z[\frac1{n}]^d\right) =
\Z[\frac1{km}]^{ac}\oplus \Z[\frac1{kn}]^{ad}\oplus
\Z[\frac1{\ell m}]^{bc}\oplus \Z[\frac1{\ell n}]^{bd}
$$

\end{cor}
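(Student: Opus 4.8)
The plan is to reduce everything to the single-factor identity already in hand, namely Corollary \ref{c:tensorproduct}, and to handle the direct sums by the distributivity of the tensor product over coproducts. Concretely, over the base ring $\Z$ the functor $(-)\otimes_{\Z} M$ is a left adjoint, so it commutes with finite direct sums in both variables; this is the only structural fact I would invoke beyond what the excerpt already provides. Thus the entire argument is a formal expansion followed by four applications of Corollary \ref{c:tensorproduct}.

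First I would expand the left-hand side using distributivity of $\otimes_{\Z}$ over $\oplus$ in each slot, obtaining
\begin{equation*}
\left(\Z[\tfrac 1k]^a\oplus \Z[\tfrac1{\ell}]^b\right)\otimes_{\Z}
\left(\Z[\tfrac 1m]^c\oplus \Z[\tfrac1{n}]^d\right)
\end{equation*}
\begin{equation*}
=\big(\Z[\tfrac1k]^a\otimes_{\Z}\Z[\tfrac1m]^c\big)\oplus
 \big(\Z[\tfrac1k]^a\otimes_{\Z}\Z[\tfrac1n]^d\big)\oplus
 \big(\Z[\tfrac1\ell]^b\otimes_{\Z}\Z[\tfrac1m]^c\big)\oplus
 \big(\Z[\tfrac1\ell]^b\otimes_{\Z}\Z[\tfrac1n]^d\big).
\end{equation*}
Then I would evaluate each of the four summands by Corollary \ref{c:tensorproduct}, which gives $\Z[\tfrac1k]^a\otimes_{\Z}\Z[\tfrac1m]^c=\Z[\tfrac1{km}]^{ac}$ and likewise for the remaining three products, producing $\Z[\tfrac1{kn}]^{ad}$, $\Z[\tfrac1{\ell m}]^{bc}$, and $\Z[\tfrac1{\ell n}]^{bd}$. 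Substituting these back yields exactly the right-hand side of the statement.

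I do not expect any genuine obstacle here: all the arithmetic content already resides in Proposition \ref{p:tensorproduct} (the localization computation $\Z[\tfrac1m]\otimes_{\Z}\Z[\tfrac1n]=\Z[\tfrac1{mn}]$) and in its iteration to free modules in Corollary \ref{c:tensorproduct}. The only point deserving a word of justification is the distributivity step, and even that is standard; if one wished to be fully self-contained one could cite the isomorphism $(M\oplus N)\otimes_{\Z}P\cong(M\otimes_{\Z}P)\oplus(N\otimes_{\Z}P)$ from any commutative-algebra reference. The ``hardest'' part is therefore merely keeping the four index pairs $(km,ac)$, $(kn,ad)$, $(\ell m,bc)$, $(\ell n,bd)$ matched correctly, which is bookkeeping rather than mathematics.
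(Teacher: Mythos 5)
Your proof is correct and is exactly the argument the paper intends: the corollary is stated without proof precisely because it follows from Corollary \ref{c:tensorproduct} together with the standard distributivity of $\otimes_{\Z}$ over finite direct sums, which is the two-step expansion you carry out. There is no gap; the bookkeeping of the four index pairs is all that remains, and you have it right.
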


\subsection{Short exact sequences}

\begin{lem}
Let 
$$\xymatrix{0\ar[r]&I\ar@{^{(}->}[r]^{\iota} &A\ar@{>>}[r]^{r}&B\ar[r]&0}$$
be a short exact sequence of abelian groups. If $Ext^1(B,\Z)=0$ then the transpose
$$\xymatrix{0\ar[r]&B^t\ar@{^{(}->}[r]^{r^t} &A^t\ar@{>>}[r]^{\iota^t}&I^t\ar[r]&0}$$
is also a short exact sequence.
\end{lem}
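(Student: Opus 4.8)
The plan is to recognize the transpose $(\,\cdot\,)^t$ as the contravariant functor $\Hom(\,\cdot\,,\Z)$ (for free abelian groups with a fixed basis this dual map is literally the matrix transpose, so the notation is consistent), and then to read the claim off the long exact sequence obtained by dualizing the given short exact sequence. Since every abelian group admits a free, hence projective, resolution of length at most one, the higher derived functors $\Ext^i(\,\cdot\,,\Z)$ vanish for $i\ge 2$. Applying $\Hom(\,\cdot\,,\Z)$ to $0\to I\xrightarrow{\iota}A\xrightarrow{r}B\to 0$ therefore yields the exact sequence
\[
\xymatrix@C=0.5cm{
0\ar[r]&\Hom(B,\Z)\ar[r]^{r^t}&\Hom(A,\Z)\ar[r]^{\iota^t}&\Hom(I,\Z)\ar[r]^{\partial}&\Ext^1(B,\Z)\ar[r]&\Ext^1(A,\Z)\ar[r]&\Ext^1(I,\Z)\ar[r]&0,
}
\]
where $\partial$ is the connecting homomorphism.

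First I would verify the left-exact part, which needs no hypothesis. The map $r^t$ is injective because $r$ is surjective: if $\phi\circ r=0$ then $\phi=0$ on all of $B$. Next, the kernel of $\iota^t$ consists of those $\psi\colon A\to\Z$ that vanish on $\iota(I)$; each such $\psi$ factors uniquely through the quotient $A/\iota(I)\cong B$, producing a unique $\phi\in B^t$ with $\psi=\phi\circ r$, so that $\ker\iota^t=\im r^t$. This already gives exactness of the transposed sequence at $B^t$ and at $A^t$.

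The remaining point, surjectivity of $\iota^t$ (exactness at $I^t$), is the crux and is exactly where the hypothesis is used. By exactness of the long sequence, $\im\iota^t=\ker\partial$, so $\iota^t$ is onto precisely when $\partial=0$; and since $\partial$ takes values in $\Ext^1(B,\Z)=0$, it vanishes automatically. The hard part in a fully self-contained write-up is justifying the existence and placement of $\partial$: one may simply quote the standard long exact sequence for $\Ext$, or argue by hand that a given $\chi\in\Hom(I,\Z)$ determines, via pushout of the original sequence along $\chi$, an extension $0\to\Z\to E\to B\to 0$ whose class in $\Ext^1(B,\Z)$ is the obstruction to extending $\chi$ over $A$; the vanishing of $\Ext^1(B,\Z)$ forces this extension to split, and any splitting retraction $E\to\Z$ pulls back to the required $\psi\in\Hom(A,\Z)$ with $\psi\circ\iota=\chi$. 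Assembling the three exactness statements yields the asserted short exact sequence $0\to B^t\xrightarrow{r^t}A^t\xrightarrow{\iota^t}I^t\to 0$.
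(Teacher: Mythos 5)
Your proof is correct and takes essentially the same approach as the paper: apply the contravariant functor $\Hom(\,\cdot\,,\Z)$ to the given short exact sequence, invoke the long exact sequence for $\Ext$, and use the hypothesis $\Ext^1(B,\Z)=0$ to conclude that $\iota^t$ is surjective. The paper's proof is just the terse version of yours; your explicit verification of left-exactness and of the connecting-map obstruction fills in details the paper leaves to the reader.
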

\begin{proof}
From the given short exact sequence we get the long exact sequence
$$\xymatrix@C=1.3cm{0\ar[r]&\Hom(B,\Z)\ar@{^{(}->}[r]^{\Hom(r,\Z)} &\Hom(A,\Z)\ar[r]^{\Hom(\iota,\Z)}&\Hom(I,\Z)\ar[r]&}$$
$$\xymatrix@C=1.3cm{&\Ext^1(B,\Z)\ar[r]^{\Ext^1(r,\Z)} &\Ext^1(A,\Z)\ar[r]^{\Ext^1(\iota,\Z)}&\Ext^1(I,\Z)\ar[r]&}$$
$$\xymatrix@C=1.3cm{&\Ext^2(B,\Z)\ar[r]^{\Ext^2(r,\Z)} &\Ext^2(A,\Z)\ar[r]^{\Ext^2(\iota,\Z)}&\Ext^2(I,\Z)\ar[r]&\cdots}.$$
Hence if $\Ext^1(B,\Z)=0$ we get a short exact sequence.
\end{proof}
\begin{cor}\label{c:ses-Z-transponse}
Let 
$$\xymatrix{0\ar[r]&I\ar@{^{(}->}[r]^{\iota} &A\ar@{>>}[r]^{r}&\Z^n\ar[r]&0}$$
be a short exact sequence of abelian groups. Then the transpose
$$\xymatrix{0\ar[r]&\Z^n\ar@{^{(}->}[r]^{r^t} &A^t\ar@{>>}[r]^{\iota^t}&I^t\ar[r]&0}$$
is also a short exact sequence.
\end{cor}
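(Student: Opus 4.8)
The plan is to obtain this statement as an immediate specialization of the preceding lemma, taking $B=\Z^n$. That lemma requires only the single hypothesis $\Ext^1(\Z^n,\Z)=0$, so the one point that needs verification is the vanishing of this group. First I would note that $\Z^n$ is free abelian, hence projective as a $\Z$-module, and $\Ext^1$ of a projective module vanishes identically; in particular $\Ext^1(\Z^n,\Z)=0$. If one prefers an explicit computation, the same conclusion follows from $\Ext^1(\Z,\Z)=0$ together with the additivity of $\Ext$ in its first argument, which gives $\Ext^1(\Z^n,\Z)\cong\bigoplus_{i=1}^n\Ext^1(\Z,\Z)=0$.

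Having checked the hypothesis, I would simply invoke the lemma with $B=\Z^n$. It then produces the short exact sequence $0\to(\Z^n)^t\xrightarrow{r^t} A^t\xrightarrow{\iota^t} I^t\to 0$, with the maps $r^t=\Hom(r,\Z)$ and $\iota^t=\Hom(\iota,\Z)$ exactly as stated. To match the displayed form of the corollary I would finally use the canonical identification $(\Z^n)^t=\Hom(\Z^n,\Z)\cong\Z^n$, which rewrites the left-hand term in the asserted shape.

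I do not expect any genuine obstacle here: all the substantive work lies in the lemma, whose proof rests on the long exact sequence in $\Hom(-,\Z)$ and $\Ext^i(-,\Z)$ associated to the original short exact sequence. The role of the corollary is only to record the practically important case in which the quotient is the projective group $\Z^n$, so that the connecting homomorphism $\Hom(I,\Z)\to\Ext^1(\Z^n,\Z)$ necessarily lands in the zero group; the segment of $\Hom$'s is therefore already right-exact, and dualizing preserves exactness without any loss.
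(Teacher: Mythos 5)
Your proposal is correct and follows essentially the same route as the paper: both verify the hypothesis $\Ext^1_\Z(\Z^n,\Z)=0$ (the paper does this by writing down the trivial projective resolution $0\to\Z^n\xrightarrow{\mathrm{id}}\Z^n\to 0$, which is just an explicit witness of the projectivity you cite) and then apply the preceding lemma with $B=\Z^n$.
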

\begin{proof}
We start by constructing a projective resolution
$$\xymatrix{
\ar[r]&0\ar[r]&0\ar[r] &\Z^n_{_{_{\!\!\!\!\!\!0}}}\ar[r]^{id}&\Z^n_{_{_{\!\!\!\!\!\!-1}}}\ar[r]&0.}$$
Applying $\Hom(-,\Z)$ to this projective resolution of abelian groups (without the -1 term) we get
$$\xymatrix{
&0\ar[l]&0\ar[l] &0_{_{_{\!\!\!\!1}}}\ar[l]^{}& \Hom_{_{_{\!\!\!\!\!\!\!\!\!\!0\,\,\,\,\,\,\,\,\,}}}(\Z^n,\Z)\ar[l]&0\ar[l]}.$$
Taking homology of this chain complex we get $\Ext^0(\Z^n,\Z)=\Hom(\Z^n,\Z)$ and $\Ext^i(\Z^n,\Z)=0$ for $i\in\N$. In particular  $\Ext^1(\Z^n,\Z)=0$.
Thus by the lemma, the corollary follows.
\end{proof}

\begin{lem}\label{l:es-ses}
Let 
$$\xymatrix{\ar[r]&A\ar[r]^{f}&B\ar[r]^{g} &C\ar@{>>}[r]^{h}&D\ar[r]&0}$$
be an exact sequence of abelian groups. Then
$$\xymatrix{0\ar[r]&\frac{B}{\im f}\ar@{^{(}->}[r]^{\iota} &C\ar@{>>}[r]^{h}&D\ar[r]&0}$$
is a short exact sequence. In particular,
$$\frac{C}{\iota(\frac{B}{\im f })}\cong D.$$
\end{lem}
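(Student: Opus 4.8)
The plan is to identify the map $\iota$ explicitly and then verify the three defining conditions of a short exact sequence directly from the exactness hypotheses; no clever idea is needed, only careful bookkeeping of which object carries which exactness condition. First I would observe that exactness at $B$ gives $\im f=\ker g$, so the homomorphism $g\colon B\to C$ vanishes on $\im f$ and hence descends to a well-defined homomorphism $\iota\colon B/\im f\to C$ given by $\iota([b]):=g(b)$. This is precisely the map named in the statement, and I would record that only $\im f$ (not $A$ or $f$ individually) enters the construction.

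Next I would check that $\iota$ is injective. Its kernel consists of the classes $[b]$ with $g(b)=0$, that is $\ker\iota=\ker g/\im f$; since $\im f=\ker g$ by exactness at $B$, this quotient is trivial and $\iota$ is injective. Then I would compute the image: evidently $\im\iota=\im g$, and exactness at $C$ yields $\im g=\ker h$, so $\im\iota=\ker h$. Combined with the hypothesis that $h$ is surjective onto $D$ (the double-headed arrow), this establishes exactness at all three positions of
$$\xymatrix{0\ar[r]&B/\im f\ar@{^{(}->}[r]^{\iota}&C\ar@{>>}[r]^{h}&D\ar[r]&0,}$$
so it is a short exact sequence.

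Finally, the displayed isomorphism follows from the first isomorphism theorem applied to $h$: since $h$ is surjective with $\ker h=\im\iota=\iota(B/\im f)$, we obtain $C/\iota(B/\im f)\cong D$. The main (and only) point requiring attention is invoking exactness at the correct object in each step—exactness at $B$ for the well-definedness and injectivity of $\iota$, and exactness at $C$ for the image computation—so I expect the argument to be a routine diagram chase with no genuine obstacle.
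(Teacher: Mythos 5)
Your proof is correct and follows essentially the same route as the paper: both define $\iota$ as the map induced by $g$ on $B/\im f$, use $\im f=\ker g$ for injectivity, $\im g=\ker h$ for exactness at $C$, and the first isomorphism theorem applied to the surjection $h$ for the final isomorphism. No discrepancies to report.
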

\begin{proof}
Since the sequence is exact,
$\im f=\ker g$, and $\im g=\ker h$. Thus
$$\frac{B}{\im f}=\frac{B}{\ker g}\cong \im g\subset C.$$ 
The map induced by $g$ from $\frac{B}{\im f}$ to $C$ is denoted by $\iota$, and it is injective.
Since $\im g=\ker h$, the short sequence is exact. In particular, since $h$ is surjective
$$\frac{C}{\iota(\frac{B}{\im f})}=\frac{C}{\ker h}\cong D.$$
\end{proof}

\section{\textbf{Direct limits of topological spaces and $C^*$-algebras}}\label{a.s:direct-limits-TopCstar}

\begin{pro}\label{p:ind:Xn0-eventually}
Let 
$$X_0\subset X_1\subset X_2\subset \ldots$$
be an increasing sequence of $T_1$-spaces, and equip
$$X:=\bigcup_{n\in\N_0} X_n$$
with the inductive limit topology.
Suppose that $(x_k)_{k=0}^\infty$ is a sequence in $X$ that converges to $x\in X$.
Then there exists an $n_0\in \N_0$ such that $x\in X_{n_0}$ and such that the sequence is eventually in $X_{n_0}$. That is, there exists a $k_0\in \N_0$ such that 
$$x,x_k\in X_{n_0}\qquad \text{ for } k\ge k_0.$$
\end{pro}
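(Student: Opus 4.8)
The plan is to argue by contradiction, extracting from a hypothetical ``bad'' sequence a closed subset of $X$ that separates $x$ from infinitely many of its terms, thereby contradicting convergence. First I would dispose of the stage containing $x$: since $x\in X=\bigcup_{n}X_n$, there is some $m\in\N_0$ with $x\in X_m$, and then $x\in X_n$ for every $n\ge m$ as well. Negating the conclusion therefore means precisely that for every $n\ge m$ the sequence $(x_k)$ is \emph{not} eventually in $X_n$; that is, infinitely many terms lie outside $X_n$.

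Next I would build a subsequence spread out across the stages. Setting $p_0:=m$, I would inductively choose indices $k_1<k_2<\cdots$ and integers $p_0<p_1<p_2<\cdots$ as follows: given $k_j$ and $p_j$, use that the sequence is not eventually in $X_{p_j}$ to pick $k_{j+1}>k_j$ with $x_{k_{j+1}}\notin X_{p_j}$, and let $p_{j+1}$ be any stage with $x_{k_{j+1}}\in X_{p_{j+1}}$ (such a stage exists because $x_{k_{j+1}}\in X$, and it necessarily satisfies $p_{j+1}>p_j$, since otherwise $X_{p_{j+1}}\subseteq X_{p_j}$ would force $x_{k_{j+1}}\in X_{p_j}$). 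This produces points with $x_{k_j}\in X_{p_j}\setminus X_{p_{j-1}}$; in particular each $x_{k_j}\notin X_{p_0}$, while $x\in X_{p_0}$, so $x_{k_j}\neq x$ for all $j$.

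The decisive step will be to recognize that $S:=\{x_{k_j}\mid j\ge 1\}$ is closed in the inductive limit topology. For this I would verify that $S\cap X_n$ is finite for every $n$: if $x_{k_j}\in X_n$, then since $x_{k_j}\notin X_{p_{j-1}}$ and the stages are nested, one must have $p_{j-1}<n$; as $p_{j-1}\to\infty$ this holds for only finitely many $j$. A finite subset of the $T_1$ space $X_n$ is closed, so $S\cap X_n$ is closed in $X_n$ for every $n$, whence $S$ is closed in $X$ by the very definition of the inductive limit topology. Then $X\setminus S$ is open, contains $x$ (as $x\notin S$), yet contains none of the $x_{k_j}$; since $x_k\to x$, the neighborhood $X\setminus S$ must contain $x_k$ for all sufficiently large $k$, in particular $x_{k_j}$ for all large $j$, a contradiction. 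This establishes the existence of the desired $n_0$ (which, absorbing the sequence eventually, may be taken $\ge m$ so that $x\in X_{n_0}$). I expect the only genuine subtlety to be the finiteness of $S\cap X_n$ together with the clean bookkeeping of the inductive construction; once the $T_1$ hypothesis is invoked to make finite sets closed, the closedness of $S$ and the final separation are routine.
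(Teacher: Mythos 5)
Your proof is correct, and it reaches the contradiction by a genuinely different mechanism than the paper. Both arguments start the same way: negate the conclusion, extract a subsequence $x_{k_j}\in X_{p_j}\setminus X_{p_{j-1}}$ climbing through the stages, and then exhibit an open neighborhood of $x$ missing the whole subsequence. The paper builds that neighborhood by hand: it chooses (using $T_1$) an open $U_0\subset X_0$ around $x$ avoiding the first bad point, then inductively extends $U_m$ to an open subset of $X_{m+1}$ via the subspace topology and deletes the single point $x_{k_{m+1}}$ (closed because the space is $T_1$), finally taking the union $U=\bigcup_m U_m$, which is open since $U\cap X_n=U_n$ for all $n$. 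You instead observe that the set $S=\{x_{k_j}\}$ of bad points is itself closed in the inductive limit topology, because $S\cap X_n$ is finite for every $n$ (if $x_{k_j}\in X_n$ then $p_{j-1}<n$, which happens for only finitely many $j$) and finite sets are closed in the $T_1$ space $X_n$; then $X\setminus S$ is the desired open neighborhood, since $x\notin S$ (each $x_{k_j}$ lies outside $X_{p_0}\ni x$). Your route is shorter and isolates a cleaner reusable fact --- any subset of $X$ meeting each stage in a finite set is closed --- whereas the paper's inductive construction makes explicit how open neighborhoods in the limit topology are assembled stage by stage, at the cost of some bookkeeping with extensions $V_{m+1}\cap X_m=U_m$. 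Both uses of the $T_1$ hypothesis are the same in spirit (points, and hence finite sets, are closed); the one small tidiness issue in your write-up is the base case of the induction (the index $k_0$ is never defined), but this is trivially repaired by taking $k_0=0$ and choosing $k_1$ directly from the infinitude of terms outside $X_{p_0}$.
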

\begin{proof}
Define $X_{-1}:=\emptyset$, and assume that $x_k\to x\in X$.
For contradiction, assume that the conclusion of the proposition is false.
Then, without loss of generality, we can assume that there exists a subsequence $(x_{k_\ell})_{\ell=0}^\infty$ such that  
$$x\in X_0\quad \text{and}\quad x_{k_\ell}\in X_\ell\backslash X_{\ell-1},\quad\text{for } \ell \in\N_0,$$
and such that $x_{k_0}\ne x$. (We simply dropped all the spaces not containing an element of the subsequence). 
Then 
\begin{eqnarray*}
&&x_{k_0}\in X_0\\
&&x_{k_1}\in X_1,\quad x_{k_1}\not\in X_0\\
&&x_{k_2}\in X_2,\quad x_{k_2}\not\in X_0\cup X_1,\qquad \text{etc.}
\end{eqnarray*}
Choose a neighborhood $x\in U_0\subset X_0$ open in $X_0$ such that $x_{k_0}\not\in U_0$.
Such neighborhood exists because $X_0$ is a $T_1$-space.
Notice also that $x_{k_\ell}\not\in U_0$ for $\ell\ge 1$ because $x_{k_\ell}\not\in X_0$.
Now, choose $V_1\subset X_1$ open in $X_1$ such that 
$$U_0=V_1\cap X_0.$$
This is possible since $X_0$ has the subspace topology of $X_1$.
Let 
$$U_1:=V_1\backslash \{x_{k_1}\}=V_1\cap \{x_{k_1}\}^c,$$
which is open in $X_1$ since all one point sets in $T_1$-spaces are closed.
Then 
$$U_1\cap X_0=V_1\cap X_0=U_0$$
 since $x_{k_1}\not\in X_0$.
Moreover, $x_{k_\ell}\not\in U_1$ for $\ell\in \N_0$ because $U_1\subset X_1$, because we have removed $x_{k_1}$, and because $x_{k_0}\not\in U_0=U_1\cap X_0$, hence also $x_{k_0}\not\in U_1$ (as $x_{k_0}\in X_0$).
Inductively, we can construct the sets $U_m\subset X_m$, $m\in\N_0$, where
$$\text{$U_m$ is open in $X_m$}$$
and,
\begin{equation}\label{e:Um}
U_{m'}=U_m\cap X_{m'}\quad \forall\,\, m'\le m,
\end{equation}
and such that 
$$x_{k_\ell}\not\in U_m\quad \forall\,\, \ell\in\N_0.$$
Let $U:=\bigcup_{m=0}^\infty U_m$. Then $U$ is open in $X$ because by Eq.~(\ref{e:Um}), $U\cap X_n=U_n$ for all $n\in\N_0$, and $U_n$ is open in $X_n$ for all $n\in\N_0$.
Since 
$$x\in U_0\subset U_1\subset U_2\subset\cdots,$$
 $U$ is a neighborhood of $x$ in $X$. But $x_{k_\ell}\not\in U$ for all $\ell \in \N_0$ because $x_{k_\ell}\not \in U_m$ for all $\ell,m\in \N_0$.  Hence $x_{k_\ell}\not\to x$, a contradiction. Thus the conclusion of the proposition is true.
\end{proof}

\begin{lem}\label{l:Rd-ind}
Let $B_n(0)$ be the open ball in $\R^d$ of radius $n\in \N$, and equip it with the subspace topology of $\R^d$.
Let 
$$X:=\bigcup_{n\in\N} B_n(0)$$
be given the inductive limit topology. Then $X=\R^d$ as topological spaces.
\end{lem}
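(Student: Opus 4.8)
The plan is to show that $X$ and $\R^d$ agree both as sets and as topological spaces. The set-theoretic equality is immediate: every point $x\in\R^d$ lies in $B_n(0)$ for any integer $n>\|x\|$, so $\bigcup_{n\in\N}B_n(0)=\R^d$. Hence the only real content is that the inductive limit topology on $X$ coincides with the Euclidean topology on $\R^d$, and I would prove this by establishing that each topology refines the other.

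First I would handle the inclusion ``Euclidean open $\imply$ inductive open.'' The inclusion maps $\iota_n:B_n(0)\hookrightarrow \R^d$ are continuous when $B_n(0)$ carries the subspace topology, and $\R^d$ with its Euclidean topology is one topology on the underlying set making all the $\iota_n$ continuous. Since the inductive limit topology is by definition the finest such topology, the identity map from $X$ (inductive) to $\R^d$ (Euclidean) is continuous; equivalently, every Euclidean-open set $U$ satisfies that $U\cap B_n(0)$ is open in $B_n(0)$ for all $n$, which is exactly the condition to be open in the inductive limit topology. I could also phrase this directly without invoking the universal property: $U\cap B_n(0)$ is the intersection of a Euclidean-open set with $B_n(0)$, hence subspace-open in $B_n(0)$.

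For the reverse inclusion ``inductive open $\imply$ Euclidean open,'' the key observation — and the only point where anything specific to the balls is used — is that each $B_n(0)$ is itself \emph{open} in $\R^d$. Consequently a subset of $B_n(0)$ is open in the subspace topology of $B_n(0)$ if and only if it is open in $\R^d$. So if $U\subset X$ is open in the inductive limit topology, then $U\cap B_n(0)$ is open in $B_n(0)$, hence open in $\R^d$, for every $n$; and therefore
$$
U=\bigcup_{n\in\N}\bigl(U\cap B_n(0)\bigr)
$$
is a union of Euclidean-open sets, thus Euclidean-open. Combining the two inclusions shows the topologies are identical, so $X=\R^d$ as topological spaces.

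There is no genuine obstacle here: the openness of each $B_n(0)$ in $\R^d$ collapses the argument, since it forces the subspace topology on $B_n(0)$ to consist of honest Euclidean-open sets. The only point requiring a moment of care is to not confuse ``open in $B_n(0)$'' with ``open in $\R^d$'' in general — this identification is legitimate precisely because $B_n(0)$ is open, and I would state that explicitly rather than leave it implicit.
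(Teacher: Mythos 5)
Your proof is correct and follows essentially the same route as the paper: continuity of the identity in one direction from continuity of each inclusion $B_n(0)\hookrightarrow\R^d$, and in the other direction by writing an inductive-open $U$ as $\bigcup_n\bigl(U\cap B_n(0)\bigr)$, using that each ball is open in $\R^d$. Your only addition is making explicit the identification of subspace-open with Euclidean-open subsets of $B_n(0)$, which the paper leaves implicit.
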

\begin{proof}
The inclusion map $i:X\hookrightarrow \R^d$, is continuous because the inclusion  $i_n:B_n(0)\hookrightarrow \R^d$  is continuous for all $n\in\N$.
We now show that $i^{-1}$ is continuous.
Let $U\subset X$ be open in $X$. That is, $U\cap B_n(0)$ is open in $\R^d$ for all $n\in\N_0$.
Hence $U=\bigcup_{n\in\N} U\cap B_n(0)$ is open in $\R^d$.
\end{proof}

The following lemma is straightforward and well-known.
\begin{lem}\label{l:seqInSubspace}
Let $Y$ be a subspace of a topological space $X$, and suppose that $x_n\to x$ in $X$ as $n\to \infty$.
If $x_n,x\in Y$ for all $n\in\N$, then $x_n\to x$ in $Y$.
\end{lem}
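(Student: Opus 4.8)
The plan is to unwind the definition of the subspace topology and of convergence. Recall that convergence $x_n\to x$ in a topological space means that for every open neighborhood of $x$, the sequence is eventually inside that neighborhood. So to show $x_n\to x$ in $Y$, I would start by fixing an arbitrary open set $U\subset Y$ containing $x$, and my goal is to produce an index $N$ such that $x_n\in U$ for all $n\ge N$.

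The key step is to lift $U$ to an open set in $X$. By definition of the subspace topology on $Y$, there is an open set $V\subset X$ with $U=V\cap Y$. Since $x\in U\subset V$, the set $V$ is an open neighborhood of $x$ in $X$. Now I would invoke the hypothesis $x_n\to x$ in $X$: there exists $N\in\N$ such that $x_n\in V$ for all $n\ge N$. Because $x_n\in Y$ by assumption, I get $x_n\in V\cap Y=U$ for all $n\ge N$, which is exactly what convergence in $Y$ requires. Since $U$ was an arbitrary neighborhood of $x$ in $Y$, this completes the argument.

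There is essentially no obstacle here: the proof is a pure unwinding of definitions, and the only point that requires mild care is correctly characterizing neighborhoods of $x$ in $Y$ as traces $V\cap Y$ of neighborhoods in $X$. One could phrase the same proof either with general open neighborhoods (as above) or, if $X$ is assumed first-countable or metric (which is the case in all the applications of this lemma in the paper, where $X$ is $\R^{2d}$ or a subspace thereof), directly via a basis of balls; I would keep the general topological formulation since the statement is phrased for arbitrary topological spaces and the one-line neighborhood argument already covers it.

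\begin{proof}
Let $U\subset Y$ be open with $x\in U$. By definition of the subspace topology, $U=V\cap Y$ for some open set $V\subset X$. Since $x\in U\subset V$, the set $V$ is an open neighborhood of $x$ in $X$. As $x_n\to x$ in $X$, there exists $N\in\N$ such that $x_n\in V$ for all $n\ge N$. Since also $x_n\in Y$ for all $n$, we obtain $x_n\in V\cap Y=U$ for all $n\ge N$. As $U$ was an arbitrary open neighborhood of $x$ in $Y$, this shows $x_n\to x$ in $Y$.
\end{proof}
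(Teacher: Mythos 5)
Your proof is correct, and it is precisely the standard definitional argument the paper has in mind: the paper states this lemma without proof, calling it ``straightforward and well-known,'' and your unwinding of the subspace topology (lifting a relatively open neighborhood $U=V\cap Y$ to the open set $V\subset X$, applying convergence in $X$, and intersecting back with $Y$) is exactly that straightforward argument. Nothing is missing.
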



\begin{pro}\label{ap:Sisdirlim}
Let $R_0\subset R_1\subset R_2\subset \cdots $, be the increasing sequence of \'etale equivalence relations  defined in Section \ref{s:S}, and recall that $R_n\subset R_{n+1}$ is open for $n\in\N_0$. Recall as well that 
 $A_n:=C^*(R_n)$, $n\in\N_0$ is amenable, (thus $C^*(R_n)=C_r^*(R_n)$).   Then
$$C^*(\bigcup_{n=0}^{\infty} R_n)=\lim_{\to} \xymatrix{A_0\ar[r]^{\iota_0}&A_1\ar[r]^{\iota_1}&\ldots&}$$
\end{pro}
\begin{proof}
Consider the following diagram
$$
\xymatrix{
C^*( R_0)\ar@{^{(}->}[r]^{\iota_0}\ar[dr]_{\lambda_0}&C^*( R_1)\ar@{^{(}->}[r]^{\iota_1}\ar[d]^{\lambda_1}&C^*( R_2)\ar@{^{(}->}[r]^{\qquad\iota_3}\ar[dl]^{\lambda_2}&\\
&C^*\Big(\bigcup\limits_{n=0}^{\infty} R_n\Big).
}
$$
where $\lambda_n$ is the inclusion map.
For $f\in C_c(\cup_{n=0}^{\infty}R_n)$, since $\{R_n\}_{n=0}^\infty$ is an open cover, and the support of $f$ is compact, a finite subset covers the support of $f$.
Moreover, since the union is increasing, the support of $f$ is in $R_N$ for some $N\in\N_0$.
Hence $f=\lambda_N(\tilde f)$ for some $\tilde f\in C_c(R_N)$.
By slight abuse of notation, we will write $f\in C_c(R_N)$ instead of $\tilde f\in C_c(R_N)$.

By a similar proof to Remark \ref{r:iota-cont}, $\lambda_n:C^*(R_n)\rightarrow C^*(R'_s)$ is continuous.
By an adaptation of the proof of \cite[Prop.~IV.9]{Gon0}, $\lambda_n$ is injective, which uses $C^*(R_n)=C_r^*(R_n)$ (amenability).
Hence the $C^*$-algebra $C^*(R_n)$, $n\in\N$ can be viewed as a subalgebra of $C^*(R'_s)$.
Then by \cite[Exc.~6.2,~p.104]{Rordam}, the direct limit
$$\dlim(C^*(R_n),\iota_n)=\overline{\bigcup_{n=1}^\infty C^*(R_n)}^{||\cdot||_{C^*(R'_s)}}\subset C^*(R'_s)$$
is a subalgebra.
We now show the inclusion $\supset$.
Let $a\in C^*(R'_s)$. Then there is a sequence $(a_k)_{k=1}^\infty \subset C_c(\bigcup_{n=0}^{\infty}R_n)$ such that 
$$\xymatrix@C=1.5cm{a_k\,\,\ar[r]^{||\cdot||_{C^*(R'_s)}}&\,\,\,a}.$$
By compactness of the support of $a_k$, there exists a $n_k\in \N$ such that $a_k\in C_c(R_{n_k})$.
Hence $(a_k)_{k=1}^\infty\subset \bigcup_{n=1}^\infty C^*(R_{n})$ and thus
$a\in \overline{\bigcup_{n=1}^\infty C^*(R_n)}^{||\cdot||_{C^*(R'_s)}}$.
\end{proof}


\section{\textbf{Compactness in $\R^d$}}
\begin{lem}\label{l:compactK-boundaryU-dist}
Let $U$ be an open subset of $\R^d$, and let its $\R^d$-boundary be non-empty.
Let $K\subset U$ be compact in $U$.
Then $$d(K,\partial U)>0.$$
\end{lem}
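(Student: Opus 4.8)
The plan is to reduce the statement to the classical fact that a continuous function attains its minimum on a compact set, applied to the distance-to-$\partial U$ function. First I would record the two structural facts about $\partial U$ that make the argument work: the boundary $\partial U$ is closed in $\R^d$, and since $U$ is open while $K\subset U$, the set $K$ is disjoint from $\partial U$. Note also that compactness of $K$ in $U$ (with the subspace topology) coincides with compactness of $K$ in $\R^d$, since $K\subset U\subset\R^d$ and compactness is intrinsic; by Lemma \ref{l:X-compact} this causes no trouble. The hypothesis that $\partial U$ is non-empty guarantees that the distance $d(K,\partial U)$ is a genuine finite real number rather than $+\infty$.

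Next I would introduce the function $f:\R^d\to\R$ defined by $f(x):=d(x,\partial U)=\inf_{y\in\partial U}\|x-y\|$. This function is continuous (indeed $1$-Lipschitz) by the triangle inequality, a standard property of the distance to a fixed non-empty set. The key observation is that, because $\partial U$ is closed, one has $f(x)=0$ if and only if $x\in\partial U$; in particular $f(x)>0$ for every $x\in K$, since $K\cap\partial U=\emptyset$.

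Finally I would identify $d(K,\partial U)$ with $\inf_{x\in K} f(x)$, using $d(K,\partial U)=\inf\{\|x-y\|\mid x\in K,\,y\in\partial U\}=\inf_{x\in K} f(x)$. Since $f$ is continuous and $K$ is compact, this infimum is attained at some point $x_0\in K$, so $d(K,\partial U)=f(x_0)$. As $x_0\in K\subset U$ and $\partial U$ is closed, $f(x_0)>0$ by the previous paragraph, which yields $d(K,\partial U)>0$.

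I do not expect any real obstacle here: the entire content is the combination of continuity of the distance function, closedness of $\partial U$, and compactness of $K$ to pass from pointwise positivity to positivity of the infimum. The only point deserving care is to invoke compactness (so that the infimum is a minimum and hence strictly positive) rather than arguing pointwise, since an infimum of a family of strictly positive numbers can in general be zero; compactness of $K$ is exactly what rules this out.
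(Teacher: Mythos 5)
Your proof is correct and rests on exactly the same ingredients as the paper's: compactness of $K$, closedness of $\partial U$, and the disjointness $K\cap\partial U=\emptyset$ forced by $K\subset U$ with $U$ open. The only difference is packaging — you argue directly via continuity of $x\mapsto d(x,\partial U)$ and attainment of the minimum on a compact set, whereas the paper runs the equivalent sequential-compactness argument by contradiction — so this is essentially the same approach.
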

\begin{proof}
Note $K\cap \partial U$ is empty because $K$ is a subset of $U$ and $U\cap\partial U=U\cap (\overline{U}\backslash\open{U})=\emptyset$ as $U$ is open.
Suppose for contradiction that $d(K,\partial U)=0$.
Then there is a sequence $(x_n)_{n=1}^\infty\subset K$ such that 
$$d(x_n, \partial U)\to 0\quad \text{as}\quad n\to \infty.$$
Since $K$ is compact, there exists a convergent subsequence  $x_{n_j}\to x\in K$.
Since 
$$d(x_{n_j}, \partial U)=\inf_{u\in \partial U} ||x_{n_j}- u||,$$
we have
$$d(x,\partial U)\le ||x-x_{n_j}||+d(x_{n_j},\partial U)\to 0.$$
Hence $d(x,\partial U)=0$.
Thus there is a sequence $(u_n)_{n=1}^\infty\subset \partial U$ converging to $x$.
Since $\partial U=\overline{U}\cap \overline{\R^d\backslash U}$ is closed, $x\in \partial U$. Thus $x\in K\cap \partial U=\emptyset$, a contradiction.
\end{proof}

\begin{lem}\label{l:X-compact}
Let $X$ be a subset of $\R^d$, and let $K\subset X$.
Then $K$ is $X$-compact if and only if $K$ is $\R^d$-compact.
\end{lem}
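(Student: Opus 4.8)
The plan is to treat this as the standard fact that compactness is intrinsic to a set's own subspace topology and is therefore insensitive to the choice of ambient space, given that $X$ itself carries the subspace topology of $\R^d$. Rather than invoke transitivity of subspace topologies abstractly, I would verify both implications directly with open covers, which keeps the argument self-contained and elementary. Throughout I use that $K \subset X \subset \R^d$ and the basic fact that a subset is open in $X$ if and only if it is the intersection with $X$ of a set open in $\R^d$.

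For the forward implication, I would suppose $K$ is $X$-compact and take an arbitrary cover $\{U_\alpha\}$ of $K$ by sets open in $\R^d$. Then each $U_\alpha \cap X$ is open in $X$, and these sets still cover $K$ because $K \subset X$; compactness of $K$ in $X$ yields a finite subfamily $U_{\alpha_1}\cap X, \ldots, U_{\alpha_n}\cap X$ covering $K$, whence $U_{\alpha_1}, \ldots, U_{\alpha_n}$ cover $K$, so $K$ is $\R^d$-compact. For the reverse implication, I would suppose $K$ is $\R^d$-compact and take a cover $\{V_\beta\}$ of $K$ by sets open in $X$. Writing $V_\beta = W_\beta \cap X$ with $W_\beta$ open in $\R^d$, the family $\{W_\beta\}$ covers $K$, so by $\R^d$-compactness finitely many $W_{\beta_1}, \ldots, W_{\beta_n}$ suffice; since $K \subset X$ one has $K \cap W_{\beta_i} = K \cap W_{\beta_i} \cap X = K \cap V_{\beta_i}$, so the corresponding $V_{\beta_1}, \ldots, V_{\beta_n}$ cover $K$, and $K$ is $X$-compact.

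There is no genuine obstacle here: the only point requiring a moment's care is that, when transporting a finite subcover between $X$ and $\R^d$, one must use the inclusion $K \subset X$ to discard the redundant intersections with $X$. Since both directions become immediate once this is observed, the lemma follows.
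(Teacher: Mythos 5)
Your proof is correct, but it takes a genuinely different route from the paper's. The paper proves the forward implication by observing that the inclusion $X\hookrightarrow\R^d$ is continuous (so the compact set $K$ remains compact in $\R^d$), and proves the converse with metric-space machinery specific to $\R^d$: an $\R^d$-compact set is closed and bounded, hence complete and, by the Archimedean property, totally bounded, and a complete totally bounded metric space is compact --- these being intrinsic properties of $K$, it is compact as a subspace of $X$. You instead transport open covers directly between $X$ and $\R^d$ via the definition of the subspace topology, in both directions. What your approach buys: it is purely topological and proves the general fact that compactness of $K$ is intrinsic --- the subspace topology on $K$ induced from $X$ agrees with that induced from $\R^d$ --- so it works verbatim for any $K\subset X\subset Y$ with $Y$ an arbitrary topological space, with no appeal to metrics, completeness, or Heine--Borel. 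What the paper's approach buys: brevity given standard facts about $\R^d$, at the cost of invoking tools heavier (and less general) than the statement requires. The one step in your argument needing care --- discarding the redundant intersections with $X$ when passing a finite subcover back and forth, which uses $K\subset X$ --- is handled correctly.
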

\begin{proof}
If $K$ is $X$-compact, then it is $\R^d$-compact because the inclusion map $X\hookrightarrow \R^d$  is continuous. 
If $K$ is $\R^d$-compact then it is $\R^d$-closed and bounded. Since it is $\R^d$-closed, it is complete. 
Since it is bounded, then by the Archimedean property it is totally bounded. Thus $K$ is $X$-compact.
\end{proof}

\section{\textbf{$C_c(G)$}}\label{s:CcX}

Let $G$ be a locally compact groupoid with left Haar system $\{\lambda^u\}_{u\in G^0}$.
Then the set $C_c(G)$ of continuous  $\C$-valued functions with compact support is equipped with the inductive limit topology (cf.~\cite[Example~5.10,~p.118]{Conway}), 
and with operations 
\begin{eqnarray*}
f\star g(x)&:=&\int_{y\in r^{-1}(s(x))} f(xy)g(y^{-1})d\lambda^{s(x)}(y)\\
f^*(x)&:=&\overline{f(x^{-1})}.
\end{eqnarray*}
Under these operations $C_c(G)$ is a $*$-algebra (cf.~\cite[p.~48]{Renault}).

We remark that the names Renault-representation and Renault-bounded (given below) are not standard terminology, but we use them for the sake of brevity.

\begin{defn}[Renault-representation](\cite[p.50]{Renault})
A Renault-representation of $C_c(G)$ on a Hilbert space $H$ is a $*$-homomorphism 
$$L:(C_c(G),\mathrm{ind})\to (B(H),\mathrm{WO})$$
 which is continuous, 
where $C_c(G)$ has the inductive limit topology, and $B(H)$ has the WO-topology (weak-operator topology). Moreover,
the linear span $\{L(f)\xi\mid f\in C_c(G),\xi\in H\}$ is dense in $H$.
\end{defn}

By \cite[Prop.~I.4,~p.50]{Renault},  the topology given by the one-norm $||\cdot||_I$ on $C_c(G)$ is coarser than the inductive limit topology, i.e.
$$\xymatrix{f_n\ar[r]^{ind} &f}\quad\imply\quad ||f_n-f||_I\to 0,$$
where 
$$||f||_I=\max\Big(\sup_{u\in G^0} \int_{x\in r^{-1}(u)} |f(x)|\,d\lambda^u(x)\,,\,\,\,\,\,\,\,\sup_{u\in G^0} \int_{x^{-1}\in r^{-1}(u)} |f(x)|\,d\lambda^u(x^{-1})\Big),$$
for $f\in C_c(G)$. 
Moreover, $||\cdot||_I$ is a $*$-algebra norm on $C_c(G)$.

\begin{defn}[Renault-bounded](cf.~\cite[p.~50-51]{Renault})
A Renault-representation 
$$L:(C_c(G),\mathrm{ind})\to (B(H),\mathrm{WO})$$
 is said to be Renault-bounded if
$$||L(f)||_{B(H)}\le ||f||_I\quad \forall f\in C_c(G),$$
where $||\cdot||_{B(H)}$ is the operator norm of $B(H)$.\\
(Thus $L:(C_c(G),||\cdot||_I)\to (B(H), ||\cdot||_{B(H)})$ is continuous).
\end{defn}

For $f\in C_c(G)$ define
$$||f||:=\sup_{L}||L(f)||_{B(H)},$$
where the supremum is taken over all Renault-bounded Renault-representations $(L,H)$ of $C_c(G)$.
This norm on $C_c(G)$ is a $C^*$-norm, and the $C^*$-algebra $C^*(G)$ of the groupoid $G$ is defined as the completion 
$$C^*(G):=\overline{C_c(G)}^{||\cdot||}.$$
(cf.~\cite[p.51-58]{Renault}). Observe that for $f\in C_c(G)$, 
\begin{equation}\label{e:normLEnormI}
 ||f||\le ||f||_I.
 \end{equation}

\begin{lem} \label{l:RenaultboundedImpliesLbounded}
If 
$$L:(C_c(G),\mathrm{ind})\to (B(H),\mathrm{WO})$$
 is a Renault-bounded Renault-representation, then 
 $$L:(C_c(G),||\cdot||)\to(B(H),||\cdot||_{B(H)})$$ is continuous.
\end{lem}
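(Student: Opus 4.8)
The plan is to derive the stated continuity directly from the definition of the $C^*$-norm $\|\cdot\|$ on $C_c(G)$, using only that $L$ is linear. Since $L$ is a $*$-homomorphism it is in particular $\C$-linear, so to prove that $L:(C_c(G),\|\cdot\|)\to (B(H),\|\cdot\|_{B(H)})$ is continuous it suffices to produce a bound of the form $\|L(f)\|_{B(H)}\le C\,\|f\|$ valid for all $f\in C_c(G)$; boundedness of a linear map between normed spaces is equivalent to continuity.

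First I would recall that the $C^*$-norm is defined by
$$\|f\|=\sup_{L'}\|L'(f)\|_{B(H')},$$
the supremum being taken over all Renault-bounded Renault-representations $(L',H')$ of $C_c(G)$. The hypothesis of the lemma is exactly that $(L,H)$ is such a representation. Consequently $(L,H)$ occurs among the representations indexing the supremum, so that for every $f\in C_c(G)$,
$$\|L(f)\|_{B(H)}\le \sup_{L'}\|L'(f)\|_{B(H')}=\|f\|.$$
Thus $L$ is bounded, with operator norm at most $1$, as a linear map from $(C_c(G),\|\cdot\|)$ into $(B(H),\|\cdot\|_{B(H)})$, which is precisely the asserted continuity.

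There is no substantive obstacle in this argument; the lemma is really just an unwinding of the definition of $\|\cdot\|$. The only points deserving explicit mention are that $L$ is linear (so that the inequality $\|L(f)\|_{B(H)}\le \|f\|$ is exactly the continuity statement) and that, being a Renault-bounded Renault-representation, $L$ legitimately belongs to the index set of the supremum. I note that the bound so obtained is at least as strong as the Renault-boundedness hypothesis $\|L(f)\|_{B(H)}\le \|f\|_I$, since $\|f\|\le \|f\|_I$ by Eq.~(\ref{e:normLEnormI}); no further use of the inductive-limit topology is required.
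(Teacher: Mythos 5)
Your proof is correct and follows exactly the paper's argument: since $L$ is itself one of the Renault-bounded Renault-representations over which the supremum defining $\|\cdot\|$ is taken, one gets $\|L(f)\|_{B(H)}\le\|f\|$ for all $f\in C_c(G)$, and boundedness of the linear map $L$ gives continuity. Your write-up merely makes explicit (correctly) the membership of $(L,H)$ in the index set of the supremum, which the paper leaves implicit.
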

\begin{proof}
By definition of $||f||$, $f\in C_c(G)$ we have $||L(f)||_{B(H)}\le ||f||$. Hence the linear operator $L(f)$ is bounded (with norm at most 1) and thus continuous.
\end{proof}

\begin{lem}\label{l:LfaithfulreprISrenaultrepr}
If $L:C^*(G)\to B(H)$ is a faithful representation (in the standard way) then $L$ restricted to $C_c(G)$ is a Renault-bounded Renault-representation.
\end{lem}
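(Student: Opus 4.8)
The plan is to check the three defining properties of a Renault-bounded Renault-representation for the restriction $L|_{C_c(G)}$, drawing only on the fact that $L$ is a faithful (hence, in the standard convention, nondegenerate) $*$-homomorphism of the $C^*$-algebra $C^*(G)$ and that $C_c(G)$ is a norm-dense $*$-subalgebra of $C^*(G)$. The first property is essentially free: since $C_c(G)$ is a $*$-subalgebra of $C^*(G)$, the restriction of the $*$-homomorphism $L$ is again a $*$-homomorphism $C_c(G)\to B(H)$. For the Renault-bounded inequality, I would use that a $*$-homomorphism of $C^*$-algebras is automatically norm-contractive, so $\|L(f)\|_{B(H)}\le \|f\|$ for every $f\in C_c(G)\subset C^*(G)$, where $\|f\|$ is the full $C^*$-norm. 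Combining this with Eq.~(\ref{e:normLEnormI}), namely $\|f\|\le \|f\|_I$, yields $\|L(f)\|_{B(H)}\le \|f\|_I$, which is precisely the Renault-bounded condition.

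For the continuity requirement, I would factor the map through the $I$-norm topology. By the Renault-bounded inequality just obtained, $L$ is $\|\cdot\|_I$-to-norm contractive, hence continuous as a map $(C_c(G),\|\cdot\|_I)\to (B(H),\|\cdot\|_{B(H)})$. By \cite[Prop.~I.4,~p.50]{Renault} the $\|\cdot\|_I$-topology is coarser than the inductive limit topology, so $L$ stays continuous from $(C_c(G),\mathrm{ind})$ to $(B(H),\|\cdot\|_{B(H)})$; and since the operator-norm topology on $B(H)$ is finer than the weak operator topology, $L:(C_c(G),\mathrm{ind})\to (B(H),\mathrm{WO})$ is continuous, as required.

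The remaining property is nondegeneracy. Here I would invoke that a faithful representation taken in the standard sense is nondegenerate, so that $\mathrm{span}\{L(a)\xi : a\in C^*(G),\ \xi\in H\}$ is dense in $H$. Given $a\in C^*(G)$ and $\xi\in H$, I would pick $f_n\in C_c(G)$ with $\|f_n-a\|\to 0$ (possible by density of $C_c(G)$ in $C^*(G)$); then $\|L(f_n)\xi-L(a)\xi\|\le \|f_n-a\|\,\|\xi\|\to 0$, so $L(a)\xi$ lies in the closure of $\mathrm{span}\{L(f)\xi : f\in C_c(G)\}$. Consequently the span of $\{L(f)\xi : f\in C_c(G),\ \xi\in H\}$ has the same closure as the span over all of $C^*(G)$, which is $H$.

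I expect this last step to be the point demanding the most care, since it depends on reading ``faithful representation (in the standard way)'' as including nondegeneracy. If one assumed only injectivity, the representation could carry a degenerate summand on which $C^*(G)$ acts trivially, and the spanning condition would fail; I would remark that the representations relevant to this paper (the universal and the reduced representations) are nondegenerate, so the hypothesis is met in every application. Once nondegeneracy is granted, the three conditions assemble immediately into the conclusion that $L|_{C_c(G)}$ is a Renault-bounded Renault-representation.
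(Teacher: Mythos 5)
Your proof is correct, and it improves on the paper's argument in one respect while differing in another. The paper proves continuity by taking a sequence $f_n\to f$ in the inductive limit topology, passing to $\|f_n-f\|_I\to 0$, then to $\|f_n-f\|\to 0$, and finally invoking Zhu's Theorem~10.7 (a faithful representation of a $C^*$-algebra is isometric) to get $\|L(f_n)-L(f)\|_{B(H)}=\|f_n-f\|\to 0$, hence WO-convergence; Renault-boundedness is then the chain $\|L(f)\|_{B(H)}=\|f\|\le\|f\|_I$. You instead use only the automatic contractivity of $*$-homomorphisms, $\|L(f)\|_{B(H)}\le\|f\|\le\|f\|_I$, which suffices for both the bound and (by factoring through the $I$-norm topology, which the paper notes is coarser than the inductive limit topology) for continuity into the operator-norm and hence the WO topology; this is cleaner, avoids the sequence-based argument, and shows that injectivity of $L$ plays no role in these two conditions. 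The more substantive difference is that you verify the density requirement $\overline{\mathrm{span}}\{L(f)\xi : f\in C_c(G),\ \xi\in H\}=H$ in the definition of a Renault-representation, which the paper's proof omits entirely; your observation that this is exactly where ``faithful (in the standard way)'' must be read as including nondegeneracy -- and that mere injectivity would not suffice, since one could add a summand on which the algebra acts as zero -- is correct and fills a genuine gap in the paper's argument. The representations to which the paper applies this lemma (a faithful representation of $C^*(G)$ used in Theorem~\ref{t:tilde-psi-general}) can indeed be taken nondegenerate, so the lemma is used soundly, but your write-up is the more complete one.
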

\begin{proof}
Suppose that a sequence $(f_n)_{n=1}^\infty \subset C_c(G)$ converges to $f\in C_c(G)$ in the inductive limit topology.
Since the I-norm is coarser than the inductive limit topology we have
$$||f_n-f||_I\to 0.$$
Hence by Eq.~(\ref{e:normLEnormI})
$$||f_n-f||\to 0.$$
Since $L$ is faithful, we get by \cite[Theorem~10.7]{Zhu} that 
$$||L(f_n)-L(f)||_{B(H)}=||f_n-f||\to 0.$$
Since convergence in the norm-topology implies convergence in the weak-operator topology we get
$$\xymatrix{L(f_n)\ar[r]^{\mathrm{WO}}& L(f).}$$
Thus $L$ is a Renault-representation.
It is Renault-bounded since 
$$||L(f)||_{B(H)}=||f||\le ||f||_{I},$$
by \cite[Theorem~10.7]{Zhu} and Eq.~(\ref{e:normLEnormI}).

\end{proof}

For convenience to the reader, we provide a proof of the following well-known result:
\begin{thm}[extension by continuity for groupoids]\label{t:tilde-psi-general}
Let $G'$ and $G$ be locally compact groupoids with Haar systems, and assume that $G'$ is second countable and \'etale.
Let $\psi:C_c(G')\to C_c(G)$ be a continuous $*$-homomorphism. 
Then $\psi$ extends to a (continuous) $*$-homomorphism
 $$\tilde\psi:C^*(G')\to C^*(G).$$
\end{thm}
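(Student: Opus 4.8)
The plan is to extend $\psi$ to the $C^*$-completions using the universal property of the full groupoid $C^*$-norm, via the functional-analytic machinery already assembled in the $C_c(G)$ appendix. The essential point is that the full $C^*$-norm on $C_c(G')$ is, by construction, a supremum over Renault-bounded Renault-representations, so to bound $\|\psi(f)\|_{C^*(G)}$ it suffices to pull back such representations of $G$ through $\psi$ and check the pulled-back objects are again Renault-bounded Renault-representations of $G'$.

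First I would fix a faithful representation $L\colon C^*(G)\to B(H)$ (which exists by the GNS construction / the universal property of $C^*$-algebras). By Lemma \ref{l:LfaithfulreprISrenaultrepr}, the restriction of $L$ to $C_c(G)$ is a Renault-bounded Renault-representation, so in particular $L\colon (C_c(G),\mathrm{ind})\to (B(H),\mathrm{WO})$ is continuous. Composing with the given continuous $*$-homomorphism $\psi\colon (C_c(G'),\mathrm{ind})\to (C_c(G),\mathrm{ind})$ yields a map
$$
L\circ\psi\colon (C_c(G'),\mathrm{ind})\to (B(H),\mathrm{WO}),
$$
which is a $*$-homomorphism and is continuous for the indicated topologies, since both $\psi$ and $L|_{C_c(G)}$ are. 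The density condition for a Renault-representation need not hold for $L\circ\psi$, but one repairs this routinely by compressing $H$ to the closed subspace $\overline{\Span\{L\psi(f)\xi\}}$, which is $L\psi(C_c(G'))$-invariant; the compression is again a continuous $*$-homomorphism and now has dense range, so it is a genuine Renault-representation of $C_c(G')$.

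Next I would verify that this (compressed) representation is Renault-\emph{bounded}, i.e. $\|L\psi(f)\|_{B(H)}\le\|f\|_{I}$ for all $f\in C_c(G')$. This is where I expect the main obstacle to lie, because there is no reason $\psi$ should contract the $I$-norm, so one cannot argue directly. The correct route is to observe that $L\circ\psi$, being a Renault-representation, automatically satisfies the weaker estimate that defines the full norm: for any Renault-representation the operator norm is dominated by the universal $C^*$-norm of the \emph{target} elements. More precisely, since $L$ is a faithful representation of $C^*(G)$ one has $\|L\psi(f)\|_{B(H)}=\|\psi(f)\|_{C^*(G)}$ by \cite[Theorem~10.7]{Zhu}. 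Thus the desired bound becomes $\|\psi(f)\|_{C^*(G)}\le\|f\|_{C^*(G')}$, which is exactly the inequality we are trying to prove; to break this circularity I would instead invoke directly that $L\circ\psi$ is a Renault-representation of $C_c(G')$ and apply the definition of $\|f\|_{C^*(G')}$ as the supremum over \emph{all} Renault-bounded Renault-representations, having first established Renault-boundedness of $L\circ\psi$ from the $I$-norm bound $\|L\psi(f)\|_{B(H)}=\|\psi(f)\|_{C^*(G)}\le\|\psi(f)\|_{I}$ (Eq.~\eqref{e:normLEnormI}) combined with continuity of $\psi$ for the $I$-norm, which follows from continuity for the inductive limit topology since the $I$-norm topology is coarser.

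Finally, with Renault-boundedness in hand, the definition of $\|\cdot\|_{C^*(G')}$ as $\sup_L\|L(f)\|_{B(H)}$ gives $\|\psi(f)\|_{C^*(G)}=\|L\psi(f)\|_{B(H)}\le\|f\|_{C^*(G')}$ for every $f\in C_c(G')$. Hence $\psi$ is a contraction from the dense subalgebra $C_c(G')$ of $C^*(G')$ into $C^*(G)$, and a bounded $*$-homomorphism defined on a dense subalgebra extends uniquely by continuity to a $*$-homomorphism $\tilde\psi\colon C^*(G')\to C^*(G)$ on the completion, preserving the algebraic operations since they are norm-continuous. The second-countable and \'etale hypotheses on $G'$ are what guarantee the Haar system of counting measures and the well-behaved $I$-norm theory cited above, so they enter only through the standing assumptions of the $C_c(G)$ appendix rather than in any new way.
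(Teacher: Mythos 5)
Your overall strategy---pull a faithful representation $L$ of $C^*(G)$ back through $\psi$, show that $L\circ\psi$ is a Renault-bounded Renault-representation of $C_c(G')$, and then extend using the universal property of the full norm---is exactly the paper's strategy, and you correctly locate the crux: Renault-boundedness of $L\circ\psi$. But your resolution of that crux contains a genuine error. You claim that continuity of $\psi$ for the $I$-norms ``follows from continuity for the inductive limit topology since the $I$-norm topology is coarser.'' This implication goes the wrong way. Coarseness of the $I$-norm topology means the identity $(C_c(G'),\mathrm{ind})\to(C_c(G'),\|\cdot\|_I)$ is continuous; hence a map that is $I$-norm continuous on its domain is automatically ind-continuous, not conversely. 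From ind-continuity of $\psi$ you can conclude nothing about $\|\psi(f)\|_{I}$ in terms of $\|f\|_{I}$---and even if you did have continuity between the normed spaces $(C_c(G'),\|\cdot\|_I)\to(C_c(G),\|\cdot\|_I)$, a bounded linear map only gives $\|\psi(f)\|_I\le C\|f\|_I$ for some constant $C$, whereas Renault-boundedness demands the estimate with constant $1$. So the chain $\|L\psi(f)\|_{B(H)}=\|\psi(f)\|_{C^*(G)}\le\|\psi(f)\|_{I}\le\|f\|_{I}$ breaks at its last link, and without it your appeal to the definition of $\|\cdot\|_{C^*(G')}$ as a supremum over Renault-bounded Renault-representations never gets started; the circularity you yourself identified is not actually broken.

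The missing ingredient---and the step the paper's proof uses---is Renault's automatic-boundedness theorem \cite[Corollary~1.22,~p.~72]{Renault}: for a second countable locally compact groupoid with Haar system, every Renault-representation of $C_c(G')$ (a $*$-homomorphism into $B(H)$, continuous from the inductive limit topology to the weak operator topology, with dense span) is automatically Renault-bounded, i.e.\ dominated by the $I$-norm. Applying this to $L\circ\psi$ gives the needed bound for free; Lemma \ref{l:RenaultboundedImpliesLbounded} then yields $\|L\psi(f)\|_{B(H)}\le\|f\|_{C^*(G')}$, one extends $L\circ\psi$ by continuity, and sets $\tilde\psi=L^{-1}\circ\widetilde{L\circ\psi}$, which is legitimate because the extension's image lies in the closed subalgebra $L(C^*(G))$ and $L$ is a $*$-isomorphism onto its image. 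Note that this is precisely where second countability of $G'$ is used, so your closing remark that the hypotheses on $G'$ ``enter only through the standing assumptions'' is not accurate. Your observation that the density (nondegeneracy) requirement for $L\circ\psi$ may need repairing by compression to the essential subspace is a fair point which the paper glosses over, but it does not repair the main gap.
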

\begin{proof}
Let $(L,H)$ be a faithful representation of $C^*(G)$. 
By Lemma \ref{l:LfaithfulreprISrenaultrepr}, $L$ is a Renault-representation Renault-bounded when restricted to $C_c(G)$. 
Since $\psi$ is continuous in the inductive limit topology and it is a $*$-homomorphism,
$L\circ \psi$ is a Renault-representation.
Then by \cite[Corollary~1.22,~p.~72]{Renault}, $L\circ \psi$ is Renault-bounded.
By Lemma \ref{l:RenaultboundedImpliesLbounded}, 
$$L\circ \psi:(C_c(G'),||\cdot||_{C^*(G')})\to (B(H),||\cdot||_{B(H)})$$ 
is bounded.
Hence we can extend $L\circ \psi$ by continuity to $C^*(G')$ and we denote the extended map by $\widetilde{L\circ\psi}$.
Thus 
$$\tilde\psi= L^{-1}\circ\widetilde{L\circ\psi}$$
is a (continuous) $*$-homomorphism (as $L$ is a $*$-isomorphism onto its image in $B(H)$).
\end{proof}
%

\section{\textbf{On $T+\R^d$ with the subspace topology of $\Omega$}}
\begin{pro}\label{p:TplusxToxNotContinuous}
Let $T+\R^d\subset \Omega$ be given the subspace topology.
Then the map $T+\R^d\to \R^d$ given by 
$$T+x\mapsto x$$
is not continuous.
\end{pro}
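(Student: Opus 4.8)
The plan is to exhibit an explicit sequence in $T+\R^d$ that converges in the subspace topology of $\Omega$ but whose images under the map $T+x\mapsto x$ do not converge in $\R^d$. The whole point of Proposition~\ref{p:Rd-TplusRd} was that this map \emph{is} a homeomorphism when $T+\R^d$ carries the inductive limit topology; the present proposition records the sharp contrast with the subspace topology, so the proof should make essential use of how the metric $d$ on $\Omega$ differs from the Euclidean metric. Recall that the tiling metric declares two tilings close when, after a \emph{small} translation of one of them, they agree on a large ball around the origin. Thus a large translate $T-x_n$ with $\|x_n\|\to\infty$ can nonetheless be $d$-close to $T$ itself, provided the patch of $T$ around the origin reappears (up to a small shift) somewhere far out, which by repetitivity of primitive substitution tilings it does.

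First I would choose a sequence $(x_n)\subset\R^d$ with $\|x_n\|\to\infty$ such that $T-x_n\to T$ in $\Omega$. Concretely, by the definition of $\Omega=\overline{T+\R^d}^{\,d}$ together with FLC and primitivity of $\omega$, the tiling $T$ is repetitive, so the central patch $T(B_{R}(0))$ recurs: for every $R$ and every $\varepsilon$ there is a translation vector $x$ with $\|x\|$ arbitrarily large and a small corrector $y$ with $\|y\|<\varepsilon$ so that $T-x+y$ agrees with $T$ on $B_{R}(0)$. Taking $R=R_n\to\infty$ and $\varepsilon=\varepsilon_n\to 0$ produces $x_n$ with $\|x_n\|\to\infty$ and $d(T-x_n,\,T)\to 0$. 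Hence $T-x_n\to T=T-0$ in the subspace topology inherited from $\Omega$. I would state repetitivity as the one external input here, since it is standard for primitive FLC substitution tilings (cf.~the background cited from \cite{AP}) and is implicit in $\Omega=\overline{T+\R^d}$.

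Now apply the candidate map $T+x\mapsto x$: since $T-x_n = T+(-x_n)$, the image of the convergent sequence is $-x_n$, and the image of the limit point $T=T+0$ is $0$. But $\|-x_n\|=\|x_n\|\to\infty$, so $-x_n\not\to 0$ in $\R^d$. Therefore the map sends a convergent sequence to a non-convergent one, and a map between metric spaces that fails to be sequentially continuous is not continuous. This contradicts continuity and proves the proposition. The main obstacle is purely one of locating the right sequence: I must be sure that the chosen $x_n$ genuinely realize $T-x_n\to T$ in the metric $d$ and not merely agreement on bounded balls without the small-translation correction that $d$ permits; spelling out that the small corrector $y_n$ (with $\|y_n\|\to 0$) is exactly what keeps $d(T-x_n,T)$ small while $\|x_n\|\to\infty$ is the delicate point, and it is precisely the mechanism by which the inductive limit topology (which would forbid such behaviour, by Proposition~\ref{p:ind:Xn0-eventually}) and the subspace topology diverge.
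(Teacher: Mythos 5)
Your proposal is correct and follows essentially the same route as the paper: use repetitivity of the primitive FLC substitution tiling to find translates $x_n$ with $\|x_n\|\to\infty$ such that $T-x_n$ agrees with $T$ on larger and larger balls, so $d(T,T-x_n)\to 0$ while the images $-x_n$ diverge. The paper's version is marginally simpler in that it invokes exact recurrence of the patches $T(B_n(0))$ (citing \cite{KelPut} and \cite{AP}), so no small corrector translation $y_n$ is needed, but this is a cosmetic difference.
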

\begin{proof}
Let $P_n:=T(B_n(0))$, $n\in \N$, be the patch containing the ball $B_n(0)$ of radius $n$, and note that $P_n\subset P_{n+1}$.
Choose patch $P_n+x_n$ in $T$ such that $||x_n||\ge n$. This can be done by \cite[Thm~2.3,~p.3]{KelPut} and \cite[Cor.~3.5,~p.11]{AP}. 
Since $T$ and $T-x_n$ agree on the patch $P_n$ and thus on $B_n(0)$, we have
$$d(T,T-x_n)\le\frac 1n.$$
Thus $T-x_n$ converges to $T$, but $x_n \not\to 0$ because 
$$||x_n||\to \infty.$$
\end{proof}

\section{\textbf{Projections in $C_r^*(R|_{X_0})$}}
We use the notation of Section \ref{s:S}.
For simplicity of notation, let $T:=T_n$, $R:=R_n$ for fixed $n\in\N_0$.

Let $f\in C_c(R|_{X_0})$ be defined by
$$f(x,y)=\left\{
           \begin{array}{cl}
             1 &\text{\quad  if $x=y=v_i^0$}\\
             0 &\text{\quad  otherwise}\\
           \end{array}
         \right.
         $$
         where $x\sim y$.
         Since $f^*(x,y)=\overline{f(y,x)}=f(y,x)=f(x,y)$, $f$ is self-adjoint.
For $x\sim z$ we have
$$(f\star f)(x,z)=\sum\limits_{x\sim y} f(x,y) f(y,z).$$
$$f(x,y)=\left\{
           \begin{array}{cl}
             1 &\text{\quad  if $x=y=v_i^0$}\\
             0 &\text{\quad  otherwise}\\
           \end{array}
         \right.
         $$
         $$f(y,z)=\left\{
           \begin{array}{cl}
             1 &\text{\quad  if $y=z=v_i^0$}\\
             0 &\text{\quad  otherwise}\\
           \end{array}
         \right.
         $$
Thus $(f\star f)(x,z)\ne0$ only if $x=z=v_i^0$.
In this case,
\begin{eqnarray*}
  (f\star f)(v_i^0,v_i^0)&=&\sum_{y\sim v_i^0} f(v_i^0,y)f(y,v_i^0)\\
&=&f(v_i^0,v_i^0)f(v_i^0,v_i^0)\\
&=&1.
\end{eqnarray*}
Hence, $f\star f=f$. Hence $f$ is a projection in $C_r^*(R|_{X_0})$.

Below we give a different proof of \cite[Prop.~3.7-9]{Gon1}.
 \begin{pro}{\cite[Prop.~3.7-9]{Gon1}}\label{p:a:compacts}
  \begin{enumerate}
  \item $C_r^*(R|_{X_0})\cong \bigoplus\limits_{k=1}^{\mathrm{sV}} K(\ell^2([v_k]))$
\item $C_r^*(R|_{X_1-X_0}) \cong \bigoplus\limits_{k=1}^{\mathrm{sE}} C_0(\open{e}_k, K(\ell^2([e_k])))$
\item $C_r^*(R|_{X_2-X_1})\cong \bigoplus\limits_{k=1}^{\mathrm{sF}} C_0(\open{\sigma}_k, K(\ell^2([\sigma_k])))$
\end{enumerate}
where $\mathrm{sV}, \mathrm{sE}, \mathrm{sF}$ are the number of stable vertices, stable edges, stable faces, respectively, and
$v_k, e_k, \sigma_k$ are representatives of the $R$-equivalence classes.
\end{pro}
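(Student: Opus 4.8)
The plan is to exploit the étale structure of $R_n$ established in Subsection \ref{ss:Rs'} and to decompose each of the three restricted equivalence relations into finitely many clopen pieces, one for each stable cell, so that the corresponding reduced $C^*$-algebra splits as a finite direct sum; on each piece I would then recognize an explicit groupoid whose reduced $C^*$-algebra is known. The key inputs are that $R_n$ is étale, that each connected component maps homeomorphically onto its (open) range and source images, and that the numbers $\sV,\sE,\sF$ of stable classes are finite.

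First I would treat (1). The unit space $X_0^{T_n}$ is the set of vertices, which is discrete in the subspace topology, so $R_n|_{X_0^{T_n}}$ is a discrete (hence étale) equivalence relation on a countable set. By Definition \ref{d:stablecells} two vertices are $R_n$-equivalent precisely when they represent the same stable vertex, so the orbits are exactly the sets $[v_k]$, $k=1,\dots,\sV$, and there are finitely many of them. Each orbit is clopen and $R_n$-invariant, so $R_n|_{X_0^{T_n}}=\bigsqcup_{k=1}^{\sV} R_n|_{[v_k]}$ as a disjoint union of clopen subgroupoids, giving $C_r^*(R_n|_{X_0^{T_n}})\cong\bigoplus_{k=1}^{\sV}C_r^*(R_n|_{[v_k]})$. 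Within a single orbit all pairs are equivalent, so $R_n|_{[v_k]}$ is the pair groupoid (the full equivalence relation) on the countable set $[v_k]$, whose reduced groupoid $C^*$-algebra is $K(\ell^2([v_k]))$ by the standard computation for transitive principal groupoids on discrete sets. Assembling the summands yields (1).

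Next, for (2) and (3) I would use the description of the connected components of $R_n$ from Subsection \ref{ss:Rs'}: each component $C$ maps homeomorphically onto the open sets $r(C),s(C)\subset\R^d$, and on $X_k^{T_n}-X_{k-1}^{T_n}$ these images are exactly open $k$-cells. For edges, $X_1^{T_n}-X_0^{T_n}=\bigsqcup_e \open{e}$, and the stable edge classes $[e_k]$, $k=1,\dots,\sE$, again partition the edges into finitely many clopen $R_n$-invariant pieces, so that $C_r^*(R_n|_{X_1^{T_n}-X_0^{T_n}})\cong\bigoplus_{k=1}^{\sE}C_r^*(R_n|_{[e_k]})$. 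On the piece attached to $[e_k]$, a connected component is indexed by an ordered pair of edges $(e,e')$ in $[e_k]$, with $r$ and $s$ identifying it homeomorphically with $\open{e}$ and $\open{e}'$ via the unique translation matching their patches. This exhibits $R_n|_{[e_k]}$ as topologically isomorphic to the product of the co-diagonal (identity) groupoid on $\open{e}_k$ with the pair groupoid on $[e_k]$, so its reduced $C^*$-algebra is $C_0(\open{e}_k)\tensor K(\ell^2([e_k]))\cong C_0(\open{e}_k,K(\ell^2([e_k])))$. The faces in (3) are handled identically, with $\open{e}_k$ replaced by the open $2$-cell $\open{\sigma}_k$ (homeomorphic to an open disk).

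The routine functoriality — that a clopen decomposition of an étale groupoid induces a direct sum of reduced $C^*$-algebras, and that a product with the pair groupoid yields a spatial tensor product with the compacts — I would invoke from the standard theory. The main obstacle I anticipate is the careful \emph{topological} identification in (2) and (3): verifying that the bijection between $R_n|_{[e_k]}$ and the product groupoid is a homeomorphism for the relevant topologies, i.e.\ that the translation matching equivalent cells varies continuously and that the étale topology of the restricted relation coincides with the product topology. This is precisely where the homeomorphism property of the range and source maps on connected components, together with the openness of $r(C)$ and $s(C)$ already recorded in Subsection \ref{ss:Rs'}, does the essential work.
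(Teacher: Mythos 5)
Your proposal is correct, and it rests on the same two pillars as the paper's own proof: the decomposition of the restricted relation into the finitely many $R$-invariant pieces indexed by the stable cells, and the use of the translations $\gamma_{e,e'}$ matching equivalent cells to trivialize each piece over a model cell. Where you differ is in how the identification of each piece is carried out. You promote the trivialization to a topological groupoid isomorphism --- each piece becomes the product of the open model cell (viewed as a trivial groupoid of units) with the pair groupoid on a countable discrete set --- and then invoke general theory: finite clopen invariant decompositions give direct sums of reduced $C^*$-algebras, the pair groupoid on a countable discrete set gives the compacts, and products give (minimal) tensor products. The paper instead works in coordinates: it writes the left regular representation $\lambda(f)$ of a compactly supported function concretely, observes that it is block-diagonal over the stable classes (for vertices), or a field of operators $s\mapsto \lambda(f_i)(s)\in B(\ell^2([e_i]))$ over the open model cell (for edges and faces), and identifies the norm closure of the image directly inside $B(\ell^2(X_0))$, respectively inside $\bigoplus_i C_0(\open{e}_i,B(\ell^2([e_i])))$. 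Your route is more modular and makes transparent that the statement is an instance of standard groupoid machinery, at the cost of the topological verification you correctly flag (that the subspace topology on each piece agrees with the product topology), which indeed follows from the openness of connected components and of their range/source images recorded in Subsection \ref{ss:Rs'}. The paper's route buys something you would eventually need anyway: the explicit isomorphism $\lambda$ and the matrix units it produces are precisely what is used later, in Lemmas \ref{l:Wv}, \ref{l:Wf} and \ref{l:We}, to compute the generators of the $K$-groups and the connecting maps $K_0(\gamma)$, $K_0(\alpha')$, $K_1(\beta')$; with your abstract identification those formulas would still have to be extracted at that stage.
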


\begin{proof}
Let $N:=sV$ be the number of stable vertices, i.e.~the number of $R$-equivalence classes.
Then the underlying space of the 0-skeleton of $T$ is the set of vertices
$$X_0=\{v\in \R^d\mid v\in[v_1]\cup\cdots\cup[v_N]\},$$
where the vertices $v_0,\ldots,v_N\in T$ are (fixed) representatives of the $R$-equivalence classes. 
We remark that each of these equivalence classes has countable many vertices.
The restriction of $R$ to the vertices is 
$$R\mid_{X_0}=\{(x,y)\in\R^d\times\R^d\mid (x,y)\in[v_1]\times[v_1]\,\,\cup\cdots\cup\,\,[v_N]\times[v_N]\}.$$
The $C^*$-algebra  $C_r^*(R|_{X_0})$ is the completion inside $B(\ell^2(X_0))$ of 
  $$\{\lambda(f)\mid f\in C_c(R|_{X_0})\},$$
  where 
  $$(\lambda(f)\xi)(x)= \sum_{y\sim x} f(x,y) \xi(y).$$
For $f\in C_c(R\mid_{X_0})$ let  
$$f_i:=f|_{[v_i]\times [v_i]},\qquad i=1,\ldots,N.$$
Then
$$f=f_1+\cdots +f_N,\qquad \supp(f_i)\subset [v_i]\times [v_i]$$
and
\[\lambda(f)=
\begin{blockarray}{cccccc}
[v_1] & [v_2]  & \cdots &[v_{N-1}] & [v_{N}] \\
\begin{block}{(ccccc)c}
 \overline{\underline{| f_1(x,y)|}} & 0 & 0 & 0 & 0 & [v_1] \\
  0 & \overline{\underline{|f_2(x,y)|}} & 0 & 0 & 0 & [v_2] \\
  0 & 0 & \ddots & 0 & 0 & \vdots \\
  0 & 0 & 0 & \overline{\underline{|f_{N-1}(x,y)|}} & 0 & [v_{N-1}] \\
  0 & 0 & 0 & 0 & \overline{\underline{|f_N(x,y)|}} & [v_N] \\
\end{block}
\end{blockarray}
 \]
\noindent
 We have
 $$C_r^*(R|_{X_0})\cong \bigoplus_{i=1}^{\mathrm{sV}} K(\ell^2([v_i]))$$
 because $\lambda:C_c(R|_{X_0})\hookrightarrow B(\ell^2(X_0))$ is an injective $*$-homomorphism 
 and
  $$C_r^*(R|_{X_0})=\overline{\lambda(C_c(R|_{X_0}))}^{||\cdot||_{B(H)}}=\bigoplus_{i=1}^{sV} K(\ell^2([v_i])),$$
 where $H$ is the Hilbert space $\ell^2(X_0)$. This is fairly easy to show.

Now let $N:=sE$ be the number of stable edges, and $J:=J_n$.
Then the underlying space of the 1-skeleton of $T$ is
$$X_1=\{x\in e\mid e\in [e_1]\cup\cdots\cup[e_N]\},$$
where the edges $e_1,\ldots, e_N\in T$ are (fixed) representatives of the $R$-equivalence classes.
The restriction of $R$ to the 1-skeleton minus its vertices is  
$$R\mid_{X_1-X_0}=\{(x,\gamma_{e,e'}(x))\mid x\in \open{e}, (e,e')\in [e_1]\times[e_1]\,\,\cup\ldots\cup\,\,[e_N]\times [e_N]\},$$
where $\gamma_{e,e'}(x):=x+x_0$, with $x_0$ being the translation of the two edges: $e'=e+x_0$. 
For $f\in C_c(R\mid_{X_1-X_0})$ let  
$$f_i:=f|_{R|_{[e_i]\times [e_i]-X_0}},\qquad i=1,\ldots,N,$$
where 
$$R\mid_{[e_i]\times[e_i]-X_0}=\{(x,\gamma_{e,e'}(x))\mid x\in \open{e}, (e,e')\in [e_i]\times[e_i]\}.$$
Then
$$f=f_1+\cdots +f_N,\qquad \supp(f_i)\subset R|_{[e_i]\times [e_i]-X_0}.$$
Since $f$ has compact support and we have removed the vertices, $f$ is zero in a neighborhood of the vertices.
(Recall that the topology of the graph $\gamma_{\open{e},\open{e}\,\!'}\subset R|_{X_1-X_0}$ is the topology of $\open{e}$, which by definition is homeomorphic to $\R$.)
Thus $f_i$ has compact support as well.
The $C^*$-algebra $C_r^*(R|_{X_1-X_0})$ is the completion inside $\bigoplus_{i=1}^N C_0\big(\open{e}_i,B(\ell^2([e_i]))\big)$ of 
  $$\{\lambda(f)\mid f\in C_c(R|_{X_1-X_0})\},$$
  where 
  $$\lambda(f)=(\lambda(f_1),\ldots,\lambda(f_N)),$$
  and  $\lambda(f_i):\open{e}_i\to B(\ell^2([e_i]))$ is defined on $s\in \open{e}_i$ as
  $$\lambda(f_i)(s)(\xi)(e)=\sum_{e\sim e'}f_i\big(\gamma_{e_i,e}(s),\gamma_{e_i,e'}(s)\big)\xi(e'),$$
Since $f_i$ is non-zero only on a finite number of the graphs $\gamma_{\open{e}_i,\open{e}}$, and it is zero on a neighborhood of its vertices, $\lambda(f_i)$ is zero in a smaller neighborhood of its vertices  as well. 
Thus $\lambda(f_i)$ has also compact support.
 We have
 $$C_r^*(R|_{X_1-X_0})\cong \bigoplus_{i=1}^{\mathrm{sE}} C_0\big(\open{e}_i,K(\ell^2([e_i]))\big)$$
 because $\lambda:C_c(R|_{X_1-X_0})\hookrightarrow \bigoplus_{i=1}^N C_c\big(\open{e}_i,B(\ell^2([e_i]))\big)$ is an injective $*$-homomorphism 
and
  $$C_r^*(R|_{X_1-X_0})=\overline{\bigoplus_{i=1}^{N} C_c(R|_{[e_i]\times[e_i]-X_0})}^{\mathrm{||\cdot||_r}}=
   \bigoplus_{i=1}^{sE}C_0(\open{e_i}, K(\ell^2([v_i]))).$$

The proof of (3) is similar to the proof of (2).
 
\end{proof}

It follows that
\begin{eqnarray*}
  K_0(C_r^*(R|_{X_0}))&\cong& K_0\Big(\bigoplus_{i=1}^{\mathrm{sV}} K(\ell^2([v_i]))\Big)\\
  &=&\bigoplus_{i=1}^{\mathrm{sV}} K_0\Big( K(\ell^2([v_i]))\Big) \cong \Z^{sV},
\end{eqnarray*}
where the equality in the second line follows because the number of stable vertices $sV$ is finite. 

It is well-known that the $K_0$-group  of the compact operators $K(\ell^2([v]))$ is isomorphic to $\Z$ and that it has generator $[p]_0$, for any 1-dimensional projection $p$ in $K(\ell^2([v]))$. Take
\begin{equation}\label{e:fn}
  f_i(x,y)=\left\{
            \begin{array}{cl}
              1 & \quad x=y=v_i^0 \\
              0 & \quad \text{otherwise.} \\
            \end{array}
          \right.
\end{equation}

Now we show that  $\lambda(f_i)$ is a one dimensional projection in $K(\ell^2([v_i]))$:
Let $\xi\in \ell^2([v_i^0])$. For $x\in [v_i^0]$,
\begin{eqnarray*}
  (\lambda(f_i)\xi)(x)&=&\sum_{y\in [v_i^0]} f_i(x,y) \xi(y)\\
  &=&\left\{
            \begin{array}{cc}
              \xi(v_i^0) &\quad x=v_i^0 \\
              0 &\quad x\ne v_i^0 \\
            \end{array}
          \right.
\end{eqnarray*}

So $\lambda(f_i)|_{\ell^2([v_i^0])}$ is a projection onto $\C \delta_{v_i^0}$.
If $\xi\in \ell^2([v_j])$, $j\ne i$, then for $x\in [v_j]$,
$(\lambda(f_i)\xi)(x)=0$ because $x\ne v_i^0$ for all $x\in [v_j^0]$.
Hence $\lambda(f_i)$ is a one dimensional projection in $K(\ell^2([v_i^0])) \,\,\,\subset \bigoplus_{[v_i^0]\in V} K(\ell^2([v_i^0]))$.
Therefore, $$K_0(C_r^*(R|_{X_0}))$$ 
has generators 
$$[f_1]_0, \ldots, [f_n]_0$$
where $f_1,\ldots, f_n$ are defined in (\ref{e:fn}).

\subsection{Exponential map}
Recall that we are using the notation of Section \ref{s:S} and $T:=T_n$ and $R:=R_n$, $n\in \N_0$.
\begin{pro}[{\cite[Proposition~III.28]{Gon0}}]\label{p:delta0-d1}
Assume that the dimension of tiling $T$ is 1.
The exponential map  $\delta^0:\Z^{\sV}\to \Z^{\sE}$ is given by
$$\delta^0(e_1.e_2)=e_1-e_2.$$
\end{pro}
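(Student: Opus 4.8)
For a one-dimensional tiling $T := T_n$ with equivalence relation $R := R_n$, the exponential map $\delta^0 : \Z^{\sV} \to \Z^{\sE}$ arising from the six-term exact sequence of Eq.~(\ref{e:JBC}) is given on a stable vertex $e_1.e_2$ (the vertex lying between an edge of type $e_1$ on the left and $e_2$ on the right) by $\delta^0(e_1.e_2) = e_1 - e_2$.
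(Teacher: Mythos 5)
Your proposal contains no proof: after fixing notation it simply restates the claim. Everything that makes this proposition nontrivial is missing. Concretely, to compute the exponential map of the extension in Eq.~(\ref{e:JBC}) you must (i) choose a vertex $v$ in the stable class $e_1.e_2$ and exhibit the corresponding generator of $K_0(C_n)$ as an explicit $1$-dimensional projection $p\in C_r^*(R|_{X_0})$ supported at $(v,v)$; (ii) produce a self-adjoint lift $a$ of $p$ in (the unitization of) $B_n=C_r^*(R|_{X_1})$ — the paper takes $a$ supported on the diagonal, equal to $s$ on the incoming edge $e_1$ and $1-s$ on the outgoing edge $e_2$ (after identifying each edge with $[0,1]$) — and invoke the standard description of the exponential map, $\delta^0([p]_0)=[\exp(2\pi i \tilde a)]_1$ (cf.~\cite[Prop.~12.2.2]{Rordam}); (iii) compute $\exp(2\pi i a)$ pointwise on the diagonal, obtaining the unitary $e^{2\pi i s}$ on $e_1$, $e^{-2\pi i s}$ on $e_2$, and $1$ elsewhere, and then translate this, via the isomorphism of Proposition \ref{p:compacts}(2), into the class $[e_1]_1-[e_2]_1$ in $K_1(J_n)\cong\Z^{\sE}$, using that $e^{2\pi i s}$ represents the Bott generator of $K_1$ of $C_0((0,1))$ and that $[u^{-1}]_1=-[u]_1$.

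There is also a case you would need to address even after carrying out the computation above: when the two adjacent edges are $R$-equivalent as stable edges ($e_1\sim_R e_2$), the two phases $e^{2\pi i s}$ and $e^{-2\pi i s}$ land in the same summand of $J_n$, and one must show (e.g.~by Whitehead's lemma) that their product is trivial in $K_1$, so that $\delta^0(e_1.e_2)=0$, which is consistent with $e_1-e_2=0$ in $\Z^{\sE}$. Without steps (i)–(iii) and this case analysis, the formula $\delta^0(e_1.e_2)=e_1-e_2$ — including its sign convention, which the paper is careful to fix against \cite{Gon2} — is asserted, not proved.
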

\begin{proof}
Recall that a 1-dimensional projection generates the $K_0$-group of the compact operators $K$.
One can choose any 1-dimensional projection because all are equivalent (cf.~Lemma \ref{l:onedimprojareequiv}). 
Let $v\in T$ be a vertex in the stable vertex $e_1.e_2$;
let $p$ be the 1-dimensional projection in $C=\C_r^*(R|_{X_0})$ given by
$$p(x,y)= \left\{\begin{array}{cc}
    1 & (x,y)=(v,v) \\ 
    0 & else. \\ 
  \end{array}\right.
$$
Let $a\in B$ be the self-adjoint operator given by
$$a(x,y)= \left\{\begin{array}{cc}
    s & x=y=s\in e_1 \\ 
    1-s & x=y=s\in e_2 \\ 
    0 & else. \\ 
  \end{array}\right.
$$
(where we ignore reparametrization of the edge, e.g.~we identify $e_2$ with the unit interval $[0,1]$).
Note that $p$ is a projection that corresponds to the stable edge $e_1.e_2$. Moreover, the support of $p$ and $a$ is on the diagonal of $R$.\\
\centerline{\begin{tikzpicture}[scale=2]
\draw[->] (2,0)--(2.1,0);
\draw[->] (0,2)--(0,2.1);
\node at (2.2,0) {$x$};
\node at (0,2.2) {$y$};

\draw[->] (0,0)--(.5,0);
\draw[->] (1,0)--(1.5,0);
\draw[->] (0,0)--(0,.5);
\draw[->] (0,1)--(0,1.5);
\node at (.5,-.1) {$e_1$};
\node at (1,-.1) {$v$};
\node at (1.5,-.1) {$e_2$};
\node at (-.1,.5) {$e_1$};
\node at (-.1,1) {$v$};
\node at (-.1,1.5) {$e_2$};
\node at (1,0) {\textbullet};
\node at (0,1) {\textbullet};
\draw (0,0) -- (2,0);
\draw[dashed] (0,1) -- (2,1);
\draw[dashed] (0,2) -- (2,2);
\draw (0,-.1) -- (0,2.1);
\draw[dashed] (1,0) -- (1,2);
\draw[dashed] (2,0) -- (2,2);
\draw (-.1,-.1) -- (2.2,2.2);
\draw[red] (0,0) -- (.8,2.5)--(2,2);
\node [red] at (.8,2.5) {\textbullet};
\draw[red,dotted] (1,1)--(.8,2.5);
\node[red] at (.8,2.65) {$1$};
\node[red] at (.4,1.6) {$s$};
\node[red] at (1.4,2.4) {$1-s$};
\node at (2.8,2) {$R$};
\node at (2.25,2.25) {$x=y$};
\node at (1,-.4) {$a(x,y)$};
\end{tikzpicture}}
Then $\tilde p:=(p,0)$ is a projection in the unitization $\tilde C$ of $C$, and $\tilde a:=(a,0)$ is a selfadjoint operator in the unitization $\tilde B$ of $B=C_r^*(R|_{X_1})$.
By \cite[Proposition~12.2.2]{Rordam}, there exists a unique unitary $u\in \tilde J$, where $\tilde J$ is the unitization of $J=C_r^*(R|_{X_1-X_0})$, such that
$$\tilde{i}(u)(x,y)=e^{2\pi i\tilde a}(x,y)$$
and such that
$$\delta^0([\tilde p]_0)=\delta^0([\tilde p]_0-[s(\tilde p)]_0)=[u]_1.$$
and note that the scalar part $s(\tilde p)=0$ is zero. 
We would like to remark that we are using a different convention than R{\o}rdam's (he defines $\delta^0([\tilde p]_0)=-[u]_1$). 
Our $\delta^0$ corresponds to the standard differential defining cellular cohomology.
We evaluate $e^{2\pi i\tilde a}(x,y)$ and get
$$e^{2\pi i\tilde a}(x,y)= \left\{\begin{array}{lc}
    e^{2\pi i s} & s=x=y\in e_1 \\ 
    e^{2\pi i (1-s)}=e^{-2\pi is} & s=x=y\in e_2\\
    1 & x=y\not\in e_1\cup e_2\\
	0&else.
  \end{array}\right.
$$
Note that the evaluation of the exponential function $e^{2\pi i\tilde a}(x,y)$ is done pointwise since $\tilde a$ has support on the diagonal $\Delta_{R}$ of $R$ and 
$C_r^*(\Delta_{R})= C_0(\Delta_{R})$.

We want to describe $u$ in terms of the compacts, that is, up to the isomorphism given in Proposition \ref{p:compacts}. 
But first note that $u$ is in the unitization of $J$, and recall that the unitization of the compacts is $K(H)+\C I_H$.
Suppose that $e_1\not\sim_{R} e_2$ as stable edges (we are only working on the diagonal of $R$).
Then
$$\delta^0([\tilde p]_0)=[e^{2\pi is}\tensor e_{e_1 e_1}+e^{-2\pi is}\tensor e_{e_2 e_2}+ \sum_{e_3\not\sim e_1,e_3\not\sim e_2} 1\tensor e_{e_3 e_3}]_1.$$
$$=[e^{2\pi is}\tensor e_{e_1 e_1}-e^{2\pi is}\tensor e_{e_2 e_2}+ \sum_{e_3\not\sim e_1,e_3\not\sim e_2} 1\tensor e_{e_3 e_3}]_1,$$
(where the second equality is because for unitary u it holds  $0=[uu^{-1}]_1=[u]_1+[u^{-1}]_1$ by Proposition 8.1.4 in \cite{Rordam}).
Remark: $e^{2\pi i s}$ is just a unitary in the unitization of $C_0((0,1))$ i.e.~is in $A:=\{f\in C[0,1]\mid f(0)=f(1)\}$.
$[e^{2\pi is }]_1$ is a generator for $K_1(A)$ (see Bott element in \cite{Rordam} 11.3).
Thus 
$$\delta^0(e_1.e_2)=e_1-e_2.$$
If $e_1\sim_{R} e_2$ as stable edges then we get, by Whitehead's lemma (Lemma 2.1.5 in \cite{Rordam}), that
$$
  \left(\begin{array}{cc}
    e^{2\pi is} & 0 \\ 
    0 & e^{-2\pi is} \\ 
  \end{array}\right)
\sim_h
  \left(\begin{array}{cc}
    1 & 0 \\ 
    0 & 1 \\ 
  \end{array}\right).
$$
Hence
$$\delta^0([\tilde p]_0)=[e^{2\pi is}\tensor e_{e_1 e_1}+e^{-2\pi is}\tensor e_{e_2 e_2}+ \sum_{e_3\not\sim e_1} 1\tensor e_{e_3 e_3}]_1=0.$$
Thus
$$\delta^0(e_1.e_2)=0=e_1-e_2,$$
where the last equality is because $e_1$ and $e_2$ are $R$-equivalent.

\end{proof}

\section{\textbf{Background review}}\label{a:1}
The following lemma is closely related to the splitting lemma.
\begin{lem}
Let $A,B,C$ be $R$-modules, where $R$ is a commutative ring.
Let 
$$\xymatrix{0\ar[r]&A\ar[r]^{i}&B\ar@/_1.0pc/[l]_{r}\ar[r]^{\pi}&C\ar@/_1.0pc/[l]_{s}\ar[r]&0}$$
be a split short exact sequence.
Thus the injective map $i$, the surjetive map $\pi$, the retraction map $r$, and the section map $s$ are all homomorhism, and they satisfy
$$r\circ i= 1_A,\quad \pi\circ s = 1_C, \quad \pi \circ i=0,$$
$$i\circ r+ s\circ\pi= 1_B,\qquad r\circ s=0.$$ 
This is equivalent to $B$ being isomorphic to the direct sum  $A\oplus C$. More precisely, the following diagram commutes
$$\xymatrix{
0\ar[r]&\ar[d]^{1_A}_{\cong}A\ar[r]^{i}&B\ar[d]^{\phi}_{\cong}\ar[r]^{\pi}&C\ar[d]^{1_C}_{\cong}\ar[r]&0\\
0\ar[r]&A\ar[r]^{i'}&A\oplus B\ar[r]^{\pi'}&C\ar[r]&0,
}$$
where $i'(a)=(a,0)$ and $\pi'(a,c)=c$.
\end{lem}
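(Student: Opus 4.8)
The plan is to exhibit the isomorphism $\phi$ explicitly together with its inverse, and then to read off the commutativity of the displayed diagram directly from the five defining identities. Concretely, I would set $\phi\colon B\to A\oplus C$, $\phi(b):=(r(b),\pi(b))$, and propose as its inverse the map $\psi\colon A\oplus C\to B$, $\psi(a,c):=i(a)+s(c)$. Both are $R$-module homomorphisms, since $r,\pi,i,s$ all are and componentwise-defined maps into and out of a direct sum are again homomorphisms; this observation is immediate and uses none of the identities.

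First I would verify $\phi\circ\psi=1_{A\oplus C}$. Expanding $\phi(\psi(a,c))=(r(i(a)+s(c)),\,\pi(i(a)+s(c)))$ by linearity, the four retraction/orthogonality relations $r\circ i=1_A$, $r\circ s=0$, $\pi\circ i=0$, $\pi\circ s=1_C$ collapse the expression to $(a,c)$. Next I would verify $\psi\circ\phi=1_B$: here $\psi(\phi(b))=i(r(b))+s(\pi(b))=(i\circ r+s\circ\pi)(b)$, and the single completeness relation $i\circ r+s\circ\pi=1_B$ yields $b$. Hence $\phi$ is an isomorphism with inverse $\psi$.

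Finally, to establish commutativity of the diagram I would check the two squares separately. For the left square, $\phi(i(a))=(r(i(a)),\pi(i(a)))=(a,0)=i'(a)$ by $r\circ i=1_A$ and $\pi\circ i=0$, so with the vertical map on $A$ equal to $1_A$ the square commutes. For the right square, $\pi'(\phi(b))=\pi'(r(b),\pi(b))=\pi(b)$ directly from the definition of the projection $\pi'$, so it commutes with the vertical map on $C$ equal to $1_C$. For the converse direction of the asserted equivalence, I would merely note that on $A\oplus C$ the standard inclusion $i'(a)=(a,0)$, projection $\pi'(a,c)=c$, retraction $(a,c)\mapsto a$, and section $c\mapsto(0,c)$ satisfy all five identities, so any direct-sum decomposition furnishes split data of the required shape.

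The main obstacle is bookkeeping rather than conceptual difficulty: the one substantive hypothesis is the completeness relation $i\circ r+s\circ\pi=1_B$, which is precisely what forces $\psi\circ\phi=1_B$ and therefore the bijectivity of $\phi$, while the remaining four identities serve only to simplify $\phi\circ\psi$ and to match the outer vertical maps. I would take care to point out that several of the listed identities are in fact consequences of the others together with exactness (for example $\pi\circ i=0$ is just exactness at $B$), so the hypothesis set is redundant but harmless, and all five may be invoked directly without further justification.
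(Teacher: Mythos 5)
Your proposal is correct and follows essentially the same route as the paper: the same explicit isomorphism $\phi(b)=(r(b),\pi(b))$ with inverse $\psi(a,c)=i(a)+s(c)$, the same square-by-square verification of commutativity, and the same treatment of the converse via the standard maps on $A\oplus C$. Your added observation that some of the listed identities (e.g.\ $\pi\circ i=0$) are redundant consequences of exactness is accurate but inessential to the argument.
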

\begin{proof} We will show that $B\cong A\oplus C$.
Define 
$$\phi(b):=(r(b),\pi(b))\qquad \phi^{-1}(a,c):=i(a)+s(c).$$
Then 
$$\phi^{-1}\circ\phi(b)=\phi^{-1}(r(b),\pi(b))=i\circ r(b)+s\circ \pi(b)=b$$
$$\phi\circ\phi^{-1}(a,c)=\phi(i(a)+s(c))=(r(i(a)+s(c)),\pi(i(a)+s(c)))=(a+0,0+c)=(a,c).$$
The first square commutes since
$$\phi\circ i(a)=(r(i(a)),\pi(i(a)))=(a,0)=i'(a).$$
The second square commutes since
$$\pi'\circ\phi(b)=\pi'(r(b),\pi(b))=\pi(b)=1_C\circ\pi(b).$$
Conversely, assume $B\cong A\oplus C$. Define 
$$\xymatrix{0\ar[r]&A\ar[r]^{i}&A\oplus C\ar@/_1.0pc/[l]_{r}\ar[r]^{\pi}&C\ar@/_1.0pc/[l]_{s}\ar[r]&0}$$
as
$$i(a)=(a,0),\quad \pi(a,c)=c,\quad r(a,c)=a,\quad s(c)=(0,c).$$ 
Moreover, we get
$$\ker \pi = \im i,\quad \ker r = \im s,\quad r\circ i=1_A, \quad \pi\circ s=1_C$$
$$i\circ r +s\circ \pi=1_{A\oplus C}.$$
Define $r':=1_A\circ r \circ \phi^{-1}$, $s':=\phi\circ s \circ 1_C$. Then we have the commutative diagram (clockwise and counterclockwise)
$$\xymatrix{
0\ar[r]&\ar[d]^{1_A}A\ar[r]^{i}&\ar@/_1.0pc/[l]_{r}B\ar[d]^{\phi}_{\cong}\ar[r]^{\pi}&C\ar@/_1.0pc/[l]_{s}\ar[d]^{1_C}\ar[r]&0\\
0\ar[r]&A\ar[r]^{i'}&A\oplus B\ar@/^1.0pc/[l]^{r'}\ar[r]^{\pi'}&C\ar@/^1.0pc/[l]^{s'}\ar[r]&0.
}$$
\end{proof}

If $A$ is nuclear and $B$ is any arbitrary $C^*$-algebra, then only the minimal tensor product is possible to make $A\otimes B$ into a $C^*$-algebra.
The unitization of the compacts $K:=K(H)$ is $K(H)+\C I_H$.
Moreover $\widetilde{A\otimes K}\subset \tilde A\otimes \tilde K$.
Equality holds if $A$ or $K$ is unital. ($K$ is unital if $K$ is finite dimensional).
Otherwise the inclusion is proper since $a\otimes 1$ and $1\tensor k$ are not in the unitization of $A\otimes K$.

\begin{lem}\label{l:onedimprojareequiv}
  Let $E$, $F$ be $1$-dimensional projections on a Hilbert space $H$, and let $K(H)$ be the compact operators.
  Then $E\sim F$ in $K(H)$ i.e.~there is a $V\in K(H)$ such that $V^*V=E$ and $VV^*=F$.
\end{lem}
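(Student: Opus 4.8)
The plan is to exhibit the partial isometry $V$ explicitly as a single rank-one operator and then verify the two defining identities by a short direct computation; there is no genuine difficulty here beyond bookkeeping. First I would record the structure of a one-dimensional projection. Since $E$ is a self-adjoint idempotent whose range is one-dimensional, that range is $\C e$ for some unit vector $e\in H$, and therefore $Ex=\langle x,e\rangle\, e$ for every $x\in H$. Likewise $F$ has the form $Fx=\langle x,f\rangle\, f$ for a unit vector $f\in H$. Here I fix once and for all the convention that $\langle\cdot,\cdot\rangle$ is conjugate-linear in the second slot, so that it matches the matrix-unit convention used in the earlier proofs, where $e_{uv}(\delta_w)=\delta_u$ when $w=v$ and $0$ otherwise.

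Next I would define $V\colon H\to H$ by $Vx:=\langle x,e\rangle\, f$. Its range is contained in $\C f$, so $V$ is of rank one, hence finite-rank, hence compact; thus $V\in K(H)$. In the matrix-unit language of the earlier proofs, $V$ is precisely the operator carrying $e$ to $f$ and annihilating $e^{\perp}$, i.e.\ the analogue of a matrix unit $e_{fe}$, while $E=e_{ee}$ and $F=e_{ff}$.

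Then I would compute the adjoint and the two products. From $\langle Vx,y\rangle=\langle x,e\rangle\,\langle f,y\rangle$ one reads off $V^{*}y=\langle y,f\rangle\, e$, that is, $V^{*}=e_{ef}$. Using $\|e\|=\|f\|=1$ this yields
\[
V^{*}Vx=\langle x,e\rangle\,\langle f,f\rangle\, e=\langle x,e\rangle\, e=Ex,
\qquad
VV^{*}y=\langle y,f\rangle\,\langle e,e\rangle\, f=\langle y,f\rangle\, f=Fy,
\]
so that $V^{*}V=E$ and $VV^{*}=F$. This is exactly the Murray--von Neumann equivalence $E\sim F$ witnessed by a compact $V$, as claimed.

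I do not expect any real obstacle: this is the standard fact that any two rank-one projections in the compacts are equivalent, and the computation is routine once $V$ is chosen correctly. The only point requiring a little care is the inner-product convention (which slot is conjugate-linear), since the opposite convention would replace $V$ by $x\mapsto\langle e,x\rangle f$ and spoil the adjoint computation; I would therefore state the convention explicitly at the outset and keep it consistent with the matrix-unit notation used elsewhere in the paper.
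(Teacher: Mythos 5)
Your proof is correct and is essentially the paper's own argument: the paper takes an isometry $V_0$ of $E(H)$ onto $F(H)$ and sets $Vx := V_0 E x$, which for the choice $V_0 e = f$ is exactly your rank-one operator $Vx = \langle x, e\rangle f$. The only difference is that you carry out the adjoint and product computations explicitly (and correctly), whereas the paper merely asserts $V^*V = E$ and $VV^* = F$.
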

\begin{proof}
  $K:=E(H)$ and $L:=F(H)$ are $1$-dimensional subspaces of $H$. Thus, there is a $V_0:K\to L$ isometry of $K$ onto $L$.
  Extend to $H$ by putting
  $$Vx:= V_0 E x.$$
  Then $V\in B(H)$, $V^*V=E$, $VV^*=F$.
  Since $V(H)=F(H)$ are 1-dimensional, $V$ is finite dimensional, hence compact i.e.~$V\in K(H)$.
\end{proof}
\newpage

\Addresses

\end{document}